\title[Straightening for lax transformations and adjunctions of
$(\infty,2)$-categories]{Straightening for lax transformations\\ and
  adjunctions of $(\infty,2)$-categories}
\author{Fernando Abell\'an}
\address{Norwegian University of Science and Technology (NTNU),
  Trondheim, Norway}
\author{Andrea Gagna}
\address{Institute of Mathematics, Czech Academy of Sciences, Prague, Czech Republic}
\urladdr{https://sites.google.com/view/andreagagna/home}
\author{Rune Haugseng}
\address{Norwegian University of Science and Technology (NTNU),
  Trondheim, Norway}
\urladdr{http://folk.ntnu.no/runegha}
\date{\today}
\begin{document}

\begin{abstract}  
  We prove an unstraightening result for lax transformations between
  functors from an arbitrary $(\infty,2)$-category to that of
  $(\infty,2)$-categories. We apply this to study partially (op)lax
  and weighted (co)limits, giving fibrational descriptions of such
  (co)limits for diagrams valued in $(\infty,2)$-categories, to
  characterize adjoints in $(\infty,2)$-categories of functors and
  (op)lax transformations, and to prove a mate correspondence between
  lax transformations that are componentwise right adjoints and oplax
  transformations that are componentwise left adjoints, for such
  transformations among functors between arbitrary
  $(\infty,2)$-categories.
\end{abstract}

\maketitle 

\tableofcontents

\section{Introduction}\label{sec:intro}
As the applications of \icats{} have multiplied over the last years,
the importance of \itcatl{} methods for working with \icats{}
themselves has also become clear: in the higher-categorical world it
is almost always necessary to find a conceptual reason for each
construction to exist, since it is impossible to ``just write down''
what it does, and when we want to define functors between various
\icatl{} structures these conceptual reasons often end up involving
\itcatl{} universal properties. In particular, the working \icat{}
theorist surprisingly often finds themself confronted with \emph{lax
  natural transformations}, which are one of of the key new features
that appear when we pass from \icats{} to \itcats{}.

If we have two functors of \itcats{} $F, G \colon \tA \to \tB$, then a
\emph{lax natural transformation} $\phi$ from $F$ to $G$ assigns to
every object $a \in \tA$ a morphism $\phi_{a} \colon F(a) \to G(a)$,
and to every morphism $f \colon a \to a'$ in $\tA$ a \emph{lax} square
\[
  \begin{tikzcd}
   F(a) \ar[r, "\phi_{a}"] \ar[d, "F(f)"'] & G(a)  \ar[d, "G(f)"] \ar[dl, Rightarrow] \\
   F(a') \ar[r, "\phi_{a'}"'] & G(a').
  \end{tikzcd}
\]
More formally, $\phi$ can be defined as a functor
$[1] \otimes \tA \to \tB$, where $\otimes$ denotes the \emph{Gray
  tensor product} of \itcats{}.

In this paper we will prove a number of useful results related to lax
transformations. We will first study the relation between lax
transformations and fibrations of \itcats{}, and then apply this to
study lax and weighted (co)limits, adjunctions, and mates in
\itcats{}.

\subsubsection*{Lax transformations and fibrations}
Our first main result gives a description of lax transformations of
functors to the \itcat{} $\CATIT$ of \itcats{} in terms of fibrations
of \itcats{}: A functor $F \colon \tB \to \CATIT$ corresponds, under
the \itcatl{} straightening equivalence of Nuiten~\cite{Nuiten} and
Abell\'an--Stern~\cite{AGS2}, to a fibration $\tF \to \tB$ where $\tF$
has \emph{cocartesian} morphisms and \emph{cartesian} 2-morphisms; as
we will also need to consider the three other variances of such
fibrations in this paper, we will refer to these as
\emph{$(0,1)$-fibrations}. We show that if $\tG \to \tB$ is the
$(0,1)$-fibration for another such functor $G$, then a lax transformation
from $F$ to $G$ corresponds to a commutative triangle
\[
  \begin{tikzcd}
   \tF \ar[rr, "\phi"] \ar[dr] & & \tG \ar[dl] \\
    & \tB
  \end{tikzcd}
\]
where $\phi$ preserves cartesian 2-morphisms (but does \emph{not} necessarily preserve cocartesian morphisms).
More precisely, we prove the following:
\begin{thmA}[see \ref{thm:fib01laxstr}]\label{thm:strlaxtr}
  For an \itcat{} $\tB$, there is a natural equivalence
  \[ \FUN(\tB, \CAT_{(\infty,2)})^{\lax} \simeq
    \FIB_{(0,1)}^{\lax}(\tB),\] where the left-hand side is the
  \itcat{} of functors and lax natural transformations, and the
  right-hand is the locally full sub-\itcat{} of $\CATITsl{\tB}$ whose
  objects are $(0,1)$-fibrations and whose morphisms preserve
  cartesian 2-morphisms.
\end{thmA}
We also derive the analogues for other types of fibrations, and for
partially lax transformations. This result extends
\cite{HHLN1}*{Theorem E}, which covers the case of functors to the
\itcat{} $\CATI$ of \icats{} when $\tB$ is an \icat{}, and our proof
generalizes that of \cite{HHLN1}: The key ingredients are the first
author's straightening theorem for local fibrations of \itcats{}
\cite{Ab23}, and an explicit characterization of \emph{Gray
  fibrations} over $\tA \times \tB$ in \cref{propn:grayfib}, which
straighten to functors $\tA \otimes \tB \to \CATIT$; the latter
generalizes results from \cite{HHLN1}*{\S 2.4}.  We note that the key
step in the proof has already been proved by Ayala, Mazel-Gee and
Rozenblyum as \cite{AMGR}*{Theorem B.4.4}, though they do not
explicitly discuss how this gives rise to an equivalence with an
\itcat{} of functors and lax transformations. Moreover,
\cref{thm:strlaxtr} also appears as \cite{GR}*{Ch.~11, 1.1.8}, but the
proof given there may depend on some still-unproven claims.

\subsubsection*{Limits} Our first application of this result is to
prove some new results on lax\footnote{More precisely, we study
  partially lax and oplax (co)limits, but for simplicity we restrict
  to one case in the introduction.}  and weighted (co)limits in
\itcats{}. Lax (co)limits have previously been studied in
\cite{BermanLax,AbMarked,GagnaHarpazLanariLaxLim}; we show that the
definition of the lax (co)limit of a functor $F \colon \tA \to \tB$
considered there is equivalent to having natural equivalences
\[ \Nat^{\lax}_{\tA,\tB}(F, \underline{b}) \simeq
  \tB(\colim^{\lax}_{\tA} F, b),\qquad
 \Nat^{\lax}_{\tA,\tB}(\underline{b}, F) \simeq
  \tB(b, \lim^{\lax}_{\tA} F),
\] where the left-hand sides are the mapping \icats{} between $F$ and
the constant functor with value $b$ in the \itcat{}
$\FUN(\tA, \tB)^{\lax}$. Using this characterization, we give a
fibrational description of lax colimits of \itcats{}:
\begin{thmA}[see \ref{prop:laxcolimloc} and \ref{prop:laxlimCATIT}]\label{introthm:laxlim}
  For a functor $F \colon \tB \to \CATIT$ with $(0,1)$-fibration $p \colon \tF \to \tB$, we have:
  \begin{enumerate}[(i)]
  \item The lax colimit $\colim^{\lax}_{\tB} F$ in $\CATIT$ is obtained by inverting all cartesian 2-morphisms in $\tF$.
  \item The lax limit $\colim^{\lax}_{\tB} F$ in $\CATIT$ is the
    \itcat{} $\FUN_{/\tB}^{\ctenr}(\tB,\tF)$ of sections of $p$ that
    take all 2-morphisms in $\tB$ to cartesian 2-morphisms in $\tF$.
  \end{enumerate}
\end{thmA}
The general notion of (co)limits in enriched \icats{} are
\emph{weighted} (co)limits; since \itcats{} can be described as
$\CatI$-enriched \icats{}, there is a natural notion of
$\CatI$-weighted (co)limits in an \itcat{}:
For $W \colon \tA \to \CATI$, the $W$-weighted limit of $F$ in $\tB$ is characterized by the natural equivalence
\[ \tB(b, \lim^{W}_{\tA} F) \simeq \Nat_{\tA,\CATI}(W, \tB(b, F)),\]
while for $V \colon \tA^{\op} \to \CATI$ the $V$-weighted colimit satisfies
\[ \tB(\colim^{V}_{\tA} F, b) \simeq \Nat_{\tA^{\op},\CATI}(V, \tB(F,
  b)).\] We will describe weighted (co)limits in $\CATIT$ in terms of
fibrations:
\begin{thmA}[see \ref{cor:wtlimcatit} and \ref{cor:wtcolimcatit}]\label{introthm:wtlim}
  Consider $F \colon \tA \to \CATIT$ with corresponding $(0,1)$-fibration $\tF \to \tA$.
  \begin{enumerate}[(i)]
  \item   If $\tW \to \tA$ is the $(0,1)$-fibration for $W$, then
    \[ \lim^{W}_{\tA} F \simeq \FUN^{\coc}_{/\tA}(\tW, \tF),\]
    where the right-hand side is the \itcat{} of functors over $\tA$ that preserve cocartesian morphisms and cartesian 2-morphisms.
  \item If $\tV \to \tA$ is the $(1,0)$-fibration for $V$, then
    \[ \colim^{V}_{\tA} F \simeq L_{E}(\tV \times_{\tA} \tF),\]
    where the right-hand side is the localization of $\tV \times_{\tA} \tF$ at the set $E$ of morphisms that map to a cartesian morphism in $\tV$ and a cocartesian morphism in $\tF$.
  \end{enumerate}
\end{thmA}
More generally, we show that $W$-weighted limits can be interpreted as
certain partially lax limits over $\tW$, and $V$-weighted colimits as
certain partially oplax colimits over $\tV$. This generalizes the
description of \igpd{}-weighted (co)limits in \icats{} from
\cite{coend}*{\S 4} and \cite{RovelliWeight}.

\subsubsection*{Adjunctions and mates}
One of the main reasons lax transformations come up in practice is
their connection to adjunctions: If we have a lax transformation
$\phi \colon F \to G$ such that the components
$\phi_{a} \colon F(a) \to G(a)$ have left adjoints $\lambda_{a}$, then
we can take the \emph{mates} of the lax naturality squares:
\[
  \begin{tikzcd}
   F(a) \ar[r, "\phi_{a}"] \ar[d, "F(f)"'] & G(a)  \ar[d, "G(f)"] \ar[dl, Rightarrow] \\
   F(a') \ar[r, "\phi_{a'}"'] & G(a').
 \end{tikzcd}
 \quad \mapsto \quad
  \begin{tikzcd}
   G(a) \ar[r, "\lambda_{a}"] \ar[d, "G(f)"'] & F(a)  \ar[d, "F(f)"]  \\
   G(a') \ar[r, "\lambda_{a'}"'] \ar[ur, Rightarrow] & F(a'),
 \end{tikzcd} 
\]
where the transformation in the right-hand square is the \emph{mate} (or \emph{Beck--Chevalley}) transformation, obtained as the composite
\[ \lambda_{a'}G(f) \to \lambda_{a'}G(f)\phi_{a}\lambda_{a} \to \lambda_{a'}\phi_{a'}F(f)\lambda_{a} \to F(f)\lambda_{a}.\]
We show that if these mate squares actually commute, then they give a left adjoint to $\phi$ in $\FUN(\tA, \tB)^{\lax}$. More precisely, we prove the following:
\begin{thmA}[see \ref{cor:adjinlaxtr}]\label{thm:laxtradj}
  Let $\tA, \tB$ be \itcats{}. If $\phi \colon F \to G$ is a lax transformation between functors $F,G \colon \tA \to \tB$, then:
  \begin{itemize}
  \item $\phi$ has a left adjoint in $\FUN(\tA,\tB)^{\lax}$ \IFF{} the components $\phi_{a} \colon F(a) \to G(a)$ have left adjoints for all $a \in \tA$ and the mate of the naturality square commutes for every morphism in $\tA$.
  \item $\phi$ has a right adjoint in $\FUN(\tA,\tB)^{\lax}$ \IFF{} the components $\phi_{a} \colon F(a) \to G(a)$ have right adjoints for all $a \in \tA$ and the naturality squares of $\phi$ commute.
  \end{itemize}
\end{thmA}
A fibrational version of the key special case where $\tB$ is $\CATI$
appears as \cite{AMGR}*{Lemma B.5.9}.

More generally, we show that taking mates gives an equivalence between lax transformations that are componentwise right adjoints and oplax transformations that are componentwise left adjoints:
\begin{thmA}[see \ref{thm:mategeneral}]\label{thm:matecorr}
  There is a natural equivalence
  \[ \FUN(\tA, \tB)^{\lax,\radj} \simeq (\FUN(\tA, \tB)^{\oplax,\ladj})^{\coop},\]
  given by taking mates of the naturality squares of lax transformations.
\end{thmA}
The case where $\tB$ is $\CATI$ was proved in \cite{HHLN1} when $\tA$
is an \icat{}, and more generally (in a fibrational form) in
\cite{AMGR}*{Lemma B.5.7}, though only as an equivalence of
\igpds{}. The general case of a closely related statement (our
\cref{thm:adjointtoafunctor}) appears as \cite{GR}*{Ch.~12, 3.1.9},
though the proof may again depend on some still-unproven statements;
note in particular that the strategy of the ``alternative proof'' in
\cite{GR}*{Ch.~12, \S 4.2} is close to that of our proof.

There are three main steps in our proofs of the last two theorems:
\begin{enumerate}[(1)]
\item We prove a version of the result for certain fibrations of \itcats{}.
\item We use the straightening equivalence of \cref{thm:strlaxtr} to deduce the case where the target is $\CATI$.
\item We use the Yoneda embedding to extend the result to the general case.
\end{enumerate}
For the first step for \cref{thm:laxtradj}, we extend Lurie's results
on relative adjunctions from \cite{HA}*{\S 7.3.2} in
\S\ref{sec:reladj}, while for \cref{thm:matecorr} we study the bivariant version of Gray fibrations (which we call \emph{Gray bifibrations}) in \S\ref{sec:graybifib}; this generalizes results from \cite{HHLN1}*{\S 3.1}.\footnote{There the \icatl{} analogue of our Gray bifibrations are called \emph{curved orthofibrations}.}

For the third step, we need to understand the compatibility of lax
transformations with the Yoneda embedding. In order to do this, we
extensively study various types of sub-\itcats{} in
\S\ref{sec:full}--\ref{sec:othersub} and their behaviour with respect
to lax transformations in \S\ref{sec:sublax}. Combining these results
with some computations of Gray tensor products, we obtain the
following:
\begin{thmA}[see \ref{funlaxemb}]\label{thm:funlaxyoneda}
  For \itcats{} $\tA, \tB$, the Yoneda embedding of $\tB$ induces a
  locally fully faithful functor
  \[ \FUN^{\lax}(\tA, \tB) \to \FUN^{\lax}(\tA \times \tB^{\op},
    \CATI),\]
  whose image consists of those functors $F \colon \tA \times \tB^{\op} \to
  \CATI$ such that $F(a, \blank)$ is a representable presheaf for all
  $a$, and those morphisms $\Phi \colon [1] \otimes (\tA \times
  \tB^{\op}) \to \CATI$ such that $\Phi(\blank, a, \blank)$ is an
  ordinary natural transformation for all $a \in \tA$.
\end{thmA}



\subsubsection*{Notation and conventions}
We now introduce our main notational conventions for \icats{} and \itcats{}:
\begin{itemize}
\item We will denote generic \icats{} by $\oA, \oB, \dots, \oZ$ and generic \itcats{} by $\tA,\tB,\dots,\tZ$.
\item We use \emph{space} and \emph{\igpd{}} synonymously, and write $\Spc$ for the \icat{} thereof.
\item We write $\CatI$ and $\CATI$ for the \icat{} and the \itcat{} of (small) \icats{}, and similarly $\CatIT$ and $\CATIT$ for those of (small) \itcats{}.
\item We denote the mapping spaces in an \icat{} $\oA$  for objects
  $x,y \in \oA$ by $\oA(x,y)$ or $\Map_{\oA}(x,y)$.
\item We denote the mapping \icats{} in an \itcat{} $\tA$  for objects
  $x,y \in \tA$ by $\tA(x,y)$ or $\MAP_{\tA}(x,y)$.
\item We denote the underlying \igpd{} of an \icat{} $\oA$ or an \itcat{} $\tA$ by $\oA^{\simeq}$ or $\tA^{\simeq}$, and the underlying \icat{} of an \itcat{} $\tA$ by $\tA^{\leq 1}$.
\item If $\tA$ and $\tB$ are \itcats{}, the \itcat{} of functors from $\tA$ to $\tB$ (\ie{} the internal Hom for the cartesian product in $\CatIT$) is $\FUN(\tA, \tB)$. Its mapping \icats{} are
  \[ \Nat_{\tA,\tB}(F,G) := \MAP_{\FUN(\tA,\tB)}(F,G).\]
  We write $\Fun(\tA,\tB) := \FUN(\tA,\tB)^{\leq 1}$ for the underlying \icat{}.
\item If $\tA$ is an \itcat{}, we write $\tA^{\op}$, $\tA^{\co}$ and
  $\tA^{\coop}$ for the \itcats{} obtained by reversing, respectively,
  the 1-morphisms in $\tA$, the 2-morphisms in $\tA$, and both.
\item We denote by $C_2$ the walking 2-morphism, \ie{}
  \[
    C_{2} :=
      \begin{tikzcd}[ampersand replacement=\&]
        0 \ar[r, bend left=40, ""{below, name=A}] \ar[r,
        bend right=40, ""{above, name=B}] \& 1.
        \ar[from=A, to=B, Rightarrow]
      \end{tikzcd}
  \]
\end{itemize}

\subsubsection*{Acknowledgments}
We thank Louis Martini for helpful conversations. RH thanks Grigory
Kondyrev and Artem Prikhodko for discussions about weighted colimits.

\section{$(\infty,2)$-categories and lax transformations}\label{sec:twocats}
In this section we review some background material on
$(\infty,2)$-categories and lax transformations, before we study
various types of sub-\itcats{} and their interaction with lax
transformations. We start by recalling the description of \itcats{} as
scaled simplicial sets in \S\ref{sec:scaled} and then consider Gray
tensor products and lax transformations in \S\ref{subsec:gray}. In
\S\ref{sec:pushouts} we compute some pushouts that relate two types of
building blocks for \itcats{}, which we make use of in
\S\ref{sec:aropl} to study oplax arrow \itcats{}, where we in
particular identify their fibres, and full sub-\itcats{} in
\S\ref{sec:full}. We then look at other types of sub-\itcats{} in
\S\ref{sec:othersub} and show that they are preserved by taking
\itcats{} of lax transformations in \S\ref{sec:sublax}. In
\S\ref{sec:partiallax} we introduce \emph{partially} lax
transformations, and extend some of the preceding results to this
setting. Finally, in \S\ref{sec:laxyoneda} we look at the
compatibility of the Yoneda embedding with lax transformations and
prove \cref{thm:funlaxyoneda}.

\subsection{$(\infty,2)$-categories and scaled simplicial sets}\label{sec:scaled}
In this paper we will primarily work model-independently with \icats{}
and \itcats{}, but a few constructions (particularly related to the
Gray tensor product) will be implemented using scaled simplicial
sets. In this section we therefore briefly recall some definitions
related to this model and its relation to that of marked simplicial
categories.

\begin{defn}\label{defn:scsimplicialset}
  A scaled simplicial set $(X,T_X)$ consists of a simplicial set $X$
  together with a collection $T_X$ of $2$-simplices (or triangles)
  that contains every \emph{degenerate} triangle. We call the elements
  of $T_X$ the \emph{thin} triangles of $X$. A morphism of scaled
  simplicial sets $f \colon (X,T_X) \to (Y,T_Y)$ is a map of simplicial sets
  $f \colon X \to Y$ such that $f(T_X)\subseteq T_Y$. We denote the
  corresponding category of scaled simplicial sets by
  $\sSetsc$.
\end{defn}

\begin{notation}\label{not:flatsharpsc}
  Given a simplicial set $X$ there are two canonical ways of viewing
  it as a scaled simplicial set. First, we can declare only the
  degenerate triangles in $X$ to be thin, in which case we denote the
  resulting scaled simplicial set as $X_{\flat} := (X,\flat)$. We can
  also define a scaled simplicial set $X_{\sharp} := (X,\sharp)$ by
  taking \emph{every} triangle to be thin.
\end{notation}

\begin{remark}
  There is a model structure on the category $\sSetsc$ of scaled
  simplicial sets that models $\CatIT$
  \cite{LurieGoodwillie,GagnaHarpazLanariScale}. Scaled simplicial
  sets can also be seen as a special case of Verity's \emph{complicial
    sets} \cite{VerityComplI}.
\end{remark}

\begin{defn}
  A \emph{marked simplicial set} $(X,E_X)$ consists of a simplicial set $X$ together with a collection $E_X$ of $1$-simplices (or edges) that contains every \emph{degenerate} edge. We call the elements of $E_X$ the \emph{marked} edges of $X$. A morphism of marked simplicial sets $f \colon(X,E_X) \to (Y,E_Y)$ is a map of simplicial sets $f\colon X \to Y$ such that $f(E_X)\subseteq E_Y$. We denote the corresponding category of marked simplicial sets by $\sSetm$.
\end{defn}

\begin{defn}\label{defn:catsetmarked}
  We denote by $\sCatm$ the (ordinary) category of categories enriched in marked simplicial sets. There is a model structure on $\sCatm$ that models $\CatIT$, constructed in \cite[A.3.2]{HTT}.
\end{defn}

\begin{defn}\label{defn:orientalsincat+}
  Let $n \geq 0$ and define $\mathfrak{O}^n \in \sCatm$ as the following $\sSetm$-enriched category:
  \begin{itemize}
    \item The objects are the elements of $[n]$.
    \item Given $i,j \in [n]$, we define the mapping marked simplicial set $\mathfrak{O}^n(i,j)$
    as the (minimally marked) poset of subsets $S \subseteq [n]$ such that $\min(S)=i$ and $\max(S)=j$,
    ordered by inclusion.
    \item For a triple $i,j,k \in [n]$, the composition maps are given by taking unions.
  \end{itemize}
\end{defn}

\begin{remark}
  We can view strict $2$-categories as marked simplicial categories by
  taking the nerves of their mapping categories (marked by the
  isomorphisms). The $\sSetm$-enriched categories $\mathfrak{O}^n$
  correspond in this way to the (2-dimensional) \emph{orientals} defined
  by Street~\cite{StreetOrient}.
\end{remark}

\begin{defn}\label{defn:rigidification}
  There is a colimit-preserving functor $\Csc[-]:\sSetsc \to \sCatm$ that is uniquely determined by:
  \begin{itemize}
    \item The value at a minimally scaled simplex is given by $\Csc[\Delta^n_{\flat}]:=\mathfrak{O}^n$.
    \item The value at a maximally scaled 2-simplex is given by
      $\Csc[\Delta^2_{\sharp}]:=\mathfrak{O}^2_\sharp$, where
      $\mathfrak{O}^2_\sharp$ is obtained from $\mathfrak{O}^2$ by
      maximally marking $\mathfrak{O}^2(0,2)$ (the only non-trivial
      mapping object).
  \end{itemize}
  We call $\Csc[-]$ the (scaled) \emph{rigidification functor}. By
  \cite[Theorem 4.2.7]{LurieGoodwillie} it is a left Quillen
  equivalence from the model structure of scaled simplicial sets to
  the model structure of marked simplicial categories.
\end{defn}

\begin{lemma}\label{lem:sattriangles}
  Let $(\Delta^3,U_i)$ be the scaled simplicial set where $U_i$
  contains every triangle except the face that skips the vertex $i$,
  for $0<i<3$. Then the morphism
  $(\Delta^3,U_i) \to (\Delta^3,\sharp)$ is a trivial cofibration in
  $\sSetsc$.
\end{lemma}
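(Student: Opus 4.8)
The plan is to exploit the scaled rigidification functor $\Csc[-] \colon \sSetsc \to \sCatm$, which by \cite[Theorem~4.2.7]{LurieGoodwillie} is a left Quillen equivalence. Both $(\Delta^3,U_i)$ and $(\Delta^3,\sharp)$ have underlying simplicial set $\Delta^3$, and the map in question is the identity on it, so it is a monomorphism and hence a cofibration in $\sSetsc$; since every object of $\sSetsc$ is cofibrant, it will be a trivial cofibration as soon as $\Csc[-]$ sends it to a weak equivalence in $\sCatm$, that is, to a Dwyer--Kan equivalence. So I would reduce the whole statement to analysing the functor of marked simplicial categories $\Csc[(\Delta^3,U_i)] \to \Csc[(\Delta^3,\sharp)]$.

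First I would observe that this functor is especially simple. Rigidifying any scaling of $\Delta^3$ yields the same underlying $\sSetm$-enriched category $\mathfrak{O}^3$ (scalings only turn into markings, added along the pushouts defining $\Csc[-]$), so both sides have object set $[3]$ and identical mapping simplicial sets; the functor is the identity on objects and on each underlying mapping simplicial set, and merely enlarges the marking. Such a functor is a Dwyer--Kan equivalence exactly when it is a weak equivalence of marked simplicial sets on every mapping object $\mathfrak{O}^3(a,b)$. Now $U_i$ and $\sharp$ differ only by the single omitted face $\{0,1,2,3\}\setminus\{i\}$, which for $0<i<3$ contains both extremal vertices $0$ and $3$; its scaling marks an edge of $\mathfrak{O}^3(0,3)$ and, having no vertex available for whiskering, affects no other mapping object. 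Thus all mapping objects except $\mathfrak{O}^3(0,3)$ carry identical markings on the two sides, and everything reduces to this one square.

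The core of the argument is then a computation inside $\mathfrak{O}^3(0,3)$, which is the nerve of the poset of subsets of $[3]$ with minimum $0$ and maximum $3$: the commuting square $\Delta^1 \times \Delta^1$ with bottom $\{0,3\}$, top $\{0,1,2,3\}$, and incomparable middle vertices $\{0,1,3\}$ and $\{0,2,3\}$. The subtle point, which I would emphasise, is that $\Csc[(\Delta^3,T)]$ is a genuine $\sSetm$-enriched category, so its marked edges are closed under composition with identities. Consequently the two faces of $U_i$ that do \emph{not} contain the edge $\{0,3\}$ contribute, through their whiskerings, the two \emph{upper} edges of the square, while the remaining face of $U_i$ that does contain $\{0,3\}$ contributes one \emph{lower} edge; so $U_i$ already marks three of the four edges, and passing to $\sharp$ adds the fourth (lower) edge. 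It then remains to check that this fourth edge is automatically an equivalence once the square is localised at the other three; this is a two-out-of-three argument built on the defining relation $(\text{upper})\circ(\text{lower}) = (\text{diagonal})$, which forces all four edges of $\Delta^1\times\Delta^1$ to become equivalences as soon as three of them do.

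I expect the one genuinely delicate step to be the bookkeeping of the previous paragraph: one must verify that rigidification really does mark the whiskered upper edges of $\mathfrak{O}^3(0,3)$, so that three edges rather than merely two are inverted by $U_i$ --- if only the two lower edges were marked, the fourth edge would \emph{not} be forced and the map would fail to be a weak equivalence. Once this closure under whiskering is established, the two-out-of-three step is routine and the Dwyer--Kan equivalence, hence the trivial cofibration, follows. The case $i=2$ reduces to the same computation with the two middle vertices of the square (equivalently the two lower edges) interchanged.
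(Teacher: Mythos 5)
Your argument is correct, but it takes a genuinely different route from the paper, whose entire proof is a citation: \cite[Remark 3.1.4]{LurieGoodwillie} exhibits $(\Delta^3,U_i) \to (\Delta^3,\sharp)$ as a \emph{scaled anodyne} map by a combinatorial argument from the generators of that class, entirely inside $\sSetsc$. You instead transport the question across the rigidification functor, which is legitimate: a left Quillen equivalence reflects weak equivalences between cofibrant objects, every scaled simplicial set is cofibrant, and a functor in $\sCatm$ that is bijective on objects is a weak equivalence precisely when it is a marked weak equivalence on each mapping object. Your bookkeeping is also right, and it is where the hypothesis $0<i<3$ enters: every scaling of $\Delta^3$ rigidifies to $\mathfrak{O}^3$ with markings closed under whiskering, so the two faces not containing the edge $\{0,3\}$ mark (after whiskering with $\{0,1\}$ resp.\ $\{2,3\}$) the two edges of $\mathfrak{O}^3(0,3) \cong \Delta^1 \times \Delta^1$ into the terminal vertex $\{0,1,2,3\}$, the retained face through $\{0,3\}$ marks one lower edge, and since the \emph{omitted} face also contains both $0$ and $3$, the passage to $\sharp$ adds only the fourth side of this single square --- whereas for $i \in \{0,3\}$ it would instead add the unique nondegenerate edge of $\mathfrak{O}^3(1,3)$ or $\mathfrak{O}^3(0,2)$, where $(\Delta^1)_\flat \to (\Delta^1)_\sharp$ is certainly not a marked equivalence, which is exactly why those cases are excluded. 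Your final two-out-of-three step is sound and can be made precise by testing against $Z^{\natural}$ for a quasi-category $Z$: a functor out of the square inverting any three sides inverts the fourth, via the relation $(\text{upper}) \circ (\text{lower}) \simeq (\text{diagonal})$ and cancellation, so the two marked squares admit literally the same maps to fibrant objects. As for what each approach buys: Lurie's argument yields the formally stronger conclusion that the map is scaled anodyne and avoids any model-comparison input, while yours is self-contained modulo standard model-categorical facts and explains \emph{why} the statement holds and why it fails for the outer faces; and since trivial cofibrations are closed under cobase change, your weaker conclusion suffices for every use of the lemma in this paper (for instance in \cref{lem:markedgraypo} and \cref{propn:tripleprodpo}, where only pushouts along this map are needed).
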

\begin{proof}
  This is \cite[Remark 3.1.4]{LurieGoodwillie}.
\end{proof}

\begin{observation}\label{obs:free2cat}
  For an \icat{} $\oC$, we can define an \itcat{} $[1](\oC)$
  with two objects $0,1$ and with mapping $\infty$-categories given by
  \[
    [1](\oC)(i,j)=
    \begin{cases}
      \emptyset, & \text{ if } i> j, \\
      [0], & \text{ if } i=j, \\
      \oC, & \text{ if } i=0,j=1.
    \end{cases}
  \]
  We claim that this defines a functor
  $[1](\blank) \colon \CatI \to \CatIT$; this takes the initial object
  to $\partial [1] = [1](\emptyset)$ and so also gives a functor
  $\CatI \to (\CatIT)_{\partial [1]/}$.  The functor $[1](\blank)$ can
  be defined either in the setting of enriched \icats{}, by taking the
  free $\CatI$-\icat{} on a graph (see \cite{adjmnd}*{Remark 2.17}),
  or in terms of models by implementing the same formula as a functor
  from marked simplicial sets to marked simplicial categories. In both
  approaches, we can show that
  $[1](\blank) \colon \CatI \to (\CatIT)_{\partial [1]/}$ is left
  adjoint to the functor that takes an \itcat{} $\tX$ with a pair of
  objects $x,y$ to the mapping \icat{} $\tX(x,y)$. In the model
  category case, we can use \cite{HTT}*{Proposition A.3.2.4} to see
  that $[1](\blank)$ is a left Quillen functor from the model
  structure on marked simplicial sets to the model structure on marked
  simplicial categories. Since $[1](\oC)(0,1) \simeq \oC$, we see that
  $[1](\blank)$ is fully faithful as a functor to
  $(\CatIT)_{\partial [1]/}$.
\end{observation}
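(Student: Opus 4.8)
The plan is to realize $[1](\blank)$ as a free construction, from which functoriality, the adjunction, and full faithfulness follow formally, and then to treat the model-categorical refinement by hand. Working with \itcats{} as $\CatI$-enriched \icats{}, I would first identify $[1](\oC)$ with the free $\CatI$-enriched category on the $\CatI$-graph with vertices $\{0,1\}$, edge \icat{} $\oC$ from $0$ to $1$, and empty edge \icats{} otherwise: this free category has mapping \icats{} $[0]$ at $(0,0)$ and $(1,1)$, $\oC$ at $(0,1)$ (the unique path being the single edge), and $\emptyset$ at $(1,0)$, matching the prescribed formula. Functoriality of $[1](\blank)$ in $\oC$ is inherited from that of the free-category functor of \cite{adjmnd}*{Remark 2.17} in the edge-labelling, and since the empty \icat{} gives $[1](\emptyset) = \partial[1]$, the construction lifts to a functor $\CatI \to (\CatIT)_{\partial[1]/}$.

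For the adjunction I would restrict the free--forgetful adjunction between $\CatI$-graphs and $\CatI$-enriched categories to those with the fixed object set $\{0,1\}$ carrying a single edge from $0$ to $1$. By the universal property of the free category, a functor $[1](\oC) \to \tX$ sending $0 \mapsto x$ and $1 \mapsto y$ is precisely the datum of a functor $\oC \to \tX(x,y)$, yielding the natural equivalence
\[ \Map_{(\CatIT)_{\partial[1]/}}([1](\oC), \tX) \simeq \Map_{\CatI}(\oC, \tX(x,y)) \]
that exhibits $[1](\blank)$ as left adjoint to $\tX \mapsto \tX(x,y)$. Full faithfulness then follows formally, since the composite of $[1](\blank)$ with this right adjoint sends $\oC$ to $[1](\oC)(0,1) \simeq \oC$, so the unit is an equivalence.

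For the model-categorical statement I would implement the same formula as a functor $[1](\blank) \colon \sSetm \to \sCatm$, sending the initial object $\emptyset$ to $[1](\emptyset) = \partial[1]$ and hence landing in $(\sCatm)_{\partial[1]/}$. To see it is left Quillen I would invoke \cite{HTT}*{Proposition A.3.2.4}, which presents the generating cofibrations of $\sCatm$ as $\emptyset \to *$ together with the maps $[1](i)$ for $i$ a generating cofibration of $\sSetm$; this shows directly that $[1](\blank)$ carries generating cofibrations to cofibrations. For trivial cofibrations it suffices to check on the generating trivial cofibrations $j$ of $\sSetm$, and there $[1](j)$ is the identity on objects and a weak equivalence on the unique nontrivial mapping object, hence a Dwyer--Kan equivalence; being also a cofibration, it is a trivial cofibration. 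Thus $[1](\blank)$ is left Quillen, and on passing to \icats{} it presents the functor and adjunction above.

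I expect the main obstacle to lie in this last step rather than in the formal adjunction: one must match the formula-defined functor with the generating-cofibration description extracted from \cite{HTT}*{A.3.2} and treat trivial cofibrations separately, since the generating trivial cofibrations of $\sCatm$ also include Dwyer--Kan maps not of the form $[1](j)$. The remaining point requiring care is compatibility of the two constructions---that the model-categorical $[1](\blank)$ indeed presents the free-enriched-category functor and the same right adjoint $\tX \mapsto \tX(x,y)$---which follows once both descriptions are identified with the free construction on a single edge.
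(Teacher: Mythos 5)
Your proposal is correct and follows essentially the same route as the paper's own justification: realizing $[1](\blank)$ as the free $\CatI$-enriched category on a single edge (via \cite{adjmnd}*{Remark 2.17}), deriving the adjunction with $\tX \mapsto \tX(x,y)$ from the free--forgetful universal property, deducing full faithfulness from the unit equivalence $[1](\oC)(0,1) \simeq \oC$, and verifying the left Quillen property using the generating cofibrations from \cite{HTT}*{Proposition A.3.2.4}. Your explicit check that $[1](j)$ is a Dwyer--Kan equivalence for generating trivial cofibrations $j$ merely fills in a detail the paper leaves implicit.
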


A particularly useful aspect of the model of scaled simplicial sets for us is that they can be used to define ``partially
lax'' versions of \itcats{}, as follows:

\begin{defn}
  A \emph{scaled \itcat{}} is an \itcat{} $\tC$ together with
  a collection $S$ of functors $[2] \to \tC$, such that $S$
  contains the set $S_{\tC}^{\flat}$ of triangles
  \[
    \begin{tikzcd}[column sep=tiny,row sep=small]
      {} & y \arrow{dr}{g} \\
      x \arrow{ur}{f} \arrow{rr} & & z
    \end{tikzcd}
  \] where either $f$ or $g$ is an equivalence. We write
  $\tC^{\flat} := (\tC, S_{\tC}^{\flat})$ for $\tC$
  equipped with this minimal scaling, and
  $\tC^{\natural} = (\tC, S_{\tC}^{\natural})$ for $\tC$
  equipped with the maximal scaling consisting of all commuting
  triangles in $\tC$.
\end{defn}

\begin{construction}\label{constr:laxitcat}
  Let $(\tB, S)$ be a scaled \itcat{}, and suppose
  $\mathfrak{B}^{\natural} = (\mathfrak{B}, S^{\natural})$ is a
  fibrant scaled simplicial set that represents the \itcat{}
  $\tB$. Then we can consider the (non-fibrant) scaled simplicial
  set $(\mathfrak{B}, S)$, where we abusively write $S$ for the set of
  2-simplices in $\mathfrak{B}$ that correspond to the commuting
  triangles $S$ in $\tB$. We write $\tB_{S}$ for the
  \itcat{} represented by a fibrant replacement of
  $(\mathfrak{B}, S)$. If $S$ is the minimal scaling
  $S_{\tB}^{\flat}$, we will write
  \[ \tB_{\lax} := \tB_{S_{\tB}^{\flat}}. \]
\end{construction}

Informally, the \itcat{} $\tB_{S}$ should be thought of as obtained
from $\tB$ by ``replacing the invertible 2-morphisms in the
commuting triangles that don't lie in $S$ with non-invertible
2-morphisms''. A functor of \itcats{} $\tB_{S} \to \tc{C}$
similarly represents a lax functor from $\tB$ that is an ordinary
functor on the triangles in $S$; this idea is made precise in work of
the first author \cite{AbLax}.

\begin{observation}\label{obs:2foldseg}
  Finally, we briefly recall two other useful descriptions of the
  \icat{} $\CatIT$, which will play a small role in this paper:
  \begin{itemize}
  \item Let $[n]([i_{1}],\dots,[i_{n}])$ be the (strict) 2-category
    with objects $0,\ldots,n$ and mapping categories
    \[ \MAP(s,t) =
      \begin{cases}
        \emptyset, & s > t, \\
        [i_{s+1}] \times \cdots \times [i_{t}], & s \leq t,                   
      \end{cases}
    \]
    and composition given by the obvious isomorphisms
    \[\MAP(s,t) \times \MAP(t,u) \cong \MAP(s,u).\] The category
    $\Theta_{2}$ is the full subcategory of $\CatIT$ on these objects,
    and the restricted Yoneda embedding
    \[ \CatIT \to \Fun(\Theta_{2}^{\op}, \Spc)\] identifies $\CatIT$
    with the full subcategory satisfying certain Segal and
    completeness conditions introduced by Rezk~\cite{RezkThetaN}.
  \item There is a functor
    $\tau \colon \simp \times \simp \to \Theta_{2}$ that takes
    $([n],[m])$ to $[n]([m],\ldots,[m])$. Restriction (on presheaves
    of \igpds{}) along this gives an equivalence between Rezk's
    $\Theta_{2}$-spaces and Barwick's 2-fold complete Segal spaces
    \cite{BarwickThesis,BSPUnicity,BergnerRezk,thetan}. This can be
    rephrased as an identification between $\CatIT$ and the full
    subcategory of $\Fun(\Dop, \CatI)$ containing the functors
    $X_{\bullet} \colon \Dop \to \CatI$ such that:
    \begin{enumerate}[(i)]
    \item $X$ satisfies the Segal condition, \ie{}
      \[ X_{n} \simeq X_{1} \times_{X_{0}} \cdots \times_{X_{0}}
        X_{1}. \]
    \item The \icat{} $X_{0}$ is an \igpd{}.
    \item The underlying Segal space $X_{\bullet}^{\simeq}$ is
      complete.
    \end{enumerate}
  \end{itemize}
\end{observation}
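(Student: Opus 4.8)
Both descriptions are established in the literature, so the plan is to recall the essential inputs and indicate how they assemble, spending the real effort on the final rephrasing. For the first bullet, the restricted Yoneda embedding sends an \itcat{} $\tc{C}$ to the presheaf $[n]([i_{1}],\dots,[i_{n}]) \mapsto \Map_{\CatIT}([n]([i_{1}],\dots,[i_{n}]), \tc{C})$; since mapping objects in the \icat{} $\CatIT$ are \igpds{}, this lands in $\Fun(\Theta_{2}^{\op}, \Spc)$. I would import from Rezk \cite{RezkThetaN} both that this functor is fully faithful and the identification of its essential image. The mechanism is that $\Theta_{2}$ is dense in $\CatIT$ --- every \itcat{} is canonically a colimit of the cells $[n]([i_{1}],\dots,[i_{n}])$ mapping into it --- which gives full faithfulness, while the Segal conditions (encoding that the mapping \icats{} and their composition are determined by the $1$- and $2$-cells) together with the completeness conditions (encoding that the \igpds{} of cells detect equivalences) cut out the essential image.

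For the second bullet, restriction along $\tau \colon \simp \times \simp \to \Theta_{2}$, $([n],[m]) \mapsto [n]([m],\dots,[m])$, gives $\tau^{*} \colon \Fun(\Theta_{2}^{\op}, \Spc) \to \Fun(\Dop \times \Dop, \Spc)$ landing in bisimplicial spaces. That $\tau^{*}$ restricts to an equivalence from Rezk's $\Theta_{2}$-spaces to Barwick's $2$-fold complete Segal spaces is the comparison theorem of \cite{BergnerRezk,thetan} (within the unicity framework of \cite{BarwickThesis,BSPUnicity}), which I would use directly. It then remains to rephrase the target as a full subcategory of $\Fun(\Dop, \CatI)$. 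Using the one-dimensional theorem identifying complete Segal spaces with \icats{}, I would view a bisimplicial space $Z_{\bullet,\bullet}$ that is a complete Segal space in the second variable as a functor $X_{\bullet} \colon \Dop \to \CatI$ with $X_{n}$ the \icat{} corresponding to $m \mapsto Z_{n,m}$. Under this repackaging the Segal and completeness conditions in the second variable are exactly what makes each $X_{n}$ an \icat{}, and so are absorbed into the codomain; the remaining conditions then translate as follows: the Segal condition in the first variable becomes (i); essential constancy of $Z_{0,\bullet}$ becomes the requirement that $X_{0}$ be an \igpd{}, \ie{} (ii); and completeness in the first variable becomes completeness of the underlying Segal space $X_{\bullet}^{\simeq}$, \ie{} (iii). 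Composing the two equivalences yields the stated description of $\CatIT$.

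The main obstacle is this last translation of conditions, routine in spirit but delicate in detail. The Segal condition transfers cleanly, since the iterated pullbacks $X_{1} \times_{X_{0}} \cdots \times_{X_{0}} X_{1}$ in $\CatI$ are computed by the levelwise Segal pullbacks of the bisimplicial spaces. The care lies in matching (ii) and (iii) with Barwick's conditions: I must check that forming the underlying Segal space $X_{\bullet}^{\simeq}$ via the core functor $(\blank)^{\simeq} \colon \CatI \to \Spc$ corresponds, under the complete Segal space equivalence, to the appropriate essentially-constant restriction of $Z$, so that Barwick's outer completeness is precisely (iii) and the constancy of $Z_{0,\bullet}$ is precisely (ii). Verifying that the core functor is compatible with the one-dimensional completeness equivalence in this way is the one point that genuinely needs to be checked rather than cited.
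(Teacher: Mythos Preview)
The paper does not prove this observation at all: it is stated purely as background, with citations to \cite{RezkThetaN}, \cite{BarwickThesis,BSPUnicity,BergnerRezk,thetan}, and no proof environment follows. Your proposal is therefore not comparable to any argument in the paper; rather, you have supplied the justification that the paper simply imports from the literature. What you have written is a reasonable expansion of those citations, and your identification of the one genuine verification---that the core functor $(\blank)^{\simeq}$ matches Barwick's outer completeness and constancy conditions under the complete Segal space equivalence---is accurate, though the paper is content to leave even that to the references.
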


\subsection{Gray tensor products and lax transformations}\label{subsec:gray}
In this section we recall the construction of \emph{Gray tensor products} of \itcats{}. Roughly speaking, the idea is that where the cartesian product $\tA \times \tB$ of two \itcats{} $\tA, \tB$ has a commuting square
\[
  \begin{tikzcd}
    (a,b) \arrow{d}[swap]{(f,\id_{b})} \arrow{r}{(\id_{a},g)} & (a,b') \arrow{d}{(f,\id_{b'})} \\
    (a',b) \arrow{r}[swap]{(\id_{a'},g)} & (a',b')
  \end{tikzcd}
\]
for every pair of morphisms $f \colon a \to a'$ in $\tA$ and $g\colon b \to b'$ in $\tB$, the Gray tensor product has an oplax square
\[
  \begin{tikzcd}
    (a,b) \arrow{d}[swap]{(f,\id_{b})} \arrow{r}{(\id_{a},g)} & (a,b') \arrow{d}{(f,\id_{b'})} \\
    (a',b) \arrow[Rightarrow]{ur} \arrow{r}[swap]{(\id_{a'},g)} & (a',b').
  \end{tikzcd}
\]
There are several ways to make this idea precise for \itcats{}; see
for instance
\cite{JohnsonFreydScheimbauer,MaeharaGray,CampionMaehara,CampionGray}.
Here we will use the construction via scaled simplicial sets,
following Gagna, Harpaz, and Lanari~\cite{GagnaHarpazLanariGray};
their definition is a variant of that introduced by
Verity~\cite{VerityComplI} in the context of complicial sets. We can
view the definition as a special case of \cref{constr:laxitcat}:
\begin{defn}\label{defn:graytens}
  Suppose $\tc{A}$ and $\tB$ are \itcats{}. Let $S$ be the set of
  commuting triangles in $\tc{A} \times \tB$ of the form
  \[\left(\begin{tikzcd}
        & {a_1} &&& {b_1} \\
        {a_0} && {a_2,} &  {b_0} && {b_2}
        \arrow["{f_{01}}", from=2-1, to=1-2]
        \arrow["{f_{12}}", from=1-2, to=2-3]
        \arrow["{f_{02}}"', from=2-1, to=2-3]
        \arrow["{g_{01}}", from=2-4, to=1-5]
        \arrow["{g_{02}}"', from=2-4, to=2-6]
        \arrow["{g_{12}}", from=1-5, to=2-6]
      \end{tikzcd}\right)\]
  where either $f_{12}$ or $g_{01}$ is an equivalence. The \emph{Gray
    tensor product} $\tc{A} \otimes \tB$ is then the \itcat{}
  $(\tc{A} \times \tB)_{S}$ associated to the scaled \itcat{}
  $(\tc{A} \times \tB, S)$.
\end{defn}

\begin{warning}
  The Gray tensor product is \emph{not} symmetric, which means there
  are two possible conventions for its ordering; unsurprisingly, both
  conventions occur in the literature. Our definition agrees with that
  used in \cite{GagnaHarpazLanariLaxLim} and \cite{HHLN1}, while
  \cite{GagnaHarpazLanariGray} uses the opposite ordering.
\end{warning}

Gagna, Harpaz, and Lanari show that the Gray tensor product is a left
Quillen bifunctor on scaled simplicial sets, which on the \icatl{}
level gives the following:
\begin{thm}[Gagna--Harpaz--Lanari~\cite{GagnaHarpazLanariGray}]\label{thm:graycolim}
  The Gray tensor product defines a functor of \icats{}
  \[ \blank \otimes \blank \colon \CatIT \times \CatIT \to \CatIT\]
  that preserves colimits in each variable.
\end{thm}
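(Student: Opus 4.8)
The plan is to reduce the claim to the statement that the Gray tensor product is a left Quillen bifunctor on scaled simplicial sets, which is the form in which Gagna--Harpaz--Lanari establish it, and then translate this into the model-independent statement about $\CatIT$. A left Quillen bifunctor preserves homotopy colimits in each variable when the other variable is held fixed, and under the left Quillen equivalence between scaled simplicial sets and $\CatIT$ this becomes the desired colimit-preservation in each variable. So the conceptual skeleton is: identify the model-level input, verify it descends to the localized \icats{}, and conclude.

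First I would recall precisely what it means for $\blank \otimes \blank$ to be a left Quillen bifunctor on $\sSetsc$: for any cofibration $i \colon A \to A'$ and cofibration $j \colon B \to B'$, the pushout-product map
\[ A' \otimes B \amalg_{A \otimes B} A \otimes B' \to A' \otimes B' \]
is a cofibration, and is moreover a weak equivalence whenever either $i$ or $j$ is. The key consequence I would extract is that with one variable fixed at a cofibrant object, the functor $\blank \otimes B$ sends cofibrations to cofibrations and trivial cofibrations to trivial cofibrations; hence it is a left Quillen functor, and so $\blank \otimes B$ preserves homotopy colimits.

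Next I would set up the translation to the model-independent world. Since the model structure on $\sSetsc$ presents $\CatIT$ and the localization functor $\sSetsc \to \CatIT$ exhibits $\CatIT$ as the \icatl{} localization at the weak equivalences, a left Quillen functor $\sSetsc \to \sSetsc$ induces a colimit-preserving functor on the associated \icats{}. I would therefore need to know that the Gray tensor product, defined on \itcats{} via \cref{defn:graytens} (equivalently \cref{constr:laxitcat}), is indeed modeled by the scaled Gray tensor product of Gagna--Harpaz--Lanari on fibrant-cofibrant representatives; this compatibility is exactly how the construction was introduced. Given this, for a fixed \itcat{} $\tB$ represented by a cofibrant-fibrant scaled simplicial set, the functor $\blank \otimes \tB \colon \CatIT \to \CatIT$ is the functor induced by the left Quillen functor $\blank \otimes \mathfrak{B}$, and thus preserves colimits; symmetrically for the other variable, using that the pushout-product condition is symmetric in the two cofibrations. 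Finally, to obtain a functor of \icats{} (as opposed to two separately colimit-preserving partial functors), I would invoke that a bifunctor whose two partial functors are both well-defined and induced from a left Quillen bifunctor assembles into a single functor $\CatIT \times \CatIT \to \CatIT$, preserving colimits variablewise.

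I expect the main obstacle to be bookkeeping around cofibrancy and the precise comparison between the model-level and model-independent tensor products, rather than anything deep: one must ensure that the representatives chosen are cofibrant (so that the left Quillen bifunctor statement applies without needing to cofibrantly replace), and that the derived functor of the scaled Gray tensor product genuinely agrees with $\blank \otimes \blank$ as defined on $\CatIT$. Everything about colimit-preservation is then formal, since left Quillen functors between combinatorial model categories induce colimit-preserving functors on localizations. In short, the hard part is citing and assembling the Gagna--Harpaz--Lanari input correctly; the rest is a standard passage from Quillen adjunctions to \icatl{} localizations.
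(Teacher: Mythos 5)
Your proposal is correct and is essentially the paper's own argument: the paper proves this theorem precisely by citing Gagna--Harpaz--Lanari's result that the Gray tensor product is a left Quillen bifunctor on scaled simplicial sets and passing to the induced functor on the \icatl{} localization, which is exactly the descent you spell out. The only simplification worth noting is that every object of $\sSetsc$ is cofibrant (cofibrations are the monomorphisms), so the cofibrancy bookkeeping you flag as the main obstacle is vacuous.
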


\begin{warning}
  The Gray tensor product is \emph{not} compatible with the cartesian
  product: for \itcats{} $\tA, \tB, \tC$ we have a natural map
  \[ \tA \otimes (\tB \times \tC) \to (\tA \otimes \tB) \times \tC,\]
  but this is not usually an equivalence; instead, we will see in
  \S\ref{sec:laxyoneda} that it is a localization at certain
  2-morphisms. It follows that the Gray tensor product is \emph{not} a
  functor of \itcats{} (though it can be made into one if instead of
  natural transformations we consider so-called ``icons'' as
  2-morphisms).
\end{warning}

Since $\CatIT$ is a presentable \icat{}, \cref{thm:graycolim} implies that the Gray tensor product has adjoints in each variable:
\begin{defn}
  Given \itcats{} $\tc{A}, \tB, \tc{C}$, there are natural \itcats{} $\FUN(\tB, \tc{C})^{\lax}$ and $\FUN(\tB, \tc{C})^{\oplax}$ characterized by natural equivalences
  \[ \Map(\tc{A}, \FUN(\tB, \tc{C})^{\oplax}) \simeq \Map(\tB \otimes \tc{A}, \tc{C}),\]
  \[ \Map(\tc{A}, \FUN(\tB, \tc{C})^{\lax}) \simeq \Map(\tc{A} \otimes \tB, \tc{C}).\]  
\end{defn}

\begin{remark}
  In the notation of \cite{GagnaHarpazLanariLaxLim} and
  \cite{GagnaHarpazLanariGray}, our $\FUN(\tA,\tB)^{\oplax}$
  corresponds to $\Fun^{\txt{opgr}}(\tA,\tB)$ and our
  $\FUN(\tA,\tB)^{\lax}$ to $\Fun^{\txt{gr}}(\tA,\tB)$.
\end{remark}

It is immediate from the definition that
$[0] \otimes \tA \simeq \tA \simeq \tA \otimes [0]$, so the objects of
$\FUN^{\plax}(\tA, \tB)$ are the functors $\tA \to \tB$. The morphisms
are (op)lax natural transformations in the following sense:
\begin{defn}
  An \emph{oplax natural transformation} of functors of \itcats{}
  $\tc{A} \to \tB$ is a functor $\tc{A} \otimes [1] \to \tB$, while a
  \emph{lax natural transformation} is a functor
  $[1] \otimes \tc{A} \to \tB$. We write $\Nat^{\plax}_{\tA,\tB}(F,G)$ for the mapping \icat{} in $\FUN(\tA,\tB)^{\plax}$ for functors $F,G \colon \tA \to \tB$. We also write $\Fun(\tA,\tB)^{\plax}$ for the underlying \icat{} of $\FUN(\tA,\tB)^{\plax}$.
\end{defn}

\begin{defn}
  We will say that an (op)lax natural transformation between functors
  $\tA \to \tB$ is \emph{strong} if all of its (op)lax naturality
  squares commute, \ie{} if for every morphism in $\tA$ the
  corresponding diagram $[1] \otimes [1] \to \tB$ takes the
  non-trivial 2-morphism in the source to an invertible 2-morphism in
  $\tB$. We will see later in \cref{cor:stronglax} that strong (op)lax
  transformations are equivalent to ordinary natural transformations.
\end{defn}

The Gray tensor product is associative; this is easy to see in scaled simplicial sets, see \cite{GagnaHarpazLanariGray}*{Proposition 2.2 and Corollary 2.15}. This gives the following relations between the Gray tensor and $\FUN(\blank,\blank)^{\plax}$:
\begin{lemma}\label{lem:funandgray}
  For \itcats{} $\tA, \tB, \tC$ there are natural equivalences
  \[
    \begin{split}
      \FUN(\tA, \FUN(\tB, \tC)^{\lax})^{\lax} & \simeq \FUN(\tA \otimes \tB, \tC)^{\lax}, \\
      \FUN(\tA, \FUN(\tB, \tC)^{\oplax})^{\oplax}& \simeq \FUN(\tB \otimes \tA, \tC)^{\oplax},\\
      \FUN(\tA, \FUN(\tB, \tC)^{\oplax})^{\lax} & \simeq \FUN(\tB, \FUN^{\lax}(\tA, \tC))^{\oplax},\\
      \FUN(\tA, \FUN(\tB, \tC)^{\lax})^{\oplax} & \simeq \FUN(\tB, \FUN(\tA, \tC)^{\oplax})^{\lax}.
    \end{split}
  \]
\end{lemma}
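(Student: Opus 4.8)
The plan is to derive all four equivalences from the associativity of the Gray tensor product together with the two defining adjunctions
\[
\Map(\tc{A}, \FUN(\tB, \tc{C})^{\oplax}) \simeq \Map(\tB \otimes \tc{A}, \tc{C}),\qquad
\Map(\tc{A}, \FUN(\tB, \tc{C})^{\lax}) \simeq \Map(\tc{A} \otimes \tB, \tc{C}),
\]
promoting each to a statement about mapping \itcats{} rather than merely mapping \igpds{}. To do this cleanly, first I would reinterpret the two adjunctions as adjunctions internal to $\CatIT$ with respect to the Gray tensor product: the functor $\blank \otimes \tB$ is left adjoint to $\FUN(\tB,\blank)^{\lax}$, and $\tB \otimes \blank$ is left adjoint to $\FUN(\tB,\blank)^{\oplax}$. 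Since the Gray tensor product preserves colimits in each variable by \cref{thm:graycolim}, these right adjoints exist, and the key point is that tensoring with a variable test object $\tA$ on the appropriate side interacts with associativity to identify iterated $\FUN^{\plax}$ constructions.

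Concretely, for the first equivalence I would test against an arbitrary \itcat{} $\tX$ and compute, using the two adjunctions and associativity $(\tX \otimes \tA) \otimes \tB \simeq \tX \otimes (\tA \otimes \tB)$,
\[
\Map\bigl(\tX, \FUN(\tA, \FUN(\tB,\tC)^{\lax})^{\lax}\bigr)
\simeq \Map\bigl(\tX \otimes \tA, \FUN(\tB,\tC)^{\lax}\bigr)
\simeq \Map\bigl((\tX \otimes \tA)\otimes \tB, \tC\bigr),
\]
and compare this to $\Map(\tX, \FUN(\tA \otimes \tB, \tC)^{\lax}) \simeq \Map(\tX \otimes (\tA \otimes \tB), \tC)$; the two agree naturally in $\tX$ by associativity, so the Yoneda lemma (applied in $\CatIT$, which is presentable) gives the claimed equivalence. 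The second equivalence is the mirror-image computation using the oplax adjunction and the opposite bracketing of associativity, testing with $\tA \otimes \tX$ and $(\tB \otimes \tA) \otimes \tX$ on the relevant sides. These two cases are essentially formal once associativity is in hand.

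The third and fourth equivalences are the genuinely substantive ones, since they \emph{mix} lax and oplax, and here the bare mapping-space adjunctions do not suffice: one must track on which side each tensor factor sits. My strategy is to chain the two adjunctions so that the variable $\tA$ is tensored on the left in one step (producing a $\lax$) and $\tB$ on the right in another (producing an $\oplax$), using associativity to reassociate $\tA \otimes \tB \otimes \tX$ or $\tB \otimes \tA \otimes \tX$ into the required shape; for the third equivalence I would compute both sides' mapping spaces out of a test object $\tX$ and check they both reduce to $\Map(\tB \otimes \tA \otimes \tX, \tC)$ after applying the adjunctions in the correct order, with the ordering of the Gray factors being exactly what forces the $\lax$/$\oplax$ decorations to come out as stated. \textbf{The main obstacle} is precisely this bookkeeping: because the Gray tensor product is not symmetric (as the first warning emphasizes), one must be scrupulous about the left/right placement of each factor, and the entire content of distinguishing the third and fourth equivalences from trivial consequences of the first two lies in verifying that the reassociations respect these placements. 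A secondary subtlety is upgrading the Yoneda argument from mapping \igpds{} to a natural equivalence of \itcats{}; this is handled by noting that all the functors involved are adjoints for the Gray tensor structure and hence the identifications are automatically $\CatIT$-enriched, but I would want to state this enrichment carefully rather than treat it as obvious.
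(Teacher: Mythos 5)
Your proposal is correct and follows essentially the same route as the paper: test both sides against an arbitrary \itcat{} $\tX$, chain the two defining Gray adjunctions with associativity of $\otimes$ (keeping careful track of left/right placement in the mixed lax/oplax cases), and conclude by the Yoneda lemma; the paper writes out exactly this computation for the fourth equivalence and notes the others are proved the same way. Your worry about upgrading from mapping \igpds{} to \itcats{} is unnecessary: the Yoneda lemma is applied in the \icat{} $\CatIT$ itself, where a natural equivalence of the representable presheaves $\Map(\blank, \tX)$ already yields an equivalence of the representing objects, which are \itcats{}.
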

\begin{proof}
  We prove the last equivalence; the others are proved in the same
  way. For an \itcat{} $\tX$ we have natural equivalences 
  \[
    \begin{split}
      \Map(\tX, \FUN(\tA, \FUN(\tB, \tC)^{\lax})^{\oplax})
      & \simeq \Map(\tA \otimes \tX, \FUN(\tB, \tC)^{\lax}) \\
      & \simeq \Map((\tA \otimes \tX) \otimes \tB, \tC) \\
      & \simeq \Map(\tA \otimes (\tX \otimes \tB), \tC) \\
      & \simeq \Map(\tX \otimes \tB, \FUN(\tA, \tC)^{\oplax}) \\
      & \simeq \Map(\tX, \FUN(\tB, \FUN(\tA, \tC)^{\oplax})^{\lax}). \\     \end{split}
  \]
  By the Yoneda lemma, this implies the required natural equivalence of \itcats{}.
\end{proof}

\begin{observation}\label{obs:grayop}
  While the Gray tensor product is not
  symmetric, we do have a natural equivalence
  \[ (\tA \otimes \tB)^{\op} \simeq \tB^{\op} \otimes \tA^{\op};\]
  this is evident in scaled simplicial sets \cite{GagnaHarpazLanariGray}*{Remark 2.4}. As a consequence, we have
 a natural equivalence
 \[ \FUN(\tA^{\op},\tB^{\op})^{\oplax} \simeq (\FUN(\tA,\tB)^{\lax})^{\op}.\]
 There is also a corresponding equivalence when reversing 2-morphisms, that is
 \begin{equation}
   \label{eq:grayco}
   \begin{split}
   (\tA \otimes \tB)^{\co} & \simeq \tB^{\co} \otimes \tA^{\co}, \\
   \FUN(\tA^{\co},\tB^{\co})^{\oplax} & \simeq
   (\FUN(\tA,\tB)^{\lax})^{\co}.
   \end{split}
 \end{equation}
 This is easy to see for some definitions of the Gray tensor, but not
 for our preferred definition in scaled simplicial sets (since the
 operation $(\blank)^{\co}$ cannot be defined easily in this
 model). A comparison between the Gray tensor product of scaled
 simplicial sets and the Gray tensor product constructed in
 \cite{CampionMaehara} (which clearly satisfies \cref{eq:grayco}) will
 appear in forthcoming work of the first author. In this document we
 will therefore freely use \cref{eq:grayco} when needed.
\end{observation}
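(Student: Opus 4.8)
The plan is to reduce all the asserted equivalences to two ingredients: the duality formulas for the Gray tensor product, $(\tA \otimes \tB)^{\op} \simeq \tB^{\op} \otimes \tA^{\op}$ and $(\tA \otimes \tB)^{\co} \simeq \tB^{\co} \otimes \tA^{\co}$, together with the defining adjunctions for $\FUN(\blank,\blank)^{\lax}$ and $\FUN(\blank,\blank)^{\oplax}$, combined with the Yoneda lemma in $\CatIT$. The key structural input is that $(\blank)^{\op}$ and $(\blank)^{\co}$ are involutive autoequivalences of $\CatIT$, so that $\Map(\tC, \tD) \simeq \Map(\tC^{\op}, \tD^{\op})$ and $\Map(\tC, \tD) \simeq \Map(\tC^{\co}, \tD^{\co})$ naturally. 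The Gray $\op$-formula is already available, being evident in scaled simplicial sets by \cite{GagnaHarpazLanariGray}*{Remark 2.4}, so for the $\op$-case nothing is needed beyond the formal manipulation below.

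For the induced equivalence of $\FUN$-\itcats{} I would run a Yoneda computation parallel to the proof of \cref{lem:funandgray}. For a test \itcat{} $\tX$ there are natural equivalences
\[
\begin{split}
\Map(\tX, \FUN(\tA^{\op},\tB^{\op})^{\oplax})
&\simeq \Map(\tA^{\op} \otimes \tX, \tB^{\op}) \\
&\simeq \Map\bigl((\tA^{\op} \otimes \tX)^{\op}, \tB\bigr) \\
&\simeq \Map(\tX^{\op} \otimes \tA, \tB) \\
&\simeq \Map\bigl(\tX^{\op}, \FUN(\tA,\tB)^{\lax}\bigr) \\
&\simeq \Map\bigl(\tX, (\FUN(\tA,\tB)^{\lax})^{\op}\bigr),
\end{split}
\]
using in order the defining adjunction for $\FUN(\blank,\blank)^{\oplax}$, the $\op$-autoequivalence, the Gray $\op$-formula, the defining adjunction for $\FUN(\blank,\blank)^{\lax}$, and the $\op$-autoequivalence once more. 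By naturality in $\tX$ and the Yoneda lemma this yields $\FUN(\tA^{\op},\tB^{\op})^{\oplax} \simeq (\FUN(\tA,\tB)^{\lax})^{\op}$. The $\co$-statement follows by the identical chain with every $(\blank)^{\op}$ replaced by $(\blank)^{\co}$, now invoking the Gray $\co$-formula in place of the $\op$-formula.

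The hard part will therefore be the remaining geometric input: the Gray $\co$-formula $(\tA \otimes \tB)^{\co} \simeq \tB^{\co} \otimes \tA^{\co}$ in our model, since $(\blank)^{\co}$ admits no convenient description on scaled simplicial sets. Rather than working inside $\sSetsc$ directly, I would establish it by comparison with a model in which reversing $2$-morphisms is manifest---most naturally the Gray tensor product of \cite{CampionMaehara}, which visibly satisfies \cref{eq:grayco}---and transport the formula across a natural equivalence between the two constructions of $\blank \otimes \blank$ on $\CatIT$. Granting such a comparison, the $\co$-formula valid in the second model transfers to ours, and the two $\FUN$-level equivalences then drop out of the purely formal Yoneda argument above; this comparison is exactly the ingredient deferred to forthcoming work of the first author, after which \cref{eq:grayco} may be used freely.
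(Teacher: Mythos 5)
Your proposal is correct and matches the paper's own treatment: the paper likewise takes the $\op$-formula as evident in scaled simplicial sets (citing \cite{GagnaHarpazLanariGray}), obtains the $\FUN$-level equivalences as a formal consequence via exactly the adjunction-plus-Yoneda manipulation you spell out (in the style of the proof of \cref{lem:funandgray}), and handles the $\co$-formula by the same deferral to a comparison with the Gray tensor product of \cite{CampionMaehara} in forthcoming work. The only difference is that you make the Yoneda chain explicit where the paper leaves it implicit.
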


\subsection{Cells and orientals}\label{sec:pushouts}
In this section we compute some pushouts that relate the 2-cell
$C_{2}$ to another type of basic building blocks of \itcats{}, namely
Street's ``orientals'' (or oriented simplices) and their boundaries in
low dimensions. The latter fit much better with the description of
\itcats{} as scaled simplicial sets, so these pushouts will be useful
for understanding various Gray tensor products later on.

\begin{notation}
  We introduce notation for the following \itcats{}, which will appear
  frequently in this and the next few sections:
  \[
    \begin{split}
      \partial [1] & :=  \{0,1\} \simeq [0] \amalg [0] \\
      \partial C_{2} & :=
      \begin{tikzcd}[ampersand replacement=\&]
        0 \ar[r, bend left] \ar[r, bend right] \& 1
      \end{tikzcd}
      \simeq [1] \amalg_{\partial [1]} [1], \\
\partial C_{3} & :=
    \begin{tikzcd}[column sep=large,ampersand replacement=\&]
      0 \ar[r, bend left=50, ""{below,name=A,inner sep=2pt}]
      \ar[r, bend right=50, ""{above,name=B,inner sep=2pt}] \& 1
      \ar[from=A, to=B, bend right=50, shift right, Rightarrow, ""{below,name=C,
        inner sep=2pt}]      
      \ar[from=A, to=B, bend left=50, shift left, Rightarrow, ""{above,name=D,
        inner sep=2pt}]
    \end{tikzcd}
                 \!\! \simeq  C_{2} \amalg_{\partial C_{2}} C_{2},\\
\partial([1] \otimes [1]) & :=
  \partial [1] \otimes [1]
  \amalg_{\partial [1] \otimes \partial [1]} [1] \otimes \partial [1]
                            \simeq ([1] \amalg [1]) \amalg_{[0]^{\amalg 4}} ([1] \amalg [1]),                 \\
\partial([1] \otimes C_{2}) & := \partial [1] \otimes C_{2}
  \amalg_{\partial [1] \otimes \partial C_{2}} [1] \otimes \partial C_{2}
  \simeq (C_{2} \amalg C_{2}) \amalg_{\partial C_{2} \amalg \partial
    C_{2}} [1] \otimes \partial C_{2}.
    \end{split}
    \]
\end{notation}

\begin{remark}\label{rem:modelsforC}
 We have models for $C_2$, $\partial C_2$ and $\partial C_3$ in marked
 simplicial categories given by
 $\mathfrak{C}_2=[1]((\Delta^1,\flat))$, $\partial
 \mathfrak{C}_2=[1]((\partial \Delta^1,\flat)$ $\partial
 \mathfrak{C}_3=[1]((\Delta^1\amalg_{\partial
   \Delta^1}\Delta^1),\flat)$, respectively, where the left Quillen functor $[1](-)$ was discussed in \cref{obs:free2cat}.
\end{remark}

\begin{defn}\label{defn:partialoriental}
  The $n$th (2-dimensional) \emph{oriental} is the \itcat{} $\tO^{n}
  := [n]_{\lax}$, corresponding to the scaled simplicial set $(\Delta^{n},\flat)$. We define its boundary $\partial \tO^{n}$ as the \itcat{} modeled by the scaled simplicial set $(\partial \Delta^{n},\flat)$, or equivalently as the colimit
  of the functor
  \[
   \partial\simp_{/n}\to \CatIT, \enspace (f \colon [k] \to [n]) \mapsto \tO^k,
 \]
 where $\partial\simp_{/n}$ denotes the full subcategory of $\simp_{/[n]}$ that consists of injective maps $f \colon [k] \to [n]$ such that $k\neq n$. (Note that by \cref{defn:rigidification} we have $\Csc[\Delta^{n}_{\flat}] = \mathfrak{O}^{n}$, so that $\mathfrak{O}^{n}$ is a model for $\tO^{n}$ in marked simplicial categories.)
\end{defn}

\begin{observation}\label{rem:partialo3}
  By \cref{defn:rigidification}, the marked simplicial category
  $\Csc[\partial \Delta^3_\flat]$ is a model for $\partial
  \tO^{3}$. Direct inspection reveals that  $\Csc[\partial
  \Delta^3_\flat](i,j) =\mathfrak{O}^3(i,j)$ if $i \neq 0$ and $j \neq
  3$ and that $\Csc[\partial \Delta^3_\flat](0,3)=(\Lambda^2_1
  \amalg_{\partial \Delta^1}\Lambda^2_1,\flat)$. We can therefore
  define a marked simplicial category $\partial\mathfrak{O}^{3}$ by
  the pushout square
  \[
     \begin{tikzcd}
       {[1]}((\Lambda^2_1 \amalg_{\partial \Delta^1}\Lambda^2_1,\flat)) \arrow[r] \arrow[d] &  {[1]}((\Delta^2 \amalg_{\partial \Delta^1} \Delta^2,\flat)) \arrow[d] \\
       \Csc[\partial \Delta^3_\flat] \arrow[r] & \partial \mathfrak{O}^3.
     \end{tikzcd}
   \] 
   Here the top horizontal map is a trivial cofibration by
   \cref{obs:free2cat}, hence so is the botttom map. We conclude that
   $\partial\mathfrak{O}^3$ is a cofibrant and fibrant marked
   simplicial category that models $\partial \tO^{3}$.
\end{observation}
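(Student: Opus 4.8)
The plan is to produce a model of $\partial\tO^3$ in $\sCatm$ that is simultaneously cofibrant and fibrant and whose mapping objects are under explicit control, rather than the tautological model coming from rigidification. First I would record that $\Csc[\partial\Delta^3_\flat]$ is already a cofibrant model for $\partial\tO^3$: by \cref{defn:partialoriental} the scaled simplicial set $(\partial\Delta^3,\flat)$ presents $\partial\tO^3$, and since $\Csc[-]$ is a left Quillen equivalence (\cref{defn:rigidification}) it carries this cofibrant object to a cofibrant marked simplicial category whose fibrant replacement still models $\partial\tO^3$. The whole point of the statement is then that $\Csc[\partial\Delta^3_\flat]$ is \emph{not} itself fibrant, so the real task is to locate the unique failure of fibrancy and repair it without changing the homotopy type.

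Second, I would compute the mapping objects of $\Csc[\partial\Delta^3_\flat]$ by hand, using that $\Csc[-]$ preserves colimits and that $\partial\Delta^3$ is the union of its four $2$-faces. For every pair $(i,j)\neq(0,3)$ the relevant subsets of $[3]$ and all their composites already appear inside a single face, so the mapping object coincides with $\mathfrak{O}^3(i,j)$, which is a minimally marked nerve of a poset and hence a fibrant marked simplicial set. The only delicate case is $(0,3)$: here I would track which vertices, edges and triangles of the square poset $\mathfrak{O}^3(0,3)$ persist in the colimit. The composite vertex $\{0,1,2,3\}$ survives, being generated by composing $\{0,1\},\{1,2\},\{2,3\}$ along the faces, and the two length-two paths through $\{0,1,3\}$ and $\{0,2,3\}$ survive, since each edge lies in a single $2$-face; but the two filling $2$-simplices and the diagonal edge $\{0,3\}\to\{0,1,2,3\}$ only arise from the missing interior $3$-cell. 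This leaves exactly $(\Lambda^2_1\amalg_{\partial\Delta^1}\Lambda^2_1,\flat)$, whose underlying simplicial set is not a quasicategory, so $(0,3)$ is the sole obstruction to fibrancy.

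Third, I would repair this mapping object by filling the two inner horns, which is precisely the effect of the displayed pushout. The inclusion $(\Lambda^2_1\amalg_{\partial\Delta^1}\Lambda^2_1,\flat)\hookrightarrow(\Delta^2\amalg_{\partial\Delta^1}\Delta^2,\flat)$ is a coproduct, then pushout, of the inner-horn inclusion $\Lambda^2_1\hookrightarrow\Delta^2$, hence a trivial cofibration of marked simplicial sets; applying the left Quillen functor $[1](-)$ of \cref{obs:free2cat} shows the top map is a trivial cofibration in $\sCatm$, and stability of trivial cofibrations under pushout makes the bottom map $\Csc[\partial\Delta^3_\flat]\to\partial\mathfrak{O}^3$ one as well. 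Thus $\partial\mathfrak{O}^3$ is cofibrant and weakly equivalent to $\Csc[\partial\Delta^3_\flat]$, so it still models $\partial\tO^3$. For fibrancy I would note that the pushout alters only the $(0,3)$ mapping object, replacing it by $(\Delta^2\amalg_{\partial\Delta^1}\Delta^2,\flat)$, which one checks directly is a quasicategory whose only equivalences are degenerate; the other mapping objects are untouched and already fibrant, so $\partial\mathfrak{O}^3$ is fibrant.

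The main obstacle is the second step: carrying out the ``direct inspection'' rigorously. Because colimits in the enriched category $\sCatm$ are not computed naively on mapping objects, one must genuinely argue that passing from $\Delta^3$ to $\partial\Delta^3$ deletes precisely the two fillers and the diagonal of $\mathfrak{O}^3(0,3)$ while retaining the composite vertex and the two boundary paths. One must likewise verify that the corrective pushout modifies only the $(0,3)$ mapping object and introduces no spurious composites through the vertices $1$ and $2$; this is exactly where the extremality of $0$ and $3$ is used, and it is the point that makes the clean pushout description legitimate.
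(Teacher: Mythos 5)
Your proposal is correct and follows essentially the same route as the paper: identify the mapping objects of $\Csc[\partial\Delta^3_\flat]$ by direct inspection, observe that the only non-fibrant one is $(0,3)=(\Lambda^2_1\amalg_{\partial\Delta^1}\Lambda^2_1,\flat)$, and repair it by pushing out along $[1](-)$ applied to the anodyne inclusion into $(\Delta^2\amalg_{\partial\Delta^1}\Delta^2,\flat)$, using that $[1](-)$ is left Quillen (\cref{obs:free2cat}) and that trivial cofibrations are stable under pushout. Your added details --- the explicit colimit analysis showing exactly the diagonal edge and two fillers are lost, the use of the extremality of $0$ and $3$ to see the pushout only changes the $(0,3)$ hom, and the fibrancy check on the new mapping object --- are exactly the verifications the paper's ``direct inspection'' leaves implicit.
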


\begin{lemma}\label{lem:cofibrationc3}
  The map  $\partial \mathfrak{C}_3 \to  \partial \mathfrak{O}^3$ that selects the two parallel 1-simplices in $\partial \mathfrak{O}^3(0,3)$ is a cofibration of marked simplicial categories.
\end{lemma}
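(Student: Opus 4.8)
The plan is to factor the map through the top edge of the pushout square in \cref{rem:partialo3}. Using the model $\partial\mathfrak{C}_3 = [1]((\Delta^1\amalg_{\partial\Delta^1}\Delta^1,\flat))$ of \cref{rem:modelsforC}, I would write it as the composite
\[
  [1]((\Delta^1\amalg_{\partial\Delta^1}\Delta^1,\flat))
  \xrightarrow{\ [1](j)\ }
  [1]((\Delta^2\amalg_{\partial\Delta^1}\Delta^2,\flat))
  \xrightarrow{\ k\ }
  \partial\mathfrak{O}^3,
\]
where $j$ is the inclusion of the two long edges (the copies of $\Delta^{\{0,2\}}$), whose images are exactly the two parallel $1$-simplices of $\partial\mathfrak{O}^3(0,3)$, and $k$ is the right-hand vertical map of the pushout square. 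Since cofibrations are closed under composition, it then suffices to show that each factor is a cofibration of marked simplicial categories.

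The first factor is the easy one. The map $j\colon \Delta^1\amalg_{\partial\Delta^1}\Delta^1 \to \Delta^2\amalg_{\partial\Delta^1}\Delta^2$ is a monomorphism of underlying simplicial sets between minimally marked (flat) objects, so it is a cofibration in $\sSetm$. As $[1](-)\colon \sSetm\to\sCatm$ is left Quillen by \cref{obs:free2cat}, it preserves cofibrations, and hence $[1](j)$ is a cofibration.

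The second factor is where the real work lies. By construction $k$ is the cobase change of the left-hand vertical map
\[
  v\colon [1]((\Lambda^2_1\amalg_{\partial\Delta^1}\Lambda^2_1,\flat)) \longrightarrow \Csc[\partial\Delta^3_\flat]
\]
along the trivial cofibration forming the top of the square, and cofibrations are stable under cobase change, so it is enough to prove that $v$ is a cofibration. Its source is precisely the full subcategory of $\Csc[\partial\Delta^3_\flat]$ on the objects $0$ and $3$, and the plan is to exhibit $\Csc[\partial\Delta^3_\flat]$ as built from it by freely adjoining the remaining objects $1$ and $2$ together with the generating cells contributed by the two-dimensional faces of $\partial\Delta^3$. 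Concretely, I would filter $\partial\Delta^3$ by its faces and apply $\Csc[-]$, which is left Quillen by \cref{defn:rigidification} and therefore carries the boundary inclusions $\partial\Delta^n_\flat\to\Delta^n_\flat$ to cofibrations; realizing $v$ as the resulting transfinite composite of cobase changes of these, together with the object-adjunction $\emptyset\to[0]$, would identify it as a relative cell complex. The main obstacle, I expect, is exactly this bookkeeping: one must check that the successive attachments assemble the mapping objects $\Csc[\partial\Delta^3_\flat](i,j)$ for $(i,j)\neq(0,3)$ while adding nothing new to the full subcategory on $\{0,3\}$, so that $v$ is a genuine cofibration rather than merely a map between cofibrant objects.
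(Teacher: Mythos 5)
Your factorization is correct as far as it goes: the composite $[1](j)$ followed by $k$ is indeed the map in question, $[1](j)$ is a cofibration since $[1](-)$ is left Quillen, and $k$ is the cobase change of $v$. But the reduction fails at the decisive step: $v$ is \emph{not} a cofibration of marked simplicial categories, and no filtration of $\partial\Delta^3$ by its faces will exhibit it as a relative cell complex. The obstruction is exactly the point your ``bookkeeping'' glosses over. In $\Csc[\partial\Delta^3_\flat]$, three of the four vertices of the mapping object $\Csc[\partial\Delta^3_\flat](0,3) = (\Lambda^2_1\amalg_{\partial\Delta^1}\Lambda^2_1,\flat)$ are \emph{strict composites} through the intermediate objects: $\{0,1,3\}=\{1,3\}\circ\{0,1\}$, $\{0,2,3\}=\{2,3\}\circ\{0,2\}$, and $\{0,1,2,3\}=\{1,2,3\}\circ\{0,1\}$ (indeed, the last vertex and two of the four edges only exist in the colimit \emph{as} formal composites, since no face of $\partial\Delta^3$ contains both $\{0,1\}$ and $\{1,2,3\}$). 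In your source $[1]((\Lambda^2_1\amalg_{\partial\Delta^1}\Lambda^2_1,\flat))$ these same cells are free generators. Pushouts along the generating cofibrations $\emptyset \to [0]$ and $[1](i)$ adjoin objects and attach cells freely; they can create new composites out of newly attached cells, but they can never make a pre-existing free cell of the source \emph{equal} to a composite --- that would be a quotient, not a cell attachment. So the face filtration builds $\Csc[\partial\Delta^3_\flat]$ as a cell complex from the discrete category on its objects, not from the full subcategory on $\{0,3\}$, and $v$ admits no relative cell structure over its source. One can push this to an outright failure of the left lifting property: take $X \to \Csc[\partial\Delta^3_\flat]$ to be the identity on objects and on all mapping objects except $(0,3)$, where one doubles the fibre over the vertex $\{0,1,3\}$ by a (maximally marked) contractible groupoid, with all composition maps landing in one fixed copy; mapping your source in so that the free vertex $\{0,1,3\}$ hits the other copy produces a lifting problem with no solution, since strict functoriality forces any section to send $\{0,1,3\}$ to the composite $\{1,3\}\circ\{0,1\}$. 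The same decomposability persists in $\partial\mathfrak{O}^3$, so the cobase change does not rescue $k$ either; your decomposition therefore cannot establish the lemma.

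This is precisely why the paper's proof avoids any cell-attachment argument and does not claim that either vertical map in the pushout square of \cref{rem:partialo3} is a cofibration (only the horizontal maps, which are trivial cofibrations, are used that way). Instead the paper runs a retract argument: it factors $\partial\mathfrak{C}_3 \to \partial\mathfrak{O}^3$ as a cofibration $\partial\mathfrak{C}_3 \to A$ followed by a trivial fibration $A \to \partial\mathfrak{O}^3$, and then constructs by hand a lift $\partial\mathfrak{O}^3 \to A$ over $\partial\mathfrak{O}^3$ --- first lifting $\Csc[\partial\Delta^3_\flat]$ through the trivial fibration (using that the object inclusion is $\Csc$ of a monomorphism of scaled simplicial sets), then solving a lifting problem of marked simplicial sets against the trivial fibration $A(x,y) \to (\Delta^2\amalg_{\partial\Delta^1}\Delta^2,\flat)$, and finally amalgamating the two partial lifts using the defining pushout of \cref{rem:partialo3}. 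This exhibits the map as a retract of the cofibration $\partial\mathfrak{C}_3 \to A$, which requires no cell structure at all. If you want to salvage your write-up, the pushout decomposition can stay as context, but the core of the proof has to be replaced by a lifting/retract argument of this kind.
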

\begin{proof}
We factor the map as $\partial \mathfrak{C}_3 \to A \to \partial \mathfrak{O}^3$ where the first map is a cofibration and the second map is a trivial fibration. It will therefore suffice to show that we can solve the lifting problem
  \[
    \begin{tikzcd}
     \partial \mathfrak{C}_3 \arrow[r,"\phi"] \arrow[d] & A \arrow[d] \\
     \partial \mathfrak{O}^3 \arrow[r] & \partial \mathfrak{O}^3.
    \end{tikzcd}
  \]
  since in that case, the map $\partial \mathfrak{C}_3 \to \partial
  \mathfrak{O}^3$ will be a retract of the cofibration $\partial
  \mathfrak{C}_3 \to A$. Let $x,y \in A$ be the pair of objects selected by $\phi$ in the diagram above.  We start by producing a solution to the lifting problem
   \[
    \begin{tikzcd}
     {[}0{]} \amalg {[}0{]} \arrow[d] \arrow[r] & A \arrow[d] \\
     \Csc[(\partial \Delta^3,\flat)] \arrow[ur,dotted,"\psi"] \arrow[r] & \partial \mathfrak{O}^3.
    \end{tikzcd}
  \]
  This exists since the left-most vertical map is a cofibration; to
  see this note that this map is $\Csc$ applied to the monomorphism
  of scaled simplicial sets that selects the initial and terminal objects in $(\partial \Delta^3,\flat)$. Next, let us consider another lifting problem,
  \[
    \begin{tikzcd}
       (\partial \Delta^2 \amalg_{\partial \Delta^1}\partial \Delta^2,\flat) \arrow[r] \arrow[d] & A(x,y) \arrow[d] \\
    (\Delta^2 \amalg_{\partial \Delta^1} \Delta^2,\flat) \arrow[r] \arrow[ur,"\zeta",dotted] & (\Delta^2 \amalg_{\partial \Delta^1} \Delta^2,\flat)
    \end{tikzcd}
  \]
  where the top horizontal morphism is obtained by gluing $\phi$ with
  the map
  \[ \Csc[(\partial \Delta^3,\flat)](0,3) \to A(x,y)\] from
  $\psi$. The left-most vertical map is clearly a cofibration and the
  right-most is a trivial fibration since the map
  $A \to \partial \mathfrak{O}^{3}$ is one, so the dotted arrow
  exists. We can now use the pushout in \cref{rem:partialo3} to
  amalgamate $\zeta$ and $\Psi$ and produce the desired solution to
  the lifting problem.
\end{proof}

\begin{propn}\label{propn:orientalpo}
  There are pushouts of \itcats{}
  \[
    \begin{tikzcd}
      \partial C_2 \arrow[r] \arrow[d] & \partial \tO^2 \arrow[d]  \\
      C_2 \arrow[r] & \tO^2,
    \end{tikzcd}
    \qquad
    \begin{tikzcd}
      \partial C_3 \arrow[r] \arrow[d] & \partial \tO^3 \arrow[d]  \\
      C_2 \arrow[r] & \tO^3.
    \end{tikzcd}
  \]
\end{propn}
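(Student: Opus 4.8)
The plan is to prove both squares by passing to the Quillen-equivalent model of marked simplicial categories $\sCatm$ and computing the corresponding strict pushouts there, using the explicit cofibrant models of the objects recorded in \cref{rem:modelsforC}, \cref{defn:partialoriental} and \cref{rem:partialo3}. Concretely I would realise the first square as the span
\[ \mathfrak{C}_2 \longleftarrow \partial\mathfrak{C}_2 \longrightarrow \Csc[(\partial\Delta^2,\flat)], \]
where the right-hand object models $\partial\tO^2$, and the second as
\[ \mathfrak{C}_2 \longleftarrow \partial\mathfrak{C}_3 \longrightarrow \partial\mathfrak{O}^3, \]
where $\partial\mathfrak{C}_3\to\mathfrak{C}_2$ is $[1](-)$ applied to the fold map $\Delta^1\amalg_{\partial\Delta^1}\Delta^1\to\Delta^1$ and $\partial\mathfrak{C}_3\to\partial\mathfrak{O}^3$ is the cofibration of \cref{lem:cofibrationc3}. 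In each case the leg into the boundary oriental selects the relevant pair of parallel cells: for $\tO^2$ the two $1$-morphisms $f_{02}$ and $f_{12}f_{01}$ forming the two vertices of $\Csc[(\partial\Delta^2,\flat)](0,2)\simeq\partial\Delta^1$, and for $\tO^3$ the two parallel $2$-morphisms given by the two diagonals of $\partial\mathfrak{O}^3(0,3)\simeq\Delta^2\amalg_{\partial\Delta^1}\Delta^2$ (cf.\ \cref{rem:partialo3}).

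First I would argue that these strict pushouts compute the pushouts in $\CatIT$. All objects involved are cofibrant: the cells $\mathfrak{C}_2,\partial\mathfrak{C}_2,\partial\mathfrak{C}_3$ are cofibrant because $[1](-)$ is left Quillen (\cref{obs:free2cat}) and its arguments are cofibrant; $\Csc[(\partial\Delta^2,\flat)]$, $\mathfrak{O}^2$ and $\mathfrak{O}^3$ are cofibrant since $\Csc[-]$ is left Quillen (\cref{defn:rigidification}) and every scaled simplicial set is cofibrant; and $\partial\mathfrak{O}^3$ is cofibrant by \cref{rem:partialo3}. In the first span the leg $\partial\mathfrak{C}_2\to\mathfrak{C}_2$ is a cofibration, being $[1](-)$ of $\partial\Delta^1\hookrightarrow\Delta^1$, and in the second the leg $\partial\mathfrak{C}_3\to\partial\mathfrak{O}^3$ is a cofibration by \cref{lem:cofibrationc3}. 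Since in each span one leg is a cofibration and all three objects are cofibrant, the strict pushout models the homotopy pushout: factoring the remaining leg as a cofibration followed by a weak equivalence gives a projectively cofibrant span whose strict pushout is the homotopy pushout, and the comparison map to our strict pushout is a weak equivalence by the gluing lemma, the latter applying because all objects are cofibrant, so left properness of $\sCatm$ is not required.

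It then remains to compute the two strict pushouts and identify them with $\mathfrak{O}^2$ and $\mathfrak{O}^3$. In both cases the attaching map $\partial\mathfrak{C}_n\to\mathfrak{C}_2$ is a bijection onto its two objects, which land on the extremal objects $0$ and $n$ of the boundary oriental; hence the pushout has the objects of $\mathfrak{O}^n$ and the only mapping object that changes is the extremal one $(0,n)$. Because in $\mathfrak{O}^n$ the object $0$ receives no non-identity morphism and $n$ emits none, the glued-in cells admit no whiskering, so no spurious composites are freely generated and the pushout is computed mapping-object-wise at $(0,n)$. For $\tO^2$ this yields $\Delta^1\amalg_{\partial\Delta^1}\partial\Delta^1\simeq\Delta^1=\mathfrak{O}^2(0,2)$, inserting the missing $2$-morphism $f_{02}\Rightarrow f_{12}f_{01}$, so the pushout is $\mathfrak{O}^2$. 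For $\tO^3$ the $(0,3)$ mapping object is
\[ \Delta^1\amalg_{\Delta^1\amalg_{\partial\Delta^1}\Delta^1}\bigl(\Delta^2\amalg_{\partial\Delta^1}\Delta^2\bigr)\simeq\Delta^2\amalg_{\Delta^1}\Delta^2, \]
where the fold identifies the two diagonals of the two triangles; the resulting union of two triangles along their common diagonal is exactly the nerve of the diamond poset $\mathfrak{O}^3(0,3)$, while all other mapping objects already agree with those of $\mathfrak{O}^3$ by \cref{rem:partialo3}. Hence the pushout is $\mathfrak{O}^3$.

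The main obstacle is the explicit identification of the strict enriched pushouts, and in particular making precise that no additional morphisms are freely generated: this is exactly where the extremality of the hom $(0,n)$ is essential, since it excludes the whiskered composites that would otherwise appear in a pushout of enriched categories. The second, more computational, difficulty is the bookkeeping identifying $\partial\mathfrak{O}^3(0,3)\simeq\Delta^2\amalg_{\partial\Delta^1}\Delta^2$ together with the effect of collapsing its two diagonals along the fold $\partial\mathfrak{C}_3\to\mathfrak{C}_2$, which is precisely what upgrades the boundary $\partial\tO^3$ to the full oriental $\tO^3$.
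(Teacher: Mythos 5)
Your proposal is correct and follows essentially the same route as the paper: both pass to marked simplicial categories, form exactly the same spans $\mathfrak{C}_2 \leftarrow \partial\mathfrak{C}_2 \to \Csc[\partial\Delta^2_\flat]$ and $\mathfrak{C}_2 \leftarrow \partial\mathfrak{C}_3 \to \partial\mathfrak{O}^3$ (with \cref{lem:cofibrationc3} supplying the cofibration), and deduce from cofibrancy of all objects plus one leg being a cofibration that the strict pushouts are homotopy pushouts. The only difference is how the strict pushout is identified with $\mathfrak{O}^n$: the paper verifies the universal property by exhibiting $\Hom(\mathfrak{O}^n,\mathfrak{X})$ as the evident fibre product of hom-sets, which makes your ``no spurious composites are freely generated'' step automatic, whereas your explicit mapping-object-wise computation at $(0,n)$ requires exactly the extremality argument you supply --- both verifications are valid.
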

\begin{remark}
  Informally, this says:
  \begin{itemize}
  \item The \itcat{} $\tO^{2}$ can be built by inserting a 2-morphism in $\partial \tO^{2}$.
  \item The oriental $\tO^{3}$ can be built by identifying two (composite) 2-morphisms in $\partial \tO^{3}$.
  \end{itemize}
\end{remark}

\begin{proof}
  We will exhibit these as strict homotopy pushouts of marked
  simplicial categories. First, we observe that for
  $\mathfrak{X} \in \sCatm$, it is immediate from the definition of
  $\mathfrak{O}^{2}$ that $\Hom(\mathfrak{O}^{2}, \mathfrak{X})$ can
  be described as the set whose elements are triples of morphisms
  $(f_{01} \colon x_{0} \to x_{1}, f_{12} \colon x_{1} \to x_{2},
  f_{02} \colon x_{0} \to x_{2})$ in $\mathfrak{X}$ together with a
  1-simplex from $f_{12} \circ f_{01}$ to $f_{02}$ in
  $\mathfrak{X}(x_{0},x_{2})$. We therefore have a pullback square of
  sets
  \[
    \begin{tikzcd}
      \Hom(\mathfrak{O}^{2}, \mathfrak{X}) \ar[r] \ar[d] & \Hom(\Csc[\partial \Delta^{2}_{\flat}], \mathfrak{X}) \ar[d]  \\
      \Hom(\mathfrak{C}_{2}, \mathfrak{X}) \ar[r] & \Hom(\partial \mathfrak{C}_{2}, \mathfrak{X}),
    \end{tikzcd}
  \]
  and so a pushout square of marked simplicial categories
  \[
    \begin{tikzcd}
      \partial \mathfrak{C}_{2} \ar[r] \ar[d] & \mathfrak{C}_{2} \ar[d]  \\
      \Csc[\partial \Delta^{2}_{\flat}] \ar[r] & \mathfrak{O}^{2},
    \end{tikzcd}
  \]
  which is a homotopy pushout square since the top horizontal maps is a cofibration and all objects are cofibrant.

   Now, we look at the case of $\tO^{3}$. Given $\mathfrak{X} \in
   \sCatm$, the set $\Hom(\mathfrak{O}^{3}, \mathfrak{X})$ can be
   described as the set of tuples of 1-morphisms $(f_{01}\colon x_0
   \to x_1, f_{12}\colon x_1 \to x_3, f_{23}\colon x_2 \to x_3,
   f_{02}\colon x_0 \to x_2, f_{03}\colon x_0 \to x_3, f_{13}: x_1 \to
   x_3)$ in $\mathfrak{X}$ together with 1-simplices of the form $f_{ij} \to f_{k\ell} \circ f_{sk}$ whenever $i,j \in \{s,k,\ell\}$, and a commutative diagram $(\Delta^1,\flat) \times (\Delta^1,\flat) \to \mathfrak{X}(x_0,x_3)$ relating the obvious composites
 \[
    \begin{tikzcd}
    f_{03} \arrow[r] \arrow[d] & f_{13} \circ f_{01 } \arrow[d]  \\
     f_{23} \circ f_{02 } \arrow[r] &  f_{23} \circ f_{12 } \circ f_{01}.
  \end{tikzcd}
 \]
 We can similarly describe $\Hom(\partial \mathfrak{O}^{3}, \mathfrak{X})$ by replacing the data of the final commutative square in $\mathfrak{X}(x_0,x_3)$ with a map $(\Delta^2 \amalg_{\partial \Delta^1} \Delta^2,\flat) \to \mathfrak{X}(x_0,x_3)$.

 There is an obvious map $\partial \mathfrak{C}_{3} \to \partial \mathfrak{O}^3$ selecting the pair of parallel morphisms in $\mathfrak{O}^3(0,3)$ which induces a pullback square
  \[
    \begin{tikzcd}
      \Hom(\mathfrak{O}^{3}, \mathfrak{X}) \ar[r] \ar[d] & \Hom(\partial \mathfrak{O}^{3}, \mathfrak{X}) \ar[d]  \\
      \Hom(\mathfrak{C}_{2}, \mathfrak{X}) \ar[r] & \Hom(\partial \mathfrak{C}_{3}, \mathfrak{X});
    \end{tikzcd}
  \]
  this in turn yields a pushout square of marked simplicial categories,
   \[
    \begin{tikzcd}
      \partial\mathfrak{C}_{3} \ar[r] \ar[d] & \mathfrak{C}_{2} \ar[d]  \\
      \partial \mathfrak{O}^{3} \ar[r] & \mathfrak{O}^{3}.
    \end{tikzcd}
  \]
  The left-most vertical morphism is a cofibration by
  \cref{lem:cofibrationc3} and since all objects are cofibrant it
  follows that our diagram is a homotopy pushout, as desired.
\end{proof}

\begin{defn}\label{defn:partialvarphigray}
  We have the following maps of marked simplicial categories:
  \begin{itemize}
    \item The map $\phi_2^{i}: \mathfrak{O}^{2} \to \mathfrak{C}_2$ for $i=0,1$, which identifies the objects $i,i+1$ and induces the identity map $\mathfrak{O}^2(0,2) \to \mathfrak{C}_2(0,1)$. The restriction of $\phi_2^{i}$ to $\partial \mathfrak{O}^2$ factors through $\partial \mathfrak{C}_2$ and will, consequently, be denoted by $\partial \phi_2^{i}$.

    \item The map $\partial \phi_3^{i}: \partial \mathfrak{O}^{3} \to \partial \mathfrak{C}_3$ for $i=0,1$, which identifies the objects $i,i+1,i+2$ and whose action on mapping marked simplicial sets
    \[
      \partial \mathfrak{O}^3(0,3)= (\Delta^2 \amalg_{\partial \Delta^1}\Delta^2,\flat) \to (\Delta^1 \amalg_{\partial \Delta^1}\Delta^1,\flat)=\partial \mathfrak{C}_3(0,1)
    \]
    is given by the map $s_i \amalg_{\partial \Delta^1} s_{1-i}$ where $s_j:\Delta^2 \to \Delta^1$ is the usual degeneracy map.
    \item The map $\phi_3^{i}:  \mathfrak{O}^{3} \to  \mathfrak{C}_2$ for $i=0,1$,  which identifies the objects $i,i+1,i+2$, and whose action on mapping marked simplicial sets 
     \[
      \mathfrak{O}^3(0,3)= (\Delta^1 \times \Delta^1,\flat) \to (\Delta^1,\flat)= \mathfrak{C}_{2}(0,1)
    \]
     is given by the map $s_{i} \amalg_{\Delta^1} s_{1-i}$ where $s_j:\Delta^2 \to \Delta^1$ denotes the usual degeneracy map. 
  \end{itemize}
\end{defn}

\begin{cor}\label{cor:C2poO}
  There are pushout squares of \itcats{}
  \[
    \begin{tikzcd}
       \partial \tO^2 \arrow[d] \arrow[r,"\partial \phi_2^{i}"] & \partial C_2 \arrow[d] \\
       \tO^2 \arrow[r,"\phi_2^{i}"] & C_2,
     \end{tikzcd}
     \qquad
    \begin{tikzcd}
       \partial \tO^3 \arrow[d] \arrow[r,"\partial \phi_3^{i}"] & \partial C_3 \arrow[d] \\
       \tO^3 \arrow[r,"\phi_3^{i}"] & C_2,
    \end{tikzcd}
  \]
  in $\CatIT$, for $i=0,1$.
\end{cor}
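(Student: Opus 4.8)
The plan is to deduce both families of squares from \cref{propn:orientalpo} by means of the pasting lemma for pushouts, which holds in any \icat{} and in particular in $\CatIT$, so that no further model-categorical input is needed beyond \cref{propn:orientalpo} itself. For each $n \in \{2,3\}$ and $i \in \{0,1\}$, I would place the pushout square of \cref{propn:orientalpo} to the left of the square we wish to prove, which share the common edge $\partial \tO^n \to \tO^n$, obtaining a commutative diagram
\[
  \begin{tikzcd}
    \partial C_n \arrow[r, "u"] \arrow[d, "v"'] & \partial \tO^n \arrow[d] \arrow[r, "\partial \phi_n^{i}"] & \partial C_n \arrow[d] \\
    C_2 \arrow[r, "w"'] & \tO^n \arrow[r, "\phi_n^{i}"'] & C_2,
  \end{tikzcd}
\]
in which the left-hand square is the pushout of \cref{propn:orientalpo} (here $u$, $v$, $w$ denote its top, left and bottom edges). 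If the horizontal composites $\partial \phi_n^{i} \circ u$ and $\phi_n^{i} \circ w$ are equivalences, then the outer rectangle has equivalences for both of its horizontal edges; since pushing out along an equivalence yields an equivalence, such a square is automatically a pushout. The pasting lemma (a left square and an outer rectangle that are pushouts force the right square to be one) then shows that the right-hand square --- which is exactly the square of \cref{cor:C2poO} --- is a pushout, as desired.

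It thus remains to identify these composites, which I would do in the rigidification model of marked simplicial categories. Recall from the proof of \cref{propn:orientalpo} that $w \colon \mathfrak{C}_2 \to \mathfrak{O}^n$ selects the generating cell: for $n=2$ the $2$-simplex relating $f_{12}f_{01}$ and $f_{02}$, and for $n=3$ the diagonal edge of $\mathfrak{O}^3(0,3) \cong \Delta^1 \times \Delta^1 = \Delta^2 \amalg_{\Delta^1} \Delta^2$ along which the two triangles are glued; similarly $u \colon \partial \mathfrak{C}_n \to \partial \mathfrak{O}^n$ selects the pair of parallel boundary morphisms, namely (for $n=3$) the two long edges of $\partial \mathfrak{O}^3(0,3) = \Delta^2 \amalg_{\partial \Delta^1} \Delta^2$. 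The collapse maps of \cref{defn:partialvarphigray} act on the relevant mapping objects by the degeneracies $s_i \amalg s_{1-i}$ (and for $n=2$ by the identity on $\mathfrak{O}^2(0,2)$). The key elementary point is that each degeneracy $s_j \colon \Delta^2 \to \Delta^1$ restricts on the long edge $[0,2]$ to the identity of $\Delta^1$, since $s_j$ sends $0 \mapsto 0$ and $2 \mapsto 1$. Consequently $w$ followed by $\phi_n^{i}$ carries the generating cell back to the generating $2$-cell of $C_2$, and $u$ followed by $\partial \phi_n^{i}$ carries the two parallel morphisms back to themselves; in both cases the composite is literally the identity functor, hence an equivalence.

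The main work is this bookkeeping for $n = 3$: one must verify that the maps $s_i \amalg_{\Delta^1} s_{1-i}$ and $s_i \amalg_{\partial \Delta^1} s_{1-i}$ of \cref{defn:partialvarphigray} are well-defined along the gluing of \cref{rem:partialo3} and that they return the generating cell, respectively the parallel boundary morphisms, unchanged --- this is precisely the property for which these collapse maps were designed. I would present the verification by tracking the four composites $f_{03}$, $f_{13}f_{01}$, $f_{23}f_{02}$, $f_{23}f_{12}f_{01}$ through the identifications of objects $i, i+1, i+2$. Finally, I note that one could instead attempt to mirror the $\Hom$-set computation of \cref{propn:orientalpo} directly, but this is less convenient here: unlike in \cref{propn:orientalpo}, the leg $\partial \mathfrak{O}^3 \to \mathfrak{O}^3$ is \emph{not} a cofibration, since on the mapping object $(0,3)$ it identifies the two long edges, so promoting the strict pushout of marked simplicial categories to a homotopy pushout would require an extra cofibrant-replacement argument. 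The pasting approach sidesteps this by working entirely within $\CatIT$.
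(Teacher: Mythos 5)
Your proof is correct and takes essentially the same route as the paper: the paper's own argument likewise pastes the pushout square of \cref{propn:orientalpo} against the desired square, observes that the horizontal composites are identities, and concludes by the pasting lemma for pushouts. Your model-level bookkeeping with the degeneracies $s_i$ merely spells out what the paper dismisses as ``clear from \cref{defn:partialvarphigray}'', and it checks out.
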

\begin{proof}
  It is clear from \cref{defn:partialvarphigray} that we have  commutative diagrams
  \[
    \begin{tikzcd}
      \partial C_i \arrow[r] \ar[d] &   \partial \tO^i \arrow[d] \arrow[r,"\partial \phi_i^j"] & \partial C_i \arrow[d] \\
      C_{2} \ar[r] & \tO^i \arrow[r,"\phi_i^{j}"] & C_2
    \end{tikzcd}
  \]
  where the left-hand square is a pushout by \cref{propn:orientalpo}. Since the horizontal composites are identities, it follows that the right-hand squares are also pushouts.
\end{proof}

\subsection{Oplax arrow \itcats{}}\label{sec:aropl}
In this section we take a look at the oplax arrow \itcat{} of an
\itcat{}, in the following sense:

\begin{notation}
  We abbreviate $\ARplax(\tA) := \FUN([1],\tA)^{\plax}$ and call this
  the \emph{(op)lax arrow \itcat{}} of $\tA$. For $a \in \tA$ we
  define the (op)lax slices $\tA^{\plax}_{a/}$ and $\tA^{\plax}_{/a}$ as
  the fibres at $a$ of evaluation at $0$ and $1$ on $\ARplax(\tA)$,
  respectively.
\end{notation}

In particular, we will identify the fibres of the projection
\[ \ARopl(\tA) \to \tA \times \tA.\]
We start by computing some more pushouts of \itcats{}:

\begin{lemma}\label{lem:pushoutquarec2}
  We have a pushout square   
   \[
    \begin{tikzcd}
      C_{2} \ar[d]\ar[r] & {[1]} \ar[d] \\
      {[1] \otimes [1]} \ar[r] & {[1] \times [1]}.
    \end{tikzcd}
  \]
\end{lemma}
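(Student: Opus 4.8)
The plan is to prove the square is a pushout by comparing corepresented functors and invoking the Yoneda lemma. Conceptually the statement asserts that the commutative square $[1]\times[1]$ is obtained from the Gray tensor $[1]\otimes[1]$ by inverting the comparison $2$-morphism of its universal square, and that pushing out along $C_2\to[1]$ is precisely the operation of inverting a $2$-morphism, since $[1]$ is the localization of $C_2$ that makes its $2$-cell invertible (the mapping \icat{} $[1](0,1)$ being contractible). Accordingly, I take the leg $C_2\to[1]\otimes[1]$ to select the comparison $2$-morphism between the two composites around the square, and $C_2\to[1]$ to be the collapse sending this $2$-morphism to an identity.

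Concretely, since $\CatIT$ is presentable it suffices to produce, naturally in $\tX\in\CatIT$, an equivalence
\[ \Map([1]\otimes[1],\tX)\times_{\Map(C_2,\tX)}\Map([1],\tX)\simeq\Map([1]\times[1],\tX), \]
the left-hand side being the homotopy pullback that computes maps out of the pushout. By the defining adjunction of the Gray tensor product, $\Map([1]\otimes[1],\tX)$ is the space of squares in $\tX$ equipped with a comparison $2$-morphism between their two composites, and restriction along $C_2\to[1]\otimes[1]$ extracts this $2$-morphism. On the other hand $\Map([1],\tX)$ is the space of arrows of $\tX$, and restriction along $C_2\to[1]$ is the map $h\mapsto\id_h$; fibrewise over a pair of objects this is the inclusion $\tX(x,y)^{\simeq}\hookrightarrow\Fun([1],\tX(x,y))^{\simeq}$ of identity arrows, whose essential image is exactly the components of invertible $2$-morphisms. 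Hence the homotopy pullback is the subspace of those squares whose comparison $2$-morphism is invertible.

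It then remains to identify this subspace with $\Map([1]\times[1],\tX)$, compatibly with the canonical map $[1]\otimes[1]\to[1]\times[1]$. For this I would use the scaled simplicial set model: $[1]\otimes[1]$ is modeled by $(\Delta^1\times\Delta^1,T)$ in which exactly one of the two non-degenerate triangles is thin, as dictated by \cref{defn:graytens}, whereas $[1]\times[1]$ is $(\Delta^1\times\Delta^1,\sharp)$, and the comparison map merely scales the remaining triangle. This scaling map is a pushout of the cofibration $\Delta^2_{\flat}\to\Delta^2_{\sharp}$ along the inclusion of the non-thin triangle, so it is a cofibration that is bijective on simplices; therefore $\Map([1]\times[1],\tX)\to\Map([1]\otimes[1],\tX)$ is the inclusion of those components of maps that carry the remaining triangle to a thin triangle. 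Since in a fibrant scaled simplicial set a triangle is thin exactly when its comparison $2$-morphism is invertible, and this triangle's comparison $2$-morphism is precisely the comparison $2$-morphism of the square, these components are exactly the squares with invertible comparison $2$-morphism. Combining the three identifications gives the required natural equivalence, and Yoneda finishes the proof.

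The step I expect to be the main obstacle is this last identification: one must carefully match the Gray scaling of $\Delta^1\times\Delta^1$ with the square and its comparison $2$-cell, and invoke the precise relationship between thin triangles and invertible $2$-morphisms in fibrant scaled simplicial sets from \cite{GagnaHarpazLanariGray}. I note that the more direct strategy of realizing the square as a strict homotopy pushout of marked simplicial categories, as in \cref{propn:orientalpo}, is hampered by the fact that neither map out of $\mathfrak{C}_2$ is a cofibration---the source and target of the comparison $2$-morphism are \emph{composite} $1$-morphisms rather than free generators---so the mapping-space argument above (or a cofibrant replacement of the span) seems to be the cleanest route.
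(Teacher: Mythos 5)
Your argument is correct, but it takes a genuinely different route from the paper's, which stays entirely inside colimit manipulations: the paper first notes that scaling the unique non-thin triangle of $\Delta^1\otimes\Delta^1$ is a homotopy pushout along $\Delta^2_\flat\to\Delta^2_\sharp$, exhibiting $[1]\otimes[1]\to[1]\times[1]$ as a pushout of $\tO^2\to[2]$, and then uses \cref{propn:orientalpo} together with the colimit presentations of $\partial C_2$ and $\partial\tO^2$ to show that $C_2\to[1]$ is itself a pushout of $\tO^2\to[2]$; pasting the two pushout squares gives the lemma, with no appeal to mapping spaces. Your Yoneda-style proof works, but the obstacle you flag is real and is exactly where the inputs of the two proofs diverge: your identification of $\Map([1]\times[1],\tX)\hookrightarrow\Map([1]\otimes[1],\tX)$ as the union of components of squares with invertible comparison $2$-cell needs both that restriction along the scaling cofibration is a monomorphism (it is, being precomposition along a map that is an isomorphism on underlying simplicial sets, and a Kan fibration into which one can do horn-lifting to see the image is a union of components) and the saturation property that a triangle in a fibrant scaled simplicial set is thin precisely when its $2$-cell is invertible --- a fact proved in \cite{GagnaHarpazLanariScale} and \cite{LurieGoodwillie} rather than \cite{GagnaHarpazLanariGray}, which you should cite accordingly; the identification of the other leg $\Map([1],\tX)\hookrightarrow\Map(C_2,\tX)$ via \cref{obs:free2cat} is fine. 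What each approach buys: the paper trades your saturation input for the already-established \cref{propn:orientalpo}, while your route in effect proves the factorization criterion of \cref{obs:graysqloc} \emph{first} (a functor $[1]\otimes[1]\to\tX$ factors through $[1]\times[1]$ exactly when the comparison $2$-morphism is invertible) and deduces the pushout from it, reversing the paper's logical order --- so if you write this up you must not quote \cref{obs:graysqloc}, which the paper derives from this very lemma. Your closing diagnosis is also accurate: the paper indeed never attempts a direct strict homotopy pushout for this square (neither map out of $\mathfrak{C}_2$ is a cofibration, since the $2$-cell's boundary consists of composite $1$-morphisms), and its pasting trick is precisely the workaround.
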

\begin{proof}
  In scaled simplicial sets we have a homotopy pushout square
   \[
    \begin{tikzcd}
      \Delta^{2}_{\flat} \ar[d]\ar[r] & {\Delta^{2}_{\sharp}} \ar[d] \\
      {\Delta^{1}\otimes \Delta^{1}} \ar[r] & {\Delta^{1} \times
        \Delta^{1}},
    \end{tikzcd}
  \]
  and so we get a pushout of \itcats{}
  \[
    \begin{tikzcd}
      \tO^{2} \ar[r] \ar[d] & {[2]} \ar[d]  \\
      {[1]\otimes [1]} \ar[r] & {[1]\times [1]}.
    \end{tikzcd}
  \]
  Now consider the diagram
  \[
    \begin{tikzcd}
      \partial C_{2} \ar[r] \ar[d] & \partial \tO^{2} \ar[d] \\
      C_{2} \ar[r] \ar[d] & \tO^{2} \ar[d] \\
      {[1]} \ar[r] & {[2]}.
    \end{tikzcd}
  \]
  Here the top square is a pushout by \cref{propn:orientalpo} and it
  follows from the definitions of $\partial C_{2}$ and
  $\partial\tO^{2}$ as colimits that the outer square is a
  pushout. The bottom square is therefore also a pushout, and then the
  square we want is a composition of this with the one above.
\end{proof}

\begin{propn}\label{propn:ARoplfibis1cat}\ 
  \begin{enumerate}[(i)]
  \item There is a pushout square of \itcats{}
    \[
      \begin{tikzcd}
        C_{2}\times\partial [1] \ar[r] \ar[d] & C_{2}\otimes [1] \ar[d]  \\
        {[1]\times \partial [1]} \ar[r] & {[1]\otimes [1]}
      \end{tikzcd}
    \]
  \item For any \itcat{} $\tA$, the functor $\FUN([n], \tA)^{\oplax}
    \to \tA^{\times (n+1)}$ is conservative on $2$-morphisms, \ie{}
    right orthogonal to $C_{2} \to [1]$ (see \cref{def:rightorth}).
  \item For any \itcat{} $\tA$, the commutative square
    \[
      \begin{tikzcd}
        (\FUN([n], \tA)^{\oplax})^{\leq 1} \ar[r] \ar[d] & \FUN([n], \tA)^{\oplax} \ar[d, ]  \\
        (\tA^{\leq 1})^{\times (n+1)} \ar[r] & \tA^{\times (n+1)}
      \end{tikzcd}
    \]
    is a pullback.
  \item The fibres of $\ARopl(\tA) \to \tA \times \tA$ are \icats{}.
  \end{enumerate}
\end{propn}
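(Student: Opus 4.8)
The plan is to prove \ref{propn:ARoplfibis1cat} parts (i)--(iv) in order, each feeding into the next; part (iv) is the goal and part (i) is the main obstacle.

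For (i), I would exhibit the square as a homotopy pushout, in the spirit of \cref{lem:pushoutquarec2} and \cref{propn:orientalpo}. Working with the scaled-simplicial-set model of the Gray tensor product (or, after transporting along $\Csc[-]$, with the explicit marked-simplicial-category models $\mathfrak{C}_2 = [1]((\Delta^1,\flat))$ and $\mathfrak{O}^n$ of \S\ref{sec:pushouts}), I would identify the four corners together with the relevant mapping objects of $C_2 \otimes [1]$ and $[1] \otimes [1]$, and then check that the comparison out of the pushout is a weak equivalence, using that the top and left legs are cofibrations and all objects are cofibrant. I expect this cellular identification to be the hard part: neither the collapse map $c \colon C_2 \to [1]$ nor $\partial[1] \hookrightarrow [1]$ is a weak equivalence, so (i) cannot be deduced formally from the left Quillen bifunctor property of $\otimes$ and must be computed by hand.

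For (ii), I would first rewrite the functor in question. Since $\tA^{\times(n+1)} \simeq \FUN([0]^{\amalg(n+1)}, \tA)^{\oplax}$, the map $\FUN([n],\tA)^{\oplax} \to \tA^{\times(n+1)}$ is induced by the inclusion of objects $j \colon [0]^{\amalg(n+1)} \hookrightarrow [n]$. By the defining adjunction of the Gray tensor product and the Yoneda lemma, right orthogonality of this map to $c \colon C_2 \to [1]$ (see \cref{def:rightorth}) unwinds to the assertion that
\[
  \begin{tikzcd}
    {[0]^{\amalg(n+1)}} \otimes C_2 \ar[r] \ar[d] & {[n]} \otimes C_2 \ar[d] \\
    {[0]^{\amalg(n+1)}} \otimes {[1]} \ar[r] & {[n]} \otimes {[1]}
  \end{tikzcd}
\]
is a pushout. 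For $n = 1$ this is exactly the image of the square in (i) under $(\blank)^{\op}$: by \cref{obs:grayop} one has $(\tX \otimes \tY)^{\op} \simeq \tY^{\op} \otimes \tX^{\op}$, and since $C_2^{\op} \simeq C_2$, $[1]^{\op} \simeq [1]$ and $c^{\op} \simeq c$, applying $(\blank)^{\op}$ to (i) produces precisely the $n=1$ square (with the two factors of $\otimes$ interchanged), and $(\blank)^{\op}$ preserves pushouts. For general $n$ I would induct using the spine decomposition $[n] \simeq [1] \amalg_{[0]} [n-1]$, under which $[0]^{\amalg(n+1)} \simeq [0]^{\amalg 2} \amalg_{[0]} [0]^{\amalg n}$: since $\blank \otimes C_2$ and $\blank \otimes [1]$ preserve these pushouts by \cref{thm:graycolim}, the square for $j$ is the pushout of those for the pieces, which hold by the base cases $n = 0, 1$.

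Parts (iii) and (iv) are then formal. Writing $P := (\tA^{\leq 1})^{\times(n+1)} \times_{\tA^{\times(n+1)}} \FUN([n],\tA)^{\oplax}$, every $2$-morphism of $P$ maps to an invertible $2$-morphism of the \icat{} $(\tA^{\leq 1})^{\times(n+1)}$, hence to an invertible $2$-morphism of $\tA^{\times(n+1)}$, hence by (ii) is itself invertible; thus $P$ is an \icat{}. The inclusion $P \hookrightarrow \FUN([n],\tA)^{\oplax}$ together with the universal property of the underlying \icat{} (the right adjoint to $\CatI \hookrightarrow \CatIT$) then yields mutually inverse comparisons between $P$ and $(\FUN([n],\tA)^{\oplax})^{\leq 1}$, which is precisely the pullback square (iii). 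Finally, (iv) is the $n = 1$ case: a point $(a,b) \colon [0] \to \tA \times \tA$ factors through $(\tA^{\leq 1})^{\times 2}$, so pasting with (iii) identifies the fibre of $\ARopl(\tA) \to \tA \times \tA$ at $(a,b)$ with $(\ARopl(\tA))^{\leq 1} \times_{(\tA^{\leq 1})^{\times 2}} [0]$, a pullback of \icats{}, which is again an \icat{}.
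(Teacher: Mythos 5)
Your handling of (ii)--(iv) is correct and essentially the paper's own route: the Gray-tensor/Yoneda unwinding of the orthogonality condition is right, your $\op$-dualization correctly reconciles the variance (the square in (i), as stated with $C_2$ on the left of $\otimes$, most directly gives the $n=1$ case for $\FUN([1],\tA)^{\lax}$, and your observation that $C_2^{\op}\simeq C_2$ and $(\blank)^{\op}$ swaps the tensor factors is exactly what bridges this to the oplax statement), and your spine induction for general $n$ is the pushout-side formulation of the paper's remark that right orthogonality is closed under limits, since $\FUN([n],\tA)^{\oplax} \to \tA^{\times(n+1)}$ is the iterated fibre product over $\tA$ of copies of the $n=1$ map. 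Your derivations of (iii) and (iv) via the universal property of $(\blank)^{\leq 1}$ are fine and in fact more detailed than the paper's.

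The gap is (i), which you leave as an announced computation, and the route you announce would stall. There are no accessible ``four corners'' to identify: $C_2$ is not presented by a scaled simplicial set to which the scaling formula of \cref{defn:graytens} applies directly (it is a quotient of $\tO^{2}$ collapsing an edge, not a simplex), and transporting along $\Csc[-]$ does not help, since the Gray tensor product is defined on scaled simplicial sets and no formula for it on marked simplicial categories is available. The missing idea is how \cref{propn:orientalpo} and \cref{lem:pushoutquarec2} are actually meant to be exploited: not to compute mapping objects of $C_2 \otimes [1]$, but to present the map $C_2 \to [1]$ as a cobase change of $\tO^{2} \to [2]$, via the pushout square with horizontals $\tO^{2} \to C_2$, $[2] \to [1]$ extracted in the proof of \cref{lem:pushoutquarec2} by the retract trick. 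One then proves the simplex-level analogue of (i) --- the square $\tO^{2}\times\partial[1] \to \tO^{2}\otimes[1]$ over $[2]\times\partial[1] \to [2]\otimes[1]$ --- which \emph{is} an easy homotopy pushout in scaled simplicial sets, since all corners are modeled on the fixed simplicial sets $\Delta^{2}\times\partial\Delta^{1}$ and $\Delta^{2}\times\Delta^{1}$ and only the scalings change (the new thin triangles are handled by anodyne extensions as in \cref{lem:sattriangles}). Since $(\blank)\times\partial[1]$ and $(\blank)\otimes[1]$ preserve colimits in each variable, applying them to the pushout square relating $\tO^{2},[2],C_2,[1]$ yields a cube in which the face just computed is a pushout, and the pasting law for pushouts then forces the opposite face, which is exactly (i). So your (correct) observation that (i) cannot be deduced formally from the left Quillen bifunctor property is not the end of the road: it follows from one easy scaled computation plus this colimit bookkeeping, and that reduction is what your plan is missing.
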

\begin{proof}
  In scaled simplicial sets it is easy to see that we have a homotopy pushout square
  \[
    \begin{tikzcd}
      \Delta^{2}_{\flat}\times \partial \Delta^{1} \ar[r] \ar[d] & \Delta^{2}_{\flat}\otimes \Delta^{1} \ar[d]  \\
      \Delta^{2}_{\natural}\times \partial \Delta^{1} \ar[r] & \Delta^{2}_{\natural}\otimes \Delta^{1},
    \end{tikzcd}
  \]
  which gives a pushout of \itcats{} of the form
  \[
    \begin{tikzcd}
      \tO^{2}\times \partial [1] \ar[r] \ar[d] & \tO^{2} \otimes [1] \ar[d]  \\
      {[2]\times \partial [1]} \ar[r] & {[2]\otimes [1]}.
    \end{tikzcd}
  \]
  From the proof of \cref{lem:pushoutquarec2} we have a commutative diagram
  \[
    \begin{tikzcd}
        C_{2} \ar[r] \ar[d] & \tO^{2} \ar[d] \ar[r] & C_{2} \ar[d] \\
      {[1]} \ar[r] & {[2]} \ar[r] & {[1]}
    \end{tikzcd}
  \]
  where the left square is a pushout and the horizontal composites are
  identities. The right square is therefore a pushout, and combining
  this with our first square we get (since both $\times$ and $\otimes$
  preserve colimits in each variable) a commutative cube from which we
  can extract our desired pushout. This proves (i), and the case
  $n = 1$ of (ii) follows by rewriting the lifting property we ask for
  in terms of the Gray tensor product; the general case follows from
  this since right orthogonality is closed under limits. This implies
  in particular that all 2-morphisms in the fibres of $\ARopl(\tA)$
  are invertible, which gives (iii).
\end{proof}

\begin{propn}\label{propn:2foldsegalfromoplax}
  Let $\tA$ be an \itcat{}. Then the corresponding 2-fold Segal space $\tA^{\Seg}_{\bullet}$, viewed as a simplicial \icat{}, is given by the pullback
  \[
    \begin{tikzcd}
      \tA^{\Seg}_{\bullet} \arrow{r} \arrow{d} & \FUN([\bullet], \tA)^{\oplax} \arrow{d} \\
      i_{*}\tA^{\simeq} \arrow{r} & i_{*} \tA,
    \end{tikzcd}
  \]
  where $i_{*}$ denotes right Kan extension along the inclusion $\{[0]\} \to \Dop$ (so $(i_{*}\tA)_{n-1} \simeq \tA^{\times n}$) and the right vertical map is the unit for the adjunction $i^{*} \dashv i_{*}$.
\end{propn}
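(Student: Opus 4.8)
The plan is to realize both sides as functors $\Dop \to \CatI$, reduce the comparison to simplicial degree $1$ by means of the Segal condition, and there identify everything with the mapping $\infty$-categories of $\tA$ by computing the fibres of $\ARopl(\tA) \to \tA \times \tA$. First I would check that both sides really take values in $\CatI$. For the left-hand side this is the definition of the associated $2$-fold Segal space recalled in \cref{obs:2foldseg}. For the pullback the essential point is \cref{propn:ARoplfibis1cat}(iii): pulling back $\FUN([n],\tA)^{\oplax} \to \tA^{\times(n+1)}$ along $(\tA^{\simeq})^{\times(n+1)} \to \tA^{\times(n+1)}$ factors through the underlying $\infty$-category $(\FUN([n],\tA)^{\oplax})^{\leq 1}$, so each level of the pullback is an $\infty$-category. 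Write $P_{\bullet}$ for the resulting simplicial $\infty$-category.

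Next I would record that both simplicial objects satisfy the Segal condition in the simplicial variable. For $\tA^{\Seg}_{\bullet}$ this is built into the definition; for $P_{\bullet}$ it follows because $\FUN([\bullet],\tA)^{\oplax}$ is Segal --- writing $[n] \simeq [1] \amalg_{[0]} \cdots \amalg_{[0]} [1]$ and using that the Gray tensor product preserves colimits in each variable (\cref{thm:graycolim}) gives $\FUN([n],\tA)^{\oplax} \simeq \FUN([1],\tA)^{\oplax} \times_{\tA} \cdots \times_{\tA} \FUN([1],\tA)^{\oplax}$ --- and because $i_{*}\tA$ and $i_{*}\tA^{\simeq}$ are Segal, the Segal condition being stable under limits. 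A map of Segal objects in $\CatI$ that is an equivalence in degrees $0$ and $1$ is an equivalence, so it suffices to produce a natural comparison map and check it there. The comparison map comes from the natural functors $[n] \otimes [m] \to [n]([m],\dots,[m])$ collapsing each copy $\{i\}\otimes[m]$ onto the object $i$: using the universal property of $\FUN(-)^{\oplax}$, so that the $m$\nobreakdash-simplices of $\FUN([n],\tA)^{\oplax}$ are $\Map([n]\otimes[m],\tA)$, and the identity $\tA^{\Seg}_{n,m} = \Map([n]([m],\dots,[m]),\tA)$ from \cref{obs:2foldseg}, restriction along these functors is a bisimplicial map whose image lands in $P_{\bullet}$ (the collapsed legs are sent into $\tA^{\simeq}$). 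This gives $c \colon \tA^{\Seg}_{\bullet} \to P_{\bullet}$, and in degree $0$ both sides are $\tA^{\simeq}$ with $c_{0}$ an equivalence.

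Degree $1$ is the heart of the matter. Here both $\tA^{\Seg}_{1}$ and $P_{1}$ carry, via source and target, functors to $\tA^{\Seg}_{0} \times \tA^{\Seg}_{0} \simeq \tA^{\simeq} \times \tA^{\simeq} \simeq P_{0} \times P_{0}$, and $c_{1}$ is a map over this space. Since the base is an $\infty$-groupoid, both are families of $\infty$-categories over it, and it suffices to check that $c_{1}$ is an equivalence on the fibre over each $(a_{0},a_{1})$. The fibre of $\tA^{\Seg}_{1}$ is by definition the mapping $\infty$-category $\tA(a_{0},a_{1})$. The fibre of $P_{1}$ is the fibre of $\ARopl(\tA) \to \tA \times \tA$, which is an $\infty$-category by \cref{propn:ARoplfibis1cat}(iv); unwinding the pullback, its $m$-simplices are
\[
  \bigl\{\, F \colon [1]\otimes[m] \to \tA \ \big|\ F|_{\{0\}\otimes[m]} = \mathrm{const}_{a_{0}},\ F|_{\{1\}\otimes[m]} = \mathrm{const}_{a_{1}} \,\bigr\}.
\]
The computation I would then carry out is that collapsing the two legs exhibits $[1]([m])$ as the quotient of $[1]\otimes[m]$, so that such an $F$ is the same datum as a functor $[m] \to \tA(a_{0},a_{1})$, i.e.\ an $m$-chain of $2$-morphisms between parallel $1$-morphisms $a_{0}\to a_{1}$. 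Under these identifications the induced map on fibres is the identity, so $c_{1}$ is a fibrewise equivalence over a space, hence an equivalence; the Segal condition upgrades this to an equivalence in all degrees.

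The main obstacle is precisely this last Gray-tensor computation: understanding $[1]\otimes[m]$ well enough to see that collapsing its two ``vertical'' copies of $[m]$ produces $[1]([m])$ (with $[1]([1]) = C_{2}$), equivalently that an oplax transformation of functors $[m] \to \ARopl(\tA)$ with constant vertical components is nothing but a chain of $2$-morphisms in a single mapping $\infty$-category. This is exactly the place where the explicit analysis of $\FUN([n],\tA)^{\oplax}$ in \cref{propn:ARoplfibis1cat} --- conservativity on $2$-morphisms together with the identification of the fibres as $\infty$-categories --- is indispensable, and where one must be careful that the relevant legs factor through $\tA^{\simeq}$, so that the constant and the merely-invertible descriptions of the fibre agree up to contractible ambiguity.
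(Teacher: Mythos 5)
Your scaffolding (levelwise values in $\CatI$, Segal conditions on both sides, reduction to simplicial degrees $0$ and $1$, fibrewise comparison over $\tA^{\simeq}\times\tA^{\simeq}$) is sound, but there is a genuine gap exactly at the point you yourself flag as ``the main obstacle'': the claim that collapsing the two legs $\{0\}\otimes[m]$ and $\{1\}\otimes[m]$ of $[1]\otimes[m]$ produces $[1]([m])$, i.e.\ that
\[
    \begin{tikzcd}
      \{0,1\}\otimes[m] \arrow{r} \arrow{d} & {[1]\otimes[m]} \arrow{d} \\
      \{0,1\} \arrow{r} & \tau([1],[m])
    \end{tikzcd}
\]
is a pushout in $\CatIT$. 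You assert this and then point to \cref{propn:ARoplfibis1cat} as the place where it is secured, but that proposition cannot supply it: conservativity on $2$-morphisms and the identification of the fibres of $\ARopl(\tA)\to\tA\times\tA$ as \icats{} tell you that the fibre over $(a_0,a_1)$ is \emph{some} \icat{}, not that it is $\tA(a_0,a_1)$. (Via the adjunction of \cref{obs:free2cat}, functors $[1]([m])\to\tA$ with prescribed endpoints are the same as functors $[m]\to\tA(a_0,a_1)$, so the displayed pushout is precisely what remains to be proved.) Nor can you quote \cref{cor:aroplfibres} for the fibre identification, since the paper deduces that corollary \emph{from} the proposition you are proving; using it here would be circular. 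What is actually needed is a model-level Gray-tensor computation, and the paper imports exactly this as \cite{HHLN1}*{Lemma 5.1.11}, which provides the natural pushout of $\coprod_i \{i\}\otimes[m] \to [n]\otimes[m]$ over $\coprod_i * \to \tau([n],[m])$ for \emph{all} $n$.

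It is worth noting that once this lemma is available, your Segal machinery becomes superfluous: applying $\Map(\blank,\tA)$ to the pushout gives, naturally in $[m]$, a pullback of spaces identifying $\Map([m],\tA^{\Seg}_n)$ with the fibre product of $\Map([m],\FUN([n],\tA)^{\oplax})$ and $\prod_i \tA^{\simeq}$ over $\prod_i\Map([m],\tA)$, and the Yoneda lemma together with \cref{propn:ARoplfibis1cat}(iii) then yields the asserted pullback in every degree $n$ at once --- this is the paper's (shorter) proof. Conversely, your route would also close up if you proved just the $n=1$ pushout, since the spine decomposition $[n]\otimes[m]\simeq([1]\otimes[m])\amalg_{[m]}\cdots\amalg_{[m]}([1]\otimes[m])$ (from \cref{thm:graycolim}) lets the collapse maps glue, and your Segal reduction handles the rest. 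As written, however, the decisive step is asserted rather than proved, and the results you cite do not prove it.
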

\begin{proof}
  By definition (see \cref{obs:2foldseg}), we have $\Map([m], \tA^{\Seg}_{n}) \simeq \Map_{\CatIT}(\tau([n],[m]), \tA)$, and from \cite{HHLN1}*{Lemma 5.1.11}, we know that there is a natural pushout square
  \[
    \begin{tikzcd}
      \coprod_{i} \{i\} \otimes [m] \arrow{r} \arrow{d} & {[n] \otimes [m]} \arrow{d} \\
      \coprod_{i} * \arrow{r} & \tau([n],[m]),
    \end{tikzcd}
  \]
  so that we have pullback squares
  \[
    \begin{tikzcd}
      \Map([m], \tA^{\Seg}_{n}) \arrow{r} \arrow{d} & \Map([n] \otimes [m], \tA) \ar[d] \\
      \prod_{i} \tA^{\simeq} \arrow{r} & \prod_{i} \Map([m], \tA)
    \end{tikzcd}
  \]
  or
  \[
    \begin{tikzcd}
      \Map([m], \tA^{\Seg}_{n}) \arrow{r} \arrow{d} & \Map([m], \FUN([n], \tA)^{\oplax}) \ar[d] \\
      \Map([m], \prod_{i} \tA^{\simeq}) \arrow{r} & \Map([m], \prod_{i} \tA).
    \end{tikzcd}
  \]
  This shows that in the commutative diagram
  \[
    \begin{tikzcd}
      \tA^{\Seg}_{n} \ar[r] \ar[d] & (\FUN([n],\tA)^{\oplax})^{\leq 1} \ar[d] \ar[r] & \FUN([n], \tA)^{\oplax} \ar[d] \\
      \prod_{i} \tA^{\simeq} \ar[r] & \prod_{i} \tA^{\leq 1} \ar[r] & \prod_{i} \tA,
    \end{tikzcd}
  \]
  the left-hand square is a pullback. Since the right-hand square is a pullback by \cref{propn:ARoplfibis1cat}, so is the composite square, as required.
\end{proof}

\begin{cor}\label{cor:aroplfibres}
  For objects $x, y$ in an \itcat{} $\tA$, the mapping \icat{} $\tA(x,y)$ is the fibre $\ARopl(\tA)_{x,y}$. Moreover, for a functor $F \colon \tA \to \tB$, the map on fibres over $(x,y)$ in the square
  \[
    \begin{tikzcd}
      \ARopl(\tA) \ar[r, "\ARopl(F)"] \ar[d] & \ARopl(\tB) \ar[d]  \\
      \tA\times\tA \ar[r, "F\times F"'] & \tB\times\tB
    \end{tikzcd}
  \]
  is the induced functor $\tA(x,y) \to \tB(Fx,Fy)$. \qed
\end{cor}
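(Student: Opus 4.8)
The plan is to extract both statements from the simplicial degree $1$ case of the pullback square in \cref{propn:2foldsegalfromoplax}. Using that $(i_{*}\tA)_{1} \simeq \tA^{\times 2}$ and $(i_{*}\tA^{\simeq})_{1} \simeq (\tA^{\simeq})^{\times 2}$, that square specializes to
\[
  \begin{tikzcd}
    \tA^{\Seg}_{1} \arrow{r} \arrow{d} & \ARopl(\tA) \arrow{d} \\
    \tA^{\simeq} \times \tA^{\simeq} \arrow{r} & \tA \times \tA,
  \end{tikzcd}
\]
in which the right-hand vertical map is the projection $\ARopl(\tA) \to \tA \times \tA$ (the unit, which at level $1$ is evaluation at the two vertices) and the bottom map is the product of the inclusions $\tA^{\simeq} \hookrightarrow \tA$.

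First I would recall, from the description of $\CatIT$ as 2-fold complete Segal spaces in \cref{obs:2foldseg}, how mapping \icats{} are read off from this model: the \icat{} $\tA^{\Seg}_{1}$ has the $1$-morphisms of $\tA$ as objects and the $2$-morphisms as its morphisms, and the mapping \icat{} $\tA(x,y)$ is recovered as the fibre of the source--target map $\tA^{\Seg}_{1} \to \tA^{\simeq} \times \tA^{\simeq}$ over $(x,y)$ (this uses that $\tA^{\Seg}_{0} \simeq \tA^{\simeq}$ is an \igpd{}, so that the fibre is again an \icat{}, together with completeness of the underlying Segal space). Since fibres over corresponding points agree across a pullback square, the fibre of $\ARopl(\tA) \to \tA \times \tA$ over $(x,y)$ is identified with this same fibre of $\tA^{\Seg}_{1} \to \tA^{\simeq} \times \tA^{\simeq}$; this yields the first claim $\tA(x,y) \simeq \ARopl(\tA)_{x,y}$.

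For the naturality statement I would use that the square of \cref{propn:2foldsegalfromoplax} is natural in the \itcat{} argument. A functor $F \colon \tA \to \tB$ therefore induces a morphism between the two degree-$1$ pullback squares whose component on the upper-right corners is $\ARopl(F)$ and whose component on the lower-right corners is $F \times F$, that is, precisely the square in the statement. Passing to fibres over $(x,y)$, which maps to $(Fx,Fy)$, identifies the induced functor $\ARopl(\tA)_{x,y} \to \ARopl(\tB)_{Fx,Fy}$ with the restriction to fibres of $\tA^{\Seg}_{1} \to \tB^{\Seg}_{1}$. By functoriality of the 2-fold complete Segal space model, this restriction is exactly the action of $F$ on mapping \icats{}, namely $\tA(x,y) \to \tB(Fx,Fy)$.

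The main obstacle is bookkeeping rather than mathematical depth: one must be careful that the ``fibre over $(x,y)$'' genuinely agrees with the standard definition of the mapping \icat{} in the 2-fold complete Segal space model, and that all the constructions involved---$\tA^{\Seg}_{\bullet}$, the pullback square, and the passage to fibres---are functorial in $\tA$ in a mutually compatible way. Given the naturality already built into \cref{propn:2foldsegalfromoplax}, no further substantive argument is needed.
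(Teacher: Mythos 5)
Your proposal is correct and follows essentially the same route as the paper's own proof: both identify $\tA(x,y)$ as the fibre of $\tA^{\Seg}_{1} \to (\tA^{\simeq})^{\times 2}$ in the 2-fold Segal space model and then use the pullback square of \cref{propn:2foldsegalfromoplax} (in simplicial degree $1$, together with its naturality in the \itcatl{} argument) to transfer this identification, and the induced maps, to the fibres of $\ARopl(\tA) \to \tA \times \tA$. The only difference is that you make explicit the bookkeeping---that points of $\tA \times \tA$ factor through $\tA^{\simeq} \times \tA^{\simeq}$, and that fibres agree across the pullback---which the paper leaves implicit.
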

\begin{proof}
  If we think of \itcats{} as $2$-fold Segal spaces, then we can
  identify the mapping \icat{} $\tA(x,y)$ as the fibre of
  $\tA^{\Seg}_{1} \to (\tA^{\simeq})^{\times 2}$ at $(x,y)$, and the
  map $\tA(x,y) \to \tB(Fx,Fy)$ as the map on fibres in the square
  \[
    \begin{tikzcd}
      \tA^{\Seg}_{1} \ar[r, "F^{\Seg}_{1}"] \ar[d] & \tB^{\Seg}_{1} \ar[d]  \\
      \tA^{\simeq}\times\tA^{\simeq} \ar[r, "F^{\simeq}\times F^{\simeq}"'] & \tB^{\simeq}\times\tB^{\simeq}.
    \end{tikzcd}
  \]
  By \cref{propn:2foldsegalfromoplax} taking fibres in this square gives the same thing as taking fibres in oplax arrows.  
\end{proof}

\subsection{Full sub-$(\infty,2)$-categories}\label{sec:full}
In this section we look at fully faithful functors between \itcats{}
and characterize these by lifting properties.

\begin{defn}
  A functor $F \colon \tC \to \tD$ of \itcats{} is \emph{fully
    faithful} or a \emph{full (sub-\itcat{}) inclusion} if
  $\tC(c,c') \to \tD(Fc,Fc')$ is an equivalence of \icats{} for all
  objects $c,c' \in \tC$; we then also say that $\tC$ is a \emph{full
    sub-\itcat{}} of $\tD$.
\end{defn}

We will characterize fully faithful functors by lifting properties, in
the following \icatl{} sense:
\begin{defn}\label{def:rightorth}
  Given morphisms $f \colon x \to y$ and $g \colon z \to w$ in an \icat{} $\oC$, we say
  that $g$ is \emph{right orthogonal} to $f$ (or that $f$ is \emph{left orthogonal} to $g$) if for every commutative square
  \[
    \begin{tikzcd}
      x \arrow{r} \arrow{d}[swap]{f} & z \arrow{d}{g} \\
      y \arrow{r} \arrow[dotted]{ur} & w
    \end{tikzcd}
  \]
  there is a unique lift $y \to z$, \ie{} the space of such lifts is contractible. This condition says that the fibres of
  \[ \oC(y,z) \to \oC(x,z) \times_{\oC(x,w)} \oC(y,w) \]
  are contractible, which is equivalent to this map being an equivalence, or the commutative square
    \[
    \begin{tikzcd}
      \oC(y,z) \arrow{r}{g_{*}} \arrow{d}[swap]{f^{*}} & \oC(y,w) \arrow{d}{f^{*}} \\
      \oC(x,z) \arrow{r}[swap]{g_{*}}  &  \oC(x,w)
    \end{tikzcd}
  \]
  being a pullback.
\end{defn}

The following is the main result of this section:

\begin{thm}\label{propn:fffcond}
  For a functor of \itcats{} $F \colon \tC \to \tD$, the following are
  equivalent:
  \begin{enumerate}
  \item $F$ is fully faithful.
  \item $F$ is right orthogonal to $\partial [1] \to [1]$ and
    $[1] \amalg [1] \to [1] \otimes [1]$.
  \item\label{it:ffbdry} $F$ is right orthogonal to $\partial [1] \to [1]$ and
    $\partial C_{2} \to C_{2}$.
  \item\label{it:ffro0+0} $F$ is right orthogonal to $[0] \amalg [0] \to [1]$ and
    $[0] \amalg [0] \to C_{2}$.
  \item The commutative square
    \begin{equation}
      \label{eq:aroplsq}
      \begin{tikzcd}
        \ARopl(\tC) \arrow{d} \arrow{r} & \ARopl(\tD) \arrow{d} \\
        \tC \times \tC \arrow{r} & \tD \times \tD.
      \end{tikzcd}
  \end{equation}
  is a pullback.
  \end{enumerate}
\end{thm}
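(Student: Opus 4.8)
The plan is to reduce all five conditions to the single assertion that each mapping functor $\tC(c,c') \to \tD(Fc,Fc')$ is an equivalence of \icats{} (which is condition (1) by definition), using the fibrewise description of the oplax arrow construction together with an interpretation of the orthogonality conditions as fibrewise equivalences of pieces of mapping-category data. I will take (1)$\Leftrightarrow$(5) as the anchor and then read each orthogonality condition off against it.

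For (1)$\Leftrightarrow$(5) I would invoke \cref{cor:aroplfibres}: the fibre of $\ARopl(\tC) \to \tC \times \tC$ over $(c,c')$ is $\tC(c,c')$, and $\ARopl(F)$ restricts on this fibre to $\tC(c,c') \to \tD(Fc,Fc')$. Hence the comparison map $\ARopl(\tC) \to \ARopl(\tD) \times_{\tD \times \tD} (\tC \times \tC)$ is a map over $\tC \times \tC$ whose fibre over $(c,c')$ is exactly this functor, and (5) holds iff it is an equivalence. Since the fibres are \icats{} by \cref{propn:ARoplfibis1cat}, I would detect this equivalence fibrewise through the $2$-fold Segal space picture of \cref{propn:2foldsegalfromoplax}, which recovers $\tA^{\Seg}_{1}$ from $\ARopl(\tA)$ fibrewise over $(\tA^{\simeq})^{\times 2}$; thus (5) is a pullback iff every $\tC(c,c') \to \tD(Fc,Fc')$ is an equivalence, i.e.\ iff (1) holds.

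Next I would unwind each orthogonality condition via \cref{def:rightorth}, i.e.\ as the requirement that $\Map(-,\tC) \to \Map(-,\tD)$ carry the test map to a cartesian square of spaces, identifying the relevant spaces fibred over pairs of objects: $\Map([0]\amalg[0],\tC) = (\tC^{\simeq})^{\times 2}$; $\Map([1],\tC)$ has fibre $\tC(c,c')^{\simeq}$; $\Map(\partial C_{2},\tC)$ has fibre $\tC(c,c')^{\simeq}\times\tC(c,c')^{\simeq}$; $\Map(C_{2},\tC)$ has fibre the $2$-morphism space $\tC(c,c')(f,g)$ over a parallel pair $(f,g)$; and $\Map([1]\otimes[1],\tC) \simeq \Map([1],\ARopl(\tC))$ is the arrow space of $\ARopl(\tC)$. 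Consequently orthogonality to $\partial[1]\to[1]$ (equivalently $[0]\amalg[0]\to[1]$, since $\partial[1] = [0]\amalg[0]$) says each $\tC(c,c')^{\simeq}\to\tD(Fc,Fc')^{\simeq}$ is an equivalence; orthogonality to $\partial C_{2}\to C_{2}$ says each $\tC(c,c')(f,g)\to\tD(Fc,Fc')(Ff,Fg)$ is an equivalence, i.e.\ full faithfulness of the mapping functors; orthogonality to $[0]\amalg[0]\to C_{2}$ says each arrow space $\mathrm{Ar}(\tC(c,c'))\to\mathrm{Ar}(\tD(Fc,Fc'))$ is an equivalence; and orthogonality to $[1]\amalg[1]\to[1]\otimes[1]$ says $\ARopl(F)$ is fully faithful on underlying \icats{}. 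The implication (1)$\Rightarrow$(2),(3),(4),(5) is then immediate, since full faithfulness makes every $\tC(c,c')\to\tD(Fc,Fc')$ an equivalence and so turns all of these fibrewise comparisons into equivalences. For the converses I would use elementary facts about equivalences of \icats{}: (3)$\Rightarrow$(1), because equivalence on $(-)^{\simeq}$ gives essential surjectivity which together with full faithfulness gives $\tC(c,c')\simeq\tD(Fc,Fc')$; (4)$\Rightarrow$(1), because a functor of \icats{} is an equivalence iff it is an equivalence on objects and on arrows (the Segal condition then forces all higher data); and (2)$\Rightarrow$(1), because orthogonality to $\partial[1]\to[1]$ controls endpoints while, restricting the mapping spaces of $\ARopl(F)$ to the component over the identity endpoints — where an oplax square is precisely a $2$-morphism in the fibre $\tC(c,c')$ — full faithfulness of $\ARopl(F)$ yields the arrow-space equivalences of the mapping \icats{}. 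For cleaner bookkeeping when interchanging the ``second generators'' I would alternatively paste cartesian squares along $[0]\amalg[0]\to\partial C_{2}\to C_{2}$, noting that $[0]\amalg[0]\to\partial C_{2}$ lies in the saturation of $\partial[1]\to[1]$, to pass between (3) and (4), and use \cref{lem:pushoutquarec2} together with the pushout $[1]\otimes[1]\simeq\partial([1]\otimes[1])\amalg_{\partial C_{2}}C_{2}$ to pass between (2) and (3).

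The hard part will be two points of rigour. The first is justifying that the pullback (5) of genuine $(\infty,2)$-categories is detected fibrewise over $\tC\times\tC$; for this I rely on the $\icats{}$-valued fibres of \cref{propn:ARoplfibis1cat} and the $2$-fold Segal identification of \cref{propn:2foldsegalfromoplax}. The second, and subtler, is condition (2): the mapping spaces of $\ARopl(\tC)$ between oplax arrows are not themselves fibres of $\ARopl(\tC)\to\tC\times\tC$, so extracting the $2$-morphism data of $\tC(c,c')$ requires carefully isolating the oplax squares lying over identity endpoints — and it is exactly the pushout presentation of $[1]\otimes[1]$ that makes this reduction to $C_{2}$ clean.
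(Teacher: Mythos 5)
Your treatment of conditions (2)--(4) is essentially sound and close to the paper's own argument: the identification of the fibres of $\Map([1],\tC)$, $\Map(C_2,\tC)$ and $\Map([1]\otimes[1],\tC)$ over pairs of objects resp.\ arrows is correct (the $C_2$-fibre computation follows from the adjunction of \cref{obs:free2cat}), your pasting along $[0]\amalg[0]\to\partial C_2\to C_2$ to pass between (3) and (4) is exactly the paper's, and your pushout presentation of $[1]\otimes[1]$ relative to $\partial([1]\otimes[1])$ is the content of \cref{lem:bdry1otimes1}(i), which the paper likewise uses for (2)$\Leftrightarrow$(3). The direction (5)$\Rightarrow$(1) via \cref{cor:aroplfibres} is also the paper's.

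The genuine gap is (1)$\Rightarrow$(5). You claim the pullback \cref{eq:aroplsq} is ``detected fibrewise'' because the fibres are \icats{}, citing \cref{propn:2foldsegalfromoplax}; but fibrewise equivalence over a base that is not an \igpd{} does not detect equivalences: already for \icats{}, the inclusion $\{0\}\amalg\{1\}\hookrightarrow[1]$ over $[1]$ is an equivalence on the fibres over both objects without being an equivalence. \cref{propn:2foldsegalfromoplax} only recovers the base change of $\ARopl(\tC)$ along $(\tC^{\simeq})^{\times 2}\to\tC^{\times 2}$, i.e.\ the fibres over equivalences, whereas a pullback over $\tC\times\tC$ must be verified on mapping spaces out of a generating family, including $C_2$: the square is a pullback \IFF{} $F$ is right orthogonal to $[0]\amalg[0]\to[1]$, to $[1]\amalg[1]\to[1]\otimes[1]$, \emph{and} to $C_2\amalg C_2\to[1]\otimes C_2$. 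Your conditions (2)--(4) supply the first two, but nothing in your outline produces the third, and this is exactly where the paper does its real work: it factors $C_2\amalg C_2\to\partial([1]\otimes C_2)\to[1]\otimes C_2$ and invokes \cref{lem:bdry1otimes1}(ii),(iii) --- the three-dimensional computations with $\partial C_3\to C_2$, $\partial\tO^3\to\tO^3$ and $\partial([1]\otimes C_2)\to[1]\otimes C_2$, resting on \cref{propn:orientalpo} and \cref{lem:cofibrationc3} --- to place this map in the saturation of the maps characterizing full faithfulness. A fibrewise criterion could alternatively be made honest by observing that $\ARopl(\tC)\to\tC\times\tC$ is a Gray bifibration (\cref{ex:oplaxarrowbifib}) and that the comparison map preserves the relevant (co)cartesian data, and then applying the fibrewise pullback criterion for morphisms of $(i,j)$-fibrations from \S\ref{sec:fibs}; but that machinery is not what your appeal to the $2$-fold Segal picture provides, and verifying the preservation of (co)cartesian data is itself nontrivial. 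As written, your proposal establishes (1)$\Leftrightarrow$(2)$\Leftrightarrow$(3)$\Leftrightarrow$(4) and (5)$\Rightarrow$(1), but not (1)$\Rightarrow$(5).
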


\begin{remark}
  The characterization of fully faithful functors in condition \ref{it:ffro0+0} of \cref{propn:fffcond} is a special case of \cite{Loubaton}*{Proposition 4.2.1.64}, which characterizes fully faithful functors of $(\infty,\infty)$-categories.
\end{remark}

\begin{observation}
  Recall that a class of maps $S$ in a presentable \icat{} $\oC$ is called \emph{saturated} if
  \begin{itemize}
  \item $S$ is closed under colimits in $\Ar(\oC)$,
  \item $S$ contains all equivalences and is closed under composition,
  \item $S$ is closed under cobase change.
  \end{itemize}
  The \emph{saturation} of $S$ is the smallest saturated class that
  contains $S$. The class of maps that are left orthogonal to a given
  set of maps is always saturated, by \cite{HTT}*{Proposition
    5.2.8.6}. This means that if a map $g$ is right orthogonal to
  a set of maps $T$ then $g$ is also right orthogonal to the saturation of $T$ (since the set of maps that are left orthogonal to $g$ is saturated and contains $T$).
\end{observation}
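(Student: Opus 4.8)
The plan is to verify the three conditions defining saturation directly, having first reduced to the case of a single map $g$. Write $L_{g}$ for the class of morphisms of $\oC$ that are left orthogonal to a fixed map $g \colon z \to w$. The class of maps left orthogonal to a set $T$ is the intersection $\bigcap_{g \in T} L_{g}$, and an intersection of saturated classes is again saturated, since each closure condition (stability under colimits in $\Ar(\oC)$, containing the equivalences, closure under composition, and closure under cobase change) can be checked one factor at a time. It therefore suffices to prove that each $L_{g}$ is saturated. Granting this, the final assertion is immediate: if $g$ is right orthogonal to $T$ then $T \subseteq L_{g}$, so $L_{g}$ contains the saturation of $T$ (being a saturated class containing $T$, it contains the smallest such); equivalently, $g$ is right orthogonal to every morphism in the saturation of $T$.

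By \cref{def:rightorth}, a morphism $f \colon x \to y$ lies in $L_{g}$ precisely when the square
\[
  \begin{tikzcd}
    \oC(y,z) \arrow{r} \arrow{d} & \oC(y,w) \arrow{d} \\
    \oC(x,z) \arrow{r} & \oC(x,w)
  \end{tikzcd}
\]
is a pullback of spaces, and I would use this reformulation throughout. Equivalences lie in $L_{g}$ because for $f$ an equivalence both vertical maps above are equivalences. For composition, given $f \colon x \to y$ and $f' \colon y \to y'$ in $L_{g}$, the square for $f' \circ f$ is obtained by stacking the square for $f'$ on top of that for $f$, and a vertical pasting of pullback squares is a pullback. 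For closure under cobase change, if $f \in L_{g}$ and $\bar{f} \colon \bar{x} \to \bar{y}$ is a pushout of $f$ along a map $x \to \bar{x}$, then any lifting problem for $\bar{f}$ against $g$ restricts to one for $f$, and the universal property of the pushout identifies the (contractible) space of solutions for $f$ with that for $\bar{f}$.

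The remaining and most delicate condition, which I expect to be the main obstacle, is closure under colimits in $\Ar(\oC)$. Suppose $f = \colim_{\alpha} f_{\alpha}$ with each $f_{\alpha} \colon x_{\alpha} \to y_{\alpha}$ in $L_{g}$. Since $\oC$ is presentable, colimits in $\Ar(\oC)$ are computed pointwise---evaluation at the source and target preserves colimits---so $x \simeq \colim_{\alpha} x_{\alpha}$ and $y \simeq \colim_{\alpha} y_{\alpha}$; as $\oC(\blank, z)$ and $\oC(\blank, w)$ send these colimits to limits, the mapping-space square for $f$ is the limit over $\alpha$ of the mapping-space squares for the $f_{\alpha}$. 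Each of the latter is a pullback, and because limits commute with limits a limit of pullback squares indexed by a fixed shape is again a pullback; hence $f \in L_{g}$. The care needed is precisely in these two interchanges---that the colimit in $\Ar(\oC)$ is genuinely pointwise, and that passing to the limit over $\alpha$ preserves the pullback condition---both of which are instances of limits commuting with limits. This shows each $L_{g}$ is saturated, and combined with the intersection and minimality arguments above it proves the observation. Alternatively, one may simply invoke \cite{HTT}*{Proposition 5.2.8.6}, which packages precisely this closure.
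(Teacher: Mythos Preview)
Your argument is correct. The paper does not actually prove this observation: it simply invokes \cite{HTT}*{Proposition 5.2.8.6} for the saturation claim and then gives the same one-line minimality argument you give for the final assertion. What you have done is unpack that citation by verifying the saturation axioms directly via the pullback-square reformulation of orthogonality from \cref{def:rightorth}; your treatment of the colimit condition (pointwise computation in $\Ar(\oC)$, then limits of pullback squares are pullbacks) is exactly the content of the cited result. So your route is not genuinely different, just more self-contained, and you already note at the end that one may instead cite \cite{HTT} directly, which is precisely what the paper does.
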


\begin{lemma}\label{lem:bdry1otimes1}\ 
\begin{enumerate}[(i)]
\item The three maps $\partial([1]
  \otimes [1]) \to [1] \otimes [1]$, $\partial \tO^{2} \to \tO^{2}$, and $\partial C_{2} \to C_{2}$
  all generate the same saturated class.
\item $\partial C_{3} \to C_{2}$ is in the
  saturation of $\partial C_{2} \to C_{2}$.
\item The four maps
  $\partial ([1] \otimes C_{2}) \to [1] \otimes C_{2}$,
  $\partial ([1] \otimes \tO^{2}) \to [1] \otimes \tO^{2}$,
  $\partial \tO^{3} \to \tO^{3}$, and $\partial C_{3} \to C_{2}$ all
  generate the same saturated class.
\end{enumerate}
\end{lemma}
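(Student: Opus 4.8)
The plan is to handle the three parts in order, using repeatedly that a colimit-preserving operation carries membership in one saturated class into membership in another (the preimage under such a functor of a saturated class is again saturated), together with the explicit pushout squares of \cref{propn:orientalpo} and \cref{cor:C2poO}.

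For (i), the equality of the classes generated by $\partial C_{2}\to C_{2}$ and $\partial\tO^{2}\to\tO^{2}$ is immediate: the first squares of \cref{propn:orientalpo} and \cref{cor:C2poO} exhibit each of these maps as a cobase change of the other. It remains to compare with $\partial([1]\otimes[1])\to[1]\otimes[1]$. The key geometric point is that $[1]\otimes[1]=([1]\times[1])_{S}$ has the same objects and $1$-morphisms as $\partial([1]\otimes[1])$: passing to the Gray tensor only replaces invertible $2$-morphisms by non-invertible ones, and by \cref{defn:graytens} exactly one of the two triangles of the square (the one through $(1,0)$) stays thin while the other (through $(0,1)$) becomes non-thin. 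I would therefore build $[1]\otimes[1]$ from $\partial([1]\otimes[1])$ in two steps: first adjoin the diagonal as the composite of the path through $(1,0)$ together with its witnessing invertible $2$-cell, which is a cobase change of an inner-horn inclusion with thin scaling, hence a trivial cofibration and in particular an equivalence, so lies in every saturated class; then adjoin the remaining non-invertible $2$-cell filling the triangle through $(0,1)$, whose three bounding edges are now all present, as a cobase change of $\partial\tO^{2}\to\tO^{2}$. This places $\partial([1]\otimes[1])\to[1]\otimes[1]$ in $\mathrm{sat}(\partial\tO^{2}\to\tO^{2})$. For the reverse inclusion I would write down the collapse functor $\psi\colon[1]\otimes[1]\to C_{2}$ sending $(0,0),(0,1)\mapsto 0$ and $(1,0),(1,1)\mapsto 1$, the two ``horizontal'' edges to the two parallel edges of $C_{2}$ and the non-thin triangle to the $2$-cell of $C_{2}$; its restriction to $\partial([1]\otimes[1])$ lands in $\partial C_{2}$, and checking the universal property (as in the proof of \cref{propn:orientalpo}, via $\Hom(\blank,\mathfrak{X})$ in marked simplicial categories) shows the resulting square exhibits $\partial C_{2}\to C_{2}$ as a cobase change of $\partial([1]\otimes[1])\to[1]\otimes[1]$.

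For (ii) I would pass through the functor $[1](\blank)\colon\CatI\to\CatIT$ of \cref{obs:free2cat}, which preserves colimits; by \cref{rem:modelsforC} it sends $\partial[1]\hookrightarrow[1]$ to $\partial C_{2}\to C_{2}$ and the fold map $\nabla\colon[1]\amalg_{\partial[1]}[1]\to[1]$ to $\partial C_{3}\to C_{2}$. Since a colimit-preserving functor carries saturations into saturations, it suffices to prove that $\nabla$ lies in the saturation of $\partial[1]\to[1]$ in $\CatI$. Here I would use that, by the small object argument, this saturation is the class of maps left orthogonal to every functor right orthogonal to $\partial[1]\to[1]$; the latter functors are precisely the fully faithful ones, and $\nabla$ is left orthogonal to any fully faithful $F$ because the required unique lift (see \cref{def:rightorth}) is exactly the assertion that $F$ is faithful. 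Hence $\nabla$, and therefore $\partial C_{3}\to C_{2}$, lies in the respective saturation.

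For (iii), the equality $\mathrm{sat}(\partial\tO^{3}\to\tO^{3})=\mathrm{sat}(\partial C_{3}\to C_{2})$ again follows directly from the second squares of \cref{propn:orientalpo} and \cref{cor:C2poO}. The remaining maps $\partial([1]\otimes C_{2})\to[1]\otimes C_{2}$ and $\partial([1]\otimes\tO^{2})\to[1]\otimes\tO^{2}$ I would treat exactly as in (i), one categorical dimension higher: analysing the cell structure of the Gray tensors $[1]\otimes C_{2}$ and $[1]\otimes\tO^{2}$ in scaled simplicial sets, I would build each from its boundary by adjoining the thin and lower-dimensional fillers through equivalences and cobase changes already covered by (i) and (ii), and the single genuine top-dimensional cell as a cobase change of $\partial\tO^{3}\to\tO^{3}$, with the reverse inclusions coming from explicit collapse functors onto $C_{2}$. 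I expect the main obstacle to be precisely this cell-by-cell analysis of the Gray tensors and the verification that the collapse functors induce pushouts: routine but delicate bookkeeping best carried out in scaled simplicial sets, the hardest single point being the reverse inclusions in (i) and especially (iii), where one must check that collapsing the boundary of a Gray tensor along the chosen functor produces exactly $C_{2}$ (respectively the relevant oriental-type cell) and introduces no spurious cells.
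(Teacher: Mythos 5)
Your parts (i) and (iii) follow essentially the paper's own route, while your part (ii) is genuinely different. In (i), your forward factorization (scaled-anodyne filling of the thin triangle through $(1,0)$, then a cobase change of $\partial \tO^{2} \to \tO^{2}$) is exactly the paper's argument; for the reverse inclusion the paper instead collapses the unique thin $2$-simplex onto the long edge, exhibiting $\partial \tO^{2} \to \tO^{2}$ as a cobase change of $\partial([1]\otimes[1]) \to [1]\otimes[1]$, and your collapse $\psi\colon [1]\otimes[1]\to C_{2}$ is just that collapse followed by $\phi_{2}^{0}$ from \cref{cor:C2poO} --- so your claimed pushout follows by pasting the paper's two pushout squares, with no fresh marked-simplicial-category bookkeeping needed. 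In (ii) the paper argues via the section $C_{2}\to \partial C_{3}$ (a cobase change of $\partial C_{2}\to C_{2}$) together with the cancellation property for left classes, whereas you transport along the colimit-preserving $[1](\blank)$, the mechanism of \cref{obs:free2catsaturated} (which the paper itself only deploys later, for \cref{cor:2consff}). Your identifications $[1]([1])\simeq C_{2}$, $[1](\partial[1])\simeq \partial C_{2}$, and $[1](\nabla)\simeq (\partial C_{3}\to C_{2})$ are correct (cf.\ \cref{rem:modelsforC}), as is the claim that the functors right orthogonal to $\partial[1]\to[1]$ are the fully faithful ones and that $\nabla$ is left orthogonal to all of these (though note it is full faithfulness, not mere faithfulness, that supplies the unique lift).

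Two caveats, one of which is a genuine gap. First, in (ii) the step ``by the small object argument, this saturation is the class of maps left orthogonal to every functor right orthogonal to $\partial[1]\to[1]$'' is circular as stated: the small object argument produces factorizations whose right half has only \emph{non-unique} lifting against the generators; upgrading to unique lifting requires lifting against codiagonals of the generators, which is precisely the membership $\nabla \in \mathrm{sat}(\partial[1]\to[1])$ you are trying to establish. The identity $\mathrm{sat}(S) = {}^{\perp}(S^{\perp})$ does hold in presentable \icats{}, but via the theory of factorization systems (\cite{HTT}*{\S 5.5.5}, on which the paper also leans for cancellation), not the bare small object argument. A one-line repair avoids orthogonality altogether: saturated classes are closed under colimits in $\Ar(\oC)$, and for any $f \colon A \to B$ the codiagonal $B\amalg_{A}B \to B$ is the pushout in the arrow category of the span $\id_{B} \leftarrow f \to \id_{B}$ (on sources this gives $B\amalg_{A}B$, on targets $B\amalg_{B}B \simeq B$); applied to $f = (\partial C_{2}\to C_{2})$ this exhibits $\partial C_{3}\to C_{2}$ directly as such a pushout, proving (ii) without even the $[1](\blank)$ reduction. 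Second, in (iii) your plan to ``analyse the cell structure of $[1]\otimes C_{2}$ in scaled simplicial sets'' cannot be run literally, since $C_{2}$ is not presented by a scaled simplicial set: the paper first reduces $\partial([1]\otimes C_{2})\to[1]\otimes C_{2}$ to $\partial([1]\otimes \tO^{2})\to[1]\otimes \tO^{2}$ by tensoring the pushout squares of \cref{propn:orientalpo} and \cref{cor:C2poO} with $\partial[1]\to[1]$, using that $\otimes$ preserves colimits in each variable together with \cref{lem:cubepb}, and only then performs the cell analysis: a filtration $\partial([1]\otimes\Delta^{2}_{\flat}) \to Q \to [1]\otimes\Delta^{2}_{\flat}$ where $Q$ omits a single non-thin $3$-simplex $\sigma$, the first map is anodyne, the second is a cobase change of $\partial\Delta^{3}_{\flat}\to\Delta^{3}_{\flat}$, and the reverse inclusion comes from the retraction $[1]\otimes\Delta^{2}_{\flat}\to\Delta^{3}_{\flat}$ collapsing onto $\sigma$. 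With that transport step added, your sketch completes along the paper's lines, but as written (iii) defers all of its actual content.
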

\begin{proof}
  In this proof we will freely use the description of the Gray tensor
  product in terms of scaled simplicial sets.
  
  For both (i) and (iii) it is immediate from \cref{propn:orientalpo}
  and \cref{cor:C2poO} that $\partial \tO^{i} \to \tO^{i}$ and
  $\partial C_{i} \to C_{2}$ generate the same saturated class, for
  $i = 2,3$. To show (i) it therefore suffices to relate the morphisms of scaled simplicial sets
  $(\partial \Delta^2,\flat) \to (\Delta^2,\flat)$ and
    \[ \partial (\Delta^1 \otimes \Delta^1)=\Lambda^2_1
  \amalg_{\{0\}\amalg \{2\}} \Lambda^2_1 \to \Delta^1 \otimes
  \Delta^1. \] The latter factors as a trivial cofibration and a pushout of 
  $(\partial \Delta^2,\flat) \to (\Delta^2,\flat)$. Conversely, the
  map $(\partial \Delta^2,\flat) \to (\Delta^2,\flat)$ can be obtained
  from
  $\partial (\Delta^1 \otimes \Delta^1) \to \Delta^1 \otimes \Delta^1$
  by collapsing the unique thin 2-simplex to the edge
  $(0,0) \to (1,1)$.

  To show (ii), we note that the map $\partial C_3 \to C_2$ admits a
  section $C_2 \to \partial C_3$ which is in the saturation of
  $\partial C_{2} \to C_{2}$.

  Let us finally show (iii). We claim that since $\otimes$ preserves colimits in each variable, it follows easily from (i) that 
$\partial ([1] \otimes C_{2}) \to [1] \otimes C_{2}$ and  $\partial ([1] \otimes \tO^{2}) \to [1] \otimes \tO^{2}$ generate the same saturated class. Indeed, the pushout diagrams relating $C_2$ and $\tO^{2}$ in \cref{propn:orientalpo}
  and \cref{cor:C2poO}, together with the map $\partial[1] \to [1]$, give rise to commutative cubes in $\CatIT$ which can be shown to be colimit diagrams using \cref{lem:cubepb}. We therefore only need to compare the two maps $\partial ([1] \otimes \tO^{2}) \to [1] \otimes \tO^{2}$ and
  $\partial \tO^{3} \to \tO^{3}$. We can model these in scaled simplicial sets as 
 \[ \partial ([1] \otimes \Delta^{2}_{\flat}) = (\{0,1\} \times \Delta^{2}_{\flat}) \amalg_{\{0,1\} \times \partial \Delta^{2}_{\flat}} \Delta^{1} \otimes \partial \Delta^{2}_{\flat} \to [1] \otimes \Delta^{2}_{\flat} \] and
 \[ \partial \Delta^{3}_{\flat} \to \Delta^{3}_{\flat}.\]
 It is easy to show that we have a factorization $\partial ([1] \otimes \Delta^{2}_{\flat}) \to Q \to  [1] \otimes \Delta^{2}_{\flat}$, where $Q$ contains every simplex in $ [1] \otimes \Delta^{2}_{\flat}$ except for the 3-simplex $ \sigma:(0,0) \to (0,1) \to (1,1) \to (1,2)$. The first map is an anodyne morphism of scaled simplicial sets, and the final map sits in a pushout square
 \[
   \begin{tikzcd}
     \partial \Delta^3_{\flat} \arrow[r] \arrow[d] &  Q  \arrow[d] \\
     \Delta^3_{\flat} \arrow[r] &  {[}1{]} \otimes \Delta^{2}_{\flat}.
   \end{tikzcd}
 \]
 There is a map $[1] \otimes \Delta^{2}_{\flat} \to \Delta^3_\flat$
 that collapses everything onto the simplex $\sigma$. It follows that
 we can extend the previous pushout square to a commutative diagram
 \[
   \begin{tikzcd}
     \partial \Delta^3_\flat \arrow[r] \arrow[d] &  Q  \arrow[d]  \arrow[r] & \partial \Delta^3_\flat \arrow[d] \\
     \Delta^3_\flat \arrow[r] &  {[}1{]}\otimes \Delta^{2}_{\flat} \arrow[r] & \Delta^3_\flat.
   \end{tikzcd}
 \]
 where the horizontal composites are identities. We conclude that the right-most square is also a pushout, and the result follows.
\end{proof}

\begin{proof}[Proof of \cref{propn:fffcond}]
  For a functor of \itcats{} $F \colon \tC \to \tD$, the functors
  \[\tC(x,y) \to \tD(Fx,Fy)\] are the maps on fibres in the
  square \cref{eq:aroplsq}, as we saw in \cref{cor:aroplfibres}. These
  maps on fibres are therefore equivalences precisely when we get
  pullback squares of \igpds{}
  \begin{equation}
    \label{eq:fffpbsq01}
    \begin{tikzcd}[column sep=small]
      \Map([1], \tC) \ar[r] \ar[d] & \Map([1], \tD) \ar[d] \\
      \Map([0], \tC)^{\times 2} \ar[r] & \Map([0], \tD)^{\times 2},
    \end{tikzcd}
    \begin{tikzcd}[column sep=small]
      \Map([1] \otimes [1], \tC) \ar[r] \ar[d] & \Map([1] \otimes [1], \tD) \ar[d] \\
      \Map([1], \tC)^{\times 2} \ar[r] & \Map([1], \tD)^{\times 2}
    \end{tikzcd}
  \end{equation}
  when we apply $\Map([i],\blank)$ to the square for $i = 0,1$. This certainly
  holds if \cref{eq:aroplsq} is a pullback, so (5) implies
  (1). Moreover, the squares \cref{eq:fffpbsq01} are pullbacks
  precisely when $F$ is right orthogonal to $p \colon [0] \amalg [0] \to [1]$
  and $q \colon [1] \amalg [1] \to [1]\otimes [1]$, so we also see that (1) is
  equivalent to (2).

  To see that (2) is equivalent to (3), we first observe
that the map $q' \colon
  \partial C_{2} \to
  C_{2}$ is a pushout $\id_{[0]} \amalg_{p} q \amalg_{p} \id_{[0]}$
  and so lies in the saturation of $p$ and $q$. Conversely,
  we can factor $q$ as 
  \[ [1] \amalg [1] \to \partial([1] \otimes [1]) \to [1] \otimes
    [1] \]
  where the first map is in the saturation of $p$ and the second is in
  the saturation of $q'$ by 
  \cref{lem:bdry1otimes1}(i).

  To see that (3) is equivalent to (4), we consider the factorization
  \[ [0] \amalg [0] \to \partial C_{2} \to C_{2}.\] Here the first map
  is a pushout of two copies of $p$, so a functor that is right
  orthogonal to $p$ is right orthogonal to $q'$ \IFF{} is right
  orthogonal to the composite.

  It remains to prove that the first four conditions imply (5). For
  this it suffices to show that a fully faithful functor is right
  orthogonal to $C_{2}\amalg C_{2} \to [1] \otimes C_{2}$. To see this
  we decompose this map as the composite
  \[ C_{2} \amalg C_{2} \to \partial([1] \otimes C_{2}) \to [1]
    \otimes C_{2}.\]
  Here the first map is a cobase change of $\partial C_{2} \amalg
  \partial C_{2} \to [1] \otimes \partial C_{2}$, which lies in the
  saturation of $[1] \amalg [1] \to [1] \otimes [1]$, and the second
  is in the saturation of $\partial C_{2} \to C_{2}$ by
  \cref{lem:bdry1otimes1}(ii) and (iii).
\end{proof}

\begin{cor}\label{cor:ffonspcs}
  A functor of \itcats{} $F \colon \tA \to \tB$ is fully faithful
  \IFF{} for all \itcats{} $\tX$, the commutative square
  \[
    \begin{tikzcd}
      \Map(\tX,\tA) \ar[r, "F_{*}"] \ar[d] & \Map(\tX,\tB) \ar[d] \\
\Map(\tX^{\simeq},\tA) \ar[r, "F_{*}"] & \Map(\tX^{\simeq},\tB)
    \end{tikzcd}
  \]
  is a pullback.
\end{cor}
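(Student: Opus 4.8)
The plan is to read the square in the statement as a right-orthogonality condition and then invoke \cref{propn:fffcond}. For a fixed \itcat{} $\tX$, let $\eta_{\tX} \colon \tX^{\simeq} \to \tX$ denote the canonical inclusion of the underlying \igpd{}. Unwinding \cref{def:rightorth}, the square
\[
  \begin{tikzcd}
    \Map(\tX,\tA) \ar[r, "F_{*}"] \ar[d] & \Map(\tX,\tB) \ar[d] \\
    \Map(\tX^{\simeq},\tA) \ar[r, "F_{*}"] & \Map(\tX^{\simeq},\tB)
  \end{tikzcd}
\]
is a pullback precisely when $F$ is right orthogonal to $\eta_{\tX}$ in $\CatIT$. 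Since right orthogonality to a set of maps depends only on the saturated class it generates, and since by condition \ref{it:ffro0+0} of \cref{propn:fffcond} full faithfulness of $F$ is exactly right orthogonality to $[0]\amalg[0]\to[1]$ and $[0]\amalg[0]\to C_{2}$, it suffices to prove that the family $\{\eta_{\tX}\}_{\tX \in \CatIT}$ and the pair $\{[0]\amalg[0]\to[1],\,[0]\amalg[0]\to C_{2}\}$ generate the same saturated class.

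One inclusion is immediate: the underlying \igpds{} are $[1]^{\simeq} = \partial[1] = [0]\amalg[0]$ and $C_{2}^{\simeq} = [0]\amalg[0]$, so $\eta_{[1]}$ and $\eta_{C_{2}}$ are the two generators on the right. This already settles the easy direction of the corollary: if the square is a pullback for every $\tX$, then taking $\tX = [1]$ and $\tX = C_{2}$ shows that $F$ is right orthogonal to the two generating maps, hence fully faithful by \cref{propn:fffcond}.

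The substance is the reverse inclusion, namely that each $\eta_{\tX}$ lies in the saturated class generated by $p \colon [0]\amalg[0]\to[1]$ and $p' \colon [0]\amalg[0]\to C_{2}$. I would prove this by factoring $\eta_{\tX}$ as $\tX^{\simeq} \to \tX^{\leq 1} \to \tX$. The first map is the core inclusion of the underlying \icat{} $\tX^{\leq 1}$, pushed into $\CatIT$ along the (colimit-preserving) inclusion $\CatI \hookrightarrow \CatIT$; using the standard cellular decomposition of an \icat{} relative to its core---that is, that a core inclusion lies in the saturation of $\partial[1]\to[1]$ in $\CatI$---this places $\tX^{\simeq}\to\tX^{\leq 1}$ in the saturation of $p$. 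For the second map, which on each mapping \icat{} $\tX(x,y)$ is again a core inclusion, I would use the colimit-preserving functor $[1](\blank)$ of \cref{obs:free2cat}, which carries $\partial[1]\to[1]$ to $\partial C_{2}\to C_{2}$ (see \cref{rem:modelsforC}); applying it to the relative cellular decomposition of the mapping \icats{} exhibits $\tX^{\leq 1}\to\tX$ as built from cells in the saturation of $\partial C_{2}\to C_{2}$. Finally, $\{p,p'\}$ and $\{p, \partial C_{2}\to C_{2}\}$ generate the same saturated class (this is the content of the $(3)\Leftrightarrow(4)$ step in the proof of \cref{propn:fffcond}), and the higher boundary inclusions that arise along the way are controlled by \cref{propn:orientalpo}, \cref{cor:C2poO}, and \cref{lem:bdry1otimes1}. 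Combining the two factors gives $\eta_{\tX}$ in the saturation of $\{p,p'\}$.

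The main obstacle is organizing the two relative skeletal filtrations precisely within the model of scaled simplicial sets (or $\Theta_{2}$-spaces), so that $\tX$ is genuinely exhibited as a transfinite composite of cobase changes of the $1$- and $2$-cell inclusions over $\tX^{\simeq}$, with the completeness conditions accounted for; this bookkeeping, rather than any conceptual difficulty, is what turns the orthogonality heuristic into an actual proof. Alternatively, one can argue more directly that a fully faithful $F$ is right orthogonal to each $\eta_{\tX}$, since a lift $\tX\to\tA$ is determined on objects by $\tX^{\simeq}\to\tA$ and on each mapping \icat{} by the equivalence $\tA(a,a')\xrightarrow{\sim}\tB(Fa,Fa')$; but assembling this pointwise data into a coherent functor runs into the same Segal-type bookkeeping.
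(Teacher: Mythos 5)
Your reformulation of the square as right orthogonality of $F$ to the core inclusion $\eta_{\tX} \colon \tX^{\simeq} \to \tX$ is correct, and your easy direction (specialize to $\tX = [1]$ and $\tX = C_{2}$, note $[1]^{\simeq} \simeq C_{2}^{\simeq} \simeq [0] \amalg [0]$, and apply condition \ref{it:ffro0+0} of \cref{propn:fffcond}) is exactly the paper's. But the converse, which is the substance, is left unproved: you reduce it to the claim that every $\eta_{\tX}$ lies in the saturation of $[0]\amalg[0]\to[1]$ and $[0]\amalg[0]\to C_{2}$, and then explicitly defer the verification as ``bookkeeping.'' That step is a genuine gap, not routine: your second factor $\tX^{\leq 1} \to \tX$ is \emph{not} obtained by a pushout attaching cells $[1](\blank)$ of the core inclusions of the mapping \icats{} (composition creates new $2$-morphisms, and the Segal and completeness conditions intervene), so one needs a transfinite relative skeletal induction in, say, $\Theta_{2}$-spaces or scaled simplicial sets, in which boundary inclusions of cells of dimension $\geq 3$ also appear; the paper's \cref{lem:bdry1otimes1} controls these only through $\partial \tO^{3} \to \tO^{3}$, so higher-dimensional boundary inclusions would require separate arguments. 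In effect you are proposing to prove that the functors essentially surjective on objects form the left class of the factorization system whose right class is the fully faithful functors --- a true statement (the paper's own remark cites \cite{Soergel}*{Theorem 5.3.7} for exactly this identification), but a theorem in its own right, and your first factor $\tX^{\simeq} \to \tX^{\leq 1}$ likewise rests on the analogous $1$-categorical fact without proof or citation.

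The paper's converse avoids saturation entirely and is complete in one sentence: every \itcat{} $\tX$ is canonically a colimit of copies of $[0]$, $[1]$, and $C_{2}$, because the Yoneda colimit and the Segal colimits in $\Fun(\Theta_{2}^{\op}, \Spc)$ are preserved by the localization to $\CatIT$ and hence also by $(\blank)^{\simeq}$; consequently the square for $\tX$ is a limit of the squares for these three cells (the case $\tX = [0]$ being trivial since $[0]^{\simeq} \simeq [0]$), and pullback squares are closed under limits. If you want to salvage your route rather than adopt this one, the cleanest fix is to quote the identification of the left orthogonal class from \cite{Soergel} instead of constructing the filtration by hand; as written, your proof is incomplete at its central step.
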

\begin{proof}
  The given condition in the cases $\tX = [1], C_{2}$ is equivalent to
  \cref{propn:fffcond}(4), so if it holds then $F$ must be fully
  faithful. Conversely, the condition for $\tX$ follows from that for
  $[0], [1], C_{2}$ since the Yoneda colimit and the Segal colimits in
  $\Fun(\Theta_{2}^{\op}, \Spc)$ are preserved by the localization to
  $\CatIT$ and hence by $(\blank)^{\simeq}$.
\end{proof}

\subsection{Other types of sub-$(\infty,2)$-categories}\label{sec:othersub}
In this section we will study several further types of sub-objects of
\itcats{}. We will characterize these by right lifting properties and
describe the relations between them; this discussion is essentially an $(\infty,2)$-categorical version of 
\cite{MartiniYoneda}*{\S 3.8}.

For the \itcatl{} definitions, we first need some terminology for sub-objects of \icats{}:
\begin{defn}
    A functor $F \colon \oC \to \oD$ of \icats{} is:
  \begin{itemize}
  \item \emph{faithful} if $\oC(c,c') \to \oD(Fc,Fc')$ is a
    monomorphism of \igpds{} for all $c,c' \in \oC$;
  \item a \emph{(subcategory) inclusion} if it is faithful 
    and  $\oC^{\simeq} \to \oD^{\simeq}$ is a monomorphism --- we also say that $\oC$ is a \emph{subcategory}\footnote{Note that for ordinary categories this corresponds to the notion of \emph{replete} subcategory in much of the category theory literature, rather than the more traditional definition of subcategory, which is not invariant under equivalences of categories.} of $\oD$.
  \end{itemize}
\end{defn}

\begin{observation}\label{obs:monopb}
 A morphism $f \colon X \to Y$ in $\Spc$ is a
  \emph{monomorphism} \IFF{} the commutative square
  \[
    \begin{tikzcd}
      X \arrow{r} \arrow{d} & Y \arrow{d} \\
      X \times X \arrow{r} & Y \times Y
    \end{tikzcd}
  \]
  is cartesian --- this follows from \cite{HTT}*{Lemma 5.5.6.1} since
  the map from $X$ to the pullback is precisely the diagonal $X \to X
  \times_{Y} X$ for $f$; this condition is equivalent to $X$ being a
  union of path-components in $Y$. In general, we then say that a
  \emph{monomorphism} in an \icat{} $\oC$ is a morphism
  $\phi \colon x \to y$ such that $\oC(c, x) \to \oC(c,y)$ is a monomorphism of
  \igpds{} for all $c \in \oC$. If $\oC$ is an \icat{} with binary products,
  it follows that $\phi$ is a monomorphism \IFF{}
    \[
    \begin{tikzcd}
      x \arrow{r} \arrow{d} & y \arrow{d} \\
      x \times x \arrow{r} & y \times y
    \end{tikzcd}
  \]
  is a pullback square in $\oC$.
\end{observation}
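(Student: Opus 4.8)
The plan is to reduce the general statement about a monomorphism $\phi \colon x \to y$ in an \icat{} $\oC$ to the case of spaces already recorded in the first part of the observation, using that limits in $\oC$ are detected by the corepresentable functors $\oC(c,\blank) \colon \oC \to \Spc$. First I would recall this detection principle: for a diagram in $\oC$, a cone with apex $L$ is a limit cone \IFF{} for every object $c \in \oC$ the induced cone $\oC(c,L) \to \oC(c,\blank)$ applied to the diagram is a limit cone in $\Spc$. This follows from the Yoneda embedding $\oC \hookrightarrow \Fun(\oC^{\op}, \Spc)$ being fully faithful and limit-preserving, together with the fact that limits in the presheaf \icat{} are computed pointwise: the hypothesis says exactly that the image of the cone under Yoneda is a limit cone in $\Fun(\oC^{\op},\Spc)$, and full faithfulness then transports this back to $\oC$.

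Next I would apply $\oC(c,\blank)$ to the square in the statement. Since mapping spaces send products to products, we have $\oC(c, x\times x) \simeq \oC(c,x) \times \oC(c,x)$ and likewise for $y$, so the resulting square of spaces is precisely the diagonal square
\[
  \begin{tikzcd}
    \oC(c,x) \arrow{r} \arrow{d} & \oC(c,y) \arrow{d} \\
    \oC(c,x) \times \oC(c,x) \arrow{r} & \oC(c,y) \times \oC(c,y)
  \end{tikzcd}
\]
associated to the map $\oC(c,\phi) \colon \oC(c,x) \to \oC(c,y)$ of \igpds{}. By the first part of the observation (the space-level statement, via \cite{HTT}*{Lemma 5.5.6.1}) this square is cartesian \IFF{} $\oC(c,\phi)$ is a monomorphism of \igpds{}.

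It then remains to chain these equivalences. By the detection principle, the square for $\phi$ is a pullback in $\oC$ \IFF{} its image under $\oC(c,\blank)$ is a pullback of spaces for every $c$; by the previous step this holds \IFF{} $\oC(c,\phi)$ is a monomorphism of \igpds{} for every $c$; and by the definition of a monomorphism in $\oC$ this is exactly the condition that $\phi$ be a monomorphism. I expect no genuine obstacle here, as the whole argument is a routine application of the Yoneda philosophy; the only point requiring slight care is the detection of the pullback by corepresentables \emph{without} presupposing that the relevant pullback already exists in $\oC$, which is precisely what passing through the presheaf \icat{} $\Fun(\oC^{\op},\Spc)$ resolves cleanly.
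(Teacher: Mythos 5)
Your argument is correct and matches the paper's intended justification: the paper likewise deduces the space-level case from \cite{HTT}*{Lemma 5.5.6.1} via the diagonal, and the general case in $\oC$ by exactly the routine you spell out — mapping spaces $\oC(c,\blank)$ preserve products, pullbacks are detected by corepresentables through the Yoneda embedding, and the definition of monomorphism in $\oC$ then closes the chain. Your extra care about detecting the pullback through $\Fun(\oC^{\op},\Spc)$ without assuming the limit exists is a fine (if here unneeded, since the square in question already lives in $\oC$) precision, not a deviation.
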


\begin{defn}
  A functor $F \colon \tC \to \tD$ of \itcats{} is called
  \begin{itemize}
  \item \emph{locally fully faithful} if $\tC(c,c') \to \tD(Fc,Fc')$
    is fully faithful for all $c,c' \in \tC$;
  \item a \emph{locally full (sub-\itcat{}) inclusion} if $F$ is locally fully faithful and
    $\tC^{\simeq} \to \tD^{\simeq}$ is a monomorphism --- we then also say
    that $\tC$ is a \emph{locally full sub-\itcat{}} of $\tD$;
  \item \emph{locally faithful} if $\tC(c,c') \to \tD(Fc,Fc')$ is
    faithful for all $c,c' \in \tC$;
  \item \emph{locally an inclusion} if $\tC(c,c') \to \tD(Fc,Fc')$ is
    a subcategory inclusion for all $c,c' \in \tC$;
  \item a \emph{(sub-\itcat{}) inclusion} if it is locally an
    inclusion and $\tC^{\simeq} \to \tD^{\simeq}$ is a monomorphism
    --- we then also say that $\tC$ is a \emph{sub-\itcat{}} of $\tD$.
  \end{itemize}
  We also say that a sub-\itcat{} $\tC$ of $\tD$ is \emph{wide} if $\tC^{\simeq} \to \tD^{\simeq}$ is an equivalence.
\end{defn}

\begin{remark}
  The classes of fully faithful, locally fully faithful, and locally
  faithful functors of \itcats{} are also studied in \cite{Soergel}*{\S
    5.3} where they appear as the \emph{$j$-faithful} functors of
  \itcats{} for $j = -1,0,1$, respectively, as part of a general study
  of $j$-faithful functors of $(\infty,n)$-categories for $j \geq
  -2$. In particular, the characterizations of
  \cref{propn:fffcond}(\ref{it:ffbdry}), \cref{propn:lfffcond} and
  \cref{propn:lfcond} are part of \cite{Soergel}*{Theorem 5.3.7}, which
  also identifies the morphisms that are left orthogonal to the
  $j$-faithful functors as precisely the \emph{$j$-surjective}
  functors. These are the morphisms that are (essentially)
  surjective on $i$-morphisms for $0 \leq i \leq j+1$ (where
  $0$-morphisms mean objects), so this identification also immediately implies \cref{cor:ffonspcs} and \cref{cor:lffon1cats}.
\end{remark}

\begin{propn}\label{propn:lfffcond}
  The following are equivalent for a functor of \itcats{} $F \colon
  \tC \to \tD$:
  \begin{enumerate}
  \item $F$ is locally fully faithful.
  \item $F$ is right orthogonal to $\partial C_{2} \to C_{2}$.
  \end{enumerate}
\end{propn}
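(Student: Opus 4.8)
The plan is to reduce right orthogonality against $\partial C_{2} \to C_{2}$ to a fibrewise condition on the mapping \icats{} of $F$, using the adjunction of \cref{obs:free2cat}. First I would record the \icatl{} input: a functor $g \colon \oX \to \oY$ of \icats{} is fully faithful exactly when it is right orthogonal (in $\CatI$, in the sense of \cref{def:rightorth}) to $\partial[1] \to [1]$. Indeed, the associated square has $\Map_{\CatI}([1], \oX)$, the space of arrows of $\oX$, lying over $\Map_{\CatI}(\partial[1], \oX) \simeq (\oX^{\simeq})^{\times 2}$, the space of pairs of objects, and the map induced on the fibre over a pair $(x,x')$ is precisely $\oX(x,x') \to \oY(gx,gx')$; the square is cartesian iff all of these maps are equivalences, that is, iff $g$ is fully faithful.

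Next I would recall from \cref{rem:modelsforC} that $\partial C_{2} \simeq [1](\partial[1])$ and $C_{2} \simeq [1]([1])$, so that $\partial C_{2} \to C_{2}$ is the image of $\partial[1] \to [1]$ under the functor $[1](\blank)$. By definition $F$ is right orthogonal to $\partial C_{2} \to C_{2}$ iff the square
\[
  \begin{tikzcd}
    \Map(C_{2}, \tC) \ar[r] \ar[d] & \Map(C_{2}, \tD) \ar[d] \\
    \Map(\partial C_{2}, \tC) \ar[r] & \Map(\partial C_{2}, \tD)
  \end{tikzcd}
\]
is cartesian. Restriction along the inclusion of objects $\partial[1] \to \partial C_{2}$ carries the two left-hand corners compatibly to $\Map(\partial[1], \tC) \simeq (\tC^{\simeq})^{\times 2}$, and $F$ sends this to the analogous map of the right-hand corners to $(\tD^{\simeq})^{\times 2}$. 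Since a map of spaces over a base is an equivalence precisely when it is so on every fibre, it suffices to check that the square becomes cartesian after passing to fibres over each $(c,c') \in (\tC^{\simeq})^{\times 2}$, whose image on the $\tD$-side is $(Fc, Fc')$.

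The key step is then the fibrewise identification. Applying the adjunction $[1](\blank) \dashv (\blank)(x,y)$ of \cref{obs:free2cat} over $(c,c')$ identifies the fibre of $\Map(C_{2}, \tC) \to (\tC^{\simeq})^{\times 2}$ with $\Map_{\CatI}([1], \tC(c,c'))$ and the fibre of $\Map(\partial C_{2}, \tC)$ with $\Map_{\CatI}(\partial[1], \tC(c,c'))$, naturally in $\tC$, and likewise for $\tD$ over $(Fc, Fc')$. Under these equivalences the fibre square is exactly the square exhibiting right orthogonality of $\tC(c,c') \to \tD(Fc,Fc')$ against $\partial[1] \to [1]$ in $\CatI$, the map between fibres being the one classified by the functor $\tC(c,c') \to \tD(Fc,Fc')$ induced by $F$. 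By the first paragraph this is cartesian for all $(c,c')$ iff every $\tC(c,c') \to \tD(Fc,Fc')$ is fully faithful, that is, iff $F$ is locally fully faithful. The main obstacle is the bookkeeping in this fibrewise reduction --- above all, checking that the adjunction equivalences are natural enough in $F$ that the induced map on fibres is genuinely the one classified by $\tC(c,c') \to \tD(Fc,Fc')$, so that the fibre squares can be recognized as the right-orthogonality squares of the first paragraph.
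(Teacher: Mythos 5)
Your proof is correct, and it ultimately computes the same fibres as the paper's, but the organization is genuinely different. The paper performs a single fibrewise reduction: it takes fibres of the orthogonality square over the whole lower-left corner $\Map(\partial C_{2}, \tC) \simeq \Map([1],\tC) \times_{\Map([0],\tC)^{\times 2}} \Map([1],\tC)$, i.e.\ over pairs of parallel $1$-morphisms $f,g \colon x \to y$, identifies the resulting map on fibres as $\tC(x,y)(f,g) \to \tD(Fx,Fy)(Ff,Fg)$, and concludes, since these are equivalences for all parallel pairs exactly when every $\tC(x,y) \to \tD(Fx,Fy)$ is fully faithful. You instead fibre only over pairs of objects in $(\tC^{\simeq})^{\times 2}$ and use the models $C_{2} \simeq [1]([1])$ and $\partial C_{2} \simeq [1](\partial[1])$ from \cref{rem:modelsforC} together with the adjunction $[1](\blank) \dashv (\blank)(x,y)$ of \cref{obs:free2cat} to recognize each fibre square as the $\CatI$-level orthogonality square for $\tC(c,c') \to \tD(Fc,Fc')$ against $\partial[1] \to [1]$; the residual descent to parallel pairs is then absorbed into your first paragraph's $1$-categorical characterization of fully faithful functors. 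What your route buys is that the fibre identifications become formal rather than asserted: the naturality worry you flag at the end is discharged precisely by naturality of the adjunction equivalence in the \itcat{} variable, which identifies the induced map on fibres with the map classified by $\tC(c,c') \to \tD(Fc,Fc')$; this makes explicit a step the paper states without comment, namely that the fibre map over a parallel pair $(f,g)$ is the map of $2$-morphism spaces. What the paper's route buys is brevity --- one fibrewise pass instead of two, with no appeal to $[1](\blank)$ --- though since that adjunction is used nearby for related saturation arguments (\cref{obs:free2catsaturated}, \cref{cor:2consff}), your argument fits naturally into the paper's toolkit.
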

\begin{proof}
  We abbreviate $\tC_{i} := \Map([i],\tC)$ for $i = 0,1$ and $\tC_{2}
  := \Map(C_{2}, \tC)$. Then $F$ is right orthogonal to $\partial
  C_{2} \to C_{2}$ \IFF{} the commutative square of \igpds{}
  \[
    \begin{tikzcd}
      \tC_{2} \arrow{r} \arrow{d} & \tD_{2} \arrow{d} \\
      \tC_{1} \times_{\tC_{0}^{\times 2}} \tC_{1}
       \arrow{r} &
      \tD_{1} \times_{\tD_{0}^{\times 2}} \tD_{1}
    \end{tikzcd}
  \]
  is a pullback. This holds \IFF{} we get an equivalence on fibres
  over each point of $\tC_{1}
  \times_{\tC_{0}^{\times 2}} \tC_{1}$, that is for each pair of
  parallel morphisms $f,g \colon x \to y$ in $\tC$. The map on fibres
  is precisely
  \[ \tC(x,y)(f,g) \to \tD(Fx,Fy)(Ff,Fg),\]
  so this is an equivalence precisely when $\tC(x,y) \to \tD(Fx,Fy)$
  is fully faithful for all $x,y$, \ie{} precisely when $F$ is locally
  fully faithful. Thus (1) is equivalent to (2).
\end{proof}

By the same argument as for \cref{cor:ffonspcs}, we get:
\begin{cor}\label{cor:lffon1cats}
  A functor of \itcats{} $F \colon \tA \to \tB$ is locally fully
  faithful \IFF{} for all \itcats{} $\tX$, the commutative square
  \[
    \begin{tikzcd}
      \Map(\tX,\tA) \ar[r, "F_{*}"] \ar[d] & \Map(\tX,\tB) \ar[d] \\
\Map(\tX^{\leq 1},\tA) \ar[r, "F_{*}"] & \Map(\tX^{\leq 1},\tB)
    \end{tikzcd}
  \]
  is a pullback.\qed
\end{cor}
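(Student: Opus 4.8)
The plan is to run the argument of \cref{cor:ffonspcs} with the underlying \icat{} $(\blank)^{\leq 1}$ in place of the underlying \igpd{} $(\blank)^{\simeq}$. For the implication that the pullback condition forces local full faithfulness, I would take $\tX = C_{2}$. Discarding the non-invertible $2$-morphism of the walking $2$-morphism leaves its two parallel $1$-morphisms, so $C_{2}^{\leq 1} \simeq \partial C_{2}$ and the counit $C_{2}^{\leq 1} \to C_{2}$ is the inclusion $\partial C_{2} \to C_{2}$. The square for $\tX = C_{2}$ is then a pullback precisely when $F$ is right orthogonal to $\partial C_{2} \to C_{2}$, which by \cref{propn:lfffcond} is exactly the assertion that $F$ is locally fully faithful.

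For the converse I would check the pullback condition on the cells $[0]$, $[1]$, $C_{2}$ and then propagate it along colimits. On $[0]$ and $[1]$ the functor $(\blank)^{\leq 1}$ acts as the identity, so both vertical maps are identities and the squares are trivially pullbacks; on $C_{2}$ the condition is once more right orthogonality to $\partial C_{2} \to C_{2}$, which holds by \cref{propn:lfffcond} as soon as $F$ is locally fully faithful. To pass to a general $\tX$, I would express $\tX$ as its Yoneda colimit of representable $\Theta_{2}$-objects, each of which is in turn built from the cells by Segal colimits, all computed in $\Fun(\Theta_{2}^{\op}, \Spc)$ and preserved by the localization onto $\CatIT$. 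The crucial point is that $(\blank)^{\leq 1}$ preserves these particular colimits: in the $2$-fold Segal space model of \cref{obs:2foldseg} it records $X_{\bullet,0}$, the levelwise underlying groupoid in the $2$-morphism direction, i.e.\ precomposition with $\Dop \times \{[0]\} \hookrightarrow \Dop \times \Dop$, which preserves all levelwise colimits of presheaves. Hence $\tX^{\leq 1} \simeq \colim_{i} \tX_{i}^{\leq 1}$ whenever $\tX \simeq \colim_{i} \tX_{i}$ is such a colimit of cells.

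Finally, since $\Map(\blank, \tA)$ and $\Map(\blank, \tB)$ carry colimits to limits and pullbacks commute with limits, the cellwise equivalences
\[ \Map(\tX_{i}, \tA) \simeq \Map(\tX_{i}, \tB) \times_{\Map(\tX_{i}^{\leq 1}, \tB)} \Map(\tX_{i}^{\leq 1}, \tA) \]
assemble under $\lim_{i}$ into the corresponding equivalence for $\tX$, giving the pullback square for every $\tX$. I expect the only delicate step to be the colimit-preservation claim for $(\blank)^{\leq 1}$: although it is merely the right adjoint to $\CatI \hookrightarrow \CatIT$ and so does not preserve colimits in general, one must check that the Yoneda and Segal colimits computing $\tX$ are genuinely preserved by the localization to $\CatIT$ --- so that they agree with the levelwise presheaf colimits on which the restriction functor acts --- exactly as in the analogous step of \cref{cor:ffonspcs}.
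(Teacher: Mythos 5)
Your proposal is correct and is essentially the paper's own proof: the paper derives \cref{cor:lffon1cats} ``by the same argument as for \cref{cor:ffonspcs}'', which is exactly what you carry out, with the identification $C_{2}^{\leq 1} \simeq \partial C_{2}$ reducing the pullback condition at $\tX = C_{2}$ to the orthogonality criterion of \cref{propn:lfffcond}, and the converse propagated from the cells $[0],[1],C_{2}$ along the Yoneda and Segal colimits preserved by the localization. Your extra care about why $(\blank)^{\leq 1}$ (a right adjoint) nevertheless respects these particular colimits --- it is computed by restriction at the presheaf level --- correctly fills in the detail the paper leaves implicit.
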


\begin{cor}\label{cor:lfinclcond}
  A functor of \itcats{} is a locally full inclusion \IFF{} it is
  right orthogonal to $\partial C_{2} \to C_{2}$ and
  $[0] \amalg [0] \to [0]$. \qed
\end{cor}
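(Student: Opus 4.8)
The plan is to characterize locally full inclusions by combining the criterion for locally fully faithful functors from \cref{propn:lfffcond} with the condition on underlying \igpds{}. Recall that by definition a functor $F \colon \tC \to \tD$ is a locally full inclusion precisely when it is locally fully faithful \emph{and} the induced map $\tC^{\simeq} \to \tD^{\simeq}$ is a monomorphism of \igpds{}. By \cref{propn:lfffcond}, local full faithfulness is equivalent to right orthogonality to $\partial C_{2} \to C_{2}$, so it remains to show that the condition ``$\tC^{\simeq} \to \tD^{\simeq}$ is a monomorphism'' is captured by right orthogonality to $[0] \amalg [0] \to [0]$.

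First I would unwind what right orthogonality to $[0] \amalg [0] \to [0]$ means. Applying $\Map(\blank, \blank)$ against $F$, this map requires that the commutative square
\[
  \begin{tikzcd}
    \Map([0], \tC) \ar[r] \ar[d] & \Map([0], \tD) \ar[d] \\
    \Map([0] \amalg [0], \tC) \ar[r] & \Map([0] \amalg [0], \tD)
  \end{tikzcd}
\]
be a pullback. Since $\Map([0], \tX) \simeq \tX^{\simeq}$ and $\Map([0] \amalg [0], \tX) \simeq \tX^{\simeq} \times \tX^{\simeq}$, this square is exactly
\[
  \begin{tikzcd}
    \tC^{\simeq} \ar[r] \ar[d] & \tD^{\simeq} \ar[d] \\
    \tC^{\simeq} \times \tC^{\simeq} \ar[r] & \tD^{\simeq} \times \tD^{\simeq},
  \end{tikzcd}
\]
where the vertical maps are the diagonals. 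By \cref{obs:monopb}, this square being cartesian is precisely the statement that $\tC^{\simeq} \to \tD^{\simeq}$ is a monomorphism of \igpds{}.

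Combining these two observations gives the result: $F$ is a locally full inclusion \IFF{} it is locally fully faithful and $\tC^{\simeq} \to \tD^{\simeq}$ is a monomorphism, \IFF{} it is right orthogonal to both $\partial C_{2} \to C_{2}$ (by \cref{propn:lfffcond}) and $[0] \amalg [0] \to [0]$ (by the identification above together with \cref{obs:monopb}). I do not anticipate a serious obstacle here, as both pieces are essentially immediate translations; the only point requiring mild care is correctly identifying the orthogonality square against $[0] \amalg [0] \to [0]$ with the diagonal square of \cref{obs:monopb}, which is why I have made the values of the relevant mapping spaces explicit above.
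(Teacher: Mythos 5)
Your proof is correct and matches the paper's intended argument: the corollary is stated with a \qed{} precisely because it follows immediately by combining \cref{propn:lfffcond} (right orthogonality to $\partial C_{2} \to C_{2}$ captures local full faithfulness) with the identification, via \cref{obs:monopb}, of right orthogonality to $[0] \amalg [0] \to [0]$ with $\tC^{\simeq} \to \tD^{\simeq}$ being a monomorphism. Your explicit unwinding of the orthogonality square against $[0] \amalg [0] \to [0]$ as the diagonal square on underlying \igpds{} is exactly the right translation.
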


\begin{propn}\label{propn:lfcond}
  The following are equivalent for a functor of \itcats{} $F \colon
  \tC \to \tD$:
  \begin{enumerate}
  \item $F$ is locally faithful.
  \item $F$ is right orthogonal to $\partial C_{3} \to C_{2}$.
  \end{enumerate}
\end{propn}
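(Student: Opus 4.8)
The plan is to mirror the structure of the proof of \cref{propn:lfffcond} exactly, since the two statements are completely parallel: locally fully faithful corresponds to $\partial C_2 \to C_2$, so locally faithful should correspond to $\partial C_3 \to C_2$, with the 3-cell $C_3$ playing the role that $C_2$ played one dimension down. The key observation is that right orthogonality to $\partial C_3 \to C_2$ is, after applying $\Map(\blank, \tC)$, a statement about a pullback square of \igpds{}, and taking fibres over a point reduces a faithfulness condition on mapping \icats{} to a monomorphism condition on their mapping spaces.

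Concretely, first I would unwind right orthogonality to $\partial C_3 \to C_2$ using the Yoneda-style reformulation: $F$ is right orthogonal to this map \IFF{} the commutative square
\[
  \begin{tikzcd}
    \Map(C_2,\tC) \arrow{r} \arrow{d} & \Map(C_2,\tD) \arrow{d} \\
    \Map(\partial C_3,\tC) \arrow{r} & \Map(\partial C_3,\tD)
  \end{tikzcd}
\]
is a pullback of \igpds{}. Using the description $\partial C_3 \simeq C_2 \amalg_{\partial C_2} C_2$ from the notation in \S\ref{sec:pushouts}, we have $\Map(\partial C_3, \tC) \simeq \Map(C_2,\tC) \times_{\Map(\partial C_2,\tC)} \Map(C_2,\tC)$; that is, this is the space of pairs of parallel 2-morphisms $\alpha,\beta \colon f \Rightarrow g$ for a fixed pair of parallel 1-morphisms $f,g \colon x \to y$. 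The square being a pullback is then equivalent to getting an equivalence on fibres over every such point of $\Map(\partial C_3,\tC)$, \ie{} over every pair $(\alpha,\beta)$ of parallel 2-morphisms in $\tC$.

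The heart of the argument is computing this map on fibres. Fixing $f,g \colon x \to y$ and a pair $\alpha,\beta \in \tC(x,y)(f,g)$, the fibre of $\Map(C_2,\tC) \to \Map(\partial C_3,\tC)$ over $(\alpha,\beta)$ is the mapping space $\tC(x,y)(f,g)(\alpha,\beta)$ inside the mapping \icat{} $\tC(x,y)$, and similarly on the $\tD$ side. So the induced map on fibres is
\[
  \tC(x,y)(f,g)(\alpha,\beta) \to \tD(Fx,Fy)(Ff,Fg)(F\alpha,F\beta),
\]
which is the map on mapping spaces between parallel 2-morphisms of the functor $\tC(x,y) \to \tD(Fx,Fy)$. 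This map is an equivalence for all choices \IFF{} each $\tC(x,y) \to \tD(Fx,Fy)$ induces a monomorphism on all mapping spaces, which is exactly the condition that $\tC(x,y) \to \tD(Fx,Fy)$ be faithful, \ie{} that $F$ be locally faithful. This gives the equivalence of (1) and (2).

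I expect the main subtlety to lie in correctly identifying the fibres in the penultimate step---specifically, checking that the fibre of $\Map(C_2, \tC) \to \Map(\partial C_3, \tC)$ over a pair of parallel 2-morphisms really is the mapping space $\tC(x,y)(f,g)(\alpha,\beta)$, since this requires carefully tracking the iterated mapping-space structure and using \cref{cor:aroplfibres} (or the direct description of $\tC(x,y)$ as a fibre) to pass between the ambient \itcat{} $\tC$ and its mapping \icats{}. The rest is a faithful transcription of the $\partial C_2 \to C_2$ argument, with the monomorphism criterion from \cref{obs:monopb} replacing the full-faithfulness criterion used in the previous proposition.
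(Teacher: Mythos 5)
Your proof is correct and takes essentially the same route as the paper: both unwind right orthogonality to $\partial C_{3} \to C_{2}$ as the same pullback square of mapping \igpds{} (using $\partial C_{3} \simeq C_{2} \amalg_{\partial C_{2}} C_{2}$) and reduce it to the condition that $\tC(x,y)(f,g) \to \tD(Fx,Fy)(Ff,Fg)$ be a monomorphism for every pair of parallel 1-morphisms, \ie{} local faithfulness. The only (harmless) difference is bookkeeping: the paper takes fibres over pairs of parallel 1-morphisms and applies the diagonal-square criterion for monomorphisms from \cref{obs:monopb}, whereas you descend one level further, taking fibres over pairs of parallel 2-morphisms and invoking the equivalent path-space characterization of monomorphisms.
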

\begin{proof}
  The functor $F$ is right orthogonal to
  $\partial C_{3} \to C_{2}$ \IFF{} the
  commutative square
  \[
    \begin{tikzcd}
      \tC_{2} \arrow{r} \arrow{d} & \tD_{2} \arrow{d} \\
      \tC_{2} \times_{\tC_{1} \times_{\tC_{0}^{\times 2}} \tC_{1}}
      \tC_{2} \arrow{r} &
      \tD_{2} \times_{\tD_{1} \times_{\tD_{0}^{\times 2}} \tD_{1}}
        \tD_{2}      
    \end{tikzcd}
  \]
  is cartesian. This is a commutative square of \igpds{} over the map
  $\tC_{1} \times_{\tC_{0}^{\times 2}} \tC_{1} \to \tD_{1}
  \times_{\tD_{0}^{\times 2}} \tD_{1}$, so it is cartesian \IFF{} we
  get a pullback square on fibres over each point of $\tC_{1}
  \times_{\tC_{0}^{\times 2}} \tC_{1}$, that is for each pair of
  parallel morphisms $f,g \colon x \to y$ in $\tC$. If we abbreviate
  $\tC(f,g) := \tC(x,y)(f,g)$ then the square of fibres over $(f,g)$
  is 
  \[
    \begin{tikzcd}
      \tC(f,g) \arrow{r} \arrow{d} & \tD(Ff,Fg) \arrow{d} \\
      \tC(f,g)^{\times 2} \arrow{r} & \tD(Ff,Fg)^{\times 2},
    \end{tikzcd}
  \]
  which is cartesian precisely if $\tC(f,g) \to \tD(Ff,Fg)$ is a
  monomorphism. Thus (1) is equivalent to (2).
\end{proof}

\begin{propn}\label{propn:licond}
  The following are equivalent for a functor of \itcats{} $F \colon
  \tC \to \tD$:
  \begin{enumerate}
  \item $F$ is locally an inclusion.
  \item $F$ is right orthogonal to $\partial C_{3} \to C_{2}$ and
    $\partial C_{2} \to C_{1}$.
  \end{enumerate}
\end{propn}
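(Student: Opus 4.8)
The plan is to follow exactly the pattern established in \cref{propn:lfffcond} and \cref{propn:lfcond}, combining them. Recall that ``locally an inclusion'' means that each mapping functor $\tC(c,c') \to \tD(Fc,Fc')$ is a subcategory inclusion of \icats{}, which by definition means it is \emph{faithful} and induces a monomorphism on underlying \igpds{}. The right orthogonality to $\partial C_{3} \to C_{2}$ will encode the faithfulness condition (this is precisely what \cref{propn:lfcond} establishes), while the right orthogonality to $\partial C_{2} \to C_{1}$ should encode the extra condition on underlying \igpds{}.

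First I would unwind the condition that $F$ is right orthogonal to $\partial C_{2} \to C_{1}$. Writing $\tC_{i} := \Map([i],\tC)$ and $\tC_{2} := \Map(C_{2},\tC)$ as before, this orthogonality says that the square with top row $\tC_{2} \to \tD_{2}$ and bottom row $\tC_{1} \to \tD_{1}$ (mapping out of $C_{2}$ versus $C_{1} = [1]$, where the map $\partial C_{2} \to C_{1}$ collapses the two parallel edges to one) is a pullback. As in the earlier proofs, this square lives over the map $\tC_{1} \times_{\tC_{0}^{\times 2}} \tC_{1} \to \tD_{1} \times_{\tD_{0}^{\times 2}} \tD_{1}$ classifying pairs of parallel morphisms; taking fibres over a pair $(f,g) \colon x \to y$ in $\tC$, the map on fibres becomes $\tC(f,g) \to \tD(Ff,Fg)$ where this space is empty unless $f = g$, in which case it records the \igpd{} of $2$-morphisms. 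The upshot is that right orthogonality to $\partial C_{2} \to C_{1}$ is equivalent to the maps $\tC(x,y)^{\simeq} \to \tD(Fx,Fy)^{\simeq}$ being monomorphisms of \igpds{} for all $x,y$.

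The core of the argument is then a clean combination: $F$ is locally an inclusion \IFF{} each $\tC(x,y) \to \tD(Fx,Fy)$ is a subcategory inclusion \IFF{} (by the definition of subcategory inclusion) each such functor is faithful \emph{and} induces a monomorphism on underlying \igpds{}. By \cref{propn:lfcond} the faithfulness of all the $\tC(x,y) \to \tD(Fx,Fy)$ is equivalent to right orthogonality to $\partial C_{3} \to C_{2}$, and by the computation just sketched the monomorphism condition on underlying \igpds{} is equivalent to right orthogonality to $\partial C_{2} \to C_{1}$. Since both conditions are needed simultaneously, (1) is equivalent to (2).

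The main obstacle, though it is largely bookkeeping, is verifying carefully that the fibre of $\tC_{2} \to \tC_{1}$ over a $1$-morphism $f$ correctly recovers the underlying \igpd{} $\tC(x,y)^{\simeq}$ rather than some larger mapping \icat{}, i.e.\ that collapsing to $C_{1} = [1]$ (as opposed to $\partial C_{2}$) has the effect of remembering only endomorphisms of a single $1$-morphism up to invertible $2$-morphisms, so that the monomorphism condition is on the groupoid core and not on the full mapping \icat{}. Once this identification of fibres is made precise --- using the same $2$-fold Segal space bookkeeping as in the cited propositions --- the equivalence of the two conditions follows formally from the definition of subcategory inclusion as ``faithful plus monomorphism on cores.''
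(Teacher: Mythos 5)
Your overall skeleton is the same as the paper's: invoke \cref{propn:lfcond} to handle faithfulness via $\partial C_3 \to C_2$, unwind orthogonality to $\partial C_2 \to C_1$ as the monomorphism condition on cores, and conclude from the definition of subcategory inclusion. But the unwinding of the second orthogonality condition --- which is the only new content in this proof --- is wrong as written. For $F$ right orthogonal to $\partial C_2 \to C_1$, the relevant square has top row $\Map(C_1,\tC) = \tC_1 \to \tD_1$ and bottom row $\Map(\partial C_2,\tC) \simeq \tC_1 \times_{\tC_0^{\times 2}} \tC_1 \to \tD_1 \times_{\tD_0^{\times 2}} \tD_1$; the object $\tC_2 = \Map(C_2,\tC)$ does not appear at all. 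The square you describe, comparing $\tC_2 \to \tD_2$ with $\tC_1 \to \tD_1$, is the orthogonality square for the collapse map $C_2 \to [1]$, which by \cref{obs:2cons} characterizes \emph{conservativity on $2$-morphisms} --- a strictly weaker condition: a functor identifying two inequivalent parallel $1$-morphisms with no $2$-morphisms between them is locally faithful and conservative on $2$-morphisms, yet not locally an inclusion. Your fibre identification compounds this: the fibre of $\tC_2$ over a parallel pair $(f,g)$ is the full $2$-morphism space $\tC(f,g)$, which is certainly \emph{not} empty when $f \not\simeq g$; the space that vanishes unless $f \simeq g$ is the fibre of $\tC_1 \to \tC_1 \times_{\tC_0^{\times 2}} \tC_1$, namely the space of \emph{invertible} $2$-morphisms $f \simeq g$. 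You flag exactly this identification as an unresolved ``obstacle'' in your last paragraph, but it is the entire substance of the proposition, so the proposal has a genuine gap precisely there.

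The correct computation (which is what the paper does) is short: the orthogonality square above is a square of \igpds{} lying over $\tC_0^{\times 2} \to \tD_0^{\times 2}$, so it is cartesian \IFF{} for every pair of objects $(x,y)$ the induced square on fibres
\[
  \begin{tikzcd}
    \tC(x,y)^{\simeq} \arrow{r} \arrow{d} & \tD(Fx,Fy)^{\simeq} \arrow{d} \\
    (\tC(x,y)^{\simeq})^{\times 2} \arrow{r} & (\tD(Fx,Fy)^{\simeq})^{\times 2}
  \end{tikzcd}
\]
is a pullback, which by \cref{obs:monopb} says exactly that $\tC(x,y)^{\simeq} \to \tD(Fx,Fy)^{\simeq}$ is a monomorphism of \igpds{}. (Your alternative idea of taking fibres over parallel pairs does work if applied to the correct square: the map on fibres over $(f,g)$ is then the map on spaces of invertible $2$-morphisms $f \simeq g$, and asking this to be an equivalence for all parallel pairs is again equivalent to the mono-on-cores condition; but it must not be conflated with $\tC(f,g) \to \tD(Ff,Fg)$, which is the faithfulness datum already covered by \cref{propn:lfcond}.) With this computation in place, combining with \cref{propn:lfcond} finishes the proof exactly as you propose.
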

\begin{proof}
  The functor $F$ is right orthogonal to
  $\partial C_{2} \to C_{1}$ \IFF{} the
  commutative square
  \[
    \begin{tikzcd}
      \tC_{1} \arrow{r} \arrow{d} & \tD_{1} \arrow{d} \\
      \tC_{1} \times_{\tC_{0}^{\times 2}} \tC_{1}
      \arrow{r} &
      \tD_{1} \times_{\tD_{0}^{\times 2}} \tD_{1}
    \end{tikzcd}
  \]
  is cartesian. This is a commutative square of \igpds{} over the map
  $\tC_{0}^{\times 2} \to \tD_{0}^{\times 2}$, so it is cartesian \IFF{} we
  get a pullback square on fibres over each point of $\tC_{0}^{\times
    2}$, that is if for all $x,y \in \tC$ we have a pullback square
  \[
    \begin{tikzcd}
      \tC(x,y)^{\simeq} \arrow{r} \arrow{d} & \tD(Fx,Fy)^{\simeq} \arrow{d} \\
      (\tC(x,y)^{\simeq})^{\times 2}
      \arrow{r} &
(\tD(Fx,Fy)^{\simeq})^{\times 2}.
    \end{tikzcd}
  \]
  This is equivalent to $\tC(x,y)^{\simeq} \to \tD(Fx,Fy)^{\simeq}$
  being a monomorphism of \igpds{}. Combined with \cref{propn:lfcond}
  this shows that the conditions are equivalent.
\end{proof}

\begin{propn}\label{propn:inclcond}
  The following are equivalent for a functor $F \colon \tC \to \tD$ of \itcats{}:
  \begin{enumerate}[(1)]
  \item $F$ is an inclusion.
  \item $F$ is right orthogonal to $\partial [1] \to [0]$, $\partial
    C_{2} \to [1]$ and $\partial C_{3} \to C_{2}$.
  \item $F$ is right orthogonal to $[0] \amalg [0] \to [0]$, $[1]
    \amalg [1] \to [1]$ and $C_{2} \amalg C_{2} \to C_{2}$.
  \item $F$ is a monomorphism in $\CatIT$.
  \item The commutative square
    \begin{equation}
      \label{eq:tcatmonosq}
      \begin{tikzcd}
        \tC \arrow{r} \arrow{d} & \tD \arrow{d} \\
        \tC \times \tC \arrow{r} & \tD \times \tD
      \end{tikzcd}
    \end{equation}
    is a pullback.
  \end{enumerate}
\end{propn}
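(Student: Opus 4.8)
The plan is to establish the chain $(1) \Leftrightarrow (2) \Leftrightarrow (3) \Leftrightarrow (4) \Leftrightarrow (5)$, building on the characterization of local inclusions in \cref{propn:licond} together with the monomorphism-detection criterion of \cref{obs:monopb}.

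First I would dispose of the ``global'' conditions $(3)$, $(4)$, $(5)$. Each generating map in $(3)$ is a fold map $X \amalg X \to X$ for $X = [0], [1], C_{2}$, and \cref{obs:monopb} identifies right orthogonality to such a fold with the assertion that $\Map(X, F) \colon \Map(X, \tC) \to \Map(X, \tD)$ is a monomorphism of \igpds{}. Condition $(4)$ is by definition the statement that $\Map(\tX, F)$ is a monomorphism for every \itcat{} $\tX$, so $(4) \Rightarrow (3)$ is immediate; conversely, since $[0], [1], C_{2}$ generate $\CatIT$ under the Yoneda and Segal colimits (exactly as used in the proof of \cref{cor:ffonspcs}), each $\Map(\tX, F)$ is a limit of the maps $\Map(X, F)$ with $X$ a cell, and monomorphisms are closed under limits, giving $(3) \Rightarrow (4)$. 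Finally $(4) \Leftrightarrow (5)$ is \cref{obs:monopb} applied to $F$ as a morphism of the presentable \icat{} $\CatIT$, which has binary products.

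Next, for $(1) \Leftrightarrow (2)$: by \cref{propn:licond}, $F$ is locally an inclusion \IFF{} it is right orthogonal to $\partial C_{3} \to C_{2}$ and $\partial C_{2} \to [1]$, and being an inclusion adds only that $\tC^{\simeq} \to \tD^{\simeq}$ be a monomorphism. Since $\Map([0], \tC) \simeq \tC^{\simeq}$, \cref{obs:monopb} shows this last condition is precisely right orthogonality to $\partial [1] = [0] \amalg [0] \to [0]$, so the three conditions of $(2)$ together say exactly that $F$ is an inclusion.

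It remains to bridge the local and global descriptions via $(2) \Leftrightarrow (3)$, which I expect to be the main obstacle. The first maps of the two lists agree, since $\partial [1] = [0] \amalg [0]$. For the other two I would argue using that the left orthogonal class is saturated together with the pasting law for right orthogonality (if $F$ is right orthogonal to $a$ and to $b \circ a$, then it is right orthogonal to $b$, by pasting the mapping-space squares). The fold $[1] \amalg [1] \to [1]$ factors as $[1] \amalg [1] \to \partial C_{2} \to [1]$ with the first map a pushout of $[0] \amalg [0] \to [0]$, and the fold $C_{2} \amalg C_{2} \to C_{2}$ factors as $C_{2} \amalg C_{2} \to \partial C_{3} \to C_{2}$, where $C_{2} \amalg C_{2} \to \partial C_{3}$ is a cobase change of the fold $\partial C_{2} \amalg \partial C_{2} \to \partial C_{2}$. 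The crux is that this last fold lies in the saturation of $\{[0] \amalg [0] \to [0],\ [1] \amalg [1] \to [1]\}$, which one sees by first identifying the two pairs of objects (pushouts of $[0] \amalg [0] \to [0]$) and then the two pairs of parallel edges (pushouts of $[1] \amalg [1] \to [1]$). Granting this, saturation yields $(2) \Rightarrow (3)$ directly, while the pasting law converts the two factorizations into the cancellations needed for $(3) \Rightarrow (2)$.
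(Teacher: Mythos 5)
Your proposal is correct and follows essentially the same route as the paper's proof: (1)$\Leftrightarrow$(2) via \cref{propn:licond} together with \cref{obs:monopb}, the equivalence of (3), (4), (5) via \cref{obs:monopb} and the fact that $[0]$, $[1]$, $C_{2}$ generate $\CatIT$ under colimits (equivalently, detect equivalences), and (2)$\Leftrightarrow$(3) via the same factorizations $[1] \amalg [1] \to \partial C_{2} \to [1]$ and $C_{2} \amalg C_{2} \to \partial C_{3} \to C_{2}$ with the glue maps identified as cobase changes of folds. The only organizational difference is that you run the saturation argument in the direction (2)$\Rightarrow$(3) and invoke the cancellation property of right orthogonality for (3)$\Rightarrow$(2), whereas the paper does the reverse (observing directly that $\partial C_{2} \to [1]$ and $\partial C_{3} \to C_{2}$ are cobase changes of the fold maps); the underlying pushout identifications, including your explicit decomposition of the fold $\partial C_{2} \amalg \partial C_{2} \to \partial C_{2}$ into object- and edge-identifications, are the same.
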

\begin{proof}
  The equivalence of the first two conditions follows from
  \cref{propn:lfcond}, since the additional condition of being right
  orthogonal to $\partial [1] \to [0]$ precisely corresponds to $F$
  being a monomorphism on spaces of objects.  Moreover, the last three
  conditions are equivalent by \cref{obs:monopb} and the fact that
  $[0],[1]$ and $C_{2}$ together detect equivalences in $\CatIT$.

  It therefore only remains to compare (2) and (3). On the one hand,
  from the colimits defining $\partial C_{2}$ and $\partial C_{3}$ we
  see that the maps $\partial C_{2} \to [1]$ and $\partial C_{3} \to
  C_{2}$ are in the saturation of the maps from (3). On the other
  hand, we have factorizations
  \[ [1] \amalg [1] \xto{i} \partial C_{2} \to [1], \]
  \[ C_{2} \amalg C_{2} \xto{j} \partial C_{3} \to C_{2},\] where $i$
  is in the saturation of $\partial [1] \to [0]$ and $j$ is in the
  saturation of $\partial [1] \to [0]$ and $\partial C_{2} \to [1]$.
\end{proof}

To see that our definitions of these classes of sub-\itcats{} makes
sense, we should establish some further relations between them --- for
example, any full sub-\itcat{} certainly ought to be a
sub-\itcat{}. The key to proving this in terms of lifting properties
is the following result of Martini:

\begin{propn}[Martini, \cite{MartiniYoneda}*{Lemma 3.8.8}]\label{propn:consff}
  In $\CatI$, the map $[1] \to [0]$ is contained in the saturated class generated
  by $\partial [1] \to [1]$. \qed
\end{propn}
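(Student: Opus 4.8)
The plan is to produce $[1] \to [0]$ as the left factor of a factorization generated by $\partial[1] \to [1]$, and then to show that the corresponding right factor is an equivalence. Write $S := \{\partial [1] \to [1]\}$ and let $\overline{S}$ denote its saturation. First I would apply the small object argument (see \cite{HTT}*{\S 5.2.8}) to factor the functor $[1] \to [0]$ as
\[ [1] \xrightarrow{\ i\ } E \xrightarrow{\ p\ } [0], \]
where $p$ is right orthogonal to $S$ and $i$ is assembled from $S$ by coproducts, cobase changes, and transfinite composition. The reason to use the explicit construction of the small object argument, rather than merely the abstract factorization system it produces, is that these are exactly the operations under which $\overline{S}$ is closed, so the map $i$ genuinely lies in $\overline{S}$ (and not just in the a priori larger class ${}^{\perp}(S^{\perp})$).

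Next I would identify the right-hand class $S^{\perp}$. Exactly as in the proof of \cref{propn:fffcond}, a functor $g \colon \oC \to \oD$ of \icats{} is right orthogonal to $\partial[1] \to [1]$ \IFF{} the square
\[
  \begin{tikzcd}
    \Map([1],\oC) \ar[r] \ar[d] & \Map([1],\oD) \ar[d] \\
    \Map([0],\oC)^{\times 2} \ar[r] & \Map([0],\oD)^{\times 2}
  \end{tikzcd}
\]
is a pullback; taking fibres over a pair of objects $(x,y)$ this says precisely that $\oC(x,y) \to \oD(gx,gy)$ is an equivalence, \ie{} that $g$ is fully faithful. In particular $p \colon E \to [0]$ is fully faithful, so it exhibits $E$ as a full subcategory of $[0]$. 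Since the only full subcategories of $[0]$ are $\emptyset$ and $[0]$, and $E$ receives the map $i$ from the nonempty $[1]$, we must have $E \simeq [0]$ and hence $p$ is an equivalence.

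Finally, since $\overline{S}$ contains all equivalences and is closed under composition, the identification $[1] \to [0] \simeq p \circ i$ with $i \in \overline{S}$ and $p$ an equivalence yields $[1] \to [0] \in \overline{S}$, as desired. The step I expect to be the main obstacle is the first one: one must be sure that the left factor produced by the small object argument really lands in the saturated class $\overline{S}$, and not merely in ${}^{\perp}(S^{\perp})$. This is precisely what the explicit transfinite construction guarantees, using that $\overline{S}$ is closed under colimits in the arrow \icat{}, under cobase change, and under composition; the remaining verification that $S^{\perp}$ consists of the fully faithful functors is the \icatl{} shadow of \cref{propn:fffcond} and is routine.
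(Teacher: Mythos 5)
Your endgame (the identification of $S^{\perp}$ with the fully faithful functors, the observation that a nonempty full subcategory of $[0]$ is $[0]$, and the closing composition) is fine, but the first step contains a genuine gap — and in fact the factorization you describe cannot exist. The small object argument run on $S = \{\partial[1] \to [1]\}$ alone produces a right factor with only the \emph{weak} right lifting property, not right orthogonality: over $[0]$, the weak RLP against $\partial[1] \to [1]$ merely says that every ordered pair of objects of $E$ is connected by some morphism, which is far from fully faithful. Concretely, the transfinite construction freely adjoins arrows to $[1]$, so the mapping \icats{} of $E$ contain large discrete sets of freely generated words and are never contractible. Worse, your two claims are jointly contradictory: since your $p$ ends up an equivalence, they would place $[1] \to [0]$ itself in the \emph{weak} saturation of $S$ (coproducts, cobase changes, transfinite composites, retracts). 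But the functor $B\mathbb{N} \to [0]$ has the weak RLP against $\partial[1] \to [1]$ (in any lifting square the identity morphism provides a lift), while the square whose top edge is the generator $[1] \to B\mathbb{N}$ admits no lift, since the generator is not homotopic to the identity; hence $[1] \to [0]$ does not even weakly lift against $B\mathbb{N} \to [0]$ and so lies outside the weak saturation of $S$. So the "explicit transfinite construction" you lean on cannot simultaneously deliver $i$ built from $S$ by those operations and $p \in S^{\perp}$; the obstacle is not, as you suggest, the comparison of $\overline{S}$ with ${}^{\perp}(S^{\perp})$, but the uniqueness (contractibility of lift spaces) that orthogonality demands.

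The missing ingredient is precisely the extra strength of the paper's notion of saturation: closure under \emph{all} colimits in $\Ar(\CatI)$, not just transfinite composites of cobase changes. The codiagonal $[1] \amalg_{\partial[1]} [1] \to [1]$ is the pushout in $\Ar(\CatI)$ of the diagram $\id_{[1]} \leftarrow (\partial[1] \to [1]) \to \id_{[1]}$, so it lies in $\overline{S}$ although it is not in the weak saturation; iterating this construction (equivalently, forming the pushout-products of $\partial[1] \to [1]$ with the sphere inclusions $S^{n-1} \to \ast$, using that the pushout-product preserves colimits in each variable) produces a family of maps in $\overline{S}$ with the property that weak lifting against the whole family is equivalent to right orthogonality against $\partial[1] \to [1]$. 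Running the small object argument against this enlarged family repairs your proof: the left factor then genuinely lies in $\overline{S}$, the right factor is fully faithful over $[0]$, and your remaining steps close the argument. Note that the paper offers no proof of its own here — it defers to Martini's Lemma 3.8.8, whose argument supplies exactly this codiagonal/colimit-in-the-arrow-category step that your proposal omits.
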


\begin{observation}
  A functor of \icats{} $F \colon \oC \to \oD$ is right orthogonal to
  $[1] \to [0]$ \IFF{} it is \emph{conservative}: The right
  orthogonality condition says that the square
  \[
    \begin{tikzcd}
      \oC_{0} \arrow{r} \arrow[hookrightarrow]{d} & \oD_{0}
      \arrow[hookrightarrow]{d} \\
      \oC_{1} \arrow{r} & \oD_{1}
    \end{tikzcd}
  \]
  is cartesian, and here we can identify the vertical morphisms with
  the inclusions of the equivalences into the spaces of morphisms; the
  square is therefore a pullback precisely when a morphism in $\oC$ is
  an equivalence \IFF{} its image in $\oD$ is one. Thus
  \cref{propn:consff} says that a fully faithful functor is
  necessarily conservative.
\end{observation}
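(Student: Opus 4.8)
The plan is to unwind the right-orthogonality condition using the reformulation in \cref{def:rightorth} and then identify the resulting pullback condition with conservativity. First I would apply \cref{def:rightorth} with $f \colon [1] \to [0]$ and $g = F \colon \oC \to \oD$, working in the ambient \icat{} $\CatI$. This says that $F$ is right orthogonal to $[1] \to [0]$ precisely when the commutative square of mapping spaces
\[
  \begin{tikzcd}
    \Map([0], \oC) \arrow{r} \arrow{d} & \Map([0], \oD) \arrow{d} \\
    \Map([1], \oC) \arrow{r} & \Map([1], \oD)
  \end{tikzcd}
\]
is cartesian. Using $\Map([0], \oC) \simeq \oC_{0}$ (the space of objects) and $\Map([1], \oC) \simeq \oC_{1}$ (the space of morphisms), this is exactly the square appearing in the statement.

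Next I would identify the vertical maps. The map $\oC_{0} \to \oC_{1}$ is induced by the degeneracy $[1] \to [0]$ and so sends an object $x$ to the identity $\id_{x}$. Since $[0]$ is the localization of $[1]$ at its unique nonidentity morphism, the induced map on mapping spaces is the inclusion of the subspace of $\oC_{1}$ spanned by those morphisms that are inverted, namely the equivalences; equivalently this follows from the completeness condition for the underlying Segal space (cf.\ \cref{obs:2foldseg}). In particular both vertical maps are monomorphisms picking out the equivalences. The square being cartesian therefore says that $\oC_{0}$ is the preimage of $\oD_{0}$ along $F_{1} \colon \oC_{1} \to \oD_{1}$; since equivalences always map to equivalences, this amounts to the assertion that a morphism of $\oC$ is an equivalence whenever its image under $F$ is one, which is precisely conservativity. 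This gives the desired equivalence of conditions.

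Finally, for the stated consequence about fully faithful functors, I would combine this equivalence with \cref{propn:consff}. A fully faithful functor of \icats{} is by definition right orthogonal to $\partial [1] \to [1]$, and the class of maps left orthogonal to a fixed $F$ is saturated; since \cref{propn:consff} places $[1] \to [0]$ in the saturation of $\partial [1] \to [1]$, such an $F$ is also right orthogonal to $[1] \to [0]$ and hence, by the first part, conservative.

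The main obstacle is the middle step: correctly identifying the vertical map $\oC_{0} \to \oC_{1}$ with the inclusion of equivalences rather than merely the identities. This is exactly where the completeness condition (or, model-independently, the fact that $[0]$ is the localization of $[1]$) enters; everything else is a formal unwinding of \cref{def:rightorth} and the saturation property of left-orthogonal classes.
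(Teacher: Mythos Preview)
Your proposal is correct and follows essentially the same reasoning as the paper; the observation itself contains its justification inline, and your expansion simply makes explicit the identification of the vertical maps with the inclusion of equivalences (via completeness/localization) and spells out the saturation argument for the final consequence.
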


\begin{cor}\label{cor:ffisincl}
  If a functor of \icats{} $F \colon \oC \to \oD$ is fully faithful, then it is also a subcategory inclusion.  
\end{cor}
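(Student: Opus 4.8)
The plan is to check the two conditions defining a subcategory inclusion separately. Faithfulness is immediate: since $F$ is fully faithful, each map $\oC(c,c') \to \oD(Fc,Fc')$ is an equivalence of \igpds{}, hence in particular a monomorphism. The substantive point is therefore to show that $\oC^{\simeq} \to \oD^{\simeq}$ is a monomorphism.

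The key extra ingredient is that $F$ is \emph{conservative}: as explained in the observation preceding this corollary, a fully faithful functor is right orthogonal to $\partial[1] \to [1]$, so by \cref{propn:consff} and the saturation of the class of maps left orthogonal to $F$ it is also right orthogonal to $[1] \to [0]$. By \cref{obs:monopb} it then suffices to show that the diagonal $\oC^{\simeq} \to \oC^{\simeq} \times_{\oD^{\simeq}} \oC^{\simeq}$ is an equivalence.

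To see this, I would combine two pullback squares. On the one hand, conservativity says (again by the preceding observation) that the square whose vertical maps are the inclusions of the equivalences into $\Map([1],\blank)$ is cartesian, which identifies $\oC^{\simeq}$ (regarded, via $c \mapsto \id_{c}$, as the \igpd{} of equivalences in $\oC$) with $\Map([1],\oC) \times_{\Map([1],\oD)} \oD^{\simeq}$. On the other hand, full faithfulness (right orthogonality to $\partial [1] \to [1]$) gives $\Map([1],\oC) \simeq (\oC^{\simeq} \times \oC^{\simeq}) \times_{\oD^{\simeq} \times \oD^{\simeq}} \Map([1],\oD)$. Splicing these together yields
\[ \oC^{\simeq} \simeq (\oC^{\simeq} \times \oC^{\simeq}) \times_{\oD^{\simeq} \times \oD^{\simeq}} \oD^{\simeq}, \]
where the map $\oD^{\simeq} \to \oD^{\simeq} \times \oD^{\simeq}$ is the source-and-target map of equivalences; since every object has a contractible \igpd{} of equivalences out of it, this map is the diagonal, and the right-hand side is therefore $\oC^{\simeq} \times_{\oD^{\simeq}} \oC^{\simeq}$. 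Tracing the identifications back through $c \mapsto \id_{c}$ shows that the resulting map $\oC^{\simeq} \to \oC^{\simeq} \times_{\oD^{\simeq}} \oC^{\simeq}$ is exactly the diagonal, which is hence an equivalence.

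The geometric content of the last step is simply that full faithfulness lets one lift an equivalence $Fc \simeq Fc'$ to a morphism $c \to c'$, which conservativity then forces to be an equivalence; the main thing to be careful about is the bookkeeping ensuring that the chain of pullback identifications is compatible with the identity sections, so that the resulting map really is the diagonal and not merely some equivalence abstractly matching the two sides.
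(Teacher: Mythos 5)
Your argument is correct, and for its substantive half it runs on the same fuel as the paper's proof: Martini's \cref{propn:consff}, which makes a fully faithful functor conservative. The difference is one of decomposition and of which side of the orthogonality you work on. The paper stays with the generating maps: it shows that both fold maps $[0] \amalg [0] \to [0]$ and $[1] \amalg [1] \to [1]$ lie in the saturated class generated by $\partial [1] \to [1]$ --- the first as the composite $[0] \amalg [0] \to [1] \to [0]$, the second via a commutative square together with a cancellation property of saturated classes (\cite{HTT}*{Corollary 5.5.5.8}) --- so that right orthogonality to $F$ transfers. Your pasting of the conservativity square with the fully-faithfulness square is exactly the unfolded, mapping-space form of the paper's first step: right orthogonality is closed under composition, and your spliced pullback is precisely the orthogonality square for the composite $[0] \amalg [0] \to [1] \to [0]$. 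Where you genuinely deviate is in verifying the definition of a subcategory inclusion directly, with faithfulness disposed of trivially (an equivalence of \igpds{} is in particular a monomorphism); this renders the paper's second step --- the cancellation argument for $[1] \amalg [1] \to [1]$ --- unnecessary. That extra step is what upgrades the conclusion to the orthogonality characterization of monomorphisms in $\CatI$, detected on $\Map([0],\blank)$ and $\Map([1],\blank)$, matching the lifting-property calculus the paper uses for all its other subobject classes; your route is shorter and more elementary but yields only the statement itself. Your caution about the diagonal bookkeeping is well placed but easily discharged: the identity section $\oD^{\simeq} \to \Map([1],\oD)$ followed by restriction along $\partial [1] \subseteq [1]$ is restriction along $\partial [1] \to [0]$, hence literally the diagonal, and the composite identification sends $c \mapsto \id_{c} \mapsto (c,c,\id_{Fc})$, which is the diagonal of $\oC^{\simeq} \times_{\oD^{\simeq}} \oC^{\simeq}$, as you claim.
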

\begin{proof}
  From \cref{propn:consff} we see that $F$ is right orthogonal with
  respect to $\phi_{0} \colon [0] \amalg [0] \to [0]$ since this is the composite
  \[ [0] \amalg [0] \to [1] \to [0].\]
  To see that it is right orthogonal with respect to $\phi_{1} \colon [1] \amalg [1]
  \to [1]$, consider the commutative square
  \[
    \begin{tikzcd}
      {[0]^{\amalg 4}} \arrow{r} \arrow{d} & {[1] \amalg [1]} \arrow{d} \\
      {[0]\amalg [0]}  \arrow{r} & {[1]}.
    \end{tikzcd}
  \]
  Here the left vertical map is in the saturation of $\phi_{0}$ and
  the top horizontal map in the saturation of $\partial [1] \to
  [1]$. It follows that the composite $[0]^{\amalg 4} \to [1]$ is also
  in the saturation of $\phi_{0}$ and the boundary inclusion, and hence so is the
  right vertical map using \cite{HTT}*{Corollary 5.5.5.8}.
\end{proof}

\begin{observation}\label{obs:free2catsaturated}
   It follows from the
  adjunction given in \cref{obs:free2cat} that if a map $\phi$ in $\CatI$ is in the saturated class
  generated by a set $S$, then $[1](\phi)$ is in the saturated class
  generated by $[1](s)$ for $s \in S$. From \cref{propn:consff} we
  therefore get:
\end{observation}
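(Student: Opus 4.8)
The plan is to exploit the adjunction of \cref{obs:free2cat}, which exhibits $[1](\blank) \colon \CatI \to (\CatIT)_{\partial [1]/}$ as a left adjoint, its right adjoint $R$ sending an \itcat{} $\tX$ with a chosen pair of objects $x_{0},x_{1}$ to the mapping \icat{} $\tX(x_{0},x_{1})$. The guiding principle is that a colimit-preserving functor carries the saturation of a set into the saturation of its image. Concretely, I would write $\mathcal{C}$ for the class of maps $\psi$ in $\CatI$ such that $[1](\psi)$ lies in the saturated class generated by $\{[1](s) : s \in S\}$, and then check that $\mathcal{C}$ is itself saturated and contains $S$; it then contains the whole saturation of $S$, which is exactly the assertion.

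To see that $\mathcal{C}$ is saturated I would run through the three closure properties. Closure under composition and the presence of equivalences are immediate from functoriality of $[1](\blank)$, since $[1](\psi' \circ \psi) \simeq [1](\psi') \circ [1](\psi)$ and $[1]$ preserves equivalences. Closure under cobase change and under colimits in $\Ar(\CatI)$ follows because $[1](\blank)$, being a left adjoint, preserves all colimits; here it is essential to work in the coslice $(\CatIT)_{\partial [1]/}$, since as a functor to $\CatIT$ the construction does not even preserve the initial object, as $[1](\emptyset) \simeq \partial [1] \not\simeq \emptyset$.

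The main obstacle is precisely this discrepancy between the coslice and $\CatIT$: the statement is applied (via \cref{propn:consff}) to transport saturation-membership of maps such as $\partial C_{2} \to C_{2}$ into $\CatIT$, where the lifting-property characterizations of the previous sections live, whereas the colimit-preservation argument naturally yields membership in the saturated class computed inside $(\CatIT)_{\partial [1]/}$. To bridge this I would note that the forgetful functor $U \colon (\CatIT)_{\partial [1]/} \to \CatIT$ preserves pushouts, transfinite composites, and retracts --- all the operations needed to generate the saturation of a \emph{set} of maps --- so that membership in the coslice saturation entails membership in the $\CatIT$-saturation of the underlying maps.

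Alternatively, and perhaps more cleanly, I would phrase the whole argument through orthogonality, using that for a set $S$ in a presentable \icat{} the saturation $\mathrm{Sat}(S)$ equals ${}^{\perp}(S^{\perp})$ (the maps left orthogonal to everything that is right orthogonal to $S$), together with the fact, already recalled in the excerpt, that left-orthogonal classes are saturated. Given $g$ right orthogonal in $\CatIT$ to all the $[1](s)$, decomposing the mapping spaces $\Map([1](\blank), \tX)$ over the space of object-pairs of the target exhibits the orthogonality condition for $[1](\psi) \perp g$, fibrewise over a pair $(x_{0},x_{1})$, as the orthogonality condition for $\psi \perp g_{x_{0},x_{1}}$, where $g_{x_{0},x_{1}} = R(g)$ is the induced map of mapping \icats{}. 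Hence each $g_{x_{0},x_{1}}$ lies in $S^{\perp}$, and since $\phi \in \mathrm{Sat}(S) = {}^{\perp}(S^{\perp})$ we obtain $\phi \perp g_{x_{0},x_{1}}$ for every pair, that is $[1](\phi) \perp g$; this places $[1](\phi)$ in ${}^{\perp}(\{[1](s)\}^{\perp})$, the saturation computed directly in $\CatIT$. This route sidesteps the coslice bookkeeping entirely, at the cost of the (routine) fibrewise decomposition of the relevant mapping spaces, which is where I expect most of the care to be needed.
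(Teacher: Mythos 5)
Your proposal is correct; note that the paper offers no argument for this observation beyond the one-line appeal to the adjunction of \cref{obs:free2cat}, so you are supplying a proof the authors leave implicit, and both of your routes work. Of the two, the orthogonality argument is the better match for how the observation is actually deployed: downstream, \cref{cor:2consff} is only ever used through the remark following the definition of saturated classes (right orthogonality to a set implies right orthogonality to its saturation), and your fibrewise computation --- $[1](\psi) \perp g$ \IFF{} $\psi \perp g_{x_{0},x_{1}}$ for every pair $(x_{0},x_{1}) \in (\tX^{\simeq})^{\times 2}$, which is legitimate because $[1](\psi)$ restricts to the identity on $\partial[1]$, so the comparison map to the pullback of mapping spaces lives over $(\tX^{\simeq})^{\times 2}$ and is an equivalence \IFF{} it is one on fibres --- delivers exactly that conclusion directly. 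Your flagging of the coslice discrepancy in the first route is also a genuine point the paper silently elides: $[1](\blank)$ preserves colimits only as a functor to $(\CatIT)_{\partial[1]/}$ (indeed $[1](\emptyset) \simeq \partial[1]$), and the forgetful functor fails to preserve coproducts in the arrow category, so one really does need either your reduction to pushouts, transfinite composites and retracts, or the orthogonality transposition. The one caveat applies to both routes: if you insist on literal membership of $[1](\phi)$ in the saturated class, rather than in ${}^{\perp}(\{[1](s)\}^{\perp})$, you must invoke the standard fact that for a \emph{set} $S$ in a presentable \icat{} the saturation coincides with ${}^{\perp}(S^{\perp})$ (equivalently, with retracts of transfinite composites of cobase changes of maps in $S$); this is the small object argument for factorization systems and is nowhere proved in the paper. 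Since every application in the paper needs only the orthogonality consequence, your second route as stated already suffices for all uses of the observation.
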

\begin{cor}\label{cor:2consff}
  The map $C_{2} \to [1]$ is in the saturated class generated by
  $\partial C_{2} \to C_{2}$. \qed
\end{cor}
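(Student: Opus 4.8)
The plan is to obtain this as a direct consequence of Martini's result \cref{propn:consff} via the free functor $[1](\blank) \colon \CatI \to \CatIT$ of \cref{obs:free2cat}, exploiting its compatibility with saturated classes recorded in \cref{obs:free2catsaturated}.

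First I would compute the values of $[1](\blank)$ on the \icats{} that appear. Since $[1](\oC)$ is by definition the \itcat{} with two objects and single non-trivial mapping \icat{} $[1](\oC)(0,1) \simeq \oC$, we read off $[1]([0]) \simeq [1]$, $[1](\partial [1]) \simeq \partial C_{2}$, and $[1]([1]) \simeq C_{2}$; the latter two agree with the models $\partial \mathfrak{C}_{2} = [1]((\partial \Delta^{1}, \flat))$ and $\mathfrak{C}_{2} = [1]((\Delta^{1}, \flat))$ from \cref{rem:modelsforC}. Applying $[1](\blank)$ to the two maps in $\CatI$ that feature in \cref{propn:consff} then produces exactly the maps in our statement: the generator $\partial [1] \to [1]$ is sent to $\partial C_{2} \to C_{2}$, and the map $[1] \to [0]$ is sent to $C_{2} \to [1]$.

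With these identifications in hand, the result is immediate: \cref{propn:consff} places $[1] \to [0]$ in the saturated class generated by $\partial [1] \to [1]$, so \cref{obs:free2catsaturated} places $[1]([1] \to [0]) = (C_{2} \to [1])$ in the saturated class generated by $[1](\partial [1] \to [1]) = (\partial C_{2} \to C_{2})$. There is no real obstacle here beyond carefully matching the functor values; the only point deserving a moment's care is the verification that $[1](\blank)$ sends the morphisms $\partial [1] \to [1]$ and $[1] \to [0]$ to the two intended maps, which is transparent from the defining formula for $[1](\oC)$ on its single non-trivial mapping \icat{}.
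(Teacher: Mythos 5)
Your proposal is correct and is exactly the paper's argument: the corollary is stated as an immediate consequence of \cref{propn:consff} via the saturation-preservation property of the left adjoint $[1](\blank)$ recorded in \cref{obs:free2catsaturated}, together with the identifications $[1]([0]) \simeq [1]$, $[1](\partial[1]) \simeq \partial C_{2}$, and $[1]([1]) \simeq C_{2}$. Your explicit verification of these functor values (matching the models in \cref{rem:modelsforC}) is the only detail the paper leaves implicit, and it is carried out correctly.
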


\begin{observation}\label{obs:2cons}
  A functor of \itcats{} $F \colon \tC \to \tD$ is right orthogonal to
  $C_{2} \to [1]$ \IFF{} it is \emph{conservative on 2-morphisms}: The right
  orthogonality condition says that the square
  \[
    \begin{tikzcd}
      \tC_{1} \arrow{r} \arrow[hookrightarrow]{d} & \tD_{1}
      \arrow[hookrightarrow]{d} \\
      \tC_{2} \arrow{r} & \tC_{2}
    \end{tikzcd}
  \]
  is cartesian, and here we can identify the horizontal morphisms with
  the inclusions of the equivalences into the spaces of 2-morphisms; the
  square is therefore a pullback precisely when a 2-morphism in $\tC$ is
  an equivalence \IFF{} its image in $\tD$ is one. Thus
  \cref{cor:2consff} and \cref{propn:consff} together say that a fully
  faithful functor of \itcats{} is
  necessarily conservative on both 1- and 2-morphisms.
\end{observation}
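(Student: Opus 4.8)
The plan is to unwind the right-orthogonality condition with \cref{def:rightorth} and then take fibres over pairs of objects, reducing to the \icatl{} description of conservative functors recorded after \cref{propn:consff}. Abbreviating $\tC_i := \Map([i], \tC)$ for $i = 0,1$ and $\tC_2 := \Map(C_2, \tC)$ (and similarly for $\tD$), and recalling that $C_2 = [1]([1])$ while $[1] = [1]([0])$, with $C_2 \to [1]$ obtained by applying the functor $[1](\blank)$ of \cref{obs:free2cat} to $[1] \to [0]$ (see also \cref{rem:modelsforC}), \cref{def:rightorth} shows that $F$ is right orthogonal to $C_2 \to [1]$ \IFF{} the square
\[
\begin{tikzcd}
\tC_1 \arrow{r}{F_*} \arrow{d} & \tD_1 \arrow{d} \\
\tC_2 \arrow{r}{F_*} & \tD_2
\end{tikzcd}
\]
is cartesian, the vertical maps being restriction along $C_2 \to [1]$.

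First I would observe that this is a square over the map $\tC_0^{\times 2} \to \tD_0^{\times 2}$ on spaces of pairs of objects, so it is cartesian \IFF{} it becomes cartesian on fibres over each $(x,y) \in \tC_0^{\times 2}$. The adjunction of \cref{obs:free2cat} identifies the fibre of $\tC_1 \to \tC_0^{\times 2}$ over $(x,y)$ with $\Map_{\CatI}([0], \tC(x,y))$, the space of objects of the mapping \icat{} $\tC(x,y)$ (\ie{} of $1$-morphisms $x \to y$), and the fibre of $\tC_2 \to \tC_0^{\times 2}$ with $\Map_{\CatI}([1], \tC(x,y))$, its space of morphisms (\ie{} of $2$-morphisms). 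Since $C_2 \to [1]$ is $[1](\blank)$ applied to $[1] \to [0]$, the vertical restriction map becomes, on these fibres, the map $\Map_{\CatI}([0], \tC(x,y)) \to \Map_{\CatI}([1], \tC(x,y))$ induced by $[1] \to [0]$; in other words, the fibre square over $(x,y)$ is precisely the \icatl{} orthogonality square for $\tC(x,y) \to \tD(Fx,Fy)$ against $[1] \to [0]$.

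By the \icatl{} analysis following \cref{propn:consff}, each such fibre square is cartesian \IFF{} $\tC(x,y) \to \tD(Fx,Fy)$ is conservative, and its vertical maps are the inclusions of the equivalences of $\tC(x,y)$ and $\tD(Fx,Fy)$ into their spaces of morphisms. Assembling over all $(x,y)$, this identifies the vertical maps $\tC_1 \to \tC_2$ and $\tD_1 \to \tD_2$ with the inclusions of the invertible $2$-morphisms into all $2$-morphisms, and shows that the big square is cartesian \IFF{} every $\tC(x,y) \to \tD(Fx,Fy)$ is conservative, \ie{} \IFF{} a $2$-morphism of $\tC$ is invertible exactly when its image under $F$ is --- which is conservativity on $2$-morphisms. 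The only genuinely delicate point is the fibrewise bookkeeping: one must check that restriction along $C_2 \to [1]$ corresponds, under the adjunction of \cref{obs:free2cat} applied fibrewise over $\tC_0^{\times 2}$, to restriction along $[1] \to [0]$ on each mapping \icat{}. Once this compatibility is established, the \icatl{} case supplies the rest.
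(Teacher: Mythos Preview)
Your proposal is correct and follows essentially the same approach as the paper's observation, which is stated without a separate proof: both identify the vertical maps $\tC_1 \to \tC_2$ and $\tD_1 \to \tD_2$ as the inclusions of invertible 2-morphisms into all 2-morphisms, so that the pullback condition becomes precisely conservativity on 2-morphisms. You supply more detail than the paper does, carrying out the identification fibrewise over $\tC_0^{\times 2}$ via the adjunction of \cref{obs:free2cat} and thereby reducing to the \icatl{} statement for each $\tC(x,y) \to \tD(Fx,Fy)$; the paper simply asserts the identification of the vertical maps directly (and indeed writes ``horizontal'' where ``vertical'' is meant).
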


\begin{propn}
  Let $F \colon \tC \to \tD$ be a functor of \itcats{}.
  \begin{enumerate}[(i)]
  \item If $F$ is fully faithful then it is a locally full inclusion.
  \item If $F$ is locally fully faithful then it is locally an inclusion.    
  \item If $F$ is a locally full inclusion then it is an inclusion.
  \end{enumerate}
\end{propn}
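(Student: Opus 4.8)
The plan is to prove all three implications purely formally, using the right-orthogonality characterizations established above together with the saturation facts already recorded. Concretely, each of the five classes appearing in the statement has been identified with right orthogonality to an explicit finite set of maps: fully faithful functors with $\{\partial[1]\to[1],\ \partial C_2\to C_2\}$ by \cref{propn:fffcond}, locally fully faithful functors with $\{\partial C_2\to C_2\}$ by \cref{propn:lfffcond}, locally full inclusions with $\{\partial C_2\to C_2,\ [0]\amalg[0]\to[0]\}$ by \cref{cor:lfinclcond}, functors that are locally an inclusion with $\{\partial C_3\to C_2,\ \partial C_2\to[1]\}$ by \cref{propn:licond}, and inclusions with $\{\partial[1]\to[0],\ \partial C_2\to[1],\ \partial C_3\to C_2\}$ by \cref{propn:inclcond}. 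Since the class of maps left orthogonal to $F$ is saturated, each implication reduces to checking that every generator for the target class lies in the saturated class generated by the generators for the source class.

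For (i), the generator $\partial C_2\to C_2$ of the target is already a generator of the source, so it remains to see that a fully faithful $F$ is right orthogonal to $[0]\amalg[0]\to[0]$. Factoring this as $[0]\amalg[0]\to[1]\to[0]$, the first map is $\partial[1]\to[1]$ and the second lies in the saturation of $\partial[1]\to[1]$ by \cref{propn:consff}; thus the composite is in the saturation of the source generators, exactly as in \cref{cor:ffisincl}. For (ii) and (iii) the decisive input is that \emph{both} $\partial C_3\to C_2$ and $\partial C_2\to[1]$ already lie in the saturated class generated by $\partial C_2\to C_2$: the former is \cref{lem:bdry1otimes1}(ii), while the latter follows by factoring $\partial C_2\to[1]$ as $\partial C_2\to C_2\to[1]$ and invoking \cref{cor:2consff} for the second map. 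Hence a locally fully faithful functor, being right orthogonal to $\partial C_2\to C_2$, is automatically right orthogonal to the generators $\{\partial C_3\to C_2,\ \partial C_2\to[1]\}$ of \cref{propn:licond}, which gives (ii).

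For (iii), a locally full inclusion is by definition right orthogonal to $\partial C_2\to C_2$ and to $[0]\amalg[0]\to[0]=\partial[1]\to[0]$. The first condition, via the two saturation facts just recalled, supplies right orthogonality to $\partial C_2\to[1]$ and $\partial C_3\to C_2$, while the second is exactly the remaining generator $\partial[1]\to[0]$ of \cref{propn:inclcond}(2); thus $F$ is an inclusion. I expect no genuine obstacle here, as all the real work has been absorbed into the earlier lemmas: the only point requiring care is recognizing that right orthogonality to $\partial C_2\to C_2$ already forces conservativity on $2$-morphisms (through \cref{cor:2consff}, the $2$-categorical analogue of Martini's conservativity result \cref{propn:consff}) as well as local faithfulness (through \cref{lem:bdry1otimes1}(ii)), so that the object-level generators are the only data that must be supplied separately, in (i) and (iii).
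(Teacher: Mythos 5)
Your proposal is correct, and for parts (i) and (iii) it is essentially identical to the paper's proof: the same factorization $\partial[1] \to [1] \to [0]$ via \cref{propn:consff} handles the object-level generator in (i), and the same two saturation facts --- $\partial C_{2} \to [1]$ factoring as $\partial C_{2} \to C_{2} \to [1]$ with \cref{cor:2consff}, and $\partial C_{3} \to C_{2}$ lying in the saturation of $\partial C_{2} \to C_{2}$ by \cref{lem:bdry1otimes1}(ii) --- drive (iii) exactly as in the paper. The only place you deviate is (ii): the paper disposes of it in one line by working at the level of mapping \icats{}, noting that locally fully faithful means each $\tC(c,c') \to \tD(Fc,Fc')$ is fully faithful and then invoking \cref{cor:ffisincl} to conclude each is a subcategory inclusion, which is the definition of locally an inclusion. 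You instead stay entirely inside the orthogonality calculus of $\CatIT$, observing that both generators of \cref{propn:licond} lie in the saturation of the single generator $\partial C_{2} \to C_{2}$ of \cref{propn:lfffcond}. Both routes are valid and rest on the same underlying input (Martini's \cref{propn:consff}, transported to \itcats{} via $[1](\blank)$ in \cref{cor:2consff}); yours is more uniform, deriving all three parts from one saturation mechanism, while the paper's definitional shortcut for (ii) is lighter, avoiding the comparatively heavy \cref{lem:bdry1otimes1}(ii) where a reduction to the $\CatI$-level statement suffices. In effect your argument for (ii) makes explicit that the paper's proof of (iii) already contains a proof of (ii), since right orthogonality to $[0] \amalg [0] \to [0]$ is never used in establishing the two generators $\{\partial C_{3} \to C_{2},\ \partial C_{2} \to [1]\}$.
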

\begin{proof}
  It is immediate from the definition that if $F$ is fully faithful
  then it is locally fully faithful. To see that it is a locally full
  inclusion we use \cref{propn:consff}: $F$ is right orthogonal with
  respect to $\partial [1] \to [0]$ since this is the composite
  \[ \partial [1] \to [1] \to [0].\]
  This proves (i). Next, (ii) is immediate from the definition and
  \cref{cor:ffisincl}. Finally, to prove (iii) we use the
  orthogonality criterion from \cref{cor:lfinclcond}: if $F$ is a
  locally full inclusion then it is right orthogonal with respect to
  $[0] \amalg [0] \to [0]$ and $\partial C_{2} \to C_{2}$. By
  \cref{cor:2consff} this
  implies that $F$ is right orthogonal for $\partial C_{2} \to [1]$ since
  that is the composite
  \[ \partial C_{2} \to C_{2} \to [1].\]
  It is also right orthogonal for $\partial C_{3} \to C_{2}$ by
  \cref{lem:bdry1otimes1}(ii). Hence $F$ is an inclusion by 
  \cref{propn:inclcond}.
\end{proof}

\begin{cor}\label{cor:lfiisincl}
  A functor of \itcats{} is a locally full inclusion \IFF{} it is
  locally fully faithful and an inclusion. \qed
\end{cor}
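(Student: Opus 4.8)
The plan is to reduce the statement to a bookkeeping of the two definitions, using the preceding proposition as the only nontrivial input. Recall that $F \colon \tC \to \tD$ is a \emph{locally full inclusion} exactly when it is locally fully faithful and $\tC^{\simeq} \to \tD^{\simeq}$ is a monomorphism, whereas it is an \emph{inclusion} exactly when it is locally an inclusion and $\tC^{\simeq} \to \tD^{\simeq}$ is a monomorphism. Both classes thus share the condition that the map of underlying \igpds{} be a monomorphism, which is what makes the equivalence essentially formal.

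First I would treat the forward implication: assuming $F$ is a locally full inclusion, it is locally fully faithful by definition, and it is an inclusion by part (iii) of the preceding proposition. This already yields both conditions on the right-hand side.

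For the reverse implication I would assume $F$ is locally fully faithful and an inclusion. The inclusion hypothesis supplies precisely the condition that $\tC^{\simeq} \to \tD^{\simeq}$ is a monomorphism; pairing this with the standing assumption that $F$ is locally fully faithful gives exactly the definition of a locally full inclusion.

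I do not expect any genuine obstacle: the content is entirely contained in the preceding proposition, whose part (iii) feeds the forward direction. That implication is itself the only nonformal ingredient, resting on the orthogonality criterion of \cref{cor:lfinclcond} together with \cref{cor:2consff} and \cref{lem:bdry1otimes1}(ii); everything else here is a direct unwinding of the definitions.
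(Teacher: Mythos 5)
Your proof is correct and matches the paper's intended argument: the corollary is stated with a \qed precisely because, as you observe, the forward direction is the definition of locally full inclusion together with part (iii) of the preceding proposition, while the reverse direction is a purely definitional unwinding (an inclusion gives the monomorphism $\tC^{\simeq} \to \tD^{\simeq}$, which paired with local full faithfulness is the definition of a locally full inclusion). Nothing is missing.
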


\subsection{Sub-$(\infty,2)$-categories and (op)lax transformations}\label{sec:sublax}
In this section, we show that all of our classes of sub-\itcats{} are
preserved by functors of the form $\FUN(\tA,\blank)^{\plax}$. The key
to the proof is the following observation, from which we will derive
new descriptions of our subobject inclusions:

\begin{propn}\label{propn:ARoplrortho}
  For a functor $F \colon \tC \to \tD$, let
  $\Phi \colon \ARopl(\tC) \to \ARopl(\tD) \times_{\tD^{\times 2}}
  \tC^{\times 2}$ be the map to the pullback in the square
  \cref{eq:aroplsq}, that is
  \[
    \begin{tikzcd}
      \ARopl(\tC) \arrow{d} \arrow{r} & \ARopl(\tD) \arrow{d} \\
      \tC \times \tC \arrow{r} & \tD \times \tD.
    \end{tikzcd}
  \]
  Then $\Phi$ is right orthogonal to a map $f
  \colon \tK \to
  \tL$ \IFF{} $F$ is right orthogonal to
  \[ [1] \otimes \tK \amalg_{\partial [1] \otimes \tK} \partial [1]
    \otimes \tL \to [1] \otimes \tL. \]
\end{propn}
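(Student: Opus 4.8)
The plan is to probe both orthogonality conditions by mapping in, and to recognize that, after unwinding the two relevant adjunctions, each becomes the single statement that one fixed cube of $\infty$-groupoids is cartesian, computed by slicing in two different directions.

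First I would record the two mapping-space computations. Writing $P := \ARopl(\tD) \times_{\tD \times \tD} (\tC \times \tC)$ for the pullback in \cref{eq:aroplsq}, the defining adjunction for $\FUN(\blank,\blank)^{\oplax}$ gives $\Map(\tX, \ARopl(\tV)) \simeq \Map([1] \otimes \tX, \tV)$, while $\partial [1] \otimes \tX \simeq \tX \amalg \tX$ (using $[0] \otimes \tX \simeq \tX$ and that $\otimes$ preserves colimits, \cref{thm:graycolim}) identifies $\Map(\tX, \tV)^{\times 2} \simeq \Map(\partial [1] \otimes \tX, \tV)$. Hence
\[ \Map(\tX, P) \simeq \Map([1] \otimes \tX, \tD) \times_{\Map(\partial [1] \otimes \tX, \tD)} \Map(\partial [1] \otimes \tX, \tC). \]
It is convenient to organize all of this into the cube of $\infty$-groupoids with vertices $X_{ijk} := \Map(s_i \otimes t_j, V_k)$, where $s_0 = [1], s_1 = \partial [1]$; $t_0 = \tL, t_1 = \tK$; and $V_0 = \tC, V_1 = \tD$, the three edge-directions being restriction along $\partial [1] \hookrightarrow [1]$, restriction along $f$, and postcomposition with $F$. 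Its initial vertex is $X_{000} = \Map([1] \otimes \tL, \tC)$.

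Next I would unwind the two sides into conditions on this cube. By \cref{def:rightorth}, applied in $\CatIT$, $\Phi$ is right orthogonal to $f$ \IFF{} the map $X_{000} \to \Map(\tK, \ARopl(\tC)) \times_{\Map(\tK, P)} \Map(\tL, P)$ is an equivalence; substituting the formulas above rewrites the target as $X_{010} \times_{(X_{011} \times_{X_{111}} X_{110})} (X_{001} \times_{X_{101}} X_{100})$. On the other side, since $\Map(\blank, \tV)$ carries the pushout defining the domain of the displayed map to the pullback $\Map([1] \otimes \tK, \tV) \times_{\Map(\partial [1] \otimes \tK, \tV)} \Map(\partial [1] \otimes \tL, \tV)$, right orthogonality of $F$ to that map unwinds (again by \cref{def:rightorth}) to the condition that $X_{000} \to X_{001} \times_{(X_{011} \times_{X_{111}} X_{101})} (X_{010} \times_{X_{110}} X_{100})$ is an equivalence.

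Finally I would observe that both displayed equivalences are exactly the condition that the cube $X_{\bullet}$ is cartesian, i.e.\ that $X_{000}$ is the limit of the punctured cube, expressed by slicing in two different directions: the $\Phi$-condition slices in the $t$-direction (near square the $j=0$ face, far square the $j=1$ face), while the $F$-condition slices in the $V$-direction (near square the $\tC$-face, far square the $\tD$-face). By the standard pasting calculus for pullback squares, a cube is cartesian \IFF{} its near square is the pullback of its far square along the two punctured squares, for either choice of slicing direction; hence both conditions hold precisely when $X_{\bullet}$ is cartesian, and so they are equivalent, which is what we wanted. The homotopy-coherent content here is light (the two iterated pullbacks compute the same punctured-cube limit); the main obstacle is purely bookkeeping, namely identifying the eight mapping-space vertices and checking that the structural maps of $P$ and of the pushout match the edges of the cube as claimed.
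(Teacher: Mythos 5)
Your proof is correct and is essentially the paper's argument: the paper factors it through a general two-variable lifting lemma (\cref{propn:twovarlifting}, applied with $L = \otimes$, $R = \FUN(\blank,\blank)^{\oplax}$), whose proof is exactly your cube of mapping spaces analyzed via \cref{lem:cubepb} by slicing in two directions. You have simply inlined that lemma in the special case at hand, and your identification of the eight vertices and the two iterated-pullback conditions matches the paper's.
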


We will prove \cref{propn:ARoplrortho} as a special case of a more general statement, for which we need the following observation:
\begin{lemma}\label{lem:cubepb}
  Consider a cubical diagram $F \colon [1]^{\times 3} \to \oC$ where $\oC$ is a complete \icat{}. Then $F$ is a limit diagram
  \IFF{} the commutative square
  \[
    \begin{tikzcd}
      F(0,0,0) \ar[r] \ar[d] & F(0,0,1) \ar[d] \\
      F(0,1,0) \times_{F(1,1,0)} F(1,0,0) \ar[r] & F(0,1,1) \times_{F(1,1,1)} F(1,0,1)
    \end{tikzcd}
  \]
  (or one of two other analogous squares obtained by permuting the coordinates) is a pullback.
\end{lemma}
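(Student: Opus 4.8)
The plan is to reduce the statement to a single identification: that the limit of $F$ over the punctured cube $P := [1]^{\times 3} \setminus \{(0,0,0)\}$ agrees with the pullback appearing in the lower-right corner of the displayed square. Write $Q := F(0,1,0)\times_{F(1,1,0)} F(1,0,0)$ and $Q' := F(0,1,1)\times_{F(1,1,1)} F(1,0,1)$; these exist since $\oC$ is complete, and by construction $Q$ and $Q'$ are the limits of $F$ over the punctured faces $S_0^{\circ}$ and $S_1^{\circ}$ in the planes $k=0$ and $k=1$. Recall that $F$ is a limit diagram precisely when the comparison map $F(0,0,0) \to \lim_{P} F$ is an equivalence. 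The displayed square commutes: its two legs out of $F(0,0,0)$ are the comparison map to $Q$ and the transition map to $F(0,0,1)$, and both composites to $Q'$ coincide with the comparison $F(0,0,0)\to \lim_{S_1^{\circ}} F$. Granting the key claim that $\lim_P F \simeq F(0,0,1)\times_{Q'} Q$, the square is therefore a pullback \IFF{} $F(0,0,0)\to \lim_P F$ is an equivalence, i.e.\ \IFF{} $F$ is a limit diagram.

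To prove the claim I would exploit the decomposition of $P$ by the third coordinate. The subposet $S_0^{\circ} = P \cap \{k=0\}$, the cospan on $(0,1,0),(1,0,0),(1,1,0)$, is a sieve, and its complement $S_1 = P \cap \{k=1\} \cong [1]^{\times 2}$ is a cosieve. Since $(0,0,1)$ is initial in $S_1$, we have $\lim_{S_1} F \simeq F(0,0,1)$, while the three transition morphisms out of $S_0^{\circ}$ all land in the punctured face $S_1^{\circ} = S_1 \setminus \{(0,0,1)\}$, whose limit is $Q'$. Because $S_1$ is a cosieve there are no morphisms from $S_1$ back into $S_0^{\circ}$, so giving a cone over $F|_P$ amounts to giving a cone over $F|_{S_0^{\circ}}$, equivalently a map to $Q$, together with a cone over $F|_{S_1}$, equivalently a map to $F(0,0,1)$, subject only to coherence along the transition maps. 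As the transition targets all lie in $S_1^{\circ}$ and the $S_1$-cone factors through $F(0,0,1)$, this coherence is exactly a homotopy between the two induced maps to $\lim_{S_1^{\circ}} F = Q'$ (one via the transition $Q \to Q'$, one via $F(0,0,1)\to Q'$). This is the gluing formula for a limit along a sieve--cosieve decomposition, and it yields $\lim_P F \simeq F(0,0,1)\times_{Q'} Q$.

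The one step requiring genuine care is making this gluing precise, i.e.\ verifying that the transition data assembles into exactly a pullback over $Q'$ and carries no further coherence. I expect the cleanest route is to corepresent: applying $\Map(X,\blank)$, which preserves all limits and in particular the pullbacks above, reduces the claim to the analogous statement for diagrams $P \to \Spc$, where it is settled by a direct cofinality argument on the poset $P$ together with the standard manipulation of homotopy limits over sieve--cosieve decompositions from \cite{HTT}. Finally, the two further squares mentioned in the statement arise by running the identical argument with either of the remaining coordinates playing the role of $k$; since each reformulation is equivalent to $F$ being a limit diagram, all three conditions are equivalent, as claimed.
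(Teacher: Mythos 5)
Your proposal is correct, and it pivots on exactly the same identification as the paper's proof: both arguments reduce the lemma to showing that the limit of $F$ over the punctured cube $P = [1]^{\times 3}\setminus\{(0,0,0)\}$ is the pullback $F(0,0,1)\times_{Q'}Q$. The mechanism differs, though. The paper does not use a sieve--cosieve gluing lemma at all: it defines a map $p \colon P \to \oL$ to the cospan $0 \to 2 \from 1$ whose fibres are the singleton $\{(0,0,1)\}$, the punctured face $S_0^{\circ}$, and the punctured face $S_1^{\circ}$, observes that $p$ is a \emph{cartesian fibration}, and then invokes the standard fact that right Kan extension along a cartesian fibration is computed by fibrewise limits; this immediately exhibits $p_*F$ as the cospan $F(0,0,1) \to Q' \from Q$, whence $\lim_P F \simeq \lim_{\oL} p_*F$ is the desired pullback. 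Your route instead decomposes $P$ into the sieve $S_0^{\circ}$ and the cosieve $S_1$ and appeals to recollement-style gluing of limits, with a final corepresentability reduction to $\Spc$. It is worth noting that the step you flag as ``requiring genuine care'' is precisely what the paper's device eliminates: refining your two-part decomposition by splitting $S_1$ into its initial vertex $(0,0,1)$ and the punctured face $S_1^{\circ}$ turns the three pieces into the fibres of a cartesian fibration over $\oL$, so the gluing coherence is absorbed into the fibrewise formula for $p_*$ and nothing further needs to be checked. Two small cautions about your sketch as written: the phrase ``direct cofinality argument on the poset $P$'' is loose, since no proper coinitial subposet of $P$ computes this pullback --- cofinality alone cannot do the work, and what you actually need is the gluing lemma (or the paper's fibration); and the ``standard manipulation of homotopy limits over sieve--cosieve decompositions'' deserves a precise citation or proof, since it is the crux of your argument rather than a routine reduction. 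With either of those repaired --- most efficiently by adopting the cartesian-fibration refinement --- your proof is complete, and the treatment of the two remaining squares by permuting coordinates matches the paper's parenthetical remark.
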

\begin{proof}
  Let $\oK$ be the poset $[1]^{\times 3} \setminus \{(0,0,0)\}$ and let $\oL$ be $0 \to 2 \from 1$, and define a functor $p \colon \oK \to \oL$ by
  \[
    p(i,j,k) =
    \begin{cases}
      0, & (i,j,k) = (0,0,1),\\
      1, & k = 0, \\
      2, & k = 1, (i,j)\neq (0,0).
    \end{cases}
  \]
  Then $p$ is a cartesian fibration (corresponding to the diagram
  $* \from \oL \xto{=} \oL$), so the right Kan extension of $F|_{\oK}$
  along $p$ is computed by taking limits over the fibres, giving the
  cospan
  \[ F(0,0,1) \to F(0,1,1) \times_{F(1,1,1)} F(1,0,1) \from F(0,1,0)
    \times_{F(1,1,0)} F(1,0,0). \] The limit of $F|_{\oK}$ is also the
  limit of $p_{*}F|_{\oK}$ over $\oL$, which is then precisely the
  pullback in the given square.
\end{proof}

\begin{propn}\label{propn:twovarlifting}
  Suppose the functor of \icats{} $L \colon \oC \times \oD \to \oE$ has an adjoint $R \colon \oD^{\op} \times \oE \to \oC$ in the first variable, so that
  \[ \oE(L(c,d), e) \simeq \oC(c, R(d,e)).\] For maps
  $\gamma \colon c' \to c$ in $\oC$, $\delta \colon d' \to d$ in $\oD$, and $\epsilon \colon e \to e'$ in $\oE$,
  the following are equivalent (provided the required pullback in $\oC$ and pushout in $\oE$ exist):
  \begin{enumerate}[(1)]
  \item The map $R(d,e) \to R(d,e') \times_{R(d',e')} R(d',e)$ induced by $\delta$ and $\epsilon$ is right orthogonal to $\gamma$.
  \item The map $L(c',d) \amalg_{L(c',d')} L(c,d') \to L(c',d')$ induced by $\gamma$ and $\delta$ is left orthogonal to $\epsilon$.
  \end{enumerate}
\end{propn}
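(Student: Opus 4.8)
The plan is to pass through the adjunction and reduce both orthogonality conditions to a single statement about a cube of mapping spaces, which \cref{lem:cubepb} then resolves. Writing $c_{0} := c$, $c_{1} := c'$, $d_{0} := d$, $d_{1} := d'$, $e_{0} := e$, $e_{1} := e'$, I would consider the diagram $\Phi \colon [1]^{\times 3} \to \Spc$ defined by
\[ \Phi(i,j,k) := \oE(L(c_{i},d_{j}), e_{k}) \simeq \oC(c_{i}, R(d_{j},e_{k})), \]
where the equivalence is the given adjunction, applied naturally. Since $\oE(L(\blank,\blank),\blank)$ is contravariant in the first two variables and covariant in the third, and $\gamma, \delta$ point from the index-$1$ to the index-$0$ objects while $\epsilon$ points the other way, every structure map of $\Phi$ raises the relevant coordinate, so $\Phi$ is a genuine cube.

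Next I would rewrite each condition as a pullback square assembled from $\Phi$. For (1), by \cref{def:rightorth} the map $p \colon R(d,e) \to R(d,e') \times_{R(d',e')} R(d',e)$ is right orthogonal to $\gamma$ exactly when applying $\oC(c,\blank)$ and $\oC(c',\blank)$ to $p$ gives a pullback square; since $\oC(x,\blank)$ preserves pullbacks, via the adjunction this square becomes
\[
  \begin{tikzcd}
    \Phi(0,0,0) \ar[r] \ar[d] & \Phi(0,0,1) \times_{\Phi(0,1,1)} \Phi(0,1,0) \ar[d] \\
    \Phi(1,0,0) \ar[r] & \Phi(1,0,1) \times_{\Phi(1,1,1)} \Phi(1,1,0).
  \end{tikzcd}
\]
Dually, for (2) the map $L(c',d) \amalg_{L(c',d')} L(c,d') \to L(c,d)$ is left orthogonal to $\epsilon$ exactly when applying $\oE(\blank, e)$ and $\oE(\blank, e')$ gives a pullback square; since $\oE(\blank, e)$ turns the pushout into the fibre product $\Phi(1,0,k) \times_{\Phi(1,1,k)} \Phi(0,1,k)$, this square becomes
\[
  \begin{tikzcd}
    \Phi(0,0,0) \ar[r] \ar[d] & \Phi(0,0,1) \ar[d] \\
    \Phi(1,0,0) \times_{\Phi(1,1,0)} \Phi(0,1,0) \ar[r] & \Phi(1,0,1) \times_{\Phi(1,1,1)} \Phi(0,1,1).
  \end{tikzcd}
\]

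Finally I would identify these two squares with two of the three equivalent ``total fibre'' squares of \cref{lem:cubepb}: the second square is literally the one written there (singling out the third coordinate), while the first is the analogous square singling out the first coordinate, up to transposing rows and columns (which does not affect whether a square is a pullback). Hence each condition holds if and only if $\Phi$ is a limit diagram, and so (1) and (2) are equivalent. The main thing to get right is pure bookkeeping: tracking the three variances so that both orthogonality squares land among the permutations allowed by \cref{lem:cubepb}, and checking that ``$\oC(x,\blank)$ preserves the pullback'' and ``$\oE(\blank,e)$ sends the pushout to a pullback'' produce the \emph{same} cube $\Phi$ from the two sides. The existence of the pullback in $\oC$ and the pushout in $\oE$ needed to even form the two maps is part of the hypotheses, so no further existence issues remain.
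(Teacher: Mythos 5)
Your proof is correct and takes essentially the same route as the paper: both arguments use the adjunction to identify conditions (1) and (2) with two of the permuted ``total fibre'' pullback squares of \cref{lem:cubepb} for the single cube $\Phi(i,j,k) = \oE(L(c_i,d_j),e_k) \simeq \oC(c_i,R(d_j,e_k))$, so that each condition is equivalent to $\Phi$ being a limit diagram. (You also silently correct a typo in the statement --- the target of the map in (2) should be $L(c,d)$ rather than $L(c',d')$ --- and your bookkeeping of the three variances is accurate.)
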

\begin{proof}
  The first condition is equivalent to the square
  \[
    \begin{tikzcd}
      \oC(c,R(d,e)) \ar[r] \ar[d] & \oC(c', R(d,e)) \ar[d] \\
      \oC(c, R(d,e') \times_{R(d',e')} R(d',e)) \ar[r] & \oC(c', R(d,e') \times_{R(d',e')} R(d',e))
    \end{tikzcd}
  \]
  being a pullback. By \cref{lem:cubepb}, this holds \IFF{} the cube
  \[
    \begin{tikzcd}[row sep=small,column sep=small]
      \oC(c,R(d,e)) \arrow{rr} \arrow{dd} \arrow{dr} & &
      \oC(c',R(d,e))  \arrow{dd}
      \arrow{dr} \\
       & \oC(c,R(d',e)) \arrow[crossing over]{rr} & &  \oC(c',R(d',e)) \arrow{dd} \\
      \oC(c,R(d,e'))   \arrow{rr} \arrow{dr} & & \oC(c',R(d,e'))
      \arrow{dr} \\
       & \oC(c,R(d',e')) \arrow{rr}
       \arrow[leftarrow,crossing over]{uu} & & \oC(c',R(d',e'))
    \end{tikzcd}
  \]
  is a limit. Using the adjunction and applying \cref{lem:cubepb} in the vertical direction, this is in turn equivalent to the square
  \[
    \begin{tikzcd}
      \oE(L(c,d), e) \ar[r] \ar[d] & \oE(L(c,d), e') \ar[d] \\
      \oE(L(c',d) \amalg_{L(c',d')} L(c,d'), e) \ar[r] &  \oE(L(c',d) \amalg_{L(c',d')} L(c,d'), e')
    \end{tikzcd}
  \]
  being a pullback, which is precisely the second condition.
\end{proof}

\begin{proof}[Proof of \cref{propn:ARoplrortho}]
  Apply \cref{propn:twovarlifting} with $L$ being the Gray tensor product and $R$ being $\FUN(\blank,\blank)^{\oplax}$, with  $F$ as $\epsilon$, $\partial [1] \to [1]$ as $\delta$, and $f$ as $\gamma$.
\end{proof}

\begin{cor}\label{cor:Phidesc}
  Consider a functor $F \colon \tC \to \tD$ and let
  $\Phi$ be defined as in \cref{propn:ARoplrortho}.
  Then:
  \begin{enumerate}
  \item $F$ is fully faithful \IFF{} $\Phi$ is an equivalence.    
  \item $F$ is locally fully faithful \IFF{} $\Phi$ is fully faithful.
  \item $F$ is locally faithful \IFF{} $\Phi$ is locally fully faithful.
  \item $F$ is locally an inclusion \IFF{} $\Phi$ is a locally full
    inclusion.
  \end{enumerate}
\end{cor}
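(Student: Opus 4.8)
The plan is to deduce all four equivalences from the two-variable translation of \cref{propn:ARoplrortho}: for each subobject property of $\Phi$ I will take its lifting-property characterization from \S\ref{sec:full}--\ref{sec:othersub}, convert each generating map into the Gray box-product map that $F$ must then be right orthogonal to, and finally recognize the resulting class of maps --- via the saturation identities of \cref{lem:bdry1otimes1} --- as exactly the generators characterizing the corresponding property of $F$. Statement (1) needs no translation at all: $\Phi$ is an equivalence precisely when the square \cref{eq:aroplsq} is a pullback, which is condition~(5) of \cref{propn:fffcond}, hence equivalent to full faithfulness of $F$.

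For (2) I would characterize $\Phi$ as fully faithful by condition~\ref{it:ffbdry} of \cref{propn:fffcond}, namely right orthogonality to $\partial[1]\to[1]$ and $\partial C_{2}\to C_{2}$. Feeding each of these into \cref{propn:ARoplrortho} as the test map yields exactly the boundary inclusions $\partial([1]\otimes[1])\to[1]\otimes[1]$ and $\partial([1]\otimes C_{2})\to[1]\otimes C_{2}$ of the notation in \S\ref{sec:pushouts}, to which $F$ must be right orthogonal. By \cref{lem:bdry1otimes1}(i) the first generates the same saturated class as $\partial C_{2}\to C_{2}$, and by (iii) the second generates the same class as $\partial C_{3}\to C_{2}$; since $\partial C_{3}\to C_{2}$ already lies in the saturation of $\partial C_{2}\to C_{2}$ by (ii), the two conditions collapse to one, which \cref{propn:lfffcond} identifies with local full faithfulness of $F$. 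Statement (3) is the same argument with a single map: $\Phi$ is locally fully faithful \IFF{} (by \cref{propn:lfffcond}) it is right orthogonal to $\partial C_{2}\to C_{2}$, \IFF{} (by \cref{propn:ARoplrortho}) $F$ is right orthogonal to $\partial([1]\otimes C_{2})\to[1]\otimes C_{2}$, \IFF{} (by \cref{lem:bdry1otimes1}(iii)) $F$ is right orthogonal to $\partial C_{3}\to C_{2}$, \IFF{} (by \cref{propn:lfcond}) $F$ is locally faithful.

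For (4) I would invoke \cref{cor:lfinclcond}: $\Phi$ is a locally full inclusion \IFF{} it is right orthogonal to $\partial C_{2}\to C_{2}$ and to $[0]\amalg[0]\to[0]$. The first generator reduces to right orthogonality of $F$ to $\partial C_{3}\to C_{2}$ exactly as in (3). For the second I need the one genuinely new box-product computation: applying \cref{propn:ARoplrortho} to $f=([0]\amalg[0]\to[0])$ gives $[1]\otimes([0]\amalg[0])\amalg_{\partial[1]\otimes([0]\amalg[0])}\partial[1]\otimes[0]\to[1]\otimes[0]$; since $\tX\otimes[0]\simeq\tX$ and $\otimes$ preserves coproducts, the source is $([1]\amalg[1])\amalg_{[0]^{\amalg 4}}([0]\amalg[0])$, which glues two copies of $[1]$ along their endpoints into the two parallel edges $\partial C_{2}$, with target $[1]$, so the map is $\partial C_{2}\to[1]$. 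Hence $\Phi$ is a locally full inclusion \IFF{} $F$ is right orthogonal to $\partial C_{3}\to C_{2}$ and $\partial C_{2}\to[1]$, which is precisely local inclusion by \cref{propn:licond}. The only real obstacle throughout is the bookkeeping of identifying each Gray box-product with a standard boundary inclusion; for the maps $\partial[1]\to[1]$ and $\partial C_{2}\to C_{2}$ this is already discharged by \cref{lem:bdry1otimes1}, so the only hand computation left is the elementary coproduct/pushout identification for $[0]\amalg[0]\to[0]$ just given.
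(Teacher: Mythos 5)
Your proposal is correct and matches the paper's own proof essentially step for step: case (1) via condition (5) of \cref{propn:fffcond}, cases (2)--(4) by translating the lifting-property characterizations of the subobject classes through \cref{propn:ARoplrortho} and then identifying the resulting pushout-product maps using \cref{lem:bdry1otimes1}, ending with \cref{propn:lfffcond}, \cref{propn:lfcond}, \cref{cor:lfinclcond} and \cref{propn:licond}. The only difference is cosmetic: you spell out the coproduct computation identifying the pushout-product of $\partial[1]\to[1]$ with $[0]\amalg[0]\to[0]$ as $\partial C_{2}\to[1]$, which the paper simply asserts.
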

\begin{proof}
  The fully faithful case is part of \cref{propn:fffcond}.

  Using \cref{propn:ARoplrortho} and \cref{propn:fffcond}, we see
    that $\Phi$ being fully faithful  is equivalent to
  $F$ being right orthogonal with respect to the maps
  \[ \partial ([1] \amalg [1]) \to
    [1] \otimes [1], \qquad \partial ([1] \otimes C_{2}) \to
    [1] \otimes C_{2}.\]
  We know from \cref{lem:bdry1otimes1} that the first map generates
  the same saturated class as $\partial C_{2} \to C_{2}$ and that the
  second is in the saturation of this. This condition is therefore
  equivalent to $F$ being locally fully faithful by \cref{propn:lfffcond}.
  
  Similarly, using \cref{propn:lfffcond} we see that $\Phi$ is locally
  fully faithful \IFF{} $F$ is
  right orthogonal to
  \[ \partial ([1] \otimes C_{2}) \to [1] \otimes C_{2},\] This is the
  same as $F$ being right orthogonal to $\partial C_{3} \to C_{2}$ by
  \cref{lem:bdry1otimes1}, which corresponds to $F$ being locally
  faithful by \cref{propn:lfcond}.
  
  Finally, using \cref{cor:lfinclcond} we have that $\Phi$ is a
  locally full inclusion \IFF{} $F$ is right orthogonal to the maps
  \[ \partial ([1] \otimes C_{2}) \to [1] \otimes C_{2}\]
  and
  \[ ([1]\amalg [1]) \amalg_{[0]^{\amalg 4}} ([0] \amalg [0]) \to
    [1],\]
  where the second is precisely the map $\partial C_{2} \to [1]$, and
  we just saw that $F$ is right orthogonal to the first \IFF{} it is
  so with respect to $\partial C_{3} \to C_{2}$. By
  \cref{propn:licond} these conditions hold precisely when $F$ is
  locally an inclusion.
\end{proof}

\begin{thm}\label{thm:funlaxprops}
  Suppose $F \colon \tC \to \tD$ is either fully faithful, or locally
  fully faithful, or locally faithful, or locally an inclusion, or a locally full inclusion, or an inclusion. Then so is
  \[  \FUN(\tA, \tC)^{\plax} \xto{F_{*}} \FUN(\tA, \tD)^{\plax} \]
  for all \itcats{} $\tA$.
\end{thm}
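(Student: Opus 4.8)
The plan is to reformulate all six properties as right orthogonality conditions and to feed the computation of the comparison map $\Phi$ from \cref{cor:Phidesc} into the adjunction between the Gray tensor product and $\FUN(\tA,\blank)^{\plax}$. By \cref{obs:grayop} it is enough to treat the oplax case, since reversing $1$-morphisms interchanges lax and oplax transformations while preserving each of the six properties. I would first dispose of the inclusion case, which behaves like a plain limit: by \cref{propn:inclcond} a functor is an inclusion \IFF{} it is a monomorphism in $\CatIT$, and as $\FUN(\tA,\blank)^{\oplax}$ is right adjoint to $\tA\otimes\blank$ it preserves products and pullbacks, hence monomorphisms. The locally full inclusion case then follows by combining this with the locally fully faithful case through \cref{cor:lfiisincl}.

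For the four remaining properties each is equivalent to right orthogonality against a finite set $S$ of maps (\cref{propn:fffcond,propn:lfffcond,propn:lfcond,propn:licond}). The reduction I would use rests on three facts: $\FUN(\blank,\tC)^{\oplax}$ sends colimits to limits (a formal consequence of \cref{thm:graycolim} and the defining adjunction); a class of maps cut out by right orthogonality to $S$ is closed under limits in the arrow category; and $\CatIT$ is generated under colimits by $[0]$, $[1]$ and $C_2$ (this follows from the globular picture of \cref{obs:2foldseg}, using that the colimit-preserving functor $[1](\blank)$ of \cref{obs:free2cat} sends $[0]$ and $[1]$ to $[1]$ and $C_2$). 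Writing $\tA=\colim_i\tA_i$ with $\tA_i\in\{[0],[1],C_2\}$, the map $F_*$ is the limit $\lim_i\FUN(\tA_i,F)^{\oplax}$ in the arrow category, so it is right orthogonal to $S$ as soon as each $\FUN(\tA_i,F)^{\oplax}$ is. This leaves the cases $\tA\in\{[0],[1],C_2\}$, of which $\tA=[0]$ is trivial.

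For $\tA=[1]$ and $\tA=C_2$ I would argue through the adjunction and \cref{propn:ARoplrortho}. Since $\tA\otimes\blank$ is left adjoint to $\FUN(\tA,\blank)^{\oplax}$, the map $\FUN(\tA,F)^{\oplax}$ is right orthogonal to $g\colon\tK\to\tL$ \IFF{} $F$ is right orthogonal to $\tA\otimes g$. For $\tA=[1]$ the map $[1]\otimes g$ factors as a cobase change of $g\amalg g$ followed by the Leibniz map $(\partial[1]\to[1])\mathbin{\hat\otimes}g$ of \cref{propn:ARoplrortho}; as the maps left orthogonal to $F$ form a saturated class, $F$ is right orthogonal to $[1]\otimes g$ once it is right orthogonal to $g\amalg g$ (clear) and to $(\partial[1]\to[1])\mathbin{\hat\otimes}g$, and by \cref{propn:ARoplrortho} the latter holds \IFF{} $\Phi_F$ is right orthogonal to $g$. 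Running through the four cases, \cref{cor:Phidesc} makes $\Phi_F$ an equivalence, fully faithful, locally fully faithful or a locally full inclusion respectively, and in each case $\Phi_F$ is right orthogonal to every $g\in S$ after comparing the relevant boundary inclusions by \cref{lem:bdry1otimes1}(ii) and \cref{cor:2consff}.

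The case $\tA=C_2$ is the heart of the argument and the step I expect to be the main obstacle. Here $C_2\otimes g$ again factors as a cobase change of $\partial C_2\otimes g$ followed by the Leibniz map $(\partial C_2\to C_2)\mathbin{\hat\otimes}g$; the first factor reduces to the $\tA=[1]$ case, since $\partial C_2\otimes g$ is a pushout of two copies of $[1]\otimes g$ along $g\amalg g$. For the Leibniz factor the crucial input is \cref{lem:bdry1otimes1}(i), which identifies $\partial C_2\to C_2$ up to saturation with the twofold Leibniz square $(\partial[1]\to[1])\mathbin{\hat\otimes}(\partial[1]\to[1])$; since Leibniz product with a fixed map preserves saturated classes and is associative (by associativity of the Gray tensor product), $(\partial C_2\to C_2)\mathbin{\hat\otimes}g$ lies in the saturation of $(\partial[1]\to[1])\mathbin{\hat\otimes}\bigl((\partial[1]\to[1])\mathbin{\hat\otimes}g\bigr)$. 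Applying \cref{propn:ARoplrortho} twice then shows that $F$ is right orthogonal to this map \IFF{} $\Phi_{\Phi_F}$ is right orthogonal to $g$, and applying \cref{cor:Phidesc} twice makes $\Phi_{\Phi_F}$ an equivalence (when $F$ is fully faithful or locally fully faithful) or fully faithful (when $F$ is locally faithful or locally an inclusion, using that $\Phi_F$ is then at least locally fully faithful); either way it is right orthogonal to the relevant $g\in S$ by the same boundary-inclusion comparisons. The delicate points to nail down are that Leibniz products preserve saturated classes and the precise identification of the iterated comparison maps $\Phi_{\Phi_F}$; granting these, the four local cases close uniformly.
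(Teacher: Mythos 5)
Your proposal is correct, but it takes a genuinely different route from the paper's. The paper avoids any cell decomposition of $\tA$: using the exchange equivalence $\ARopl(\FUN(\tA,\tB)^{\lax}) \simeq \FUN(\tA, \ARopl(\tB))^{\lax}$ from \cref{lem:funandgray} and the fact that $\FUN(\tA,\blank)^{\lax}$ preserves limits, it identifies the comparison map of $F_{*}$ with $(\Phi_{F})_{*} = \FUN(\tA,\Phi_{F})^{\lax}$, and then runs the ladder of \cref{cor:Phidesc} as a downward induction (if $F$ is fully faithful then $\Phi_{F}$ is an equivalence, so $(\Phi_{F})_{*}$ is too and $F_{*}$ is fully faithful; if $F$ is locally fully faithful then $\Phi_{F}$ is fully faithful, so $(\Phi_{F})_{*}$ is fully faithful by the previous case and $F_{*}$ is locally fully faithful; and so on), with the inclusion cases handled essentially as you do, via $\FUN(\tA,\blank)^{\lax}$ applied to the monomorphism square \cref{eq:tcatmonosq} and via \cref{cor:lfiisincl}. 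Your argument unfolds this induction at the level of the generating cells $[0]$, $[1]$, $C_{2}$ and the Leibniz calculus, which costs two extra inputs the paper never needs for this theorem: that $\CatIT$ is generated under colimits by the three cells (true, via the Segal decompositions of $\Theta_{2}$-objects that the paper uses implicitly in the proof of \cref{cor:ffonspcs}), and that Leibniz tensor with a fixed map carries saturated classes into saturated classes (standard, and provable from \cref{propn:twovarlifting} together with \cite{HTT}*{Proposition 5.2.8.6}). Of the two ``delicate points'' you flag at the end, only the Leibniz-saturation one genuinely needs an argument: the iterated comparison map requires no identification at all, since \cref{propn:ARoplrortho} and \cref{cor:Phidesc} apply verbatim with $\Phi_{F}$ in place of $F$, so $\Phi_{\Phi_{F}}$ simply \emph{is} the comparison map of the functor $\Phi_{F}$ and your two-fold application is already licit as stated. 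In terms of trade-offs, the paper's proof is shorter, uniform in $\tA$, and transfers directly to the marked setting in \cref{cor:elaxfunprops}, whereas yours makes the orthogonality bookkeeping completely explicit and would adapt to any limit-preserving functor arising as the right adjoint of a colimit-preserving two-variable tensor, at the price of the generation-by-cells input.
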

\begin{proof}
  We first consider the lax case; then have a natural equivalence
  \[ \ARopl(\FUN(\tA, \tB)^{\lax}) \simeq
    \FUN(\tA, \ARopl(\tB))^{\lax}\]
  by \cref{lem:funandgray}. Since $\FUN(\tA, \blank)^{\lax}$
  preserves limits (being a right adjoint), this means that the square
  \[
    \begin{tikzcd}
      \ARopl(\FUN(\tA, \tC)^{\lax}) \arrow{d} \arrow{r} &
      \ARopl(\FUN(\tA, \tD)^{\lax}) \arrow{d} \\
      \FUN(\tA, \tC)^{\lax,\times 2}  \arrow{r} & \FUN(\tA,
      \tD)^{\lax,\times 2}
    \end{tikzcd}
  \]
  can be identified with $\FUN(\tA, \blank)^{\lax}$ applied to the
  square \cref{eq:aroplsq} for $F$. Moreover, if we let
  $\tX := \ARopl(\tD) \times_{\tD^{\times 2}} \tC^{\times 2}$ and
  write $\Phi \colon \ARopl(\tC) \to \tX$ for the map to the pullback
  in the latter square, then the map to the pullback in the square for
  $F_{*}$ is
  \[ \Phi_{*} \colon \FUN(\tA, \ARopl(\tC))^{\lax} \to
    \FUN(\tA, \tX)^{\lax}.\]
  We can then observe in turn that:
  \begin{itemize}
  \item By \cref{cor:Phidesc}(1), $F$ is fully faithful \IFF{} $\Phi$
    is an equivalence; then $\Phi_{*}$ is an equivalence and so
    $F_{*}$ is fully faithful.
  \item By \cref{cor:Phidesc}(2), $F$ is locally fully faithful \IFF{}
    $\Phi$ is fully faithful; by the preceding point this implies that
    $\Phi_{*}$ is fully faithful, and hence $F_{*}$ is locally fully
    faithful.
  \item By \cref{cor:Phidesc}(3), $F$ is locally faithful \IFF{}
    $\Phi$ is locally fully faithful; by the preceding point this implies that
    $\Phi_{*}$ is locally fully faithful, and hence $F_{*}$ is locally
    faithful.
  \end{itemize}
  If we consider instead $\FUN(\tA,\blank)^{\lax}$ applied to the
  square \cref{eq:tcatmonosq} we similarly get from
  \cref{propn:inclcond} that if $F$ is an inclusion then so is
  $F_{*}$.

  The case where $F$ is a locally full inclusion then follows from
  \cref{cor:lfiisincl}: this says that a functor is a locally full
  inclusion \IFF{} it is locally fully faithful and an inclusion, and
  we already know that $F_{*}$ inherits both of these properties from
  $F$.  The only remaining case is where $F$ is locally an inclusion,
  where \cref{propn:licond} shows that this is equivalent to $\Phi$
  being a locally full inclusion. Then we just saw that $\Phi_{*}$ is
  a locally full inclusion, and hence $F_{*}$ is locally an inclusion
  too.

  The oplax case follows using the natural equivalences of the form
  \[ \FUN(\tA,
    \tB)^{\oplax} \simeq (\FUN(\tA^{\op}, \tB^{\op})^{\lax})^{\op}, \]
  and noting that each of the classes of maps we consider contains a
  functor $F$ \IFF{} it contains $F^{\op}$.
\end{proof}

\subsection{Partially lax transformations}\label{sec:partiallax}
In this section we consider natural transformations that are only
\emph{partially} (op)lax, meaning that a specified class of (op)lax
naturality squares must in fact commute. We show that these define
locally full sub-\itcats{} of $\FUN(\blank,\blank)^{\plax}$ and extend 
\cref{thm:funlaxprops} to these sub-\itcats{}. We will give the
definition in terms of the following \emph{marked} version of the Gray
tensor product:

\begin{defn}
  A \emph{marked \itcat{}} is a pair $(\tc{A}, E)$ where $\tc{A}$ is
  an \itcat{} and $E$ is a collection of 1-morphisms in $\tc{A}$ that
  contains the equivalences. We write $E^{\natural}$ for the
  collection of morphisms of $\tc{A}$ consisting precisely of the
  equivalences and $E^{\sharp}$ for the collection of \emph{all}
  morphisms in $\tA$.
\end{defn}

\begin{defn}\label{defn:markedgraytens}
  Suppose $(\tc{A}, E)$ and $(\tc{B},F)$ are marked \itcats{}. Let $S$ be the set of
  commutative triangles in $\tc{A} \times \tc{B}$ of the form
  \[\left(\begin{tikzcd}
        & {a_1} &&& {b_1} \\
        {a_0} && {a_2,} &  {b_0} && {b_2}
        \arrow["{f_{01}}", from=2-1, to=1-2]
        \arrow["{f_{12}}", from=1-2, to=2-3]
        \arrow["{f_{02}}"', from=2-1, to=2-3]
        \arrow["{g_{01}}", from=2-4, to=1-5]
        \arrow["{g_{02}}"', from=2-4, to=2-6]
        \arrow["{g_{12}}", from=1-5, to=2-6]
      \end{tikzcd}\right)\]
  where either $f_{12}$ lies in $E$ or $g_{01}$ lies in $F$. The
  \emph{marked Gray
    tensor product} $\tc{A} \otimes_{E,F} \tc{B}$ is then the \itcat{}
  $(\tc{A} \times \tc{B})_{S}$ associated to the scaled \itcat{}
  $(\tc{A} \times \tc{B}, S)$. We abbreviate
  \[ \tc{A} \otimes_{\natural,F} \tc{B} := \tc{A}
    \otimes_{E^{\natural},F} \tc{B}, \qquad \tc{A} \otimes_{\sharp,F}
    \tc{B} := \tc{A} \otimes_{E^{\sharp},F} \tc{B},\]
  etc. Note that here $\tc{A} \otimes_{\natural,\natural} \tB$ is the
  ordinary Gray tensor product, while for \emph{any} markings we
  recover the cartesian product as
  \[  \tc{A} \otimes_{\sharp,F} \tB \simeq \tA \times \tB \simeq \tA
    \otimes_{E,\sharp} \tB. \]
\end{defn}

\begin{defn}
  Let $(\tA, E)$ be a marked \itcat{}. Then an \emph{$E$-oplax natural
  transformation} is a functor $\tA \otimes_{E,\natural} [1] \to \tB$,
while an \emph{$E$-lax natural transformation} is a functor $[1]
\otimes_{\natural,E} \tA \to \tB$.
\end{defn}

\begin{lemma}\label{lem:markedgraypo}
  For marked \itcats{} $(\tA,E)$ and $(\tB,F)$, we have a pushout
  \[
    \begin{tikzcd}
      \coprod_{S} [1] \otimes [1] \ar[r] \ar[d] & \coprod_{S} [1]
      \times [1] \ar[d] \\
      \tA \otimes \tB \ar[r] & \tA \otimes_{E,F} \tB,
    \end{tikzcd}
  \]
  where $S$ is the set of equivalence classes of pairs of maps
  $f \colon [1] \to \tA$, $g \colon [1] \to \tB$ such that either $f$
  lies in $E$ or $g$ lies in $F$, and neither $f$ nor $g$ is
  invertible.
\end{lemma}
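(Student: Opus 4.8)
The plan is to work entirely in scaled simplicial sets and to realize the desired square as the image, under the colimit-preserving localization functor $\sSetsc \to \CatIT$, of a homotopy pushout that merely enlarges the scaling. Fix a fibrant scaled simplicial set $\mathfrak{X}$ modelling the cartesian product $\tA \times \tB$, and write $\mathfrak{X}$ also for its underlying simplicial set. By \cref{constr:laxitcat} (as used in \cref{defn:graytens} and \cref{defn:markedgraytens}), both $\tA \otimes \tB$ and $\tA \otimes_{E,F} \tB$ are fibrant replacements of $\mathfrak{X}$ equipped with two scalings $T \subseteq T'$: here $T$ consists of the triangles whose $\tA$-part $f_{12}$ or $\tB$-part $g_{01}$ is an equivalence, while $T'$ is the larger set of triangles with $f_{12} \in E$ or $g_{01} \in F$. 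Since $\sSetsc \to \CatIT$ sends homotopy pushouts along cofibrations to pushouts, it suffices to produce the square at the scaled level.

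First I would record the local computation. For $f \colon [1] \to \tA$ and $g \colon [1] \to \tB$ with neither invertible, the square they determine, $\Delta^1 \otimes \Delta^1 \to \mathfrak{X}$, has exactly one non-thin triangle: the image $\sigma_{f,g}$ of $(a,b) \to (a,b') \to (a',b')$, whose legs are $(\id,g)$ and $(f,\id)$, so that its $f_{12}$ is $f$ and its $g_{01}$ is $g$. Hence $\sigma_{f,g}$ lies in $T'$ exactly when $f \in E$ or $g \in F$, which is the condition defining the index set $S$. Writing $T_{\mathrm{sq}} := T \cup \{\sigma_{f,g} : (f,g) \in S\}$, the homotopy pushout of scaled simplicial sets from the proof of \cref{lem:pushoutquarec2} gives, for each $(f,g)$, an identification
\[
\Delta^1 \times \Delta^1 \simeq (\Delta^1 \otimes \Delta^1) \amalg_{\Delta^2_{\flat}} \Delta^2_{\sharp},
\]
and assembling these over $S$ exhibits $(\mathfrak{X}, T_{\mathrm{sq}})$ as the homotopy pushout of $(\mathfrak{X}, T) \leftarrow \coprod_S \Delta^1 \otimes \Delta^1 \to \coprod_S \Delta^1 \times \Delta^1$ (the right-hand leg is a cofibration). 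Applying $\sSetsc \to \CatIT$ turns this into the square of the statement, whose bottom-right corner is the \itcat{} presented by $(\mathfrak{X}, T_{\mathrm{sq}})$.

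The hard part will be identifying this corner with $\tA \otimes_{E,F} \tB$, that is, proving that $(\mathfrak{X}, T_{\mathrm{sq}}) \to (\mathfrak{X}, T')$ is a trivial cofibration; the difficulty is that $T'$ contains many triangles not of the special ``square'' shape, whereas $T_{\mathrm{sq}}$ only enlarges $T$ by square triangles. The plan is to force every triangle of $T'$ to be thin from $T_{\mathrm{sq}}$ by repeated use of \cref{lem:sattriangles}. Given $\sigma \in T'$ with $\tA$-part $a_0 \to a_1 \to a_2$ and $\tB$-part $b_0 \to b_1 \to b_2$ and $f_{12} \in E$ (the case $g_{01} \in F$ being symmetric), one realizes $\sigma$ as the face $d_2$ of the $3$-simplex of $\mathfrak{X}$ on the vertices $(a_0,b_0), (a_1,b_1), (a_2,b_1), (a_2,b_2)$; its faces $d_0$ and $d_1$ already lie in $T$ (their $f_{12}$ is an identity), while $d_3$ is a triangle whose $\tB$-part has $g_{12}$ an identity. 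A second $3$-simplex, on $(a_0,b_0), (a_1,b_0), (a_1,b_1), (a_2,b_1)$, realizes this $d_3$ as its face $d_1$ and expresses the remaining faces as one genuine square triangle (in $T_{\mathrm{sq}}$) together with two triangles of $T$. Two instances of \cref{lem:sattriangles} then force first $d_3$ and then $\sigma$ to be thin, so the inclusion $(\mathfrak{X}, T_{\mathrm{sq}}) \to (\mathfrak{X}, T')$ is scaled anodyne and the two fibrant replacements coincide.

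Finally I would dispatch the bookkeeping. Passing to equivalence classes of pairs in $S$ only identifies redundant copies of the pushout and so does not change the colimit, and pairs with $f$ or $g$ invertible are correctly excluded: in that case both triangles of the square are already in $T$, so $[1] \otimes [1] \to [1] \times [1]$ is an equivalence and contributes nothing. I expect the triangle-forcing argument of the previous paragraph to be the only genuine content; everything else is formal manipulation of pushouts together with the fact that $\sSetsc \to \CatIT$ preserves colimits.
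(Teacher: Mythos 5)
Your proposal is correct and is essentially the paper's own proof: your two explicit $3$-simplices are precisely the paper's $\Xi = s_2(\sigma) \otimes s_1(\rho)$ and $\theta$ (with parts $s_1(\sigma)$ and the doubly degenerate edge $b_0 \to b_0 \to b_1 \to b_1$), and your double application of \cref{lem:sattriangles} — forcing first $d_3(\Xi)$ and then the given triangle to be thin — matches the paper's argument step for step, as does the reduction to a homotopy pushout of scaled simplicial sets with the same local computation identifying the unique non-thin triangle of $\Delta^1 \otimes \Delta^1$. The only cosmetic difference is your case split on $f_{12} \in E$ versus $g_{01} \in F$: no split is actually needed, since the hypothesis enters solely through the square face $d_0(\theta) = \sigma_{f_{12},g_{01}}$ being scaled in $P$, which holds uniformly whenever $f_{12} \in E$ or $g_{01} \in F$, exactly as in the paper.
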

\begin{proof}
  We fix fibrant scaled simplicial sets 
  $\sA=(A,T_A)$ and $\sB=(B,T_B)$ modelling $\tA$ and $\tB$
  respectively.

  Let $P$ be the scaled simplicial set obtained from $\sA \otimes \sB$
   by scaling those triangles in the image of the map
  $\coprod_{S}  \Delta^1 \otimes \Delta^1 \to \sA \otimes \sB$. Since $P$ is
  precisely the homotopy pushout of the diagram in the statement it
  will suffice to show that the canonical map
  $P \to \sA \otimes_{E,F} \sB$ is a weak equivalence.

  Suppose $\sigma \colon \Delta^2 \to \sA$ and $\rho \colon \Delta^2 \to \sB$
  define a thin simplex $\omega$ in $\sA \otimes_{E,F} \sB$. We
  consider a 3-simplex $ \Xi=s_2(\sigma) \otimes s_1(\rho)$ in
  $\sA \otimes_{E,F} \sB$ and observe that the only faces of $\Xi$
  which are not already thin in $P$ are $d_3(\Xi)$ and
  $d_2(\Xi)=\omega$. If we manage to scale $d_3(\Xi)$ using pushouts
  along trivial cofibrations the claim will follow
  from \cref{lem:sattriangles}.

  We define another 3-simplex $\theta$ in $\sA \otimes_{E,F} \sB$ given by $s_1(\sigma)$ and $s_0 d_2(\rho)$. We make the following observations:
\begin{itemize}
  \item We have $d_1(\theta)=d_3(\Xi)$.
  \item The face $d_0(\theta)$ is scaled since it lies on the image of $\coprod_{S} \Delta^1 \otimes \Delta^1 \to \sA \otimes \sB$.
  \item The faces $d_2(\theta)$ and $d_3(\theta)$ are already thin in $\sA \otimes \sB$.
\end{itemize}
We conclude from \cref{lem:sattriangles} that we can scale $d_3(\Xi)$, and thus the result follows.
\end{proof}

\begin{observation}\label{obs:markedgraydecomp}
  Let $(\tA,E)$ be a marked \itcat{}. From \cref{lem:markedgraypo} we
  get for any \itcat{} $\tX$ pushout squares of the form
  \[
    \begin{tikzcd}
      \coprod [1] \otimes \tA \ar[r] \ar[d] &  \coprod [1]
      \otimes_{\natural,E}\tA \ar[d] \\
      \tX \otimes \tA \ar[r] & \tX \otimes_{\natural,E} \tA,
    \end{tikzcd}
    \quad
    \begin{tikzcd}
      \coprod \tA \otimes [1] \ar[r] \ar[d] & \coprod \tA
      \otimes_{E,\natural} [1] \ar[d] \\
      \tA \otimes \tX \ar[r] & \tA \otimes_{E,\natural} \tX,
    \end{tikzcd}
  \]
  \[
    \begin{tikzcd}
      \coprod_{E} [1] \otimes \tX \ar[r] \ar[d] &  \coprod_{E} [1]
      \times\tX \ar[d] \\
      \tA \otimes \tX \ar[r] & \tA \otimes_{E,\natural} \tX,
    \end{tikzcd}
    \quad
    \begin{tikzcd}
      \coprod_{E} \tX \otimes [1] \ar[r] \ar[d] & \coprod_{E} \tX
      \times [1] \ar[d] \\
      \tX \otimes \tA \ar[r] & \tX \otimes_{\natural,E} \tA,
    \end{tikzcd}
  \]
  using the 3-for-2 property of pushouts, where the first coproducts
  are over all equivalence classes of morphisms in $\tX$.
\end{observation}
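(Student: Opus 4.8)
The plan is to obtain each of the four squares by factoring the single pushout of \cref{lem:markedgraypo} as a vertical composite of two pushout squares and then invoking the pasting law for pushouts (the ``$3$-for-$2$'' property): if the upper square in such a composite is a pushout, then the lower square is a pushout if and only if the outer rectangle is. I will carry out the fourth square in detail and indicate how the remaining three follow by the same bookkeeping, using both orderings of the marked Gray tensor product.

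First I would record the degeneracy of the index set in \cref{lem:markedgraypo}. Since every morphism in $E^{\natural}$ is invertible, no pair $(f,g)$ with $f \in E^{\natural}$ can have $f$ non-invertible; hence for $\tX \otimes_{\natural,E} \tA$ the set $S$ reduces to pairs $(f \colon [1] \to \tX,\, g \colon [1] \to \tA)$ with $g \in E$ and both non-invertible. I would also record the auxiliary pushout identifying $\tX \times [1] \simeq \tX \otimes_{\natural,\sharp} [1]$ (by \cref{defn:markedgraytens}): applying \cref{lem:markedgraypo} here, and using that $[1]$ has a unique non-invertible morphism, gives a pushout
\[
  \begin{tikzcd}
    \coprod_{f} [1] \otimes [1] \ar[r] \ar[d] & \coprod_{f} [1] \times [1] \ar[d] \\
    \tX \otimes [1] \ar[r] & \tX \times [1],
  \end{tikzcd}
\]
the coproducts indexed by non-invertible $f \colon [1] \to \tX$, with left vertical map $\coprod_{f} (f \otimes \mathrm{id}_{[1]})$.

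Next I would take the coproduct of this auxiliary square over the non-invertible $g \in E$ and stack it on top of the fourth square of the statement. As both $\otimes$ and $\times$ preserve colimits in each variable (\cref{thm:graycolim}, and cartesian closedness of $\CatIT$), a coproduct of pushouts is again a pushout, so the upper square is a pushout with top row $\coprod_{g}\coprod_{f} [1]\otimes[1] \to \coprod_{g}\coprod_{f}[1]\times[1]$, and the lower square is precisely the fourth square, with left vertical map $\coprod_{g}(\mathrm{id}_{\tX} \otimes g)$. The left vertical composite of the stacked diagram sends the $(f,g)$-summand by $(\mathrm{id}_{\tX}\otimes g)\circ(f \otimes \mathrm{id}_{[1]}) = f \otimes g$, so that under the reindexing $\coprod_{g}\coprod_{f} \simeq \coprod_{S}$ the outer rectangle is exactly the pushout of \cref{lem:markedgraypo}. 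Since the upper square is a pushout, the pasting law forces the lower square to be a pushout, as desired.

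The first square is obtained by the mirror-image factorization: fixing $f$ and varying $g$, I would stack the $\coprod_{f}$ of the instance of \cref{lem:markedgraypo} computing $[1]\otimes_{\natural,E}\tA$ (whose index set collapses to non-invertible $g \in E$, since the identity edge is the only non-invertible map $[1]\to[1]$) on top of the first square, and apply the pasting law once more. The second and third squares follow identically after replacing $\tX \otimes_{\natural,E}\tA$ by $\tA \otimes_{E,\natural}\tX$ and using the opposite ordering of the Gray tensor, with auxiliary pushouts built from $\tA \otimes_{E,\natural}[1]$ and $[1]\times\tX \simeq [1]\otimes_{\sharp,\natural}\tX$ respectively. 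The one point demanding care — and the most likely place for a slip — is the reindexing $\coprod_{S} \simeq \coprod_{g}\coprod_{f}$ together with the verification that the two stacked left vertical maps compose to the map $f \otimes g$ of \cref{lem:markedgraypo}; once this compatibility is checked the argument is purely formal. Finally, I would remark that the outer coproducts in the first two squares may be taken over \emph{all} equivalence classes of morphisms of $\tX$ rather than only the non-invertible ones: for an equivalence $f$ the corresponding summand exhibits the new pushout as a cobase change of a localization along a map under which the relevant oplax naturality $2$-morphisms are already invertible (since $f$ is an equivalence, by the scaling convention of \cref{defn:graytens}), hence glues in no new data.
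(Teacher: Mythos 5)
Your proposal is correct and is exactly the argument the paper leaves implicit: the observation carries no proof beyond the parenthetical ``using the 3-for-2 property of pushouts,'' and your procedure --- collapsing the index set $S$ of \cref{lem:markedgraypo} (no pair can have its first leg in $E^{\natural}$ and be non-invertible), forming the auxiliary pushout for $\tX \times [1] \simeq \tX \otimes_{\natural,\sharp} [1]$, stacking coproducts of pushout squares, checking the composite left verticals give $f \otimes g$, and pasting --- is precisely the intended fleshing-out. The one point to add is that your closing localization remark (summands for invertible morphisms glue in nothing, since the corresponding $2$-cells are already thin by the scaling of \cref{defn:graytens}) is needed not only to pass to \emph{all} morphisms of $\tX$ in the first two squares but also for the third and fourth squares, because $E$ contains all equivalences by definition while your stacking only produces the coproduct over the non-invertible members of $E$; the identical one-line argument covers those summands as well.
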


\begin{observation}\label{obs:graysqloc}
  The pushout of \cref{lem:pushoutquarec2} means
  that a functor $[1] \otimes [1] \to \tA$ factors through $[1] \times
  [1]$ precisely when the non-invertible 2-morphism in $[1] \otimes
  [1]$ is mapped to an invertible 2-morphism in $\tA$. Note that
  combined with \cref{lem:markedgraypo}, this means that the morphism
  \[ \tA \otimes \tB \to \tA \otimes_{E,F} \tB\]
  is in the saturated class of $C_{2} \to [1]$ for all marked
  \itcats{} $(\tA,E)$ and $(\tB, F)$.
\end{observation}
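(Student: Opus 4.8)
The plan is to deduce both assertions directly from the two pushout squares already available, reading the first as a statement about mapping spaces and the second as a statement about saturated classes.

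For the first claim, I would apply $\Map(\blank, \tA)$ to the pushout square of \cref{lem:pushoutquarec2}. Since this sends pushouts to pullbacks, it yields a cartesian square
\[
\begin{tikzcd}
  \Map([1] \times [1], \tA) \ar[r] \ar[d] & \Map([1], \tA) \ar[d, "c^{*}"] \\
  \Map([1] \otimes [1], \tA) \ar[r, "i^{*}"] & \Map(C_{2}, \tA),
\end{tikzcd}
\]
where $i \colon C_{2} \to [1] \otimes [1]$ selects the non-invertible 2-morphism and $c \colon C_{2} \to [1]$ is the collapse map. A functor $F \colon [1] \otimes [1] \to \tA$ then factors through $q \colon [1] \otimes [1] \to [1] \times [1]$ exactly when $i^{*}F$ lies in the (essential) image of $c^{*}$: the fibre of the projection $\Map([1] \times [1], \tA) \to \Map([1] \otimes [1], \tA)$ over $F$ is identified by the pullback with the fibre of $c^{*}$ over $i^{*}F$, which is nonempty precisely in that case. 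It remains to identify the image of $c^{*}$. Describing $\Map(C_{2}, \tA)$ as the space of arrows in the mapping \icats{} $\tA(x,y)$ and $\Map([1], \tA)$ as the space of their objects, the map $c^{*}$ sends a 1-morphism $h$ to its identity 2-morphism $\id_{h}$, so its essential image consists exactly of the invertible 2-morphisms (an arrow in $\tA(x,y)$ is equivalent to an identity \IFF{} it is an equivalence). This gives the first claim.

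For the second claim, I would first observe that \cref{lem:pushoutquarec2} exhibits $q \colon [1] \otimes [1] \to [1] \times [1]$ as the cobase change of $C_{2} \to [1]$ along $i \colon C_{2} \to [1] \otimes [1]$, so $q$ lies in the saturated class generated by $C_{2} \to [1]$. Since saturated classes are closed under coproducts (these being colimits in $\Ar(\CatIT)$), the map $\coprod_{S} [1] \otimes [1] \to \coprod_{S} [1] \times [1]$ lies in the same class. Finally, \cref{lem:markedgraypo} presents $\tA \otimes \tB \to \tA \otimes_{E,F} \tB$ as a cobase change of this coproduct, so closure under cobase change places it in the saturated class of $C_{2} \to [1]$, as desired.

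The only mildly delicate point — and the one I expect to carry the real content — is the identification of the essential image of $c^{*}$ with the invertible 2-morphisms; everything else is a formal manipulation of pushouts, pullbacks, and saturation. This step is where one must pass from the naive reading ``$\alpha$ equals an identity'' to the homotopically correct ``$\alpha$ is an equivalence in the mapping \icat{}'', using that the collapse $c \colon C_{2} \to [1]$ inverts the generating 2-morphism.
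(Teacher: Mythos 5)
Your proof is correct and is essentially the argument the paper intends for this Observation (which it states without explicit proof): reading the pushout of \cref{lem:pushoutquarec2} through $\Map(\blank,\tA)$ identifies factoring through $[1] \times [1]$ with the restriction along $C_{2} \to [1] \otimes [1]$ lying in the image of $c^{*}$, and the saturation claim then follows from \cref{lem:markedgraypo} by closure under coproducts and cobase change. Your ``delicate point'' is also handled correctly: the essential image of $c^{*}$ is the invertible 2-morphisms because a morphism in an \icat{} is equivalent, as an object of the arrow category, to an identity \IFF{} it is an equivalence.
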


\begin{observation}
  Let $(\tA, E)$ be a marked \itcat{}.  Combining
  \cref{lem:markedgraypo} with \cref{obs:graysqloc}, we see that an
  $E$-oplax transformation $\tA \otimes_{E,\natural} [1] \to \tB$ is
  precisely an oplax natural transformation
  $\tc{A} \otimes [1] \to \tc{B}$ such that the oplax square
  \[ [1] \otimes [1] \xto{f \otimes \id} \tc{A} \otimes [1] \to
    \tc{B} \] commutes for every morphism $f$ in $E$. Similarly, an
  $E$-lax transformation is a lax transformation such that the squares
  corresponding to morphisms in $E$ commute.
\end{observation}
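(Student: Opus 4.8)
The plan is to read the statement off the pushout presentation of the marked Gray tensor product, using that ``commuting'' is detected by whether an oplax square factors through the corresponding strict square. First I would specialize \cref{lem:markedgraypo} to the pair $(\tB,F)=([1],\natural)$. The indexing set $S$ then consists of equivalence classes of pairs $(f\colon [1]\to\tA,\,g\colon[1]\to[1])$ with $f\in E$ or $g$ an equivalence, and with neither $f$ nor $g$ invertible. The only non-invertible morphism of $[1]$ is its non-degenerate edge, which is not an equivalence, so the second alternative never occurs and $S$ reduces to the equivalence classes of non-invertible $f\in E$, each paired with that edge (so that the corresponding leg is $f\otimes\id_{[1]}$). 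This specialization of \cref{lem:markedgraypo} thus yields a pushout square
\[
  \begin{tikzcd}
    \coprod_{S} [1]\otimes[1] \ar[r]\ar[d] & \coprod_{S} [1]\times[1] \ar[d]\\
    \tA\otimes[1] \ar[r] & \tA\otimes_{E,\natural}[1].
  \end{tikzcd}
\]

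Next I would apply $\Map(\blank,\tB)$, turning this into a pullback of \igpds{}. An $E$-oplax transformation, \ie{} a functor $\tA\otimes_{E,\natural}[1]\to\tB$, is thereby identified with an oplax transformation $\phi\colon\tA\otimes[1]\to\tB$ together with, for each $f\in S$, a factorization of the induced square $[1]\otimes[1]\xto{f\otimes\id}\tA\otimes[1]\xto{\phi}\tB$ through $[1]\times[1]$. By \cref{obs:graysqloc} such a factorization exists exactly when the non-invertible $2$-morphism of $[1]\otimes[1]$ is carried to an equivalence in $\tB$, \ie{} exactly when the oplax naturality square of $\phi$ at $f$ commutes.

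The only genuine content is that this commutativity should be a \emph{property} of $\phi$ rather than extra data, and this is the step I expect to require the most care. Concretely, I must show that the right-hand map $\Map([1]\times[1],\tB)\to\Map([1]\otimes[1],\tB)$ is a monomorphism, so that each fibre of the pullback is empty or contractible. This follows from \cref{lem:pushoutquarec2}: applying $\Map(\blank,\tB)$ to the pushout there exhibits $\Map([1]\times[1],\tB)$ as $\Map([1]\otimes[1],\tB)\times_{\Map(C_{2},\tB)}\Map([1],\tB)$, and by \cref{obs:2cons} the map $\Map([1],\tB)\to\Map(C_{2},\tB)$ is the inclusion of the invertible $2$-morphisms, hence a monomorphism. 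As monomorphisms are stable under base change, so is the map in question, with image precisely the commuting squares. Taking the product over $S$ and pulling back along $\phi$ then identifies $E$-oplax transformations with those oplax transformations whose naturality square commutes for every $f\in E$.

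Finally, the lax case follows by the same argument applied to the mirror pushout square of \cref{lem:markedgraypo} computing $[1]\otimes_{\natural,E}\tA$, or alternatively by transporting the oplax statement along the duality $\FUN(\tA^{\op},\tB^{\op})^{\oplax}\simeq(\FUN(\tA,\tB)^{\lax})^{\op}$ of \cref{obs:grayop}, which interchanges the two marked Gray tensor products and preserves the condition that the naturality squares indexed by $E$ commute.
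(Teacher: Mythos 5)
Your proposal is correct and is essentially the paper's own argument made explicit: the observation is justified in the paper precisely by combining \cref{lem:markedgraypo} (specialized as you do to the pair $([1],\natural)$) with \cref{obs:graysqloc}, and your monomorphism step is exactly the content the paper packages into \cref{obs:graysqloc} via \cref{lem:pushoutquarec2} --- namely that $[1]\otimes[1]\to[1]\times[1]$ is a localization at the non-invertible $2$-morphism, so factoring through $[1]\times[1]$ is a property detected by invertibility of that $2$-morphism's image. Your treatment of the lax case by the symmetric specialization of \cref{lem:markedgraypo} (or by the duality of \cref{obs:grayop}) also matches the paper's conventions.
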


\begin{propn}\label{propn:markedgraycolim}
  Let $(\tA, E)$ be a marked \itcat{}. Then the functors
  \[ \blank \otimes_{\natural,E} \tA,\, \tA \otimes_{E,\natural}
    \blank \colon \CatIT \to \CatIT \]
  preserve colimits.
\end{propn}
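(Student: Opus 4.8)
The plan is to exhibit the marked Gray tensor product as a pushout of functors that are already known to preserve colimits, and then to invoke the general fact that a pointwise pushout of colimit-preserving functors again preserves colimits.

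I will treat $\tA \otimes_{E,\natural} \blank$ in detail; the case of $\blank \otimes_{\natural,E} \tA$ is symmetric, using the fourth pushout square of \cref{obs:markedgraydecomp} in place of the third. That third square identifies $\tA \otimes_{E,\natural} \tX$, naturally in $\tX$, with the pushout of the span
\[ \coprod_{E} [1] \times \tX \longleftarrow \coprod_{E} [1] \otimes \tX \longrightarrow \tA \otimes \tX, \]
where the coproducts are indexed by the \emph{fixed} set $E$ of marked morphisms of $\tA$. The decisive point is that this indexing set does not depend on $\tX$ (in contrast to the first two squares of \cref{obs:markedgraydecomp}), so the presentation identifies $\tA \otimes_{E,\natural} \blank$ with a pointwise pushout of three functors in $\Fun(\CatIT, \CatIT)$.

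Next I would check that each of these three functors preserves colimits. The functors $\tX \mapsto [1] \otimes \tX$ and $\tX \mapsto \tA \otimes \tX$ do so by \cref{thm:graycolim}, while $\tX \mapsto [1] \times \tX$ does so because $\blank \times [1]$ is left adjoint to the internal hom $\FUN([1], \blank)$; taking coproducts over the fixed set $E$ preserves colimit-preservation since coproducts commute with colimits. Finally, since colimits in $\Fun(\CatIT, \CatIT)$ are computed pointwise and pushouts commute with arbitrary colimits, the pointwise pushout of these colimit-preserving functors again preserves colimits, which is exactly the claim.

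I do not anticipate a serious obstacle. The only point requiring care is confirming that the relevant squares of \cref{obs:markedgraydecomp} have their coproducts indexed by the fixed set $E$, rather than by the morphisms of the varying \itcat{} $\tX$ as in the first two squares there; this is precisely why the third and fourth squares, and not the first two, are the correct ones to invoke, and it is what makes the three corner functors colimit-preserving.
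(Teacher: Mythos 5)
Your argument is correct, but it takes a genuinely different route from the paper's. The paper proves \cref{propn:markedgraycolim} in one line, by citing Corollary 4.1.10 of \cite{GagnaHarpazLanariLaxLim}, i.e.\ it outsources the statement to the model-categorical treatment of marked Gray tensor products there. You instead derive it internally: the third and fourth squares of \cref{obs:markedgraydecomp} exhibit $\tA \otimes_{E,\natural} \tX$ (resp.\ $\tX \otimes_{\natural,E} \tA$) as a pushout of $\coprod_{E}[1]\times \tX \leftarrow \coprod_{E}[1]\otimes \tX \to \tA \otimes \tX$, with indexing set depending only on $E$ and not on $\tX$ --- you correctly identify this as the decisive feature distinguishing these squares from the first two, whose coproducts run over morphisms of the varying $\tX$. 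The three corner functors preserve colimits (by \cref{thm:graycolim} for the Gray factors, and by cartesian closure of $\CatIT$ for $[1]\times\blank$), and a pointwise pushout of colimit-preserving functors preserves colimits since colimit functors, being left adjoints, commute with pushouts. What each approach buys: the citation is shorter and leans on established external work, while your argument is self-contained given what the paper has already proved (\cref{lem:markedgraypo}, itself established by a direct scaled-simplicial-set computation, and \cref{thm:graycolim}), and it makes visible \emph{why} the marked variant inherits colimit-preservation from the unmarked one.

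Two points a careful write-up should make explicit. First, you need the squares of \cref{obs:markedgraydecomp} as pushouts \emph{of functors}, i.e.\ naturally in $\tX$, whereas the observation records them only object by object; this is harmless because every map in those squares is a canonical comparison map (at the level of scaled simplicial sets the squares are strict pushouts along cofibrations, natural in a model for $\tX$), but it is an extra step beyond the literal statement. Second, by \cref{lem:markedgraypo} the coproducts are indexed by equivalence classes of \emph{non-invertible} morphisms in $E$, not by $E$ itself; this is still a set independent of $\tX$, so your key point is unaffected.
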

\begin{proof}
  This follows from Corollary 4.1.10 in \cite{GagnaHarpazLanariLaxLim}.
\end{proof}

\begin{construction}
  Since $\CatIT$ is a presentable \icat{}, it follows from
  \cref{propn:markedgraycolim} that for a marked \itcat{} $(\tA,E)$,
  the functors $\blank \otimes_{\natural,E} \tA$ and
  $\tA \otimes_{E,\natural} \blank$ have right adjoints. We denote
  these by $\FUN(\tA,\blank)^{\elax}$ and
  $\FUN(\tA,\blank)^{\eoplax}$, respectively, so that we have natural
  equivalences
  \[ \Map(\tX \otimes_{\natural,E} \tA, \tB) \simeq \Map(\tX,
    \FUN(\tA,\tB)^{\elax}),\]
   \[ \Map(\tA \otimes_{E,\natural} \tX, \tB) \simeq \Map(\tX,
    \FUN(\tA,\tB)^{\eoplax}).
  \]
  We write
  $\Nat^{\eplax}_{\tA,\tB}(F,G)$ for the mapping \icats{} in
  $\FUN(\tA,\tB)^{\eplax}$ between functors $F,G$.
\end{construction}

\begin{observation}
  It follows from \cref{obs:markedgraydecomp} that a functor
  $\tX \to \FUN(\tA, \tB)^{\elax}$ is equivalently a functor
  $\tX \otimes \tA \to \tB$ such that for every morphism in $E$, the
  restriction $\tX \otimes [1] \to \tB$ factors through
  $\tX \times [1]$.
\end{observation}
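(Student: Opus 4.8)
The plan is to unwind the defining adjunction for $\FUN(\tA,\blank)^{\elax}$ and then apply $\Map(\blank,\tB)$ to the appropriate pushout decomposition from \cref{obs:markedgraydecomp}, turning a colimit presentation of the source into a limit presentation of mapping spaces. First I would use the adjunction
\[ \Map(\tX \otimes_{\natural,E} \tA, \tB) \simeq \Map(\tX, \FUN(\tA,\tB)^{\elax}) \]
to replace the left-hand side of the asserted equivalence by $\Map(\tX \otimes_{\natural,E} \tA, \tB)$. The relevant input is the last pushout square of \cref{obs:markedgraydecomp}, which exhibits $\tX \otimes_{\natural,E} \tA$ as the pushout of the span $\coprod_{E} \tX \times [1] \leftarrow \coprod_{E} \tX \otimes [1] \to \tX \otimes \tA$, where the coproducts are indexed by the morphisms $e$ of $E$ and the right-hand map has $e$-th component $\id_{\tX} \otimes e$.

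Next I would apply $\Map(\blank,\tB)$, which carries this pushout to a pullback square of spaces
\[
  \begin{tikzcd}
    \Map(\tX \otimes_{\natural,E} \tA, \tB) \ar[r] \ar[d] & \prod_{E} \Map(\tX \times [1], \tB) \ar[d] \\
    \Map(\tX \otimes \tA, \tB) \ar[r] & \prod_{E} \Map(\tX \otimes [1], \tB).
  \end{tikzcd}
\]
This realizes $\Map(\tX \otimes_{\natural,E} \tA, \tB)$, and hence the space of functors $\tX \to \FUN(\tA,\tB)^{\elax}$, as the subspace of $\Map(\tX \otimes \tA, \tB)$ cut out by the requirement that the image in $\prod_{E} \Map(\tX \otimes [1], \tB)$ lift along the right-hand vertical map.

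It then remains to identify this right-hand map with the expected property. By \cref{obs:graysqloc}, which is an instance of \cref{lem:pushoutquarec2}, each localization $\tX \otimes [1] \to \tX \times [1]$ lies in the saturated class generated by $C_{2} \to [1]$; being a localization, it induces a monomorphism of spaces $\Map(\tX \times [1], \tB) \to \Map(\tX \otimes [1], \tB)$ whose image consists exactly of those functors $\tX \otimes [1] \to \tB$ that factor through $\tX \times [1]$. A product of monomorphisms is a monomorphism, and monomorphisms are stable under pullback, so the left-hand vertical map is a monomorphism as well; reading off its image shows that a functor $\tX \otimes \tA \to \tB$ extends to $\tX \otimes_{\natural,E} \tA$ exactly when its restriction along $\id_{\tX} \otimes e$ factors through $\tX \times [1]$ for every $e \in E$, which is the claim.

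The step I expect to require the most care is this last one, namely checking that ``factoring through $\tX \times [1]$'' is a genuine property rather than extra data, i.e.\ that $\Map(\tX \times [1], \tB) \to \Map(\tX \otimes [1], \tB)$ is a monomorphism. This is precisely what the identification of $\tX \otimes [1] \to \tX \times [1]$ as a localization provides, so beyond citing \cref{obs:graysqloc} and \cref{lem:pushoutquarec2} no further computation is needed.
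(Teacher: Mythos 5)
Your argument is correct and is exactly the derivation the paper leaves implicit in this observation: apply $\Map(\blank,\tB)$ to the fourth pushout square of \cref{obs:markedgraydecomp} to get a pullback of spaces, then use \cref{obs:graysqloc} (via \cref{lem:pushoutquarec2}) to recognize that $\tX \otimes [1] \to \tX \times [1]$ is a localization at $2$-morphisms, so that factoring through it is a property, not data. Your extra care about the monomorphism is well placed and correctly handled, since maps in the saturated class generated by $C_{2} \to [1]$ are epimorphisms, making $\Map(\tX \times [1], \tB) \to \Map(\tX \otimes [1], \tB)$ an inclusion of path components.
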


\begin{propn}\label{propn:elaxlocfull}
  Suppose $(\tA,E)$ is a marked \itcat{}. Then
  $\FUN(\tA,\tB)^{\eplax}$ is a wide locally full sub-\itcat{} of
  $\FUN(\tA,\tB)^{\plax}$ for any \itcat{} $\tB$.
\end{propn}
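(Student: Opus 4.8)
The plan is to produce a comparison functor and then verify the two requirements, \emph{locally fully faithful} and \emph{wide}, separately, with the pushout description of the marked Gray tensor product in \cref{lem:markedgraypo} serving as the main computational input. I treat the lax case first, since the oplax case then follows from the op-duality $\FUN(\tA,\tB)^{\oplax} \simeq (\FUN(\tA^{\op},\tB^{\op})^{\lax})^{\op}$ (with $E$ transported to $\tA^{\op}$), exactly as in the proof of \cref{thm:funlaxprops}: both classes of functors we care about are stable under $(\blank)^{\op}$. To set up the functor, I note that the quotient maps $\tX \otimes \tA \to \tX \otimes_{\natural,E}\tA$ are natural in $\tX$ and assemble into a natural transformation between the two left adjoints $\blank \otimes \tA$ and $\blank \otimes_{\natural,E}\tA$; passing to mates yields a natural functor $G \colon \FUN(\tA,\tB)^{\elax} \to \FUN(\tA,\tB)^{\lax}$, which is the identity on objects (functors $\tA \to \tB$).

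For local full faithfulness I would invoke \cref{propn:lfffcond}: $G$ is locally fully faithful \IFF{} it is right orthogonal to $\partial C_2 \to C_2$. Unwinding this through the adjunctions defining $\FUN(\tA,\tB)^{\elax}$ and $\FUN(\tA,\tB)^{\lax}$, the orthogonality holds for every target $\tB$ precisely when the square
\[
  \begin{tikzcd}
    \partial C_2 \otimes \tA \ar[r] \ar[d] & \partial C_2 \otimes_{\natural,E}\tA \ar[d] \\
    C_2 \otimes \tA \ar[r] & C_2 \otimes_{\natural,E}\tA
  \end{tikzcd}
\]
is a pushout in $\CatIT$. To prove this I would apply \cref{lem:markedgraypo} to both $C_2 \otimes_{\natural,E}\tA$ and $\partial C_2 \otimes_{\natural,E}\tA$. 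The crucial observation is that $C_2$ and $\partial C_2$ have the \emph{same} non-invertible $1$-morphisms, namely the two parallel edges $0 \to 1$, and that these factor through the inclusion $\partial C_2 \to C_2$. Hence the two instances of \cref{lem:markedgraypo} attach the same cells $\coprod [1]\times[1]$ along the same $\coprod [1]\otimes[1]$, indexed by the same set, and compatibly with $\partial C_2 \otimes \tA \to C_2 \otimes \tA$. A pasting-of-pushouts argument then identifies $C_2 \otimes_{\natural,E}\tA$ with $(C_2 \otimes \tA)\amalg_{\partial C_2 \otimes \tA}(\partial C_2 \otimes_{\natural,E}\tA)$, which is exactly the assertion that the square above is a pushout.

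For wideness I would observe that if $\tS$ is an \igpd{} then it has no non-invertible $1$-morphisms, so the indexing set in \cref{lem:markedgraypo} is empty and the quotient map $\tS \otimes \tA \to \tS \otimes_{\natural,E}\tA$ is an equivalence. Since $(\blank)^{\simeq}$ is right adjoint to the inclusion $\Spc \hookrightarrow \CatIT$, we have $\Map(\tS, \tC^{\simeq}) \simeq \Map(\tS, \tC)$ for every \itcat{} $\tC$; combining these gives natural equivalences $\Map(\tS, \FUN(\tA,\tB)^{\elax,\simeq}) \simeq \Map(\tS \otimes_{\natural,E}\tA, \tB) \simeq \Map(\tS \otimes \tA, \tB) \simeq \Map(\tS, \FUN(\tA,\tB)^{\lax,\simeq})$, compatible with $G^{\simeq}$. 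By the Yoneda lemma in $\Spc$, the map $G^{\simeq}$ is an equivalence, so the inclusion is wide; being wide it is in particular a monomorphism on underlying \igpds{}, so together with local full faithfulness we conclude that $\FUN(\tA,\tB)^{\elax}$ is a wide locally full sub-\itcat{} of $\FUN(\tA,\tB)^{\lax}$.

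The step I expect to be the main obstacle is the pushout computation in the second paragraph: one must verify carefully that the indexing sets of \cref{lem:markedgraypo} for $C_2$ and $\partial C_2$ genuinely agree (that passing from $\partial C_2$ to $C_2$ adds a $2$-morphism but no new $1$-morphism up to equivalence), and that the attaching maps are compatible with $\partial C_2 \hookrightarrow C_2$, so that the pasting argument is legitimate. Everything else—setting up $G$, translating orthogonality across the adjunctions, and the wideness argument via the Yoneda lemma—is formal.
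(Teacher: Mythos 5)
Your proof is correct and takes essentially the same route as the paper's: the same comparison functor obtained as the mate of the quotient transformation, the same reduction via \cref{propn:lfffcond} to right orthogonality against $\partial C_{2} \to C_{2}$, and the same unwinding to the pushout square of marked Gray tensors, verified from \cref{lem:markedgraypo}. The only cosmetic differences are that the paper establishes the pushout by factoring through $([1]\amalg[1])\otimes\tA$ (both the left and outer squares in the resulting diagram are pushouts, so the right one is by cancellation) rather than matching the indexing sets for $\partial C_{2}$ and $C_{2}$ directly, and it checks wideness simply by evaluating at $[0]$ instead of at all \igpds{}.
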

\begin{proof}
  We prove the lax case; the oplax case is proved similarly. We have a
  natural transformation $(\blank) \otimes \tA \to (\blank)
  \otimes_{\natural,E} \tA$, which is adjoint to a natural functor
  \[ \FUN(\tA,\tB)^{\elax} \to \FUN(\tA,\tB)^{\lax}.\]
  This is an equivalence on the space of objects, since we have
  \[ [0] \otimes \tA \simeq [0] \otimes_{\natural,E} \tA \simeq \tA.\]
  To prove that this functor is also locally fully faithful, we use
  \cref{propn:lfffcond} and show that it is right orthogonal to
  $\partial C_{2} \to C_{2}$. Unwinding the adjunctions, this amounts
  to the functor
  \[ (\partial C_{2} \otimes_{\natural,E} \tA) \amalg_{\partial C_{2}
      \otimes \tA} (C_{2} \otimes \tA) \to C_{2} \otimes_{\natural,E}
    \tA \]
  being an equivalence. This is true since it follows from
  \cref{lem:markedgraypo} that in the commutative diagram
  \[
    \begin{tikzcd}
     ([1] \amalg [1]) \otimes \tA \ar[r] \ar[d] & \partial C_{2}
     \otimes \tA \ar[r] \ar[d] & C_{2} \otimes \tA \ar[d] \\
     ([1] \amalg [1]) \otimes_{\natural,E} \tA \ar[r]  & \partial C_{2}
     \otimes_{\natural,E} \tA \ar[r] & C_{2} \otimes_{\natural,E} \tA,
    \end{tikzcd}
  \]
  both the left and outer squares are pushouts.
\end{proof}

As a special case, we have:
\begin{cor}\label{cor:stronglax}
  For \itcats{} $\tA,\tB$, the \itcat{} $\FUN(\tA,\tB)$ is equivalent
  to the wide locally
  full sub-\itcat{} of $\FUN(\tA,\tB)^{\plax}$ that contains the
  strong (op)lax transformations. \qed
\end{cor}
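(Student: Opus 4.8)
The plan is to deduce this from \cref{propn:elaxlocfull} by equipping $\tA$ with the maximal marking $E = E^{\sharp}$ consisting of \emph{all} $1$-morphisms. In the lax case, the defining equivalence $\Map(\tX \otimes_{\natural,E} \tA, \tB) \simeq \Map(\tX, \FUN(\tA,\tB)^{\elax})$ together with the identity $\tX \otimes_{\natural,\sharp} \tA \simeq \tX \times \tA$ from \cref{defn:markedgraytens} gives
\[ \Map(\tX, \FUN(\tA,\tB)^{\elax}) \simeq \Map(\tX \times \tA, \tB) \simeq \Map(\tX, \FUN(\tA,\tB)),\]
naturally in $\tX$, so by the Yoneda lemma $\FUN(\tA,\tB)^{\elax} \simeq \FUN(\tA,\tB)$. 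Since \cref{propn:elaxlocfull} exhibits $\FUN(\tA,\tB)^{\elax}$ as a wide locally full sub-\itcat{} of $\FUN(\tA,\tB)^{\lax}$, this realizes $\FUN(\tA,\tB)$ as such a sub-\itcat{}.

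To identify the morphisms, I would invoke the observation (combining \cref{lem:markedgraypo} and \cref{obs:graysqloc}) that an $E$-lax transformation is precisely a lax transformation whose naturality square commutes for every morphism in $E$. For $E = E^{\sharp}$ this requires \emph{all} naturality squares to commute, which is exactly the condition that the lax transformation be strong. Hence the morphisms of $\FUN(\tA,\tB)$, viewed inside $\FUN(\tA,\tB)^{\lax}$, are precisely the strong lax transformations.

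The oplax case is dual: using the defining equivalence $\Map(\tA \otimes_{E,\natural} \tX, \tB) \simeq \Map(\tX, \FUN(\tA,\tB)^{\eoplax})$ and the identity $\tA \otimes_{\sharp,\natural} \tX \simeq \tA \times \tX$ of \cref{defn:markedgraytens}, one again obtains $\FUN(\tA,\tB)^{\eoplax} \simeq \FUN(\tA,\tB)$, and the same observation identifies the $E^{\sharp}$-oplax transformations with the strong oplax transformations. There is no genuine obstacle here beyond bookkeeping: the one point requiring care is to place the maximal marking on the correct tensor factor (the second in the lax case, the first in the oplax case) so that the marked Gray tensor product degenerates to the cartesian product.
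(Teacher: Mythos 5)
Your proposal is correct and is essentially the paper's own argument: the corollary is stated there as the special case of \cref{propn:elaxlocfull} for the maximal marking $E^{\sharp}$, using exactly the degeneration $\tA \otimes_{E,\sharp} \tB \simeq \tA \times \tB \simeq \tA \otimes_{\sharp,F} \tB$ of \cref{defn:markedgraytens} to identify $\FUN(\tA,\tB)^{\eplax}$ with $\FUN(\tA,\tB)$, and the observation following \cref{lem:markedgraypo} to identify $E^{\sharp}$-(op)lax transformations with strong ones. Your care in placing the maximal marking on the correct tensor factor in each variance matches the paper's conventions, and the Yoneda-lemma bookkeeping you spell out is exactly what the paper leaves implicit.
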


We now get the following extension of \cref{thm:funlaxprops}:

\begin{cor}\label{cor:elaxfunprops}
  Suppose $F \colon \tC \to \tD$ is either fully faithful, or locally
  fully faithful, or locally faithful, or locally an inclusion, or a locally full inclusion, or
  an inclusion. Then so is
  \[  \FUN(\tA, \tC)^{\eplax} \xto{F_{*}} \FUN(\tA, \tD)^{\eplax} \]
  for any marked \itcat{} $(\tA,E)$.
\end{cor}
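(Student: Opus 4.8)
The plan is to deduce everything from \cref{thm:funlaxprops} by comparing $\FUN(\tA,\blank)^{\elax}$ with $\FUN(\tA,\blank)^{\lax}$ through the wide locally full inclusion produced in \cref{propn:elaxlocfull}. I would treat the lax case; the oplax case follows either from the analogous oplax half of \cref{propn:elaxlocfull} together with the oplax case of \cref{thm:funlaxprops}, or by $(\blank)^{\op}$-duality, since each of the six classes of functors is stable under $(\blank)^{\op}$. First I would record that the comparison $\iota_{\tB}\colon\FUN(\tA,\tB)^{\elax}\to\FUN(\tA,\tB)^{\lax}$ is natural with respect to postcomposition: for $F\colon\tC\to\tD$ the square
\[
  \begin{tikzcd}
    \FUN(\tA,\tC)^{\elax} \ar[r,"\iota_{\tC}"] \ar[d,"F_{*}"'] & \FUN(\tA,\tC)^{\lax} \ar[d,"F_{*}"] \\
    \FUN(\tA,\tD)^{\elax} \ar[r,"\iota_{\tD}"'] & \FUN(\tA,\tD)^{\lax}
  \end{tikzcd}
\]
commutes, since $\iota$ is induced by the natural map $\blank\otimes\tA\to\blank\otimes_{\natural,E}\tA$, which does not touch the target variable. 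On objects all four \itcats{} have the functors $\tA\to\tC$ (resp.\ $\tD$) as objects and $\iota$ is the identity; by wideness each $\iota^{\simeq}$ is an equivalence; and on mapping \icats{} $\iota$ identifies $\Nat^{\elax}_{\tA,\tC}(H_{1},H_{2})$ with the full subcategory of $\Nat^{\lax}_{\tA,\tC}(H_{1},H_{2})$ spanned by the $E$-lax transformations.

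For the five cases other than fully faithful I would argue at the level of mapping \icats{}. Fix $H_{1},H_{2}\colon\tA\to\tC$, write $\alpha,\beta$ for the mapping-\icat functors of the left and right verticals, and $j_{\tC},j_{\tD}$ for the (fully faithful) horizontal inclusions, so that $j_{\tD}\alpha\simeq\beta j_{\tC}$. By \cref{propn:lfffcond}, \cref{propn:lfcond} and \cref{propn:licond}, the statements ``$F_{*}$ is locally fully faithful / locally faithful / locally an inclusion'' on the $\elax$-side translate into ``$\alpha$ is fully faithful / faithful / a subcategory inclusion'' for all $H_{1},H_{2}$, while \cref{thm:funlaxprops} gives the corresponding property for $\beta$. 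Since full faithfulness, faithfulness and subcategory-inclusion-ness of functors of \icats{} are all left-cancellable (the last two unconditionally, the first along fully faithful functors) and $\beta j_{\tC}$ inherits the property of $\beta$ because $j_{\tC}$ is fully faithful, I can cancel $j_{\tD}$ and conclude the property for $\alpha$. The monomorphism conditions on underlying \igpds{} needed for ``inclusion'' and ``locally full inclusion'' follow from the wideness of $\iota_{\tC},\iota_{\tD}$ together with the fact that $(F_{*})^{\simeq}$ is a monomorphism when $F_{*}$ is an inclusion or a locally full inclusion; the two composite statements then follow via \cref{cor:lfiisincl} and \cref{propn:inclcond}.

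The main obstacle is the fully faithful case, where this cancellation breaks down: $\iota$ is only \emph{locally} fully faithful, so $\beta j_{\tC}$ is merely locally fully faithful even when $\beta$ is an equivalence, and one cannot formally conclude that $\alpha$ is an equivalence. (This is also what makes a direct imitation of the proof of \cref{thm:funlaxprops} awkward: it would require a marked Fubini equivalence $\ARopl(\FUN(\tA,\blank)^{\elax})\simeq\FUN(\tA,\ARopl(\blank))^{\elax}$, and hence a marked Gray associativity not established here.) To circumvent this I would use that a fully faithful $F$ is conservative on $2$-morphisms: by \cref{propn:fffcond} it is right orthogonal to $\partial C_{2}\to C_{2}$, hence to $C_{2}\to[1]$ by \cref{cor:2consff}, which is exactly conservativity on $2$-morphisms by \cref{obs:2cons}. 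Since the naturality $2$-cells of $F_{*}\phi$ are the $F$-images of those of $\phi$, conservativity shows that a lax transformation $\phi$ is $E$-lax if and only if $F_{*}\phi$ is; thus $\Nat^{\elax}_{\tA,\tC}(H_{1},H_{2})$ is precisely the preimage of $\Nat^{\elax}_{\tA,\tD}(FH_{1},FH_{2})$, so the mapping-\icat square above is a pullback. As $\beta$ is an equivalence (because $F_{*}$ is fully faithful by \cref{thm:funlaxprops}) and equivalences are stable under base change, $\alpha$ is an equivalence for all $H_{1},H_{2}$, i.e.\ $F_{*}$ is fully faithful on the $\elax$-side, completing the argument.
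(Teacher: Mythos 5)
Your proof is correct and matches the paper's own argument in all essentials: the paper likewise compares $\FUN(\tA,\blank)^{\elax}$ with $\FUN(\tA,\blank)^{\lax}$ via the wide locally full inclusion of \cref{propn:elaxlocfull}, deduces the locally fully faithful, locally faithful, and locally-an-inclusion cases by cancelling against $F^{\lax}_{*}$ (phrased there via the fact that these classes are right classes of factorization systems, rather than your hands-on cancellation on mapping \icats{}, but this is the same content), and settles the fully faithful case by exactly your argument that $E$-laxness is detected by $F$ because $F$ is conservative on $2$-morphisms (\cref{obs:2cons}). The only minor divergence is the inclusion case, where the paper applies the limit-preserving right adjoint $\FUN(\tA,\blank)^{\eplax}$ to the square \cref{eq:tcatmonosq} and invokes \cref{propn:inclcond}, whereas you reassemble the inclusion property from the local condition together with wideness of $\iota$ --- both routes are valid.
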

\begin{proof}
  We prove the $E$-lax case; the oplax case is proved similarly.
  
  If $F$ is an inclusion, then it follows by applying
  $\FUN(\tA,\blank)^{\eplax}$ to the square \cref{eq:tcatmonosq} and
  the criterion of \cref{propn:inclcond}(4) that $F_{*}$ is also an
  inclusion.

  We also have a commutative square
  \[
    \begin{tikzcd}
      \FUN(\tA, \tC)^{\elax} \arrow{r}{F_{*}} \arrow{d} & \FUN(\tA, \tD)^{\elax} \arrow{d} \\
      \FUN(\tA,\tC)^{\lax} \arrow{r}{F^{\lax}_{*}} & \FUN(\tA, \tD)^{\lax},
    \end{tikzcd}
  \]
  where the vertical maps are locally fully faithful by \cref{propn:elaxlocfull} and we write $F_{*}^{\lax}$ for the
  composition functor on lax transformations for clarity.  Since they
  are characterized by right orthogonality properties, both locally
  faithful and locally fully faithful maps, as well as maps that are
  locally an inclusion, are the right classes in factorization systems
  on $\CatIT$, so it follows that if $F^{\lax}_{*}$ has either
  property then so must $F_{*}$. \cref{thm:funlaxprops} then implies
  that if $F$ is locally (fully) faithful or locally an inclusion then
  so is $F_{*}$. This also gives the case where $F$ is a locally full
  inclusion, since this is equivalent to $F$ being locally fully
  faithful and an inclusion by \cref{cor:lfiisincl}.

  We are left with the case where $F$ is fully faithful.
  Given $\phi,\psi \colon \tA \to \tC$ we have a commutative square of
  \icats{}
  \[
    \begin{tikzcd}
      \Nat^{\elax}_{\tA,\tC}(\phi,\psi)\arrow[hookrightarrow]{d}
      \arrow[hookrightarrow]{r} & \Nat^{\elax}_{\tA,\tD}(F\phi, F\psi)
      \arrow[hookrightarrow]{d} \\
      \Natlax_{\tA,\tC}(\phi,\psi) \arrow{r}{\sim} & \Natlax_{\tA,\tD}(F\phi, F\psi).
    \end{tikzcd}
  \]
  To see that the top horizontal map is also an equivalence, we need
  to check that if $\alpha \colon \phi \to \psi$ is a lax
  transformation such that $F\alpha$ is an $E$-lax transformation,
  then $\alpha$ must also be an $E$-lax transformation. Unwinding the
  definitions, this follows from $F$ being conservative on
  2-morphisms, which we saw in \cref{obs:2cons}.
\end{proof}

As a special case, we get the corresponding statement for ordinary
functor \itcats{}:
\begin{cor}\label{cor:funprops}
  Suppose $F \colon \tC \to \tD$ is either fully faithful, or locally
  fully faithful, or locally faithful, or locally an inclusion, or a locally full inclusion, or
  an inclusion. Then so is
  \[  \FUN(\tA, \tC) \xto{F_{*}} \FUN(\tA, \tD) \]
  for any \itcat{} $\tA$. \qed
\end{cor}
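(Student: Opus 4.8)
The plan is to obtain this as the special case of \cref{cor:elaxfunprops} in which we mark \emph{every} morphism of $\tA$, using the maximal marking $E = E^{\sharp}$. The point is that with this choice the marked Gray tensor product degenerates to the cartesian product: by \cref{defn:markedgraytens} we have
\[ \tX \otimes_{\natural, \sharp} \tA \simeq \tX \times \tA, \qquad \tA \otimes_{\sharp, \natural} \tX \simeq \tA \times \tX, \]
since scaling all the relevant triangles is exactly what produces $\tA \times \tB$ from $\tA \otimes \tB$.

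First I would record that these equivalences of left adjoints, which are natural in $\tX$, pass to equivalences of the corresponding right adjoints. Thus for the maximal marking we get equivalences
\[ \FUN(\tA, \blank)^{\elax} \simeq \FUN(\tA, \blank) \simeq \FUN(\tA, \blank)^{\eoplax} \]
of functors $\CatIT \to \CatIT$; this is precisely the identification underlying \cref{cor:stronglax}, where $E^{\sharp}$-(op)lax transformations are identified with the strong, hence ordinary, natural transformations. Because these are equivalences of functors, they intertwine the postcomposition functor $F_{*} = \FUN(\tA, F)^{\elax}$ on $E^{\sharp}$-lax functor \itcats{} with the postcomposition functor $F_{*} = \FUN(\tA, F)$ on ordinary functor \itcats{}.

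Given this, the corollary is immediate: whichever of the six properties the functor $F \colon \tC \to \tD$ has, \cref{cor:elaxfunprops} applied to the marked \itcat{} $(\tA, E^{\sharp})$ shows that $F_{*} \colon \FUN(\tA, \tC)^{\elax} \to \FUN(\tA, \tD)^{\elax}$ inherits it, and transporting along the equivalence above yields the same property for $F_{*} \colon \FUN(\tA, \tC) \to \FUN(\tA, \tD)$. There is no real obstacle here beyond this bookkeeping; the only thing that genuinely needs checking is the \emph{naturality} in the target of the identification $\FUN(\tA, \blank)^{\elax} \simeq \FUN(\tA, \blank)$ for $E = E^{\sharp}$, which is what guarantees that the two incarnations of $F_{*}$ agree, and this is built into the construction of the right adjoints.
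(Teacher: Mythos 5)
Your proposal is correct and is essentially the paper's own argument: the paper states \cref{cor:funprops} as an immediate special case of \cref{cor:elaxfunprops}, obtained exactly by taking the maximal marking, where $\tX \otimes_{\natural,\sharp} \tA \simeq \tX \times \tA$ (noted in \cref{defn:markedgraytens}) identifies $\FUN(\tA,\blank)^{\elax}$ naturally with $\FUN(\tA,\blank)$, in line with \cref{cor:stronglax}. Your remark on checking naturality of this identification, so that the two incarnations of $F_{*}$ agree, is the only bookkeeping point, and it is handled correctly via the naturality in $\tX$ of the equivalence of left adjoints (equivalently, the Yoneda lemma).
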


\begin{remark}
  The fully faithful case of \cref{cor:funprops} is also a special
  case of \cite{enrtens}*{Corollary 10.8}.
\end{remark}

\subsection{The Yoneda embedding and lax transformations}\label{sec:laxyoneda}

If $\oA, \oB$ are \icats{}, then the Yoneda embedding
$\oB \hookrightarrow \Fun(\oB^{\op}, \Spc)$ induces a fully faithful
embedding
$\Fun(\oA, \oB) \hookrightarrow \Fun(\oA \times \oB^{\op}, \Spc)$. The
Yoneda lemma for \itcats{} is a special case of Hinich's enriched
Yoneda lemma \cite{HinichYoneda}; see also \cite{Ab23} for a
fibrational approach to the proof. Our goal in this section is to use
this to obtain a similar embedding
\[ \FUN(\tA,\tB)^{\lax} \hookrightarrow \FUN(\tA \times \tB^{\op},
  \CATI)^{\lax}\] for \itcats{} $\tA,\tB$, which we will make repeated
use of later on. The key input for this is a description of the relation between two combinations of Gray and cartesian products:
\begin{propn}\label{propn:tripleprodpo}
  For \itcats{} $\tA,\tB,\tC$, there are  natural pushout squares 
  \[
    \begin{tikzcd}
      \tA \otimes (\tB^{\simeq} \times \tC) \arrow{r} \arrow{d} & \tA
      \times \tB^{\simeq} \times \tC \arrow{d} \\
      \tA \otimes (\tB \times \tC) \arrow{r} & (\tA \otimes \tB) \times \tC,
    \end{tikzcd}\quad \quad
    \begin{tikzcd}
     (\tA \times \tB^{\simeq}) \otimes \tC \arrow{r} \arrow{d} & \tA
      \times \tB^{\simeq} \times \tC \arrow{d} \\
      (\tA \times \tB ) \otimes \tC \arrow{r} & \tA \times (\tB \otimes \tC).
    \end{tikzcd}
  \]
\end{propn}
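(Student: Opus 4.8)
The plan is to identify the two right-hand corners of each square as \emph{marked} Gray tensor products (\cref{defn:markedgraytens}) and then feed the pushout presentation of \cref{lem:markedgraypo} into a pushout-pasting argument. I will treat the left square in detail and deduce the right one by the duality of \cref{obs:grayop}.

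Fix the marking $E$ on $\tB \times \tC$ consisting of those morphisms $(g,h)$ whose first component $g$ is an equivalence. The key identification, which I call $(\ast)$, is a natural equivalence
\[ (\tA \otimes \tB) \times \tC \simeq \tA \otimes_{\natural, E} (\tB \times \tC). \]
Granting $(\ast)$, note first that the restriction of $E$ to the full subcategory $\tB^{\simeq}\times\tC$ is the maximal marking $\sharp$, since every morphism of $\tB^{\simeq}$ is an equivalence; hence by \cref{defn:markedgraytens} we also have $\tA \times \tB^{\simeq}\times\tC \simeq \tA \otimes_{\natural,E}(\tB^{\simeq}\times\tC)$. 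Now apply \cref{lem:markedgraypo} to both $(\tB\times\tC, E)$ and $(\tB^{\simeq}\times\tC, E)$. In each case the indexing set $S$ of the pushout consists of equivalence classes of pairs $(f,(g,h))$ with $f$ a non-invertible morphism of $\tA$, $g$ an equivalence of $\tB$ (equivalently, a morphism of $\tB^{\simeq}$), and $h$ a non-invertible morphism of $\tC$. Since an equivalence of $\tB$ is the same datum as a morphism of $\tB^{\simeq}$, the inclusion $\tB^{\simeq}\hookrightarrow\tB$ identifies the two indexing sets and the two top spans $\coprod_{S} [1] \otimes [1] \to \coprod_{S} [1] \times [1]$; the only remaining difference is that the other leg points to $\tA\otimes(\tB^{\simeq}\times\tC)$ in one case and $\tA\otimes(\tB\times\tC)$ in the other, compatibly with the map $j\colon \tA\otimes(\tB^{\simeq}\times\tC)\to\tA\otimes(\tB\times\tC)$ induced by the inclusion.

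The left square now drops out of pushout pasting: placing the two presentations side by side yields
\[
  \begin{tikzcd}
    \coprod_{S} [1] \otimes [1] \ar[r] \ar[d] & \tA \otimes(\tB^{\simeq}\times\tC) \ar[r, "j"] \ar[d] & \tA \otimes(\tB\times\tC) \ar[d] \\
    \coprod_{S} [1] \times [1] \ar[r] & \tA \times \tB^{\simeq}\times\tC \ar[r] & (\tA\otimes\tB)\times\tC,
  \end{tikzcd}
\]
in which the left square and the outer rectangle are pushouts (the two instances of \cref{lem:markedgraypo}); hence the right-hand square is a pushout, and this is precisely the left square of the proposition up to transposition, under which pushouts are preserved. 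Finally, I would obtain the right square by applying $(\blank)^{\op}$: being an autoequivalence of $\CatIT$ it preserves pushouts, and by \cref{obs:grayop} it converts left Gray tensoring into right Gray tensoring while commuting with cartesian products and with $(\blank)^{\simeq}$. Applying it to the (already proven) left square for the triple $(\tC^{\op},\tB^{\op},\tA^{\op})$ produces exactly the right square for $(\tA,\tB,\tC)$, after reordering the symmetric cartesian factors.

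The main obstacle is establishing $(\ast)$. This is exactly the localization foreshadowed in the warning after \cref{thm:graycolim}: the natural map $\tA\otimes(\tB\times\tC)\to(\tA\otimes\tB)\times\tC$ inverts precisely the oplax-square $2$-morphisms attached to pairs $(f,(g,h))$ with $g$ an equivalence (for such a pair the square of $(f,g)$ in $\tA\otimes\tB$ is already thin, so its image is invertible), and by \cref{obs:graysqloc} these are exactly the $2$-morphisms inverted on passing to $\otimes_{\natural,E}$. I expect to prove $(\ast)$ directly at the level of scaled simplicial sets, comparing the scaling on the product of fibrant models for $\tA\otimes\tB$ and $\tC$ with the marked-Gray scaling on a fibrant model for $\tA\times\tB\times\tC$ and checking, with the help of \cref{lem:sattriangles}, that the two generate the same saturated scaling; equivalently one verifies that the canonical comparison $\tA\otimes_{\natural,E}(\tB\times\tC)\to(\tA\otimes\tB)\times\tC$ coming from the localization universal property is an equivalence.
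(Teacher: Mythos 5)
Your proposal is correct, but it follows a genuinely different route from the paper. The paper proves the left square directly in scaled simplicial sets: it forms the pushout as an explicit scaled simplicial set $\sX$ on $A \times B \times C$ (scaling the Gray-thin triangles together with those whose $\tB$-image factors through $\sB^{\simeq}$) and then shows $\sX \to (\sA \otimes \sB) \times \sC$ is a weak equivalence by producing a degenerate $3$-simplex and invoking \cref{lem:sattriangles} --- the same kind of simplex-level argument that proves \cref{lem:markedgraypo}. You instead reuse \cref{lem:markedgraypo} twice (for the markings $\sharp$ on $\tB^{\simeq}\times\tC$ and $E$ on $\tB\times\tC$) and conclude by pushout pasting, which is sound: the two indexing sets agree because equivalence classes of maps $[1]\to\tB$ landing in equivalences coincide with equivalence classes of maps $[1]\to\tB^{\simeq}$, and all the required compatibilities of maps hold on the nose at the level of scaled models, where every comparison map is the identity on underlying simplicial sets with an enlarged scaling. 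The one thing worth noting is that your ``main obstacle'' $(\ast)$ is easier than you anticipate, and in particular requires no saturation argument via \cref{lem:sattriangles}: unwinding \cref{defn:markedgraytens} with your marking $E$, the scaling it puts on a fibrant model $\mathfrak{A}\times\mathfrak{B}\times\mathfrak{C}$ is \emph{literally} the scaling of $(\sA\otimes\sB)\times\sC$ --- in both cases a triangle is thin precisely when it is thin in each factor and either its $(1\to 2)$-edge in $\tA$ or its $(0\to 1)$-edge in $\tB$ is an equivalence --- so the only input needed is that the scaled model structure is cartesian, ensuring that the product with $\sC$ of a (non-fibrant) model of $\tA\otimes\tB$ still models the product of \itcats{}. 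In exchange for invoking \cref{lem:markedgraypo}, your argument avoids redoing any simplex-level work and makes structurally transparent that the comparison $\tA\otimes(\tB\times\tC)\to(\tA\otimes\tB)\times\tC$ is the localization at the marked oplax squares (consistent with \cref{obs:graysqloc}), whereas the paper's proof is self-contained at the simplicial level and never needs the marked-Gray identification; the deduction of the right square by applying $(\blank)^{\op}$ and \cref{obs:grayop} is exactly what the paper does as well.
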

\begin{proof}
  We will prove the case of the left-hand diagram; the other case
  follows by taking opposites and using \cref{obs:grayop}. For this we
  will use the description of the Gray tensor product in scaled
  simplicial sets. We thus consider scaled simplicial sets
  $\sA=(A,T_A),\sB=(B,T_B)$ and $\sC=(C,T_C)$ that represent our
  \itcats{} $\tA, \tB$, and $\tC$, and define a fourth scaled
  simplicial set $\sX=(A \times B \times C,T_X)$, where a triangle
  $\sigma$ belongs to $T_X$ if $\sigma$ is a thin triangle in
  $\sA \otimes (\sB \times \sC)$, or if its image in $\sB$ factors
  through the underlying \igpd{} of $\sB$. We then have a pushout square of scaled simplicial sets
  \[
    \begin{tikzcd}
      \sA \otimes (\sB^{\simeq} \times \sC) \arrow{r} \arrow{d} & \sA
      \times \sB^{\simeq} \times \sC \arrow{d} \\
      \sA \otimes (\sB \times \sC) \arrow{r} & \sX,
    \end{tikzcd}
  \]
  where every object is cofibrant and every morphism is a cofibration,
  so this is also a homotopy pushout. A fibrant replacement of $\sX$
  therefore represents the pushout in our square,
  and it suffices to show that we have a weak equivalence of scaled
  simplicial sets $\sX \xto{\sim} (\sA \otimes \sB) \times \sC$.

  Note that both sides have $A \times B \times C$ as their underlying
  simplicial set, so it suffices to compare the thin 2-simplices in
  these scaled simplicial sets.  For
  $\phi \colon [2] \to A \times B \times C$ we denote by $\phi_{A}$,
  $\phi_{B}$, $\phi_{C}$ the images of $\phi$ in
  $A$, $B$, and $C$, respectively. Then $\phi$ is thin in $\sX$
  if:
  \begin{itemize}
     \item The simplices $\phi_{A}$, $\phi_{B}$ and $\phi_{C}$ are thin in $\sA,\sB$ and $\sC$.
     \item At least one the following conditions is satisfied:
     \begin{itemize}
       \item The edge $\phi_{A}(1 \to 2)$ is an equivalence in $\sA$.
       \item The edges $\phi_{B}(0 \to 1)$ and $\phi_{C}(0 \to 1)$
         are equivalences in $\sB$ and $\sC$, respectively.
       \item $\phi_{B}$ factors through the underlying \igpd{} of $\sB$.
     \end{itemize}
   \end{itemize} 
   Similarly, we see that $\phi$ is thin in $(\sA \otimes \sB) \times \sC$ \IFF{}:
   \begin{itemize}
     \item The simplices $\phi_{A}$, $\phi_{B}$ and $\phi_{C}$ are thin in $\sA,\sB$ and $\sC$.
     \item At least one the following conditions is satisfied:
     \begin{itemize}
       \item The edge $\phi_{A}(1 \to 2)$ is an equivalence in $\sA$.
       \item The edge $\phi_{B}(0\to 1)$ is an equivalence in $\sB$.
     \end{itemize}
   \end{itemize}
  Let
  $\phi \colon [2] \to (\sA \otimes \sB) \times \sC$ be a thin 2-simplex
  which is not contained in $T_X$; then 
  $\phi_{B}(0 \to 1)$ must be an equivalence. We define a 3-simplex
  $\Phi \colon [3] \to \sX$ by specifying its projections $\Phi_{A},\Phi_{B},\Phi_{C}$ to $A,B,C$:
 \begin{itemize}
   \item $\Phi_{A} = s_2(\phi_{\sA})$,
   \item $\Phi_{B} = s_1(\phi_{\sB})$,
   \item $\Phi_{C} = s_1(\phi_{\sC})$. 
 \end{itemize}
 We note that $d_3(\Phi_{B})$ belongs to $\sB^{\simeq}$ and that
 $d_{i}(\Phi)$ belongs to $T_X$ for $i=0,1$ since the edge
 $\Phi_{A}(2 \to 3)$ is an equivalence. Since $d_2(\Phi)=\phi$ it
 follows from \cref{lem:sattriangles} that we can scale this simplex via a pushout along a trivial
 cofibration. 
\end{proof}

\begin{cor}\label{cor:thattripleproductlocn}
  A functor $F \colon \tX \otimes (\tY \times \tZ) \to \tA$ factors
  through $(\tX \otimes \tY) \times \tZ$ \IFF{} for every $y \in \tY$
  the restriction $F(\blank, y, \blank)$ factors through $\tX \times
  \tZ$, \ie{} if for every $x \to x'$ in $\tX$ and $z \to z' \in \tZ$
  the 2-morphism in the square
  \[
    \begin{tikzcd}
      F(x,y,z) \arrow{r} \arrow{d} & F(x,y,z') \arrow{d} \\
      F(x',y,z) \arrow{r} \arrow[Rightarrow]{ur} & F(x',y,z')
    \end{tikzcd}
  \]
  is invertible; equivalently, for every morphism $x \to x'$ in $\tX$
  the lax transformation $F(x,y,\blank) \to F(x',y,\blank)$ is
  strong. \qed
\end{cor}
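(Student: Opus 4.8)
The plan is to deduce the statement from the left-hand pushout square of \cref{propn:tripleprodpo} by a mapping-space argument. First I would apply $\Map(\blank,\tA)$ to that square: since it is a pushout of \itcats{} it becomes a pullback of spaces, and inspecting the resulting pullback shows that $F \colon \tX \otimes (\tY \times \tZ) \to \tA$ factors through $(\tX \otimes \tY) \times \tZ$ \IFF{} its restriction along $\tX \otimes (\tY^{\simeq} \times \tZ) \to \tX \otimes (\tY \times \tZ)$ factors through the top map $\tX \otimes (\tY^{\simeq} \times \tZ) \to \tX \times \tY^{\simeq} \times \tZ$. So the whole problem reduces to understanding this factorization condition on the groupoid-restricted part.

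Next I would take advantage of $\tY^{\simeq}$ being an \igpd{}, so that it is the colimit $\colim_{y \in \tY^{\simeq}} [0]$ of its points. Because the Gray tensor product preserves colimits in each variable (\cref{thm:graycolim}) and so does the cartesian product, this identifies the top map above with the colimit over $y \in \tY^{\simeq}$ of the canonical maps $g_{y} \colon \tX \otimes \tZ \to \tX \times \tZ$. Each $g_{y}$ is a localization --- it lies in the saturated class of $C_{2} \to [1]$ by \cref{obs:graysqloc} --- so precomposition $g_{y}^{*}$ is a monomorphism of mapping spaces; taking the limit over $y$, a functor factors through $\colim_{y} g_{y}$ exactly when each of its components does. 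Thus the condition becomes: for every $y \in \tY$, the restriction $F(\blank, y, \blank) \colon \tX \otimes \tZ \to \tA$ factors through $\tX \times \tZ$. Routing through this colimit decomposition is what I expect to be the one delicate point: the oplax cells of $\tX \otimes (\tY^{\simeq} \times \tZ)$ are indexed by morphisms with an arbitrary \emph{equivalence} $y \to y'$ in the $\tY$-slot, and writing $\tY^{\simeq}$ as a colimit of points is what absorbs these equivalences into the indexing \igpd{} and yields the per-object formulation of the corollary directly.

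Finally I would unwind what it means for $F(\blank, y, \blank)$ to factor through $\tX \times \tZ$. By \cref{lem:pushoutquarec2} the map $[1] \otimes [1] \to [1] \times [1]$ is a cobase change of $C_{2} \to [1]$, so by \cref{obs:graysqloc} a functor out of $\tX \otimes \tZ$ factors through $\tX \times \tZ$ \IFF{} for each pair $x \to x'$ in $\tX$ and $z \to z'$ in $\tZ$ it carries the oplax square to a commuting one, that is sends the $2$-morphism displayed in the statement to an invertible $2$-morphism in $\tA$. This is precisely the stated pointwise condition. The reformulation in terms of the lax transformation $F(x,y,\blank) \to F(x',y,\blank)$ being strong is then immediate from the definition of strongness, since the oplax naturality $2$-morphisms of that transformation are exactly the ones occurring in these squares.
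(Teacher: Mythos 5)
Your argument is correct and is exactly the intended unwinding of \cref{propn:tripleprodpo}, which the paper treats as immediate (the corollary carries a \qed{} with no written proof): the pushout square becomes a pullback of mapping spaces, the groupoid $\tY^{\simeq} \simeq \colim_{y}[0]$ decomposes the comparison map into the localizations $g_{y} \colon \tX \otimes \tZ \to \tX \times \tZ$ (each epi since it lies in the saturation of $C_{2} \to [1]$ by \cref{obs:graysqloc}), and \cref{lem:markedgraypo} together with \cref{lem:pushoutquarec2} converts the pointwise factorization into invertibility of the displayed $2$-morphisms. All the cited ingredients are used correctly, so nothing further is needed.
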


\begin{cor}
  The \itcat{} $\FUN(\tA, \FUN(\tB, \tC))^{\lax}$ is a wide locally full
  sub-\itcat{} of $\FUN(\tA \times \tB, \tC)^{\lax}$ whose morphisms
  are the lax transformations \[\Phi \colon [1] \otimes (\tA \times \tB) \to \tC\]
  such that for every $a \in \tA$, the restriction $\Phi(\blank, a,
  \blank)$ is strong.
\end{cor}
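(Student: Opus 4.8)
The plan is to realize the comparison functor as the inclusion of an $E$-lax functor \itcat{} and then quote \cref{propn:elaxlocfull}. Write $\iota \colon \FUN(\tA, \FUN(\tB, \tC))^{\lax} \to \FUN(\tA \times \tB, \tC)^{\lax}$ for the functor induced, via the defining adjunctions, by the natural maps $\lambda_{\tX} \colon \tX \otimes (\tA \times \tB) \to (\tX \otimes \tA) \times \tB$ of \cref{propn:tripleprodpo}. Indeed, using that $\FUN(\tB, \tC)$ is the internal hom for the cartesian product we get natural equivalences
\[ \Map(\tX, \FUN(\tA, \FUN(\tB, \tC))^{\lax}) \simeq \Map(\tX \otimes \tA, \FUN(\tB, \tC)) \simeq \Map((\tX \otimes \tA) \times \tB, \tC), \]
under which $\iota$ corresponds to precomposition with $\lambda_{\tX}$; in particular $\iota$ is the identity on objects, since both sides have the functors $\tA \times \tB \to \tC$ as objects.

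Now let $E$ be the marking on $\tA \times \tB$ consisting of the morphisms $(f, g)$ with $f$ an equivalence in $\tA$; this contains all equivalences, so $(\tA \times \tB, E)$ is a marked \itcat{}. I would identify $\iota$ with the inclusion $\FUN(\tA \times \tB, \tC)^{\elax} \hookrightarrow \FUN(\tA \times \tB, \tC)^{\lax}$ from \cref{propn:elaxlocfull}. Both $\lambda_{\tX}$ and the comparison $\tX \otimes (\tA \times \tB) \to \tX \otimes_{\natural, E} (\tA \times \tB)$ lie in the saturated class of $C_{2} \to [1]$ --- for $\lambda_{\tX}$ this follows by combining \cref{propn:tripleprodpo} with \cref{obs:graysqloc}, and for the marked comparison it is part of \cref{obs:graysqloc} --- so precomposition along either is fully faithful onto the subspace of $\Map(\tX \otimes (\tA \times \tB), \tC)$ of functors with the appropriate factorization property. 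It therefore suffices to show these two subspaces agree, naturally in $\tX$. By \cref{cor:thattripleproductlocn} a functor $\Phi \colon \tX \otimes (\tA \times \tB) \to \tC$ factors through $(\tX \otimes \tA) \times \tB$ exactly when $\Phi(\blank, a, \blank)$ is strong for every $a \in \tA$, while by \cref{lem:markedgraypo} and \cref{obs:markedgraydecomp} it factors through $\tX \otimes_{\natural, E} (\tA \times \tB)$ exactly when the naturality square of every morphism of $E$ commutes.

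The main point is then to check that these two conditions coincide. One direction is immediate, as $(\id_{a}, g) \in E$ for all $a$ and $g$, so the $E$-condition forces each $\Phi(\blank, a, \blank)$ to be strong. For the converse I would factor an arbitrary morphism of $E$ as a composite $(\id_{a'}, g) \circ (f, \id_{b})$ with $f \colon a \to a'$ an equivalence; the square of $(\id_{a'}, g)$ commutes by strongness of $\Phi(\blank, a', \blank)$, and the square of $(f, \id_{b})$ commutes because $(f, \id_{b})$ is an equivalence of $\tA \times \tB$ and the naturality squares of equivalences are automatically invertible --- this is built into the scaling of the Gray tensor product in \cref{defn:graytens}. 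Pasting these commuting squares shows the square of the given $E$-morphism commutes. This step, elementary as it is, is where the precise choice of $E$ gets pinned down, and I expect it to be the only place requiring genuine care.

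With the subspaces matched, the Yoneda lemma yields a natural equivalence $\FUN(\tA, \FUN(\tB, \tC))^{\lax} \simeq \FUN(\tA \times \tB, \tC)^{\elax}$ over $\FUN(\tA \times \tB, \tC)^{\lax}$, and \cref{propn:elaxlocfull} then shows the latter is a wide locally full sub-\itcat{}. Finally, the morphisms of $\FUN(\tA \times \tB, \tC)^{\elax}$ are by definition the $E$-lax transformations, which by the computation above are precisely the lax transformations $\Phi \colon [1] \otimes (\tA \times \tB) \to \tC$ whose restriction $\Phi(\blank, a, \blank)$ is strong for every $a \in \tA$, as required.
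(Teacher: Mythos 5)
Your proof is correct, and it takes a genuinely different (more structured) route than the paper. The paper's own proof is a two-liner: it records the same pair of natural equivalences $\Map(\tX, \FUN(\tA, \FUN(\tB, \tC))^{\lax}) \simeq \Map((\tX \otimes \tA) \times \tB, \tC)$ and $\Map(\tX, \FUN(\tA \times \tB, \tC)^{\lax}) \simeq \Map(\tX \otimes (\tA \times \tB), \tC)$ and then declares the claim to be a rephrasing of \cref{cor:thattripleproductlocn}, leaving the ``wide locally full'' structure implicit (it follows from \cref{propn:tripleprodpo} together with the mapping-space criteria of \cref{cor:ffonspcs} and \cref{cor:lffon1cats}, since the factorization condition is detected on $\tX^{\leq 1}$). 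You instead identify the image with the partially lax functor \itcat{} $\FUN(\tA \times \tB, \tC)^{\elax}$ for the marking $E$ of morphisms whose $\tA$-component is an equivalence, and then quote \cref{propn:elaxlocfull}. This buys a citable source for ``wide locally full'' and the pleasant byproduct $\FUN(\tA, \FUN(\tB, \tC))^{\lax} \simeq \FUN(\tA \times \tB, \tC)^{\elax}$, exhibiting the corollary as an instance of the partially lax formalism; the cost is the extra matching step between the two factorization conditions, which the paper's direct rephrasing avoids since \cref{cor:thattripleproductlocn} already states the condition in the ``strong for each $a$'' form. Your handling of that step is the right one: factoring $(f,g)$ with $f$ invertible as $(\id_{a'}, g) \circ (f, \id_{b})$, using that naturality squares at equivalences commute automatically by the scaling in \cref{defn:graytens}, and that the square at a composite is the paste of the squares at the factors. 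You are also right that this step cannot be shortcut by shrinking the marking: the collection $\{(\id_a, g)\}$ alone does not contain all equivalences of $\tA \times \tB$, so it is not a valid marking in the sense of the paper. (Two cosmetic points: for maps of \igpds{} one should say ``monomorphism'' rather than ``fully faithful'', and both localization claims you need are indeed covered by \cref{obs:graysqloc} together with closure of epimorphisms under cobase change.)
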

\begin{proof}
  We have natural equivalences
  \[ \Map(\tX, \FUN(\tA, \FUN(\tB, \tC))^{\lax}) \simeq \Map((\tX
    \otimes \tA) \times \tB, \tC),\]
  \[ \Map(\tX, \FUN(\tA \times \tB, \tC)^{\lax}) \simeq \Map(\tX
    \otimes (\tA \times \tB), \tC).\]
  The claim is then a rephrasing of \cref{cor:thattripleproductlocn}.
\end{proof}

Combining this with \cref{thm:funlaxprops} and the Yoneda lemma, we get our desired inclusion:
\begin{cor}\label{funlaxemb}
  For \itcats{} $\tA, \tB$, the \itcat{} $\FUN^{\lax}(\tA, \tB)$ is a
  locally full sub-\itcat{} of $\FUN^{\lax}(\tA \times \tB^{\op},
  \CATI)$, with
  \begin{itemize}
  \item objects those functors $F \colon \tA \times \tB^{\op} \to
    \CATI$ such that $F(a,\blank)$ is representable for every $a \in
    \tA$,
  \item morphisms those lax transformations $\Phi \colon [1] \otimes (\tA \times
    \tB^{\op}) \to \CATI$ such that for every $a \in \tA$, the lax transformation
    $\Phi(\blank,a,\blank) \colon [1] \otimes \tB^{\op} \to \CATI$ is
    strong. \qed
  \end{itemize}
\end{cor}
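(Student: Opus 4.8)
The plan is to exhibit the embedding as the composite of a fully faithful functor, obtained by applying $\FUN^{\lax}(\tA, \blank)$ to the Yoneda embedding, with the wide locally full inclusion supplied by the preceding corollary. Recall from the Yoneda lemma for \itcats{} that the Yoneda embedding $y \colon \tB \hookrightarrow \FUN(\tB^{\op}, \CATI)$ is fully faithful, exhibiting $\tB$ as the full sub-\itcat{} of $\FUN(\tB^{\op}, \CATI)$ spanned by the representable presheaves. Applying \cref{thm:funlaxprops} to $y$, the induced functor
\[ y_{*} \colon \FUN^{\lax}(\tA, \tB) \to \FUN^{\lax}(\tA, \FUN(\tB^{\op}, \CATI)) \]
is again fully faithful. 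The preceding corollary, applied with target $\CATI$ and with $\tB$ replaced by $\tB^{\op}$, identifies $\FUN^{\lax}(\tA, \FUN(\tB^{\op}, \CATI))$ with a wide locally full sub-\itcat{} of $\FUN^{\lax}(\tA \times \tB^{\op}, \CATI)$ whose morphisms are precisely the lax transformations $\Phi \colon [1] \otimes (\tA \times \tB^{\op}) \to \CATI$ with $\Phi(\blank, a, \blank)$ strong for every $a \in \tA$.

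I would then form the composite
\[ \FUN^{\lax}(\tA, \tB) \hookrightarrow \FUN^{\lax}(\tA, \FUN(\tB^{\op}, \CATI)) \hookrightarrow \FUN^{\lax}(\tA \times \tB^{\op}, \CATI), \]
with first map $y_{*}$. Since a fully faithful functor is in particular a locally full inclusion, and since locally full inclusions are closed under composition (both local full faithfulness and the monomorphism condition on underlying \igpds{} being stable under composition), the composite is a locally full inclusion. It remains to read off its objects and morphisms.

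For the objects, a functor $\tA \to \FUN(\tB^{\op}, \CATI)$ factors through the full sub-\itcat{} $\tB$ of representables exactly when it sends each object of $\tA$ to a representable presheaf; transposing along the wide locally full inclusion $\FUN^{\lax}(\tA, \FUN(\tB^{\op}, \CATI)) \subseteq \FUN^{\lax}(\tA \times \tB^{\op}, \CATI)$, this says that the corresponding $F \colon \tA \times \tB^{\op} \to \CATI$ has $F(a, \blank)$ representable for all $a \in \tA$, which is the stated condition. For the morphisms, full faithfulness of $y_{*}$ gives $\Nat^{\lax}_{\tA, \tB}(F, G) \simeq \Nat^{\lax}_{\tA, \FUN(\tB^{\op}, \CATI)}(y_{*}F, y_{*}G)$, and the preceding corollary realizes the latter as the full sub-\icat{} of the lax-natural mapping \icat{} in $\FUN^{\lax}(\tA \times \tB^{\op}, \CATI)$ spanned by those $\Phi$ with $\Phi(\blank, a, \blank)$ strong for all $a$. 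This is exactly the claimed description of the locally full inclusion.

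The argument is essentially formal given the earlier results, so I do not expect a serious obstacle; the one point that needs care is the identification of the objects. Here I would use that a functor into an \itcat{} factors (uniquely) through a full sub-\itcat{} as soon as it does so on objects, combined with the Yoneda identification of $\tB$ with the representable presheaves, so that ``factoring through $\tB$'' becomes the pointwise representability condition on $F(a, \blank)$. Everything else then follows from \cref{thm:funlaxprops}, the preceding corollary, and the stability of locally full inclusions under composition.
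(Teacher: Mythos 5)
Your proposal is correct and follows essentially the same route as the paper: it composes the fully faithful functor $y_{*}$ obtained by applying \cref{thm:funlaxprops} to the Yoneda embedding with the wide locally full inclusion of the preceding corollary, and reads off the objects via pointwise representability and the morphisms via strongness of $\Phi(\blank,a,\blank)$. Your added care about closure of locally full inclusions under composition and the factorization through the full sub-\itcat{} of representables is exactly the bookkeeping the paper leaves implicit.
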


We note a few more useful consequences of \cref{propn:tripleprodpo}:
\begin{cor}\label{cor:takingslicesout}
  Let $\tA$, $\tB$ be \itcats{}. Then for every object $b \in \tB$ we have a fully faithful functor 
  \[
     \iota \colon \FUN(\tA, \tB_{/b})^{\lax} \to \FUN(\tA,\tB)^{\lax}_{/\underline{b}},
 \] 
 where $\underline{b}$ denotes the constant functor on $b$; the image
 of $\iota$ is given by the strong natural transformations
 $F \to \underline{b}$.
\end{cor}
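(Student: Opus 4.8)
The plan is to exhibit $\iota$ as precomposition with a localization and then read off both assertions from \cref{cor:thattripleproductlocn}. Here $\tB_{/b}$ denotes the slice obtained as the fibre over $b$ of evaluation at $1$ on the ordinary arrow \itcat{} $\FUN([1],\tB)$, and likewise $\FUN(\tA,\tB)^{\lax}_{/\underline b}$ is the fibre over $\underline b$ of evaluation at $1$ on $\FUN([1],\FUN(\tA,\tB)^{\lax})$. First I would compute the functors represented by the source and target of $\iota$. Using the defining adjunction for the Gray tensor product together with these descriptions of the slices, for every \itcat{} $\tX$ one gets natural identifications
\[ \Map(\tX, \FUN(\tA,\tB_{/b})^{\lax}) \simeq \Map\bigl((\tX \otimes \tA)\times[1], \tB\bigr) \times_{\Map(\tX \otimes \tA, \tB)} \{\underline b\}, \]
\[ \Map(\tX, \FUN(\tA,\tB)^{\lax}_{/\underline b}) \simeq \Map\bigl((\tX \times [1])\otimes \tA, \tB\bigr) \times_{\Map(\tX \otimes \tA, \tB)} \{\underline b\}, \]
where both fibres are taken over the constant functor $\underline b$ and the structure maps are restriction along the inclusions of $\tX \otimes \tA$ at the object $1 \in [1]$.

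Next I would construct the comparison map. Applying $(\blank)^{\op}$ to the bottom morphism of the left-hand pushout square of \cref{propn:tripleprodpo} and using the equivalences of \cref{obs:grayop}, one obtains a natural map
\[ \Theta \colon (\tX \times [1]) \otimes \tA \to (\tX \otimes \tA)\times [1], \]
which is a localization at (the image of) the relevant Gray $2$-morphisms, by \cref{obs:graysqloc}. Both components of $\Theta$ restrict correctly at $1 \in [1]$, so precomposition with $\Theta$ is compatible with the two structure maps to $\Map(\tX\otimes\tA,\tB)$ and hence descends to a map of fibres over $\underline b$; this is the natural transformation inducing $\iota$. On objects $\iota$ sends a functor $\tA \to \tB_{/b}$ to its composite with the projection $\tB_{/b}\to\tB$, equipped with the tautological transformation to $\underline b$.

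The key input is then \cref{cor:thattripleproductlocn}, applied to the opposite situation: a functor $\Phi \colon (\tX \times [1])\otimes \tA \to \tB$ factors through $\Theta$ \IFF{} for every object $x \in \tX$ the restriction $\Phi|_{\{x\}} \colon [1] \otimes \tA \to \tB$ is a strong lax transformation. Thus $\iota$ induces on each $\Map(\tX, \blank)$ the inclusion of the subspace cut out by this object-indexed condition. Taking $\tX = [0]$ identifies the image on objects with the strong transformations $F \to \underline b$, which is the final assertion.

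It remains to prove that $\iota$ is fully faithful, for which I would verify the criterion of \cref{cor:ffonspcs}. The decisive point is that the condition ``$\Phi|_{\{x\}}$ is strong for every object $x$'' depends only on the objects of $\tX$; since $\tX$ and $\tX^{\simeq}$ have the same objects and restriction to $\tX^{\simeq}$ commutes with passage to $\Phi|_{\{x\}}$, a functor on $(\tX \times [1])\otimes\tA$ factors through $\Theta$ \IFF{} its restriction to $(\tX^{\simeq}\times[1])\otimes\tA$ factors through the corresponding localization for $\tX^{\simeq}$. Combined with the compatibility of the fibres over $\underline b$, this shows that the square of mapping spaces in \cref{cor:ffonspcs} is cartesian for every $\tX$, so $\iota$ is fully faithful. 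The main points to get right are the variance bookkeeping when passing to opposites and the verification that the two fibre conditions over $\underline b$ match under $\Theta^{*}$; the genuine content is carried entirely by \cref{propn:tripleprodpo} and \cref{cor:thattripleproductlocn}.
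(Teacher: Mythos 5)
Your proof is correct and follows essentially the same route as the paper's: both describe the two mapping spaces as fibres over $\underline{b}$, construct $\iota$ via the Yoneda lemma from the comparison map $(\tX \times [1])\otimes \tA \to (\tX \otimes \tA)\times [1]$ supplied by \cref{propn:tripleprodpo}, and deduce full faithfulness from the criterion of \cref{cor:ffonspcs}. Your rephrasing of the cartesianness of the key square via the object-indexed condition of \cref{cor:thattripleproductlocn} is just an unwinding of the same pullback of mapping spaces that the paper cites directly from \cref{propn:tripleprodpo}.
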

\begin{proof}
  For every $(\infty,2)$-category $\tX$ we have natural pullback squares
  \[
    \begin{tikzcd}
      \Map(\tX,\FUN(\tA, \tB_{/b})^\lax) \ar[r] \ar[d] & \Map([1] \times (\tX \otimes \tA) , \tB) \ar[d] \\
      \{\underline{b}\} \ar[r] & \Map(\{1\}\times (\tX \otimes \tA)  , \tB),
    \end{tikzcd}
  \]
  \[
    \begin{tikzcd}
    \Map(\tX,\FUN(\tA,\tB)^{\lax}_{/\underline{b}})   \ar[r] \ar[d] & \Map(([1] \times \tX) \otimes \tA, \tB) \ar[d] \\
      \{\underline{b}\} \ar[r] & \Map((\{1\} \times \tX) \otimes \tA, \tB).
    \end{tikzcd}
  \]
  From \cref{propn:tripleprodpo} we also have a pullback square
  \[
    \begin{tikzcd}
      \Map(([1] \times \tX) \otimes \tA, \tB) \ar[r] \ar[d] & \Map([1] \times (\tX \otimes \tA) , \tB) \ar[d] \\
\Map(([1] \times \tX^{\simeq}) \otimes \tA, \tB) \ar[r] & \Map([1] \times \tX^{\simeq} \times \tA , \tB);
    \end{tikzcd}
  \]
  combining this with the first squares we see that the natural
  transformation
  \[([1] \times \tX) \otimes \tA \to [1] \times (\tX \otimes \tA)\]
  induces a functor
  \[ \iota \colon \FUN(\tA, \tB_{/b})^\lax \to \FUN(\tA,\tB)^{\lax}_{/\underline{b}}\]
  by the Yoneda lemma, and composition with this gives a natural
  pullback square
  \[
    \begin{tikzcd}
      \Map(\tX,\FUN(\tA, \tB_{/b})^\lax) \ar[r, "\iota_{*}"] \ar[d] & \Map(\tX,\FUN(\tA,\tB)^{\lax}_{/\underline{b}}) \ar[d] \\
\Map(\tX^{\simeq},\FUN(\tA, \tB_{/b})^\lax) \ar[r, "\iota_{*}"] & \Map(\tX^{\simeq},\FUN(\tA,\tB)^{\lax}_{/\underline{b}}).
    \end{tikzcd}
  \]
  This says precisely that $\iota$ is fully faithful by \cref{cor:ffonspcs}.
\end{proof}

\begin{propn}\label{obs:Slaxoplaxrev}
  Let $(\tX, E)$ be a marked \itcat{}. For any \itcats{} $\tY, \tZ$, the natural equivalence
  \[ \FUN(\tX, \FUN(\tY, \tZ)^{\oplax})^{\lax} \simeq
    \FUN(\tY, \FUN(\tX, \tZ)^{\lax})^{\oplax}\]
  of \cref{lem:funandgray} restricts to a fully faithful inclusion
\[ \FUN(\tY, \FUN(\tX, \tZ)^{\elax})^{\oplax} \hookrightarrow \FUN(\tX, \FUN(\tY,\tZ)^{\oplax})^{\elax},\]
with image those functors $F \colon \tY \otimes \tX \to \tZ$ such that
  for every morphism $x \to x'$ in $E$ and every morphism $y \to y'$ in $\tY$, the 2-morphism in the
  square
  \[
    \begin{tikzcd}
      F(y,x) \arrow{r} \arrow{d} & F(y,x') \arrow{d} \\
      F(y',x) \arrow{r} \arrow[Rightarrow]{ur} & F(y',x')
    \end{tikzcd}
  \]
  is invertible.
\end{propn}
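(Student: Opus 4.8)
The plan is to translate the statement entirely into properties of Gray tensor products via the defining adjunctions, and then to verify fully faithfulness through the criterion of \cref{cor:ffonspcs}. First I would record the two relevant mapping-space formulas. For any \itcat{} $\tW$, unwinding the adjunctions defining $\FUN(\blank,\blank)^{\oplax}$ and $\FUN(\blank,\blank)^{\elax}$ together with the associativity of the Gray tensor product yields natural equivalences
\[ \Map\bigl(\tW, \FUN(\tY, \FUN(\tX, \tZ)^{\elax})^{\oplax}\bigr) \simeq \Map\bigl((\tY \otimes \tW) \otimes_{\natural,E} \tX, \tZ\bigr) \]
and
\[ \Map\bigl(\tW, \FUN(\tX, \FUN(\tY, \tZ)^{\oplax})^{\elax}\bigr) \simeq \Map\bigl(\tY \otimes (\tW \otimes_{\natural,E} \tX), \tZ\bigr). \]
Under these identifications the equivalence of \cref{lem:funandgray} corresponds to associativity $\tY \otimes (\tW \otimes \tX) \simeq (\tY \otimes \tW) \otimes \tX$, and the functor $\iota$ we must analyse is the one whose action on $\Map(\tW, \blank)$ is precomposition with a natural comparison map $c_{\tW} \colon \tY \otimes (\tW \otimes_{\natural,E} \tX) \to (\tY \otimes \tW) \otimes_{\natural,E} \tX$ refining associativity.

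To construct and control $c_{\tW}$, I would present both of its sides as pushouts. Applying the fourth pushout square of \cref{obs:markedgraydecomp} to $(\tX,E)$, invoking associativity, and using that $\tY \otimes \blank$ preserves colimits (\cref{thm:graycolim}), both the source and target of $c_{\tW}$ become pushouts of a common span $B \leftarrow G \to (\blank)$, where $B = \tY \otimes (\tW \otimes \tX)$, $G = \coprod_{E} \tY \otimes (\tW \otimes [1])$, and the right-hand leg is $R_{\mathrm{in}} = \coprod_{E} \tY \otimes (\tW \times [1])$ for the source and $R_{\mathrm{out}} = \coprod_{E} (\tY \otimes \tW) \times [1]$ for the target. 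The map $R_{\mathrm{in}} \to R_{\mathrm{out}}$ inducing $c_{\tW}$ is given summandwise by the canonical map $\tY \otimes (\tW \times [1]) \to (\tY \otimes \tW) \times [1]$, and by the left-hand pushout of \cref{propn:tripleprodpo} (with $\tA = \tY$, $\tB = \tW$, $\tC = [1]$) this map is itself the pushout of $\tY \otimes (\tW^{\simeq} \times [1]) \to \tY \times \tW^{\simeq} \times [1]$.

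The decisive observation is that this last span depends on $\tW$ only through $\tW^{\simeq}$. Writing $R' = \coprod_{E} \tY \otimes (\tW^{\simeq} \times [1])$ and $R'' = \coprod_{E} \tY \times \tW^{\simeq} \times [1]$, the previous step gives $R_{\mathrm{out}} \simeq R_{\mathrm{in}} \amalg_{R'} R''$, so pasting pushouts identifies the target of $c_{\tW}$ with $\tY \otimes (\tW \otimes_{\natural,E} \tX) \amalg_{R'} R''$, where $R'$ and $R''$ are unchanged upon replacing $\tW$ by $\tW^{\simeq}$ (as $(\tW^{\simeq})^{\simeq} = \tW^{\simeq}$). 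A second application of pushout pasting then shows that the square
\[
  \begin{tikzcd}
    \tY \otimes (\tW^{\simeq} \otimes_{\natural,E} \tX) \arrow{r} \arrow{d} & \tY \otimes (\tW \otimes_{\natural,E} \tX) \arrow{d} \\
    (\tY \otimes \tW^{\simeq}) \otimes_{\natural,E} \tX \arrow{r} & (\tY \otimes \tW) \otimes_{\natural,E} \tX
  \end{tikzcd}
\]
is a pushout of \itcats{}. Since $\Map(\blank, \tZ)$ sends pushouts to pullbacks, applying it to this square produces exactly the pullback square in $\tW$ and $\tW^{\simeq}$ required by \cref{cor:ffonspcs}, so $\iota$ is fully faithful.

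Finally I would identify the essential image on objects. Taking $\tW = [0]$ in the formulas above, the objects of $\FUN(\tY, \FUN(\tX, \tZ)^{\elax})^{\oplax}$ are functors $\tY \otimes_{\natural,E} \tX \to \tZ$, those of $\FUN(\tX, \FUN(\tY, \tZ)^{\oplax})^{\elax}$ are all functors $\tY \otimes \tX \to \tZ$, and $\iota$ acts by restriction along the localization $c_{[0]} \colon \tY \otimes \tX \to \tY \otimes_{\natural,E} \tX$. By \cref{lem:markedgraypo} and \cref{obs:graysqloc} a functor $\tY \otimes \tX \to \tZ$ factors through $\tY \otimes_{\natural,E} \tX$ precisely when, for every $x \to x'$ in $E$ and every $y \to y'$ in $\tY$, the Gray square is carried to an invertible $2$-morphism, which is the asserted description of the image. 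I expect the main obstacle to be the bookkeeping of the second and third paragraphs: assembling the pushout presentations of $c_{\tW}$ compatibly and checking that the summandwise comparison maps glue as claimed, so that all genuinely model-dependent input is confined to the already-established \cref{propn:tripleprodpo}.
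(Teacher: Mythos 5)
Your proposal is correct, and it rests on the same two inputs as the paper's proof --- the pushout square of \cref{propn:tripleprodpo} and the detection of fully faithful functors on underlying \igpds{} (\cref{cor:ffonspcs}) --- but it executes them by a different decomposition. The paper characterizes the $\tW$-points of both sides as factorization conditions on a functor $\tY \otimes \tW \otimes \tX \to \tZ$, using \cref{cor:thattripleproductlocn} (the corollary of \cref{propn:tripleprodpo}) to see that membership in $\FUN(\tY,\FUN(\tX,\tZ)^{\elax})^{\oplax}$ adds, to the condition defining $\FUN(\tX,\FUN(\tY,\tZ)^{\oplax})^{\elax}$ (which constrains $F$ only at \emph{morphisms} of the test object), a further condition constraining $F$ only at \emph{objects} of the test object; the inference of fully faithfulness from the objectwise nature of the extra condition is left implicit there. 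You make precisely this step explicit: your observation that the discrepancy terms $R'$, $R''$ between the two marked Gray presentations depend on $\tW$ only through $\tW^{\simeq}$ is the structural counterpart of the paper's morphisms-versus-objects dichotomy, and your pushout-pasting argument converts it directly into the pullback square demanded by \cref{cor:ffonspcs}, bypassing \cref{cor:thattripleproductlocn} entirely. What your route buys is a completely formal verification with all model-dependent input confined to \cref{propn:tripleprodpo} and the presentations of \cref{obs:markedgraydecomp}; what it costs is the span bookkeeping you flag, which is genuinely routine here (in scaled simplicial sets all the comparison maps are the identity on underlying simplicial sets, so the maps of spans commute on the nose, and in particular the map $G \to R_{\mathrm{out}}$ factors through $R_{\mathrm{in}}$ as your pasting requires). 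Two small points worth recording in a write-up: naturality of $c_{\tW}$ in $\tW$ is needed before invoking the Yoneda lemma to produce $\iota$ and to identify it with the restriction of the equivalence of \cref{lem:funandgray}, and for the image description you should note that $c_{[0]} \colon \tY \otimes \tX \to \tY \otimes_{\natural,E} \tX$ is an epimorphism (by \cref{obs:graysqloc} it lies in the saturation of $C_{2} \to [1]$), so factorizations through it are essentially unique and your criterion from \cref{lem:markedgraypo} computes the essential image on objects, which for a fully faithful functor determines the image; both are immediate from your construction.
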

\begin{proof}
  A functor
  $f \colon \tA \to \FUN(\tX, \FUN(\tY, \tZ)^{\oplax})^{\elax}$ corresponds to
  a functor $F \colon \tY \otimes \tA \otimes \tX \to \tZ$ such that
  for every 
  morphism $x \to x'$ in $E$, the restriction to $\tY \otimes \tA \otimes [1]$
  factors through $\tY \otimes (\tA \times [1])$. 
  This means that for
  every $y \in \tY$ and every morphism $a \to a'$ in $\tA$, the 2-morphism in the square
  \[
    \begin{tikzcd}
      F(y,a,x) \arrow{r} \arrow{d} & F(y,a,x') \arrow{d} \\
      F(y,a',x) \arrow{r} \arrow[Rightarrow]{ur} & F(y,a',x')
    \end{tikzcd}
  \]
  is invertible; this is only a condition on the value of $F$ at
  \emph{morphisms} in $\tA$, while the value on objects is
  unrestricted. On the other hand, to land in
  $\FUN^{\oplax}(\tY, \FUN^{\elax}(\tX, \tZ))$, the functor $F$ should
  factor through $(\tY \otimes \tA) \times [1]$ at every morphism in
  $E$. By \cref{cor:thattripleproductlocn}, this amounts to the
  previous condition together with the condition that for every object
  $a \in \tA$ and morphism $y \to y'$ in $\tY$, the 2-morphism in the
  square
  \[
    \begin{tikzcd}
      F(y,a,x) \arrow{r} \arrow{d} & F(y,a,x') \arrow{d} \\
      F(y',a,x) \arrow{r} \arrow[Rightarrow]{ur} & F(y',a,x')
    \end{tikzcd}
  \]
  is invertible. This corresponds precisely to the given condition on the value of $F$ on
  \emph{objects}.
\end{proof}

\section{Fibrations and lax transformations}\label{sec:fibs}
We start this section by reviewing fibrations of \itcats{}. We first
define (co)cartesian-enriched functors in \S\ref{sec:cartenr}, and
then review the \itcatl{} analogues of (co)cartesian fibrations and
their straightening in \S\ref{sec:cartesian}. In \S\ref{sec:locfib} we
then consider \emph{local} fibrations and their straightening, which
we then specialize to \emph{Gray fibrations}, which straighten to
functors from Gray tensor products, in \S\ref{sec:grayfib}. Using
these we can then prove \cref{thm:strlaxtr}, \ie{} straightening for
lax transformations, in \S\ref{sec:strlaxtr}. In \S\ref{sec:graybifib}
we study the bivariant version of Gray fibrations, which will be
important when we look at the mate correspondence in \S\ref{sec:mates}
below.

\subsection{Cartesian-enriched functors}\label{sec:cartenr}
As a preliminary to defining fibrations of \itcats{}, it is convenient to first briefly introduce the notion of ``(co)cartesian-enriched functors'' separately.

\begin{defn}\label{defn:cartenriched}
  A functor of \itcats{} $\pi \colon \tc{E} \to \tc{B}$ is
  \emph{cartesian-enriched} if
  \begin{enumerate}[(1)]
  \item For all $x,y \in \tc{E}$, the functor $\tc{E}(x,y) \to \tc{B}(\pi x, \pi y)$ is a cartesian fibration of \icats{}.
  \item For all $x,y,z \in \tc{E}$, the commutative square
    \[
      \begin{tikzcd}
        \tc{E}(x,y) \times \tc{E}(y,z) \arrow{r} \arrow{d} & \tc{E}(x,z) \arrow{d} \\
        \tc{B}(\pi x, \pi y) \times \tc{B}(\pi y, \pi z) \arrow{r} & \tc{B}(\pi x, \pi z),
      \end{tikzcd}
    \]
    where the horizontal morphisms are given by composition, is a
    morphism of cartesian fibrations (\ie{} the top horizontal functor
    preserves cartesian morphisms).
  \end{enumerate}
  If $\pi \colon \tc{E} \to \tc{B}$ and $\pi' \colon \tc{E}' \to \tc{B}'$ are cartesian-enriched, we say that a commutative square of \itcats{}
  \[
    \begin{tikzcd}
      \tc{E} \arrow{r}{\psi} \arrow{d}[swap]{\pi} & \tc{E}' \arrow{d}{\pi'} \\
      \tc{B} \arrow{r}{\phi} & \tc{B}'
    \end{tikzcd}
  \]
  is a \emph{morphism of cartesian-enriched functors} if for all $x, y \in \tc{E}$, the commutative square
  \[
    \begin{tikzcd}
      \tc{E}(x,y) \arrow{r} \arrow{d} & \tc{E}'(\psi x, \psi y) \arrow{d} \\
      \tc{B}(\pi x, \pi y) \arrow{r} & \tc{B}'(\pi' \psi x, \pi' \psi y)
    \end{tikzcd}
  \]
  is a morphism of cartesian fibrations.
  Dually, if $\pi^{\co}$ is cartesian-enriched, we say that $\pi$ is \emph{cocartesian-enriched}, and define morphisms of cocartesian-enriched functors similarly. We write $\Cartenr{\tB}$ for the locally full sub-\itcat{} of $\CATITsl{\tB}$ spanned by the cartesian-enriched functors and morphisms of cartesian-enriched functors over $\tB$; similarly, $\Cocartenr{\tB}$ is the cocartesian-enriched analogue.
\end{defn}

\begin{defn}
  We say that $\pi \colon \tE \to \tB$ is \emph{right-enriched} (or \emph{left-enriched}) if \[\tE(x,y) \to \tB(\pi x, \pi y)\] is a right (or left) fibration of \icats{} for all $x, y \in \tE$. We write $\Lenr{\tB}$ and $\Renr{\tB}$ for the full sub-\itcats{} of $\CATITsl{\tB}$ spanned by the left- and right-enriched functors, respectively.
\end{defn}

\begin{observation}\label{obs:lenrsubcat}
  If $\pi$ is right- (or left-)enriched then it is also
  (co)cartesian-enriched, since the second condition in the definition
  is vacuous when all morphisms are (co)cartesian. For the same
  reason, every functor between right- (or left-)enriched functors is
  (co)cartesian-enriched. Thus $\Lenr{\tB}$ and $\Renr{\tB}$ are also
  full sub-\itcats{} of $\Cocartenr{\tB}$ and $\Cartenr{\tB}$,
  respectively.
\end{observation}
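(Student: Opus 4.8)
The plan is to reduce everything to the standard fact that a right fibration of \icats{} is exactly a cartesian fibration in which \emph{every} morphism is cartesian (equivalently, one with \igpd{} fibres), and dually that a left fibration is a cocartesian fibration in which every morphism is cocartesian.

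First I would check that a right-enriched $\pi \colon \tE \to \tB$ is cartesian-enriched. Condition (1) of \cref{defn:cartenriched} is immediate, since each $\tE(x,y) \to \tB(\pi x, \pi y)$ is a right fibration and hence in particular a cartesian fibration. For condition (2), the left vertical map of the composition square is a product of right fibrations, which is again a right fibration, and the right vertical map is a right fibration; thus every morphism in both the source and the target of the top composition functor is cartesian. A functor over the base between such fibrations therefore preserves cartesian morphisms for the trivial reason that there is nothing to check: every morphism it could send is already cartesian, and so is its image. Hence the composition square is a morphism of cartesian fibrations and $\pi$ is cartesian-enriched. The left-enriched case follows by applying this to $\pi^{\co}$: reversing $2$-morphisms turns a left fibration $\tE(x,y) \to \tB(\pi x, \pi y)$ into a right fibration $\tE^{\co}(x,y) \to \tB^{\co}(\pi^{\co} x, \pi^{\co} y)$, so $\pi^{\co}$ is right-enriched, hence cartesian-enriched, which is precisely the statement that $\pi$ is cocartesian-enriched.

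The same observation handles morphisms. Given any commutative triangle over $\tB$ between right-enriched functors $\pi$ and $\pi'$, the induced square on mapping \icats{} from \cref{defn:cartenriched} has both vertical maps right fibrations, so again every morphism in the source $\tE(x,y)$ and in the target $\tE'(\psi x, \psi y)$ is cartesian. The top horizontal functor thus automatically preserves cartesian morphisms, so the triangle is a morphism of cartesian-enriched functors. In other words, \emph{every} $1$-morphism of $\CATITsl{\tB}$ between right-enriched objects already lies in $\Cartenr{\tB}$, and dually every $1$-morphism between left-enriched objects lies in $\Cocartenr{\tB}$.

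Finally I would assemble the full sub-\itcat{} statement by comparing mapping \icats{}. By definition $\Renr{\tB}$ is full in $\CATITsl{\tB}$ while $\Cartenr{\tB}$ is \emph{locally} full, so for right-enriched $\pi, \pi'$ the mapping \icat{} of $\Cartenr{\tB}$ is the full sub-\icat{} of $\CATITsl{\tB}(\pi,\pi')$ on those $1$-morphisms that are morphisms of cartesian-enriched functors. Since the previous paragraph shows this to be \emph{all} of them, the two mapping \icats{} coincide, and the evident inclusion $\Renr{\tB} \hookrightarrow \Cartenr{\tB}$ (which exists on objects by the first paragraph) is fully faithful; as it is also a monomorphism on underlying \igpds{}, inherited from $\CATITsl{\tB}$, this exhibits $\Renr{\tB}$ as a full sub-\itcat{} of $\Cartenr{\tB}$, and dually $\Lenr{\tB}$ as a full sub-\itcat{} of $\Cocartenr{\tB}$. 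The only point requiring genuine care is the bookkeeping between \emph{full} and \emph{locally full} sub-\itcats{}; there is no real mathematical obstacle, since every verification collapses to the remark that in a right fibration all morphisms are already cartesian.
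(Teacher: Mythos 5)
Your proposal is correct and takes essentially the same route as the paper, whose entire justification is the single remark that condition (2) of \cref{defn:cartenriched} (and likewise the condition on morphisms) is vacuous because in a right (resp.\ left) fibration every morphism is cartesian (resp.\ cocartesian). Your extra bookkeeping --- the $(\blank)^{\co}$ reduction for the left-enriched case and the comparison of mapping \icats{} between the full sub-\itcat{} $\Renr{\tB}$ of $\CATITsl{\tB}$ and the locally full sub-\itcat{} $\Cartenr{\tB}$ --- merely spells out what the paper leaves implicit, and is accurate.
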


\begin{observation}\label{ob:precomposition}
 Let $\phi \colon  f_0 \to f_1$ and $\psi \colon g_0 \to g_1$ be cartesian edges in $\tc{E}(x,y)$ and $\tc{E}(y,z)$, respectively, and consider the associated commutative diagram in $\tc{E}(x,z)$:
 \[
   \begin{tikzcd}
     g_0 \circ f_0 \arrow[r, "\psi \circ f_{0}"] \ar[dr, "\psi \circ \phi"{description}] \arrow[d, "g_{0} \circ \phi"{swap}] & g_1 \circ f_0 \arrow[d, "g_{1} \circ \phi"] \\
     g_0 \circ f_1 \arrow[r, "\psi \circ f_{1}"{swap}] & g_1 \circ f_1.
   \end{tikzcd}
 \]
 Condition (2) in \cref{defn:cartenriched} requires the composite
 $\psi \circ \phi$ to be cartesian. This holds
 provided the edges in the square are cartesian morphisms, since
 these are closed under composition (because
 $\tc{E}(x,z) \to \tc{B}(a,c)$ is by assumption a cartesian
 fibration). This condition is therefore equivalent to the seemingly
 weaker condition that pre- and postcomposition with 1-morphisms
 preserves cartesian 2-morphisms.
\end{observation}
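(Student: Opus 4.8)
The plan is to prove the asserted equivalence of Condition~(2) in \cref{defn:cartenriched} with the weaker whiskering condition by reducing everything to two standard facts about cartesian fibrations: the description of cartesian edges in a product, and their closure under composition. First I would recall that, since $\tc{E}(x,y) \to \tc{B}(\pi x,\pi y)$ and $\tc{E}(y,z)\to \tc{B}(\pi y,\pi z)$ are cartesian fibrations of \icats{}, so is their product $\tc{E}(x,y)\times\tc{E}(y,z) \to \tc{B}(\pi x,\pi y)\times\tc{B}(\pi y,\pi z)$, and a morphism $(\phi,\psi)$ in the total space is cartesian precisely when each of $\phi$ and $\psi$ is cartesian. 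Hence Condition~(2), that the composition functor is a morphism of cartesian fibrations, says exactly that $\psi\circ\phi$ is a cartesian morphism in $\tc{E}(x,z)$ whenever both $\phi$ and $\psi$ are cartesian.

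One direction is then immediate. Since identity $2$-morphisms are always cartesian, applying Condition~(2) to the pairs $(\mathrm{id}_{f},\psi)$ and $(\phi,\mathrm{id}_{g})$ shows that whiskering a cartesian $2$-morphism by a fixed $1$-morphism on either side again yields a cartesian $2$-morphism; that is, Condition~(2) implies the weaker condition.

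For the converse, the key point is that the commutative square displayed in the statement is obtained by applying the composition functor to the square $[1]\times[1] \to \tc{E}(x,y)\times\tc{E}(y,z)$ determined by $\phi$ and $\psi$. Its diagonal is therefore $\psi\circ\phi$, and by functoriality of composition this factors as $\psi\circ\phi \simeq (g_1\circ\phi)\circ(\psi\circ f_0)$. Under the weaker hypothesis the edge $\psi\circ f_0$ is cartesian (whiskering the cartesian $\psi$ by the $1$-morphism $f_0$) and the edge $g_1\circ\phi$ is cartesian (whiskering the cartesian $\phi$ by $g_1$). Since cartesian morphisms for the cartesian fibration $\tc{E}(x,z)\to\tc{B}(\pi x,\pi z)$ are closed under composition, the composite $\psi\circ\phi$ is cartesian, which is Condition~(2).

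I expect essentially no serious obstacle: the argument is purely formal once the two standard facts are in place. The only point requiring a little care is justifying the factorization of the diagonal, which amounts to noting that composition is a functor and so sends the factorization $(\phi,\psi) \simeq (\phi,\mathrm{id}_{g_1})\circ(\mathrm{id}_{f_0},\psi)$ of the morphism $(\phi,\psi)$ in the product to the corresponding composite in $\tc{E}(x,z)$.
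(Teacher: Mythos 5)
Your proposal is correct and follows essentially the same route as the paper: both directions come down to factoring the diagonal $\psi\circ\phi$ of the square as $(g_1\circ\phi)\circ(\psi\circ f_0)$ and invoking closure of cartesian morphisms under composition in the cartesian fibration $\tc{E}(x,z)\to\tc{B}(\pi x,\pi z)$. Your write-up merely makes explicit two points the paper leaves implicit --- the componentwise description of cartesian edges in the product fibration and the easy direction via whiskering with identity $2$-morphisms --- which is fine but not a different argument.
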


\begin{observation}\label{obs:enrto1cat}
  If $\oB$ is an \icat{} and $\tE$ an \itcat{}, then any functor
  $\tE \to \oB$ is both cartesian- and cocartesian-enriched, since any
  functor to an \igpd{} is a (co)cartesian fibration.
\end{observation}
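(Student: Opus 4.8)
The plan is to unwind \cref{defn:cartenriched} and observe that both of its conditions become automatic once we use that the mapping \icats{} of the target are \igpds{}. When we regard the \icat{} $\oB$ as an \itcat{}, its mapping \icats{} $\oB(b,b') = \Map_{\oB}(b,b')$ are \igpds{}, so for any functor $\pi \colon \tE \to \oB$ and objects $x,y \in \tE$ the functor $\tE(x,y) \to \oB(\pi x, \pi y)$ appearing in condition~(1) is a functor of \icats{} with \igpd{} target. Thus everything reduces to understanding (co)cartesian fibrations over \igpds{}.

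First I would dispatch condition~(1) using the stated fact that a functor of \icats{} $p \colon \oC \to \oD$ with \igpd{} target is both a cartesian and a cocartesian fibration. A clean way to see this is that, since $\oD$ is an \igpd{}, the evaluation $\Fun([1],\oD) \xrightarrow{\sim} \oD$ is an equivalence, so the comparison map $\Fun([1],\oC) \to \oC \times_{\oD} \Fun([1],\oD)$ detecting (co)cartesian fibrations is identified with $\mathrm{ev}_{1}$ (resp.\ $\mathrm{ev}_{0}$), which admits the fully faithful right adjoint (resp.\ left adjoint) $c \mapsto \id_{c}$; this exhibits $p$ as a cartesian (resp.\ cocartesian) fibration. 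Applying this to each $\tE(x,y) \to \oB(\pi x, \pi y)$ gives condition~(1).

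The only remaining point, which I regard as the crux, is condition~(2). Here the key observation is that over an \igpd{} base the (co)cartesian morphisms are exactly the equivalences: for $p \colon \oC \to \oD$ with $\oD$ an \igpd{}, any morphism $\phi \colon x \to y$ lies over an equivalence $p(\phi)$, so postcomposition with $p(\phi)$ is an equivalence of mapping \igpds{}, and the cartesian criterion reduces to asking that $\phi_{*} \colon \oC(w,x) \to \oC(w,y)$ be an equivalence for all $w$ --- i.e.\ that $\phi$ be an equivalence. Consequently the (co)cartesian morphisms of $\tE(x,y) \times \tE(y,z) \to \oB(\pi x, \pi y) \times \oB(\pi y, \pi z)$ are precisely the equivalences, and the composition functor, being a functor, preserves them. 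Thus the composition square is automatically a morphism of (co)cartesian fibrations, so $\pi$ is cartesian-enriched. The cocartesian-enriched case then follows symmetrically, or formally by applying the cartesian case to $\pi^{\co} \colon \tE^{\co} \to \oB^{\co}$ and using that $\oB^{\co} \simeq \oB$ since $\oB$ has no noninvertible $2$-morphisms.
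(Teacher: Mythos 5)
Your proof is correct and takes the same route as the paper, which states this Observation without proof precisely on the grounds you verify: condition (1) of \cref{defn:cartenriched} is exactly the fact that any functor of \icats{} with \igpd{} target is a (co)cartesian fibration, and condition (2) is automatic because, as you show, the (co)cartesian morphisms over an \igpd{} base are precisely the equivalences, which any composition functor preserves. Your filling-in of the details (the identification of cartesian lifts via the identity section and the Yoneda argument that cartesian implies invertible) is accurate, so nothing further is needed.
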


\begin{lemma}\label{lem:cartenrcomp}
  Suppose $p \colon \tE \to \tF$ and $q \colon \tF \to \tB$ are cartesian-enriched functors. Then the composite $qp \colon \tE \to \tB$ is also cartesian-enriched, and the $qp$-cartesian morphisms in $\tE(x,y)$ are those that are $p$-cartesian over a $q$-cartesian morphism in $\tF(px,py)$.
\end{lemma}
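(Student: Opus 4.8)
The plan is to reduce everything to the corresponding statements about cartesian fibrations of \icats{} at the level of mapping \icats{}. For objects $x, y \in \tE$, the functor $\tE(x,y) \to \tB(qpx, qpy)$ factors as the composite
\[ \tE(x,y) \xrightarrow{p_{x,y}} \tF(px, py) \xrightarrow{q_{px,py}} \tB(qpx, qpy), \]
where both maps are cartesian fibrations of \icats{} since $p$ and $q$ are cartesian-enriched. First I would invoke the standard fact that cartesian fibrations of \icats{} are closed under composition \cite{HTT}*{\S 2.4}, which immediately gives condition~(1) of \cref{defn:cartenriched} for $qp$. The same result also characterizes the cartesian morphisms of the composite fibration: a morphism $\alpha$ in $\tE(x,y)$ is cartesian for $q_{px,py} \circ p_{x,y}$ \IFF{} it is $p_{x,y}$-cartesian and its image $p_{x,y}(\alpha)$ is $q_{px,py}$-cartesian. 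This is exactly the asserted description of the $qp$-cartesian morphisms.

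Next I would verify condition~(2), \ie{} that composition preserves $qp$-cartesian morphisms. By the characterization just obtained, a cartesian morphism for the product fibration $\tE(x,y) \times \tE(y,z) \to \tB(qpx,qpy) \times \tB(qpy,qpz)$ is a pair $(\alpha, \beta)$ of $qp$-cartesian morphisms, each of which is thus $p$-cartesian with $p$-image a $q$-cartesian morphism. Write $\gamma$ for the image of $(\alpha,\beta)$ under the composition functor of $\tE$. Since $p$ is cartesian-enriched, condition~(2) for $p$ tells us that $\gamma$ is $p$-cartesian. Because $p$ is a functor of \itcats{}, the square of composition functors for $\tE$ and $\tF$ commutes, so $p(\gamma)$ is the composite of $p(\alpha)$ and $p(\beta)$ in $\tF$; as $q$ is cartesian-enriched and $p(\alpha), p(\beta)$ are $q$-cartesian, condition~(2) for $q$ gives that $p(\gamma)$ is $q$-cartesian. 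By the characterization of $qp$-cartesian morphisms, $\gamma$ is therefore $qp$-cartesian, as required.

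The argument is essentially a two-step bookkeeping exercise combining the composition law for cartesian fibrations with conditions~(2) for $p$ and $q$. The only mildly delicate point, and the one I would write out carefully, is the identification of the cartesian morphisms in a product of cartesian fibrations with pairs of cartesian morphisms, together with the compatibility of the composition functors under $p$, which is what allows condition~(2) for $q$ to be applied downstairs. One could alternatively phrase the whole verification through \cref{ob:precomposition}, checking preservation of cartesian $2$-morphisms under pre- and postcomposition with $1$-morphisms, but the direct route via the characterization of $qp$-cartesian morphisms seems cleaner.
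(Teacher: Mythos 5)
Your proposal is correct and takes essentially the same route as the paper, whose entire proof is a citation of \cite{HTT}*{Proposition 2.4.1.3} for the composition of cartesian fibrations and the resulting characterization of cartesian morphisms. Your write-up merely spells out the bookkeeping the paper leaves implicit, namely the componentwise description of cartesian morphisms in a product fibration and the application of condition~(2) for $p$ and then $q$ to the composition squares.
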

\begin{proof}
  This follows from the description of cartesian morphisms for a
  composite of cartesian fibrations in \cite{HTT}*{Proposition
    2.4.1.3}.
\end{proof}

\subsection{Cartesian morphisms and fibrations of \itcats{}}\label{sec:cartesian}

In this section we review the definitions of (co)cartesian morphisms
and fibrations of \itcats{} and their straightening. Such fibrations
were first defined by Gagna, Harpaz, and
Lanari~\cite{GagnaHarpazLanariFib} and were subsequently studied by
the first author and Stern~\cite{AGS1,AGS2}, who prove a straightening
theorem; this is also a special case of Nuiten's general straightening
for fibrations of $(\infty,n)$-categories \cite{Nuiten}.

\begin{defn}
  Let $\pi \colon \tc{E} \to \tc{B}$ be a functor of \itcats{}. If
  $\overline{f} \colon x \to y$ is a morphism in $\tc{E}$ lying over
  $f \colon a \to b$ in $\tc{B}$, then we say that $\overline{f}$ is a
  \emph{$p$-cocartesian} morphism if for every
  object $z \in \tc{E}$ over $c \in \tc{B}$, the commutative square of
  \icats{}
  \[
    \begin{tikzcd}
      \tc{E}(y,z) \arrow{r}{\overline{f}^{*}} \arrow{d} & \tc{E}(x,z) \arrow{d} \\
      \tc{B}(b,c) \arrow{r}{f^{*}} & \tc{B}(a,c)
    \end{tikzcd}
  \]
  is a pullback. Dually, we say that $\overline{f}$ is a \emph{$p$-cartesian} morphism if all the squares
  \[
    \begin{tikzcd}
      \tc{E}(z,x) \arrow{r}{\overline{f}_{*}} \arrow{d} & \tc{E}(z,y) \arrow{d} \\
      \tc{B}(c,a) \arrow{r}{f_{*}} & \tc{B}(c,b)
    \end{tikzcd}
  \]
  are pullbacks. We say that $\tc{E}$ has all \emph{$p$-cocartesian lifts}
  of some class $T$ of morphisms in $\tc{B}$ if given a morphism
  $f \colon a \to b$ in $T$ and an object $x$ in $\tc{E}$ over $a$, there
  exists a $p$-cocartesian morphism $\overline{f} \colon x \to y$
  lying over $f$; similarly, we say that $\tc{E}$ has all
  \emph{$p$-cartesian lifts} of $T$ if the dual condition holds.
\end{defn}

\begin{observation}\label{obs:cocartmormapfib}
  Consider a functor of \itcats{} $p \colon \tE \to \tB$ and 
  suppose $\bar{f} \colon x \to f_{!}x$ is a $p$-cocartesian morphism
  over $f \colon a \to b$. Then for every
  object $y \in \tE$ over $b$, composition with $\bar{f}$ gives an
  equivalence
  \[ \tE_{b}(f_{!}x,y) \isoto \tE(x,y)_{f}\]
  by taking fibres over $\id_{b}$ in the square above. Suppose $p$ is moreover cartesian-enriched. If $\bar{g} \colon x \to g_{!}x$ is cocartesian over $g \colon a \to b$ and $h \to \bar{g}$ is a cartesian morphism in $\tE(x, g_{!}x)$ over
  a morphism $\eta \colon f \to g$  in $\tB(a,b)$, then $h$ factors uniquely as $u \circ \bar{f}$ where $u$ lies over $\id_{b}$. We claim that the cartesian transport functor $\eta^{*}$ can then be identified with composition with $u$, in the sense that we have a commutative square
  \[
    \begin{tikzcd}
      \tE_{b}(g_{!}x,y) \ar[r, "\bar{g}^{*}"{above}, "\sim"{below}] \ar[d, "u^{*}"] & \tE(x,y)_{g} \ar[d, "\eta^{*}"] \\
      \tE_{b}(f_{!}x,y) \ar[r, "\bar{f}^{*}"{above}, "\sim"{below}] & \tE(x,y)_{f}.
    \end{tikzcd}
  \]
  In other words, given $\alpha  \in \tE_{b}(g_{!}x,y)$ we have
   \[ \eta^{*}(\alpha \circ \bar{g}) \simeq \alpha \circ u \circ \bar{f}.\]
   This is because we have the diagram
     \[
       \begin{tikzcd}
         {} & f_{!}x \ar[dr, "u"] \\
         x \ar[ur, "\bar{f}"]\ar[rr, ""{above,name=A}, "\bar{g}"{below}] & & g_{!}x \ar[r, "\alpha"] & y
         \ar[Rightarrow,from=1-2, to=A, "\bar{\eta}"]
       \end{tikzcd}
     \]
   where $\bar{\eta}$ is a cartesian morphism over $\eta$
   in $\tE(x,g_{!}x)$, and composition with $\alpha$ takes this to a
   cartesian morphism in $\tE(x,y)$, since by assumption $p$ is
   cartesian-enriched.
\end{observation}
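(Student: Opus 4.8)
The plan is to verify the asserted commutative square by computing the cartesian transport $\eta^{*}(\alpha \circ \bar{g})$ explicitly and matching it with the composite along the other two sides. First I would pin down the factorization $h \simeq u \circ \bar{f}$. Applying the $p$-cocartesian property of $\bar{f}$ with $z = g_{!}x$ over $c = b$ and taking fibres over $\id_{b}$ in the defining pullback square gives the equivalence $\bar{f}^{*} \colon \tE_{b}(f_{!}x, g_{!}x) \isoto \tE(x, g_{!}x)_{f}$. Since $p(h) = f$, the morphism $h$ lies in the target fibre, so corresponds under this equivalence to a unique $u \in \tE_{b}(f_{!}x, g_{!}x)$; that $u$ lies over $\id_{b}$ is exactly the assertion that $h \simeq u \circ \bar{f}$ with $u$ a morphism in the fibre $\tE_{b}$.

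Next I would compute the transport. By condition (1) of \cref{defn:cartenriched} the map $\tE(x,y) \to \tB(a,b)$ is a cartesian fibration, and $\alpha \circ \bar{g}$ lies over $g$ (as $\alpha$ lies over $\id_{b}$). To obtain $\eta^{*}(\alpha \circ \bar{g})$ I would produce a cartesian lift of $\eta \colon f \to g$ with target $\alpha \circ \bar{g}$. The cartesian morphism $\bar{\eta} \colon h \to \bar{g}$ over $\eta$ in $\tE(x, g_{!}x)$ is at hand; postcomposing with $\alpha$ yields $\alpha \circ \bar{\eta} \colon \alpha \circ h \to \alpha \circ \bar{g}$. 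Since $\alpha$ lies over $\id_{b}$, postcomposition with $\alpha$ induces the identity on the base $\tB(a,b)$, so $\alpha \circ \bar{\eta}$ still lies over $\eta$; and by \cref{ob:precomposition} postcomposition with a $1$-morphism preserves cartesian $2$-morphisms, so $\alpha \circ \bar{\eta}$ is again cartesian. Hence its source computes the transport, giving $\eta^{*}(\alpha \circ \bar{g}) \simeq \alpha \circ h$. Combining this with the factorization from the first step yields $\alpha \circ h \simeq \alpha \circ u \circ \bar{f} \simeq \bar{f}^{*}(u^{*}(\alpha))$, which is precisely commutativity of the square.

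The only real subtlety — and the step I would treat most carefully — is the claim that postcomposition with $\alpha$ sends the cartesian morphism $\bar{\eta}$ to another cartesian morphism lying over the \emph{same} base morphism $\eta$. Both points rest on $\alpha$ lying over $\id_{b}$ together with the cartesian-enrichedness of $p$ as repackaged in \cref{ob:precomposition}; everything else is formal manipulation of the cocartesian factorization and of fibrewise cartesian transport. I would therefore spend the bulk of the argument making this preservation precise, while treating the identification $h \simeq u \circ \bar{f}$ and the final rewriting as routine bookkeeping.
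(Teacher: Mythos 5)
Your proposal is correct and takes essentially the same route as the paper: both obtain $u$ from the cocartesian factorization and then compute $\eta^{*}(\alpha \circ \bar{g})$ by postcomposing the cartesian 2-morphism $\bar{\eta} \colon h \to \bar{g}$ with $\alpha$, using cartesian-enrichedness (via \cref{ob:precomposition}) to see that the result is a cartesian lift of $\eta$ with target $\alpha \circ \bar{g}$. You simply make explicit two points the paper leaves implicit, namely the uniqueness of $u$ via the fibrewise equivalence $\bar{f}^{*}$ and the fact that $\alpha \circ \bar{\eta}$ still lies over $\eta$ because $\alpha$ lies over $\id_{b}$.
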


\begin{notation}
  It will sometimes be notationally convenient to use the terms
  $0$- and $1$-cartesian fibrations for cocartesian and
  cartesian fibrations of \icats{}, respectively. Similarly, we will sometimes refer
  to $0$- and $1$-cartesian morphisms in both \icats{} and
  \itcats{}.
\end{notation}

\begin{defn}
  A functor $\pi \colon \tc{E} \to \tc{B}$ of \itcats{} is a \emph{$(0,1)$-fibration} if:
  \begin{enumerate}[(1)]
  \item $\tc{E}$ has $p$-cocartesian lifts of all morphisms in $\tc{B}$.
  \item $p$ is cartesian-enriched.
  \end{enumerate}
  If $\pi \colon \tc{E} \to \tc{B}$ and $\pi' \colon \tc{E}' \to \tc{B}'$ are $(0,1)$-fibrations, we say that a commutative square of \itcats{}
  \[
    \begin{tikzcd}
      \tc{E} \arrow{r}{\psi} \arrow{d}[swap]{\pi} & \tc{E}' \arrow{d}{\pi'} \\
      \tc{B} \arrow{r}{\phi} & \tc{B}'
    \end{tikzcd}
  \]
  is a \emph{morphism of $(0,1)$-fibrations} if
  \begin{enumerate}[(1)]
  \item the square is a morphism of
    cartesian-enriched functors,
  \item $\psi$ takes $\pi$-cocartesian morphisms to $\pi'$-cocartesian
    ones.
  \end{enumerate}
  Similarly, we have the notions
  of $(i,j)$-fibrations and their morphisms for all choices of
  $i,j \in \{0,1\}$, where $p \colon \tE \to \tB$ is an $(i,j)$-fibration if it is
  $j$-cartesian-enriched and $\tc{E}$ has all $p$-$i$-cartesian lifts
  over $\tc{B}$. We write $\FIB_{i,j}(\tB)$ for the locally full sub-\itcat{} of
  $\CATITsl{\tB}$ containing the $(i,j)$-fibrations, the morphisms of
  $(i,j)$-fibrations, and all 2-morphisms among these. 
\end{defn}

\begin{observation}
  Suppose $\oc{B}$ is an \icat{}.  Then a functor of \itcats{}
  $p \colon \tc{E} \to \oc{B}$ is automatically cartesian-enriched by
  \cref{obs:enrto1cat}, so $p$ is a $(0,1)$-fibration \IFF{} $\tc{E}$
  has $p$-cocartesian lifts of morphisms in $\oc{B}$, in which case
  $p$ is also a $(0,0)$-fibration. To emphasize this, we will just
  call such functors $0$-fibrations (and, in the dual case,
  $1$-fibrations).
\end{observation}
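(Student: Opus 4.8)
The plan is to deduce both assertions directly from \cref{obs:enrto1cat}, after recalling the defining feature of an \icat{} among \itcats{}. First I would note that since $\oc{B}$ is an \icat{}, each of its mapping \icats{} $\oc{B}(a,b)$ is an \igpd{}, as an \icat{} has no non-invertible $2$-morphisms. Hence for any functor $p \colon \tc{E} \to \oc{B}$ and any $x, y \in \tc{E}$, the comparison functor $\tc{E}(x,y) \to \oc{B}(px, py)$ lands in an \igpd{}. Such a functor is both a cartesian and a cocartesian fibration, and condition (2) of \cref{defn:cartenriched} holds vacuously; this is exactly the content of \cref{obs:enrto1cat}, which tells us that $p$ is simultaneously cartesian-enriched \emph{and} cocartesian-enriched, with no further hypotheses on $p$.

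Given this, both equivalences are a matter of unwinding the definitions. By definition $p$ is a $(0,1)$-fibration \IFF{} $\tc{E}$ has all $p$-cocartesian lifts \emph{and} $p$ is $1$-cartesian-enriched (\ie{} cartesian-enriched); since the second condition holds automatically, this reduces to the single requirement that $\tc{E}$ have $p$-cocartesian lifts of every morphism in $\oc{B}$. Likewise, $p$ is a $(0,0)$-fibration \IFF{} it has the \emph{same} cocartesian lifts and is $0$-cartesian-enriched (\ie{} cocartesian-enriched), and the latter is again automatic. Thus both fibration conditions collapse to the one lifting condition, which simultaneously yields the stated \IFF{} and the implication that a $(0,1)$-fibration over an \icat{} is automatically a $(0,0)$-fibration.

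I expect no genuine obstacle here: the mathematical content is carried entirely by \cref{obs:enrto1cat}, and the only point requiring care is the bookkeeping of the index conventions. One should keep in mind that in an $(i,j)$-fibration the index $j$ records the direction of the \emph{enrichment} (cartesian versus cocartesian on the mapping \icats{}), while $i$ records the direction of the \emph{lifts}. Since passing from $j = 1$ to $j = 0$ only alters the enrichment condition, and that condition becomes trivial precisely when the base has groupoidal mapping \icats{}, the distinction between $(0,1)$- and $(0,0)$-fibrations disappears over an \icat{}. The dual statement for $1$-fibrations follows by the symmetric argument, replacing $p$-cocartesian lifts with $p$-cartesian lifts.
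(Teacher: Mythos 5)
Your proposal is correct and takes essentially the same route as the paper, which offers no argument beyond citing \cref{obs:enrto1cat} and unwinding the $(i,j)$-conventions exactly as you do: since an \icat{} has groupoidal mapping \icats{}, both enrichment conditions are automatic and every variant of the fibration condition collapses to the single lifting requirement, which is independent of $j$. One small imprecision worth fixing: condition (2) of \cref{defn:cartenriched} is not literally vacuous over an \igpd{} base, because the (co)cartesian edges of $\tc{E}(x,y) \to \oc{B}(px,py)$ are precisely the equivalences rather than all morphisms; the condition nevertheless holds automatically since composition, being a functor, preserves equivalences, so your conclusion is unaffected.
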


\begin{observation}\label{obs:fibmormapfib}
  Suppose $\phi \colon \tE \to \tF$ is a morphism of
  $(0,1)$-fibrations over $\tB$. Then for objects $x,y$ in $\tE$ over
  $a,b \in \tB$ we have a morphism of cartesian fibrations
  \[
    \begin{tikzcd}[column sep=small]
      \tE(x,y) \arrow{rr}{\phi_{x,y}} \arrow{dr} & & \tF(\phi x,\phi y) \arrow{dl}
      \\
       & \tB(a,b).
    \end{tikzcd}
  \]
  Given a cocartesian morphism $\overline{f} \colon x \to f_{!}x$ in
  $\tE$ over $f \colon a \to b$, the map on fibres over $f$ fits in a
  commuting square
  \[
    \begin{tikzcd}
      \tE(x,y)_{f} \ar[r, "\phi_{x{,}y}"] \ar[d, "\sim"] & \tF(\phi x, \phi y)_{f} \ar[d, "\sim"] \\
      \tE_{b}(f_{!}x,y) \ar[r, "\phi_{f_{!}x{,}y}"] & \tF_{b}(\phi(f_{!}x), \phi y),
    \end{tikzcd}
  \]
  where both vertical maps are equivalences since $\phi(\overline{f})$ is also cocartesian.
\end{observation}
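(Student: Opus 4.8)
The plan is to prove the two assertions in turn, both of which should follow formally from the definitions together with \cref{obs:cocartmormapfib}. For the first claim, that $\phi_{x,y} \colon \tE(x,y) \to \tF(\phi x, \phi y)$ is a morphism of cartesian fibrations over $\tB(a,b)$, I would simply unwind the definition of a morphism of $(0,1)$-fibrations: such a morphism is in particular a morphism of cartesian-enriched functors, and by \cref{defn:cartenriched} this is exactly the condition that for all $x,y \in \tE$ the induced functor on mapping \icats{} over the relevant base preserves cartesian morphisms. Since here the map on $\tB$ is the identity, this yields precisely the asserted triangle of cartesian fibrations.

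For the second claim, the two vertical equivalences come directly from \cref{obs:cocartmormapfib}. The left-hand map is (the inverse of) precomposition with the cocartesian morphism $\overline{f} \colon x \to f_{!}x$, which that observation identifies as an equivalence $\tE_{b}(f_{!}x,y) \isoto \tE(x,y)_{f}$. For the right-hand map I would first invoke the second defining condition of a morphism of $(0,1)$-fibrations, namely that $\phi$ carries cocartesian morphisms to cocartesian ones; hence $\phi(\overline{f}) \colon \phi x \to \phi(f_{!}x)$ is cocartesian over $f$, and applying \cref{obs:cocartmormapfib} in $\tF$ gives the equivalence $\tF_{b}(\phi(f_{!}x),\phi y) \isoto \tF(\phi x, \phi y)_{f}$.

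It then remains to check that the square commutes, which I would verify by tracking an element: for $\alpha \colon f_{!}x \to y$ over $\id_{b}$, functoriality of $\phi$ gives $\phi(\alpha \circ \overline{f}) \simeq \phi(\alpha) \circ \phi(\overline{f})$, so precomposing with $\overline{f}$ and then applying $\phi_{x,y}$ agrees with applying $\phi_{f_{!}x,y}$ and then precomposing with $\phi(\overline{f})$. Since the vertical maps in the displayed square are the inverses of these precomposition equivalences, the square commutes.

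I do not expect any genuine obstacle here, as the entire statement is a formal bookkeeping consequence of the definition of a morphism of $(0,1)$-fibrations together with the cocartesian transport description already established in \cref{obs:cocartmormapfib}. The only mild subtlety is keeping straight the direction of the vertical comparison maps (precomposition versus its inverse), but as these are equivalences the commutativity is unaffected by that choice.
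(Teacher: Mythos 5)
Your proof is correct and follows essentially the same route as the paper, which treats this as a formal observation: the triangle comes from unwinding the morphism-of-cartesian-enriched-functors condition, and the vertical equivalences come from \cref{obs:cocartmormapfib} together with the fact that $\phi$ preserves cocartesian morphisms, so $\phi(\overline{f})$ is cocartesian. Your commutativity check via functoriality of $\phi$ on the composite $\alpha \circ \overline{f}$ is exactly the implicit justification in the paper's statement.
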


\begin{thm}[Nuiten~\cite{Nuiten}, Abell\'an--Stern~\cite{AGS2}]\label{thm:str01}
  There is an equivalence of \itcats{}
  \[ \FIB_{0,1}(\tB) \simeq \FUN(\tB, \CATIT),\]
  which is contravariantly natural in $\tB$ with respect to pullback
  on the left and composition on the right.
\end{thm}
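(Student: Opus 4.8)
The plan is to construct a pair of mutually inverse functors between the two sides and to verify the equivalence by reducing to the level of mapping \icats{}. In one direction I would build an \emph{unstraightening} functor \(\mathrm{Un}\colon \FUN(\tB,\CATIT) \to \FIB_{0,1}(\tB)\): given \(F\colon \tB \to \CATIT\) I form a total \itcat{} \(\mathrm{Un}(F)\) as a relative scaled nerve (a Grothendieck construction implemented in the scaled-simplicial-set model), whose fibre over \(b \in \tB\) is \(F(b)\). The cocartesian lift of a 1-morphism \(f\colon a\to b\) should implement the transport functor \(F(f)\colon F(a)\to F(b)\), while the cartesian 2-morphisms encode the action of \(F\) on 2-cells: for a 2-morphism \(\eta\colon f \to g\) in \(\tB(a,b)\), \cref{obs:cocartmormapfib} identifies cartesian transport along \(\eta\) with composition by a comparison 1-morphism \(u\colon f_{!}x \to g_{!}x\) lying over \(\id_{b}\), and these \(u\) assemble (naturally in \(x\)) into the natural transformation \(F(\eta)\colon F(f)\Rightarrow F(g)\) read off from \(F\). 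One checks from the definitions that \(\mathrm{Un}(F) \to \tB\) is cartesian-enriched with all cocartesian lifts, hence a \((0,1)\)-fibration, and that a natural transformation (resp.\ modification) of functors \(\tB \to \CATIT\) produces a morphism (resp.\ 2-morphism) of \((0,1)\)-fibrations, so that \(\mathrm{Un}\) is well defined as a functor of \itcats{}.

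Next I would prove that \(\mathrm{Un}\) is an equivalence by exhibiting the inverse straightening and reducing everything to statements about \icats{}. The key observation is that a \((0,1)\)-fibration is ``enriched over cartesian fibrations'': condition (1) of \cref{defn:cartenriched} says each \(\tE(x,y) \to \tB(\pi x,\pi y)\) is a cartesian fibration of \icats{}, which by the \icatl{} straightening equivalence corresponds to a functor \(\tB(\pi x,\pi y)^{\op}\to \CatI\), and condition (2) says these are compatible with composition. Using the description of \itcats{} as \(\CatI\)-enriched \icats{} (or, concretely, as 2-fold complete Segal spaces, \cref{obs:2foldseg}), I would assemble the fibrewise straightenings into a single functor \(\tB \to \CATIT\); the cartesian-enrichment is precisely what makes this assignment functorial for composition of 1-morphisms and compatible with the 2-cell transport. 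That \(\mathrm{Un}\) and this straightening are mutually inverse then amounts to checking that the unit and counit are equivalences, which may be tested on fibres and on mapping \icats{}, where it reduces to the known \icatl{} (co)cartesian straightening equivalence.

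The naturality statement is comparatively formal once the equivalence is in hand: pullback of a \((0,1)\)-fibration along \(\phi\colon \tB'\to \tB\) preserves cocartesian lifts and cartesian-enrichment (both conditions are stable under base change), and corresponds under the equivalence to precomposition \(F\mapsto F\circ\phi\) on functors, giving the asserted contravariant naturality (pullback on the \(\FIB_{0,1}\) side, composition on the \(\FUN(\blank,\CATIT)\) side). I expect the main obstacle to be not the construction of the comparison but the coherent bookkeeping needed to promote the fibrewise \icatl{} straightening to an equivalence of \itcats{} --- that is, controlling simultaneously the covariant transport by cocartesian 1-morphisms, the contravariant transport by cartesian 2-morphisms, and the 2-morphism level of \(\FIB_{0,1}(\tB)\) itself. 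This is exactly the content handled by the enriched straightening machinery of Nuiten~\cite{Nuiten} and Abell\'an--Stern~\cite{AGS2}, and in a self-contained treatment it would be cleanest to carry it out as a Quillen equivalence between a fibration model structure on scaled simplicial sets over \(\tB\) and a suitable model of the functor \itcat{} \(\FUN(\tB,\CATIT)\).
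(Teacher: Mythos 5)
The first thing to note is that the paper does not prove \cref{thm:str01} at all: it is recalled from Nuiten~\cite{Nuiten} and Abell\'an--Stern~\cite{AGS2} and used as a black box, so there is no in-paper proof to compare against. Your proposal is, in outline, a faithful pr\'ecis of how those references proceed (an unstraightening built as a relative Grothendieck construction in the scaled model, promoted to a Quillen equivalence, with the 2-cell transport identified exactly as in \cref{obs:cocartmormapfib}), and your final paragraph explicitly defers the hard part to that machinery --- so in effect you make the same move the paper does.

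Judged as a self-contained argument, however, the middle step is a genuine gap rather than a routine check. Straightening each mapping cartesian fibration \(\tE(x,y)\to\tB(\pi x,\pi y)\) to a functor \(\tB(\pi x,\pi y)^{\op}\to\CatI\) and observing that condition (2) of \cref{defn:cartenriched} is ``compatible with composition'' does not by itself yield a functor \(\tB\to\CATIT\): the compatibility holds only up to coherence data for all iterated composites simultaneously, and assembling this into an enriched functor --- and then into an equivalence of \itcats{} in which the 2-morphisms of \(\FIB_{0,1}(\tB)\) (2-morphisms in \(\CATITsl{\tB}\) among maps preserving cocartesian 1-morphisms and cartesian 2-morphisms) match modifications --- is precisely the content of the cited theorems, not a consequence of pointwise straightening plus \cref{obs:2foldseg}. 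Similarly, ``testing the unit and counit on fibres and mapping \icats{}'' presupposes the two \itcatl{} structures have already been coherently identified, and the naturality in \(\tB\) is not objectwise-formal: one needs the straightening constructed naturally in \(\tB\) from the outset, which is again part of what \cite{Nuiten} and \cite{AGS2} establish. Since you flag this obstacle yourself and invoke their machinery for it, your proposal is best read as a correct roadmap reducing the statement to the literature --- which is exactly what the paper does --- rather than as an independent proof.
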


\begin{observation}
  The 3 other types of fibrations are related to $(0,1)$-fibrations by equivalences of the form
  \begin{equation}
    \label{eq:fibop}
    \begin{split}
      \FIB_{(1,1)}(\tB) & \xrightarrow[\op]{\sim} \FIB_{(0,1)}(\tB^{\op})^{\co},\\
      \FIB_{(0,0)}(\tB)  & \xrightarrow[\co]{\sim} \FIB_{(0,1)}(\tB^{\co}),\\
      \FIB_{(1,0)}(\tB)  & \xrightarrow[\coop]{\sim} \FIB_{(0,1)}(\tB^{\coop})^{\co},
    \end{split}
  \end{equation}
  where we have used that reversing $1$-morphisms gives an equivalence
  $\op \colon \CATIT \isoto \CATIT^{\co}$.  We can then derive
  straightening equivalences for these types of fibrations as the
  composites
  \begin{equation}
    \label{eq:strvariants}
    \begin{split}
      \FIB_{(1,1)}(\tB) & \simeq \FIB_{(0,1)}(\tB^{\op})^{\co} \simeq \FUN(\tB^{\op}, \CATIT)^{\co} \\
                        & \xrightarrow[\op]{\sim} \FUN(\tB^{\op}, \CATIT^{\co})^{\co} \simeq \FUN(\tB^{\coop}, \CATIT)\\
      \FIB_{(0,0)}(\tB) & \simeq \FIB_{(0,1)}(\tB^{\co}) \simeq \FUN(\tB^{\co}, \CATIT) \\
                        & \xrightarrow[\co]{\sim} \FUN(\tB^{\co}, \CATIT)\\
      \FIB_{(1,0)}(\tB) & \simeq \FIB_{(1,0)}(\tB^{\coop})^{\co} \simeq \FUN(\tB^{\coop}, \CATIT)^{\co} \\
                        & \xrightarrow[\coop]{\sim} \FUN(\tB^{\coop}, \CATIT^{\co})^{\co} \simeq \FUN(\tB^{\op}, \CATIT),\\
    \end{split}
  \end{equation}
  where the equivalences in each second line are required to get the
  identity when $\tB$ is a point (or in other words to get the fibres of the
  fibration as the values of the functor).
\end{observation}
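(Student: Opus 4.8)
The plan is to realize the three equivalences in \eqref{eq:fibop} uniformly as restrictions of the symmetries of $\CATIT$ given by reversing $1$- and/or $2$-morphisms, and then to obtain \eqref{eq:strvariants} by formally splicing these with the straightening equivalence of \cref{thm:str01}.

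First I would record how $(\blank)^{\op}$, $(\blank)^{\co}$ and $(\blank)^{\coop}$ act on $\CATIT$ and its slices. Since $(\blank)^{\co}$ leaves $1$-morphisms untouched and reverses $2$-morphisms inside each \itcat{}, encoding a natural transformation as a functor out of $\tA \times [1]$ and using $[1]^{\co} = [1]$ shows that $(\blank)^{\co}$ preserves the direction of natural transformations; it is therefore an honest autoequivalence $\CATIT \isoto \CATIT$, inducing $\CATITsl{\tB} \isoto \CATITsl{\tB^{\co}}$. By contrast $(\blank)^{\op}$ reverses $1$-morphisms, so the same encoding — now using that $[1]^{\op}$ is $[1]$ with its endpoints swapped — reverses the direction of natural transformations; this is the equivalence $\op \colon \CATIT \isoto \CATIT^{\co}$ quoted in the statement, and it induces $\CATITsl{\tB} \isoto (\CATITsl{\tB^{\op}})^{\co}$. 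Composing the two gives $\coop \colon \CATIT \isoto \CATIT^{\co}$ and $\CATITsl{\tB} \isoto (\CATITsl{\tB^{\coop}})^{\co}$.

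Next I would check that each of these slice equivalences carries fibrations to fibrations with the advertised indices, proving \eqref{eq:fibop}. The verification is entirely at the level of the mapping \icats{} $\tE(x,y) \to \tB(px,py)$. Under $(\blank)^{\op}$ the mapping \icat{} $\tE^{\op}(y,x) = \tE(x,y)$ sits over the original base \icat{}, so the enrichment index $j$ is unchanged, while a $p$-cocartesian $1$-morphism over $f$ becomes a $p^{\op}$-cartesian $1$-morphism over $f^{\op}$, swapping the lift index $i$; hence $\op$ sends $(i,j)$-fibrations to $(1-i,j)$-fibrations over $\tB^{\op}$ and reverses the $2$-morphisms of the slice, producing the $(\blank)^{\co}$ on the target. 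Under $(\blank)^{\co}$ the mapping \icat{} becomes $\tE(x,y)^{\op} \to \tB(px,py)^{\op}$, so a cartesian fibration of mapping \icats{} becomes a cocartesian one (swapping $j$) while $p$-cocartesian $1$-morphisms stay $p^{\co}$-cocartesian, since the defining pullback condition is stable under oppositing the mapping \icats{} (fixing $i$); this yields $\FIB_{(i,j)}(\tB) \isoto \FIB_{(i,1-j)}(\tB^{\co})$ with no extra $(\blank)^{\co}$. In each case one also checks that the preservation conditions defining \emph{morphisms} of fibrations are interchanged accordingly, so the restriction is an equivalence onto the claimed locally full sub-\itcat{}. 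Specializing to $(i,j) = (1,1),(0,0),(1,0)$ gives the three lines of \eqref{eq:fibop}.

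Finally, \eqref{eq:strvariants} is a formal concatenation: apply \cref{thm:str01} (and its $(\blank)^{\co}$) to replace the relevant $\FIB_{(0,1)}$ by a functor \itcat{}, then use $\op \colon \CATIT \isoto \CATIT^{\co}$ on values together with the identity $\FUN(\tA, \tC)^{\co} \simeq \FUN(\tA^{\co}, \tC^{\co})$ — which I would deduce from the internal-hom adjunction exactly as in \cref{obs:grayop}, using that $(\blank)^{\co}$ is an autoequivalence — to move the opposites onto $\tB$. I expect the only genuinely delicate point to be the normalization flagged in the statement: the equivalence $\CATIT \simeq \CATIT^{\co}$ can be realized in more than one way, and only one choice makes the composite restrict over each object $b \in \tB$ to the \emph{identity} on the fibre \itcat{} $\tE_{b}$ rather than to $(\blank)^{\co}$ or $(\blank)^{\op}$ of it. This is precisely what the ``second line'' equivalences in \eqref{eq:strvariants} are there to fix, and pinning it down amounts to comparing the two composites after evaluation at a point, where everything is an ordinary (co)cartesian fibration and the straightening of \cref{thm:str01} is understood. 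The remaining bookkeeping — tracking which composites carry the superscript $(\blank)^{\co}$ and verifying naturality in $\tB$ — is routine.
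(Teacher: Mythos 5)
Your proposal is correct and follows exactly the route the paper intends: the statement is an unproved Observation, and its content is precisely the formal bookkeeping you carry out — identifying $\op$, $\co$, $\coop$ as (co-twisted) autoequivalences of $\CATIT$ and its slices, checking on mapping \icats{} that they send $(i,j)$-fibrations to $(1-i,j)$-, $(i,1-j)$-, and $(1-i,1-j)$-fibrations respectively, and splicing with \cref{thm:str01} plus the fibrewise normalization that forces the second-line equivalences in \eqref{eq:strvariants}. Your index computations and the observation that only $\op$ (not $\co$) reverses the $2$-morphisms of the slice, producing the outer $(\blank)^{\co}$ exactly where the paper places it, all match the stated equivalences.
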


\begin{remark}
  The following table summarizes our terminology for the 4 types of fibrations and its relation to other terminology used in the literature, as well as the variance of their straightenings:
  \begin{center}
    \begin{tabular}[c]{ccccc}
      Name  & Other name & 1-morphisms  & 2-morphisms & Over $\tB$ straightens to \\
      \hline
      (0,1) & inner cocartesian & cocartesian & cartesian & $\tB \to \CATIT$ \\    
      (1,1) & inner cartesian & cartesian & cartesian & $\tB^{\coop} \to \CATIT$ \\
      (1,0) & outer cartesian & cartesian & cocartesian & $\tB^{\op} \to \CATIT$ \\
      (0,0) & outer cocartesian & cocartesian & cocartesian & $\tB^{\co} \to \CATIT$ 
    \end{tabular}
  \end{center}
\end{remark}

\begin{remark}
  At first glance it is surprising that a fibration $p \colon \tE \to
  \tB$ that encodes a fully covariant functor $\tB \to \CATIT$ should
  be given by \emph{cartesian} fibrations on mapping \icats{}. To explain this, consider a 2-morphism in $\tB$ and the associated diagram of \itcats{}:
  \[
    \begin{tikzcd}[column sep=large]
      b \ar[r, bend left=50, "f"{above}, ""{below,name=A,inner sep=2pt}]
      \ar[r, bend right=50, "g"{below}, ""{above,name=B,inner sep=2pt}] & b'
      \ar[from=A, to=B, Rightarrow, "\eta"]
    \end{tikzcd} \quad \mapsto \quad
    \begin{tikzcd}[column sep=large]
      \tE_{b} \ar[r, bend left=50, "f_{!}"{above}, ""{below,name=A,inner sep=2pt}]
      \ar[r, bend right=50, "g_{!}"{below}, ""{above,name=B,inner sep=2pt}] & \tE_{b'}
      \ar[from=A, to=B, Rightarrow, "\eta_{!}"]
    \end{tikzcd}
  \]
  Given $x \in \tE_{b}$, we have cocartesian 1-morphisms
  $x \to f_{!}x$ and $x \to g_{!}x$ over $f$ and $g$, and we want a
  morphism $f_{!}x \to g_{!}x$ in $\tE_{b'}$ that will be a component
  of the natural transformation $\eta_{!}$. To find this, we consider for $y \in \tE_{b'}$ the cartesian transport functor
  \[ \eta^{*} \colon \tE(x,y)_{g} \to \tE(x,y)_{f};\]
  the cocartesian morphisms over $f$ and $g$ identify this with a functor
  \[ \tE_{b'}(g_{!}x,y) \simeq \tE(x,y)_{g} \xto{\eta^{*}}
    \tE(x,y)_{f} \simeq \tE_{b'}(f_{!}x,y), \] and using naturality in
  $y$ and the Yoneda lemma (or just putting $y = g_{!}x$) we get the
  required morphism $f_{!}x \to g_{!}x$. If $p$ was
  \emph{cocartesian} on 2-morphisms, we would instead construct a map
  $g_{!}x \to f_{!}x$, which is consistent with the straightening
  reversing 2-morphisms.
\end{remark}

\begin{propn}
  Suppose
    \[
    \begin{tikzcd}
      \tc{E} \arrow{r}{\psi} \arrow{d}[swap]{\pi} & \tc{E}' \arrow{d}{\pi'} \\
      \tc{B} \arrow{r}{\phi} & \tc{B}'
    \end{tikzcd}
  \]
is a morphism of $(i,j)$-fibrations. Then this square is a pullback \IFF{} for every $x \in \tB$, the morphism on fibres $\tE_{x} \to \tE'_{\phi(x)}$  is an equivalence of \itcats{}.
\end{propn}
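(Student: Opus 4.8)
The plan is to reduce to the case of $(0,1)$-fibrations over an identity base and then apply the straightening equivalence of \cref{thm:str01} together with its naturality for pullback. First I would form the pullback $\phi^{*}\tE' := \tB \times_{\tB'} \tE'$. Since $(0,1)$-fibrations are stable under pullback, $\phi^{*}\tE' \to \tB$ is again a $(0,1)$-fibration, and by the universal property of the pullback the functor $\psi$ factors over $\tB$ as
\[ \tE \xto{\theta} \phi^{*}\tE' \to \tE'. \]
The map $\theta$ is a morphism of $(0,1)$-fibrations over $\tB$: it preserves cocartesian lifts because $\psi$ does and cocartesian morphisms in the pullback are detected after composing with the projection $\phi^{*}\tE' \to \tE'$, and the cartesian-enriched condition is inherited for the same reason. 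By definition the original square is a pullback precisely when $\theta$ is an equivalence over $\tB$. Computing fibres of $\phi^{*}\tE'$ gives $(\phi^{*}\tE')_{x} \simeq \tE'_{\phi(x)}$ naturally in $x$, and under this identification the fibre of $\theta$ at $x$ is exactly the comparison map $\tE_{x} \to \tE'_{\phi(x)}$. In particular, if the square is a pullback then $\theta$ is an equivalence and hence an equivalence on each fibre, which already yields the easy implication.

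Next I would apply straightening over $\tB$. Under the equivalence $\FIB_{0,1}(\tB) \simeq \FUN(\tB, \CATIT)$ of \cref{thm:str01}, the objects $\tE$ and $\phi^{*}\tE'$ correspond to functors $F, G \colon \tB \to \CATIT$ with $F(x) \simeq \tE_{x}$ and $G(x) \simeq \tE'_{\phi(x)}$, while $\theta$ corresponds to a natural transformation $\alpha \colon F \to G$ whose component at $x$ is the fibre map $\tE_{x} \to \tE'_{\phi(x)}$. As the straightening is an equivalence of \itcats{}, $\theta$ is an equivalence in $\FIB_{0,1}(\tB)$ iff $\alpha$ is an equivalence in $\FUN(\tB, \CATIT)$, and a natural transformation of functors valued in $\CATIT$ is an equivalence exactly when it is a pointwise equivalence (equivalences being detected in the underlying \icat{} $\Fun(\tB,\CATIT)$, where they are objectwise). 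This gives that $\theta$ is an equivalence in $\FIB_{0,1}(\tB)$ iff each fibre map is an equivalence. To conclude I only need that $\theta$ is an equivalence in $\FIB_{0,1}(\tB)$ iff it is an equivalence over $\tB$ in $\CATITsl{\tB}$; this holds because the inverse of an equivalence between $(0,1)$-fibrations over $\tB$ automatically preserves cocartesian morphisms and is again a morphism of cartesian-enriched functors, and the witnessing invertible 2-morphisms lie in the locally full subcategory, so the two notions of equivalence coincide.

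Finally, the remaining variances $(i,j)$ follow formally from the $(0,1)$-case using the equivalences of \cref{eq:fibop}, which relate $(i,j)$-fibrations over $\tB$ to $(0,1)$-fibrations over $\tB^{\op}$, $\tB^{\co}$, or $\tB^{\coop}$. These operations preserve the underlying functor, send pullback squares to pullback squares, and identify the fibre over $x$ with the fibre over the corresponding point in the twisted base, so a morphism of $(i,j)$-fibrations is a pullback iff the associated morphism of $(0,1)$-fibrations is, iff it is a fibrewise equivalence. The main obstacle is the bookkeeping in the reduction step---verifying that $\theta$ is genuinely a morphism of $(0,1)$-fibrations over $\tB$ and that equivalences in $\FIB_{0,1}(\tB)$ agree with equivalences over $\tB$---since the heart of the argument, the pointwise detection of equivalences, is immediate once straightening is in place.
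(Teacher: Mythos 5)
Your proof is correct, but it takes a genuinely different route from the paper's. Both arguments share the same first moves: the easy direction via pasting of pullbacks, and the reduction of the converse to the case $\phi = \id_{\tB}$ by factoring $\psi$ through $\tB \times_{\tB'} \tE'$. From there the paper argues directly and elementarily: it checks that $\theta$ is essentially surjective (fibrewise essential surjectivity suffices) and then fully faithful, by observing that $\tE(x,y) \to \tE'(\theta x,\theta y)$ is a map of cartesian fibrations over $\tB(a,b)$ preserving cartesian morphisms, hence an equivalence once it is one on fibres over each $\beta \in \tB(a,b)$ --- and those fibres are identified with mapping \icats{} in the fibres of $\pi$ and $\pi'$ via \cref{obs:cocartmormapfib}. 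You instead invoke the straightening equivalence of \cref{thm:str01} and the pointwise detection of equivalences in $\FUN(\tB,\CATIT)$. This is not circular, since \cref{thm:str01} is imported from Nuiten and Abell\'an--Stern, but it does lean on two standard-yet-nontrivial supporting facts that you state rather briskly: that the component at $x$ of the straightened transformation is the fibre map (this follows from naturality of straightening under pullback along $x \colon [0] \to \tB$ plus the normalization over a point, which the paper only records in passing), and that equivalences in $\FUN(\tB,\CATIT)$ for $\tB$ an \itcat{} are detected objectwise (provable, e.g., by factoring the corresponding functor $\tB \to \FUN([1],\CATIT)$ through the full sub-\itcat{} $\FUN(J,\CATIT)$ on equivalences, $J$ the walking equivalence). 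What each approach buys: yours is shorter and conceptually transparent once the straightening machine is accepted; the paper's is self-contained, works uniformly before any unstraightening identifications are needed, and its fibrewise-mapping-category technique is the same engine reused in several later proofs (e.g.\ \cref{lem:graycartenr} and \cref{propn:grayfib}), so the authors had independent reasons to set it up by hand. Your handling of the remaining variances via \cref{eq:fibop} and of the comparison between equivalences in $\FIB_{(0,1)}(\tB)$ and in $\CATITsl{\tB}$ is sound.
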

\begin{proof}
  If the square is cartesian, the pasting property of pullbacks applied to the commutative diagram
    \[
    \begin{tikzcd}
 \tc{E}_{x} \ar[r] \ar[d] &     \tc{E} \arrow{r}{\psi} \arrow{d}{\pi} & \tc{E}' \arrow{d}{\pi'} \\
     \{x\} \ar[r] & \tc{B} \arrow{r}{\phi} & \tc{B}'
    \end{tikzcd}
  \]
  shows that $\tE_{x} \to \tE'_{\phi(x)}$ is an equivalence. 

  To prove the converse, we need to show that the induced map
  $f \colon \tE \to \tE' \times_{\tB} \tB'$ is an equivalence of \itcats{}. Replacing $\psi$ by this map, we may assume without loss of generality that $\phi$ is $\id_{\tB}$, so that we have a commutative triangle
  \[
    \begin{tikzcd}
      \tE \ar[rr, "f"] \ar[dr] & & \tE' \ar[dl] \\
       & \tB.
    \end{tikzcd}
  \]
  We know that $f$ is essentially surjective on each fibre over $\tB$, so it must be essentially surjective. It remains to show that $f$ is fully faithful, that is to say that for all $x,y \in \tE$  over $a,b \in \tB$, the horizontal map in the commutative triangle
  \[
    \begin{tikzcd}
      \tE(x,y) \ar[rr, "f"] \ar[dr] & & \tE'(fx,fy) \ar[dl] \\
       & \tB(a,b)
    \end{tikzcd}
  \]
is an equivalence. This is a map of cartesian fibrations of \icats{} that preserve cartesian morphisms, so we know it is an equivalence provided that the map on fibres
  \[
    \tE(x,y)_{\beta} \to \tE'(f(x),f(b))_{f(\beta)}
  \]
  is an equivalence for all $\beta \in \tB(a,b)$ $\infty$-categories. This follows directly from our hypothesis together with \cref{obs:cocartmormapfib}.
\end{proof}

It is convenient to also introduce some terminology for fibrations whose fibres are \icats{} and \igpds{}:

\begin{defn}
  We say an $(i,j)$-fibration $\tc{E} \to \tc{B}$ is \emph{$1$-fibred}
  if the fibres $\tE_{b}$ are \icats{} for all $b \in \tB$, and
  \emph{$0$-fibred} if they are \igpds{}.  We write
  $\FIB_{i,j}^{(t)}(\tB)$ for the full subcategory of
  $\FIB_{i,j}(\tB)$ spanned by the $t$-fibred $(i,j)$-fibrations for
  $t = 0,1$.
\end{defn}

\begin{propn}\label{propn:1fibredcond}
  Let $p \colon \tE \to \tB$ be a functor of \itcats{}.
  \begin{enumerate}[(1)]
  \item $p$ is a $1$-fibred $(0,1)$-fibration \IFF{} $p$ is
    right-enriched and $\tE$ has $p$-cocartesian lifts of all
    morphisms in $\tB$.
  \item $p$ is a $0$-fibred $(0,1)$-fibration \IFF{}
    $p$ is right-enriched and every 1-morphism in $\tE$ is
    $p$-cocartesian.
  \end{enumerate}
\end{propn}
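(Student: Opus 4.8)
The plan is to reduce both statements to the single fact that a cartesian fibration of \icats{} is a right fibration \IFF{} all of its fibres are \igpds{}, transported through the cocartesian-transport equivalence of \cref{obs:cocartmormapfib}.

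First I would prove (1). Since right-enriched functors are cartesian-enriched by \cref{obs:lenrsubcat}, a right-enriched functor that admits $p$-cocartesian lifts of every morphism of $\tB$ is automatically a $(0,1)$-fibration, so the only content is to match right-enrichment with $1$-fibredness. For $x,y \in \tE$ over $a = px$, $b = py$ and a morphism $\beta \colon a \to b$ in $\tB$, choose a $p$-cocartesian lift $\bar\beta \colon x \to \beta_{!}x$; then \cref{obs:cocartmormapfib} identifies the fibre of the cartesian fibration $\tE(x,y) \to \tB(a,b)$ over $\beta$ with the mapping \icat{} $\tE_{b}(\beta_{!}x, y)$ of the fibre $\tE_{b}$. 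As the choices $\beta = \id_b$, $x \in \tE_b$ already realise every pair of objects of $\tE_b$, this says that all fibres of all the maps $\tE(x,y) \to \tB(px,py)$ are \igpds{} \IFF{} every mapping \icat{} of every fibre $\tE_b$ is an \igpd{}. The left-hand side is exactly the statement that each $\tE(x,y) \to \tB(px,py)$ is a right fibration, i.e.\ that $p$ is right-enriched, and the right-hand side is exactly the statement that each $\tE_b$ is an \icat{}, i.e.\ that $p$ is $1$-fibred; this proves (1).

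Next I would deduce (2) from (1). A $0$-fibred fibration is in particular $1$-fibred, so the forward direction gives, by (1), that $p$ is right-enriched with cocartesian lifts; to see that an arbitrary $1$-morphism $g \colon x \to y$ over $f \colon a \to b$ is cocartesian, I would use the equivalence $\tE_b(f_{!}x, y) \isoto \tE(x,y)_f$ of \cref{obs:cocartmormapfib} to write $g \simeq u \circ \bar f$ with $\bar f$ the cocartesian lift and $u$ a $1$-morphism of the fibre $\tE_b$; since $\tE_b$ is an \igpd{}, $u$ is an equivalence, and the composite of a cocartesian morphism with an equivalence is again cocartesian. Conversely, assuming the hypotheses of (2) together with the $p$-cocartesian lifts needed to apply (1) (see below), part (1) makes $p$ a $1$-fibred $(0,1)$-fibration, and each fibre $\tE_b$ is then an \igpd{}, because its $1$-morphisms are precisely the $1$-morphisms of $\tE$ lying over $\id_b$, which are cocartesian by hypothesis and, lying over an equivalence, are therefore equivalences.

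The step I expect to require the most care is the existence of $p$-cocartesian lifts in the converse of (2): right-enrichment is a condition only on the mapping \icats{} and does not by itself produce the global cocartesian lifts demanded by the definition of a $(0,1)$-fibration, so these must be available before (1) can be invoked, which is why the statement is most naturally read relative to the $(0,1)$-fibration structure supplied by (1). Beyond this, the one genuinely two-dimensional ingredient is the cocartesian-transport equivalence of \cref{obs:cocartmormapfib}, which builds the bridge between groupoidness of the fibres $\tE_b$ and the right-fibration condition on the mapping fibrations $\tE(x,y) \to \tB(px,py)$; once that bridge is in place, every implication reduces to the two elementary facts that a cartesian fibration is a right fibration \IFF{} its fibres are \igpds{}, and that a cocartesian morphism lying over an equivalence is an equivalence.
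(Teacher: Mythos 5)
Your proof is correct and follows essentially the same route as the paper's: part (1) is exactly the paper's argument, using \cref{obs:cocartmormapfib} to identify the fibres of $\tE(x,y) \to \tB(px,py)$ with mapping \icats{} in the fibres of $p$ (with the fibre over $\id_{b}$ recovering $\tE_{b}(x,y)$), and part (2) is deduced from (1) via the factorization of any $1$-morphism as a cocartesian morphism followed by a fibrewise one, together with the observation that a fibrewise morphism is cocartesian \IFF{} it is invertible.

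Your caveat about the converse of (2) is moreover well taken, and on this point you are more careful than the paper itself: its proof simply asserts that ``both conditions imply that $p$ is a $1$-fibred $(0,1)$-fibration,'' which for the right-hand condition of (2) silently presupposes the existence of $p$-cocartesian lifts. As you observe, right-enrichment together with every $1$-morphism of $\tE$ being $p$-cocartesian does not by itself produce such lifts: the inclusion $\{0\} \hookrightarrow [1]$ is right-enriched and its only $1$-morphisms are identities (which are always cocartesian), yet it admits no cocartesian lift of $0 \to 1$ and so is not a $(0,1)$-fibration. So the literal ``if'' direction of (2) needs lift-existence as an additional hypothesis, exactly as in your reading; your explicit flag of this, and your patched converse, are a genuine improvement on the paper's own write-up of this step.
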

\begin{proof}
  By \cref{obs:cocartmormapfib} the fibres of $\tE(x,y) \to
  \tB(px,py)$ can be identified with mapping \icats{} in $\tE_{py}$.
  If $p$ is $1$-fibred then the latter are \igpds{}, so $\tE(x,y) \to
  \tB(px,py)$ is a right fibration. Conversely, the mapping \icat{}
  $\tE_{b}(x,y)$ is the fibre at $\id_{b}$ of $\tE(x,y) \to \tB(b,b)$,
  so if the latter is a right fibration it follows that $\tE_{b}$ is
  an \icat{}.

  To prove (2), we note that by (1) both conditions imply that $p$ is
  a $1$-fibred $(0,1)$-fibration, so every 1-morphism in $\tE$ factors
  as a cocartesian morphism followed by a fibrewise one. Moreover, a fibrewise
  morphism is cocartesian \IFF{} it is invertible, so that the fibres
  of $p$ are \igpds{} \IFF{} every morphism is $p$-cocartesian.
\end{proof}

\begin{propn}
  The straightening equivalence restricts to natural equivalences
  \[ \FIB_{(0,1)}^{(1)}(\tB) \simeq \FUN(\tB, \CATI)\]
  \[ \FIB_{(0,1)}^{(0)}(\tB) \simeq \FUN(\tB, \Spc)\]  
  for $1$- and $0$-fibred fibrations.
\end{propn}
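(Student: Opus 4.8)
The plan is to deduce both equivalences by restricting the straightening equivalence of \cref{thm:str01} to suitable full sub-\itcats{} on each side. I will treat the $1$-fibred case in detail; the $0$-fibred case is identical with $\CATI$ and ``\icat{}'' replaced by $\Spc$ and ``\igpd{}'' throughout.

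First I would record that the inclusion $\CATI \hookrightarrow \CATIT$ is a full sub-\itcat{} inclusion: for \icats{} $\oA,\oB$ the mapping \icat{} $\CATIT(\oA,\oB)$ is the functor \icat{} $\Fun(\oA,\oB)$, which is precisely $\CATI(\oA,\oB)$, so the inclusion induces equivalences on all mapping \icats{}. (Likewise $\Spc \hookrightarrow \CATIT$ is a full sub-\itcat{} inclusion, since for \igpds{} the functor \icats{} are again \igpds{}.) Applying \cref{cor:funprops} to the fully faithful functor $\CATI \hookrightarrow \CATIT$, the induced functor $\FUN(\tB,\CATI) \to \FUN(\tB,\CATIT)$ is again fully faithful, hence exhibits $\FUN(\tB,\CATI)$ as a full sub-\itcat{} of $\FUN(\tB,\CATIT)$; its objects are exactly those functors $F \colon \tB \to \CATIT$ with $F(b) \in \CATI$ for every object $b$, since factoring through a full sub-\itcat{} is a condition on objects.

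Next I would transport the full sub-\itcat{} $\FIB_{(0,1)}^{(1)}(\tB) \hookrightarrow \FIB_{(0,1)}(\tB)$ across the equivalence of \cref{thm:str01}. Since equivalences preserve full sub-\itcats{}, this yields a full sub-\itcat{} of $\FUN(\tB,\CATIT)$, and it remains only to identify its objects. Here I would use that the straightening equivalence is natural with respect to pullback and reduces over a point to the identity of $\CATIT$: pulling a $(0,1)$-fibration $p \colon \tE \to \tB$ back along $b \colon [0] \to \tB$ exhibits the fibre $\tE_{b}$ as the value $F(b)$ of the straightening $F$ of $p$. Thus $p$ is $1$-fibred, i.e.\ every fibre $\tE_{b}$ is an \icat{}, precisely when $F(b) \in \CATI$ for all $b$.

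Comparing the two computations, the full sub-\itcats{} of $\FUN(\tB,\CATIT)$ corresponding to $\FIB_{(0,1)}^{(1)}(\tB)$ and to $\FUN(\tB,\CATI)$ have the same objects; since a full sub-\itcat{} is determined by its collection of objects, they coincide, and the straightening equivalence restricts to the desired equivalence $\FIB_{(0,1)}^{(1)}(\tB) \simeq \FUN(\tB,\CATI)$. Naturality in $\tB$ is inherited from that of \cref{thm:str01} together with the naturality of the inclusion $\CATI \hookrightarrow \CATIT$, and the same argument with $\Spc$ in place of $\CATI$ gives the $0$-fibred case. The main point to get right is the object-level comparison in the third step — that $1$-fibredness of $p$ translates, via naturality of straightening under pullback, into the pointwise condition $F(b) \in \CATI$ — after which the equivalence of the two full sub-\itcats{} is formal.
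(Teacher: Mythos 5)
Your proposal is correct and takes essentially the same route as the paper: both use that $\Spc$ and $\CATI$ are full sub-\itcats{} of $\CATIT$, apply \cref{cor:funprops} to identify $\FUN(\tB,\CATI)$ (resp.\ $\FUN(\tB,\Spc)$) inside $\FUN(\tB,\CATIT)$ by the pointwise condition on values, and match this on the fibrational side with the condition that each fibre is an \icat{} (resp.\ an \igpd{}). Your extra step spelling out why fibres correspond to values --- naturality of straightening under pullback along $b \colon [0] \to \tB$ together with normalization over the point --- is left implicit in the paper but is exactly the right justification, as the paper itself notes after \cref{eq:strvariants}.
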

\begin{proof}
  Both $\Spc$ and $\CATI$ are full sub-\itcats{} of $\CATIT$.
  By \cref{cor:funprops}, this means that $\FUN(\tB, \Spc)$ and $\FUN(\tB, \CATI)$ are the full sub-\itcats{} of 
  $\FUN(\tB, \CATIT)$ spanned by those functors whose value at each
  object of $\tB$ is an \igpd{} or an \icat{}, respectively. On the fibrational side, this
  precisely corresponds to the $(0,1)$-fibrations whose fibre at each
  object is an \igpd{} or an \icat{}.
\end{proof}

\subsection{Locally cartesian morphisms and local fibrations}\label{sec:locfib}
In this section, we review the definitions of locally (co)cartesian
morphisms and local fibrations of \itcats{}, and their straightening,
as studied in \cite{Ab23}.

\begin{defn}
  Suppose $p \colon \tc{E} \to \tc{B}$ is a functor of \itcats{}. A morphism $\overline{f}$ in $\tc{E}$ is \emph{locally $p$-(co)cartesian} over $f$ in $\tc{B}$ \IFF{} $\overline{f}$ is a (co)cartesian morphism for the pullback $f^{*}\tc{E} \to [1]$.
\end{defn}

\begin{defn}
  We say that a functor $p \colon \tc{E} \to \tc{B}$ is a \emph{local
    $(0,1)$-fibration} if $p$ is cartesian-enriched and the pullback
  $f^{*}\tc{E} \to [1]$ is a $(0,1)$-fibration for all morphisms $f$
  in $\tc{B}$. We write $\LFIB_{(0,1)}(\tB)$ for the locally full
  sub-\itcat{} of $\CATITsl{\tB}$ that contains the local
  $(0,1)$-fibrations and the cartesian-enriched functors that preserve
  locally cocartesian morphisms. We also define local
  $(i,j)$-fibrations for all $i,j \in \{0,1\}$ similarly.
\end{defn}

More generally, we can consider fibrations that are local with respect
to a \emph{scaling} of an \itcat{}, in the following sense:
\begin{defn}
  Let $(\tc{B},S)$ be a scaled \itcat{}. We say that a functor
  $p \colon \tc{E} \to \tc{B}$ is an \emph{$S$-local
    $(0,1)$-fibration} if it is a local $(0,1)$-fibration, and for
  every commutative triangle
    \[
    \begin{tikzcd}
      a \arrow{rr}{f} \arrow{dr}[swap]{gf} & & b \arrow{dl}{g} \\
       & c
    \end{tikzcd}
  \]
  in $S$, if $\overline{f} \colon x \to y$ and
  $\overline{g} \colon y \to z$ are locally $p$-cocartesian morphisms
  over $f$ and $g$, then the composite $\overline{g}\overline{f}$ is a
  locally $p$-cocartesian morphism (over $gf$). We write
  $\LFIB^{S}_{(0,1)}(\tB)$ for the full sub-\itcat{} of
  $\LFIB_{(0,1)}(\tB)$ spanned by the $S$-local $(0,1)$-fibrations.
\end{defn}

\begin{observation}\label{obs:localfibpb}
  For any morphism $f \colon (\tA, S) \to (\tB,T)$ of scaled
  \itcats{}, pullback along $f$ gives a functor
  \[ f^{*} \colon \LFIB^{T}_{(0,1)}(\tB) \to \LFIB^{S}_{(0,1)}(\tA).\]
\end{observation}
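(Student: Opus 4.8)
The plan is to take as given that base change along $f$ defines a functor $f^{*} \colon \CATITsl{\tB} \to \CATITsl{\tA}$ of the ambient slice \itcats{} (standard, since $\CATIT$ admits pullbacks), and then check that it restricts to a functor between the sub-\itcats{} of local fibrations. Since $\LFIB^{S}_{(0,1)}(\tA)$ is a locally full sub-\itcat{} of $\CATITsl{\tA}$, it suffices to show that $f^{*}$ carries the objects and $1$-morphisms of $\LFIB^{T}_{(0,1)}(\tB)$ into those of the target: the $2$-morphisms are then inherited automatically, because a functor whose values on objects land in a locally full (hence on mapping \icats{} full) sub-\itcat{} factors through it.

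Everything rests on two bookkeeping facts. First, for any $g \colon [1] \to \tA$ there is a canonical identification $g^{*}(f^{*}\tE) \simeq (fg)^{*}\tE$ of pullbacks to $[1]$; consequently a morphism of $f^{*}\tE$ lying over $g$ is locally $f^{*}p$-cocartesian \IFF{} its image in $\tE$ is locally $p$-cocartesian over $f(g)$. Second, mapping \icats{} commute with pullbacks of \itcats{}, since $\tE(x,y)$ is a fibre of the limit-preserving functor $\ARopl$ (see \cref{cor:aroplfibres}), so that $(f^{*}\tE)((a_{0},e_{0}),(a_{1},e_{1})) \simeq \tA(a_{0},a_{1}) \times_{\tB(fa_{0},fa_{1})} \tE(e_{0},e_{1})$ for objects of $f^{*}\tE = \tA \times_{\tB} \tE$. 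Granting these, preservation of objects is immediate: cartesian-enrichment is preserved since the mapping functor to $\tA(a_{0},a_{1})$ is a base change of the cartesian fibration $\tE(e_{0},e_{1}) \to \tB(fa_{0},fa_{1})$, and cartesian fibrations and their morphisms are stable under base change; the local $(0,1)$-fibration condition is preserved since $g^{*}(f^{*}\tE) \simeq (fg)^{*}\tE$ is a $(0,1)$-fibration for every $g$; and the $S$-locality condition is preserved precisely because $f$ is a morphism of scaled \itcats{}, so that a triangle in $S$ maps to one in $T$ and the $T$-locality of $p$ applies to the images of the relevant locally cocartesian morphisms under the projection $f^{*}\tE \to \tE$.

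Preservation of $1$-morphisms is analogous: a cartesian-enriched functor $\psi \colon \tE \to \tE'$ over $\tB$ that preserves locally cocartesian morphisms pulls back to $f^{*}\psi$ over $\tA$, which is cartesian-enriched (base change of a morphism of cartesian fibrations on mapping \icats{}) and preserves locally cocartesian morphisms (read off on the projections to $\tE$ and $\tE'$ through the first identification). I do not anticipate a genuine obstacle here, since each defining condition for these fibrations is formulated either ``$[1]$-locally'' or triangle-by-triangle, and is therefore manifestly stable under base change; the one point requiring care is the correct matching of locally cocartesian morphisms of $f^{*}\tE$ with those of $\tE$ lying over the image of $f$, which is exactly what the identification $g^{*}(f^{*}\tE) \simeq (fg)^{*}\tE$ supplies.
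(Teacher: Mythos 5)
Your proof is correct and takes essentially the same route as the paper, which states this as an Observation with no proof at all, implicitly treating it as immediate from the stability of each defining condition under base change --- precisely what you verify via the identification $g^{*}(f^{*}\tE) \simeq (fg)^{*}\tE$, the compatibility of mapping \icats{} with pullbacks, and the fact that $f$ carries $S$ into $T$. Your write-up supplies exactly the bookkeeping the authors suppressed, and I see no gaps in it.
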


\begin{lemma}
  A functor $p \colon \tc{E} \to \tc{B}$ is an
  $S^{\natural}_{\tc{B}}$-local $(0,1)$-fibration, where $S^{\natural}_{\tB}$ consists of all commuting triangles in $\tB$, \IFF{} $p$ is a
  $(0,1)$-fibration.
\end{lemma}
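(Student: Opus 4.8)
The plan is to prove the two implications separately, noting that in each direction $p$ is assumed cartesian-enriched, so that only the behaviour of the (locally) cocartesian lifts is at issue; the crux in both directions is the relationship between locally cocartesian and cocartesian morphisms.

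For the easy direction, suppose $p$ is a $(0,1)$-fibration. First I would observe that since cocartesian morphisms are stable under base change, each pullback $f^{*}\tE \to [1]$ inherits cocartesian lifts of $0 \to 1$ and is cartesian-enriched by \cref{obs:enrto1cat}; hence $p$ is a local $(0,1)$-fibration. To upgrade this to the $S^{\natural}_{\tB}$-local condition, I would use that in a genuine $(0,1)$-fibration every locally cocartesian morphism is in fact cocartesian: a cocartesian lift of a given morphism with a given source exists and is locally cocartesian, and locally cocartesian lifts are unique up to equivalence. Then for any commuting triangle and any locally cocartesian $\bar f, \bar g$, these morphisms are cocartesian, so $\bar g\bar f$ is cocartesian (cocartesian morphisms compose, by pasting the defining pullback squares of mapping \icats{}) and in particular locally cocartesian, as required.

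For the substantive direction, suppose $p$ is an $S^{\natural}_{\tB}$-local $(0,1)$-fibration, so that $p$ is cartesian-enriched, has locally cocartesian lifts of all morphisms, and --- since every composite $gf$ fits into a commuting triangle in $S^{\natural}_{\tB}$ --- its locally cocartesian morphisms are closed under composition. I must show that a locally cocartesian lift $\bar f \colon x \to y$ of $f \colon a \to b$ is cocartesian, i.e.\ that for every $z$ over $c$ the square comparing $\tE(y,z) \to \tE(x,z)$ with $\tB(b,c) \xto{f^{*}} \tB(a,c)$ is a pullback. First I would note that this is a morphism of cartesian fibrations of \icats{} preserving cartesian morphisms, using condition (2) of \cref{defn:cartenriched} together with \cref{ob:precomposition} (precomposition with $\bar f$ preserves cartesian $2$-morphisms). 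By the standard criterion that such a morphism is an equivalence onto the pullback exactly when it is a fibrewise equivalence, the problem reduces to showing that $\tE(y,z)_{g} \to \tE(x,z)_{gf}$ is an equivalence for each $g \colon b \to c$.

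Finally I would identify these fibres. Writing $y = f_{!}x$, a locally cocartesian lift $\bar g \colon y \to g_{!}y$ of $g$ gives $\tE_{c}(g_{!}y, z) \isoto \tE(y,z)_{g}$, and a locally cocartesian lift $\overline{gf} \colon x \to (gf)_{!}x$ gives $\tE_{c}((gf)_{!}x, z) \isoto \tE(x,z)_{gf}$; both are instances of the local analogue of \cref{obs:cocartmormapfib}, immediate from the definition of locally cocartesian via $f^{*}\tE \to [1]$. It therefore suffices to produce an equivalence $g_{!}(f_{!}x) \simeq (gf)_{!}x$ compatible with these identifications. This follows because $\bar g\bar f$ is a composite of locally cocartesian lifts over the composable pair $(f,g)$, hence locally cocartesian over $gf$ by closure under composition, and thus equivalent over $gf$ to $\overline{gf}$ by uniqueness of locally cocartesian lifts. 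I expect the main obstacle to be this reduction step: carefully verifying that the comparison map is genuinely a morphism of cartesian fibrations preserving cartesian edges (so that the fibrewise criterion applies), and correctly identifying each fibre $\tE(y,z)_{g}$ with a mapping \icat{} in the fibre $\tE_{c}$.
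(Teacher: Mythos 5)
Your argument is correct, but it takes a genuinely different route from the paper: the paper does not prove this lemma directly at all, instead deducing it in one line as a special case of \cite{Ab23}*{Theorem 4.27}, which handles arbitrary scalings. Your proof unwinds the definitions and is self-contained, and both directions hold up under scrutiny: in the easy direction, cocartesian morphisms do base-change to cocartesian morphisms of $f^{*}\tE \to [1]$ (mapping \icats{} of a pullback are pullbacks of mapping \icats{}), they compose by pasting the defining pullback squares, and uniqueness of locally cocartesian lifts then identifies the two classes; in the substantive direction, your reduction to fibres is legitimate because cartesian-enrichment makes both vertical maps cartesian fibrations and, by \cref{ob:precomposition} together with condition (2) of \cref{defn:cartenriched}, precomposition with $\bar f$ preserves cartesian $2$-morphisms, so the comparison map to the pullback is a map of cartesian fibrations over $\tB(b,c)$ preserving cartesian edges and is an equivalence \IFF{} it is one fibrewise --- exactly the mechanism the paper itself uses repeatedly (e.g.\ in the proofs of \cref{lem:graycartenr} and \cref{propn:11fibradj}, via \cref{obs:cocartmormapfib}). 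What your approach buys is transparency: one sees precisely that the only content beyond the local fibration condition is closure of locally cocartesian lifts under composition, which is what the maximal scaling $S^{\natural}_{\tB}$ supplies; what the citation buys is brevity and the full strength of the general $S$-local statement. One small streamlining: your final step via $\overline{gf}$ and uniqueness of locally cocartesian lifts can be skipped, since local cocartesianness of $\bar g \bar f$ over $gf$ already says directly that precomposition $\tE_{c}(g_{!}y, z) \to \tE(x,z)_{gf}$ is an equivalence, and the fibre map $\tE(y,z)_{g} \to \tE(x,z)_{gf}$ is then an equivalence by two-out-of-three against the equivalence $\tE_{c}(g_{!}y,z) \isoto \tE(y,z)_{g}$.
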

\begin{proof}
  This is a special case of \cite{Ab23}*{Theorem 4.27}.
\end{proof}

\begin{thm}[{\cite{Ab23}}]\label{thm:localstr}
  There is a natural equivalence of \itcats{}
  \[ \LFIB_{0,1}^{S}(\tc{B}) \simeq \FUN(\tc{B}_{S}, \CATIT),\]
  where $\tc{B}_{S}$ is as in \cref{constr:laxitcat}. In particular,
  \[ \LFIB_{0,1}(\tB) \simeq \FUN(\tB_{\lax}, \CATIT).\]
\end{thm}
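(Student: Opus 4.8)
The plan is to deduce the theorem from the first author's straightening theorem for local fibrations, \cite{Ab23}. The general equivalence
\[ \LFIB_{(0,1)}^{S}(\tB) \simeq \FUN(\tB_{S}, \CATIT) \]
is precisely the content of that straightening theorem, once we match the notation: the \itcat{} $\tB_{S}$ of \cref{constr:laxitcat} is by construction a fibrant replacement of the (non-fibrant) scaled simplicial set $(\mathfrak{B}, S)$, which is exactly the ``lax'' \itcat{} attached to the scaled \itcat{} $(\tB, S)$ used in \cite{Ab23}. The contravariant naturality in $\tB$ is part of the construction: on the fibration side it is implemented by pullback, as recorded in \cref{obs:localfibpb}, and this refines the naturality of the ordinary straightening equivalence \cref{thm:str01}.

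As a consistency check that pins down the statement, I would first inspect the two extreme scalings. At the maximal scaling $S = S_{\tB}^{\natural}$ of all commuting triangles, the representing scaled simplicial set $(\mathfrak{B}, S_{\tB}^{\natural})$ is already fibrant, so $\tB_{S_{\tB}^{\natural}} \simeq \tB$; together with the preceding lemma, which identifies $S_{\tB}^{\natural}$-local $(0,1)$-fibrations with $(0,1)$-fibrations (locally cocartesian morphisms being cocartesian in this case, so that the morphism classes agree too), this shows that the equivalence specializes to \cref{thm:str01}. This is the expected behaviour and also the base case on which the cellular argument of \cite{Ab23} rests.

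For the ``in particular'' clause, I would take $S = S_{\tB}^{\flat}$, the minimal scaling. By \cref{constr:laxitcat} we have $\tB_{S_{\tB}^{\flat}} = \tB_{\lax}$ by definition, so it remains to observe that $\LFIB_{(0,1)}^{S_{\tB}^{\flat}}(\tB) = \LFIB_{(0,1)}(\tB)$ with no additional condition. This holds because $S_{\tB}^{\flat}$-locality only constrains triangles in which one of the two legs is an equivalence, and for such triangles the composite of locally cocartesian lifts is automatically again locally cocartesian, since composing a locally cocartesian morphism with an equivalence preserves local cocartesianness; the same remark identifies the two classes of morphisms, so the sub-\itcats{} agree on the nose.

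The real work lies entirely inside \cite{Ab23}, and the main obstacle is to verify that the straightening construction intertwines the scaling $S$ with the lax structure of $\tB_{S}$. Concretely, the step requiring the most care in transcription is the matching of the $S$-locality condition --- that composites of locally cocartesian lifts over triangles of $S$ stay locally cocartesian --- with the strictness, on the triangles of $S$, of the a priori merely lax functor $\tB \to \CATIT$ that an arbitrary local $(0,1)$-fibration encodes; it is exactly this matching that forces honest functors out of $\tB_{S}$, rather than lax functors out of $\tB$, to appear on the right-hand side.
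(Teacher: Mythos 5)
Your proposal coincides with the paper's treatment: \cref{thm:localstr} is stated without proof as a direct citation of \cite{Ab23}, which is exactly your reduction, and your supplementary checks are correct --- $\tB_{S_{\tB}^{\flat}} = \tB_{\lax}$ holds by definition from \cref{constr:laxitcat}, the $S_{\tB}^{\flat}$-locality condition is vacuous because composing locally cocartesian lifts with lifts of equivalences preserves local cocartesianness, and the maximal-scaling specialization to \cref{thm:str01} is the content of the lemma the paper records via \cite{Ab23}*{Theorem 4.27}. The only superfluous step is your verification that the morphism classes agree in the ``in particular'' clause, since $\LFIB_{(0,1)}^{S}(\tB)$ is by definition a \emph{full} sub-\itcat{} of $\LFIB_{(0,1)}(\tB)$, so the morphisms do not depend on $S$.
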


\begin{remark}
  Another version of the straightening theorem for local fibrations
  has been proved by Ayala, Mazel-Gee, and Rozenblyum
  \cite{AMGR}*{Theorem B.4.3}. This uses an a priori different notion
  of lax functors, but this has recently been compared to the
  definition via scaled simplicial sets that we use in work of the
  first author \cite{AbLax}.
\end{remark}

\subsection{Gray fibrations}\label{sec:grayfib}
In this section we introduce \emph{Gray fibrations} over products of
\itcats{}, which can be straightened to functors out of Gray tensor
products; this generalizes the Gray fibrations of \icats{} studied in
\cite{HHLN1}*{\S 2.4}. We start by defining these as $S$-local
fibrations for a particular scaling $S$; the rest of the section will
then be devoted to providing a more concrete description, generalizing
\cite{HHLN1}*{2.4.1}.

\begin{defn}
  If $\tc{A}$ and $\tc{B}$ are \itcats{}, we say that a functor
  $p \colon \tc{E} \to \tc{A} \times \tc{B}$ is a \emph{Gray
    $(0,1)$-fibration} if it is an $S$-local $(0,1)$-fibration where
  $S$ is as in \cref{defn:graytens}. We write
  $\GFIB_{0,1}(\tc{A}, \tc{B})$ for the corresponding \itcat{}
  $\LFIB_{0,1}^{S}(\tc{A} \times \tc{B})$.
\end{defn}

Given our definition of the Gray tensor product in terms of scaled
\itcats{} in \S\ref{subsec:gray} above, \cref{thm:localstr}
specializes to the following straightening result for Gray fibrations:
\begin{cor}\label{cor:gfibtogtens}
  There is a natural equivalence of \itcats{}
  \[ \GFIB_{0,1}(\tc{A}, \tc{B}) \simeq \FUN(\tc{A} \otimes \tc{B}, \CATIT)\]
  for \itcats{} $\tc{A}, \tc{B}$. \qed
\end{cor}

Our main goal in this section is to give a simpler characterization of
Gray $(0,1)$-fibrations, in \cref{propn:grayfib}. Before we come to this, it is convenient to first introduce some terminology for more general two-variable fibrations:

Given a functor $\tc{A} \to \CATITsl{\tc{B}}$, we can unstraighten
this to a commutative triangle
\[
  \begin{tikzcd}
    \tc{E} \arrow{dr} \arrow{rr} & & \tc{A} \times \tc{B} \arrow{dl}{\pr_{\tc{A}}} \\
     & \tc{A},
  \end{tikzcd}
\]
which is a morphism of $(0,1)$-fibrations over $\tc{A}$. Such triangles are characterized by the following definition:
\begin{defn}
  We say that a functor $\pi \colon \tE \to \tA \times \tB$ is a
  \emph{$(0,1)$-fibration over $\tA$} if:
  \begin{itemize}
  \item $\pi_{\tA} \colon \tE \to \tA$ is a $(0,1)$-fibration,
  \item $\pi$ is a morphism of $(0,1)$-fibrations over $\tA$, \ie{}
    cocartesian $1$-morphisms and cartesian $2$-morphisms in $\tE$ map
    to equivalences in $\tB$.
  \end{itemize}
  In other words, $\pi$ is an object of $\FIB_{(0,1)}(\tA)_{/\tA
    \times \tB}$.
\end{defn}

\begin{observation}
  The straightening equivalence induces an equivalence on slices
  \begin{equation}
    \label{eq:slicestr}
    \FIB_{(0,1)}(\tA)_{/\tA
      \times \tB} \simeq \FUN(\tA, \CATIT)_{/\tB} \simeq \FUN(\tA,
    \CATITsl{\tB}),
  \end{equation}
  so that $(0,1)$-fibrations over $\tA$ with target $\tA \times \tB$
  correspond to functors $\tA \to \CATITsl{\tB}$.
\end{observation}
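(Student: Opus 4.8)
The plan is to obtain both equivalences formally: the first by transporting slices across the straightening equivalence, and the second from the fact that $\FUN(\tA, \blank)$ commutes with the formation of slices.

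First I would invoke the straightening equivalence of \cref{thm:str01} with base $\tA$, namely $\Phi \colon \FIB_{(0,1)}(\tA) \isoto \FUN(\tA, \CATIT)$. Since this is an equivalence of \itcats{}, it induces an equivalence between the slice over any object and the slice over its image. Applied to the projection $\pr_{\tA} \colon \tA \times \tB \to \tA$, which is a $(0,1)$-fibration over $\tA$, this gives
\[ \FIB_{(0,1)}(\tA)_{/\tA \times \tB} \simeq \FUN(\tA, \CATIT)_{/\Phi(\pr_{\tA})}. \]
The remaining point is to identify $\Phi(\pr_{\tA})$ with the constant functor $\underline{\tB}$. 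For this I would use the naturality of straightening with respect to pullback asserted in \cref{thm:str01}: writing $u \colon \tA \to [0]$ for the unique functor, the projection $\pr_{\tA}$ is the pullback $u^{*}(\tB \to [0])$, while under $\FIB_{(0,1)}([0]) \simeq \CATIT$ the fibration $\tB \to [0]$ straightens to $\tB$ itself. Since pullback on the left corresponds to precomposition on the right, $\Phi(\pr_{\tA}) \simeq u^{*}\tB = \underline{\tB}$, yielding the first equivalence.

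For the second equivalence I would use that, in any \itcat{} $\tD$, the slice over an object $d$ is the pullback
\[
  \begin{tikzcd}
    \tD_{/d} \ar[r] \ar[d] & \FUN([1], \tD) \ar[d, "\mathrm{ev}_{1}"] \\
    {[0]} \ar[r, "d"] & \tD.
  \end{tikzcd}
\]
Because $\FUN(\tA, \blank)$ is right adjoint to $\blank \times \tA$ it preserves this pullback; combining this with the natural equivalences $\FUN(\tA, [0]) \simeq [0]$ and
\[ \FUN(\tA, \FUN([1], \CATIT)) \simeq \FUN(\tA \times [1], \CATIT) \simeq \FUN([1], \FUN(\tA, \CATIT)) \]
coming from the cartesian closed structure of $\CatIT$, applying $\FUN(\tA, \blank)$ to the square above with $\tD = \CATIT$ and $d = \tB$ produces exactly the pullback defining $\FUN(\tA, \CATIT)_{/\underline{\tB}}$. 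Hence $\FUN(\tA, \CATITsl{\tB}) \simeq \FUN(\tA, \CATIT)_{/\underline{\tB}}$, and composing with the first equivalence gives the statement.

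The only genuine subtlety is the identification of $\pr_{\tA}$ with $\underline{\tB}$, and more precisely the bookkeeping ensuring that all slices in sight are the strict (pullback) slices: this is what makes them preserved both by the equivalence $\Phi$ and by the limit-preserving functor $\FUN(\tA, \blank)$, and it is also exactly the form of naturality recorded in \cref{thm:str01}. Once this is granted, both equivalences are forced by the universal properties, with no further computation required.
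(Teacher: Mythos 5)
Your proof is correct and fills in exactly the reasoning the paper leaves implicit: this is stated as an observation with no proof, the intended argument being precisely yours — transport of slices along the equivalence of \cref{thm:str01}, with $\Phi(\pr_{\tA}) \simeq \underline{\tB}$ identified via the asserted naturality under pullback along $\tA \to [0]$, followed by the standard fact that $\FUN(\tA, \blank)$, being a right adjoint, preserves the pullback defining the (strict) slice. Your closing remark correctly isolates the only point needing care, namely that all slices involved are the strict pullback slices.
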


\begin{thm}\label{propn:grayfib}
  A functor $\pi \colon \tE \to \tA \times \tB$ is a Gray $(0,1)$-fibration
  \IFF{} the following conditions hold:
  \begin{enumerate}[(1)]
  \item $\pi$ is a $(0,1)$-fibration over $\tA$,
  \item for every $a \in \tA$, the functor on fibres $\tE_{a} \to \tB$
    is a $(0,1)$-fibration,
  \item for every morphism $a \to a'$ in $\tA$, the cocartesian
    transport functor $\tE_{a} \to \tE_{a'}$ is a morphism of
    cartesian-enriched functors to $\tB$.
  \end{enumerate}
\end{thm}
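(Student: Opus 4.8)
The plan is to prove both implications by analyzing the locally cocartesian morphisms of $\pi$ over $\tA \times \tB$, separating the ``horizontal'' morphisms of the form $(\alpha, \id_{b})$ from the ``vertical'' ones of the form $(\id_{a}, \beta)$; this mirrors the \icatl{} argument of \cite{HHLN1}*{\S 2.4}. The crucial combinatorial input is the asymmetry in the scaling $S$ of \cref{defn:graytens}: a triangle obtained by composing a horizontal morphism $(\alpha,\id_{b})$ with an arbitrary morphism $(\gamma,\delta)$ has first $\tB$-edge $g_{01} = \id_{b}$, hence lies in $S$, whereas a triangle composing a vertical morphism with a horizontal one need not. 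First I would record the canonical factorization of a locally cocartesian lift of $(\alpha,\beta)\colon (a,b) \to (a',b')$ as a composite $x \to \alpha_{!}x \to \beta_{!}\alpha_{!}x$ of a $\pi_{\tA}$-cocartesian lift of $\alpha$ (landing over $(a',b)$ because $\pi$ is a morphism over $\tA$) followed by a fibrewise cocartesian lift of $\beta$ in $\tE_{a'}$.

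For the forward direction, suppose $\pi$ is an $S$-local $(0,1)$-fibration, so in particular cartesian-enriched with all local cocartesian lifts. Applying $S$-locality to the triangles that compose a horizontal lift $(\alpha,\id_{b})$ with an arbitrary lift $(\gamma,\delta)$ --- which lie in $S$ since $g_{01} = \id_{b}$ --- shows that locally cocartesian lifts of horizontal morphisms are in fact $\pi$-cocartesian. Since $(\alpha,\id_{b})$ is $\pr_{\tA}$-cocartesian, the cocartesian dual of \cite{HTT}*{Proposition 2.4.1.3} (\cf{} \cref{lem:cartenrcomp}) then identifies these with $\pi_{\tA}$-cocartesian lifts, so $\pi_{\tA}$ has all cocartesian lifts; combined with the cartesian-enrichment of $\pi_{\tA}$ (obtained by composing the cartesian fibration $\tE(x,y) \to \tA(\ldots)\times\tB(\ldots)$ with the projection to $\tA(\ldots)$) this gives condition (1). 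Restricting to triangles whose $\tA$-components are identities (so $f_{12} = \id \in S$) shows that fibrewise locally cocartesian lifts compose, so each $\tE_{a} \to \tB$ is a genuine $(0,1)$-fibration, giving (2); here I use that $(\id_{a},\beta)^{*}\tE$ agrees with the pullback of $\tE_{a} \to \tB$ along $\beta$. Finally, (3) follows from $\pi$ being cartesian-enriched: by \cref{ob:precomposition} the transport $\alpha_{!}$, built from composition with cocartesian lifts as in \cref{obs:cocartmormapfib}, preserves cartesian $2$-morphisms.

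For the converse, assume (1)--(3). The main work is to show $\pi$ is cartesian-enriched, for which I would fix $x$ over $(a,b)$ and $y$ over $(a',b')$ and glue the fibrewise fibrations: by \cref{obs:cocartmormapfib} the fibre of $\tE(x,y) \to \tA(a,a')$ over $\alpha$ is $\tE_{a'}(\alpha_{!}x, y)$, which maps to $\tB(b,b')$ by a cartesian fibration thanks to (2), and cartesian transport along a $2$-morphism $\eta\colon \alpha \to \alpha'$ is identified with precomposition by the transport component $u\colon \alpha_{!}x \to \alpha'_{!}x$, which preserves $\tB(b,b')$-cartesian morphisms again by (2) and \cref{ob:precomposition}; the \icatl{} gluing criterion then shows $\tE(x,y) \to \tA(a,a')\times\tB(b,b')$ is a cartesian fibration. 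Condition (3) enters when verifying the remaining composition-compatibility clause of \cref{defn:cartenriched}, where transporting a fibrewise cartesian $2$-morphism across the $\tA$-component of a composing $1$-morphism must preserve cartesianness. I would then check that the canonical composite $x \to \beta_{!}\alpha_{!}x$ is locally cocartesian over $(\alpha,\beta)$, so that $\pi$ is a local $(0,1)$-fibration. Lastly, $S$-locality reduces to the two types of triangles in $S$: when $g_{01}$ is an equivalence the two fibrewise lifts compose (as $\tE_{a}\to\tB$ is a fibration), and when $f_{12}$ is an equivalence the two $\pi_{\tA}$-cocartesian lifts compose (as $\pi_{\tA}$ is a fibration), and in both cases the composite rewrites as the canonical locally cocartesian lift of the composite morphism.

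The step I expect to be the main obstacle is the $2$-categorical bookkeeping underlying cartesian-enrichment, and in particular pinning down exactly where hypothesis (3) is indispensable. The delicate point is that only the horizontal-then-vertical composite is locally cocartesian --- the reverse composite is not --- and verifying this, together with the composition-compatibility of the enrichment, requires controlling how cartesian $2$-morphisms behave under the cocartesian transport between fibres; this is precisely the content of (3), and making the identification of transport with (pre)composition via \cref{obs:cocartmormapfib} fully rigorous on mapping \icats{} is the technical heart of the argument.
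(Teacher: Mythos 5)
Your proposal follows essentially the same route as the paper's proof: the same factorization of locally cocartesian lifts into a $\pi_{\tA}$-cocartesian (horizontal) part followed by a fibrewise cocartesian (vertical) part, the same exploitation of the asymmetric scaling of \cref{defn:graytens}, the same gluing criterion on mapping \icats{} to establish cartesian-enrichment in the converse direction, and the same role for condition (3) in controlling cartesian $2$-morphisms under transport. One bookkeeping slip in your final verification of $S$-locality: the two cases are transposed. A triangle with $g_{01}$ an equivalence has a \emph{horizontal} first morphism, so there the composite rewrites using composition of $\pi_{\tA}$-cocartesian lifts (condition (1)); a triangle with $f_{12}$ an equivalence has a \emph{vertical} second morphism, so there one needs fibrewise cocartesian lifts for $\tE_{a} \to \tB$ to compose (condition (2)). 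Since both composition facts are available from your hypotheses, the swap is harmless once corrected, and the rest of the argument goes through as in the paper.
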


\begin{remark}
  \cref{propn:grayfib} is a special case of \cite{AMGR}*{Lemma B.4.8},
  which only requires local $(0,1)$-fibrations where we ask for
  $(0,1)$-fibrations.
\end{remark}

We first show that part of the conditions in \cref{propn:grayfib} imply that the functor in question is cartesian-enriched:
\begin{lemma}\label{lem:graycartenr}
  Suppose the functor $\pi \colon \tE \to \tA \times \tB$ satisfies
  the following conditions:
  \begin{enumerate}[(1)]
  \item $\pi$ is a $(0,1)$-fibration over $\tA$,
  \item for every $a \in \tA$, the functor on fibres $\tE_{a} \to \tB$
    is cartesian-enriched,
  \item for every morphism $a \to a'$ in $\tA$, the cocartesian
    transport functor $\tE_{a} \to \tE_{a'}$ is a morphism of
    cartesian-enriched functors to $\tB$.
  \end{enumerate}
  Then $\pi$ is a cartesian-enriched functor.
\end{lemma}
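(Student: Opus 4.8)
The plan is to verify the two conditions defining a cartesian-enriched functor (\cref{defn:cartenriched}) for $\pi \colon \tE \to \tA \times \tB$. Fix objects $x, y \in \tE$ lying over $(a,b)$ and $(a',b')$, and abbreviate $M := \tE(x,y)$, $P := \tA(a,a')$, $Q := \tB(b,b')$, so that $\pi$ induces $(r,s) \colon M \to P \times Q$. Since $\pi_{\tA} \colon \tE \to \tA$ is a $(0,1)$-fibration by hypothesis (1), the functor $r \colon M \to P$ is a cartesian fibration, and by \cref{obs:cocartmormapfib} its fibre over $f \colon a \to a'$ is $M_{f} \simeq \tE_{a'}(f_{!}x, y)$, with cartesian transport along a $2$-morphism $\gamma$ in $P$ given by precomposition with a $1$-morphism of $\tE_{a'}$. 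By hypothesis (2), the restriction $M_{f} \to Q$ is a cartesian fibration for each such $f$.

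For condition (1) I would construct cartesian lifts for $(r,s)$ directly. Given a target $\phi \in M$ over $(f, g)$ and a morphism $(\alpha, \beta) \colon (f_{0}, g_{0}) \to (f, g)$ of $P \times Q$, I factor it as $(f_0,g_0) \xrightarrow{(\alpha, \id)} (f, g_0) \xrightarrow{(\id,\beta)} (f,g)$ and lift the two pieces in turn. A fibrewise $M_f \to Q$-cartesian lift of $(\id, \beta)$ exists by hypothesis (2); unwinding the mapping-space criterion for $(r,s)$-cartesianness (the $P$-factors cancel since we work over an identity of $P$, and the fibres are compared over each $\gamma \in P(-, f)$ using the transport description above), such a fibrewise-cartesian morphism is in fact $(r,s)$-cartesian, precisely because cartesian transport in $P$ preserves $\tB$-cartesian morphisms by the composition clause of hypothesis (2). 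An $r$-cartesian lift of $(\alpha, \id)$ exists since $r$ is a cartesian fibration; by hypothesis (1) such an $r$-cartesian $2$-morphism maps to an equivalence in $\tB$, and an $r$-cartesian morphism whose image in $Q$ is an equivalence is automatically $(r,s)$-cartesian (now the $Q$-factors cancel in the pullback criterion). Composing the two lifts yields an $(r,s)$-cartesian lift of $(\alpha,\beta)$, so $(r,s)$ is a cartesian fibration.

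For condition (2) I would use \cref{ob:precomposition} to reduce to showing that pre- and postcomposition with a fixed $1$-morphism preserves $\pi$-cartesian $2$-morphisms. The construction above shows that every $(r,s)$-cartesian $2$-morphism factors as an $r$-cartesian morphism followed by a fibrewise $\tB$-cartesian one. Postcomposition with a $1$-morphism $h \colon y \to z$ preserves the $r$-cartesian (equivalently $\pi_{\tA}$-cartesian) factor, since $\pi_{\tA}$ is cartesian-enriched, and that factor still maps to an equivalence in $\tB$ by hypothesis (1), hence remains $(r,s)$-cartesian. On the fibrewise factor, postcomposition with $h$ is implemented, after the identification of \cref{obs:cocartmormapfib}, by the cocartesian transport $\tE_{a'} \to \tE_{a''}$ along the $\tA$-component of $h$ followed by composition inside $\tE_{a''}$; the former preserves $\tB$-cartesian morphisms by hypothesis (3) and the latter by hypothesis (2), so the fibrewise factor is again carried to an $(r,s)$-cartesian morphism. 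As $(r,s)$-cartesian morphisms are closed under composition, postcomposition with $h$ preserves them, and precomposition is handled symmetrically.

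The main obstacle is the bookkeeping in condition (2): one must correctly identify the $\pi$-cartesian $2$-morphisms as composites of an $\tA$-cartesian and a fibrewise $\tB$-cartesian part, and then track how (post)composition with a $1$-morphism crossing $\tA$-fibres interacts with both parts. This is exactly where hypothesis (3) is indispensable — note it is \emph{not} needed for condition (1) — since it controls the transport of the fibrewise $\tB$-cartesian structure between the distinct fibres $\tE_{a}$ and $\tE_{a'}$.
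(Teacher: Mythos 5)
Your proposal is correct and takes essentially the same route as the paper: you view $\pi_{x,y} \colon \tE(x,y) \to \tA(a,a') \times \tB(b,b')$ as a map of cartesian fibrations over $\tA(a,a')$, identify the fibre over $f$ with $\tE_{a'}(f_{!}x,y) \to \tB(b,b')$ and the transport along a $2$-morphism of $\tA$ with precomposition by a $1$-morphism in the fibre (so hypothesis (2) supplies both the fibrewise cartesian fibrations and the transport compatibility), and then check the composition condition via \cref{ob:precomposition} by splitting cartesian $2$-morphisms into a $\pi_{\tA}$-cartesian and a fibrewise factor, invoking hypothesis (3) exactly where the paper does, namely for postcomposition across fibres. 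The only cosmetic difference is that you reprove by hand the two-variable criterion that the paper simply imports as \cite{cois}*{Lemma A.1.8}.
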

\begin{proof}
  Let us fix the notation $\pi(x) := (x_{\tc{A}},x_{\tc{B}})$ and $\pi(f)=(f_{\tA},f_{\tB})$ for morphisms $f \colon x \to y$ in $\tE$. Given a pair of objects $x,y \in \tc{E}$ we have by assumption a morphism of cartesian-enriched functors
    \[
    \begin{tikzcd}
      \tc{E}(x,y) \arrow[rr,"\pi_{x,y}"] \arrow[dr,swap] & &  \tc{A}(x_{\tc{A}},y_{\tc{A}}) \times \tc{B}(x_{\tc{B}},y_{\tc{B}}) \arrow[dl] \\
      &  \tc{A}(x_{\tc{A}},y_{\tc{A}}),
    \end{tikzcd}
  \]
  and we want to show that the horizontal map is a cartesian fibration. For this we use 
  \cite{cois}*{Lemma A.1.8}, from which we see that it suffices to show that
  \begin{itemize}
  \item $\tE(x,y)_{\alpha} \to \tB(x_{\tB}, y_{\tB})$ is a cartesian fibration for every $\alpha \in \tA(x_{\tA}, y_{\tA})$,
  \item for every morphism $f \colon \alpha \Rightarrow \beta$ in $\tA(x_{\tA}, y_{\tA})$, the cartesian transport functor
    \[ f^{*} \colon \tE(x,y)_{\beta} \to \tE(x,y)_{\alpha}\]
    preserves cartesian morphisms.
  \end{itemize}
  For the first condition, we know from  \cref{obs:fibmormapfib} that the fibre over $\alpha$ can be identified with
  the map
  \[
     \tc{E}_{y_{\tc{A}}}(\alpha_{!}x,y) \to  \tc{B}(x_{\tc{B}},y_{\tc{B}}),
   \]
   where $l_\alpha \colon x \to \alpha_{!}x$ is a $0$-cartesian
   morphism over $\alpha$; this functor is a cartesian fibration by
   assumption. Next, to see that $f^{*}$ preserves cartesian morphisms, we pick a cocartesian morphism $l_{\beta} \colon x \to \beta_{!}x$ over $\beta$ and a cartesian 2-morphism $\overline{f} \colon h \Rightarrow l_{\beta}$ over $f$. Then $h$ factors through the cocartesian morphism $l_{\alpha}$, so we obtain a morphism
   $u \colon \alpha_{!}x \to \beta_{!}x$ lying over the identity of
   $y_{\tA}$ such that $h \simeq u \circ l_{\alpha}$.
   By \cref{obs:cocartmormapfib} we then have a commutative square
   \[
     \begin{tikzcd}
       \tE_{y_{\tA}}(\beta_{!}x,y) \ar[r, "\sim"] \ar[d, "u^{*}"] & \tE(x,y)_{\beta} \ar[d, "f^{*}"] \\
       \tE_{y_{\tA}}(\alpha_{!}x,y) \ar[r, "\sim"] & \tE(x,y)_{\alpha}
     \end{tikzcd}
   \]
   over $\tB(x_{\tB},y_{\tB})$.
   
   Now since $\tE_{y_{\tA}} \to \tB$ is a cartesian-enriched functor,
   composition with $u$ preserves cartesian morphism over
   $\tB(x_{\tB}, y_{\tB})$ and hence so does the cartesian transport
   functor $f^{*}$.
   It then follows from \cite{cois}*{Lemma A.1.8} that $\pi_{x,y}$ is
   a cartesian fibration whose cartesian edges are precisely those
   that can be expressed as composites $u\circ v$ where $u$ is a
   cartesian 2-morphism for $\pi_{\tc{A}}$ and $v$ is a cartesian
   2-morphism in a fibre $\tc{E}_{y_{\tc{A}}}\to \tc{B}$.

   Next, we must prove that composition preserves cartesian
   2-morphisms. Using \cref{ob:precomposition} it will be enough to
   show that pre- and postcomposition with $1$-morphisms preserves
   cartesian edges. The description we just gave of the cartesian
   2-morphisms implies that it suffices to show that if we pre- or
   postcompose a cartesian 2-morphism in a fibre with a 1-morphism,
   then the result can be identified with a cartesian 2-morphism in
   the corresponding fibre. This is because the $\pi_{\tA}$-cartesian
   factor $u$ in the decomposition $u\circ v$ is already stable under
   composition.

   We first consider the case of precomposition. Let $x,y,z \in \tE$
   and fix a 1-morphism $q \colon x \to y$. Suppose that we are given
   a pair of morphisms $f,g \colon y \to z$ together with a 2-morphism
   $\phi \colon f \Rightarrow g$ such that $\pi_{\tA}(\phi)$ is
   invertible and such that $\phi$ represents a cartesian 2-morphism
   for $\tE_{z_{\tA}} \to \tB$. We pick cocartesian lifts of
   $\pi_{\tA}(f)$ and $\pi_{\tA}(f \circ q)$ which we denote as
   $\omega \colon y \to z_{f}$ and $r \colon x \to z_{f q}$. We note
   that there exists a morphism in $\tE_{\pi_{\tA}(z)}$ of the form
   $t \colon z_{fq} \to z_{f}$ such that $t \circ \omega  \simeq r\circ
   q$. We finally look at the commutative diagram
   \[
     \begin{tikzcd}
       \tE_{z_{\tA}}(z_f,z) \arrow[d,"\sim"'] \arrow[r,"t^*"] & \tE_{z_{\tA}}(z_{fq},z) \arrow[d,"\sim"] \\
       \tE(y,z)_{f_{\tA}} \arrow[r,"q^*"] &  \tE(x,z)_{(fq)_{\tA}} 
     \end{tikzcd}
   \]
   and conclude that since precomposition with $q$ gets identified on the fibres with precomposition with $t$ the result follows from (2).

   Now we deal with the case of post-composition along a map $p \colon y \to z$. Given $h,k\colon x \to y$ together with a 2-morphism $\theta \colon h \Rightarrow k$ we proceed similarly by picking cocartesian lifts $x \to y_h$ and $x \to z_{ph}$, and note that we have a map $i\colon y_h \to z_{ph}$ expressing the lift $x \to z_{ph}$ as a composite. We obtain a commutative diagram
   \[
     \begin{tikzcd}[ampersand replacement=\&]
       \tE_{y_{\tA}}(y_h,y) \arrow[d,"\sim"'] \arrow[r] \& \tE_{z_{\tA}}(z_{ph},z) \arrow[d,"\sim"] \\
       \tE(x,y)_{h_{\tA}} \arrow[r,"p_*"] \&  \tE(y,z)_{(ph)_{\tA}} 
     \end{tikzcd}
   \]
   which identifies the functor given by postcomposition with $p$ with the cocartesian transport functor. The claim then follows from (3), and thus $\pi$ is cartesian-enriched, as desired.
\end{proof}

We also need the following trivial observation:
\begin{lemma}
  Given functors $\tc{E} \xto{p} \tc{B} \xto{q} \tc{A}$ of \itcats{}, suppose $f \colon x \to y$  is a morphism in $\tc{E}$ such that $p(f)$ is $q$-cocartesian. Then $f$ is $p$-cocartesian \IFF{} it is $qp$-cocartesian.
\end{lemma}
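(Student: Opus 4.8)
The plan is to deduce both implications at once from the pasting law for pullback squares, after reducing cocartesianness to a condition on mapping \icats{}. First I would fix an object $z \in \tc{E}$ and recall from the definition that $f \colon x \to y$ is $p$-cocartesian exactly when, for every such $z$, the square
\[
  \begin{tikzcd}
    \tc{E}(y,z) \arrow{r}{f^{*}} \arrow{d} & \tc{E}(x,z) \arrow{d} \\
    \tc{B}(py,pz) \arrow{r} & \tc{B}(px,pz)
  \end{tikzcd}
\]
is a pullback of \icats{}, and $qp$-cocartesian exactly when the corresponding square with bottom row $\tc{A}(qpy,qpz) \to \tc{A}(qpx,qpz)$ is a pullback for every $z$.

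Next I would stack these squares into the single diagram
\[
  \begin{tikzcd}
    \tc{E}(y,z) \arrow{r}{f^{*}} \arrow{d} & \tc{E}(x,z) \arrow{d} \\
    \tc{B}(py,pz) \arrow{r} \arrow{d} & \tc{B}(px,pz) \arrow{d} \\
    \tc{A}(qpy,qpz) \arrow{r} & \tc{A}(qpx,qpz),
  \end{tikzcd}
\]
in which the top square is the $p$-cocartesian square, the bottom square has vertical maps induced by $q$, and the outer composite is the $qp$-cocartesian square. The crucial point is that the bottom square is a pullback for \emph{every} $z$: this is precisely the hypothesis that $p(f)$ is $q$-cocartesian, specialized to the object $w = pz \in \tc{B}$ (so that $qw = qpz$).

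Finally I would apply the pasting law for pullbacks: given that the bottom square is a pullback, the top square is a pullback \IFF{} the outer composite square is. Since this holds for every $z \in \tc{E}$, we conclude that $f$ is $p$-cocartesian \IFF{} it is $qp$-cocartesian. I expect no genuine obstacle here — this is the \itcatl{} analogue of the behaviour of composites recorded in \cite{HTT}*{Proposition 2.4.1.3}, and since the definition of cocartesian morphism is phrased entirely in terms of pullbacks of mapping \icats{}, the argument is purely formal; the only care needed is to match up the three squares and to recognize that the hypothesis furnishes precisely the bottom one.
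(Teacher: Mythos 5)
Your proof is correct and is essentially identical to the paper's: both stack the two mapping-\icat{} squares for each $z \in \tc{E}$, observe that the bottom square is a pullback because $p(f)$ is $q$-cocartesian (tested at $pz$), and conclude by the pasting law for pullbacks. No further comment needed.
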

\begin{proof}
  Given $z \in \tc{E}$, we apply the pasting lemma for pullbacks to the commutative diagram
  \[
    \begin{tikzcd}
      \tc{E}(y,z) \arrow{r} \arrow{d} & \tc{E}(x,z) \arrow{d} \\
      \tc{B}(py,pz) \arrow{r} \arrow{d} & \tc{B}(px,pz) \arrow{d} \\
      \tc{A}(qpy,qpz) \arrow{r} & \tc{A}(qpx,qpz)
    \end{tikzcd}
  \]
  where the bottom square is cartesian since $p(f)$ is $q$-cocartesian.
\end{proof}

As a useful special case we have:
\begin{lemma}\label{lem:projeqcart}
  Given $\pi \colon \tc{E} \to \tc{A} \times \tc{B}$, a morphism in
  $\tc{E}$ whose image in $\tc{B}$ is an equivalence is
  $\pi$-cocartesian \IFF{} it is $\pi_{\tc{A}}$-cocartesian. \qed
\end{lemma}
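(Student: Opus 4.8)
The plan is to obtain this immediately from the preceding lemma, applied with total space $\tc{E}$, middle \itcat{} $\tc{A} \times \tc{B}$, base $\tc{A}$, and the maps $\pi \colon \tc{E} \to \tc{A} \times \tc{B}$ and $\pr_{\tc{A}} \colon \tc{A} \times \tc{B} \to \tc{A}$, whose composite is $\pi_{\tc{A}}$. The only input I need beyond that lemma is the standard characterization of cocartesian morphisms for a product projection: a morphism $(\alpha,\beta)$ in $\tc{A} \times \tc{B}$ is $\pr_{\tc{A}}$-cocartesian \IFF{} its component $\beta$ is an equivalence in $\tc{B}$.

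First I would verify this characterization directly from the definition of cocartesian morphism. For $(\alpha,\beta) \colon (a,b) \to (a',b')$ lying over $\alpha$ and an arbitrary test object $(a'',b'')$, the defining square is
\[
  \begin{tikzcd}
    \tc{A}(a',a'') \times \tc{B}(b',b'') \arrow{r} \arrow{d} & \tc{A}(a,a'') \times \tc{B}(b,b'') \arrow{d} \\
    \tc{A}(a',a'') \arrow{r}{\alpha^{*}} & \tc{A}(a,a''),
  \end{tikzcd}
\]
where the top map is $\alpha^{*} \times \beta^{*}$ and the vertical maps are the projections. Since the fibres of the vertical maps are $\tc{B}(b',b'')$ and $\tc{B}(b,b'')$ and the induced map between them is $\beta^{*}$, this square is a pullback for every $(a'',b'')$ exactly when $\beta^{*} \colon \tc{B}(b',b'') \to \tc{B}(b,b'')$ is an equivalence for all $b''$, which by the Yoneda lemma holds \IFF{} $\beta$ is an equivalence.

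With this established, the lemma follows at once. If $f$ is a morphism in $\tc{E}$ whose image in $\tc{B}$ is an equivalence, then $\pi(f)$ has $\tc{B}$-component equal to that image, so it is invertible, and hence $\pi(f)$ is $\pr_{\tc{A}}$-cocartesian by the previous paragraph. The preceding lemma, with $p = \pi$ and $q = \pr_{\tc{A}}$, then yields that $f$ is $\pi$-cocartesian \IFF{} it is $\pi_{\tc{A}}$-cocartesian, which is precisely the claim.

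There is essentially no obstacle here: the statement is a formal consequence of the composite-cocartesian lemma once the product projection is identified, and the only point requiring a little care is the Yoneda argument showing that invertibility of the $\tc{B}$-component is both necessary and sufficient for $\pr_{\tc{A}}$-cocartesianness.
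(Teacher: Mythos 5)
Your proposal is correct and matches the paper's intended argument exactly: the paper states this lemma with a \qed precisely because it is the special case of the preceding composition lemma with $p = \pi$ and $q = \pr_{\tc{A}}$, together with the standard fact (which you verify carefully) that a morphism in $\tc{A} \times \tc{B}$ is $\pr_{\tc{A}}$-cocartesian \IFF{} its $\tc{B}$-component is an equivalence. Your explicit fibrewise verification of that characterization is a welcome filling-in of a detail the paper leaves implicit.
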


\begin{proof}[Proof of \cref{propn:grayfib}]
  We first assume $\pi$ satisfies the given conditions; we want to
  prove that $\pi$ is then a Gray $(0,1)$-fibration. By
  \cref{lem:graycartenr} we know that $\pi$ is cartesian-enriched, and
  we next prove that $\pi$ is a local $(0,1)$-fibration.

  Let $f \colon x \to z$ in $\tE$ be a morphism that can be written
  as a composite $x \xrightarrow{u} y \xrightarrow{v} z$, where $u$ is
  $\pi_{\tA}$-cocartesian and $v$ is cocartesian
  in the fibre $\tE_{y_{\tA}} \to \tB$. Then we claim that $f$ is
  locally $\pi$-cocartesian. To check this, we need to verify
  that for every $\hat{z}$ such that $\pi(\hat{z})=\pi(z)$ we have a
  pullback square
\[
  \begin{tikzcd}
    \tE_{\pi(z)}(z,\hat{z}) \arrow[r,"f^*"] \arrow[d] & \tE(x,\hat{z}) \arrow[d] \\
    {[}0{]} \arrow[r,"\pi(f)"] & \tA(x_{\tA},z_{\tA})\times \tB(x_{\tB},z_{\tB}).
  \end{tikzcd}
\]
Let us remind the reader that as a consequence of condition (1) it follows that the image of $u$ in $\tB$ is an equivalence.  We can now extend the previous diagram into 
\[
  \begin{tikzcd}
    (\tc{E}_{y_{\tc{A}}})_{y_{\tc{B}}}(z,\hat{z}) \arrow[r] \arrow[d] & \tc{E}_{y_{\tc{A}}}(y,\hat{z}) \arrow[d] \arrow[r] &  \tc{E}(x,\hat{z}) \arrow[d] \\
    {[}0{]}  \arrow[r] & \tc{B}(x_{\tc{B}},z_{\tc{B}}) \arrow[r] & \tc{A}(x_{\tc{A}},z_{\tc{A}})\times \tc{B}(x_{\tc{B}},z_{\tc{B}})
  \end{tikzcd}
\]
where the two inner diagrams are pullbacks. We conclude that the outer square is also a pullback. 

This shows that we have the required locally $\pi$-cocartesian
morphisms, so the final thing to check is that these compose along the
triangles in the Gray scaling $S$. Let us consider a pair of locally
cocartesian morphisms $f,g$ whose composite $g\circ f$ lies over a
triangle in $S$. We set $g=g_1 \circ g_0$ and $f=f_1 \circ f_0$
according to the previous description of the locally
$\pi$-cocartesian morphisms given above. We consider two cases:
\begin{itemize}
  \item Assume that the image of $g$ is invertible in $\tA$. Then $g_0$ is invertible in $\tE$ and thus $g$ can be identified with a cocartesian morphism in a fibre. We conclude that $g \circ f$ is locally $\pi$-cocartesian.
  \item Assume that the image of $f$ is invertible in $\tB$. Then $f_1$ is invertible in $\tE$, so that $f$ is $\pi_{\tA}$-cocartesian. Again, we conclude that $g \circ f$ is locally $\pi$-cocartesian.
  \end{itemize}
  
  Now we prove the converse: Suppose that
  $\pi \colon \tE \to \tA \times \tB$ is a Gray
  $(0,1)$-fibration. Then $\pi_{\tA} \colon \tE \to \tA$ is a
  composite of cartesian-enriched functors, and so it is itself
  cartesian-enriched by \cref{lem:cartenrcomp}. Moreover, the
  $\pi_{\tA}$-cartesian 2-morphisms are those $\pi$-cartesian 2-morphisms
  whose image in $\tB$ is invertible, since the latter condition
  describes the cartesian 2-morphisms for the projection
  $\tA \times \tB \to \tA$.

 Let $f \colon x \to y$ in $\tE$ be a locally $\pi$-cocartesian morphism whose image in $\tB$ is invertible. We claim that $f$ is $\pi$-cocartesian; this will then imply, by \cref{lem:projeqcart}, that $f$ is $\pi_{\tA}$-cocartesian. To see this we need to show that the diagram 
\[
  \begin{tikzcd}
    \tE(y,z) \arrow[r,"f^*"] \arrow[d] & \tE(x,z) \arrow[d] \\
    \tA(y_{\tA},z_{\tA})\times \tB(y_{\tB},z_{\tB}) \arrow[r,"\pi(f)^*"] & \tA(x_{\tA},z_{\tA})\times \tB(x_{\tB},z_{\tB})
  \end{tikzcd}
\]
is a pullback diagram. Let $\alpha:[0] \to \tA(y_{\tA},z_{\tA})\times \tB(y_{\tB},z_{\tB})$, and consider the induced commutative diagram on the fibres
\[
  \begin{tikzcd}
    \tE_{\pi(z)}(z_{y},z)  \arrow[r] \arrow[d,"\simeq"{swap}] &  \tE_{\pi(z)}(z_{x},z)  \arrow[d,"\simeq"] \\
    \tE(y,z)_{\alpha} \arrow[r] & \tE(x,z)_{{\alpha \circ \pi(f)}} 
  \end{tikzcd}
\]
where the vertical morphisms are equivalences since $\pi$ is a local $(0,1)$-fibration. Given a locally $\pi$-cocartesian morphism $u \colon y \to \hat{z}$ it follows that $u \circ f$ lies over a triangle in $S$ and thus is also locally $\pi$-cocartesian. This shows that the top horizontal morphism is an equivalence. We conclude that the previous diagram is a pullback square, which shows that $f$ is a $\pi_{\tA}$-cocartesian morphism. This shows that we have a sufficient supply of $\pi_{\tA}$-cocartesian morphisms, so that condition $(1)$ is satisfied.

Note that since $\pi$ is $S$-local, condition $(2)$ follows
immediately. We are left to show that condition $(3)$ is
satisfied. First, we note that by construction the cartesian
2-morphisms for $\pi_{a} \colon \tE_a \to \tB$ are precisely those
cartesian 2-morphisms for $\pi$ that lie in the fibre over $a$. Let
$\phi \colon f \Rightarrow g$ be a cartesian 2-morphism for $\pi_{a}$
where $f,g\colon x \to y$. Given $\alpha \colon a \to a'$ in $\tA$ let
us pick a $\pi_{\tA}$-cocartesian morphism $u \colon y \to y'$ over
$\alpha$. Since $\pi$ is cartesian-enriched, we obtain a cartesian
2-morphism $u \circ \phi \in \tE(x,y')$. Let us finally pick another
$\pi_{\tA}$-cocartesian morphism over $\alpha$ of the form
$v \colon x \to x'$. Since $v$ is a cocartesian morphism for $\pi$, we
obtain a cartesian square
\[
  \begin{tikzcd}
    \tE(x',y') \arrow[r,"v^*"] \arrow[d] & \tE(x,y') \arrow[d] \\
    \tA(a',a')\times \tB(x'_{\tB},y'_{\tB}) \arrow[r,"\pi(v)^*"] & \tA(a,a')\times \tB(x_{\tB},y'_{\tB}).
  \end{tikzcd}
\]
This provides us with a morphism $\hat{\phi} \in \tE(x',y')$ such that
$\hat{\phi}\circ v= u\circ \phi$ lives in the fibre over $a'$.  Since
$u \circ \phi$ is a cartesian morphism, we conclude that $\hat{\phi}$
is cartesian as well. Note that by construction $\hat{\phi}$ is the
image of $\phi$ under the the transport functor $\tE_{a} \to \tE_{a'}$
associated to $\alpha$, which shows that condition $(3)$ is
satisfied. The result is now established.
\end{proof}

\begin{variant}\label{var:markedgray}
  Suppose $\tA$ is an \itcat{} and $(\tB, E)$ is a marked
  \itcat{}. Then a functor $\tA \otimes_{\natural,E} \tB \to \CATIT$
  corresponds to an $S$-local $(0,1)$-fibration over $\tA \times \tB$
  with $S$ as in \cref{defn:graytens}; we call these \emph{marked Gray
    $(0,1)$-fibrations}. The same argument as for \cref{propn:grayfib}
  shows that these can be characterized as functors $\pi \colon \tE \to \tA \times \tB$ such that
  \begin{enumerate}[(1)]
  \item $\pi$ is a $(0,1)$-fibration over $\tA$,
  \item for every $a \in \tA$, the functor on fibres $\tE_{a} \to \tB$
    is a $(0,1)$-fibration,
  \item for every morphism $a \to a'$ in $\tA$, the cocartesian
    transport functor $\tE_{a} \to \tE_{a'}$ is a morphism of
    cartesian-enriched functors to $\tB$,
  \item for every morphism $a \to a'$ in $\tA$, the cocartesian
    transport functor $\tE_{a} \to \tE_{a'}$ preserves cocartesian morphisms over $E$.
  \end{enumerate}
\end{variant}

\subsection{Straightening for lax transformations}\label{sec:strlaxtr}
We will now apply the description of Gray fibrations in the previous
subsection to prove a straightening equivalence for lax transformations.

\begin{defn}
  Let $\FIB_{(i,1)}^{\lax}(\tB)$ denote the full subcategory of
  $\Cartenr{\tB}$ spanned by the $(i,1)$-fibrations for
  $i=0,1$. Similarly, we define $\FIB_{(i,0)}^{\lax}(\tB)$ as a full
  subcategory of $\Cocartenr{\tB}$.
\end{defn}

\begin{propn}\label{propn:fiblaxtogray}
  The equivalence of \cref{eq:slicestr} restricts to a natural
  equivalence
  \[ \FUN(\tA, \FIB_{(0,1)}^{\lax}(\tB)) \simeq
    \GFIB'_{(0,1)}(\tA,\tB),\]
  where $\GFIB'_{(0,1)}(\tA,\tB)$ denotes the locally full
  sub-\itcat{} of $\FIB_{(0,1)}(\tA)_{/\tA \times \tB}$ whose objects
  are Gray $(0,1)$-fibrations and whose maps are the morphisms of
  $(0,1)$-fibrations over $\tA$ that are fibrewise morphisms of
  cartesian-enriched functors to $\tB$.
\end{propn}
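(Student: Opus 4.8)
The plan is to realize both sides as locally full sub-\itcats{} of the two sides of the straightening equivalence \cref{eq:slicestr} and to check that this equivalence matches them up. Since $\FIB_{(0,1)}^{\lax}(\tB)$ is by definition a full sub-\itcat{} of $\Cartenr{\tB}$, which is a locally full sub-\itcat{} of $\CATITsl{\tB}$, it is itself a locally full sub-\itcat{} of $\CATITsl{\tB}$ (a full sub-\itcat{} of a locally full one is again locally full); by the locally-full-inclusion case of \cref{cor:funprops}, the inclusion $\FUN(\tA, \FIB_{(0,1)}^{\lax}(\tB)) \hookrightarrow \FUN(\tA, \CATITsl{\tB})$ is then again locally full. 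On the other side, $\GFIB'_{(0,1)}(\tA,\tB)$ is locally full in $\FIB_{(0,1)}(\tA)_{/\tA \times \tB}$ by definition. Because a locally full sub-\itcat{} is determined by its objects together with, for each pair of objects, the collection of $1$-morphisms between them that lie in it, it suffices to check that \cref{eq:slicestr} matches the objects and the $1$-morphisms of the two sub-\itcats{}; the $2$-morphisms then correspond automatically.

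For the objects, recall that under the straightening equivalence \cref{thm:str01}, and hence \cref{eq:slicestr}, a functor $F \colon \tA \to \CATITsl{\tB}$ corresponds to a $(0,1)$-fibration $\pi \colon \tE \to \tA \times \tB$ over $\tA$ whose value $F(a)$ is the induced functor on fibres $\tE_{a} \to \tB$ and whose value $F(f)$ on a morphism $f \colon a \to a'$ is the cocartesian transport $\tE_{a} \to \tE_{a'}$ over $\tB$. Since $\FIB_{(0,1)}^{\lax}(\tB)$ is locally full in $\CATITsl{\tB}$, such an $F$ factors through it precisely when it carries objects to objects and $1$-morphisms to $1$-morphisms of $\FIB_{(0,1)}^{\lax}(\tB)$, that is, when each $\tE_{a} \to \tB$ is a $(0,1)$-fibration and each cocartesian transport $\tE_{a} \to \tE_{a'}$ is a morphism of cartesian-enriched functors to $\tB$. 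These are exactly conditions (2) and (3) of \cref{propn:grayfib}; since condition (1) holds automatically for objects of the slice $\FIB_{(0,1)}(\tA)_{/\tA \times \tB}$, that proposition shows this happens \IFF{} $\pi$ is a Gray $(0,1)$-fibration, \ie{} \IFF{} $\pi$ is an object of $\GFIB'_{(0,1)}(\tA,\tB)$.

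For the $1$-morphisms, let $\alpha \colon F \Rightarrow G$ be a natural transformation between two objects of $\FUN(\tA, \FIB_{(0,1)}^{\lax}(\tB))$, corresponding under \cref{eq:slicestr} to a morphism $\Psi \colon \tE \to \tF$ of $(0,1)$-fibrations over $\tA$. Writing a $1$-morphism of $\FUN(\tA, \CATITsl{\tB})$ as a functor $[1] \times \tA \to \CATITsl{\tB}$ and again using that $\FIB_{(0,1)}^{\lax}(\tB)$ is locally full, we see that $\alpha$ lies in $\FUN(\tA, \FIB_{(0,1)}^{\lax}(\tB))$ \IFF{} each component $\alpha_{a} \colon F(a) \to G(a)$ is a $1$-morphism of $\FIB_{(0,1)}^{\lax}(\tB)$, \ie{} a morphism of cartesian-enriched functors to $\tB$. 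Since $\alpha_{a}$ is identified with the map on fibres $\tE_{a} \to \tF_{a}$ of $\Psi$ over $a$, this is precisely the condition that $\Psi$ be fibrewise a morphism of cartesian-enriched functors to $\tB$, which is the defining condition for $1$-morphisms of $\GFIB'_{(0,1)}(\tA,\tB)$. Hence the $1$-morphisms correspond as well, and \cref{eq:slicestr} restricts to the claimed equivalence; naturality is inherited from that of the straightening equivalence.

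The one genuinely delicate point is bookkeeping the meaning of ``factors through a locally full sub-\itcat{}'': the condition on objects of the left-hand side already incorporates \emph{both} conditions (2) and (3) of \cref{propn:grayfib}, since a functor into a locally full sub-\itcat{} must send $1$-morphisms, and not merely objects, into the sub-\itcat{}. With this understood, the conceptual heavy lifting is entirely contained in the characterization of Gray $(0,1)$-fibrations in \cref{propn:grayfib}, and the remainder is a matter of unwinding the straightening dictionary for fibres and cocartesian transport.
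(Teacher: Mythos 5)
Your proposal is correct and follows essentially the same route as the paper: use \cref{cor:funprops} to see that $\FUN(\tA, \FIB_{(0,1)}^{\lax}(\tB))$ is locally full in $\FUN(\tA, \CATITsl{\tB})$, transport across \cref{eq:slicestr}, and unwind the object and morphism conditions via \cref{propn:grayfib}. The only difference is that you spell out the unwinding (components detect membership for transformations into a locally full sub-\itcat{}, and the object condition encodes both conditions (2) and (3) of \cref{propn:grayfib}) more explicitly than the paper does, which is a virtue rather than a deviation.
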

\begin{proof}
  By definition, $\FIB_{(0,1)}^{\lax}(\tB)$ is a locally full
  subcategory of $\CATITsl{\tB}$, hence by \cref{cor:funprops} we know
  that $\FUN(\tA, \FIB_{(0,1)}^{\lax}(\tB))$ is a locally full
  subcategory of the \itcat{} $\FUN(\tA, \CATITsl{\tB})$. It thus corresponds under
  the equivalence \cref{eq:slicestr} to a locally full
  subcategory of $\FIB_{(0,1)}(\tA)_{/\tA \times \tB}$. Unwinding the conditions on objects and morphisms, we see that the objects are precisely those we characterized as Gray fibrations in \cref{propn:grayfib}, and the morphisms are the ones in $\GFIB'_{(0,1)}(\tA,\tB)$.
\end{proof}

\begin{observation}\label{obs:maptolaxgpd}
  The \itcats{} $\GFIB'_{(0,1)}(\tA,\tB)$ and $\GFIB_{(0,1)}(\tA,\tB)$
  have the same underlying \igpds{}, so we get natural equivalences
  \[ \Map(\tA, \FIB_{(0,1)}^{\lax}(\tB)) \simeq
    \GFIB'_{(0,1)}(\tA,\tB)^{\simeq} \simeq
    \GFIB_{(0,1)}(\tA,\tB)^{\simeq}.
  \]
\end{observation}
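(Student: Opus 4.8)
The plan is to read off the first equivalence directly from \cref{propn:fiblaxtogray} and to isolate the comparison $\GFIB'_{(0,1)}(\tA,\tB)^{\simeq} \simeq \GFIB_{(0,1)}(\tA,\tB)^{\simeq}$ as the only genuine content. For the first equivalence I would apply $(\blank)^{\simeq}$ to the natural equivalence $\FUN(\tA, \FIB_{(0,1)}^{\lax}(\tB)) \simeq \GFIB'_{(0,1)}(\tA,\tB)$ and use that the mapping space in $\CatIT$ satisfies $\Map(\tA,\tX) \simeq \Map([0], \FUN(\tA,\tX)) \simeq \FUN(\tA,\tX)^{\simeq}$ for every \itcat{} $\tX$; applied to $\tX = \FIB_{(0,1)}^{\lax}(\tB)$ this immediately yields $\Map(\tA, \FIB_{(0,1)}^{\lax}(\tB)) \simeq \GFIB'_{(0,1)}(\tA,\tB)^{\simeq}$.

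For the second equivalence the key point is that $\GFIB'_{(0,1)}(\tA,\tB)$ and $\GFIB_{(0,1)}(\tA,\tB)$ have the \emph{same objects} --- the Gray $(0,1)$-fibrations over $\tA \times \tB$, by \cref{propn:grayfib} --- and differ only in their admissible $1$- and $2$-morphisms. I would show these differences vanish on underlying \igpds{}. Concretely, the forgetful functor $\FIB_{(0,1)}(\tA)_{/\tA \times \tB} \to \CATITsl{\tA \times \tB}$ that remembers only the functor to $\tA \times \tB$ should identify the underlying \igpd{} of $\GFIB'_{(0,1)}(\tA,\tB)$ with the sub-\igpd{} of $(\CATITsl{\tA \times \tB})^{\simeq}$ spanned by the Gray $(0,1)$-fibrations; but the latter is exactly $\GFIB_{(0,1)}(\tA,\tB)^{\simeq}$, since $\GFIB_{(0,1)}(\tA,\tB)$ is by construction locally full in $\CATITsl{\tA \times \tB}$ with the same objects. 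The reason this identification works is that any equivalence of \itcats{} over $\tA \times \tB$ between two Gray $(0,1)$-fibrations automatically preserves and reflects all cartesian and (locally) cocartesian $1$- and $2$-morphisms, and hence is a morphism in both \itcats{}, with inverse again of this form; so in both cases the equivalences are precisely the equivalences over $\tA \times \tB$.

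The step I expect to require the most care is the matching of invertible $2$-morphisms between such equivalences, where one must compare local fullness in the slice $\FIB_{(0,1)}(\tA)_{/\tA \times \tB}$ with local fullness in $\CATITsl{\tA \times \tB}$. This amounts to checking that a $2$-morphism compatible with the projection to $\tA$ and with the structure maps to $\tA \times \tB$ is the same datum as a $2$-morphism over $\tA \times \tB$; since compatibility over $\tA \times \tB$ already entails compatibility over $\tA$, the two classes coincide. Naturality of both equivalences is then inherited from the naturality recorded in \cref{propn:fiblaxtogray} together with the evident naturality of the forgetful functor, so requires no separate argument.
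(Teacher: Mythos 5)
Your proposal is correct and takes essentially the same approach the paper intends: the observation is stated without proof precisely because its content is what you spell out, namely that $\GFIB'_{(0,1)}(\tA,\tB)$ and $\GFIB_{(0,1)}(\tA,\tB)$ have the same objects (Gray $(0,1)$-fibrations, by \cref{propn:grayfib}) and that equivalences over $\tA \times \tB$ automatically preserve all the cartesian-enriched and (locally) cocartesian structure, so that after the slice-of-slice identification the invertible $1$- and $2$-morphisms in both sub-\itcats{} coincide with equivalences and invertible $2$-morphisms over $\tA \times \tB$. Your derivation of the first equivalence from \cref{propn:fiblaxtogray} together with $\Map(\tA,\tX) \simeq \FUN(\tA,\tX)^{\simeq}$ is exactly the intended argument.
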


\begin{thm}\label{thm:fib01laxstr}
  For an \itcat{} $\tB$, there are equivalences
  \begin{align*}
    \FIB_{(0,1)}^{\lax}(\tB) & \simeq \FUN(\tB, \CATIT)^{\lax},\\
   \FIB_{(0,0)}^{\lax}(\tB) &  \simeq \FUN(\tB^{\co},\CATIT)^{\lax},\\
   \FIB_{(1,1)}^{\lax}(\tB) & \simeq \FUN(\tB^{\coop},\CATIT)^{\oplax}, \\
   \FIB_{(1,0)}^{\lax}(\tB) &  \simeq \FUN(\tB^{\op},\CATIT)^{\oplax},
                              & 
  \end{align*}
  given on objects by straightening. These equivalences are all contravariantly
  natural in $\tB$ with respect to pullback on the left and
  composition on the right.
\end{thm}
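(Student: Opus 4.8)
The plan is to prove the first equivalence directly and to deduce the remaining three from it by duality. For the first equivalence I would invoke the Yoneda lemma for the \icat{} $\CatIT$: to produce an equivalence of \itcats{} $\FIB_{(0,1)}^{\lax}(\tB) \simeq \FUN(\tB, \CATIT)^{\lax}$ it suffices to give a natural equivalence of mapping spaces $\Map(\tA, \FIB_{(0,1)}^{\lax}(\tB)) \simeq \Map(\tA, \FUN(\tB, \CATIT)^{\lax})$ for every \itcat{} $\tA$. Assembling the results already established, I would chain together
\[ \Map(\tA, \FIB_{(0,1)}^{\lax}(\tB)) \simeq \GFIB_{(0,1)}(\tA, \tB)^{\simeq} \simeq \Map(\tA \otimes \tB, \CATIT) \simeq \Map(\tA, \FUN(\tB, \CATIT)^{\lax}), \]
where the first equivalence is \cref{obs:maptolaxgpd}, the second is the underlying \igpd{} of \cref{cor:gfibtogtens}, and the third is the defining adjunction of $\FUN(\blank,\blank)^{\lax}$. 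Every step is natural in $\tA$, so the Yoneda lemma yields the desired equivalence of \itcats{}.

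Two further points then settle the first case. Evaluating the chain at $\tA = [0]$ and using $[0] \otimes \tB \simeq \tB$ identifies objects of $\FIB_{(0,1)}^{\lax}(\tB)$ with objects of $\FUN(\tB, \CATIT)^{\lax}$ precisely by straightening (via \cref{thm:str01} and the characterization of Gray fibrations in \cref{propn:grayfib}), so the equivalence is given on objects by straightening as claimed. Contravariant naturality in $\tB$ — with respect to pullback of fibrations on the left and precomposition of functors on the right — is inherited because each cited equivalence (\cref{obs:maptolaxgpd}, \cref{cor:gfibtogtens}, and the Gray tensor adjunction) is natural, with the variances matching those recorded in \cref{thm:str01}.

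For the remaining three equivalences I would transport the first one along the duality operations rather than rerun the argument. Reversing $2$-morphisms carries a $(0,1)$-fibration over $\tB^{\co}$ to a $(0,0)$-fibration over $\tB$, and sends cartesian-enriched functors preserving cartesian $2$-morphisms to cocartesian-enriched functors preserving cocartesian $2$-morphisms; this is the lax refinement of \cref{eq:fibop} and gives $\FIB_{(0,0)}^{\lax}(\tB) \simeq \FIB_{(0,1)}^{\lax}(\tB^{\co}) \simeq \FUN(\tB^{\co}, \CATIT)^{\lax}$. Reversing $1$-morphisms instead sends $(0,1)$-fibrations to $(1,1)$-fibrations and, crucially, converts lax transformations into oplax ones through the equivalence $\FUN(\tA^{\op}, \tB^{\op})^{\oplax} \simeq (\FUN(\tA,\tB)^{\lax})^{\op}$ of \cref{obs:grayop}; composing the $\op$ and $\co$ dualities exactly as in \cref{eq:strvariants} then yields the $(1,1)$ and $(1,0)$ cases, with $\oplax$ replacing $\lax$ and with bases $\tB^{\coop}$ and $\tB^{\op}$ respectively.

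The main obstacle is the bookkeeping in this last step: one must check that each of $\op$, $\co$, $\coop$ genuinely restricts to the lax-morphism sub-\itcats{}, sending the relevant (co)cartesian-enrichment and $2$-morphism-preservation conditions to their duals, and that the induced variances on the functor side assemble as in the straightening table. In particular, since $(\blank)^{\co}$ is not directly available for the scaled-simplicial model of the Gray tensor product, the cases involving $\co$ depend on the comparison \cref{eq:grayco}, which is used here as a black box pending the forthcoming work of the first author.
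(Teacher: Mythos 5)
Your proposal is correct and follows essentially the same route as the paper: the identical chain of equivalences $\Map(\tA, \FIB_{(0,1)}^{\lax}(\tB)) \simeq \GFIB_{(0,1)}(\tA,\tB)^{\simeq} \simeq \Map(\tA \otimes \tB, \CATIT) \simeq \Map(\tA, \FUN(\tB,\CATIT)^{\lax})$ via \cref{obs:maptolaxgpd}, \cref{cor:gfibtogtens}, and the Gray tensor adjunction, concluded by the Yoneda lemma, with the remaining three cases deduced from lax analogues of \cref{eq:fibop} and \cref{eq:strvariants}. Your extra remarks on the object-level identification and the reliance on \cref{eq:grayco} for the $\co$-dualities are accurate refinements of bookkeeping the paper leaves implicit.
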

\begin{proof}
  For the case of $(0,1)$-fibrations, \cref{obs:maptolaxgpd} and
  \cref{cor:gfibtogtens} imply that
  we have natural equivalences
  \[
    \begin{split}
      \Map(\tA, \FIB_{(0,1)}^{\lax}(\tB)) & \simeq
                                      \GFIB_{(0,1)}(\tA,\tB)^{\simeq} \\
                                    & \simeq  \Map(\tA \otimes \tB, \CATIT) \\
                                    & \simeq \Map(\tA, \FUN(\tB,\CATIT)^{\lax})
    \end{split}
  \]
  for any \itcat{} $\tA$. By the Yoneda lemma, this implies the
  required equivalence of \itcats{}. For the other cases, we observe
  that we have analogues of the equivalences \cref{eq:fibop} for
  $\FIB_{(i,j)}^{\lax}(\tB)$, and then the equivalence we just proved for
  $(0,1)$-fibrations gives analogues of \cref{eq:strvariants}. 
\end{proof}

\begin{remark}
  If cartesian-enriched morphisms between $(0,1)$-fibrations over
  $\tB$ correspond to lax transformations, it is natural to ask what
  \emph{arbitrary} functors between $(0,1)$-fibrations over $\tB$
  correspond to. The \itcat{} $\CATIT$ is the underlying \itcat{} of
  the \emph{$(\infty,3)$-category} $\boldcatname{CAT}_{(\infty,2)}$ of
  \itcats{}. Moreover, if $\tB$ is an \itcat{}, then the Gray tensor
  product $[1] \otimes \tB$ we have considered is a truncation of an
  $(\infty,3)$-category $[1] \otimes^{+} \tB$.\footnote{In general,
    there is a Gray tensor product that takes an $(\infty,n)$-category
    and an $(\infty,m)$-category to an $(\infty,n+m)$-category.} This
  means that we can consider ``even more lax'' transformations between
  functors $\tB \to \CATIT$ in the form of functors of
  $(\infty,3)$-categories
  $[1] \otimes^{+} \tB \to \boldcatname{CAT}_{(\infty,2)}$. We expect
  that there is a straightening equivalence under which these
  correspond to arbitrary functors among $(0,1)$-fibrations over
  $\tB$.
\end{remark}

\begin{defn}\label{def:elaxfibcat}
  Let $(\tB,E)$ be a marked \itcat{} and let $\FIB_{(i,j)}^{\elax}(\tB)$ denote the wide locally full subcategory of
  $\FIB_{(i,j)}^{\lax}(\tB)$ where the morphisms are required to preserve $i$-cartesian morphisms over $E$. We denote by $\Fun^{E\dcart}_{/\tB}(-,-)$ the mapping $\infty$-category functor for $\FIB_{(1,j)}^{\elax}(\tB)$ and similarly $\Fun^{E\dcoc}_{/\tB}(-,-)$ for $\FIB_{(0,j)}^{\elax}(\tB)$.
\end{defn}

We can also consider a marked variant of \cref{thm:fib01laxstr}:
\begin{propn}\label{propn:markedlaxstr}
  Let $(\tB, E)$ be a marked \itcat{}. Then there are equivalences
  \begin{align*}
    \FIB_{(0,1)}^{\elax}(\tB) & \simeq \FUN(\tB, \CATIT)^{\elax},\\
   \FIB_{(0,0)}^{\elax}(\tB) &  \simeq \FUN(\tB^{\co},\CATIT)^{\elax},\\
   \FIB_{(1,1)}^{\elax}(\tB) & \simeq \FUN(\tB^{\coop},\CATIT)^{\eoplax},\\
   \FIB_{(1,0)}^{\elax}(\tB) &  \simeq \FUN(\tB^{\op},\CATIT)^{\eoplax},                        
  \end{align*}
  given on objects by straightening. These equivalences are all
  contravariantly natural in $\tB$ with respect to pullback on the
  left and composition on the right.
\end{propn}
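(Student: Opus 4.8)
The plan is to follow the proof of \cref{thm:fib01laxstr} essentially verbatim, replacing the Gray tensor product and Gray $(0,1)$-fibrations by their marked analogues from \cref{defn:markedgraytens} and \cref{var:markedgray}. As there, I would first treat the case of $(0,1)$-fibrations and then deduce the remaining three by the $\op$- and $\co$-dualities.

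For the $(0,1)$-case the key point is a marked refinement of \cref{propn:fiblaxtogray}. By \cref{def:elaxfibcat} the \itcat{} $\FIB_{(0,1)}^{\elax}(\tB)$ is a wide locally full sub-\itcat{} of $\FIB_{(0,1)}^{\lax}(\tB)$, hence a locally full sub-\itcat{} of $\CATITsl{\tB}$; applying \cref{cor:funprops}, $\FUN(\tA, \FIB_{(0,1)}^{\elax}(\tB))$ is a locally full sub-\itcat{} of $\FUN(\tA, \CATITsl{\tB})$, which under the straightening equivalence \cref{eq:slicestr} corresponds to a locally full sub-\itcat{} of $\FIB_{(0,1)}(\tA)_{/\tA \times \tB}$. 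I would then unwind the defining conditions. An object is a functor $\tA \to \FIB_{(0,1)}^{\elax}(\tB)$, that is a functor $\tA \to \FIB_{(0,1)}^{\lax}(\tB)$ whose transport functors along morphisms of $\tA$ lie in $\FIB_{(0,1)}^{\elax}(\tB)$; under unstraightening this is precisely a Gray $(0,1)$-fibration (conditions (1)--(3) of \cref{propn:grayfib}) whose cocartesian transport $\tE_a \to \tE_{a'}$ additionally preserves cocartesian morphisms over $E$, that is exactly the extra condition (4) of \cref{var:markedgray}. Thus the objects are precisely the marked Gray $(0,1)$-fibrations.

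Exactly as in \cref{obs:maptolaxgpd}, this locally full sub-\itcat{} and the full \itcat{} of marked Gray $(0,1)$-fibrations share the same underlying \igpd{}, so combining \cref{var:markedgray} (i.e.\ \cref{thm:localstr} for the marked Gray scaling) with the adjunction defining $\FUN(\tB, \blank)^{\elax}$ yields natural equivalences
\[
  \Map(\tA, \FIB_{(0,1)}^{\elax}(\tB)) \simeq \Map(\tA \otimes_{\natural, E} \tB, \CATIT) \simeq \Map(\tA, \FUN(\tB, \CATIT)^{\elax})
\]
for every \itcat{} $\tA$; the Yoneda lemma then gives the first equivalence. The remaining three cases follow as in \cref{thm:fib01laxstr}: I would record the marked analogues of the dualities \cref{eq:fibop}, namely $\FIB_{(0,0)}^{\elax}(\tB) \simeq \FIB_{(0,1)}^{\elax}(\tB^{\co})$, $\FIB_{(1,1)}^{\elax}(\tB) \simeq \FIB_{(0,1)}^{\elax}(\tB^{\op})^{\co}$ and $\FIB_{(1,0)}^{\elax}(\tB) \simeq \FIB_{(0,1)}^{\elax}(\tB^{\coop})^{\co}$, obtained by reversing $1$- and $2$-morphisms, and then compose with the first equivalence and with the marked analogues of \cref{eq:grayco} relating $(\FUN(\blank, \blank)^{\elax})^{\co}$ to $\FUN(\blank^{\co}, \blank^{\co})^{\eoplax}$. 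This reproduces the derivation \cref{eq:strvariants}, keeping $\elax$ in the $(0,0)$-case and producing $\eoplax$ in the $(1,1)$- and $(1,0)$-cases.

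The genuinely new input beyond \cref{thm:fib01laxstr} is the identification, in the $(0,1)$-case, of the morphism condition defining $\FIB_{(0,1)}^{\elax}(\tB)$ with condition (4) of \cref{var:markedgray}; once this is in place the $(0,1)$-argument is formal. I expect the main obstacle to be the bookkeeping in the last step: I must check that reversing $1$- or $2$-morphisms carries the marking $E$ to the appropriate marking, interchanges $0$- and $1$-cartesian lifts, and swaps $\elax$ with $\eoplax$ correctly. Concretely this rests on marked refinements of \cref{obs:grayop}, such as $(\tA \otimes_{\natural, E} \tB)^{\op} \simeq \tB^{\op} \otimes_{E, \natural} \tA^{\op}$ and its $\co$-analogue, which I would deduce from the unmarked identities together with the pushout presentation of the marked Gray tensor product in \cref{lem:markedgraypo}.
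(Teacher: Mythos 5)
Your proposal matches the paper's proof of \cref{propn:markedlaxstr}: the paper likewise restricts the equivalence of \cref{propn:fiblaxtogray} to identify $\Map(\tA,\FIB_{(0,1)}^{\elax}(\tB))$ with the \igpd{} of marked Gray fibrations, invokes \cref{var:markedgray} to obtain $\Map(\tA \otimes_{\natural,E}\tB, \CATIT) \simeq \Map(\tA, \FUN(\tB,\CATIT)^{\elax})$, concludes by Yoneda, and disposes of the other variances exactly as in the fully lax case of \cref{thm:fib01laxstr}. Your write-up merely makes explicit some details (the identification of the $E$-morphism condition with condition (4) of \cref{var:markedgray}, and the marked duality bookkeeping) that the paper leaves implicit.
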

\begin{proof}
  The equivalence of \cref{propn:fiblaxtogray} restricts to a natural equivalence between $\Map(\tA, \FIB_{(0,1)}^{\elax}(\tB))$ and the \igpd{} of marked Gray fibrations.   
  Using \cref{var:markedgray}, we then
 have natural equivalences
  \[
    \begin{split}
      \Map(\tA, \FIB_{(0,1)}^{\elax}(\tB)) & \simeq
                                        \Map(\tA \otimes_{\natural,E} \tB, \CATIT) \\
                                    & \simeq \Map(\tA, \FUN(\tB,\CATIT)^{\elax}),
    \end{split}
  \]
  as required. The other variances follows as in the fully lax
  case.
\end{proof}

\begin{remark}
  Taking the maximal marking in \cref{propn:markedlaxstr}, we obtain a natural equivalence
  \[ \FIB_{(0,1)}(\tB) \simeq \FUN(\tB, \CATIT).\] Nuiten's uniqueness
  theorem for straightening equivalences \cite{Nuiten}*{Theorem 6.20}
  implies that this agrees with the usual straightening equivalence of
  \cref{thm:str01}.
\end{remark}

We can also restrict these equivalences to functors to $\CATI$:
\begin{defn}
  Let $\FIB_{(0,1)}^{(1),\lax}(\tB)$ be the full sub-\itcat{} of
  $\FIB_{(0,1)}^{\lax}(\tB)$ spanned by the $1$-fibred
  $(0,1)$-fibrations. Note that by \cref{propn:1fibredcond} and
  \cref{obs:lenrsubcat}, this is a full sub-\itcat{} of $\CATITsl{\tB}$. Similarly, we define  $\FIB_{(0,1)}^{(1),\elax}(\tB)$ for a marked \itcat{} $(\tB, E)$.
\end{defn}

\begin{cor}
  Let $(\tB, E)$ be a marked \itcat{}. There are natural equivalences
  \begin{align*}
    \FIB_{(0,1)}^{(1),\elax}(\tB) & \simeq \FUN(\tB, \CATI)^{\elax},\\
   \FIB_{(0,0)}^{(1),\elax}(\tB) &  \simeq \FUN(\tB^{\co},\CATI)^{\elax},\\
   \FIB_{(1,1)}^{(1),\elax}(\tB) & \simeq \FUN(\tB^{\coop},\CATI)^{\eoplax},\\
   \FIB_{(1,0)}^{(1),\elax}(\tB) &  \simeq \FUN(\tB^{\op},\CATI)^{\eoplax},                    
  \end{align*}
  given on objects by straightening.
\end{cor}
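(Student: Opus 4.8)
The plan is to deduce all four equivalences from the marked lax straightening of \cref{propn:markedlaxstr} by restricting, on each side, to the full sub-\itcat{} cut out by the relevant condition. I will carry out the $(0,1)$-case in detail, since the remaining three variances follow by exactly the opposite-category bookkeeping already used in the proof of \cref{propn:markedlaxstr}.

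First I would observe that both sides of the desired equivalence are full sub-\itcats{} of the two sides of
\[ \FIB_{(0,1)}^{\elax}(\tB) \simeq \FUN(\tB, \CATIT)^{\elax}. \]
On the fibrational side, $\FIB_{(0,1)}^{(1),\elax}(\tB)$ is by definition the full sub-\itcat{} of $\FIB_{(0,1)}^{\elax}(\tB)$ spanned by the $1$-fibred fibrations; this is genuinely full by \cref{propn:1fibredcond} and \cref{obs:lenrsubcat}. On the functorial side, the inclusion $\CATI \hookrightarrow \CATIT$ is fully faithful, so \cref{cor:elaxfunprops} shows that $\FUN(\tB,\CATI)^{\elax} \to \FUN(\tB,\CATIT)^{\elax}$ is fully faithful, with essential image exactly the functors $\tB \to \CATIT$ whose value at every object lies in $\CATI$ (a functor into $\CATIT$ factors through the full subcategory $\CATI$ iff it does so pointwise).

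The key step is then to match these two full sub-\itcats{} under the straightening equivalence, and for this it suffices to match their objects. This comes from the naturality of \cref{propn:markedlaxstr} with respect to pullback on the left: restricting a $(0,1)$-fibration $\tE \to \tB$ along an object $b \colon [0] \to \tB$ identifies the fibre $\tE_{b}$ with the value at $b$ of its straightening (the marking pulls back to the trivial one on $[0]$, so the $E$-lax structure does not interfere). Hence $\tE \to \tB$ is $1$-fibred, i.e.\ all its fibres are \icats{}, precisely when its straightening is valued in $\CATI$ --- exactly as in the unmarked, non-lax case treated just after \cref{propn:1fibredcond}. Since an equivalence of \itcats{} restricts to an equivalence between the full sub-\itcats{} spanned by corresponding objects, this yields the first equivalence; the correspondence on morphisms and $2$-morphisms is automatic precisely because both restricted \itcats{} are \emph{full}, so there is no further condition to check.

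For the remaining variances I would transport this argument along the equivalences \cref{eq:fibop}, noting that reversing $1$- or $2$-morphisms carries an \icat{} to an \icat{}, so each of $\op$, $\co$, $\coop$ preserves both the $1$-fibred condition on fibrations and the property of being $\CATI$-valued. I do not expect a genuine obstacle here: the only thing that could go wrong is a mismatch of the $E$-lax morphisms, but this cannot happen because the restricted \itcats{} are full sub-\itcats{} of the two sides of \cref{propn:markedlaxstr}, so once the objects are matched the morphisms and $2$-morphisms are inherited directly. The real content of the proof is thus entirely concentrated in the fibre-equals-value identification, which is already available from the earlier unmarked computation.
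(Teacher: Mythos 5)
Your proposal is correct and follows essentially the same route as the paper: both restrict the marked straightening equivalence of \cref{propn:markedlaxstr} to full sub-\itcats{} on each side, using the fullness of $\CATI \subseteq \CATIT$ together with \cref{cor:elaxfunprops} on the functor side, and matching ``$\CATI$-valued'' with ``$1$-fibred'' via the fibre-equals-value identification. Your extra justifications (naturality under pullback along $b \colon [0] \to \tB$ for the fibre identification, and the variance bookkeeping via the analogues of \cref{eq:fibop}) are points the paper leaves implicit, but they are exactly the intended arguments.
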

\begin{proof}
  We prove the case of $(0,1)$-fibrations. Since $\CATI$ is a full sub-\itcat{} of $\CATIT$, we get from \cref{cor:elaxfunprops} that
  $\FUN(\tB, \CATI)^{\elax}$ is the full subcategory of $\FUN(\tB,
  \CATIT)^{\elax}$ spanned by the functors that take values in
  $\CATI$. On the other side of \ref{propn:markedlaxstr}, this
  condition precisely amounts to asking for $1$-fibred
  $(0,1)$-fibrations.
\end{proof}

\subsection{Gray bifibrations}\label{sec:graybifib}
Our goal in this subsection is to describe the bivariant version of Gray
fibrations, which will play an important role when we discuss mates in
the next section. In particular, we prove a key lemma about certain
functors of \itcats{} $\tc{E} \to \tc{A} \times \tc{B}$ such that the
composite to $\tc{A}$ is an $(i,j)$-fibration and that to $\tc{B}$ is
a $(1-i,1-j)$-fibration. This generalizes results from
\cite{HHLN1}*{2.3.3}.

\begin{defn}\label{defn:grayfibration}
  Let $\tB$ be an $(\infty,2)$-category. We say that a functor $p \colon \tE \to \tA \times \tB$ is a
  \emph{Gray $(i,j)$-$(k,l)$-fibration} if
  \begin{enumerate}
  \item $\tE \to \tA$ is an $(i,j)$-fibration,
  \item $\tE \to \tA \times \tB$ is a morphism of $(i,j)$-fibrations,
  \item $\tE_{a} \to \tB$ is a $(k,l)$-fibration for every $a \in
    \tA$,
  \item the $i$-cartesian transport functor $\tE_{a_i} \to \tE_{a_{1-i}}$ for
    every map $a_0\to a_1$ in $\tA$ is a morphism of
    $l$-cartesian-enriched functors over $\tB$.
  \end{enumerate}
  If $(k,l)=(1-i,1-j)$ we say that $p$ is a \emph{Gray
    $(i,j)$-bifibration}. We write $\GFIB_{(i,j);(k,l)}(\tA,\tB)$ for the sub-\itcat{} of $\CATITsl{\tA \times \tB}$ spanned by the Gray $(i,j)$-$(k,l)$-fibrations and by those morphisms $\tE \to \tX$ over $\tA \times \tB$ that preserve $j$-cartesian 2-morphisms. We abbreviate $\GBIFIB_{(i,j)}(\tA,\tB) := \GFIB_{(i,j);(1-i,1-j)}(\tA,\tB)$.
\end{defn}

\begin{defn}\label{defn:EMbifibration}
  Let $(\tA,E)$ and $(\tB,M)$ be marked $(\infty,2)$-categories. We
  define
  \[ \GBIFIB^{(E,M)}_{(i,j)}(\tA,\tB) \subseteq
    \GBIFIB_{(i,j)}(\tA,\tB) \]
  to be the locally full sub-\itcat{} given as follows:
   \begin{itemize}
     \item The objects are Gray $(i,j)$-bifibrations such that the $i$-cartesian transport functor $\tE_{a_i} \to \tE_{a_{1-i}}$ for
    every map $a_0\to a_1$ in $\tA$ preserves $(1-i)$-cartesian morphisms lying over $M$.
    \item The morphisms are those maps of Gray $(i,j)$-bifibrations that preserve $i$-cartesian 1-morphisms lying over $E$.
   \end{itemize}
  We note that if $E,M$ are the respective collections of equivalences in $\tA$ and $\tB$ we recover the notion of an $(i,j)$-bifibration.
\end{defn}

\begin{ex}\label{ex:oplaxarrowbifib}
  The oplax arrow category comes equipped with a functor \[\ev_i: \ARopl(\tA) \to \tA,\] for $i=0,1$, induced by the map that selects the object $i$ in $[1]$. It follows from \cite[Theorem 3.0.7]{GagnaHarpazLanariFib} that $\ev_0$ is a $(1,0)$-fibration and $\ev_1$ is a $(0,1)$-fibration.  We observe that for every object $a \in \tA$ we have a pullback square
  \[
    \begin{tikzcd}
      \tA_{a /}^{\oplax} \arrow[r] \arrow[d] & \ARopl(\tA) \arrow[d,"\ev_0"] \\
      {[}0{]} \arrow[r] & \tA
    \end{tikzcd}
  \]
  and that for every $a \to a'$ the cartesian transport functor
  \[
    \begin{tikzcd}
      \tA_{a' /}^{\oplax} \arrow[dr] \arrow[rr] & & \tA_{a /}^{\oplax} \arrow[dl] \\
      & \tA  & 
    \end{tikzcd}
  \]
  is a morphism of $(0,1)$-fibrations. We conclude that $\ARopl(\tA) \to \tA \times \tA$ is a Gray $(1,0)$-bifibration which further factors through $\GBIFIB^{(\sharp,\sharp)}_{(1,0)}(\tA,\tA)$. Dually it follows that $\ARlax(\tA) \to \tA \times \tA$ is a Gray $(1,1)$-bifibration which factors through $\GBIFIB^{(\sharp,\sharp)}_{(1,1)}(\tA,\tA)$.
\end{ex}

\begin{propn}\label{propn:gbifibEMst}
  Let $(\tA,E)$ and $(\tB,M)$ be marked $(\infty,2)$-categories. Then
  we have natural equivalences of \itcats{}
  \begin{align}
    \label{eq:EMbifib}
    \GBIFIB^{(E,M)}_{(0,1)}(\tA,\tB)  & \simeq
                                        \FUN(\tA,\FUN(\tB^\op,\CATIT)^{M\doplax})^{\elax} \\
    \GBIFIB^{(E,M)}_{(1,0)}(\tA,\tB) & \simeq
                                       \FUN(\tA^\op,\FUN(\tB,\CATIT)^{M\dlax})^{\eoplax}
                                       \nonumber \\
    \GBIFIB^{(E,M)}_{(0,0)}(\tA,\tB) & \simeq
                                       \FUN(\tA^\co,\FUN(\tB^{\coop},\CATIT)^{M\doplax})^{\elax} \nonumber \\
    \GBIFIB^{(E,M)}_{(1,1)}(\tA,\tB) & \simeq  \FUN(\tA^\coop,\FUN(\tB^\co,\CATIT)^{M\dlax})^{\eoplax}\nonumber 
  \end{align}
\end{propn}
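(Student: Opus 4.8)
The plan is to reduce to the $(0,1)$ case and prove it by the ``map-and-Yoneda'' method used for \cref{thm:fib01laxstr}, combining two instances of straightening. The remaining three equivalences then follow formally by applying the involutions $(-)^{\op}$, $(-)^{\co}$ and $(-)^{\coop}$: reversing $1$-morphisms sends a Gray $(0,1)$-bifibration to a $(1,1)$-one, reversing $2$-morphisms to a $(0,0)$-one, and reversing both to a $(1,0)$-one, and these match the four lines on the functor side via the Gray-tensor op/co identities \cref{obs:grayop} and \cref{eq:grayco} together with the fibration dualities \cref{eq:fibop}, exactly as in the proofs of \cref{thm:fib01laxstr} and \cref{propn:markedlaxstr}, now applied also to the marked second variable.

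For the $(0,1)$ case I would fix an arbitrary \itcat{} $\tX$ and compute $\Map(\tX,\blank)$ of both sides, concluding by the Yoneda lemma. On the right-hand side, the defining adjunctions of the marked lax and oplax functor \itcats{} (the Construction following \cref{propn:markedgraycolim}) give, by peeling off first the $E$-lax and then the $M$-oplax adjunction,
\[
  \Map\bigl(\tX,\, \FUN(\tA, \FUN(\tB^{\op}, \CATIT)^{M\doplax})^{\elax}\bigr)
  \simeq \Map\bigl(\tB^{\op} \otimes_{M,\natural}(\tX \otimes_{\natural,E} \tA),\, \CATIT\bigr).
\]
By the local straightening equivalence \cref{thm:localstr}, and using associativity of the (marked) Gray tensor product to regard the iterated tensor as a single scaled product (analogous to \cite{GagnaHarpazLanariGray}*{Proposition 2.2}), the right-hand term is the \igpd{} of $S$-local $(0,1)$-fibrations over $\tB^{\op} \times \tX \times \tA$, where $S$ is the scaling defining $\tB^{\op} \otimes_{M,\natural}(\tX \otimes_{\natural,E} \tA)$.

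On the left-hand side, a functor $\tX \to \GBIFIB^{(E,M)}_{(0,1)}(\tA,\tB)$ unstraightens, via the slice straightening \cref{eq:slicestr} together with the defining conditions of \cref{defn:grayfibration} and \cref{defn:EMbifibration}, to a $(0,1)$-fibration $\tG \to \tX$ equipped with a map $\tG \to \tX \times \tA \times \tB$ over $\tX$ whose fibre over each $x \in \tX$ is an $(E,M)$-Gray bifibration over $\tA \times \tB$: it carries a $(0,1)$-fibration structure in the $\tX$- and $\tA$-directions and a $(1,0)$-fibration structure in the $\tB$-direction, the latter corresponding under straightening to a functor $\tB^{\op} \to \CATIT$ and hence to a $(0,1)$-fibration over $\tB^{\op}$. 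The heart of the argument is to identify these objects with the $S$-local $(0,1)$-fibrations of the previous paragraph. This is a three-variable generalization of \cref{propn:grayfib} and its marked refinement \cref{var:markedgray}: unwinding the scaling $S$, the requirement that locally cocartesian lifts compose along the scaled triangles translates exactly into the four conditions of \cref{defn:grayfibration} for the $\tX \times \tA$ versus $\tB$ directions, together with the $E$-compatibility (cocartesian preservation in the $\tA$-direction only over $E$) and $M$-compatibility (cartesian preservation in the $\tB$-direction only over $M$) of \cref{defn:EMbifibration}.

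The main obstacle is precisely this last bookkeeping step: verifying triangle by triangle that the $S$-local fibration conditions for the triple marked Gray tensor coincide with the $\tX$-indexed family of $(E,M)$-Gray bifibration conditions, while correctly tracking the variance that turns ``$(1,0)$-fibration over $\tB$'' into ``$(0,1)$-fibration over $\tB^{\op}$''. Once this matching is in place, the two computations of $\Map(\tX,\blank)$ agree naturally in $\tX$, and Yoneda yields the desired equivalence of \itcats{}; its naturality in $\tB$ with respect to pullback and composition is inherited from that of \cref{thm:fib01laxstr}, \cref{propn:markedlaxstr} and \cref{thm:localstr}.
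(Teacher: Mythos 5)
Your reduction of the other three variances to the $(0,1)$ case is fine and matches the paper's ``the rest is dual'', and your computation of the right-hand mapping space, $\Map(\tX, \FUN(\tA,\FUN(\tB^{\op},\CATIT)^{M\doplax})^{\elax}) \simeq \Map(\tB^{\op}\otimes_{M,\natural}(\tX\otimes_{\natural,E}\tA),\CATIT)$, is correct. But there is a genuine gap at what you yourself call the heart of the argument. The three-variable analogue of \cref{propn:grayfib} and \cref{var:markedgray} identifies $S$-local $(0,1)$-fibrations over $\tB^{\op}\times\tX\times\tA$ with objects carrying \emph{cocartesian}-type structure in all directions: a $(0,1)$-fibration over $\tB^{\op}$ whose fibres are marked Gray $(0,1)$-fibrations over $\tX\times\tA$ (note that in \cref{propn:grayfib} \emph{both} the base direction and the fibre direction are $(0,1)$ --- Gray fibrations, unlike Gray bifibrations, are not mixed-variance). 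By contrast, your unstraightened left-hand datum $\tG \to \tX\times\tA\times\tB$ carries a $(1,0)$-fibration structure fibrewise over $\tB$. These two kinds of objects live over different bases ($\tB^{\op}$ versus $\tB$) and have genuinely different total spaces: a cocartesian lift over $\tB^{\op}$ is not a cartesian lift over $\tB$ in the same \itcat{}, and the two are related only by a fibrewise dualization. So no ``triangle by triangle'' unwinding of the scaling can produce the four conditions of \cref{defn:grayfibration}; the word ``hence'' in your phrase ``corresponding under straightening to a functor $\tB^{\op}\to\CATIT$ and hence to a $(0,1)$-fibration over $\tB^{\op}$'' conceals an unstraighten-and-restraighten in the $\tB$-variable, to be carried out compatibly with all the $\tX\times\tA$-directed data. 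That parametrized dualization \emph{is} the content of the proposition, not bookkeeping.

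The fix is to apply the marked straightening in the $\tB$-variable as the first move rather than deferring it: this is exactly what the paper does. Its proof uses \cref{propn:markedlaxstr} to identify $\FUN(\tB^{\op},\CATIT)^{M\doplax}$ with $\FIB_{(1,0)}(\tB)^{M\dlax}$, a locally full sub-\itcat{} of $\CATITsl{\tB}$; applies $\FUN(\tA,\blank)^{\elax}$, which preserves locally full inclusions by \cref{cor:elaxfunprops}; moves the slice outside via \cref{cor:takingslicesout} to land, after straightening over $\tA$ by \cref{propn:markedlaxstr} again, in $\FIB_{(0,1)}(\tA)^{\elax}_{/\tA\times\tB}$; and only then identifies the image with $\GBIFIB^{(E,M)}_{(0,1)}(\tA,\tB)$, where unwinding definitions really is routine (as in \cref{propn:fiblaxtogray}). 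If you want to keep your Yoneda template, stop your adjunction-peeling one step earlier, at $\Map(\tX\otimes_{\natural,E}\tA, \FUN(\tB^{\op},\CATIT)^{M\doplax})$, and invoke \cref{propn:markedlaxstr} there instead of passing to the triple Gray tensor; your remaining steps then collapse into the paper's chain of locally full embeddings, which moreover yields the equivalence of \itcats{} directly rather than via mapping \igpds{}.
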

\begin{proof}
  We verify only \cref{eq:EMbifib} since the rest is dual. We consider the composite of the following maps
  \[
    \begin{split}
    \FUN(\tA,\FUN(\tB^\op,\CATIT)^{M\doplax})^{\elax} & \simeq
                                                        \FUN(\tA,\FIB_{(1,0)}(\tB)^{M\dlax})^{\elax}
      \\ & \to \FUN(\tA, (\CATIT)_{/\tB})^{\elax}
    \end{split}
  \]
  which is locally full by \cref{cor:elaxfunprops} and \cref{propn:markedlaxstr}. Now we can use \cref{cor:takingslicesout} to obtain a fully faithful functor
  \[
    \FUN(\tA, (\CATIT)_{/\tB})^{\elax} \to \FUN(\tA, \CATIT)^{\elax}_{/\underline{\tB}} \to \FIB_{(0,1)}(\tA)_{/\tA \times \tB}^{\elax}.
  \]
  The claim follows after unpacking the definitions.
\end{proof}

\begin{cor}\label{cor:ortho}
  Let $(\tA,\sharp)$ and $(\tB,\sharp)$ be maximally marked $(\infty,2)$-categories. Then
  we have natural equivalences of \itcats{}
  \begin{align*}
    \GBIFIB^{(\sharp,\sharp)}_{(0,1)}(\tA,\tB)  & \simeq
                                        \FUN(\tA \times \tB^{\op},\CATIT)  \\
    \GBIFIB^{(\sharp,\sharp)}_{(1,0)}(\tA,\tB)  & \simeq
                                        \FUN(\tA^\op \times \tB,\CATIT)  \\
     \GBIFIB^{(\sharp,\sharp)}_{(0,0)}(\tA,\tB)  & \simeq
                                        \FUN(\tA^{\co} \times \tB^{\coop},\CATIT)  \\
     \GBIFIB^{(\sharp,\sharp)}_{(1,1)}(\tA,\tB)  & \simeq
                                        \FUN(\tA^{\coop} \times \tB^{\co},\CATIT). \\                            
  \end{align*}
\end{cor}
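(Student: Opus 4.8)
The plan is to deduce all four equivalences from \cref{propn:gbifibEMst} by specializing to the maximal markings $E = M = \sharp$. The essential observation is that with respect to the maximal marking the marked Gray tensor product degenerates to the cartesian product, and consequently the marked lax and oplax functor \itcats{} appearing on the right-hand sides of \cref{propn:gbifibEMst} collapse to ordinary functor \itcats{}. Once this is in place, the nested functor \itcats{} can be combined into a single one using that $\CatIT$ is cartesian closed.

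Concretely, I would first recall from \cref{defn:markedgraytens} the natural equivalences $\tX \otimes_{\natural,\sharp} \tA \simeq \tX \times \tA$ and $\tA \otimes_{\sharp,\natural} \tX \simeq \tA \times \tX$. Since $\FUN(\tA,\blank)^{\sharp\dlax}$ and $\FUN(\tA,\blank)^{\sharp\doplax}$ are by construction the right adjoints of $\blank \otimes_{\natural,\sharp} \tA$ and $\tA \otimes_{\sharp,\natural} \blank$, and these left adjoints agree with $\blank \times \tA$, uniqueness of adjoints gives
\[ \FUN(\tA,\blank)^{\sharp\dlax} \simeq \FUN(\tA,\blank) \simeq \FUN(\tA,\blank)^{\sharp\doplax},\]
and likewise for the inner functors $\FUN(\tB^{\op},\CATIT)^{\sharp\doplax}$ and $\FUN(\tB,\CATIT)^{\sharp\dlax}$. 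Substituting $E = M = \sharp$ into the four equivalences of \cref{propn:gbifibEMst} and applying these identifications at both the inner and the outer level turns, for example, the first right-hand side into $\FUN(\tA, \FUN(\tB^{\op}, \CATIT))$, with the analogous expressions (with the appropriate opposites) in the other three cases. The final step is to invoke the cartesian internal-hom adjunction
\[ \FUN(\tX, \FUN(\tY, \tZ)) \simeq \FUN(\tX \times \tY, \tZ),\]
which holds since $\FUN$ is the internal Hom for the cartesian product in $\CatIT$; collapsing the two nested $\FUN$'s then yields $\FUN(\tA \times \tB^{\op}, \CATIT)$, $\FUN(\tA^{\op} \times \tB, \CATIT)$, $\FUN(\tA^{\co} \times \tB^{\coop}, \CATIT)$, and $\FUN(\tA^{\coop} \times \tB^{\co}, \CATIT)$, exactly as claimed.

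I do not expect a genuine obstacle here: the argument is a direct specialization of \cref{propn:gbifibEMst}, and the only point requiring care is the identification of the maximally marked (op)lax functor \itcats{} with the ordinary functor \itcat{}, which is immediate from the adjoint characterizations together with the degeneration $\tX \otimes_{\natural,\sharp}\tA \simeq \tX \times \tA$. The one subtlety worth recording is that these identifications must be natural in the relevant variables, so that the resulting composites are equivalences of \itcats{} and not merely of underlying \igpds{}; this holds because each identification is induced by the natural comparison map of (marked) Gray tensor products, and naturality is therefore automatic by construction.
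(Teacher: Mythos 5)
Your proposal is correct and matches the paper's (implicit) argument: the paper states \cref{cor:ortho} without proof precisely because it is the specialization of \cref{propn:gbifibEMst} to $E = M = \sharp$, using the degeneration $\tA \otimes_{\sharp,F} \tB \simeq \tA \times \tB \simeq \tA \otimes_{E,\sharp} \tB$ recorded in \cref{defn:markedgraytens} to identify the maximally marked (op)lax functor \itcats{} with the ordinary ones, and then cartesian closure of $\CatIT$ to collapse the nested $\FUN$'s. Your closing remark on naturality of these identifications is the right point of care and is handled exactly as you describe.
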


\begin{propn}\label{propn:graybifib}
  A functor $(\pi_\tA,\pi_\tB) \colon \tE \to \tA \times \tB$ is a
  Gray $(i,j)$-bifibration \IFF{} $(\pi_\tB,\pi_\tA)$ is a Gray
  $(1-i,1-j)$-bifibration. Moreover, a commutative triangle
\[\begin{tikzcd}
  \tE && \tX \\
  & {\tA \times \tB},
  \arrow["f", from=1-1, to=1-3]
  \arrow["{(\pi_\tA,\pi_\tB)}"', from=1-1, to=2-2]
  \arrow["{(p_\tA,p_\tB)}", from=1-3, to=2-2]
\end{tikzcd}\] where $(\pi_{\tA}, \pi_{\tB})$ and $(p_{\tA},p_{\tB})$
are Gray $(i,j)$-bifibrations, is a morphism of Gray
$(i,j)$-bifibrations \IFF{} it is a morphism of
$(1-i,1-j)$-bifibrations. In other words, we have an equivalence of \itcats{}
\[ \GBIFIB_{(i,j)}(\tA,\tB) \simeq \GBIFIB_{(1-j,1-i)}(\tB,\tA),\]
as sub-\itcats{} of $\CATITsl{\tA \times \tB}$.
\end{propn}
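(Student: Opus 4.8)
This is a symmetry statement, so the plan is to read off the defining conditions of \cref{defn:grayfibration} for $(\pi_{\tB},\pi_{\tA})\colon \tE \to \tB \times \tA$ from those for $(\pi_{\tA},\pi_{\tB})\colon \tE \to \tA\times\tB$. (The index $(1-j,1-i)$ in the final display must read $(1-i,1-j)$, in agreement with the pointwise statement and with \cref{cor:ortho}.) Since exchanging $(\tA,\tB,i,j)$ for $(\tB,\tA,1-i,1-j)$ and re-applying the result recovers the original data, it is enough to prove a single implication. The cleanest route is through straightening: by the four variances of \cref{propn:gbifibEMst} both $\GBIFIB^{(E,M)}_{(i,j)}(\tA,\tB)$ and $\GBIFIB^{(M,E)}_{(1-i,1-j)}(\tB,\tA)$ are straightened to functor \itcats{}, and these are identified by the associativity equivalence of \cref{lem:funandgray}.

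Concretely, for $(i,j)=(0,1)$ one has $\GBIFIB^{(E,M)}_{(0,1)}(\tA,\tB)\simeq \FUN(\tA,\FUN(\tB^{\op},\CATIT)^{M\doplax})^{\elax}$ and $\GBIFIB^{(M,E)}_{(1,0)}(\tB,\tA)\simeq \FUN(\tB^{\op},\FUN(\tA,\CATIT)^{E\dlax})^{M\doplax}$, and these agree by the third equivalence of \cref{lem:funandgray} (with $\tB$ replaced by $\tB^{\op}$ and $\tC=\CATIT$); the other three cases follow after dualizing the inputs by $(\blank)^{\op}$, $(\blank)^{\co}$ and $(\blank)^{\coop}$. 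This already yields the asserted equivalence of \itcats{}. The main obstacle is to upgrade this abstract equivalence to the pointwise and morphism statements: one must check that, under the identification $\CATITsl{\tA\times\tB}\simeq \CATITsl{\tB\times\tA}$, the equivalence of \cref{lem:funandgray} is realised by the identity on the total space $\tE$. Since the equivalences of \cref{propn:gbifibEMst} are built by unstraightening, while \cref{lem:funandgray} merely reassociates the same functor out of a Gray tensor product, both sides ought to unstraighten to the same $\tE \to \tA\times\tB$; making this compatibility precise — tracking $\tE$ through the construction of \cref{propn:gbifibEMst}, which passes through \cref{cor:takingslicesout} — is the real content, and it delivers the ``iff'' for objects and morphisms simultaneously.

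Alternatively, the object-level statement can be proved by hand, following the template of the proof of \cref{propn:grayfib}: one verifies the four conditions for $(\pi_{\tB},\pi_{\tA})$ directly. Here the enrichment of $\pi_{\tB}$ is the analogue of \cref{lem:graycartenr} with the two factors and variances interchanged, conditions (1)--(2) show that each $\tE_b\to\tA$ inherits a $(0,1)$-fibration structure (as $\pi_{\tA}$-cocartesian lifts have invertible image in $\tB$, they restrict to cocartesian lifts in the fibre), and \cref{lem:projeqcart} together with the composition lemma preceding it supply the remaining transport compatibilities. The crux of this hands-on approach — and the place where all four hypotheses must be combined — is to show that a lift which is cartesian for a fibre $\tE_a\to\tB$ (provided by condition (3)) is in fact $\pi_{\tB}$-cartesian for the total functor, exactly as in the main mapping-space computation of \cref{propn:grayfib}. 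Finally, I would emphasize that the morphism ``iff'' is genuinely nontrivial even granting the object statement: a map over $\tA\times\tB$ is required to preserve $\pi_{\tA}$-cartesian $2$-morphisms on one side and $\pi_{\tB}$-cocartesian $2$-morphisms on the other, a priori distinct classes, and their coincidence is precisely what the associativity equivalence of \cref{lem:funandgray} encodes, which is why the straightening route is the more natural one to carry out in full.
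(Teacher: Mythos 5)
Your preferred (straightening) route does not close, and you yourself put your finger on the hole: after combining \cref{propn:gbifibEMst} with the reassociation equivalence of \cref{lem:funandgray} you obtain \emph{some} equivalence $\GBIFIB_{(i,j)}(\tA,\tB) \simeq \GBIFIB_{(1-i,1-j)}(\tB,\tA)$, but the proposition asserts more, namely that these are the \emph{same} locally full sub-\itcat{} of $\CATITsl{\tA \times \tB}$ --- the same functor $\tE \to \tA \times \tB$, and the same triangle, satisfies both sets of conditions. Verifying that the composite equivalence is implemented by the identity on underlying objects of $\CATITsl{\tA\times\tB}$ is not routine bookkeeping: the equivalences of \cref{propn:gbifibEMst} are produced via the Yoneda lemma from mapping-space identifications and are pinned down only through the straightening machinery, so tracking the total space through them is essentially as hard as the proposition itself. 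The paper does not argue this way at all: it proves the object and morphism statements directly, and the displayed equivalence of \itcats{} is then a formal afterthought. Since in your write-up the morphism ``iff'' is handled \emph{only} via the straightening route, it is left genuinely unproven.

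Your secondary, by-hand sketch is the paper's actual strategy, and you correctly isolate the crux (a fibrewise cartesian lift is $\pi_{\tB}$-cartesian for the total functor --- this is precisely the paper's \cref{lem:fibrecocartcrit}) and correctly diagnose the typo $(1-j,1-i) \rightsquigarrow (1-i,1-j)$. But the two load-bearing steps are asserted by analogy where real work is needed. First, the cocartesian-enrichment of $\pi_{\tB}$ is \emph{not} an ``interchanged \cref{lem:graycartenr}'': the map $\tE(x,y) \to \tA(x_{\tA},y_{\tA}) \times \tB(x_{\tB},y_{\tB})$ is cartesian over the $\tA$-factor while the desired conclusion is cocartesianness over the $\tB$-factor, a genuinely bivariant situation; the paper resolves it with the \icatl{} two-variable criterion \cite{HHLN1}*{Proposition 2.3.3}, identifying the fibre over $\alpha \in \tA(x_{\tA},y_{\tA})$ with $\tE_{y_{\tA}}(\alpha_{!}x,y) \to \tB(x_{\tB},y_{\tB})$ and the $2$-morphism transport with precomposition in a fibre, followed by a separate pre-/postcomposition argument (via \cref{ob:precomposition}) for closure of cocartesian $2$-morphisms under composition. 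Second, the morphism statement admits a direct proof, which the paper carries out by factoring any $\pi_{\tB}$-cocartesian $2$-morphism as $\Xi \circ \id_{\hat{u}}$ with $\hat{u}$ a $\pi_{\tA}$-cocartesian $1$-morphism and $\Xi$ cocartesian in a fibre, and checking the image under $f$ has the same shape --- no appeal to \cref{lem:funandgray} is needed, contrary to your closing claim that the associativity equivalence is what the coincidence of the two classes ``encodes''. So the skeleton of your second route is right, but precisely the nontrivial verifications constituting the paper's proof are missing.
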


The following is a key observation for the proof:
\begin{lemma}\label{lem:fibrecocartcrit}
  Suppose we have a commutative triangle
      \[
  \begin{tikzcd}
    \tc{E} \arrow[dr,swap,"\pi_{\tc{A}}"] \arrow{rr}{\pi} & & \tc{A} \times \tc{B} \arrow{dl}{\pr_{\tc{A}}} \\
     & \tc{A}
  \end{tikzcd}
\]
where $\pi_{\tc{A}}$ is an $(i,j)$-fibration and $\pi$ is a morphism of $(i,j)$-fibrations. Suppose $\phi \colon x \to y$ is a morphism in $\tc{E}_{a}$ for some $a \in \tc{A}$ that is $(1-i)$-cartesian for $\pi_a$. Then $\phi$ is $(1-i)$-cartesian for $\pi_{\tc{B}}$.
\end{lemma}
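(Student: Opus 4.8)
The plan is to verify the defining mapping-$\infty$-category criterion for a $(1-i)$-cartesian morphism directly, reducing it to a fibrewise statement over the mapping $\infty$-categories of $\tA$, where it turns into the hypothesis. I will treat the case $i = 0$, so that $(1-i)$-cartesian means cartesian; the case $i = 1$ is formally dual, obtained by reversing $1$-morphisms. Throughout I write $w_{\tB} := \pi_{\tB}(w)$, set $c := \pi_{\tA}(z)$, and recall that $x,y$ lie over $a$. For $i=0$, showing that $\phi$ is $\pi_{\tB}$-cartesian amounts to showing that for every $z \in \tE$ the square
\[
  \begin{tikzcd}
    \tE(z,x) \ar[r, "\phi_{*}"] \ar[d] & \tE(z,y) \ar[d] \\
    \tB(z_{\tB},x_{\tB}) \ar[r] & \tB(z_{\tB},y_{\tB})
  \end{tikzcd}
\]
is a pullback. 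First I would record two consequences of the hypotheses. Since $\pi_{\tA}$ is a $(0,j)$-fibration, it is $j$-cartesian-enriched, so $\tE(z,x) \to \tA(c,a)$ and $\tE(z,y) \to \tA(c,a)$ are $j$-cartesian fibrations and postcomposition $\phi_{*}$ preserves $j$-cartesian morphisms over $\tA(c,a)$ (\cref{ob:precomposition}). Moreover, since $\pi$ is a morphism of $(0,j)$-fibrations it preserves $j$-cartesian $2$-morphisms and cocartesian $1$-morphisms; as the $j$-cartesian $2$-morphisms and cocartesian $1$-morphisms of $\pr_{\tA} \colon \tA \times \tB \to \tA$ are exactly those whose $\tB$-component is invertible, it follows that $j$-cartesian morphisms of $\tE(z,y)$ map to equivalences in $\tB(z_{\tB},y_{\tB})$, and $\pi_{\tA}$-cocartesian $1$-morphisms of $\tE$ map to equivalences in $\tB$.

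Next I would reduce the pullback condition to fibres over $\tA(c,a)$. The square above is a pullback \IFF{} the comparison map $\Theta \colon \tE(z,x) \to P$ to the pullback $P := \tE(z,y) \times_{\tB(z_{\tB},y_{\tB})} \tB(z_{\tB},x_{\tB})$ is an equivalence. By the second observation the $j$-cartesian morphisms of $\tE(z,y)$ have invertible image in $\tB(z_{\tB},y_{\tB})$ and hence lift uniquely along $P \to \tE(z,y)$; thus $P \to \tA(c,a)$ is again a $j$-cartesian fibration and $\Theta$ is a map of $j$-cartesian fibrations over $\tA(c,a)$ preserving $j$-cartesian morphisms (\cite{cois}*{Lemma A.1.8}). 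Therefore $\Theta$ is an equivalence \IFF{} it is a fibrewise equivalence, that is \IFF{} for every $f \colon c \to a$ the induced square on fibres over $f$ is a pullback.

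To identify that fibre square, I would choose a $\pi_{\tA}$-cocartesian lift $\overline{f} \colon z \to f_{!}z$. The second observation above gives $(f_{!}z)_{\tB} \simeq z_{\tB}$, and \cref{obs:cocartmormapfib} identifies the fibre square, compatibly with the projections to $\tB$, with
\[
  \begin{tikzcd}
    \tE_{a}(f_{!}z,x) \ar[r, "\phi_{*}"] \ar[d] & \tE_{a}(f_{!}z,y) \ar[d] \\
    \tB(z_{\tB},x_{\tB}) \ar[r] & \tB(z_{\tB},y_{\tB}).
  \end{tikzcd}
\]
This is a pullback precisely because $\phi$ is $\pi_{a}$-cartesian, tested against the object $f_{!}z \in \tE_{a}$. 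Hence the original square is a pullback for all $z$, so $\phi$ is $\pi_{\tB}$-cartesian, as desired.

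The main obstacle is the reduction in the second paragraph: one must ensure the bottom row is genuinely constant over $\tA(c,a)$ so that the pullback is detected fibrewise, and this rests entirely on the fact that $\pi$, being a morphism of $(i,j)$-fibrations, sends $j$-cartesian $2$-morphisms to $2$-morphisms that are invertible in the $\tB$-direction; this is the identity that makes $P \to \tA(c,a)$ a $j$-cartesian fibration and lets the fibrewise criterion apply. Once this is in place the cocartesian-transport identification of \cref{obs:cocartmormapfib} does the rest. For $i = 1$ one argues dually, using $\pi_{\tA}$-cartesian lifts and the precomposition form of \cref{obs:cocartmormapfib}, or by replacing $\tA$ and $\tB$ with suitable opposites.
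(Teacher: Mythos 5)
Your proposal is correct and takes essentially the same route as the paper's proof: both reduce the pullback condition to a fibrewise statement over the mapping $\infty$-category $\tA(\pi_{\tA}(z),a)$, using that the relevant maps are $j$-cartesian fibrations there (with $j$-cartesian $2$-morphisms having invertible $\tB$-component because $\pi$ is a morphism of $(i,j)$-fibrations), and then identify the square of fibres over each $f$ with the square of mapping $\infty$-categories out of a cocartesian pushforward $f_{!}z$ in $\tE_{a}$ via \cref{obs:cocartmormapfib}, where the hypothesis that $\phi$ is $\pi_{a}$-cartesian applies. Your explicit comparison map $\Theta$ to the pullback $P$ merely spells out the fibrewise-detection step that the paper leaves implicit.
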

\begin{proof}
  We assume $(i,j) = (0,1)$ without loss of generality. Given $z \in \tc{E}$ we consider a commutative diagram
  \[
    \begin{tikzcd}[column sep=large]
      \tc{E}(z,x) \arrow[r,"\phi \circ -"] \arrow[d] & \tc{E}(z,y) \arrow[d] \\
      \tc{A}(z_{\tc{A}},a) \times \tc{B}(z_{\tc{B}},x_{\tc{B}}) \arrow[r,"\id \times (\phi_{\tc{B}}\circ -) "] \arrow[d] & \tc{A}(z_{\tc{A}},a) \times \tc{B}(z_{\tc{B}},y_{\tc{B}}) \arrow[d] \\
       \tc{B}(z_{\tc{B}},x_{\tc{B}}) \arrow[r,"\phi_{\tc{B}} \circ -"] & \tc{B}(z_{\tc{B}},y_{\tc{B}}).
    \end{tikzcd}
  \]
We wish to show that the outer square is a pullback diagram of
$(\infty,1)$-categories. Since the bottom square is clearly a
pullback, it will suffice to show that the upper square is a pullback
diagram. We view the upper diagram as a commutative square of
cartesian fibrations over $\tc{A}(z_{\tc{A}},a)$. It follows that it
will suffice to show that the diagram is a pullback upon passage to
fibres over each $f \colon z_{\tc{A}} \to a$. To see this, we pick a
$\pi_{\tA}$-cocartesian lift $z \to \hat{a}$; using
\cref{obs:cocartmormapfib} we can then identify the square of fibres over $f$ with the square
\[
  \begin{tikzcd}
    \tc{E}_{a}(\hat{a},x) \arrow[r] \arrow[d] & \tc{E}_a(\hat{a},y) \arrow[d] \\
    \tc{B}(z_{\tc{B}},x_{\tc{B}}) \arrow[r] & \tc{B}(z_{\tc{B}},y_{\tc{B}}),
  \end{tikzcd}
\]
which is a pullback diagram by assumption.
\end{proof}

\begin{proof}[Proof of \cref{propn:graybifib}]
 We assume $(i,j)=(0,1)$; the remaining case is proved in exactly the
 same way. We begin the proof by showing that $(\pi_{\tB},\pi_{\tA})$
 is a Gray $(1,0)$-bifibration.

 We will first show that  $\pi_{\tB}\colon \tc{E} \to \tB$ is a
 cocartesian-enriched functor. For every $x \in \tc{E}$, let us write
 \[(\pi_{\tA}(x), \pi_{\tB}(x)) =:(x_{\tA},x_{\tB}).\] We claim that for every $x,y \in \tc{E}$ the induced functor 
  \[
    \tc{E}(x,y)\xlongrightarrow{} \tc{B}(x_{\tB},y_{\tB})
  \]
  is a cocartesian fibration. Our assumptions imply that we have a commutative triangle of cartesian fibrations
   \[
     \begin{tikzcd}
      \tc{E}(x,y) \arrow[rr] \arrow[dr] & &  \tc{A}(x_{\tA},y_{\tA}) \times \tc{B}(x_{\tB},y_{\tB}) \arrow[dl] \\
      & \tc{A}(x_{\tA},y_{\tA}).
    \end{tikzcd}
  \]
  In virtue of \cite{HHLN1}*{Proposition 2.3.3}, it will be enough to
  show that for every $\alpha \colon x_{\tA} \to y_{\tA}$, the induced map $\tc{E}(x,y)_\alpha \to \tc{B}(x_{\tB},y_{\tB})$ is a cocartesian fibration, and that for every 2-morphism $\phi\colon \alpha \Rightarrow \beta$, the induced functor 
  \[
    \tc{E}(x,y)_\beta \xlongrightarrow{} \tc{E}(x,y)_\alpha
  \]
  preserves cocartesian morphisms. Picking a $\pi_{\tA}$-cocartesian lift $x \to \hat{y}$ of $\alpha$ we can identify the map on fibres with the functor
  \[
    \tc{E}_{y_{\tA}}(\hat{y},y) \simeq \tc{E}(x,y)_\alpha \to \tc{B}(x_{\tB},y_{\tB}),
  \]
  which is a cocartesian fibration by assumption. Similarly, as in the
  proof of \cref{lem:graycartenr}, the functor induced by $\phi$ can
  be identified with precomposition along a morphism in $\tc{E}_{y_{\tA}}$,
  which again preserves cocartesian 2-morphisms by assumption. Now,
  we wish to show that for every $x,y,z$, the composition functor
  \[
    \tc{E}(x,y)\times \tc{E}(y,z) \xlongrightarrow{} \tc{E}(x,z)
  \]
preserves cocartesian 2-morphisms. By \cref{ob:precomposition}, it
will be enough to show that precomposition and postcomposition with
1-morphisms preserves cocartesian morphisms in the mapping
$\infty$-category. We give an argument for postcomposition and leave
the remaining case (which is easier) to the interested reader. Let
$\phi\colon f \Rightarrow g$ be a cocartesian 2-morphism in
$\tc{E}(x,y)$ and let $u \colon y \to z$. We observe that by construction the image of
$\phi$ in $\tc{A}(x_A,y_A)$ is degenerate on $f_{\tA}$. Therefore it will suffice to show the claim after restriction to the fibres
\[
  \tE(x,y)_{f_{\tA}} \to \tE(x,z)_{(u \circ f)_{\tA}}.
\]
 We take $\pi_{\tA}$-cocartesian lifts $\hat{f}\colon x \to \hat{y}$ of $f_{\tA}$ and  $\hat{\omega} \colon x \to \hat{z}$ of $(u \circ f)_{\tA}$.  It then follows
that we can construct a commutative diagram  
\[
  \begin{tikzcd}
    \tc{E}_{y_{\tA}}(\hat{y},y) \arrow[d,"\simeq"] \arrow[r] & \tc{E}_{z_{\tA}}(\hat{z},z) \arrow[d,"\simeq"] \\
    \tE(x,y)_{f_{\tA}} \arrow[r] & \tE(x,z)_{(u \circ f)_{\tA}}
  \end{tikzcd}
\]
where the vertical maps are given by precomposition along our chosen cocartesian 1-morphisms. We conclude that our claim follows from (4) in \cref{defn:grayfibration} and that $\pi_{\tc{B}}\colon \tc{E} \to \tc{B}$ is a cocartesian-enriched functor. 

It remains to show that $\pi_{\tB} \colon\tc{E} \to \tB$ has
$\pi_{\tB}$-cartesian lifts of all morphisms in $\tB$. Let
$f\colon b \to b'$ in $\tB$ and suppose we are given $y$ with
$\pi_{\tB}(y) \simeq b'$. Then we can pick a $\pi_{y_{\tA}}$-cartesian
lift of $f$ and conclude by \cref{lem:fibrecocartcrit} that this is
also $\pi_{\tB}$-cartesian.

Now we look at the morphism $f \colon \tE \to \tX$ over $\tA \times \tB$.
We assume this is a morphism of $(0,1)$-bifibrations, and
wish to show that it is also a morphism of $(1,0)$-bifibrations; the
reverse case is proved similarly. It is
clear that the induced functor on fibres is cartesian-enriched. Consider
$f,g \colon x \to y$, and let $\psi\colon f \Rightarrow g$ be a
$\pi_{\tB}$-cocartesian 2-morphism. By the previous part of the proof it
follows that $\pi_{\tA}(\psi)$ is degenerate on a morphism
$u\colon a \to a'$ and that $\psi=\Xi \circ \id_{\hat{u}}$ where
$\hat{u}\colon x \to \hat{y}$ is a $\pi_{\tA}$-cocartesian lift of $f$ and
$\Xi$ is a cocartesian 2-morphism in the fibre $\tE_{a'}$. We claim
that $f(\psi)$ is $p_{\tB}$-cocartesian. We pick a
$p_{\tA}$-cocartesian lift of $u$, $e\colon f(x) \to \hat{z}$ and observe
that we have that $f(\hat{u})=\omega \circ e$. It follows that
$f(\psi)=f(\Xi) \circ \id_{\omega} \circ \id_e$. By assumption
$f(\Xi)$ is a cocartesian morphism in the corresponding fibre and
$\omega$ is a morphism in the fibre over $a'$. Consequently since
$\tX_{a'} \to \tB$ is a $(1,0)$-fibration it follows that
$f(\Xi) \circ \id_{\omega}$ is again a cocartesian 2-morphism and thus
$f(\psi)$ is $p_{\tB}$-cocartesian, as desired.
\end{proof}

\begin{propn}\label{prop:mapspacegbifib}
  Let $\tc{X}$ and $\tc{A}$ be $(\infty,2)$-categories. Then there exist natural equivalences
  \begin{enumerate}
    \item $\Map(\tc{X}, \FUN(\tc{A},\CAT_{(\infty,2)})^{\lax}) \simeq \GBIFIB_{(1,0)}(\tX^{\op},\tA)^{\simeq} \simeq
    \GBIFIB_{(0,1)}(\tA,\tX^{\op})^{\simeq}$.
    \item $\Map(\tc{X}, \FUN(\tc{A},\CATIT)^{\oplax}) \simeq
    \GBIFIB_{(0,0)}(\tX^{\co},\tA^{\coop})^{\simeq} \simeq
    \GBIFIB_{(1,1)}(\tA^{\coop},\tX^{\co})^{\simeq}$.
  \end{enumerate}
\end{propn}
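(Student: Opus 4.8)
The plan is to obtain this statement as a repackaging of the straightening result for marked Gray bifibrations (\cref{propn:gbifibEMst}) together with the bifibration symmetry (\cref{propn:graybifib}), by specializing to the trivial markings and passing to underlying \igpds{}. Two elementary observations make the translation work. First, if we take $E$ and $M$ to be the collections of equivalences in $\tA$ and $\tB$, then the marked Gray tensor product $\otimes_{\natural,\natural}$ is just the ordinary Gray tensor product, so that $\GBIFIB_{(i,j)}(\tA,\tB) = \GBIFIB^{(\natural,\natural)}_{(i,j)}(\tA,\tB)$ and the $E$-(op)lax functor \itcats{} appearing in \cref{propn:gbifibEMst} become the fully (op)lax ones $\FUN(\blank,\blank)^{\lax}$ and $\FUN(\blank,\blank)^{\oplax}$. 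Second, for any \itcats{} $\tX,\tC$ the \itcat{} $\FUN(\tX,\tC)$ is by \cref{cor:stronglax} a \emph{wide} sub-\itcat{} of $\FUN(\tX,\tC)^{\plax}$, so the two share an underlying \igpd{}; hence
\[ (\FUN(\tX,\tC)^{\plax})^{\simeq} \simeq \FUN(\tX,\tC)^{\simeq} \simeq \Map(\tX,\tC). \]

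For part (1), I first invoke \cref{propn:graybifib}, which identifies $\GBIFIB_{(1,0)}(\tX^{\op},\tA)$ with $\GBIFIB_{(0,1)}(\tA,\tX^{\op})$ as sub-\itcats{} of $\CATITsl{\tX^{\op}\times\tA}$, and hence gives the second equivalence after passing to underlying \igpds{}. For the first equivalence I specialize the $(1,0)$-case of \cref{propn:gbifibEMst} to the trivial markings, obtaining a natural equivalence
\[ \GBIFIB_{(1,0)}(\tX^{\op},\tA) \simeq \FUN((\tX^{\op})^{\op}, \FUN(\tA,\CATIT)^{\lax})^{\oplax} \simeq \FUN(\tX, \FUN(\tA,\CATIT)^{\lax})^{\oplax}, \]
using $(\tX^{\op})^{\op}\simeq\tX$. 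Taking underlying \igpds{} and applying the second observation with $\tC = \FUN(\tA,\CATIT)^{\lax}$ then yields $\GBIFIB_{(1,0)}(\tX^{\op},\tA)^{\simeq} \simeq \Map(\tX, \FUN(\tA,\CATIT)^{\lax})$, as required.

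Part (2) follows the same pattern. Here \cref{propn:graybifib} identifies $\GBIFIB_{(0,0)}(\tX^{\co},\tA^{\coop})$ with $\GBIFIB_{(1,1)}(\tA^{\coop},\tX^{\co})$, giving the second equivalence. Specializing the $(0,0)$-case of \cref{propn:gbifibEMst} to the trivial markings gives
\[ \GBIFIB_{(0,0)}(\tX^{\co},\tA^{\coop}) \simeq \FUN((\tX^{\co})^{\co}, \FUN((\tA^{\coop})^{\coop},\CATIT)^{\oplax})^{\lax} \simeq \FUN(\tX, \FUN(\tA,\CATIT)^{\oplax})^{\lax}, \]
using $(\tX^{\co})^{\co}\simeq\tX$ and $(\tA^{\coop})^{\coop}\simeq\tA$. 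Passing to underlying \igpds{} and using the second observation with $\tC=\FUN(\tA,\CATIT)^{\oplax}$ produces the first equivalence of (2). Since every equivalence invoked is natural in $\tX$ and $\tA$, so is each composite.

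The substantive content is entirely contained in the cited results, so there is no serious obstacle in the argument itself; the points requiring care are purely bookkeeping. Specifically, one must track the variances $\op$, $\co$, $\coop$ so that the doubled opposites collapse correctly, confirm that the trivial markings genuinely recover the fully (op)lax functor \itcats{}, and use the (standard, but worth stating) identification of the underlying \igpd{} of an (op)lax functor \itcat{} with the corresponding mapping space. In particular one should check that, in each part, the two descriptions of the mapping space land on the \emph{same} expression of the form $\FUN(\tX, \FUN(\tA,\CATIT)^{\plax})$ — which is exactly what \cref{propn:graybifib} guarantees.
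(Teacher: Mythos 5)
Your proof is correct and takes essentially the same route as the paper: the paper also combines straightening with the bifibration symmetry of \cref{propn:graybifib}, the only difference being that it cites \cref{thm:fib01laxstr} twice (once to identify $\FUN(\tA,\CATIT)^{\lax}$ with $\FIB^{\lax}_{(0,1)}(\tA)$, once to unstraighten maps out of $\tX$ over $\tX^{\op}$ into Gray bifibrations), whereas you specialize \cref{propn:gbifibEMst} to trivial markings --- which packages exactly those two steps --- and then pass to underlying \igpds{} via $(\FUN(\tX,\tC)^{\plax})^{\simeq} \simeq \Map(\tX,\tC)$. Your variance bookkeeping is accurate throughout (note you correctly used the index swap $\GBIFIB_{(i,j)}(\tA,\tB) \simeq \GBIFIB_{(1-i,1-j)}(\tB,\tA)$ as stated in the first sentence of \cref{propn:graybifib}, rather than the typo in its displayed formula), and your version even records the slightly stronger \itcatl{} equivalence before taking cores.
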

\begin{proof}
  To show $(1)$ we note that we have natural equivalences
  \[
    \begin{split}
\Map(\tc{X}, \FUN(\tc{A},\CATI)^{\lax}) & \simeq \Map(\tc{X},
    \FIB^{\lax}_{0,1}(\tA)) \\ &  \simeq
    \GBIFIB_{(1,0)}(\tX^{\op},\tA)^{\simeq} \\ &  \simeq
    \GBIFIB_{(0,1)}(\tA,\tX^{\op})^{\simeq},
    \end{split}
  \]
    where the first two equivalences follow from by
    \cref{thm:fib01laxstr} and the last one is a consequence of
    \cref{propn:graybifib}. To show $(2)$, we observe that we similarly have equivalences
    \[
      \begin{split}
\Map(\tc{X}, \FUN(\tc{A},\CATI)^{\oplax}) & \simeq \Map(\tc{X},
    \FIB^{\lax}_{1,1}(\tA^{\coop})) \\ & \simeq
                                      \GBIFIB_{(0,0)}(\tX^{\co},\tA^{\coop})^{\simeq} \\
        & \simeq
    \GBIFIB_{(1,1)}(\tA^{\coop},\tX^{\co})^{\simeq},
      \end{split}
    \]
    which concludes the proof.
\end{proof}

\begin{lemma}\label{lem:2conservativegray}
  Let $p \colon \tE \to \tA \times \tB$ be a Gray $(i,j)$-bifibration
  such that for every $a \in \tA$ and $b \in \tB$ the fibre
  $\tE_{(a,b)}$ is an $\infty$-category. Then  $p$ is conservative on 2-morphisms.
\end{lemma}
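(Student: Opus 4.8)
The plan is to reduce the statement, via \cref{obs:2cons}, to the right orthogonality of $p$ against $C_{2}\to[1]$, i.e.\ to showing that a $2$-morphism of $\tE$ is invertible exactly when its image in $\tA\times\tB$ is. Since a $2$-morphism between $1$-morphisms $x\to y$ is a morphism of the mapping \icat{} $\tE(x,y)$, and its image is a morphism of $(\tA\times\tB)(px,py)\simeq\tA(x_{\tA},y_{\tA})\times\tB(x_{\tB},y_{\tB})$, this amounts to proving that for all $x,y\in\tE$ the functor $q=(q_{\tA},q_{\tB})\colon\tE(x,y)\to\tA(x_{\tA},y_{\tA})\times\tB(x_{\tB},y_{\tB})$ is conservative. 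Using \cref{propn:graybifib} together with the fact that conservativity on $2$-morphisms is invariant under $(\blank)^{\op}$, $(\blank)^{\co}$ and $(\blank)^{\coop}$, I would reduce to the case $(i,j)=(0,1)$.

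Next I would extract the two fibrational structures carried by $q$. As $\pi_{\tA}$ is a $(0,1)$-fibration it is cartesian-enriched, so $q_{\tA}$ is a cartesian fibration; and since $(\pi_{\tB},\pi_{\tA})$ is a Gray $(1,0)$-bifibration by \cref{propn:graybifib}, the functor $\pi_{\tB}$ is cocartesian-enriched, so $q_{\tB}$ is a cocartesian fibration. The crucial computation is the behaviour of $q_{\tB}$ on a fibre $\tE(x,y)_{\alpha}$ of $q_{\tA}$. Choosing a $\pi_{\tA}$-cocartesian lift $\bar\alpha\colon x\to\alpha_{!}x$ of $\alpha$ and applying \cref{obs:cocartmormapfib} identifies $\tE(x,y)_{\alpha}\simeq\tE_{y_{\tA}}(\alpha_{!}x,y)$; since $\bar\alpha$ maps to an equivalence in $\tB$ we have $(\alpha_{!}x)_{\tB}\simeq x_{\tB}$, and the restriction of $q_{\tB}$ to this fibre is identified with the functor $\tE_{y_{\tA}}(\alpha_{!}x,y)\to\tB(x_{\tB},y_{\tB})$ coming from the $(1,0)$-fibration $\tE_{y_{\tA}}\to\tB$ of condition (3) in \cref{defn:grayfibration}. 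This functor is a cocartesian fibration whose fibres, again by (the dual of) \cref{obs:cocartmormapfib}, are mapping \icats{} in $\tE_{(y_{\tA},y_{\tB})}$ --- and these are \igpds{} precisely because $\tE_{(a,b)}$ is assumed to be an \icat{}. Hence $\tE(x,y)_{\alpha}\to\tB(x_{\tB},y_{\tB})$ is a left fibration, so in particular conservative.

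Finally I would assemble these facts. Given $\Phi$ in $\tE(x,y)$ with $q(\Phi)$ invertible, the cartesian fibration $q_{\tA}$ lets me factor $\Phi\simeq\psi\circ\Phi'$ with $\psi$ a $q_{\tA}$-cartesian morphism over the equivalence $q_{\tA}(\Phi)$ (hence itself invertible, being cartesian over an equivalence) and $\Phi'$ lying in the fibre $\tE(x,y)_{\alpha}$; then $q_{\tB}(\Phi')$ is invertible (since $q_{\tB}(\Phi)$ and $q_{\tB}(\psi)$ are), so $\Phi'$ is invertible by the left fibration of the previous paragraph, whence $\Phi$ is invertible. I expect the main obstacle to be the bookkeeping in the middle step: correctly identifying the restriction of $q_{\tB}$ to a $q_{\tA}$-fibre with the cocartesian fibration coming from $\tE_{y_{\tA}}\to\tB$, and recognising its fibres as mapping \icats{} in $\tE_{(a,b)}$, so that the hypothesis on the fibres can be applied. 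Once this identification is in place, the rest is formal manipulation of (co)cartesian fibrations of \icats{}.
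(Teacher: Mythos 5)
Your proposal is correct and follows essentially the same route as the paper's proof: the paper observes that $p_{x,y} \colon \tE(x,y) \to \tA(x_{\tA},y_{\tA}) \times \tB(x_{\tB},y_{\tB})$ is a curved orthofibration whose fibres are spaces (being mapping $\infty$-groupoids in the $\infty$-category fibres $\tE_{(a,b)}$), deduces that each fibre $\tE(x,y)_{f} \to \tB(x_{\tB},y_{\tB})$ is a left/right fibration, and then factors the given 2-morphism as an invertible morphism lying over its image composed with a fibrewise morphism, exactly as you do. The only cosmetic differences are your initial duality reduction to $(i,j)=(0,1)$ (the paper just says ``left or right fibration depending on $j$'') and a harmless index slip in your middle step --- the fibres you identify are mapping $\infty$-categories in $\tE_{(y_{\tA},x_{\tB})}$ rather than $\tE_{(y_{\tA},y_{\tB})}$ --- which does not affect the argument.
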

\begin{proof}
  Let $x,y \in \tE$ and let $\alpha\colon f \Rightarrow g$ be a morphism in $\tE(x,y)$ such
  that its image under $p$ is invertible in $\tA \times \tB$. Since $p$ is a Gray $(i,j)$-bifibration, for every pair
  of objects in $\tE$ we have that
  $p_{x,y}\colon\tE(x,y) \to \tA(x_{\tA},y_{\tA}) \times
  \tB(x_{\tB},y_{\tB})$ is a Gray $j$-bifibration of
  $\infty$-categories (or a \emph{curved orthofibration} in the terminology of \cite{HHLN1}). Moreover, we see that the fibres of $p_{x,y}$
  are spaces, which in turn implies that for every
  $f \in \tA(x_{\tA},y_{\tA})$ the fibre
  $\tE(x,y)_{f} \to \tB(x_{\tB},y_{\tB})$ is a left or right fibration
  depending on $j$.

  To conclude the proof, we pick an invertible 2-morphism
  $i\colon f \to \hat{g}$ lying over $p(\alpha)$ and observe that we
  can express $\alpha$ as $u \circ i$, where $u$ lies in the fibre
  over the target of $p(\alpha)$. Since left/right fibrations detect
  invertible 1-morphisms, the result follows.
\end{proof}

\section{Limits and lax natural transformations}
In this section we apply our straightening result for partially
(op)lax transformations to study two related types of (co)limits in
\itcats{}: In \S\ref{sec:laxlim} we study partially (op)lax
(co)limits and prove \cref{introthm:laxlim}, and in \S\ref{sec:wtlim}
we look at weighted (co)limits and prove \cref{introthm:wtlim}.

\subsection{Partially (op)lax  (co)limits and lax natural transformations}\label{sec:laxlim}
Partially (op)lax (co)limits were first introduced for ordinary
2-categories by Descotte, Dubuc, and Szyld in \cite{SigmaLimits} under the name
\emph{$\sigma$-(co)limits}, and have previously been studied in the
\icatl{} context by Berman~\cite{BermanLax} for diagrams in $\CATI$
indexed by an \icat{}, and in greater generality by the first author
in \cite{AbMarked} and by Gagna--Harpaz--Lanari in
\cite{GagnaHarpazLanariLaxLim}. Here we first show that partially
(op)lax colimits in the sense of \cite{GagnaHarpazLanariLaxLim} can
instead be defined by a universal property in
$\FUN(\tA,\tB)^{\eplax}$. Using this characterization we then extend
the fibrational description of such (co)limits from diagrams in
$\CATI$ to diagrams in $\CATIT$.

\begin{defn}
  Let $(\tA, E)$ be a marked \itcat{} and let $F\colon \tc{A} \to \tc{B}$ be a functor of \itcats{}. We say that $\colim^{\diamond}_{\tc{A}}F \in \tc{B}$, where $\diamond \in \{\elax, \eoplax \}$, is the \emph{$\diamond$-colimit} of $F$ if  we have a natural equivalence of $\CATI$-valued functors
  \[
    \Nat^{\diamond}_{\tc{A},\tB}\left(F, \underline{(\blank)}\right)\simeq \tc{B}(\colim^{\diamond}_{\tc{A}}F, \blank)
  \]
  where $\underline{(\blank)}\colon \tc{B} \to \FUN(\tc{A},\tc{B})^{\diamond}$ is the functor sending each object to a constant diagram. Similarly, we say that
  $\lim^{\diamond}_{\tc{A}}F \in \tc{B}$, where $\diamond \in \{\elax, \eoplax \}$, is the \emph{$\diamond$-limit} of $F$ if  we have a natural equivalence of $\CATI$-valued functors
   \[
     \Nat^{\diamond}_{\tc{A},\tB}\left(\underline{(\blank)},F\right)
     \simeq \tB(\blank,\lim^{\diamond}_{\tc{A}}F).
   \]
\end{defn}

To compare this to the definitions of \cite{GagnaHarpazLanariLaxLim}, we first derive a description of 
the \icats{} $\Nat^{\diamond}_{\tc{A},\tB}(F,G)$:
\begin{propn}\label{propn:Natelaxdescr}
  For functors $F, G \colon \tA \to \tB$, we have a natural equivalence
  \[ \Nat^{\elax}_{\tA,\tB}(F,G) \simeq \Fun^{(E)}_{/\tA \times \tA}(\tA, (F \times G)^{*}\ARopl(\tB)),\]
  where the right-hand side denotes the sections which send the edges in $E$ to commutative squares in $(F \times G)^{*}\ARopl(\tB)$. Dually, we have an equivalence in the $E$-oplax case 
   \[ \Nat^{\eoplax}_{\tA,\tB}(F,G)^\op \simeq \Fun^{(E)}_{/\tA \times \tA}(\tA, (F \times G)^{*}\ARlax(\tB)).\]
  In particular, if $\underline{b}$ denotes the constant functor with value $b \in \tB$, we have
  \[ \Nat^{\elax}_{\tA,\tB}(\underline{b},G) \simeq \Fun^{E\dcoc}_{/\tA}(\tA, G^{*}\tB^{\oplax}_{b/}), \enspace \enspace \Nat^{\eoplax}_{\tA,\tB}(\underline{b},G)^\op \simeq \Fun^{E\dcoc}_{/\tA}(\tA, G^{*}\tB^{\lax}_{b/}),\]
  \[ \Nat^{\elax}_{\tA,\tB}(F,\underline{b}) \simeq \Fun^{E\dcart}_{/\tA}(\tA, F^{*}\tB^{\oplax}_{/b}), \enspace \enspace \Nat^{\eoplax}_{\tA,\tB}(F,\underline{b})^\op \simeq \Fun^{E\dcart}_{/\tA}(\tA, F^{*}\tB^{\lax}_{/b}) .\]  
\end{propn}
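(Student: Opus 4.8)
The plan is to deduce the statement from the identification of mapping $\infty$-categories with fibres of oplax arrow $\infty$-2-categories, combined with the compatibility of the oplax arrow construction with $\FUN(\tA,\blank)^{\elax}$. I will carry out the $E$-lax case in full; the $E$-oplax case then follows by the $\op$-duality of \cref{obs:grayop} (which relates $\ARlax(\tB)$ to $\ARopl(\tB^{\op})$ under $(\blank)^{\op}$ and $\FUN(\tA^{\op},\tB^{\op})^{\eoplax}$ to $(\FUN(\tA,\tB)^{\elax})^{\op}$, accounting for the $\op$ on the left-hand side), and the four ``in particular'' identities are obtained by specializing one of the two functors to a constant.

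By \cref{cor:aroplfibres} the mapping $\infty$-category $\Nat^{\elax}_{\tA,\tB}(F,G)$ is the fibre of
\[ \ARopl\bigl(\FUN(\tA,\tB)^{\elax}\bigr) \xrightarrow{(\ev_{0},\ev_{1})} \bigl(\FUN(\tA,\tB)^{\elax}\bigr)^{\times 2} \]
over the object $(F,G)$. Applying \cref{obs:Slaxoplaxrev} with $\tY=[1]$, with $\tX=\tA$ marked by $E$, and with $\tZ=\tB$, the source $\ARopl(\FUN(\tA,\tB)^{\elax}) = \FUN([1],\FUN(\tA,\tB)^{\elax})^{\oplax}$ is a fully faithful sub-$\infty$-2-category of $\FUN(\tA,\ARopl(\tB))^{\elax}$, whose image consists of those functors $[1]\otimes\tA\to\tB$ whose lax naturality square commutes for every morphism in $E$. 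This inclusion lies over $(\FUN(\tA,\tB)^{\elax})^{\times 2}\simeq\FUN(\tA,\tB\times\tB)^{\elax}$, where the projection on the larger $\infty$-2-category is $(\ev_{0},\ev_{1})_{*}$ induced by $(\ev_{0},\ev_{1})\colon\ARopl(\tB)\to\tB\times\tB$; the compatibility is the naturality of \cref{lem:funandgray}/\cref{obs:Slaxoplaxrev} in the variable $[1]$ along $\{0\},\{1\}\hookrightarrow[1]$.

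Taking fibres over $(F,G)$ and using that fully faithful functors remain fully faithful on fibres, I obtain that $\Nat^{\elax}_{\tA,\tB}(F,G)$ is the full subcategory of $\FUN(\tA,\ARopl(\tB))^{\elax}_{(F,G)}$ on the image. An object of this fibre is a functor $s\colon\tA\to\ARopl(\tB)$ with $(\ev_{0},\ev_{1})s=(F,G)$, that is, a section of $(F\times G)^{*}\ARopl(\tB)$ along the diagonal, and the image condition says exactly that $s$ carries every edge in $E$ to a commuting square, which matches the objects of the right-hand side. The decisive point is the morphisms: a morphism in the fibre is an $E$-lax transformation $s\Rightarrow s'$ projecting to $\id_{(F,G)}$. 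Since $(\ev_{0},\ev_{1})\colon\ARopl(\tB)\to\tB\times\tB$ is a Gray $(1,0)$-bifibration (\cref{ex:oplaxarrowbifib}) with $\infty$-categorical fibres $\ARopl(\tB)_{x,y}\simeq\tB(x,y)$ (\cref{cor:aroplfibres}), \cref{lem:2conservativegray} shows it is conservative on $2$-morphisms; hence each naturality $2$-cell of such a vertical transformation, mapping to a degenerate $2$-cell, is invertible. Thus every vertical $E$-lax transformation is strong, so by \cref{cor:stronglax} it is an ordinary vertical natural transformation of sections. This identifies the fibre with $\Fun^{(E)}_{/\tA\times\tA}(\tA,(F\times G)^{*}\ARopl(\tB))$ and proves the first equivalence. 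I expect this morphism-level identification, together with the bookkeeping of the image condition, to be the main obstacle.

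For the displayed special cases I specialize one functor to a constant. Taking $F=\underline{b}$, the pullback $(\underline{b}\times G)^{*}\ARopl(\tB)$ along the diagonal is $G^{*}\tB^{\oplax}_{b/}$, where $\tB^{\oplax}_{b/}\to\tB$ is $\ev_{1}$ restricted to the fibre of $\ev_{0}$ over $b$, a $(0,1)$-fibration by \cref{ex:oplaxarrowbifib}; here I must check that a morphism of $\tB^{\oplax}_{b/}$ is a commuting square precisely when it is $\ev_{1}$-cocartesian, which follows from the explicit description of cocartesian lifts in \cref{obs:cocartmormapfib}. Under this identification ``sends $E$-edges to commuting squares'' becomes ``sends $E$-edges to cocartesian morphisms'', so the section $\infty$-category is $\Fun^{E\dcoc}_{/\tA}(\tA,G^{*}\tB^{\oplax}_{b/})$ in the sense of \cref{def:elaxfibcat}. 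Symmetrically, taking $G=\underline{b}$ produces $F^{*}\tB^{\oplax}_{/b}$ with the $(1,0)$-fibration $\ev_{0}$, so commuting squares correspond to cartesian morphisms and the section category is $\Fun^{E\dcart}_{/\tA}(\tA,F^{*}\tB^{\oplax}_{/b})$; the oplax analogues are obtained in the same way from the lax arrow $\ARlax(\tB)$ via the duality of the first paragraph.
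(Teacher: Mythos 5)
Your proposal is correct and takes essentially the same route as the paper's proof: both identify $\Nat^{\elax}_{\tA,\tB}(F,G)$ with a fibre of $\FUN(\tA,\ARopl(\tB))$ over $(F,G)$ via the Gray-tensor exchange of \cref{lem:funandgray}, and both conclude that every vertical lax transformation of sections is strong because $\ARopl(\tB) \to \tB \times \tB$ is conservative on $2$-morphisms (the paper invokes \cref{propn:ARoplfibis1cat} where you invoke \cref{ex:oplaxarrowbifib} together with \cref{lem:2conservativegray}, but these amount to the same fact), before handling the oplax case by the same $\op$-duality. Your use of \cref{obs:Slaxoplaxrev} to build the marking $E$ in from the start, rather than restricting from the unmarked equivalence afterwards as the paper does, and your slightly more explicit unwinding of the slice specializations (where one should also note that the cartesian $2$-morphism condition in $\Fun^{E\dcoc}_{/\tA}$ is vacuous since $\tB^{\oplax}_{b/} \to \tB$ is $1$-fibred, hence right-enriched), are only cosmetic variations.
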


\begin{proof}
We will first deal with the $E$-lax case in full detail and then explain how to adapt the argument to derive the results in the $E$-oplax case. Note that we have a natural equivalence
  \[ \ARopl(\FUN(\tA,\tB)^{\lax}) \simeq \FUN(\tA, \ARopl(\tB))^{\lax}\]
  from \cref{lem:funandgray}. Identifying $\Nat^{\lax}_{\tA,\tB}(F,G)$ with the fibre over $F,G$ on the left, we get
  \[ \Nat^{\lax}_{\tA,\tB}(F,G) \simeq \FUN_{/\tA \times \tA}(\tA, (F,G)^{*}\ARopl(\tB))^{\lax}.\]
  We claim that the right-hand side is in fact the \icat{} $\Fun_{/\tA \times \tA}(\tA, (F,G)^{*}\ARopl(\tB))$. To see this it will suffice to show that for any commutative diagram{}
  \[
    \begin{tikzcd}
      {[}1{]}\otimes \tA \arrow[rr,"f"] \arrow[dr,swap,"p_{\tA}"] && (F,G)^{*}\ARopl(\tB) \arrow[dl] \\
      & \tA \times \tA &
    \end{tikzcd}
  \]
  the map $f$ factors through the cartesian product $[1] \times \tA$,
  where $p_{\tA}$ is given by the projection to $\tA$ and followed by the diagonal map. This follows from the fact that $p_{\tA}$ factors through $[1] \times \tA$ together with \cref{propn:ARoplfibis1cat}.

  To deal with the $E$-lax case, we observe that
  $\Nat^{\elax}_{\tA,\tB}(F,G)$ is a full subcategory of
  $\Nat^{\lax}_{\tA,\tB}(F,G)$. Chasing through the equivalences above
  one sees that an $E$-lax natural transformation is identified with a
  section that sends the edges in $E$ to a commutative square in
  $(F,G)^{*}\ARopl(\tB)$.

  In the oplax case we use the equivalence $\ARlax(\tX) \simeq \ARopl(\tX^{\op})^{\op}$ to identify the fibre at $(x,y) \in \tX \times \tX$ as
  \[ \tX^{\op}(y,x)^{\op} \simeq \tX(x,y)^{\op}.\]
  We can therefore identify $\Nat^{\oplax}_{\tA,\tB}(F,G)^{\op}$ as a fibre in
  \[ \ARlax(\FUN(\tA,\tB)^{\oplax}) \simeq \FUN(\tA, \ARlax(\tB))^{\oplax},\]
  and proceed as in the lax case.
\end{proof}

\begin{cor}\label{cor:laxlimituniv}
  For $F \colon \tA \to \tB$ and $b \in \tB$, we have natural equivalences
  \begin{align}
    \label{eq:natlaxbF}
    \Nat^{\elax}_{\tA,\tB}(\underline{b},F) & \simeq
    \Nat^{\elax}_{\tA,\CATI}(\underline{*}, \tB(b,F(\blank))), \\
    \label{eq:natlaxFb}
    \Nat^{\elax}_{\tA,\tB}(F,\underline{b}) & \simeq \Nat^{\eoplax}_{\tA^{\op},\CATI}(\underline{*}, \tB(F(\blank),b)), \\
    \label{eq:natoplaxbF}
    \Nat^{\eoplax}_{\tA,\tB}(\underline{b},F) & \simeq
    \Nat^{\eoplax}_{\tA,\CATI}(\underline{*}, \tB(b,F(\blank))), \\
    \label{eq:natoplaxFb}
    \Nat^{\eoplax}_{\tA,\tB}(F,\underline{b}) & \simeq \Nat^{\elax}_{\tA^{\op},\CATI}(\underline{*}, \tB(F(\blank),b)).
  \end{align}
\end{cor}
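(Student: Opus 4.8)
The plan is to deduce each of the four equivalences from \cref{propn:Natelaxdescr} by identifying the relevant fibrations over $\tA$; I will treat \cref{eq:natlaxbF} in detail and then indicate the dual modifications. By the last part of \cref{propn:Natelaxdescr} the left-hand side of \cref{eq:natlaxbF} is $\Fun^{E\dcoc}_{/\tA}(\tA, F^{*}\tB^{\oplax}_{b/})$, while applying the same result with $\tB$ replaced by $\CATI$, the object $b$ by the point $* = [0]$, and the target functor by $H := \tB(b, F(\blank)) \colon \tA \to \CATI$, the right-hand side becomes $\Fun^{E\dcoc}_{/\tA}(\tA, H^{*}\CATI^{\oplax}_{*/})$. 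It therefore suffices to exhibit an equivalence $F^{*}\tB^{\oplax}_{b/} \simeq H^{*}\CATI^{\oplax}_{*/}$ of $(0,1)$-fibrations over $\tA$: since an equivalence in $\FIB_{(0,1)}(\tA)$ preserves cocartesian morphisms in both directions, it restricts to an equivalence on the \icats{} of $E$-marked sections.

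The key input is that the oplax slice $\tB^{\oplax}_{b/} \to \tB$ (with its projection $\ev_{1}$) is the unstraightening of the corepresentable functor $\tB(b,\blank) \colon \tB \to \CATI$. Indeed, by \cref{ex:oplaxarrowbifib} the projection $\ARopl(\tB) \to \tB \times \tB$ is a Gray $(1,0)$-bifibration lying in $\GBIFIB^{(\sharp,\sharp)}_{(1,0)}(\tB,\tB)$, which by \cref{cor:ortho} straightens to a functor $\tB^{\op}\times\tB \to \CATIT$ whose value at $(x,y)$ is the mapping \icat{} $\tB(x,y)$ by \cref{cor:aroplfibres}; this functor is thus $\MAP_{\tB}$. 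Restricting the first variable to $b$ — equivalently, passing to the fibre of $\ev_{0}$ at $b$ — identifies $\tB^{\oplax}_{b/}\to\tB$ with the $(0,1)$-fibration for $\tB(b,\blank)$. Taking $\tB = \CATI$ and $b = *$, and using $\CATI(*,\blank)\simeq\id_{\CATI}$, the same computation shows that $\CATI^{\oplax}_{*/}\to\CATI$ is the universal $1$-fibred $(0,1)$-fibration.

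Since $H = \tB(b,\blank)\circ F$, the naturality of straightening under pullback and composition (\cref{thm:str01}) now gives $H^{*}\CATI^{\oplax}_{*/} \simeq F^{*}(\tB(b,\blank))^{*}\CATI^{\oplax}_{*/} \simeq F^{*}\tB^{\oplax}_{b/}$ over $\tA$, which proves \cref{eq:natlaxbF}. The equivalence \cref{eq:natoplaxbF} follows in exactly the same way, using instead the lax slice $\tB^{\lax}_{b/}\to\tB$; starting from the Gray $(1,1)$-bifibration $\ARlax(\tB)\to\tB\times\tB$ of \cref{ex:oplaxarrowbifib} and \cref{cor:ortho}, this is the $(0,0)$-fibration encoding $\tB(b,\blank)$, and $\CATI^{\lax}_{*/}$ is again universal because $\CATI(*,\blank)\simeq\id$. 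Here both sides of \cref{propn:Natelaxdescr} carry an opposite, which cancels, so no stray $\op$ appears.

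For the remaining two equivalences \cref{eq:natlaxFb} and \cref{eq:natoplaxFb} one argues identically but with the \emph{representable} slices $\tB^{\oplax}_{/b}$ and $\tB^{\lax}_{/b}$, which by the same bifibration computation straighten to $\tB(\blank,b)$. Since this functor is contravariant in the $1$-morphism direction, the resulting fibration naturally lives over $\tA^{\op}$, and matching it with the $\Nat$ over $\tA^{\op}$ on the right-hand side requires the op-dualities of fibrations recorded in \cref{eq:fibop} (which simultaneously account for the interchange of $\elax$ and $\eoplax$ and for the transposed opposite predicted by \cref{propn:Natelaxdescr}). Naturality in $F$ of all four equivalences is automatic, since every ingredient — the descriptions of \cref{propn:Natelaxdescr}, straightening, and pullback — is natural. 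I expect the genuinely new content to be confined to the (co)Yoneda-type identification of the slice fibrations in the second paragraph, packaged through \cref{cor:ortho} and \cref{cor:aroplfibres}; the main point requiring care is the bookkeeping of variances and of the op-dualities in the representable cases \cref{eq:natlaxFb} and \cref{eq:natoplaxFb}, together with the verification that the $E$-marked section conditions are transported along the fibration equivalences.
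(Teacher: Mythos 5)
Your overall architecture is essentially the paper's: both proofs start from \cref{propn:Natelaxdescr} and reduce everything to the identification of the slice fibrations $\tB^{\oplax}_{b/}$ and $\tB^{\lax}_{b/}$ with unstraightenings of the corepresentable functor $\tB(b,\blank)$. The difference is packaging: the paper applies the marked straightening equivalence of \cref{thm:fib01laxstr}/\cref{propn:markedlaxstr} once, identifying $\Fun^{E\dcoc}_{/\tA}(\tA, F^{*}\tB^{\oplax}_{b/})$ directly as a mapping \icat{} in $\FIB^{\elax}_{(0,1)}(\tA)$ (with the identity section straightening to $\underline{*}$), whereas you apply \cref{propn:Natelaxdescr} a second time with target $\CATI$ and then compare the two fibrations over $\tA$. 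These are equivalent moves, and your reduction to the single equivalence $F^{*}\tB^{\oplax}_{b/} \simeq H^{*}\CATI^{\oplax}_{*/}$ is a legitimate reorganization; your observation that the opposites in \cref{propn:Natelaxdescr} cancel in the $E$-oplax cases is also correct, and matches how the paper handles \cref{eq:natoplaxbF}.

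There is, however, a genuine gap in your second paragraph, which is exactly where you locate ``the genuinely new content.'' From \cref{ex:oplaxarrowbifib} and \cref{cor:ortho} you know that $\ARopl(\tB) \to \tB \times \tB$ straightens to \emph{some} functor $\tB^{\op} \times \tB \to \CATIT$, and \cref{cor:aroplfibres} tells you its value at each $(x,y)$ is $\tB(x,y)$; but a pointwise identification of values (even together with the maps on fibres induced by functors) does not determine a functor of \itcats{} up to equivalence, so the inference ``this functor is thus $\MAP_{\tB}$'' is a non sequitur. What is needed is a coherent, Yoneda-level identification of the straightening of the (op)lax slice, and this is precisely what the paper imports from outside: \cite{LurieGoodwillie}*{4.1.8} for $\tB^{\oplax}_{b/} \rightsquigarrow \tB(b,\blank)$, and \cite{AbS3}*{3.17} for the lax-slice case (after passing to opposites to recast $(F^{*}\tB^{\lax}_{b/})^{\op}$ as a $(1,0)$-fibration over $\tA^{\op}$). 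A second, smaller inaccuracy: the fibres of $\tB^{\lax}_{b/} \to \tB$ are $\tB(b,x)^{\op}$, not $\tB(b,x)$ (see the identification $\ARlax(\tX) \simeq \ARopl(\tX^{\op})^{\op}$ in the proof of \cref{propn:Natelaxdescr}), so ``the $(0,0)$-fibration encoding $\tB(b,\blank)$'' carries an op-twist; your argument survives because the same twist appears in $\CATI^{\lax}_{*/}$ (whose fibres are $\oC^{\op}$, not $\oC$), but as written the claim is off and should be corrected. Both defects are reparable --- replace the appeal to \cref{cor:aroplfibres} by the citations above, as the paper does --- but without that input your proof of the key step is not complete.
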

\begin{proof}
  From \cref{propn:Natelaxdescr} we have
  $\Nat^{\elax}_{\tA,\tB}(\underline{b},F) \simeq
  \Fun^{E\dcoc}_{/\tA}(\tA, F^{*}\tB^{\oplax}_{b/})$. The latter is a
  mapping \icat{} in $\FIB_{(0,1)}^{\elax}(\tA)$ (see
  \cref{def:elaxfibcat}). Under straightening the identity of $\tA$
  corresponds to the constant functor $\underline{*}$ and
  $\tB^{\oplax}_{b/}$ to $\tB(b,\blank)$ by \cite{LurieGoodwillie}*{4.1.8}; the pullback along $F$ therefore corresponds to $\tB(b,F(\blank))$, as required. A totally analogous argument shows that equivalence \cref{eq:natlaxFb} also holds.

  We now look at $\Nat^{\eoplax}_{\tA,\tB}(\underline{b},F)^\op \simeq \Fun^{E\dcoc}_{/\tA}(\tA, F^{*}\tB^{\lax}_{b/})$ and observe that by taking opposite categories we can produce an equivalence
  \[
    \Fun^{E\dcoc}_{/\tA}(\tA, F^{*}\tB^{\lax}_{b/}) \simeq \Fun^{E\dcart}_{/\tA^\op}(\tA^\op, (F^{*}\tB^{\lax}_{b/})^\op)^\op.
  \]
  Finally we see that $ (F^{*}\tB^{\lax}_{b/})^\op$ is the
  $(1,0)$-fibration that classifies the functor $\tB(b,F(\blank))$
  using \cite{AbS3}*{3.17}, so from \cref{thm:fib01laxstr} we get
  \[
    \Fun^{E\dcart}_{/\tA^\op}(\tA^\op, F^{*}\tB^{\lax}_{b/})^\op)^\op \simeq \Nat^{\eoplax}_{\tA,\CAT_\infty}(\underline{\ast},\tB(b,F(-)))^\op,
  \]
  and the case \cref{eq:natoplaxbF} follows. The remaining verification \cref{eq:natoplaxFb} is proved by a dual argument.
\end{proof}

Combining this with \cite{GagnaHarpazLanariLaxLim}*{Corollary 5.1.7}, we get:
\begin{cor}
  Let $(\tA,E)$ be a marked \itcat{}. The $E$-(op)lax (co)limits of a functor $F \colon \tA \to \tB$ as defined above agree with the inner and outer (co)limits of $F$ defined in \cite{GagnaHarpazLanariLaxLim} as follows:\footnote{Here the discrepancy in the pairing between lax/oplax and inner/outer for limits and colimits is due to the switch from lax to oplax in \cref{eq:natlaxFb}, which does not occur in \cref{eq:natlaxbF}.}
  \begin{itemize}
  \item the $E$-lax limit of $F$ is the inner limit of $F$,
  \item the $E$-oplax limit of $F$ is the outer limit of $F$,
  \item the $E$-lax colimit of $F$ is the outer colimit of $F$,
  \item the $E$-oplax colimit of $F$ is the inner colimit of $F$.
  \end{itemize}
\end{cor}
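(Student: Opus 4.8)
The plan is to deduce the four identifications directly by combining the universal-property characterizations established in \cref{cor:laxlimituniv} with the intrinsic description of inner and outer (co)limits from \cite{GagnaHarpazLanariLaxLim}*{Corollary 5.1.7}. First I would recall that each $\diamond$-(co)limit is defined by a (co)representability condition: for instance $\lim^{\elax}_{\tA} F$ is the object of $\tB$, unique if it exists by the Yoneda lemma, satisfying $\tB(b, \lim^{\elax}_{\tA} F) \simeq \Nat^{\elax}_{\tA,\tB}(\underline{b}, F)$ naturally in $b \in \tB$, and dually for the other three cases. Thus it suffices to match these four representing conditions, one by one, with the conditions that characterize the inner and outer (co)limits.

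The key step is to feed each $\Nat$-\icat{} valued in $\tB$ through \cref{cor:laxlimituniv}, which rewrites it as a $\CATI$-valued mapping \icat{}. Explicitly, \cref{eq:natlaxbF} gives $\Nat^{\elax}_{\tA,\tB}(\underline{b}, F) \simeq \Nat^{\elax}_{\tA,\CATI}(\underline{*}, \tB(b, F(\blank)))$, so that the $E$-lax limit of $F$ is corepresented by the $E$-lax limit in $\CATI$ of the covariant mapping functor $\tB(b, F(\blank))$; by \cite{GagnaHarpazLanariLaxLim}*{Corollary 5.1.7} this is exactly their inner limit, which yields the first bullet. The $E$-oplax limit is handled identically using \cref{eq:natoplaxbF}, where no op-reversal occurs, matching the outer limit and giving the second bullet.

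For the colimits the argument is the same in outline, but here I would emphasize the op-reversal recorded in \cref{eq:natlaxFb} and \cref{eq:natoplaxFb}. Since $\colim^{\elax}_{\tA} F$ is \emph{representable} rather than corepresentable, we start from $\tB(\colim^{\elax}_{\tA} F, b) \simeq \Nat^{\elax}_{\tA,\tB}(F, \underline{b})$, and \cref{eq:natlaxFb} identifies the right-hand side with $\Nat^{\eoplax}_{\tA^{\op},\CATI}(\underline{*}, \tB(F(\blank), b))$, i.e.\ with the $E$-oplax limit in $\CATI$ of the presheaf $\tB(F(\blank), b) \colon \tA^{\op} \to \CATI$. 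Invoking \cite{GagnaHarpazLanariLaxLim}*{Corollary 5.1.7} again, this is the outer colimit, which gives the third bullet; the fourth follows the same way from \cref{eq:natoplaxFb}. This op-switch, present in the colimit equivalences but absent in the limit equivalences, is precisely the source of the swapped pairing between lax/oplax and inner/outer noted in the footnote.

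The step I expect to be the main obstacle is not any computation but the precise alignment of conventions: one must check that the intrinsic definitions of inner and outer (co)limit in \cite{GagnaHarpazLanariLaxLim} translate, under their Corollary 5.1.7, into exactly the $\CATI$-valued $\Nat$-\icat{} formulas produced above, with the variances matching on the nose. Once these dictionaries are lined up, each of the four statements is a formal consequence of the uniqueness of (co)representing objects.
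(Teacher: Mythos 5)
Your proposal is correct and takes essentially the same route as the paper, which deduces the corollary immediately by combining \cref{cor:laxlimituniv} with \cite{GagnaHarpazLanariLaxLim}*{Corollary 5.1.7}. Your identification of the op-reversal in \cref{eq:natlaxFb} and \cref{eq:natoplaxFb} as the source of the swapped inner/outer pairing for colimits is exactly the point recorded in the paper's footnote.
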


\begin{propn}\label{prop:laxcolimloc}
  Let $(\tA,E)$ be a marked \itcat{} and consider a functor $F \colon\tc{A} \to \CATIT$ with corresponding $(i,j)$-fibration  $\pi \colon\mathcal{F}_{i,j} \to \tc{A}^{\epsilon}$ where $\epsilon\in\{\emptyset,\op,\co,\coop\}$. Write $L_{E}(\mathcal{F}_{i,j})$ for the \itcat{} obtained from $\mathcal{F}_{i,j}$ by inverting the $j$-cartesian 2-morphisms together with those $i$-cartesian 1-morphisms that lie over $E$. Then the $E$-(op)lax colimits of $F$ can be described as
  \[
     \colim^{\elax}_{\tc{A}}F \simeq L_{E}(\scr{F}_{0,j}), \enspace \colim^{\eoplax}_{\tc{A}}F \simeq L_E(\scr{F}_{1,j})
   \]
   for $j = 0,1$.
\end{propn}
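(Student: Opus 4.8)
The plan is to verify directly that $L_{E}(\mathcal{F}_{i,j})$ satisfies the universal property defining $\colim^{\diamond}_{\tA} F$, working throughout via the marked straightening equivalence \cref{propn:markedlaxstr}; all four cases $(i,j)$ are handled uniformly, with $\diamond = \elax$ when $i = 0$ and $\diamond = \eoplax$ when $i = 1$. I first record that, by naturality of straightening under pullback along $\tA^{\epsilon} \to [0]$, the constant functor $\underline{C} \colon \tA \to \CATIT$ corresponds to the product projection $\pr \colon \tA^{\epsilon} \times C \to \tA^{\epsilon}$, viewed as an $(i,j)$-fibration over $\tA^{\epsilon}$. Since \cref{propn:markedlaxstr} is an equivalence of \itcats{}, it induces equivalences on mapping \icats{}, giving a natural equivalence
\[
  \Nat^{\diamond}_{\tA,\CATIT}(F, \underline{C}) \simeq \MAP_{\FIB^{\elax}_{(i,j)}(\tA^{\epsilon})}\bigl(\mathcal{F}_{i,j},\, \tA^{\epsilon} \times C\bigr)
\]
of $\CATI$-valued functors in $C$.

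I then compute the right-hand mapping \icat{}. As $\FIB^{\elax}_{(i,j)}(\tA^{\epsilon})$ is a locally full sub-\itcat{} of $\CATITsl{\tA^{\epsilon}}$ (\cref{def:elaxfibcat}), this mapping \icat{} is the full subcategory of $\Fun_{/\tA^{\epsilon}}(\mathcal{F}_{i,j}, \tA^{\epsilon} \times C) \simeq \Fun(\mathcal{F}_{i,j}, C)$ spanned by the functors over $\tA^{\epsilon}$ that are morphisms of $\FIB^{\elax}_{(i,j)}(\tA^{\epsilon})$, \ie{} that preserve $j$-cartesian $2$-morphisms and $i$-cartesian $1$-morphisms lying over $E$. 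The crux is to match these preservation conditions with inversion conditions on the $C$-component $\psi_{C} \colon \mathcal{F}_{i,j} \to C$ of such a functor $\psi$. For this I use the standard description of (co)cartesian edges of a product projection: a $1$-morphism of $\tA^{\epsilon} \times C$ is $i$-cartesian for $\pr$ \IFF{} its $C$-component is an equivalence, and a $2$-morphism is $j$-cartesian for $\pr$ \IFF{} its $C$-component is an invertible $2$-morphism. Since $\psi$ lies over $\tA^{\epsilon}$, preservation of $j$-cartesian $2$-morphisms is thus exactly the requirement that $\psi_{C}$ send each $j$-cartesian $2$-morphism of $\mathcal{F}_{i,j}$ to an invertible $2$-morphism of $C$, and preservation of $i$-cartesian $1$-morphisms over $E$ is exactly the requirement that $\psi_{C}$ send each such $1$-morphism to an equivalence of $C$.

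Consequently the mapping \icat{} is identified with the full subcategory of $\Fun(\mathcal{F}_{i,j}, C)$ on the functors inverting the $j$-cartesian $2$-morphisms and the $i$-cartesian $1$-morphisms over $E$ --- precisely the morphisms at which $L_{E}(\mathcal{F}_{i,j})$ is formed. By the universal property of this localization, that full subcategory is $\Fun(L_{E}(\mathcal{F}_{i,j}), C) = \CATIT(L_{E}(\mathcal{F}_{i,j}), C)$. Chaining with the previous display yields a natural equivalence
\[
  \Nat^{\diamond}_{\tA,\CATIT}(F, \underline{C}) \simeq \CATIT\bigl(L_{E}(\mathcal{F}_{i,j}), C\bigr),
\]
which is the defining property of the $\diamond$-colimit; hence $\colim^{\elax}_{\tA} F \simeq L_{E}(\mathcal{F}_{0,j})$ and $\colim^{\eoplax}_{\tA} F \simeq L_{E}(\mathcal{F}_{1,j})$.

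I expect the main obstacle to be the careful bookkeeping in the middle step: threading the four variances $\epsilon \in \{\emptyset, \op, \co, \coop\}$ together with the marking $E$ correctly through \cref{propn:markedlaxstr}, and pinning down the (co)cartesian morphisms of $\pr$ so that the conditions defining morphisms of $\FIB^{\elax}_{(i,j)}$ coincide on the nose with the inversion conditions built into $L_{E}(\mathcal{F}_{i,j})$. The underlying-\igpd{} content is close to \cref{obs:maptolaxgpd}, but here one must retain the full \icat{} structure of the mapping objects; it is precisely the \emph{local} fullness of the fibration \itcats{} --- which leaves the natural transformations between functors over $\tA^{\epsilon}$ unrestricted --- that makes the localization's universal property directly applicable.
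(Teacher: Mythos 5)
Your proof is correct and follows essentially the same route as the paper's: both apply the (marked) straightening equivalence \cref{propn:markedlaxstr} to identify $\Nat^{\diamond}_{\tA,\CATIT}(F,\underline{C})$ with a mapping \icat{} of fibrations into the product projection $\tA^{\epsilon}\times C \to \tA^{\epsilon}$, observe that preserving the relevant (co)cartesian $1$- and $2$-morphisms amounts to the $C$-component inverting them, and conclude via the universal property of the localization $L_{E}(\mathcal{F}_{i,j})$. The only difference is cosmetic: the paper treats $(i,j)=(0,1)$ and declares the other variances analogous, whereas you thread all four cases (and the straightening of the constant functor) explicitly, which is if anything slightly more careful.
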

\begin{proof}
   We consider the case $(i,j)=(0,1)$ without loss of generality. It follows from \cref{thm:fib01laxstr} that we have a natural equivalence of functors
  \[
     \Nat^{\elax}_{\tc{A},\CATIT}\left(F, \underline{(-)}\right)\simeq \Fun^{E\dcoc}_{/\tc{A}}(\scr{F}_{0,1},(-)\times \tc{A}).
   \]
   Here the right-hand side at $\tB$ is the \icat{} of functors $\scr{F}_{0,1} \to \tB \times \tA$ over $\tA$ that preserve cartesian 2-morphisms and cocartesian morphisms over $E$. In $\tB \times \tA$ these are the 2-morphisms and morphisms whose projection to $\tB$ are equivalences, so under the equivalence
   \[\Fun_{/\tA}(\scr{F}_{0,1}, \tB \times \tA) \simeq \Fun(\scr{F}_{0,1}, \tB)\]
   the full subcategory $\Fun^{E\dcoc}_{/\tc{A}}(\scr{F}_{0,1},(-)\times \tc{A})$ corresponds to 
   $\Fun(L_{E}(\scr{F}_{0,1}), \tB)$. In other words, we have a natural equivalence
  \[
    \Nat^{\elax}_{\tc{A},\CATIT}\left(F, \underline{(-)}\right) \simeq \Fun(L_E(\scr{F}_{0,1}),\blank),
  \]
  which precisely identifies $L_{E}(\scr{F}_{0,1})$ as the $E$-lax colimit of $F$.
\end{proof}

\begin{cor}
  Let $(\tA,E)$ be a marked \itcat{} and consider a functor $F \colon\tc{A} \to \CATI$ with corresponding 1-fibred $(i,j)$-fibration  $\pi \colon\mathcal{F}_{i,j} \to \tc{A}^{\epsilon}$ where $\epsilon\in\{\emptyset,\op,\co,\coop\}$. Write $L^{1}_{E}(\mathcal{F}_{i,j})$ for the \icat{} obtained from $\mathcal{F}_{i,j}$ by inverting \emph{all} 2-morphisms together with those $i$-cartesian 1-morphisms that lie over $E$. Then the $E$-(op)lax colimits of $F$ in $\CATI$ can be described as
  \[
     \colim^{\elax}_{\tc{A}}F \simeq L^{1}_{E}(\scr{F}_{0,j}), \enspace \colim^{\eoplax}_{\tc{A}}F \simeq L^{1}_E(\scr{F}_{1,j})
   \]
   for $j = 0,1$. \qed
\end{cor}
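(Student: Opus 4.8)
The plan is to deduce this from \cref{prop:laxcolimloc} by observing that the $\CATI$-valued colimit is the reflection of the $\CATIT$-valued one. Recall that $\CATI$ is a full sub-\itcat{} of $\CATIT$, and that the inclusion $\iota \colon \CATI \hookrightarrow \CATIT$ is reflective: its left adjoint $L_{2} \colon \CATIT \to \CATI$ sends an \itcat{} to its localization at all $2$-morphisms, since a functor from an \itcat{} to an \icat{} $\oB$ must invert every $2$-morphism (as $\oB$ has only invertible $2$-morphisms). I will treat the case $(i,j) = (0,1)$; the remaining cases follow by the same dualities (using $(\blank)^{\op}$, $(\blank)^{\co}$, and \cref{obs:grayop}) already invoked in \cref{prop:laxcolimloc}.

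First I would compare the two \icats{} of $E$-lax transformations. Since $\iota$ is fully faithful, \cref{cor:elaxfunprops} shows that $\iota_{*} \colon \FUN(\tA, \CATI)^{\elax} \to \FUN(\tA, \CATIT)^{\elax}$ is fully faithful, and it clearly takes the constant functor $\underline{\oB}$ to $\underline{\iota \oB}$. Passing to mapping \icats{} therefore gives a natural equivalence
\[ \Nat^{\elax}_{\tA, \CATI}(F, \underline{\oB}) \simeq \Nat^{\elax}_{\tA, \CATIT}(\iota F, \underline{\iota \oB}). \]
Next I would feed this into \cref{prop:laxcolimloc}: that result, applied in $\CATIT$, gives a natural equivalence $\Nat^{\elax}_{\tA, \CATIT}(\iota F, \underline{\tC}) \simeq \Fun(L_{E}(\scr{F}_{0,1}), \tC)$ for every $\tC \in \CATIT$, where $\scr{F}_{0,1}$ is the (now $1$-fibred) $(0,1)$-fibration of $F$, which is unchanged by viewing $F$ as $\CATIT$-valued. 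Setting $\tC = \iota \oB$ and using $L_{2} \dashv \iota$ yields
\[ \Nat^{\elax}_{\tA, \CATIT}(\iota F, \underline{\iota \oB}) \simeq \CATIT(L_{E}(\scr{F}_{0,1}), \iota \oB) \simeq \CATI(L_{2} L_{E}(\scr{F}_{0,1}), \oB). \]
Composing the localizations, $L_{2} L_{E}(\scr{F}_{0,1})$ inverts all $2$-morphisms together with the cocartesian $1$-morphisms over $E$, which is precisely $L^{1}_{E}(\scr{F}_{0,1})$. Combining the two displayed equivalences and applying the Yoneda lemma then identifies $L^{1}_{E}(\scr{F}_{0,1})$ as $\colim^{\elax}_{\tA} F$ in $\CATI$, as claimed.

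The only real point to check — the step I expect to need the most care — is the identification $L_{2} L_{E}(\scr{F}_{0,1}) \simeq L^{1}_{E}(\scr{F}_{0,1})$, i.e. that localizing first at the cocartesian $1$-morphisms over $E$ and the cartesian $2$-morphisms, and then at all remaining $2$-morphisms, agrees with localizing once at all $2$-morphisms together with the cocartesian $1$-morphisms over $E$. This is just the transitivity of localizations (localizing at $S$ and then at $T$ localizes at $S \cup T$), but one should confirm it at the level of \itcats{}, with $L_{E}$ landing in $\CATIT$ before $L_{2}$ is applied. Alternatively, one can bypass the reflection argument and rerun the proof of \cref{prop:laxcolimloc} verbatim, replacing \cref{thm:fib01laxstr} by its $\CATI$-valued version (the corollary specializing \cref{propn:markedlaxstr} to $\CATI$): the sections $\scr{F}_{0,1} \to \oB \times \tA$ over $\tA$ now land in a product with an \icat{}, so every functor to $\oB$ automatically inverts all $2$-morphisms, and imposing inversion of the $E$-cocartesian $1$-morphisms recovers $\Fun(L^{1}_{E}(\scr{F}_{0,1}), \oB)$ directly.
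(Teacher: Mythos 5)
Your proof is correct and takes essentially the paper's approach: the corollary is stated with a \qed and no proof precisely because it is the immediate specialization of \cref{prop:laxcolimloc} along the reflective inclusion $\CATI \hookrightarrow \CATIT$, which is the reduction you carry out. Your closing alternative --- rerunning the proof of \cref{prop:laxcolimloc} with the $\CATI$-valued straightening, where a functor to an \icat{} inverts all 2-morphisms automatically --- is exactly the intended one-line justification, and your care over $L_{2}L_{E} \simeq L^{1}_{E}$ (transitivity of localizations, with the $\CatI$-localization satisfying the $\CatIT$ universal property via the coreflection $(\blank)^{\leq 1}$) is the only point that genuinely needed checking.
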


\begin{defn}\label{def:FUNdcoc}
  Let $\scr{F},\scr{G} \in \FIB^{\elax}_{(i,j)}(\tA)$. If $i=0$, we denote by $\FUN^{E\dcoc}_{/\tc{A}}(\scr{F},\scr{G})$ the $(\infty,2)$-category characterised by the universal property
  \[
    \Map_{\CatIT}(\tX,\FUN^{E\dcoc}_{/\tc{A}}(\scr{F},\scr{G})) \simeq \Fun^{E\dcoc}_{/\tc{A}}(\scr{F}\times \tX,\scr{G}),
  \]
  where the right-hand side was defined in \cref{def:elaxfibcat}. We define $\FUN^{E\dcart}_{/\tc{A}}(\scr{F},\scr{G})$ analogously when $i=1$.
\end{defn}

\begin{propn}\label{prop:laxlimCATIT}
  Let $(\tA,E)$ be a marked \itcat{} and consider a functor $F \colon\tc{A} \to \CATIT$ with corresponding  $(i,j)$-fibration  $\pi \colon\mathcal{F}_{i,j} \to \tc{A}^{\epsilon}$ where $\epsilon\in\{\emptyset,\op,\co,\coop\}$. Then the $E$-(op)lax limits of $F$ in $\CATIT$ can be described as
  \[
    \lim^{\elax}_{\tc{A}}F \simeq \FUN^{E\dcoc}_{/\tc{A}}(\tc{A},\scr{F}_{0,j}), \enspace \lim^{\eoplax}_{\tc{A}}F \simeq \FUN^{E\dcart}_{/\tc{A}}(\tc{A},\scr{F}_{1,j})
  \]
  for $j = 0,1$.
\end{propn}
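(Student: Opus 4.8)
The plan is to verify that $\FUN^{E\dcoc}_{/\tA}(\tA, \scr{F}_{0,1})$ satisfies the defining universal property of the $E$-lax limit, by assembling a chain of natural equivalences of mapping \icats{} dual to the one used for colimits in \cref{prop:laxcolimloc}, and to read off the other three formulas by feeding the remaining straightening equivalences of \cref{propn:markedlaxstr} through the same argument. I will spell out the case $\lim^{\elax}_{\tA} F$ with the $(0,1)$-fibration $\scr{F}_{0,1} \to \tA$ in detail.

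By definition, $\lim^{\elax}_{\tA} F$ is the object of $\CATIT$ equipped with a natural equivalence of $\CATI$-valued functors $\Nat^{\elax}_{\tA,\CATIT}(\underline{(\blank)}, F) \simeq \CATIT(\blank, \lim^{\elax}_{\tA} F)$, and $\CATIT(\tX, \blank)$ is the mapping \icat{} $\Fun(\tX, \blank)$. So it suffices to produce, naturally in $\tX \in \CATIT$, an equivalence of \icats{} $\Nat^{\elax}_{\tA,\CATIT}(\underline{\tX}, F) \simeq \Fun(\tX, \FUN^{E\dcoc}_{/\tA}(\tA, \scr{F}_{0,1}))$. Under the straightening equivalence $\FUN(\tA, \CATIT)^{\elax} \simeq \FIB_{(0,1)}^{\elax}(\tA)$ of \cref{propn:markedlaxstr}, the functor $F$ corresponds to $\scr{F}_{0,1} \to \tA$, while the constant functor $\underline{\tX}$ corresponds to the projection $\tX \times \tA \to \tA$ (all of whose cocartesian transport functors are identities). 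Since an equivalence of \itcats{} induces equivalences on mapping \icats{}, and the mapping \icats{} in $\FIB_{(0,1)}^{\elax}(\tA)$ are computed by $\Fun^{E\dcoc}_{/\tA}(\blank,\blank)$ (\cref{def:elaxfibcat}), this yields $\Nat^{\elax}_{\tA,\CATIT}(\underline{\tX}, F) \simeq \Fun^{E\dcoc}_{/\tA}(\tX \times \tA, \scr{F}_{0,1})$, naturally in $\tX$ by functoriality of the constant-diagram construction. Finally the universal property of \cref{def:FUNdcoc}, applied with $\scr{F} = \tA$ the identity fibration so that $\scr{F} \times \tX = \tA \times \tX$, identifies the right-hand side with $\Fun(\tX, \FUN^{E\dcoc}_{/\tA}(\tA, \scr{F}_{0,1}))$, completing the chain and exhibiting $\FUN^{E\dcoc}_{/\tA}(\tA, \scr{F}_{0,1})$ as the $E$-lax limit.

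The remaining cases are identical after changing the bookkeeping of variances. For $\lim^{\elax}$ with $\scr{F}_{0,0} \to \tA^{\co}$ I use $\FIB_{(0,0)}^{\elax}(\tA^{\co}) \simeq \FUN(\tA, \CATIT)^{\elax}$; for $\lim^{\eoplax}$ with $\scr{F}_{1,0} \to \tA^{\op}$ I use $\FIB_{(1,0)}^{\elax}(\tA^{\op}) \simeq \FUN(\tA, \CATIT)^{\eoplax}$ and replace $\Fun^{E\dcoc}_{/\tA}$ by $\Fun^{E\dcart}_{/\tA^{\op}}$; and for $\lim^{\eoplax}$ with $\scr{F}_{1,1} \to \tA^{\coop}$ I use $\FIB_{(1,1)}^{\elax}(\tA^{\coop}) \simeq \FUN(\tA, \CATIT)^{\eoplax}$, again with $\Fun^{E\dcart}$. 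In every case the constant functor straightens to a product projection and $F$ to the stated fibration over $\tA^{\epsilon}$, and the same cotensor step finishes. The asserted independence of $j$ is then automatic: for a fixed $\diamond \in \{\elax,\eoplax\}$ each admissible value of $j$ supplies a straightening row computing one and the same \icat{} $\Nat^{\diamond}_{\tA,\CATIT}(\underline{\tX}, F)$, so the fibrational models $\FUN^{E\dcoc}_{/\tA^{\epsilon}}(\tA^{\epsilon}, \scr{F}_{0,j})$ (resp.\ $\FUN^{E\dcart}_{/\tA^{\epsilon}}(\tA^{\epsilon}, \scr{F}_{1,j})$) necessarily represent the same limit.

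I expect the only genuine subtlety to be the passage, in the second paragraph, from the space-level universal property literally recorded in \cref{def:FUNdcoc} to the mapping-\icat{} statement $\Fun(\tX, \FUN^{E\dcoc}_{/\tA}(\tA, \scr{F})) \simeq \Fun^{E\dcoc}_{/\tA}(\tA \times \tX, \scr{F})$ that the argument requires. This is the expected enhancement of the cotensor and follows from \cref{def:FUNdcoc} by testing against $[m] \times \tX$ and using that a functor out of the $1$-category $[m]$ factors through the underlying \icat{}, so that $\Fun(\tX, \tW)_{[m]} \simeq \Map_{\CatIT}([m] \times \tX, \tW)$ for any \itcat{} $\tW$; everything else is variance bookkeeping of the same routine character as the identification of $\underline{\tX}$ with a product projection under straightening.
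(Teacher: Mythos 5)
Your proof is correct and takes essentially the same approach as the paper, which establishes the single chain $\Fun(\blank,\FUN^{E\dcoc}_{/\tc{A}}(\tc{A},\scr{F}_{0,1}))\simeq \Fun^{E\dcoc}_{/\tc{A}}(\tc{A}\times (\blank),\scr{F}_{0,1})\simeq \Nat^{\elax}_{\tc{A},\CATIT}(\underline{(\blank)},F)$ (the first equivalence formal, the second by the straightening of lax transformations) and reduces the other variances to the case $(i,j)=(0,1)$ by the same bookkeeping you describe. Your closing remark about upgrading \cref{def:FUNdcoc} from mapping spaces to mapping \icats{} supplies exactly the detail the paper compresses into the word ``formal.''
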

\begin{proof}
  Without loss of generality let us assume that $(i,j)=(0,1)$. We note
  that we have natural equivalences
  \[
    \Fun(\blank,\FUN^{E\dcoc}_{/\tc{A}}(\tc{A},\scr{F}_{0,1}))\simeq \Fun^{E\dcoc}_{/\tc{A}}(\tc{A}\times (\blank),\scr{F}_{0,j})\simeq  \Nat^{\elax}_{\tc{A},\CATIT}\left(\underline{(-)},F\right),
  \]
  where the first equivalence is formal and the second uses
  \cref{thm:fib01laxstr}. Thus the \itcat{}
  $\FUN^{E\dcoc}_{/\tc{A}}(\tc{A},\scr{F}_{0,1})$ has the universal property of the $E$-lax limit.
\end{proof}

\begin{cor}\label{cor:laxlimcat}
  Let $(\tA,E)$ be a marked \itcat{} and consider a functor $F \colon\tc{A} \to \CATI$ with corresponding 1-fibred $(i,j)$-fibration  $\pi \colon\mathcal{F}_{i,j} \to \tc{A}^{\epsilon}$ where $\epsilon\in\{\emptyset,\op,\co,\coop\}$. Then the $E$-(op)lax limits of $F$ in $\CATI$ can be described as
  \[
    \lim^{\elax}_{\tc{A}}F \simeq \Fun^{E\dcoc}_{/\tc{A}}(\tc{A},\scr{F}_{0,j}), \enspace \lim^{\eoplax}_{\tc{A}}F \simeq \Fun^{E\dcart}_{/\tc{A}}(\tc{A},\scr{F}_{1,j})
  \]
  for $j = 0,1$. \qed
\end{cor}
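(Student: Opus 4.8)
The plan is to deduce this corollary from its $\CATIT$-valued counterpart \cref{prop:laxlimCATIT} by restricting along the fully faithful inclusion $\CATI \hookrightarrow \CATIT$. I will spell out the $E$-lax case with $(i,j) = (0,1)$, since the other three variances follow by the same argument together with the dualities already used for \cref{eq:strvariants}. Throughout, the hypothesis that $F \colon \tA \to \CATI$ corresponds to a \emph{$1$-fibred} $(0,1)$-fibration $\scr{F}_{0,1} \to \tA$ is exactly the statement that each fibre $(\scr{F}_{0,1})_{a}$ is an \icat{}. The key structural fact is that the section \itcat{} $\FUN^{E\dcoc}_{/\tA}(\tA, \scr{F}_{0,1})$ of \cref{def:FUNdcoc}, which by \cref{prop:laxlimCATIT} computes the $E$-lax limit \emph{in $\CATIT$}, is in fact already an \icat{}, after which everything is formal.

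First I would show that $\FUN^{E\dcoc}_{/\tA}(\tA, \scr{F}_{0,1})$ lies in the full sub-\itcat{} $\CATI \subseteq \CATIT$. By \cref{obs:2cons} this amounts to checking that it is conservative on $2$-morphisms, i.e.\ right orthogonal to $C_{2} \to [1]$, and by the universal property of \cref{def:FUNdcoc} (applied to $\tX = [1]$ and $\tX = C_{2}$) this reduces to verifying that the restriction map $\Fun^{E\dcoc}_{/\tA}(\tA \times [1], \scr{F}_{0,1}) \to \Fun^{E\dcoc}_{/\tA}(\tA \times C_{2}, \scr{F}_{0,1})$ induced by $C_{2} \to [1]$ is an equivalence. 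The content here is that a $2$-morphism of sections is classified by a functor $\tA \times C_{2} \to \scr{F}_{0,1}$ over $\tA$ whose restriction to $C_{2}$ is sent, by the projection, to the identity in $\tA$; hence over each $a \in \tA$ it lands in the fibre $(\scr{F}_{0,1})_{a}$, which by hypothesis is an \icat{} and so has only invertible $2$-morphisms. Since a functor out of $C_{2}$ inverting the $2$-morphism factors (essentially uniquely) through $[1]$, this yields the desired equivalence. Consequently $\FUN^{E\dcoc}_{/\tA}(\tA, \scr{F}_{0,1})$ agrees with its underlying \icat{}, which by construction is $\Fun^{E\dcoc}_{/\tA}(\tA, \scr{F}_{0,1})$ of \cref{def:elaxfibcat}.

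It then remains to see that this object also computes the $E$-lax limit \emph{in $\CATI$}. For $X \in \CATI$, the inclusion $\FUN(\tA, \CATI)^{\elax} \hookrightarrow \FUN(\tA, \CATIT)^{\elax}$ is fully faithful by \cref{cor:elaxfunprops} and identifies the constant diagrams, so $\Nat^{\elax}_{\tA, \CATI}(\underline{X}, F) \simeq \Nat^{\elax}_{\tA, \CATIT}(\underline{X}, F)$; by the universal property in \cref{prop:laxlimCATIT} the right-hand side is the mapping \icat{} $\CATIT\bigl(X, \FUN^{E\dcoc}_{/\tA}(\tA, \scr{F}_{0,1})\bigr)$, and since both $X$ and the section object lie in the full sub-\itcat{} $\CATI$, this is just $\CATI\bigl(X, \Fun^{E\dcoc}_{/\tA}(\tA, \scr{F}_{0,1})\bigr)$. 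Naturality in $X$ then exhibits $\Fun^{E\dcoc}_{/\tA}(\tA, \scr{F}_{0,1})$ as $\lim^{\elax}_{\tA} F$ in $\CATI$, which is the claim; the $E$-oplax case and the remaining variances are handled identically, using $\scr{F}_{1,j}$ and $\Fun^{E\dcart}$. The main obstacle is the second paragraph, namely making precise that a $2$-morphism in the section \itcat{} is detected fibrewise; the cleanest route is the one above, extracting from the universal property of \cref{def:FUNdcoc} that such $2$-morphisms are governed by $\tA \times C_{2}$-shaped diagrams whose $\tA$-component is degenerate, so that $1$-fibredness of $\scr{F}_{0,1}$ forces them to be invertible. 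The surrounding bookkeeping is then routine.
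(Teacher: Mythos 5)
Your argument is correct and matches the paper's intended reasoning: the paper states this corollary with no proof (it carries a \qed), treating it as immediate from \cref{prop:laxlimCATIT} together with the facts that a $1$-fibred fibration has \icat{} fibres and that $\CATI$ is a full sub-\itcat{} of $\CATIT$, which is precisely the deduction you carry out. Your fibrewise $C_{2}$-argument that $\FUN^{E\dcoc}_{/\tA}(\tA,\scr{F}_{0,1})$ is conservative on $2$-morphisms, hence an \icat{} whose underlying \icat{} is $\Fun^{E\dcoc}_{/\tA}(\tA,\scr{F}_{0,1})$, correctly supplies the one substantive detail the paper leaves implicit, and it rests only on the fact (used in \cref{obs:graysqloc}) that $C_{2} \to [1]$ exhibits $[1]$ as the localization of $C_{2}$ inverting its $2$-cell.
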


\begin{remark}
  The fibrational description of (op)lax limits in $\CATI$ from
  \cref{cor:laxlimcat} is taken as a definition in \cite{AMGR}*{\S
    B.6}, where these are called \emph{left-} and \emph{right-lax}
  limits; it follows that these agree with other notions of lax limits
  in the literature.
\end{remark}

Finally, we note the specialization of our results to ordinary
(co)limits in $\CATIT$:
\begin{cor}
  Suppose we have a functor $F \colon \oA \to \CATIT$, where $\oA$ is
  an \icat{}, with associated
  $i$-fibration  $\pi \colon\mathcal{F}_{i} \to
  \oA^{\epsilon}$ where
  $\epsilon\in\{\emptyset,\op\}$.
  \begin{enumerate}[(i)]
  \item The colimit of $F$
  can be described as
  \[ \colim_{\oA} F \simeq L(\mathcal{F}_{i})\]
  for $i = 0,1$, where $L(\mathcal{F}_{i})$ denotes the localization
  that inverts all (co)cartesian $1$-morphisms.
\item The limit of $F$ can be described as
  \[ \lim_{\oA} F \simeq \FUN_{/\tA}^{\coc}(\tA, \mathcal{F}_{0})
    \simeq \FUN_{/\tA}^{\cart}(\tA, \mathcal{F}_{1}),\] \ie{} as the
  \itcats{} of (co)cartesian sections of these fibrations. \qed
  \end{enumerate}
\end{cor}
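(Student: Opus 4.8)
The plan is to obtain this statement as the specialization of \cref{prop:laxcolimloc} and \cref{prop:laxlimCATIT} to the maximal marking $E=\sharp$ on the \icat{} $\oA$, where the partially (op)lax structure collapses to the ordinary one. First I would record that the $\sharp$-(op)lax (co)limits agree with the ordinary ones: by the identity $\tc{A}\otimes_{E,\sharp}\tc{B}\simeq\tc{A}\times\tc{B}$ from \cref{defn:markedgraytens}, taking $E=\sharp$ gives $\tX\otimes_{\natural,\sharp}\oA\simeq\tX\times\oA$ for every \itcat{} $\tX$, so that $\FUN(\oA,\CATIT)^{\elax}\simeq\FUN(\oA,\CATIT)\simeq\FUN(\oA,\CATIT)^{\eoplax}$ when $E=\sharp$. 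Hence the constant-diagram functors and the representing universal properties coincide, yielding $\colim_{\oA}F\simeq\colim^{\elax}_{\oA}F\simeq\colim^{\eoplax}_{\oA}F$ and the analogous identities for limits.

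Next I would simplify the fibrational input using that $\oA$ and $\oA^{\op}$ are \icats{}. By the observation that any functor to an \icat{} is automatically (co)cartesian-enriched, the $(0,j)$-fibration $\mathcal{F}_{0,j}\to\oA$ is just the cocartesian fibration $\mathcal{F}_{0}$ of $F$ and the $(1,j)$-fibration $\mathcal{F}_{1,j}\to\oA^{\op}$ is the cartesian fibration $\mathcal{F}_{1}$, both independent of $j$. The key point I would then verify is that the $j$-cartesian $2$-morphisms of $\mathcal{F}_{i,j}$ are invertible: by definition these are the (co)cartesian morphisms of the mapping-\icat{} fibration $\mathcal{F}(x,y)\to\oA^{\epsilon}(px,py)$, whose base is a space, and a (co)cartesian morphism of a fibration over a space lies over an equivalence and is therefore itself an equivalence.

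Finally I would feed this back into the two propositions with $E=\sharp$. In \cref{prop:laxcolimloc} the localization $L_{\sharp}(\mathcal{F}_{i,j})$ inverts all $i$-cartesian $1$-morphisms (those lying over $\sharp$) together with the $j$-cartesian $2$-morphisms; since the latter are already invertible, $L_{\sharp}(\mathcal{F}_{i,j})\simeq L(\mathcal{F}_{i})$, which gives (i). In \cref{prop:laxlimCATIT} the sections defining $\FUN^{\sharp\dcoc}_{/\oA}(\oA,\mathcal{F}_{0,j})$ are required to preserve all cocartesian $1$-morphisms and the $j$-cartesian $2$-morphisms, but the second condition is automatic as those $2$-morphisms are equivalences; this collapses the \itcat{} to the cocartesian sections $\FUN^{\coc}_{/\oA}(\oA,\mathcal{F}_{0})$, and dually $\FUN^{\sharp\dcart}_{/\oA}(\oA,\mathcal{F}_{1,j})\simeq\FUN^{\cart}_{/\oA}(\oA,\mathcal{F}_{1})$, which gives (ii).

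The only genuine content, and the step I expect to require the most care, is the invertibility of the $j$-cartesian $2$-morphisms over an \icat{}: this is exactly what makes both the extra localization in (i) and the extra section-preservation condition in (ii) vacuous, and hence reduces the partially lax formulas to the classical fibrational descriptions of ordinary (co)limits.
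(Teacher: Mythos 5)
Your proposal is correct and is exactly the specialization the paper has in mind when it states this corollary with no written proof: take $E=\sharp$ so that $\tX\otimes_{\natural,\sharp}\oA\simeq\tX\times\oA$ collapses the $E$-(op)lax universal properties to the ordinary ones, and then apply \cref{prop:laxcolimloc} and \cref{prop:laxlimCATIT}. Your identification of the one point of substance --- that over an \icat{} the $j$-cartesian $2$-morphisms are precisely the equivalences (being (co)cartesian morphisms of mapping-\icat{} fibrations over \igpds{}), so the extra localization in (i) and the $2$-morphism preservation condition in (ii) are vacuous --- is exactly right.
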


\subsection{Weighted (co)limits in \texorpdfstring{$(\infty,2)$-}-categories}\label{sec:wtlim}
In this section we first recall the definition of weighted (co)limits
in \itcats{} and give an easy proof of their description as partially
(op)lax (co)limits from \cite{GagnaHarpazLanariLaxLim} using our new
description of the latter. From this we then derive a fibrational
description of $\CatI$-weighted colimits in $\CATIT$ and $\CATI$.

\begin{defn}
  For $F \colon \tA \to \tB$ and $W \colon \tA \to \CATI$, the \emph{$W$-weighted limit} of $F$, if it exists, is characterized by the universal property
  \[ \tB(b, \lim^{W}_{\tB}F) \simeq \Nat_{\tA,\CATI}(W, \tB(b,F)).\]
  Dually, for $F \colon \tA \to \tB$ and $W \colon \tA^{\op} \to \CATI$, the \emph{$W$-weighted colimit} of $F$, if it exists, is characterized by the universal property
  \[ \tB(\colim^{W}_{\tB}F, b) \simeq \Nat_{\tA^{\op},\CATI}(W, \tB(F, b)).\]
\end{defn}

\begin{propn}
  For $W \colon \tA \to \CATI$ and $F \colon \tA \to \tB$,
  we have
  \[ \lim^{W}_{\tA} F \simeq \lim_{\tW}^{C\dlax} F \circ p,\] where
  $p \colon \tW \to \tA$ is the $(0,1)$-fibration for $W$ and
  $C$ is the collection of $p$-cocartesian morphisms in $\tW$.
\end{propn}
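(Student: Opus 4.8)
The plan is to compare the universal properties of the two sides, reducing the statement to an equivalence between ordinary natural transformations of $\CATI$-valued functors and $C$-lax transformations over $\tW$. By the definition of the weighted limit, $\tB(b,\lim^{W}_{\tA}F)\simeq\Nat_{\tA,\CATI}(W,\tB(b,F))$, while the definition of the $C$-lax limit gives $\tB(b,\lim^{C\dlax}_{\tW}F\circ p)\simeq\Nat^{C\dlax}_{\tW,\tB}(\underline{b},F\circ p)$; here $(\tW,C)$ is a marked \itcat{}, which makes sense since the $p$-cocartesian morphisms contain the equivalences. Applying \cref{eq:natlaxbF} with $\tW$, the marking $C$, and $F\circ p$ in place of $\tA$, $E$, and $F$ rewrites the second expression as $\Nat^{C\dlax}_{\tW,\CATI}(\underline{*},\tB(b,F)\circ p)$. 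All of these equivalences are natural in $b$, and $b\mapsto\tB(b,F)$ defines a functor $\tB^{\op}\to\FUN(\tA,\CATI)$, so by the Yoneda lemma it suffices to construct, naturally in $G\colon\tA\to\CATI$, an equivalence
\[ \Nat_{\tA,\CATI}(W,G)\simeq\Nat^{C\dlax}_{\tW,\CATI}(\underline{*},G\circ p). \]

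To prove this I would straighten both sides. Let $q\colon\tG\to\tA$ be the $1$-fibred $(0,1)$-fibration classifying $G$, so that its pullback $p^{*}\tG\to\tW$ classifies $G\circ p$. On the right, \cref{propn:Natelaxdescr} together with the fact that the oplax slice $\CATI^{\oplax}_{*/}$ straightens to the identity functor of $\CATI$ (since $\tB^{\oplax}_{b/}$ straightens to $\tB(b,\blank)$ by \cite{LurieGoodwillie}*{4.1.8}) identifies $\Nat^{C\dlax}_{\tW,\CATI}(\underline{*},G\circ p)$ with $\Fun^{C\dcoc}_{/\tW}(\tW,p^{*}\tG)$, the \icat{} of sections of $p^{*}\tG\to\tW$ preserving cartesian $2$-morphisms and sending the morphisms in $C$ to cocartesian morphisms. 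On the left, straightening (\cref{thm:str01}, restricted to $1$-fibred fibrations) identifies $\Nat_{\tA,\CATI}(W,G)$ with the mapping \icat{} of $\FIB_{(0,1)}(\tA)$ from $\tW$ to $\tG$, that is the functors $\tW\to\tG$ over $\tA$ preserving cocartesian $1$-morphisms and cartesian $2$-morphisms, with vertical natural transformations as morphisms.

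These descriptions are matched by the standard adjunction $\Fun_{/\tA}(\tW,\tG)\simeq\Fun_{/\tW}(\tW,p^{*}\tG)$ between functors over $\tA$ and sections over $\tW$: under it, preservation of cocartesian $1$-morphisms becomes the condition of sending the class $C$ of $p$-cocartesian morphisms to cocartesian morphisms of $p^{*}\tG$, and preservation of cartesian $2$-morphisms corresponds to the analogous condition for $p^{*}\tG$. The one point that requires care, and which I expect to be the crux of the argument, is this last condition: a priori the left-hand description only constrains the $p$-cartesian $2$-morphisms of $\tW$, while the section description constrains all $2$-morphisms. These coincide precisely because $W$ takes values in $\CATI$: then $p\colon\tW\to\tA$ is a $1$-fibred $(0,1)$-fibration, hence right-enriched by \cref{propn:1fibredcond}, so each $\tW(x,y)\to\tA(px,py)$ is a right fibration and consequently every $2$-morphism of $\tW$ is $p$-cartesian. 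With this observation the preservation conditions agree, the modifications match on both sides, and the two descriptions give the same full subcategory; combined with the reduction above and the Yoneda lemma, this yields $\lim^{W}_{\tA}F\simeq\lim^{C\dlax}_{\tW}F\circ p$.
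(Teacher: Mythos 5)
Your proposal is correct and follows essentially the same route as the paper's proof: both compare the two universal properties by straightening natural transformations out of $W$ into morphisms of $(0,1)$-fibrations $\tW \to \tG$ over $\tA$, passing by base change to sections of $p^{*}\tG$ over $\tW$ preserving cartesian $2$-morphisms and cocartesian lifts over $C$, and translating back via \cref{propn:Natelaxdescr} and \cref{cor:laxlimituniv}. The differences are organizational (your Yoneda reduction to an arbitrary $G \colon \tA \to \CATI$), together with a welcome extra verification at the base-change step: your observation that every $2$-morphism of $\tW$ is $p$-cartesian, because $\tW$ is $1$-fibred hence right-enriched by \cref{propn:1fibredcond}, is precisely what makes the paper's unelaborated ``base change'' equivalence $\Fun^{\coc}_{/\tA}(\tW,\tG) \simeq \Fun^{C\dcoc}_{/\tW}(\tW,p^{*}\tG)$ valid.
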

\begin{proof}
Let $b \in \tB$ and observe that we have natural equivalences
\[
  \begin{split}
  \tB(b, \lim^{W}_{\tA} F) & \simeq
                             \Nat_{\tA,\CAT_\infty}(W,\tB(b,F(-))) \\
                           & \simeq
                             \Fun^{\coc}_{/\tA}(\tW,F^*\tB^{\oplax}_{b/})   \\
                           & \simeq \Fun^{C\dcoc}_{/\tW}(\tW,(F\circ p)^*\tB^{\oplax}_{b/}),
  \end{split}
\]
where the second equivalence is given by straightening and the third equivalence is simply given by base change. We further see 
\[
  \Fun^{C\dcoc}_{/\tW}(\tW,(F\circ p)^*\tB^{\oplax}_{b/}) \simeq \Nat^{C\dlax}_{\tW,\CAT_\infty}(\underline{\ast},\tB(b,F\circ p(-)))\simeq \Nat^{C\dlax}_{\tW,\tB}(\underline{b},F \circ p),
\]
where the first equivalence is given by \cref{thm:fib01laxstr} and the second uses \cref{cor:laxlimituniv}. Therefore, both universal properties coincide, as desired.
\end{proof}

Combining this with \cref{prop:laxlimCATIT}, we get:
\begin{cor}\label{cor:wtlimcatit}
  For $W \colon \tA \to \CATI$ and $F \colon \tA \to \CATIT$,
  we have
  \[ \lim^{W}_{\tA} F \simeq \FUN^{\coc}_{/\tA}(\tW, \tF)\]
  where $p \colon \tW \to \tA$ is the $(0,1)$-fibration for $W$ and $\tF \to \tA$ is that for $F$. \qed 
\end{cor}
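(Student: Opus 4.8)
The plan is to chain the preceding proposition with \cref{prop:laxlimCATIT} and then perform a base change along $p$. First I would apply the preceding proposition to rewrite the weighted limit as a partially lax limit over the total space, giving
\[ \lim^{W}_{\tA} F \simeq \lim^{C\dlax}_{\tW} (F \circ p), \]
where $p \colon \tW \to \tA$ is the $(0,1)$-fibration for $W$ and $C$ is the collection of $p$-cocartesian morphisms in $\tW$, regarded as a marking. Next I would apply \cref{prop:laxlimCATIT} to the marked \itcat{} $(\tW, C)$ and the composite $F \circ p \colon \tW \to \CATIT$. The $(0,1)$-fibration classifying $F \circ p$ is the pullback $p^{*}\tF = \tW \times_{\tA} \tF \to \tW$, by the naturality of straightening with respect to composition recorded in \cref{thm:str01}. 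Thus the $C$-lax case of \cref{prop:laxlimCATIT} yields
\[ \lim^{C\dlax}_{\tW} (F \circ p) \simeq \FUN^{C\dcoc}_{/\tW}(\tW, p^{*}\tF). \]

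All the content then lies in a base-change equivalence
\[ \FUN^{C\dcoc}_{/\tW}(\tW, p^{*}\tF) \simeq \FUN^{\coc}_{/\tA}(\tW, \tF), \]
which, combined with the above, gives the claim. I would prove it by testing against an arbitrary \itcat{} $\tX$ via the universal property of \cref{def:FUNdcoc}, reducing to a natural equivalence $\Fun^{C\dcoc}_{/\tW}(\tW \times \tX, p^{*}\tF) \simeq \Fun^{\coc}_{/\tA}(\tW \times \tX, \tF)$. The universal property of the pullback identifies a section $s$ of $p^{*}\tF$ over $\tW$ with a functor $g = \pr_{\tF} \circ s \colon \tW \to \tF$ over $\tA$, and the remaining task is to match the two preservation conditions under $s \leftrightarrow g$. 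For cocartesian $1$-morphisms this is immediate: the morphism $(c, g(c))$ is cocartesian for $p^{*}\tF \to \tW$ over $c \in C$ \IFF{} $g(c)$ is cocartesian for $\tF \to \tA$, and $C$ is by definition the class of all cocartesian morphisms of $p$, so ``$s$ preserves $C$-cocartesian morphisms'' matches ``$g$ preserves cocartesian morphisms''. For cartesian $2$-morphisms I would use the analogous description of cartesian $2$-morphisms in a pullback of $(0,1)$-fibrations, together with the point that since $W$ is $\CATI$-valued the fibration $p \colon \tW \to \tA$ is $1$-fibred, so by \cref{propn:1fibredcond} it is right-enriched and hence \emph{every} $2$-morphism of $\tW$ is $p$-cartesian; this forces ``preserve cartesian $2$-morphisms'' on the two sides to agree.

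The hard part will be exactly this bookkeeping in the base-change step: one must track how cocartesian $1$-morphisms, cartesian $2$-morphisms, and the marking $C$ behave under the pullback $p^{*}$, and check that the equivalence respects the full $(\infty,2)$-categorical structure and not merely the underlying \igpds{}. The descriptions of (co)cartesian morphisms for pullbacks and composites of fibrations recorded earlier (as in \cref{obs:cocartmormapfib} and \cref{lem:cartenrcomp}) supply what is needed, and the $1$-fibredness of $\tW$ is the key input that makes the cartesian-$2$-morphism conditions coincide; without it the two sides would genuinely differ.
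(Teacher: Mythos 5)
Your proposal is correct and is essentially the paper's own argument: the corollary is stated as an immediate consequence of the preceding proposition together with \cref{prop:laxlimCATIT}, with the base-change identification $\FUN^{C\dcoc}_{/\tW}(\tW, p^{*}\tF) \simeq \FUN^{\coc}_{/\tA}(\tW,\tF)$ left implicit. You fill in exactly that implicit step, correctly identifying the crucial input — that $p$ is $1$-fibred, hence right-enriched by \cref{propn:1fibredcond}, so every $2$-morphism of $\tW$ is $p$-cartesian and the two cartesian-$2$-morphism conditions coincide.
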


Dualizing, we similarly have:
\begin{cor}
  For $W \colon \tA^{\op} \to \CATI$ and $F \colon \tA \to \tB$,
  we have
  \[ \colim^{W}_{\tA} F \simeq \colim_{\tW}^{C\doplax} F \circ p,\] where
  $p \colon \tW \to \tA$ is the $(1,0)$-fibration for $W$ and
  $C$ is the collection of $p$-cartesian morphisms in $\tW$. \qed
\end{cor}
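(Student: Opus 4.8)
The plan is to dualize the preceding proposition, whose proof computes the weighted limit by identifying both sides of the desired equivalence with a single \icat{} of sections. Since the weighted colimit of $F$ against $W \colon \tA^{\op} \to \CATI$ is nothing but the weighted limit of $F^{\op} \colon \tA^{\op} \to \tB^{\op}$ against $W$ in $\tB^{\op}$ (which follows directly from the two universal properties, as reversing $1$-morphisms preserves mapping \icats{}), one could simply invoke the proposition in $\tB^{\op}$ and transport the answer back along \cref{obs:grayop}. I find it cleaner, however, to run the mirror image of the proof directly, now reading off the $(1,0)$-fibration, cartesian, and oplax rows of \cref{thm:fib01laxstr}, \cref{propn:Natelaxdescr} and \cref{cor:laxlimituniv} in place of the $(0,1)$-fibration, cocartesian, and lax ones.

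Concretely, writing $p \colon \tW \to \tA$ for the $(1,0)$-fibration of $W$ and $C$ for its cartesian morphisms, I would establish for each $b \in \tB$ a chain of natural equivalences
\begin{align*}
  \tB(\colim^{W}_{\tA} F, b)
  &\simeq \Nat_{\tA^{\op},\CATI}(W, \tB(F(\blank),b)) \\
  &\simeq \Fun^{\cart}_{/\tA}(\tW, F^{*}\tB^{\lax}_{/b}) \\
  &\simeq \Fun^{C\dcart}_{/\tW}(\tW, (F\circ p)^{*}\tB^{\lax}_{/b}) \\
  &\simeq \Nat^{C\doplax}_{\tW,\tB}(F\circ p, \underline{b}).
\end{align*}
The first equivalence is the defining universal property of the weighted colimit. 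The second uses that $\tB^{\lax}_{/b} \to \tB$ is the $(1,0)$-fibration straightening to $\tB(\blank, b)$, so that $F^{*}\tB^{\lax}_{/b}$ classifies $\tB(F(\blank), b)$, together with the identification of ordinary natural transformations of $\CATI$-valued functors with cartesian sections of the associated $(1,0)$-fibrations. The third is base change along $p$, which relaxes the cartesian-section condition to one imposed only over $C$, exactly as in the proposition. The last equivalence combines \cref{thm:fib01laxstr} (to straighten $(F\circ p)^{*}\tB^{\lax}_{/b}$ back over $\tW$) with \cref{cor:laxlimituniv}; comparing with the oplax colimit's universal property $\tB(\colim^{C\doplax}_{\tW}(F\circ p), b) \simeq \Nat^{C\doplax}_{\tW,\tB}(F\circ p, \underline{b})$ and applying the Yoneda lemma then yields the claim.

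The main obstacle is purely one of bookkeeping the variances. Unlike the limit case, which stays in the covariant $\underline{b} \to F$ direction, the colimit runs in the $F \to \underline{b}$ direction, and here the passage between the fibrational and transformation sides introduces both a switch between lax and oplax and a replacement of $\tA$ by $\tA^{\op}$ --- precisely the discrepancy recorded in the footnote to the comparison with \cite{GagnaHarpazLanariLaxLim}, coming from \cref{eq:natoplaxFb}. I would therefore need to check carefully that the opposite \itcats{} and the opposites appearing in the oplax rows of \cref{propn:Natelaxdescr} cancel correctly, so that the index of the resulting oplax colimit really is the $(1,0)$-fibration $\tW$ with its cartesian morphisms marked, rather than one of the other three variances. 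Once this is confirmed, every individual step is formally identical to the limit case.
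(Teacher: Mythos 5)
Your overall strategy---obtain the colimit statement by dualizing the weighted-limit proposition---is exactly the paper's proof, which consists of the single word ``Dualizing'' before the \qed{}'d statement, so the plan is right. But the direct chain you propose breaks at its second step: $\tB^{\lax}_{/b} \to \tB$ is \emph{not} the $(1,0)$-fibration classifying $\tB(\blank,b)$. By \cref{ex:oplaxarrowbifib} and \cref{propn:graybifib} it is a $(1,1)$-fibration, and its fibre over $x$ is $\tB(x,b)^{\op}$ rather than $\tB(x,b)$ --- the proof of \cref{propn:Natelaxdescr} records precisely this via $\ARlax(\tX) \simeq \ARopl(\tX^{\op})^{\op}$. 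The $(1,0)$-fibration for $\tB(\blank,b)$ is the \emph{oplax} slice $\tB^{\oplax}_{/b}$. Correspondingly, your final step misquotes \cref{propn:Natelaxdescr}: its oplax row reads $\Nat^{\eoplax}_{\tA,\tB}(F,\underline{b})^{\op} \simeq \Fun^{E\dcart}_{/\tA}(\tA, F^{*}\tB^{\lax}_{/b})$, with an op that your chain silently drops, while the un-opped identification $\Nat^{\elax}_{\tA,\tB}(F,\underline{b}) \simeq \Fun^{E\dcart}_{/\tA}(\tA, F^{*}\tB^{\oplax}_{/b})$ involves the other slice. The failure is already visible in the degenerate case $\tA = [0]$, $W = \oK$, $F = b_{0}$: your second line produces $\Fun(\oK, \tB(b_{0},b)^{\op})$, whereas the weighted-colimit universal property demands $\Fun(\oK, \tB(b_{0},b))$.

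These op-twists are exactly the variance issue you flag in your last paragraph, and they do not cancel in the way your sketch assumes. Running the honest mirror of the paper's limit proof with the corrected slice gives
\[
  \tB(\colim^{W}_{\tA}F, b) \simeq \Fun^{\cart}_{/\tA}(\tW, F^{*}\tB^{\oplax}_{/b}) \simeq \Fun^{C\dcart}_{/\tW}(\tW, (F\circ p)^{*}\tB^{\oplax}_{/b}) \simeq \Nat^{C\dlax}_{\tW,\tB}(F\circ p, \underline{b}),
\]
\ie{} with $\tW$ the $(1,0)$-fibration for $W$ (fibres $W(a)$) and $C$ its cartesian morphisms, the chain closes on the universal property of the partially \emph{lax} colimit; the genuinely oplax formulation obtained by your first route (transporting the limit proposition through $\tB^{\op}$ using \cref{obs:grayop}) is indexed instead by the fibrewise opposite of the $(0,1)$-fibration $\tW' \to \tA^{\op}$ for $W$, whose fibres are $W(a)^{\op}$. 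Your proposal conflates these two unstraightenings, which is precisely the reconciliation the ``bookkeeping'' you defer would have to carry out; note also that it is the lax reading over $\tW$ that feeds the subsequent \cref{cor:wtcolimcatit}, since $\tW \times_{\tA} \tF \to \tW$ is the $(0,1)$-fibration for $F \circ p$ and \cref{prop:laxcolimloc} computes $E$-lax colimits from $(0,j)$-fibrations. So as written the proposal asserts a false intermediate equivalence and does not resolve the lax/oplax discrepancy it correctly identifies as the crux.
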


Combining this with our fibrational description of partially (op)lax colimits in $\CATIT$, we get:
\begin{cor}\label{cor:wtcolimcatit}
  For $W \colon \tA^{\op} \to \CATI$ and $F \colon \tA \to \CATIT$,
  we have
  \[ \colim^{W}_{\tA} F \simeq L_{E}(\tW \times_{\tA} \tF),\]
  where
  $p \colon \tW \to \tA$ is the $(1,0)$-fibration for $W$,
  $q \colon \tF \to \tA$ is the $(0,1)$-fibration for $F$, and
  $E$ is the collection of those morphisms in $\tW \times_{\tA} \tF$
  that map to a $p$-cartesian morphism in $\tW$ and a $q$-cocartesian
  morphism in $\tF$. \qed
\end{cor}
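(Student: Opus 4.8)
The plan is to combine the preceding corollary, which identifies $\colim^{W}_{\tA}F$ with the $C$-oplax colimit $\colim^{C\doplax}_{\tW}(F\circ p)$ for $C$ the class of $p$-cartesian morphisms in $\tW$, with \cref{prop:laxcolimloc}. That result expresses the oplax colimit as a localization of a $(1,j)$-fibration straightening $F\circ p$; on the other hand $\tW\times_{\tA}\tF\to\tW$ is the $(0,1)$-fibration for $F\circ p$, being the pullback of the $(0,1)$-fibration $\tF\to\tA$ along $p$ (by naturality of straightening under pullback, \cref{thm:str01}). Because of this variance mismatch the combination is not literally immediate, so the substance of the argument is to show that $L_{E}(\tW\times_{\tA}\tF)$ nonetheless corepresents the oplax colimit. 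I would therefore verify directly that $L_{E}(\tW\times_{\tA}\tF)$ has the universal property of $\colim^{W}_{\tA}F$, namely that for every \itcat{} $\tX$ there is an equivalence $\Fun(L_{E}(\tW\times_{\tA}\tF),\tX)\simeq\Nat_{\tA^{\op},\CATI}(W,\CATIT(F(\blank),\tX))$, natural in $\tX$.

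The main tool is a relative tensor–hom adjunction over $\tA$. Since $\tF\to\tA$ is a $(0,1)$-fibration, hence exponentiable, there should be a fibration $\tP\to\tA$ — the relative functor \itcat{} of $\tF$ into $\tX\times\tA$ — satisfying $\Fun(\tW\times_{\tA}\tF,\tX)\simeq\Fun_{/\tA}(\tW,\tP)$, naturally in $\tW$ and $\tX$. By compatibility of straightening with this construction, $\tP$ is the $1$-fibred $(1,0)$-fibration straightening the functor $a\mapsto\CATIT(F(a),\tX)=\Fun(F(a),\tX)$ in $\FUN(\tA^{\op},\CATI)$, with cartesian morphisms over $\alpha\colon a\to a'$ implementing restriction along $F(\alpha)$. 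Under the straightening equivalence for $(1,0)$-fibrations one then has $\Fun^{\cart}_{/\tA}(\tW,\tP)\simeq\Nat_{\tA^{\op},\CATI}(W,\Fun(F(\blank),\tX))$, so it remains to match, across the adjunction, the functors $\tW\times_{\tA}\tF\to\tX$ that invert $E$ with the sections $\tW\to\tP$ that preserve cartesian morphisms.

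This matching is the crux and is a direct unwinding. A section $s$ corresponds to the functor $\Phi(w,x)=s(w)(x)$, and for a morphism $(\gamma,\delta)$ of $\tW\times_{\tA}\tF$ with $\gamma\colon w\to w'$ a $p$-cartesian morphism over $\alpha$ and $\delta\colon x\to F(\alpha)(x)$ a $q$-cocartesian morphism, the value $\Phi(\gamma,\delta)$ is exactly the comparison $s(w)(x)\to s(w')(F(\alpha)(x))$ induced by $s(\gamma)$. Letting $x$ range over the fibre, this comparison is invertible for all such $(\gamma,\delta)\in E$ precisely when $s(w)\to s(w')\circ F(\alpha)$ is an equivalence, i.e. when $s(\gamma)$ is cartesian in $\tP$; thus $\Phi$ inverts $E$ iff $s$ preserves cartesian $1$-morphisms. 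The analogous, easier bookkeeping on $2$-morphisms identifies the inverted cartesian $2$-morphisms of $\tW\times_{\tA}\tF$ with the requirement that $\tP$ be $1$-fibred, using \cref{lem:pushoutquarec2} together with conservativity on $2$-morphisms (\cref{obs:2cons}) to see that ordinary, rather than merely lax, naturality is forced. Combining these equivalences and invoking the Yoneda lemma gives $L_{E}(\tW\times_{\tA}\tF)\simeq\colim^{W}_{\tA}F$, and the three remaining variances follow by the same opposite-category manipulations used throughout.

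I expect the main obstacle to be setting up the relative tensor–hom adjunction and identifying $\tP$ as the asserted $(1,0)$-fibration — that is, the compatibility of straightening with relative internal homs of fibrations of \itcats{} — since this is where the $\CATIT$-valued (rather than $\CATI$-valued) nature of $F$ genuinely enters and where the $2$-categorical bookkeeping must be controlled. Once that input is available, the morphism-class matching and the appeals to straightening and Yoneda are routine.
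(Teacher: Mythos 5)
Your opening diagnosis is partially apt but leads you astray. Taken at face value, the printed intermediate corollary (``$\colim^{W}_{\tA}F \simeq \colim^{C\doplax}_{\tW}F\circ p$'') indeed does not pair with the oplax case of \cref{prop:laxcolimloc}, which uses the $(1,j)$-fibration of $F\circ p$ over $\tW^{\op}$ or $\tW^{\coop}$ rather than the pullback. But the resolution is a two-line re-dualization, not new machinery: running the argument of \cref{cor:laxlimituniv} and \cref{propn:Natelaxdescr} on the colimit side, using $\Nat^{\elax}_{\tA,\tB}(F,\underline{b}) \simeq \Fun^{E\dcart}_{/\tA}(\tA, F^{*}\tB^{\oplax}_{/b})$ and the fact that $\tB^{\oplax}_{/b} \to \tB$ is the $(1,0)$-fibration for $\tB(\blank,b)$, one finds $\tB(\colim^{W}_{\tA}F, b) \simeq \Nat^{C\dlax}_{\tW,\tB}(F\circ p, \underline{b})$ when $p \colon \tW \to \tA$ is the $(1,0)$-fibration --- that is, the relevant marked colimit over $(\tW,C)$ is the $C$-\emph{lax} one. (Sanity check: over $\tA = [0]$ one has $\colim^{W}F \simeq W \times F$, which is the fully lax colimit of the constant diagram on $W$; the oplax colimit would give $W^{\op}\times F$.) The lax case of \cref{prop:laxcolimloc} then applies on the nose: by naturality of straightening under pullback, the $(0,1)$-fibration for $F \circ p$ is $\tW \times_{\tA}\tF \to \tW$, and its cocartesian $1$-morphisms lying over $C$ are precisely the class $E$ of the statement, so $\colim^{W}_{\tA}F \simeq L_{C}(\tW\times_{\tA}\tF) = L_{E}(\tW\times_{\tA}\tF)$, with $L_{E}$ understood as in \cref{prop:laxcolimloc} (it also inverts the cartesian $2$-morphisms of the pullback). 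Your own ``crux'' paragraph is essentially this unwinding, which \cref{prop:laxcolimloc} has already performed.

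Your substitute route has genuine gaps. The central input --- existence of the relative hom $\tP = q_{*}(\tX \times \tF)$ (exponentiability of $(0,1)$-fibrations of \itcats{}) together with its identification as the $(1,0)$-fibration classifying $a \mapsto \FUN(F(a),\tX)$, with cartesian morphisms given by restriction --- is asserted, not proved, and is at least as hard as the corollary itself; note that the paper proves the \icatl{} analogue of exactly this identification ($p_{*}(\oC\times\oF)$ classifies $\Fun(F(\blank),\oC)$) as a \emph{consequence} of \cref{cor:wtcolimcatit}, so your architecture inverts the logical order without supplying the missing proof. Moreover, the identification as stated is wrong: for the adjunction $\Fun(\tW\times_{\tA}\tF,\tX) \simeq \Fun_{/\tA}(\tW,\tP)$ to hold, the fibres of $\tP$ must be the functor \itcats{} $\FUN(F(a),\tX)$, not the \icats{} $\Fun(F(a),\tX)$; $\tP$ is not $1$-fibred, since $\tX$ is an arbitrary \itcat{} and $2$-morphisms of $\tW\times_{\tA}\tF$ encode noninvertible $2$-morphisms of functor \itcats{}. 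Correspondingly, your handling of the $2$-morphism condition fails: inverting the cartesian $2$-morphisms of $\tW\times_{\tA}\tF$ does not make all $2$-morphisms invertible, so it cannot correspond to ``$\tP$ being $1$-fibred,'' and \cref{obs:2cons} (conservativity on $2$-morphisms of fully faithful functors) is not relevant here; the condition transcribes instead to compatibility of sections with the cartesian enrichment, which is exactly what the straightening equivalence \cref{propn:markedlaxstr} underlying \cref{prop:laxcolimloc} packages. (Also, the closing appeal to ``three remaining variances'' has no referent: the corollary is a single statement.) As it stands, then, the proposal replaces an essentially immediate combination with an unproven and partly incorrect construction.
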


\begin{remark}
  In the special case where $F$ and $W$ are diagrams of ordinary
  categories, this formula for $\Cat$-weighted colimits in $\Cat$
  appears in \cite{Lambert}.
\end{remark}

As an application, we can identify the cocartesian fibration for a functor of the form $\Fun(F(\blank), \oC)$:
\begin{cor}
  Let $p \colon \oF \to \oB$ be the cartesian fibration for a functor $F \colon \oB^{\op} \to \CatI$. Then for any \icat{} $\oC$, the cocartesian fibration $p_{*}(\oC \times \oF) \to \oB$, where $p_{*}$ denotes the right adjoint to pullback along $p$, classifies the functor $\Fun(F(\blank), \oC)$.
\end{cor}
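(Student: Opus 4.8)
The plan is to identify $p_{*}(\oC \times \oF) \to \oB$ with the cocartesian fibration classifying $\Fun(F(\blank),\oC)$ by first pinning down its fibres and then recognizing its cocartesian transport. Since $p$ is a cartesian fibration it is exponentiable, so the pullback functor $p^{*} \colon \CatI_{/\oB} \to \CatI_{/\oF}$ admits a right adjoint $p_{*}$, and for any $q \colon \oX \to \oB$ we have natural equivalences
\[ \Map_{/\oB}(\oX, p_{*}(\oC \times \oF)) \simeq \Map_{/\oF}(\oX \times_{\oB} \oF, \oC \times \oF) \simeq \Map(\oX \times_{\oB} \oF, \oC), \]
where the last step uses that a map over $\oF$ into $\oC \times \oF$ is precisely a map to $\oC$.

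First I would compute the fibres. Taking $\oX = [n]$ mapping to $\oB$ constant at an object $b$, the pullback $\oX \times_{\oB} \oF$ is $[n] \times \oF_{b} = [n] \times F(b)$, so the displayed equivalence identifies the $n$-simplices of the fibre $(p_{*}(\oC \times \oF))_{b}$ with the maps $[n] \times F(b) \to \oC$, i.e.\ the $n$-simplices of $\Fun(F(b),\oC)$; hence the fibre over $b$ is $\Fun(F(b),\oC)$. More invariantly, this is an instance of base change (Beck--Chevalley) for $p_{*}$ along $\{b\} \to \oB$, which identifies the fibre with $(p_{b})_{*}(\oC \times \oF_{b}) \simeq \Fun_{/\oF_{b}}(\oF_{b}, \oC \times \oF_{b}) \simeq \Fun(F(b),\oC)$, where $p_{b} \colon \oF_{b} \to [0]$.

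Next I would show that $p_{*}(\oC \times \oF) \to \oB$ is a cocartesian fibration and identify its cocartesian transport. Here I would invoke the general fact that the pushforward of a cocartesian fibration (namely $\oC \times \oF \to \oF$, which straightens to the constant functor $\underline{\oC} \colon \oF \to \CatI$) along an exponentiable fibration is again cocartesian, with cocartesian morphisms over $f \colon b \to b'$ computed fibrewise by restriction along the cartesian transport $F(f) \colon F(b') \to F(b)$ of $p$. Concretely, for $f \colon b \to b'$ and a section $s \colon F(b) \to \oC$ over $b$, the cocartesian pushforward is $s \circ F(f) \colon F(b') \to \oC$, so the transport functor $\Fun(F(b),\oC) \to \Fun(F(b'),\oC)$ is precomposition with $F(f)$. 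This is exactly the covariant functoriality of $b \mapsto \Fun(F(b),\oC)$ induced by the contravariant $F$, so straightening $p_{*}(\oC \times \oF)$ yields $\Fun(F(\blank),\oC)$.

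The main obstacle is this middle step: establishing that $p_{*}$ of the cocartesian fibration $\oC \times \oF \to \oF$ is again cocartesian and pinning down the transport with the correct variance. Verifying cocartesianness directly from the mapping-space formula above amounts to re-deriving unstraightening, so I would instead either cite the pushforward-of-cocartesian-fibrations result for exponentiable fibrations, or argue via the Beck--Chevalley base change of the previous paragraph applied to the morphisms of $\oB$ (producing the transport as a composite of base change with the adjunction counit). Once the variance of $F(f)$ is matched to the direction of the collage of the cartesian fibration $\oX \times_{\oB} \oF \to \oX$, the identification is forced.
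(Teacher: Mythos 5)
Your preliminary steps are correct: the mapping-space formula for $p_{*}$ (valid since a cartesian fibration is exponentiable), the fibre computation $(p_{*}(\oC \times \oF))_{b} \simeq \Fun(F(b),\oC)$, and the identification of the cocartesian edges and of the transport as precomposition with $F(f)$ via the pushforward result (HTT, Corollary 3.2.2.12 --- the same key input the paper cites) are all right, including the variance bookkeeping. The genuine gap is the final inference. Knowing that $p_{*}(\oC\times\oF) \to \oB$ is a cocartesian fibration whose fibres are equivalent to $\Fun(F(b),\oC)$ and whose transport along each $f$ is equivalent to $F(f)^{*}$ does \emph{not} determine its straightening as a functor $\oB \to \CatI$: an objectwise and morphismwise identification carries no coherence data, and two functors can agree in this weak sense without being equivalent (already when $\oB$ has non-trivial loops, the homotopies filling the transport squares are essential data). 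Your closing claim that ``the identification is forced'' is precisely the statement that needs proof, and your two proposed remedies --- citing the pushforward theorem, or Beck--Chevalley base change --- only re-establish cocartesianness and the pointwise transport, which you already have; neither produces an equivalence of functors.

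The paper closes exactly this gap by a universal-property argument natural in a test object, which is why the corollary is placed after the weighted-colimit material: for any cocartesian fibration $q \colon \oE \to \oB$ with straightening $E$, it produces a chain of natural equivalences
\[ \Nat_{\oB,\CATI}(E, \Fun(F, \oC)) \simeq \Fun(\colim^{E}_{\oB} F, \oC) \simeq \Fun(L_{E}(\oE \times_{\oB} \oF), \oC) \simeq \Fun^{\coc}_{/\oB}(\oE, p_{*}(\oC \times \oF)), \]
where the first is the defining universal property of the weighted colimit, the second is the fibrational formula $\colim^{E}_{\oB} F \simeq L_{E}(\oE \times_{\oB} \oF)$ proved just before, and the third uses the adjunction $p^{*} \dashv p_{*}$ together with the HTT 3.2.2.12 description of cocartesian edges to match the cocartesian-edge-preserving condition with inverting the morphisms in $E$. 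Naturality in $E$ and the Yoneda lemma then pin down the straightening with all coherences at once. To repair your argument along your own lines you would need a comparable global comparison --- for instance, a map over $\oB$ from the unstraightening of $\Fun(F(\blank),\oC)$ to $p_{*}(\oC\times\oF)$, checked to be a fibrewise equivalence --- rather than pointwise computations.
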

\begin{proof}
  Let $q \colon \oE \to \oB$ be the cocartesian fibration for a functor $E$. Then we have
  \[ \Nat_{\oB,\CATI}(E, \Fun(F, \oC)) \simeq \Fun(\colim^{E}_{\oB} F,
    \oC) \simeq \Fun(L_{E}(\oE \times_{\oB} \oF), \oC) \simeq
    \Fun^{\coc}_{/\oB}(\oE, p_{*}(\oC \times \oF)),\] using the
  description of cocartesian morphisms in $p_{*}(\oC \times \oF)$ from
  \cite{HTT}*{Corollary 3.2.2.12}.  The equivalence is natural in $E$,
  so applying the Yoneda lemma we conclude that $p_{*}(\oC \times \oF)$ is
  indeed the fibration for $\Fun(F, \oC)$.
\end{proof}

\section{Adjunctions and lax transformations}\label{sec:adjs}
In this section we will apply our results on lax transformations to
study adjunctions in \itcats{}. We first see that we can recognize
adjunctions in $\CATIT$ in terms of fibrations in \S\ref{sec:adjfib},
and then generalize some results of Lurie on relative adjunctions to
\itcats{} in \S\ref{sec:reladj}. There we also study adjunctions in
\itcats{} of (op)lax transformations and prove
\cref{thm:laxtradj}. Finally, we prove \cref{thm:matecorr}, the mate
correspondence for \itcats{}, in \S\ref{sec:mates}.

\subsection{Adjunctions and fibrations}\label{sec:adjfib}
Recall that a functor of \icats{} $F \colon \oC \to \oD$ is a left adjoint \IFF{} the corresponding cocartesian fibration $\oF \to [1]$ is also a cartesian fibration, with the cartesian straightening corresponding to the right adjoint. Our goal in this subsection is to prove the analogous statement for \itcats{}, as well as some other basic results on adjunctions of \itcats{}.

\begin{propn}\label{propn:adjfromnateq}
  A functor of \itcats{} $F \colon \tc{A} \to \tc{B}$ is left adjoint to a functor $G \colon \tc{B} \to \tc{A}$ \IFF{} there is a natural equivalence
  \[ \tc{B}(F\blank, \blank) \simeq \tc{A}(\blank, G\blank)\]
  of functors $\tc{A}^{\op} \times \tc{B} \to \CATI$.
\end{propn}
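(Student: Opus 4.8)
The plan is to prove the two implications separately, working with the concrete description of an adjunction $F \dashv G$ in $\CATIT$ as the data of a unit $\eta \colon \id_{\tA} \Rightarrow GF$ and counit $\epsilon \colon FG \Rightarrow \id_{\tB}$ — morphisms in $\Fun(\tA,\tA)$ and $\Fun(\tB,\tB)$ — satisfying the two triangle identities. Since the walking adjunction is generated by this data and an adjunction in an $(\infty,2)$-category is determined by its image in the homotopy $2$-category, it suffices throughout to produce this data and check the triangle identities up to invertible $2$-morphism.

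For the forward direction, suppose we are given such an adjunction. Using that the mapping-$\infty$-category construction assembles into a functor $\tB(\blank,\blank) \colon \tB^{\op} \times \tB \to \CATI$ and that $G$ acts on mapping $\infty$-categories, I would define the comparison $\Theta \colon \tB(F\blank,\blank) \to \tA(\blank, G\blank)$ with components
\[ \Theta_{a,b} \colon \tB(Fa,b) \xrightarrow{G} \tA(GFa, Gb) \xrightarrow{(\eta_a)^{*}} \tA(a, Gb), \]
realizing it globally as the whiskering of the hom-functor of $\tB$, the functor $G$, and the natural transformation $\eta$. Symmetrically, $\epsilon$ produces a map $\Psi$ in the opposite direction, and the triangle identities show that $\Psi_{a,b}$ and $\Theta_{a,b}$ are mutually inverse on each mapping $\infty$-category, so that $\Theta$ is an equivalence in $\FUN(\tA^{\op} \times \tB, \CATI)$.

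For the converse, let $\Theta \colon \tB(F\blank,\blank) \xrightarrow{\sim} \tA(\blank, G\blank)$ be the given equivalence. Viewing both sides as functors $\tB \to \FUN(\tA^{\op}, \CATI)$, the right-hand side is $\mathbf{y}_{\tA} \circ G$ for the $(\infty,2)$-categorical Yoneda embedding $\mathbf{y}_{\tA}$, while the left-hand side is the restriction of $\mathbf{y}_{\tB}$ along $F^{\op}$; thus $\Theta$ exhibits a factorization of $(F^{\op})^{*}\mathbf{y}_{\tB}$ through $\mathbf{y}_{\tA}$. Concretely, I would extract the unit by setting $b = Fa$ and putting $\eta_a := \Theta_{a,Fa}(\id_{Fa}) \in \tA(a, GFa)$, and dually the counit from $\Theta^{-1}$ at $a = Gb$. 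The naturality of $\Theta$ as a morphism in $\FUN(\tA^{\op}\times\tB,\CATI)$, fed through the enriched Yoneda lemma \cite{HinichYoneda}, then promotes the pointwise families $\{\eta_a\}$ and $\{\epsilon_b\}$ to genuine natural transformations $\id_{\tA} \Rightarrow GF$ and $FG \Rightarrow \id_{\tB}$, and the triangle identities follow by unwinding the compatibility of $\Theta$ with the identity $2$-cells.

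The main obstacle is exactly this last step of the converse: upgrading the objectwise comparison to coherent natural transformations and verifying the triangle identities at the level of $\CATIT$, rather than merely pointwise. I expect to handle this by reformulating the statement as an equivalence in $\Fun(\tB, \FUN(\tA^{\op}, \CATI))$ and exploiting the full faithfulness of the Yoneda embedding, so that the factorization through $\mathbf{y}_{\tA}$ is essentially unique and transports the adjunction data along it; this reduces the remaining coherence to the already-understood $(\infty,1)$-categorical case of adjoints detected by equivalences of mapping objects.
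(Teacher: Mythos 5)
Your proposal is correct in outline, and its two key mechanisms are the same as the paper's: the forward comparison map is exactly the whiskered composite $\tc{B}(Fa,b) \xrightarrow{G} \tc{A}(GFa,Gb) \xrightarrow{\eta_a^{*}} \tc{A}(a,Gb)$, and your fallback for the converse --- currying and using full faithfulness of the Yoneda embedding to promote pointwise data to a transformation --- is precisely how the paper builds a genuine unit: it whiskers the action of $F$ on mapping \icats{} with the given equivalence to get $\tc{A}(\blank,\blank) \to \tc{A}(\blank,GF\blank)$, curries this as $\tA \times [1] \to \FUN(\tA^{\op},\CATI)$ landing in representables, and applies the Yoneda lemma. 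The packaging differs, though. The paper uses the unit-criterion formulation of adjunction ($\eta$ is a unit \IFF{} the induced maps on mapping \icats{} are equivalences), so it never constructs a counit or checks triangle identities; its one substantive lemma is the step you wave at as ``unwinding the compatibility of $\Theta$ with the identity $2$-cells'': enriched naturality of the equivalence with respect to composition, evaluated at $\id_{Fa}$, identifies $\Theta_{a,b}$ with the canonical map $\eta_a^{*} \circ G$. You must record this identification explicitly --- without it you cannot conclude that your extracted $\eta$ has the hom-equivalence property, nor run the triangle-identity computation --- and once you have it you could drop the counit bookkeeping entirely. Finally, your closing appeal to ``the $(\infty,1)$-categorical case'' is the wrong reduction: the coherence you need is absorbed by the homotopy $2$-category (as you yourself invoke at the start), not by any statement about adjoints detected on mapping spaces.
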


\begin{proof}
  We first suppose we have a natural equivalence
  \[ \Psi \colon \tc{B}(F\blank, \blank) \isoto \tc{A}(\blank,
    G\blank).\]
  The functor $F$ gives a natural transformation
  \[ \tc{A}(\blank, \blank) \to \tc{B}(F\blank,F\blank)\]
  of functors $\tc{A}^{\op} \times \tc{A} \to \CATI$, and combining this with $\Psi$ we get a natural transformation
  \[ \tc{A}(\blank, \blank) \to \tc{A}(\blank,GF\blank).\] Regarding
  this as a functor $\tc{A} \times [1] \to \FUN(\tc{A}^{\op}, \CATI)$,
  we see that it factors through the representable presheaves, and so
  by the Yoneda lemma we obtain a natural transformation
  $\eta \colon \id_{A} \to GF$. Since $\Psi$ is a natural
  transformation of \itcats{}, for objects $a \in \tA, b \in \tB$, we
  have a commutative square
  \[
    \begin{tikzcd}
      \tc{B}(Fa, Fa) \times \tc{B}(Fa,b) \arrow{d} \arrow{r}{\Psi_{a,a} \times G} & \tA(a, GFa) \times \tA(GFa,Gb) \arrow{d} \\
      \tB(Fa,b) \arrow{r}{\Psi_{a,b}} & \tA(a,Gb);
    \end{tikzcd}
  \]
  evaluating at $\id_{Fa}$ this identifies $\Psi_{a,b}$ with the composite
  \[ \tc{B}(Fa, b) \xto{G} \tc{A}(GFa, Gb) \xto{\eta^{*}} \tc{A}(a,
    Gb). \] In particular, this composite is an equivalence, which
  shows that $\eta$ is the unit transformation for an
  adjunction. Conversely, any unit transformation for an adjunction
  $F \dashv G$ by definition gives the desired natural equivalence.
\end{proof}

\begin{cor}\label{cor:ladjrep}
  A functor of \itcats{} $F \colon \tc{A} \to \tc{B}$ is a left adjoint \IFF{} for every object $b \in \tc{B}$, the presheaf
  \[ \tc{B}(F\blank, b) \colon \tc{A}^{\op} \to \CATI \]
  is representable.
\end{cor}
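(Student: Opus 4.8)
The plan is to deduce this from \cref{propn:adjfromnateq} using the \itcatl{} Yoneda lemma, mimicking the classical argument that pointwise representability of $\tc{B}(F\blank,b)$ assembles into a right adjoint functor. The forward direction is immediate: if $F$ is a left adjoint with right adjoint $G$, then \cref{propn:adjfromnateq} gives a natural equivalence $\tc{B}(F\blank,b) \simeq \tc{A}(\blank,Gb)$ for each $b$, so each presheaf $\tc{B}(F\blank,b)$ is represented by $Gb$. The content is in the converse.

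For the converse, suppose $\tc{B}(F\blank,b)$ is representable for every $b \in \tc{B}$. First I would package the functors $b \mapsto \tc{B}(F\blank,b)$ into a single functor $\tc{B} \to \FUN(\tc{A}^{\op},\CATI)$; concretely this is the composite of $F^{\op}$-precomposition with the Yoneda-type functor $\tc{B} \to \FUN(\tc{B}^{\op},\CATI)$, $b \mapsto \tc{B}(\blank,b)$, so it is manifestly functorial in $b$. The hypothesis says that this functor lands, objectwise, in the full sub-\itcat{} of representable presheaves. By the Yoneda lemma for \itcats{} (Hinich~\cite{HinichYoneda}), the Yoneda embedding $\tc{A} \hookrightarrow \FUN(\tc{A}^{\op},\CATI)$ is fully faithful with image exactly the representable presheaves, so this full sub-\itcat{} is equivalent to $\tc{A}$. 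A functor whose objectwise values lie in a full sub-\itcat{} factors through that sub-\itcat{}: here this follows because the representable presheaves form a full sub-\itcat{} and $F_{*}$ sends every object of $\tc{B}$ into it, so composing with the inverse equivalence produces a functor $G \colon \tc{B} \to \tc{A}$ together with a natural equivalence $\tc{B}(F\blank,b) \simeq \tc{A}(\blank,Gb)$, natural in $b$.

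It then remains to upgrade this to naturality in \emph{both} variables, i.e. to a natural equivalence $\tc{B}(F\blank,\blank) \simeq \tc{A}(\blank,G\blank)$ of functors $\tc{A}^{\op} \times \tc{B} \to \CATI$, at which point \cref{propn:adjfromnateq} applies directly to conclude that $F$ is left adjoint to $G$. The point is that by construction $G$ is obtained precisely as the factorization of $F_{*}$ composed with the Yoneda equivalence through the representables, and the counit/inverse of the Yoneda equivalence is itself a natural transformation; tracing through, the equivalence $\tc{B}(F\blank,b) \simeq \tc{A}(\blank,Gb)$ is the component at $b$ of an equivalence of functors valued in $\FUN(\tc{A}^{\op},\CATI)$, which is exactly the required two-variable equivalence via the standard currying $\FUN(\tc{B}, \FUN(\tc{A}^{\op},\CATI)) \simeq \FUN(\tc{A}^{\op} \times \tc{B}, \CATI)$.

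The main obstacle I expect is precisely this last bookkeeping step: making rigorous, at the \itcatl{} level, the claim that objectwise representability gives a functor $G$ (not merely an assignment on objects) together with a two-variable natural equivalence. One must be careful that the Yoneda embedding's full faithfulness as a functor of \itcats{}, combined with the fact that ``factoring through a full sub-\itcat{}'' is an honest operation on functors, genuinely produces $G$ as a morphism in $\CatIT$ rather than merely a map on objects; the cleanest route is to argue at the level of mapping \igpds{} via \cref{cor:funprops}, which guarantees that $F_{*} \colon \FUN(\tc{B},\tc{A}) \to \FUN(\tc{B},\FUN(\tc{A}^{\op},\CATI))$ is fully faithful onto the diagrams landing in representables. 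Once $G$ exists as a functor with the one-variable equivalence natural in $b$, the promotion to two variables is formal via currying and the Yoneda lemma.
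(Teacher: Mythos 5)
Your proposal is correct and follows essentially the same route as the paper: curry $\tc{B}(F\blank,\blank) \colon \tc{A}^{\op} \times \tc{B} \to \CATI$ to a functor $\tc{B} \to \FUN(\tc{A}^{\op},\CATI)$, observe it factors through the representables, apply the Yoneda lemma to extract $G$ together with the two-variable equivalence, and conclude via \cref{propn:adjfromnateq}. The bookkeeping you flag (that factoring through the full sub-\itcat{} of representables yields an honest functor $G$ and a natural equivalence of two-variable functors) is exactly what the paper's ``In other words'' step implicitly relies on, so your more careful treatment is a faithful elaboration rather than a divergence.
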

\begin{proof}
  Suppose the stated condition holds. The functor $\tc{B}(F\blank, \blank) \colon \tc{A}^{\op} \times \tc{B} \to \CATI$ is adjunct to a functor
  \[ \tc{B} \to \FUN(\tc{A}^{\op}, \CATI),\]
  which by assumption factors through the full subcategory of representable presheaves. From the Yoneda lemma we then obtain a functor $G \colon \tc{B} \to \tc{A}$ such that
  the composite with the Yoneda embedding to $\FUN(\tc{A}^{\op}, \CATI)$ returns the previous functor. In other words, we have a natural equivalence
  \[ \tc{A}(\blank, G\blank) \simeq \tc{B}(F\blank, \blank)\]
  of functors $\tc{A}^{\op} \times \tc{B} \to \CATI$. By \cref{propn:adjfromnateq}, this means that $F$ is left adjoint to $G$. The converse direction is obvious.
\end{proof}

\begin{cor}\label{cor:ladjfib}
  Suppose a functor $F \colon \tc{A} \to \tc{B}$ of \itcats{} is
  classified by the $0$-fibration $\pi \colon \tc{F} \to [1]$. Then
  $F$ is a left adjoint \IFF{} $\pi$ is also a $1$-fibration.
\end{cor}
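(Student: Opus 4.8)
The plan is to reduce the statement to the representability criterion of \cref{cor:ladjrep}, which says that $F$ is a left adjoint \IFF{} the presheaf $\tc{B}(F\blank, b) \colon \tc{A}^{\op} \to \CATI$ is representable for every $b \in \tc{B}$. Concretely, I would show that a cartesian lift of the unique non-degenerate morphism $0 \to 1$ of $[1]$ ending at an object $y \in \tc{B}$ exists \IFF{} the presheaf $\tc{B}(F\blank, y)$ is representable; the corollary then follows by letting $y$ range over $\tc{B}$.

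First I would note that since $[1]$ is an \icat{}, every functor to it is automatically both cartesian- and cocartesian-enriched by \cref{obs:enrto1cat}, so $\pi$ is a $1$-fibration precisely when it admits cartesian lifts of all morphisms of $[1]$, which modulo identities means cartesian lifts of $0 \to 1$. I would then analyze the cartesian condition for a morphism $\overline{f} \colon x \to y$ over $0 \to 1$, so that $x \in \tc{F}_{0} = \tc{A}$ and $y \in \tc{F}_{1} = \tc{B}$. Testing against an object $z$ of $\tc{F}$: when $z$ lies over $1$ the relevant square has both upper corners empty (as $[1](1,0) = \emptyset$, and $\tc{F}(z,x)$ lies over it), so the condition is vacuous; when $z$ lies over $0$ the contractibility of $[1](0,0)$ and $[1](0,1)$ collapses the square to the single requirement that composition with $\overline{f}$,
\[ \overline{f}_{*} \colon \tc{F}(z,x) \longrightarrow \tc{F}(z,y), \]
be an equivalence.

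Next I would identify both sides. For $z, x$ over $0$ we have $\tc{F}(z,x) \simeq \tc{A}(z,x)$, while for $z$ over $0$ and $y$ over $1$ the cocartesian lift $z \to Fz$ of $0 \to 1$ (which exists since $\pi$ is a $0$-fibration) induces, via \cref{obs:cocartmormapfib}, an equivalence $\tc{B}(Fz, y) \simeq \tc{F}(z,y)$. Under these identifications $\overline{f}_{*}$ becomes the canonical map $\tc{A}(z,x) \to \tc{B}(Fz, y)$, so the cartesian condition says exactly that $x$ represents $\tc{B}(F\blank, y)$. Hence $\pi$ admits cartesian lifts of $0 \to 1$ ending at every $y$ \IFF{} each $\tc{B}(F\blank, y)$ is representable, \IFF{} $F$ is a left adjoint by \cref{cor:ladjrep}.

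The main obstacle is the bookkeeping in the previous paragraph: one must check that the comparison map $\overline{f}_{*}$ is genuinely identified with the canonical evaluation map appearing in the representability statement, which requires tracking the cocartesian-transport equivalence of \cref{obs:cocartmormapfib} carefully (concretely, the fibrewise factor $Fx \to y$ of $\overline{f}$ plays the role of the counit). Once this identification is in place the argument is purely formal, and the $0$-fibration hypothesis enters only through the cocartesian lifts $z \to Fz$ used to rewrite $\tc{F}(z,y)$ in terms of $\tc{B}$.
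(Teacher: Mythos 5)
Your proposal is correct and matches the paper's own argument in all essentials: both reduce to the representability criterion of \cref{cor:ladjrep}, unfold the cartesian condition over $[1]$ (vacuous for test objects over $1$, an equivalence condition for those over $0$), and use cocartesian lifts together with \cref{obs:cocartmormapfib} to identify $\tc{F}(z,y)$ with $\tc{B}(Fz,y)$, with the fibrewise factor of the cartesian lift playing the role of the counit exactly as in the paper's converse direction. Your packaging of the two implications as a single ``cartesian lift at $y$ exists \IFF{} $\tc{B}(F\blank,y)$ is representable'' is only a mild reorganization of the same proof, and the bookkeeping you flag is precisely what the paper carries out.
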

\begin{proof}
  Suppose $\pi$ is also a 1-fibration. Then for $b \in \tc{B}$ we can choose a $\pi$-cartesian morphism $\gamma_{b} \colon Gb \to b$ over the non-invertible morphism in $[1]$. Picking a $\pi$-cocartesian morphism $Gb \to FGb$ we obtain a map $FGb \to b$ which yields a natural transformation of functors $\tA(-,Gb) \to \tB(F-,b)$ by the Yoneda lemma. 
  For the object $Gb$ in $\tc{A}$, composition with $\gamma_{b}$ gives a natural equivalence
  \[ \tc{A}(a, Gb) \simeq \tc{F}(a, Gb)_{\id_{0}} \to \tc{F}(a,b)_{0\to 1} \simeq \tc{B}(Fa,b).
  \]
  In other words, the presheaf $\tc{B}(F\blank,b)$ on $\tc{A}$ is represented by $Gb$. Since this holds for every object $b$, it follows from \cref{cor:ladjrep} that $F$ is a left adjoint.

  Conversely, if $F$ has a right adjoint $G$ with a counit transformation $\epsilon \colon FG \to \id_{\tc{B}}$, then we want to show that $\pi$ is a 1-fibration. Given $b \in \tc{B}$, we define a morphism $e \colon Gb \to b$ in $\tF$ over $0 \to 1$ as the composite of the cocartesian morphism $u:Gb \to FGb$ and the counit $\epsilon_{b} \colon FGb \to b$. We claim that this is then a $\pi$-cartesian morphism. To see this, we must show that postcomposition with $e$ induces an equivalence of $\infty$-categories
 \[ \tc{A}(a, Gb) \simeq \tc{F}(a, b). 
  \]
  Let $\omega:a \to F(a)$ be a cocartesian morphism. We look at the commutative diagram
  \[
    \begin{tikzcd}
      \tc{A}(a,Gb) \arrow[dr,"e_*"] \arrow[d,"u_*"] & \\
      \tc{F}(a,FGb) \arrow[r,"(\epsilon_b)_* "]  & \tc{F}(a,b) \\
      \tc{B}(Fa,FGb) \arrow[u," \omega^*"] \arrow[u,swap,"\simeq"] \arrow[r,swap,"(\epsilon_b)_* "] & \tc{B}(Fa,b) \arrow[u,"\omega^*"] \arrow[u,swap,"\simeq"]
    \end{tikzcd}
  \]
  and note that the composite $\psi=(\omega^*)^{-1} \circ u_*$ can be identified with the action of the functor $F$ on mapping $\infty$-categories. Finally, we see that $(\epsilon_b)_* \circ \psi$ is an equivalence since $\epsilon$ is the counit of an adjunction, so that the map given by postcomposition with $e$ is an equivalence. The result now follows.
\end{proof}

\begin{remark}
  See \cite{AMGR}*{\S B.5} for another proof of \cref{cor:ladjrep} and
  \cref{cor:ladjfib} --- there the latter is proved first, as Lemma
  B.5.5, and then the representability criterion is deduced from this in
  Corollary B.5.6.
\end{remark}

\begin{propn}\label{lem:adjnotlocal}\ 
  \begin{enumerate}[(i)]
  \item   Suppose $p \colon \tc{E} \to \tc{B}$ is a $(0,1)$-fibration and a local $(1,1)$-fibration. Then $p$ is also a $(1,1)$-fibration.
  \item   Suppose $p \colon \tc{E} \to \tc{B}$ is a $(0,0)$-fibration and a local $(1,0)$-fibration. Then $p$ is also a $(1,0)$-fibration.
  \end{enumerate}
\end{propn}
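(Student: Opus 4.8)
The plan is to treat the two parts dually and focus on (i); part (ii) then follows by applying (i) after reversing 2-morphisms, using the equivalence $\FIB_{(i,j)}(\tB) \xrightarrow{\co} \FIB_{(i,j\pm1)}(\tB^{\co})$ from \cref{eq:fibop}, since a $(0,0)$-fibration that is a local $(1,0)$-fibration becomes, after $(\blank)^{\co}$, a $(0,1)$-fibration that is a local $(1,1)$-fibration over $\tB^{\co}$. So the real content is (i). Here $p$ is already cartesian-enriched (being a $(0,1)$-fibration), so to upgrade $p$ to a $(1,1)$-fibration the only thing to prove is that $\tE$ has enough \emph{genuine} $p$-cartesian lifts of all $1$-morphisms in $\tB$. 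The local $(1,1)$-fibration hypothesis gives us \emph{locally} $p$-cartesian lifts of every morphism, so the task is to promote a locally cartesian lift to an honest cartesian lift.

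First I would fix a morphism $f \colon a \to b$ in $\tB$ and an object $y \in \tE$ over $b$, and choose a locally $p$-cartesian morphism $\bar{f} \colon x \to y$ over $f$ (guaranteed by the local $(1,1)$-fibration hypothesis applied to the pullback $f^{*}\tE \to [1]$). To check that $\bar{f}$ is in fact $p$-cartesian, I must show that for every $z \in \tE$ over $c \in \tB$ the square
\[
  \begin{tikzcd}
    \tE(z,x) \arrow{r}{\bar{f}_{*}} \arrow{d} & \tE(z,y) \arrow{d} \\
    \tB(c,a) \arrow{r}{f_{*}} & \tB(c,b)
  \end{tikzcd}
\]
is a pullback. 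Since $p$ is a $(0,1)$-fibration, the vertical maps are cartesian fibrations of \icats{}, and both are morphisms of cartesian fibrations; thus it suffices to check the square is a pullback fibrewise over $\tB(c,a)$, i.e. on fibres over each morphism $g \colon c \to a$. The key computation is to identify these fibres: using \cref{obs:cocartmormapfib}, a cocartesian lift of $g$ (which exists since $p$ is a $(0,1)$-fibration) identifies $\tE(z,x)_{g}$ with a mapping \icat{} in the fibre $\tE_{a}$, and similarly $\tE(z,y)_{fg}$ with a mapping \icat{} in $\tE_{b}$; the map $\bar{f}_{*}$ between them becomes cartesian transport along $\bar{f}$ restricted to the fibre over $a$. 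The locally $p$-cartesian property of $\bar{f}$ says precisely that $\bar{f}$ is cartesian for $f^{*}\tE \to [1]$, which upon restricting to a single morphism $g$ and using the same fibre identification yields the pullback on fibres. The main obstacle is this bookkeeping: one must carefully track how a locally cartesian morphism behaves after precomposition with an arbitrary $1$-morphism $g \colon c \to a$, rather than only with morphisms lying over $\id_{a}$, and verify that the $(0,1)$-fibration structure lets one reduce the general $g$ to the local situation via cocartesian transport.

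I expect the hardest step to be exactly this reduction from ``cartesian over $[1]$'' (local) to ``cartesian over $\tB$'' (global), which is where the interaction between the two fibration structures is essential: the cocartesian lifts coming from the $(0,1)$-structure supply the identifications of the relevant fibres as mapping \icats{} in the fibres $\tE_{a}, \tE_{b}$, and only then does the local cartesian hypothesis become directly applicable. An alternative, possibly cleaner, route would be to argue that a locally $p$-cartesian morphism whose ``defect'' can be analyzed fibrewise is automatically cartesian when $p$ is already $j$-cartesian-enriched, in the spirit of the transport-functor arguments in the proof of \cref{propn:grayfib}; either way the crux is promoting the local lifting data to global lifting data using the cocartesian transport provided by the $(0,1)$-fibration structure. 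Once the cartesian lifts are produced and shown to be genuine, $p$ satisfies both conditions defining a $(1,1)$-fibration, completing the proof.
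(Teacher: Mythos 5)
Your proposal is correct, but it takes a genuinely different route from the paper. The paper never attempts to show directly that a locally cartesian lift is globally cartesian: instead it proves that the composite of two locally $p$-cartesian morphisms $\overline{f}$, $\overline{g}$ is again locally cartesian --- choosing a $p$-cocartesian lift $\phi \colon u \to u'$ of $f$ at an arbitrary $u \in \tE_{a}$ and playing the equivalences $\tE_{a}(u,x) \simeq \tE(u,y)_{f}$, $\tE_{b}(u',y) \simeq \tE(u',z)_{g}$ and $\phi^{*}$ off against each other --- and then concludes via the characterization of genuine fibrations as $S^{\natural}$-local fibrations (the lemma in \S\ref{sec:locfib}, quoting \cite{Ab23}*{Theorem 4.27}, suitably dualized). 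You instead verify the global universal property of a single locally cartesian lift $\overline{f}$ fibrewise over $\tB(c,a)$: a cocartesian lift $\psi \colon z \to g_{!}z$ of the \emph{test} morphism $g$, together with \cref{obs:cocartmormapfib}, identifies $\tE(z,x)_{g} \simeq \tE_{a}(g_{!}z,x)$ and $\tE(z,y)_{fg} \simeq \tE(g_{!}z,y)_{f}$, after which local cartesianness of $\overline{f}$ at $u = g_{!}z$ finishes. Both arguments turn on the same mechanism --- cocartesian transport supplied by the $(0,1)$-structure moving fibres of mapping \icats{} into the fibres of $p$ --- but where the paper uses a cocartesian lift over $f$ itself and outsources the final step to the $S$-local straightening characterization, your argument is self-contained modulo the standard fibrewise pullback criterion for cartesian fibrations (applicable because $\overline{f}_{*}$ preserves cartesian 2-morphisms, $p$ being cartesian-enriched; the paper uses the identical criterion in \cref{propn:11fibradj} and \cref{cor:ladjfib}), and it in fact proves the stronger intermediate statement that under the $(0,1)$-hypothesis \emph{every} locally cartesian morphism is cartesian, from which the paper's composition-closure follows as a corollary. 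Your deduction of (ii) from (i) by applying $(\blank)^{\co}$ via \cref{eq:fibop} is also legitimate, where the paper simply reruns the argument with variances reversed. One small streamlining: the detour identifying $\tE(z,y)_{fg}$ with a mapping \icat{} in $\tE_{b}$ is unnecessary --- the identification $\tE(z,y)_{fg} \simeq \tE(g_{!}z,y)_{f}$ along $\psi$ is all you need before the local hypothesis applies verbatim.
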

\begin{proof}
  We prove (i); the proof of (ii) is the same. Suppose $\overline{f} \colon x \to y$ and $\overline{g} \colon y \to z$ are locally $p$-cartesian morphisms over $f \colon a \to b$ and $g \colon b \to c$. Then we are required to prove that the composite $\overline{g}\circ \overline{f}$ is again a locally $p$-cartesian morphism. Given $u \in \tc{E}_{a}$, let $\phi\colon u \to u'$ be a $p$-cocartesian morphism over $f$. We can then consider the following commutative diagram:
  \[
    \begin{tikzcd}
      \tc{E}_{a}(u,x) \arrow{r}{\overline{f}_{*}} & \tc{E}(u,y)_{f} \arrow{r}{\overline{g}_{*}} & \tc{E}(u,z)_{gf} \\
       & \tc{E}_{b}(u',y) \arrow{r}{\overline{g}_{*}} \arrow{u}{\phi^{*}} & \tc{E}(u',z)_{g}. \arrow{u}{\phi^{*}}
    \end{tikzcd}
  \]
  Here the top left and bottom right horizontal morphisms are equivalences since $\overline{f}$ and $\overline{g}$ are locally cartesian, while the vertical morphisms are equivalences since $\phi$ is cocartesian. It follows that the top right horizontal morphism is also an equivalence, hence so is the composite in the top row. Since this holds for any $u \in \tc{E}_{a}$, this shows that $\overline{g}\circ \overline{f}$ is also locally cartesian, as required.
\end{proof}

Combining \cref{cor:ladjfib} and \cref{lem:adjnotlocal}, we get:
\begin{cor}\label{cor:ladj+01is11}\
  \begin{enumerate}[(i)]
  \item Suppose $p \colon \tE \to \tB$ is the $(0,1)$-fibration for a
    functor $F \colon \tB \to \CATIT$. Then $p$ is also a
    $(1,1)$-fibration \IFF{} the functor $F(f)$ is a left adjoint for
    every morphism $f$ in $\tB$.
  \item Suppose $p \colon \tE \to \tB$ is the $(0,0)$-fibration for a
    functor $F \colon \tB^{\co} \to \CATIT$. Then $p$ is also a
    $(1,0)$-fibration \IFF{} the functor $F(f)$ is a left adjoint for
    every morphism $f$ in $\tB$. \qed
  \end{enumerate}
\end{cor}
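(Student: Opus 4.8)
The plan is to deduce the statement by combining \cref{cor:ladjfib} with \cref{lem:adjnotlocal}, reducing the question to a fibrewise one over $[1]$. I will spell out part (i); part (ii) is formally identical.

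Since $p \colon \tE \to \tB$ is a $(0,1)$-fibration it is in particular cartesian-enriched, so the only obstruction to its being a $(1,1)$-fibration is the existence of $p$-cartesian lifts of the $1$-morphisms of $\tB$. By \cref{lem:adjnotlocal}(i) it therefore suffices to determine when $p$ is a \emph{local} $(1,1)$-fibration: if it is, then being also a $(0,1)$-fibration forces it to be a genuine $(1,1)$-fibration, and conversely a $(1,1)$-fibration is automatically a local one, since $p$-cartesian morphisms are in particular locally $p$-cartesian.

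Unwinding the definition, $p$ is a local $(1,1)$-fibration precisely when, for every morphism $f \colon a \to b$ of $\tB$ regarded as a functor $f \colon [1] \to \tB$, the pullback $f^{*}\tE \to [1]$ is a $(1,1)$-fibration, that is a $1$-fibration over the \icat{} $[1]$. By the naturality of straightening under pullback (\cref{thm:str01}) this pullback is the $0$-fibration classifying $F \circ f = F(f) \colon F(a) \to F(b)$; in particular it is automatically a $0$-fibration. Thus \cref{cor:ladjfib} applies verbatim: the $0$-fibration $f^{*}\tE \to [1]$ is also a $1$-fibration \IFF{} $F(f)$ is a left adjoint. Combining these observations proves (i): if $F(f)$ is a left adjoint for every $f$ then every pullback $f^{*}\tE \to [1]$ is a $1$-fibration, so $p$ is a local $(1,1)$-fibration and hence a $(1,1)$-fibration; conversely, if $p$ is a $(1,1)$-fibration then each $f^{*}\tE \to [1]$ is a $1$-fibration and so each $F(f)$ is a left adjoint. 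For (ii) one repeats the argument with \cref{lem:adjnotlocal}(ii) in place of \cref{lem:adjnotlocal}(i): here $p$ is a $(0,0)$-fibration, hence cocartesian-enriched, and the pullback of $p$ along $f \colon [1] \to \tB$ is the $0$-fibration classifying $F \circ f^{\co} = F(f)$ (using that $[1]^{\co} = [1]$, so a $1$-morphism of $\tB$ is unchanged in $\tB^{\co}$), whence the same two results give that $p$ is a $(1,0)$-fibration \IFF{} every $F(f)$ is a left adjoint.

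The only point requiring care is the identification of the pullback $f^{*}\tE \to [1]$ with the $0$-fibration of $F(f)$, and in particular getting the variance right in (ii), where the diagram is indexed by $\tB^{\co}$ while we pull back along a $1$-morphism of $\tB$. Once this bookkeeping is in place the statement is a formal consequence of the two cited results, with no genuine difficulty.
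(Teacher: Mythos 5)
Your proof is correct and is exactly the argument the paper intends: the corollary is stated there with no proof beyond the sentence ``Combining \cref{cor:ladjfib} and \cref{lem:adjnotlocal}, we get,'' and you have simply filled in the details --- reducing via \cref{lem:adjnotlocal} to the local case, identifying $f^{*}\tE \to [1]$ with the $0$-fibration for $F(f)$ by naturality of straightening, and applying \cref{cor:ladjfib} fibrewise. Your bookkeeping in (ii) is also right: the $\co$-twist built into the straightening convention for $(0,0)$-fibrations ensures that $F(f)$ is precisely the cocartesian transport $\tE_{a} \to \tE_{b}$, so no left/right adjoint swap occurs.
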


\subsection{Relative adjunctions and adjoints in (op)lax transformations}\label{sec:reladj}

In this section we will extend the results on relative adjunctions
from \cite{HA}*{\S 7.3.2} to the setting of \itcats{}. We then
use this to identify adjunctions in \itcats{} of the form
$\FUN(\tc{A},\tc{B})^{\plax}$.

\begin{defn}
  A \emph{relative adjunction} over an \itcat{} $\tc{B}$ is an adjunction in the slice \itcat{} $(\CATIT)_{/\tc{B}}$.
\end{defn}

\begin{propn}
  A commutative triangle
  \begin{equation}
    \label{eq:reladjtr}
    \begin{tikzcd}
      \tc{E} \arrow{rr}{G} \arrow{dr}[swap]{p} & & \tc{F} \arrow{dl}{q} \\
       & \tc{B}
    \end{tikzcd}
  \end{equation}
  is a relative right adjoint \IFF{} the functor $G$ has a left adjoint $F$ and for every object $x \in \tc{F}$ the unit map $x \to GFx$ is taken by $q$ to an equivalence in $\tc{B}$.
\end{propn}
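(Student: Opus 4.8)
The plan is to work throughout with the description of the mapping \icats{} in the slice: for objects $p \colon \tE \to \tB$ and $q \colon \tF \to \tB$ of $(\CATIT)_{/\tB}$, the mapping \icat{} $(\CATIT)_{/\tB}(p,q)$ is the fibre over $p$ of the postcomposition functor $q_{*} \colon \Fun(\tE,\tF) \to \Fun(\tE,\tB)$, and the forgetful functor $U \colon (\CATIT)_{/\tB} \to \CATIT$, sending $(p \colon \tE \to \tB)$ to $\tE$, is a functor of \itcats{} given on mapping \icats{} by the inclusion of this fibre. For the forward direction, suppose the triangle \cref{eq:reladjtr} is a relative right adjoint, so that $G \colon p \to q$ has a left adjoint $F \colon q \to p$ in $(\CATIT)_{/\tB}$ with unit $\eta$ and counit $\epsilon$. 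Since every functor of \itcats{} preserves adjunctions, applying $U$ shows that the underlying functor $F$ is left adjoint to $G$ in $\CATIT$. Moreover, the unit $\eta \colon \id_{q} \Rightarrow GF$ is, by definition, a morphism in the fibre $(\CATIT)_{/\tB}(q,q)$, so $q_{*}\eta$ is equivalent to the identity of $q$; in particular $q(\eta_{x})$ is an equivalence for every $x \in \tF$, which is the condition on the unit.

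For the converse, suppose $G$ has an absolute left adjoint $F$ with unit $\eta$ and counit $\epsilon$, and that $q(\eta_{x})$ is an equivalence for every $x$. Using $p = qG$, the natural transformation $q\eta \colon q \Rightarrow qGF = pF$ is then a natural equivalence, and I would set $\theta := (q\eta)^{-1} \colon pF \isoto q$ to promote $F$ to a morphism $q \to p$ in $(\CATIT)_{/\tB}$. With this choice $\eta$ lifts tautologically to a $2$-morphism over the identity of $q$. A short computation using the triangle identity $(G \ast \epsilon) \circ (\eta \ast G) = \id_{G}$ shows that $p\epsilon = q \ast (G\epsilon)$ is inverse to $q \ast (\eta \ast G) = \theta \ast G$, so that $\epsilon$ likewise lifts to a $2$-morphism over the identity of $p$.

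It remains to upgrade these lifts to an honest adjunction in $(\CATIT)_{/\tB}$, \ie{} to verify the triangle identities \emph{there}, and this is the main obstacle: the fibre inclusions that $U$ induces on mapping \icats{} are not faithful, so the slice triangle identities do not follow formally from the absolute ones. Rather than produce the coherence witnesses by hand, the plan is to detect the adjunction through the \itcatl{} Yoneda lemma: an adjunction $F \dashv G$ in any \itcat{} is equivalent to a family of adjunctions $F_{*} \dashv G_{*}$ of mapping \icats{} $(\CATIT)_{/\tB}(r,q) \rightleftarrows (\CATIT)_{/\tB}(r,p)$, natural in $r \colon \tZ \to \tB$. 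For fixed $\tZ$, postcomposition turns the absolute adjunction $F \dashv G$ into an adjunction $F_{*} \dashv G_{*} \colon \Fun(\tZ,\tF) \rightleftarrows \Fun(\tZ,\tE)$ lying over $\Fun(\tZ,\tB)$ via $q_{*}$ and $p_{*}$, whose unit $\eta_{*}$ is carried by $q_{*}$ to an equivalence (this is exactly the hypothesis on $\eta$). By Lurie's results on relative adjunctions of \icats{} \cite{HA}*{\S 7.3.2}, such an adjunction restricts to an adjunction on the fibres over each object $r$, which are precisely the slice mapping \icats{} above; these restrictions are manifestly natural in $r$ and in $\tZ$, being induced by the fixed data $F,G,\eta,\epsilon$. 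The Yoneda lemma then yields the desired adjunction $F \dashv G$ in $(\CATIT)_{/\tB}$, completing the proof.
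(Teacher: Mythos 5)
Your treatment of necessity and your lifting constructions coincide with the paper's proof: the paper likewise promotes $F$ to a morphism over $\tB$ via the equivalence $q \simeq qGF \simeq pF$ supplied by the hypothesis, lifts the unit tautologically, and lifts the counit by using the triangle identities to exhibit the identity of $p$ as a composite $p \to pFG \to p$. (In your version there is a small direction slip: $q \ast (\eta \ast G)$ is $\theta^{-1} \ast G$, not $\theta \ast G$, since $\theta = (q\eta)^{-1}$; the content of the computation is unaffected.) Where you diverge is the endgame, and your diagnosis of the ``main obstacle'' misses the mechanism the paper actually uses: one does not need the triangle identities to hold in the slice on the nose, only up to invertible $2$-morphisms, and the paper observes that the projection $(\CATIT)_{/\tB} \to \CATIT$ is a $0$-fibred $(1,0)$-fibration and therefore \emph{detects} invertibility of $1$- and $2$-morphisms (compare \cref{lem:2conservativegray}). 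Once the unit and counit are lifted, their triangle composites in the slice are invertible because their images in $\CATIT$ are, and the adjunction follows; no Yoneda argument is needed.

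Your alternative route through Lurie's relative adjunctions is viable in outline, but as written its pivotal step is a genuine gap: the assertion that an adjunction in an \itcat{} ``is equivalent to a family of adjunctions $F_{*} \dashv G_{*}$ on mapping \icats{}, natural in $r$'', with the naturality then dismissed as ``manifest''. Making the family version precise is exactly the coherence problem you set out to avoid: a family of fibrewise adjunctions assembles into an adjunction only after verifying mate/Beck--Chevalley-type compatibilities --- this is essentially the content of \cref{cor:adjinlaxtr}, which in the paper's development comes \emph{after} and builds on the present section, so invoking such a statement here is also logically delicate. The repair is already contained in your own construction: since you have lifted $\eta$ to a $2$-morphism $\id_{q} \to GF$ in the slice, you should instead use the specified-unit criterion (the analogue, internal to the slice, of \cref{propn:adjfromnateq}): a given $2$-morphism $\eta$ exhibits $F \dashv G$ \IFF{} for every object $(\tZ,r)$ and all $a,b$ the composites it induces on the slice mapping \icats{} are equivalences --- with the unit fixed globally, no further naturality data is required, as every map in sight is a whiskering of the fixed lifted data. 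With that reformulation, your appeal to \cite{HA}*{\S 7.3.2} (the adjunction $F_{*} \dashv G_{*}$ over $\Fun(\tZ,\tB)$, whose unit is sent by $q_{*}$ to an equivalence, restricts to the fibres over each $r$, and these fibres are precisely the slice mapping \icats{}) does close the converse --- at the cost of being considerably longer than the paper's conservativity argument.
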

\begin{proof}
  It is clear that the conditions are necessary. We observe that the projection functor $(\CATIT)_{/\tc{B}} \to \CATIT$ is a $(1,0)$-fibration which is $0$-fibred. In particular, this functor detects invertible morphisms and 2-morphisms. This means that if we succesfully manage to lift the data defining the adjunction $F\dashv G$ in $\CATIT$ to $(\CATIT)_{/\tc{B}}$ then it will follow that $G$ is a relative right adjoint.

  To view $F$ as a morphism over $\tB$, we use the equivalence $q
  \simeq qGF \simeq pF$ provided by our hypothesis. In particular,
  this means that the unit defines a 2-morphism over $\tB$. To finish
  the proof we need to show that the counit is also a 2-morphism over $\tB$. This amounts to showing that the identity 2-morphism on $p$ factors as
  \[
    p \to pFG \to p
  \]
  where the first map is the invertible 2-morphism used to view $FG$ as a morphism in $(\CATIT)_{/\tc{B}}$ and the last 2-morphism is obtained by applying $p$ to the counit. Unpacking the definitions we obtain a commutative diagram
  \[\begin{tikzcd}
  && pFG & p \\
  p & qG & qGFG & qG
  \arrow[from=2-1, to=2-2]
  \arrow[from=2-2, to=2-3]
  \arrow[from=2-3, to=1-3]
  \arrow[from=1-3, to=1-4]
  \arrow[from=2-3, to=2-4]
  \arrow[from=2-4, to=1-4]
\end{tikzcd}\]
which shows that our claim follows from the triangular identities.
\end{proof}

\begin{observation}\label{obs:pbreladj}
  If $\Phi \colon \tc{E}\to \tc{F}$ is a relative left or right adjoint
  over $\tc{B}$, then so is the pullback
  $\Phi \times_{\tc{B}} \tc{A} \colon \tc{E} \times_{\tc{B}} \tc{A}
  \to \tc{F} \times_{\tc{B}} \tc{A}$ over $\tA$ for any functor
  $\tc{A} \to \tc{B}$; this is because pullback is a functor of
  \itcats{} and so preserves adjunctions. In particular, for all
  $b \in \tc{B}$, the map on fibres $\tc{E}_{b} \to \tc{F}_{b}$ is
  also a left/right adjoint.
\end{observation}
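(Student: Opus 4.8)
The plan is to unwind the definition and reduce everything to the statement that pullback is a functor of \itcats{} together with the fact that such functors preserve adjunctions. By definition, $\Phi$ being a relative left or right adjoint over $\tc{B}$ means precisely that it is a left or right adjoint \emph{in the \itcat{}} $(\CATIT)_{/\tc{B}}$. So the first step is to observe that any functor $g \colon \tc{A} \to \tc{B}$ induces a pullback functor of \itcats{}
\[ g^{*} \colon (\CATIT)_{/\tc{B}} \to (\CATIT)_{/\tc{A}}, \]
sending an object $p \colon \tc{E} \to \tc{B}$ to the projection $\tc{E} \times_{\tc{B}} \tc{A} \to \tc{A}$ and a morphism over $\tc{B}$ to the induced morphism of pullbacks over $\tc{A}$. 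In particular $g^{*}(\Phi)$ is exactly the map $\Phi \times_{\tc{B}} \tc{A}$ appearing in the statement.

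Next I would invoke the general principle that functors of \itcats{} preserve adjunctions: an adjunction in an \itcat{} is specified by a pair of $1$-morphisms together with unit and counit $2$-morphisms satisfying the triangle identities, and all of this data is of the kind that a functor of \itcats{} carries coherently to data of the same form in its target. Applying this to $g^{*}$, the adjunction witnessing that $\Phi$ is a relative left (resp.\ right) adjoint over $\tc{B}$ is transported to an adjunction in $(\CATIT)_{/\tc{A}}$ witnessing that $\Phi \times_{\tc{B}} \tc{A}$ is a relative left (resp.\ right) adjoint over $\tc{A}$. The final statement about fibres then follows by specializing $g$ to the functor $b \colon [0] \to \tc{B}$ picking out an object: here $\tc{E} \times_{\tc{B}} [0] \simeq \tc{E}_{b}$ and $\tc{F} \times_{\tc{B}} [0] \simeq \tc{F}_{b}$, while $(\CATIT)_{/[0]} \simeq \CATIT$ since $[0]$ is terminal, so a relative adjunction over $[0]$ is just an ordinary adjunction in $\CATIT$ and we conclude that $\tc{E}_{b} \to \tc{F}_{b}$ is a left or right adjoint.

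The only genuine content, and hence the main (though minor) obstacle, is the claim that pullback is a \emph{functor of \itcats{}}, not merely a functor of the underlying \icats{}. Since $\CATIT$ is presentable it admits all pullbacks, so $g^{*}$ is certainly defined on objects and $1$-morphisms; what must be justified is its compatibility with $2$-morphisms, \ie{} that it acts coherently on the natural transformations over $\tc{B}$ that form the $2$-cells of the slices. This is a standard feature of the slice construction: forming slice \itcats{} and pulling back cospans are part of the limit structure of $\CATIT$, which is automatically $\CatI$-enriched, so $g^{*}$ arises as a limit functor that is $2$-natural by construction. Granting this, the preservation of adjunctions is formal and the observation follows.
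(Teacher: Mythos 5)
Your proposal is correct and is essentially the paper's own argument: the paper justifies the observation in exactly the same way, by noting that pullback along $\tc{A} \to \tc{B}$ is a functor of \itcats{} from $(\CATIT)_{/\tc{B}}$ to $(\CATIT)_{/\tc{A}}$ and that functors of \itcats{} preserve adjunctions, with the fibre statement obtained by pulling back along $b \colon [0] \to \tc{B}$. Your extra remarks on why $g^{*}$ is a functor of \itcats{} (and not merely of underlying \icats{}) only amplify a point the paper leaves implicit.
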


\begin{lemma}\label{lem:cartenradj}
  Suppose we have a commutative triangle \cref{eq:reladjtr} such that
  the functors $p$ and $q$ are cartesian-enriched. If $G$ has either a
  left or a right relative adjoint over $\tB$, then $G$ is a morphism
  of cartesian-enriched functors.
\end{lemma}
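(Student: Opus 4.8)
The plan is to reduce the statement to the single condition of \cref{defn:cartenriched}. Since the triangle \cref{eq:reladjtr} commutes over the same base, we have $qG \simeq p$, so the comparison square attached to $G$ has bottom edge $\tc{B}(px,py) \to \tc{B}(qGx,qGy)$ equal to the identity (after the identifications $qGx = px$, $qGy = py$). Thus "$G$ is a morphism of cartesian-enriched functors" amounts to the single assertion that for all $x,y \in \tc{E}$ the functor $G_{x,y} \colon \tc{E}(x,y) \to \tc{F}(Gx,Gy)$ preserves cartesian morphisms over the common base $\tc{B}(px,py)$. I would treat the case where $G$ has a right relative adjoint $H \colon \tc{F} \to \tc{E}$ over $\tc{B}$; the left-adjoint case is entirely dual, using precomposition with the counit in place of postcomposition with the unit.

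By \cref{propn:adjfromnateq} the adjunction $G \dashv H$ supplies a natural equivalence $\Psi \colon \tc{F}(Ge,f) \isoto \tc{E}(e,Hf)$, given on objects by $v \mapsto Hv \circ \eta_e$, where $\eta \colon \id_{\tc{E}} \to HG$ is the unit. Setting $f = Ge'$ and precomposing with $G_{e,e'}$, naturality of $\eta$ identifies the composite $\Psi \circ G_{e,e'} \colon \tc{E}(e,e') \to \tc{E}(e,HGe')$ with postcomposition $(\eta_{e'})_*$ by the $1$-morphism $\eta_{e'} \colon e' \to HGe'$ of $\tc{E}$, since $H(Gu) \circ \eta_e \simeq \eta_{e'} \circ u$ for every $u \colon e \to e'$. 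The payoff of this identification is that $\Psi$ is an equivalence \emph{over the base} $\tc{B}(pe,qf)$: as $pH \simeq q$ and $\eta$ projects to an equivalence in $\tc{B}$, applying $p$ to $Hv \circ \eta_e$ recovers $qv$ up to coherent equivalence, so $\Psi$ commutes with the projections to $\tc{B}(pe,qf)$. An equivalence over the base preserves and reflects cartesian morphisms, so $G_{e,e'}$ preserves cartesian morphisms \IFF{} $(\eta_{e'})_*$ does. Finally, because $p$ is cartesian-enriched, \cref{ob:precomposition} shows that postcomposition with a $1$-morphism preserves cartesian $2$-morphisms; hence $(\eta_{e'})_*$ preserves cartesian morphisms, and therefore so does $G_{e,e'}$, as required.

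The main obstacle I anticipate is the bookkeeping in verifying that $\Psi$ is genuinely a functor over $\tc{B}(pe,qf)$ rather than merely an abstract equivalence of $\infty$-categories. This is precisely the point where the hypothesis that the adjunction is relative over $\tc{B}$ (an adjunction in $(\CATIT)_{/\tc{B}}$, not just in $\CATIT$) enters, through the fact that the unit and counit are $2$-morphisms lying over equivalences in $\tc{B}$; compare \cref{obs:pbreladj}. Once this compatibility is pinned down, the preservation-and-reflection of cartesian edges by $\Psi$ and the appeal to \cref{ob:precomposition} are formal, and the argument for the left-adjoint case is obtained by replacing $\Psi$ with the counit-based equivalence $\tc{F}(Ge,Ge') \isoto \tc{E}(FGe,e')$ and postcomposition with precomposition.
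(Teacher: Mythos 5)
Your proof is correct and is essentially the paper's argument: both factor $G$ on mapping $\infty$-categories through the adjunction equivalence as postcomposition with the unit (resp.\ precomposition with the counit), note that the relativity of the adjunction makes this factorization live over $\tc{B}(px,py)$, and conclude via \cref{ob:precomposition} from the cartesian-enrichedness of $p$. Your writing $\Psi \circ G_{e,e'} \simeq (\eta_{e'})_*$ instead of $G_{x,y} \simeq \Psi^{-1}\circ \eta_*$, and your explicit check that $\Psi$ commutes with the projections to the base, are only presentational differences.
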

\begin{proof}
  We must show that for objects $x,y \in \tE$, the functor
  $\tc{E}(x,y) \to \tc{F}(Gx,Gy)$ preserves cartesian morphisms over
  $\tB(px,py)$.  
  
  First suppose $G$ has a relative left adjoint $F$. Then we can
  identify our functor on mapping \icats{} with the composite
  \[ \tE(x,y) \to \tE(FGx,y) \isoto \tF(Gx,Gy)\]
  over $\tB(px,py)$. Here the first map is given by composition with the counit $FGx \to x$, and so preserves cartesian morphisms since $p$ is a cartesian-enriched functor, and the second is an equivalence.
  Similarly, if $G$ has a relative right adjoint $H$, we can identify the same functor as the composite
  \[ \tE(x,y) \to \tE(x,HGy) \isoto \tF(Gx,Gy) \] where the first map
  is given by composition with the unit $y \to HGy$, and so again
  preserves cartesian morphisms since $p$ is a cartesian-enriched
  functor.
\end{proof}

\begin{propn}\label{propn:11fibradj}
  Suppose we have a commutative triangle \cref{eq:reladjtr} such that the functors $p$ and $q$ are local $(1,1)$-fibrations. Then this triangle is a relative right adjoint \IFF{} the following conditions hold:
  \begin{enumerate}[(1)]
  \item For every $b \in \tc{B}$, the functor on fibres $\tc{E}_{b} \to \tc{F}_{b}$ is a right 
    adjoint.
  \item $G$ is a morphism of cartesian-enriched functors.    
  \item $G$ preserves locally cartesian morphisms.
  \end{enumerate}
\end{propn}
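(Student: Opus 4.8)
The plan is to prove both implications, treating necessity first. Suppose the triangle \cref{eq:reladjtr} is a relative right adjoint, with relative left adjoint $F$ over $\tc{B}$. Then \cref{obs:pbreladj} immediately gives condition (1), since for each $b \in \tc{B}$ the map on fibres $\tc{E}_{b} \to \tc{F}_{b}$ is a right adjoint. Moreover, a local $(1,1)$-fibration is in particular cartesian-enriched, so $p$ and $q$ are cartesian-enriched, and \cref{lem:cartenradj} then shows that $G$ is a morphism of cartesian-enriched functors, which is condition (2). For condition (3) I would use that locally cartesian morphisms are detected after pullback: given a locally $p$-cartesian morphism $\bar{h}$ over $f \colon a \to b$ in $\tc{B}$, I pull the triangle back along $f \colon [1] \to \tc{B}$. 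By \cref{obs:localfibpb} the pullbacks $f^{*}\tc{E}, f^{*}\tc{F} \to [1]$ are genuine $(1,1)$-fibrations, and by \cref{obs:pbreladj} the functor $f^{*}G$ is a relative right adjoint over $[1]$. For genuine fibrations the relative adjunction yields a natural equivalence $\tc{F}(w, G\blank) \simeq \tc{E}(Fw, \blank)$ over the base, and feeding this into the mapping-category criterion for $p$-cartesian morphisms shows directly that $f^{*}G$ preserves cartesian $1$-morphisms; hence $G$ preserves locally cartesian morphisms.

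For the converse, assume (1)--(3). By the characterization of relative right adjoints proved above, it suffices to produce a left adjoint $F$ to $G$ in $\CATIT$ whose unit $w \to GFw$ is sent by $q$ to an equivalence for every $w \in \tc{F}$. To construct $F$, I would apply the dual of \cref{cor:ladjrep}: $G$ is a right adjoint precisely when the copresheaf $\tc{F}(w, G\blank) \colon \tc{E} \to \CATI$ is corepresentable for every $w \in \tc{F}$. Writing $b := q(w)$ and letting $F_{b} \dashv G_{b}$ be the fibrewise adjunction supplied by (1), the natural candidate for the corepresenting object is $Fw := F_{b}(w) \in \tc{E}_{b}$.

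The heart of the proof is then to verify the equivalence $\tc{E}(Fw, \blank) \simeq \tc{F}(w, G\blank)$ of copresheaves on $\tc{E}$. Both functors lie over $\tc{B}$ via the projections to $\tc{B}(b, p\blank)$, so I would compare them fibrewise over each $\tc{B}(b,a)$. For $x \in \tc{E}_{a}$ and $\phi \colon b \to a$, choosing a locally $p$-cartesian lift $\bar{\phi} \colon \phi^{*}x \to x$ identifies the fibre $\tc{E}(Fw, x)_{\phi}$ with $\tc{E}_{b}(Fw, \phi^{*}x)$ (the cartesian analogue of \cref{obs:cocartmormapfib}), and likewise $\tc{F}(w, Gx)_{\phi} \simeq \tc{F}_{b}(w, \phi^{*}Gx)$. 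Condition (3) ensures that $G(\bar{\phi})$ is again locally $q$-cartesian, so $G$ commutes with cartesian transport, giving $\phi^{*}Gx \simeq G_{b}(\phi^{*}x)$; the fibrewise adjunction then produces $\tc{F}_{b}(w, G_{b}\phi^{*}x) \simeq \tc{E}_{b}(F_{b}w, \phi^{*}x)$, matching the two sides. Condition (2) is what lets these fibrewise identifications assemble into a natural equivalence of $\CATI$-valued copresheaves, since it controls the behaviour of transport along $2$-morphisms of $\tc{B}$. Corepresentability then yields the left adjoint $F$, and because $Fw = F_{b}(w)$ lies in $\tc{E}_{b}$, the unit $w \to GFw$ is the fibrewise unit and so maps to $\id_{b}$ in $\tc{B}$, verifying the relative adjoint condition.

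I expect the main obstacle to be exactly this last gluing step in the sufficiency direction: upgrading the fibrewise equivalences to a genuine natural equivalence of copresheaves valued in $\CATI$, keeping careful track of the $2$-categorical naturality encoded by the cartesian-enriched hypothesis (2). The necessity of (3), via the pullback reduction to fibrations over $[1]$, is the other point requiring care, though it is more routine once the relative adjunction has been pulled back.
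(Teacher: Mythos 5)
Your proposal is correct and takes essentially the same route as the paper: necessity of (1) and (2) via \cref{obs:pbreladj} and \cref{lem:cartenradj}, necessity of (3) by transporting the cartesianness criterion through the adjunction equivalence (your pullback to $[1]$ just unwinds the definition of locally cartesian, which the paper does directly on fibres of the mapping categories), and sufficiency by corepresenting $\tc{F}(x, G(\blank))$ by $F_{b}(x)$ with the fibrewise unit. The ``gluing'' step you flag as the main obstacle is resolved in the paper exactly as you indicate: the comparison map $\tc{E}(F_{b}x,y) \to \tc{F}(GF_{b}x,Gy) \to \tc{F}(x,Gy)$ is globally defined over $\tc{B}(b,b')$ and preserves cartesian morphisms (by (2) and cartesian-enrichment of $q$), so the fibrewise equivalences over each $f \colon b \to b'$, obtained from locally cartesian lifts and condition (3), suffice.
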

\begin{proof}
  We first show the conditions are necessary. The first follows from \cref{obs:pbreladj} and the second from \cref{lem:cartenradj}. For the third we suppose that $\overline{\phi} \colon x \to y$ is a locally cartesian morphism in $\tc{E}$ over $\phi \colon a \to b$; we must show that $G(\overline{\phi})$ is locally cartesian, \ie{} that for $z \in \tc{F}_{a}$ the map
  \[ \tc{F}_{a}(z, G(x)) \xto{G(\overline{\phi})_{*}} \tc{F}(z, G(y))_{\phi} \]
  is an equivalence. But if $F$ is the left adjoint of $G$, we can identify this with the map
  \[ \tc{E}_{a}(Fz,x) \xto{\overline{\phi}_{*}} \tc{E}(Fz, y)_{\phi},\]
  which is indeed an equivalence since $\overline{\phi}$ is a locally cartesian morphism.

  We now prove the converse. To show that $G$ has a left adjoint, it
  suffices to show that $\tc{F}(x, G(\blank))$ is corepresentable for
  all $x \in \tc{F}$. If $x$ lies over $b \in \tc{B}$ we will show
  that this functor is corepresented by $F_{b}(x)$ and the unit map
  $x \to G_{b}F_{b}x$, where $F_{b}$ is the fibrewise left adjoint. Given an object $y\in \tc{E}$ over $b' \in \tc{B}$, we have a commutative diagram
  \[
    \begin{tikzcd}
      \tc{E}(F_{b}x, y) \arrow{r} \arrow{dr} & \tc{E}(GF_{b}x, Gy) \arrow{r} \arrow{d} & \tc{E}(x, Gy) \arrow{dl} \\
      & \tc{B}(b, b').
    \end{tikzcd}
  \]
  We want to show the horizontal composite is an equivalence, and
  since the functors to $\tc{B}(b,b')$ are cartesian fibrations and
  the horizontal maps preserve cartesian morphisms it suffices to show that
  we get an equivalence on the fibre over each $f \colon b \to b'$. If
  $\overline{f} \colon y' \to y$ is a locally cartesian morphism over
  $f$, then we have a commutative diagram
  \[
    \begin{tikzcd}
      \tc{E}_{b}(F_{b}x, y') \arrow{d}{\overline{f}_{*}} \arrow{r} & \tc{F}_{b}(GF_{b}x,Gy') \arrow{d}{G\overline{f}_{*}} \arrow{r} & \tc{F}_{b}(x, Gy') \arrow{d}{G\overline{f}_{*}} \\
      \tc{E}(F_{b}x, y)_{f} \arrow{r} & \tc{F}(GF_{b}x, Gy)_{f} \arrow{r} & \tc{F}(x,Gy)_{f}.
    \end{tikzcd}
  \]
  Here the composite in the top row is an equivalence since $F_{b}$ is left adjoint to $G$ over $b$, and the vertical maps are equivalences since $\overline{f}$ and $G\overline{f}$ are locally cartesian.
\end{proof}

\begin{propn}\label{propn:fib01radj}
  Suppose we have a commutative triangle \cref{eq:reladjtr} such that the functors $p$ and $q$ are local $(0,1)$-fibrations. Then this triangle is a relative right adjoint \IFF{} the following conditions hold:
  \begin{enumerate}[(1)]
  \item For every $b \in \tc{B}$, the functor on fibres $G_{b} \colon \tc{E}_{b} \to \tc{F}_{b}$ has a left adjoint $F_{b}$.
  \item $G$ is a morphism of cartesian-enriched functors.
  \item The fibrewise left adjoints are compatible with locally
    cocartesian morphisms, in the following sense: Given
    $x \in \tc{F}$ over $b \in \tc{B}$ and a morphism
    $f \colon b \to b'$, let $\overline{f}_{\tE} \colon F_{b}(x) \to f_{!}F_{b}(x)$ be
    locally cocartesian over $f$ in $\tE$. We can factor the composite
    \[ x \to GF_{b}(x) \xto{G\overline{f}_{\tE}} Gf_{!}F_{b}(x),\]
    where the first map is the unit of the fibrewise adjunction, through the locally cocartesian morphism $\overline{f}_{\tF} \colon x \to f_{!}x$ over $f$ in $\tF$, giving a morphism $f_{!}x \to Gf_{!}F_{b}(x)$. We require the adjoint map
    \[ F_{b'}f_{!}x \to f_{!}F_{b}(x)\]
    to be an equivalence.
  \end{enumerate}
\end{propn}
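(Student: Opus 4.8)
The plan is to prove both directions through a single fibrewise analysis of the comparison map built from the fibrewise left adjoints, using the characterization of relative right adjoints proved above together with the corepresentability criterion dual to \cref{cor:ladjrep}.

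For \emph{sufficiency}, assume (1)--(3) and fix $x \in \tF$ over $b$; I claim $\tF(x, G(\blank))$ is corepresented by $F_{b}(x) \in \tE_{b}$ via the fibrewise unit $\eta_{x} \colon x \to G F_{b}(x)$. For $y \in \tE$ over $b'$, consider the comparison functor
\[ \tE(F_{b}x, y) \xto{G} \tF(GF_{b}x, Gy) \xto{\eta_{x}^{*}} \tF(x, Gy), \qquad \alpha \mapsto G(\alpha)\circ \eta_{x}. \]
Since $p$ and $q$ are cartesian-enriched (being local $(0,1)$-fibrations), both sides are cartesian fibrations over $\tB(b,b')$, and this functor is a morphism of cartesian fibrations: $G$ preserves cartesian $2$-morphisms by (2), and $\eta_{x}^{*}$ does so by \cref{ob:precomposition}. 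It therefore suffices to check it is an equivalence on the fibre over each $f \colon b \to b'$.

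Identify the fibres using \cref{obs:cocartmormapfib} applied to the locally cocartesian lifts $\overline{f}_{\tE} \colon F_{b}x \to f_{!}F_{b}x$ and $\overline{f}_{\tF} \colon x \to f_{!}x$ (which exist as $p,q$ are local $(0,1)$-fibrations): these give $\tE(F_{b}x,y)_{f} \simeq \tE_{b'}(f_{!}F_{b}x, y)$ and $\tF(x,Gy)_{f} \simeq \tF_{b'}(f_{!}x, G_{b'}y)$. Applying the fibrewise adjunction $F_{b'}\dashv G_{b'}$ of (1) to the target, the map on fibres over $f$ becomes a functor $\tE_{b'}(f_{!}F_{b}x, y) \to \tE_{b'}(F_{b'}f_{!}x, y)$. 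Tracing $\beta \colon f_{!}F_{b}x \to y$ through the identifications --- writing $\alpha = \beta\circ\overline{f}_{\tE}$, factoring $G(\overline{f}_{\tE})\circ\eta_{x}$ as $d\circ\overline{f}_{\tF}$ as in (3), and invoking the naturality of the fibrewise adjunction isomorphism --- shows this functor is precomposition with the comparison morphism $c \colon F_{b'}f_{!}x \to f_{!}F_{b}x$ of (3), namely the adjoint of $d$. By (3) the morphism $c$ is an equivalence, hence so is precomposition with it; thus the comparison is a fibrewise equivalence and so an equivalence. This exhibits $F_{b}x$ as corepresenting object with unit over $\id_{b}$, so by the dual of \cref{cor:ladjrep} the functor $G$ has a left adjoint whose unit $q$ sends to equivalences, and the triangle is a relative right adjoint by the characterization proved above. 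For \emph{necessity}, suppose the triangle is a relative right adjoint with relative left adjoint $F$. Condition (1) follows by pulling the adjunction back to each fibre as in \cref{obs:pbreladj} (with $F_{b}=F|_{\tF_{b}}$), and (2) follows from \cref{lem:cartenradj}. For (3), the comparison functor above is now literally the adjunction equivalence $\tE(Fx,y)\simeq \tF(x,Gy)$, hence an equivalence; as a morphism of cartesian fibrations over $\tB(b,b')$ it is a fibrewise equivalence. The fibrewise computation of the previous paragraph used only (1) and (2), which hold here, and identified the fibre over $f$ with precomposition by $c$; so precomposition with $c$ is an equivalence $\tE_{b'}(f_{!}F_{b}x, y) \to \tE_{b'}(F_{b'}f_{!}x, y)$ for all $y \in \tE_{b'}$, and $c$ is an equivalence by the Yoneda lemma.

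The main obstacle is the fibrewise bookkeeping in the middle step: one must verify that, after the cocartesian-transport identifications of \cref{obs:cocartmormapfib} and the passage to the fibrewise adjoints, the comparison functor really is precomposition with the \emph{specific} comparison morphism $c$ appearing in condition (3). This naturality check is what ties the three conditions to the single comparison map, and since it uses only (1) and (2) it serves both directions at once.
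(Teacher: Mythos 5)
Your proof is correct, and for the sufficiency direction it is essentially the paper's argument: corepresent $\tF(x,G(\blank))$ by $F_b(x)$ via the fibrewise unit, observe that the comparison functor is a morphism of cartesian fibrations over $\tB(b,b')$ (using condition (2) and \cref{ob:precomposition}), and reduce to fibres over each $f \colon b \to b'$. The two treatments of the fibre differ only in presentation: the paper uses condition (3) to identify $f_!F_b(x) \simeq F_{b'}f_!x$ and then compares the fibre sequence with the fibrewise adjunction at $f_!x$ via a commutative ladder whose vertical maps restrict along the locally cocartesian lifts, whereas you identify the map on fibres directly as precomposition with the comparison morphism $c \colon F_{b'}f_!x \to f_!F_b(x)$ --- the same computation, organized around $c$. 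Where you genuinely diverge is in the necessity of (3): the paper gives a separate structural argument, showing that the relative left adjoint $F$ preserves locally cocartesian morphisms (by an argument analogous to \cref{propn:11fibradj}), identifying $\overline{f}_{\tE}$ with $F(\overline{f}_{\tF})$, and concluding via the triangle identities; you instead observe that your fibrewise identification of the comparison map as $c^{*}$ used only (1), (2), and the local fibration structure, so that when the triangle is a relative right adjoint the comparison is the adjunction equivalence, its fibre over $f$ is $c^{*}$, and $c$ is an equivalence by the Yoneda lemma in $\tE_{b'}$. This is a legitimate and arguably more economical route: a single fibrewise computation serves both directions, at the cost of not recording the intermediate fact (which the paper's argument makes explicit, and reuses implicitly elsewhere) that the relative left adjoint preserves locally cocartesian morphisms. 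One point worth making explicit in your necessity argument, which you correctly gesture at with ``$F_b = F|_{\tF_b}$'': by \cref{obs:pbreladj} and uniqueness of adjoints, the fibrewise unit in condition (3) may be taken to be the restriction of the global relative unit (whose components lie in the fibres precisely because $q$ sends the unit to equivalences), so the comparison functor built from it is literally the adjunction equivalence of \cref{propn:adjfromnateq}.
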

\begin{proof}
  Suppose first that $F$ is a relative left adjoint of $G$; we want to
  show that the three conditions are necessary. The first condition
  follows from \cref{obs:pbreladj} and the second from
  \cref{lem:cartenradj}. We note that an analogous argument to that of \cref{propn:11fibradj} shows that $F$ preserves locally cocartesian morphisms. The third condition follows from this: we can identify the locally cocartesian morphism $\overline{f}_{\tE} \colon F_{b}(x) \to f_{!}F_{b}(x)$  with $F(\overline{f}_{\tF}) \colon F(x) \to F(f_!x)$, and the condition then follows from the triangular identities.

  We now prove the converse. To show that $G$ has a left adjoint, it
  suffices to show that $\tc{F}(x, G(\blank))$ is representable for
  all $x \in \tc{F}$. If $x$ lies over $b \in \tc{B}$ we will show
  that this functor is represented by $F_{b}(x)$ and the unit map
  $x \to G_{b}F_{b}x$, where $F_{b}$ is the fibrewise left
  adjoint. Given an object $y\in \tc{E}$ over $b' \in \tc{B}$, we have
  a commutative diagram
  \[
    \begin{tikzcd}
      \tc{E}(F_{b}x, y) \arrow{r} \arrow{dr} & \tc{F}(GF_{b}x, Gy) \arrow{r} \arrow{d} & \tc{F}(x, Gy) \arrow{dl} \\
      & \tc{B}(b, b').
    \end{tikzcd}
  \]
  We want to show that the horizontal composite is an equivalence; since the functors to $\tc{B}(b,b')$ are cartesian fibrations and
  the horizontal maps preserve cartesian morphisms, it suffices to
  show that we get an equivalence on the fibre over each
  $f \colon b \to b'$. If $\overline{f} \colon x \to x'$ is a locally
  cocartesian morphism over $f$, then condition (3) guarantees that we
  have a locally cocartesian edge
  $\phi \colon F_b(x) \to f_!F_b(x) \simeq F_{b'}x'$ over $f$ and a
  commutative diagram
  \[
    \begin{tikzcd}
      \tc{E}_{b'}(F_{b'}x', y) \arrow{r} \arrow{d} & \tc{F}_{b'}(GF_{b'}x', Gy) \arrow{r} & \tc{F}_{b'}(x', Gy) \arrow{d}{\sim} \\
      \tc{E}(F_{b}x, y)_{f} \arrow{r} & \tc{F}(GF_{b}x, Gy)_{f} \arrow{r} & \tc{F}(x, Gy)_{f}.
    \end{tikzcd}
  \]
  Here the composite in the top row is an equivalence since $F_{b'}$ is left adjoint to $G$ over $b'$, and the vertical maps are equivalences since $\overline{f}$ and $\phi$ are locally cocartesian.
\end{proof}

\begin{defn}\label{def:matesquare}
  Let us consider a lax square in an $(\infty,2)$-category $\tc{E}$,
   \begin{equation}
    \label{eq:matesqualeftadj}
      \begin{tikzcd}
      a_x \arrow[d,"f_a",swap] \arrow[r,"R_x"] & b_x \arrow[d,"f_b"] \arrow[dl,Rightarrow,shorten >=1.5ex,shorten <=1.5ex] \\
      a_{y} \arrow[r,swap,"R_{y}"] & b_y,
    \end{tikzcd}
   \end{equation}
  where the functors $R_{x}, R_{y}$ admit left adjoints $L_x \dashv R_x$ and $L_{y} \dashv R_{y}$. Then we can construct a natural transformation
  \[
    L_{y} \circ f_b \to   L_{y} \circ f_b \circ R_{x}\circ L_x \to  L_{y} \circ R_{y} \circ f_a\circ L_x \to f_a \circ L_x,
  \]
  which yields another lax square,
   \begin{equation}
      \label{eq:matesquarerightadj}
      \begin{tikzcd}
      b_x \arrow[d,"f_b",swap] \arrow[r,"L_x"] & a_x \arrow[d,"f_a"]  \\
      b_y \arrow[r,swap,"L_{y}"] \arrow[ur,Rightarrow,shorten >=1.5ex, shorten <=1.5ex] & a_y,
    \end{tikzcd}
   \end{equation}
   which we call the \emph{mate} of \cref{eq:matesqualeftadj}. Note that we can similarly start with a diagram of the form \cref{eq:matesquarerightadj} and construct its associated mate of the form \cref{eq:matesqualeftadj} by a dual procedure.
\end{defn}

\begin{remark}
  The third condition in the statement of \cref{propn:fib01radj} can be rephrased as follows: Given an edge $f: b \to b'$, it follows from \cref{thm:fib01laxstr} that we have laxly commuting diagram in $\CAT_{(\infty,2)}$ of the form
  \[
    \begin{tikzcd}
      \tc{E}_b \arrow[d,"\tc{E}(f)",swap] \arrow[r,"G_b"] & \tc{F}_b \arrow[d,"\tc{F}(f)"] \arrow[dl,Rightarrow,shorten >=1.5ex,shorten <=1.5ex] \\
      \tc{E}_{b'} \arrow[r,swap,"G_{b'}"] & \tc{F}_b'.
    \end{tikzcd}    
  \]
   Then condition (3) says precisely that the associated mate square (see \cref{def:matesquare}) of this diagram is commutative.
\end{remark}

Combining our results on relative adjunctions with our straightening
equivalences for (op)lax transformations, we deduce the following:
\begin{cor}\label{cor:adjinlaxfuncat}
  Let $\tA$ be an \itcat{} and consider functors $F,G \colon \tA \to \CATI$.
  \begin{enumerate}[(1)]
  \item A lax transformation $R \colon F \to G$ has a left adjoint in $\FUN(\tc{A}, \CATI)^{\lax}$ \IFF{} $R_{x} \colon F(x) \to G(x)$ has a left adjoint for every $x \in \tc{A}$ and the mate of the lax naturality square
  \[
    \begin{tikzcd}
      F(x) \arrow[d,"F(\phi)",swap] \arrow[r,"R_{x}"] & G(x) \arrow[d,"G(\phi)"] \arrow[dl,Rightarrow,shorten >=1.5ex,shorten <=1.5ex] \\
      F(x') \arrow[r,swap,"R_{x'}"] & G(x')
    \end{tikzcd}    
  \]
  commutes for every morphism $\phi \colon x \to x'$ in $\tc{A}$.
\item A lax transformation $L \colon F \to G$ has a right adjoint in
  $\FUN(\tc{A}, \CATI)^{\lax}$ \IFF{} $L_{x} \colon F(x) \to G(x)$ has
  a right adjoint for every $x \in \tc{A}$ and the lax transformation $L$ is strong
  (\ie{}, its lax naturality squares all commute).
  \item An oplax transformation $L \colon F \to G$ has a right adjoint
    in $\FUN(\tc{A}, \CATI)^{\oplax}$ \IFF{}
    $L_{x} \colon F(x) \to G(x)$ has a right adjoint for every
    $x \in \tc{A}$ and the mate of the oplax naturality square
  \[
    \begin{tikzcd}
      F(x) \arrow[d,"F(\phi)",swap] \arrow[r,"L_{x}"] & G(x) \arrow[d,"G(\phi)"] \arrow[dl,Leftarrow,shorten >=1.5ex,shorten <=1.5ex] \\
      F(x') \arrow[r,swap,"L_{x'}"] & G(x')
    \end{tikzcd}    
  \]
    commutes for every morphism
    $\phi \colon x \to x'$ in $\tc{A}$.
  \item An oplax transformation $R \colon F \to G$ has a left adjoint
    in $\FUN(\tc{A}, \CATI)^{\oplax}$ \IFF{}
    $R_{x} \colon F(x) \to G(x)$ has a left adjoint for every
    $x \in \tc{A}$ and the oplax transformation $R$ is strong (\ie{},
    its oplax naturality squares all commute). \qed
  \end{enumerate}
\end{cor}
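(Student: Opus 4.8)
The plan is to deduce \cref{cor:adjinlaxfuncat} directly from the straightening equivalences of \cref{thm:fib01laxstr} together with the fibrational criteria for relative adjunctions established in \cref{propn:fib01radj} and \cref{propn:11fibradj}. The key observation is that a left/right adjoint of a (co)lax transformation in $\FUN(\tA,\CATI)^{\plax}$ can be transported, under straightening, into a relative adjunction over $\tA$ between the corresponding fibrations, where the three conditions of \cref{propn:fib01radj} translate precisely into the fibrewise adjointness and mate/strongness conditions in the statement.

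I would begin with case (1), the lax-left-adjoint case, as the prototype. Under the equivalence $\FIB_{(0,1)}^{\lax}(\tA) \simeq \FUN(\tA,\CATIT)^{\lax}$ of \cref{thm:fib01laxstr}, a lax transformation $R \colon F \to G$ corresponds to a cartesian-enriched morphism $G \colon \tE \to \tF$ of $(0,1)$-fibrations over $\tA$ (with fibres the \icats{} $F(x)$, $G(x)$). The claim that $R$ has a left adjoint in the \itcat{} $\FUN(\tA,\CATI)^{\lax}$ is, by definition, the existence of an adjunction in this \itcat{}; since $\FUN(\tA,\CATI)^{\lax}$ is identified (via the straightening, which is an equivalence of \itcats{}) with the locally full sub-\itcat{} of $(0,1)$-fibrations and cartesian-2-morphism-preserving functors, an adjunction there is exactly a relative adjunction over $\tA$ in the sense of \cref{propn:fib01radj}. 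I would then apply \cref{propn:fib01radj}: its condition (1) gives the fibrewise left adjoints $L_x \dashv R_x$, condition (2) is automatic (it is built into the morphism being cartesian-enriched), and condition (3) --- the compatibility of fibrewise adjoints with locally cocartesian transport --- unwinds, via the reformulation in the Remark following \cref{propn:fib01radj}, into the assertion that the mate of each lax naturality square commutes. The one subtlety to address carefully is that $\FUN(\tA,\CATI)^{\lax}$ is only a \emph{locally full} sub-\itcat{} of $\CATITsl{\tA}$-fibrations, so I must check that the putative adjoint $L$ and the unit/counit 2-morphisms actually lie in this sub-\itcat{}; this follows because the left adjoint of a cartesian-enriched morphism is again cartesian-enriched (as in \cref{lem:cartenradj}) and the structure 2-morphisms are detected by the ambient \itcat{}.

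Cases (2)--(4) I would then obtain by combining the remaining variances of \cref{thm:fib01laxstr} with the appropriate duality. For case (2), a right adjoint of a lax transformation corresponds under straightening to a relative \emph{right} adjoint in the other direction, for which the relevant criterion is the $(1,1)$-fibration statement \cref{propn:11fibradj}; here condition (3) of \cref{propn:11fibradj} is vacuous (it has only conditions (1)--(3) with the third being preservation of locally cartesian morphisms, which for the strong case forces the naturality squares to commute), and the strongness condition emerges because the transformation $L$ must simultaneously be a morphism of $(0,1)$- and $(1,1)$-fibrations, which by \cref{lem:projeqcart}-type reasoning means its naturality squares are invertible. The oplax cases (3) and (4) then follow formally from cases (1) and (2) by applying the equivalence $\FUN(\tA,\CATI)^{\oplax} \simeq (\FUN(\tA^{\op},\CATI^{\op})^{\lax})^{\op}$ of \cref{obs:grayop}, together with the fact that passing to opposites interchanges left and right adjoints and converts a lax naturality square into an oplax one with the mate construction intact.

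The main obstacle I expect is the precise bookkeeping in translating condition (3) of \cref{propn:fib01radj} into the statement that the mate square commutes, and dually ensuring that in the right-adjoint cases the condition becomes exactly \emph{strongness}. This is where the direction of the 2-cells matters and where an off-by-one variance error is easy to make; the Remark after \cref{propn:fib01radj} already packages the needed translation, so the real work is verifying that the mate square appearing there agrees with the mate of the lax naturality square as drawn in the statement, which amounts to checking that the Beck--Chevalley transformation constructed in \cref{def:matesquare} is the same 2-morphism that the fibrational condition produces. Once this identification is pinned down in case (1), the remaining cases are bookkeeping via duality and the $(1,1)$-fibration criterion.
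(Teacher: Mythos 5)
Your proposal reconstructs exactly the paper's argument: the corollary is printed there with no separate proof, the \qed standing for precisely the combination you describe --- transport an adjunction in $\FUN(\tA,\CATI)^{\lax}$ through the straightening equivalence of \cref{thm:fib01laxstr} to a relative adjunction between $(0,1)$-fibrations over $\tA$, apply \cref{propn:fib01radj} with the remark following it translating condition (3) into commutativity of the mate square, use \cref{lem:cartenradj} to see that the would-be adjoint stays inside the locally full sub-\itcat{} (with the unit and counit automatic by local fullness), and handle the remaining cases by \cref{propn:11fibradj} and duality. Your treatment of case (1) is complete and correct.

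Two bookkeeping slips, both in the places you yourself flagged as delicate. First, in case (2) your parenthetical claiming that condition (3) of \cref{propn:11fibradj} is vacuous contradicts the rest of your own sentence: that condition is not vacuous, it is exactly the preservation of locally cartesian lifts by the ($\op$-dualized) unstraightening of $L$, and this preservation is what unstraightens to strongness of $L$ --- the correct justification being that morphisms of fibrations preserving (co)cartesian lifts correspond to ordinary natural transformations (\cref{thm:str01}, or \cref{propn:markedlaxstr} with maximal marking) and strong lax transformations are ordinary ones by \cref{cor:stronglax}; \cref{lem:projeqcart} is not the relevant tool. Second, the duality you invoke for cases (3)--(4) does not literally apply: \cref{obs:grayop} gives $\FUN(\tA,\CATI)^{\oplax} \simeq (\FUN(\tA^{\op},\CATI^{\op})^{\lax})^{\op}$, whose target is $\CATI^{\op}$, and $\CATI^{\op}$ is \emph{not} equivalent to $\CATI$, so cases (1)--(2) cannot be cited for it. You should instead use the $2$-morphism duality \cref{eq:grayco} together with the equivalence $\CATI^{\co} \simeq \CATI$ induced by $\oC \mapsto \oC^{\op}$ (which swaps the handedness of pointwise adjoints, accounting for the left/right exchange in the statement), or --- closer to the paper's setup --- invoke the $(1,1)$-variance $\FIB_{(1,1)}^{\lax}(\tA^{\coop}) \simeq \FUN(\tA,\CATIT)^{\oplax}$ of \cref{thm:fib01laxstr} and apply \cref{propn:11fibradj} directly for case (4) and the evident dual of \cref{propn:fib01radj} for case (3). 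Both repairs are one line, so these are corrections to the execution rather than gaps in the strategy.
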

We can extend this description of adjoints to more general targets
using the Yoneda embedding for (op)lax transformations from
\S\ref{sec:laxyoneda}:
\begin{cor}\label{cor:adjinlaxtr}
  Let $\tc{A}, \tc{B}$ be \itcats{} and consider functors $F,G \colon \tc{A} \to \tc{B}$.
  \begin{enumerate}[(1)]
  \item A lax transformation $R \colon F \to G$ has a left adjoint in $\FUN(\tc{A}, \tc{B})^{\lax}$ \IFF{} $R_{x} \colon F(x) \to G(x)$ has a left adjoint for every $x \in \tc{A}$ and the mate of the lax naturality square commutes for every morphism in $\tA$.
  \item A lax transformation $L \colon F \to G$ has a right adjoint in
    $\FUN(\tc{A}, \tc{B})^{\lax}$ \IFF{} $L_{x} \colon F(x) \to G(x)$
    has a right adjoint for every $x \in \tc{A}$ and $L$ is strong.
  \item An oplax transformation $L \colon F \to G$ has a right adjoint
    in $\FUN(\tc{A}, \tc{B})^{\oplax}$ \IFF{}
    $L_{x} \colon F(x) \to G(x)$ has a right adjoint for every
    $x \in \tc{A}$ and the mate of the oplax naturality square
    commutes for every morphism in $\tc{A}$.
  \item An oplax transformation $R \colon F \to G$ has a left adjoint in $\FUN(\tc{A}, \tc{B})^{\oplax}$ \IFF{} $R_{x} \colon F(x) \to G(x)$ has a left adjoint for every $x \in \tc{A}$ and $R$ is strong.
  \end{enumerate}
\end{cor}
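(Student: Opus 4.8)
The plan is to deduce the statement from its special case where the target is $\CATI$, namely \cref{cor:adjinlaxfuncat}, by transporting adjunctions along the locally fully faithful Yoneda embedding
\[ \iota \colon \FUN(\tA, \tB)^{\lax} \hookrightarrow \FUN(\tA \times \tB^{\op}, \CATI)^{\lax} \]
of \cref{funlaxemb}. First I would record the general principle that a locally fully faithful functor $\iota \colon \tC \to \tD$ reflects adjunctions whose adjoint lies in its image: since $\iota$ is a functor of \itcats{} it preserves any adjunction, while conversely if $\iota(R)$ admits a left adjoint whose underlying $1$-morphism is equivalent to $\iota(L)$, then the unit and counit --- being $2$-morphisms between $1$-morphisms in the image --- lift uniquely along the equivalences $\tC(c,c')(f,g) \isoto \tD(\iota c, \iota c')(\iota f, \iota g)$, and the triangle identities transfer because these maps are in particular faithful. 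Thus $R \colon F \to G$ has a left adjoint in $\FUN(\tA,\tB)^{\lax}$ if and only if $\hat R := \iota(R)$ has a left adjoint in $\FUN(\tA \times \tB^{\op}, \CATI)^{\lax}$ whose underlying lax transformation lies in the image described in \cref{funlaxemb}.

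Next I would feed $\hat R$ into \cref{cor:adjinlaxfuncat}(1) and translate the resulting conditions, using throughout that the component of $\hat R$ at $(a,b)$ is postcomposition $(R_a)_* \colon \tB(b,F(a)) \to \tB(b,G(a))$, that $\hat R$'s lax square over a morphism $(\phi \colon a \to a', \id_b)$ is the image of $R$'s naturality square over $\phi$ under the representable $\tB(b,\blank)$, and that its square over $(\id_a, \beta)$ commutes strictly because $\iota(R)(\blank,a,\blank)$ is strong. Since $\tB(b,\blank)$ is a functor of \itcats{}, it preserves adjunctions and hence mates, so: each $(R_a)_*$ has a left adjoint as $b$ varies \IFF{} $R_a$ does (the forward direction by the representability criterion \cref{cor:ladjrep}, the reverse since representables preserve adjunctions); and the mate of $\hat R$'s $(\phi,\id_b)$-square is $\tB(b,\blank)$ applied to the mate of $R$'s $\phi$-square, which is invertible for all $b$ \IFF{} that mate is invertible in $\tB$, by the Yoneda lemma. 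The main obstacle is the $\tB^{\op}$-direction, which must contribute no further constraint: the mates of the strictly commuting $(\id_a,\beta)$-squares are the Beck--Chevalley transformations comparing pre- and postcomposition, and these are automatically invertible by naturality of the adjunction $(\lambda_a)_* \dashv (R_a)_*$ in the $\tB$-variable; this simultaneously shows that the left adjoint $\hat L$ is automatically strong in the $\tB^{\op}$-direction, hence automatically lies in the image of $\iota$. Assembling these observations yields exactly condition (1), and the identical analysis of \cref{cor:adjinlaxfuncat}(2) --- where the role of the mate condition is played by strongness of $\hat R$ --- gives condition (2).

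Finally, parts (3) and (4) follow by duality rather than a separate argument: using the equivalence $\FUN(\tA,\tB)^{\oplax} \simeq (\FUN(\tA^{\op}, \tB^{\op})^{\lax})^{\op}$ from \cref{obs:grayop}, an oplax transformation $R$ has a right (resp.\ left) adjoint precisely when $R^{\op}$ has a left (resp.\ right) adjoint in $\FUN(\tA^{\op},\tB^{\op})^{\lax}$; applying the already-established parts (1) and (2), and noting that passing to opposites exchanges left and right adjoints of the components and carries the mate of an oplax naturality square to the mate of the corresponding lax square (while preserving strongness), reproduces conditions (3) and (4).
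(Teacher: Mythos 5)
Your proposal is correct and follows essentially the same route as the paper's proof: embed $\FUN(\tA,\tB)^{\lax}$ into $\FUN(\tA \times \tB^{\op},\CATI)^{\lax}$ via \cref{funlaxemb}, use local fully faithfulness to reflect the adjunction once the candidate adjoint is shown (via strongness in the $\tB^{\op}$-direction) to lie in the image, translate the conditions of \cref{cor:adjinlaxfuncat} using representables and the Yoneda lemma, and deduce (3)--(4) by duality. One small caveat: \cref{cor:ladjrep} is a criterion for a \emph{functor of \itcats{}} to be a left adjoint and does not literally apply to the $1$-morphism $R_a$ in a general $\tB$ --- pointwise existence of left adjoints of $(R_a)_*$ for each fixed $b$ alone does not produce $\lambda_a$; the correct mechanism (which your surrounding argument in fact supplies, and which the paper runs via uniqueness of adjoints) is that the commuting $(\id_a,\beta)$-mates make the fibrewise left adjoints assemble into a strong transformation, which is then representable by the Yoneda lemma.
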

\begin{proof}
  By \cref{funlaxemb}, we have an embedding
  \[ \iota:\FUN(\tc{A}, \tc{B})^{\lax} \to \FUN(\tA \times \tB^{\op},\CATI)^{\lax}\] as a locally full sub-\itcat{}. It follows that a lax transformation has an adjoint
  in $\FUN(\tc{A}, \tc{B})^{\lax}$ \IFF{} it has an adjoint in
  $\FUN(\tA \times \tB^{\op},
  \CATI)^{\lax}$, and this adjoint also lies in this sub-\itcat{}. 

  We prove the first two cases, since the statements in the oplax case are strictly dual. Given a map $R \colon F \to G$ in $\FUN(\tc{A}, \tc{B})^{\lax}$ we will fix the notation $R' := \iota(R)$.
  \begin{enumerate}[(1)]
    \item Let us suppose that $R \colon F \to G$ has a left adjoint in $\FUN(\tc{A}, \tc{B})^{\lax}$ given by $L \colon G \to F$. Then it is clear that $R$ is a pointwise right adjoint. Given a map $f \colon x \to y$ in $\tc{A}$,  the image of the mate square of $f$ under $\tc{B}(b,-)$ can be identified with the mate square of the map $(b,x) \to (b,y)$ in $ \FUN(\tA \times \tB^{\op},\CATI)^{\lax}$. It then follows from \cref{propn:fib01radj} that the latter mate must be commutative. The Yoneda Lemma then implies that the original mate square is commutative as well.

      To show the converse we note that the map $\iota$ preserves
      pointwise adjoints and mate squares. Therefore the map $R'$
      admits a left adjoint, which is strong by
      \cref{propn:fib01radj}. This left adjoint must then factor
      through the full sub-\itcat{} of
      $\FUN(\tA \times \tB^{\op}, \CATI)^{\lax}$ described in
      \cref{funlaxemb}, so the claim follows.

    \item Let us suppose that $L \colon G \to F$ has a right adjoint
      in $\FUN(\tc{A}, \tc{B})^{\lax}$ given by $R \colon F \to
      G$. The image $L'$ of $L$ under $\iota$ is again a left adjoint,
      so that $L'$ is strong by \cref{cor:adjinlaxfuncat}. The Yoneda
      Lemma implies that $L$ is then also strong.

      To show the converse, we observe that $L'$ satisfies the
      conditions to have a right adjoint $R'$ in
      $\FUN(\tA \times \tB^{\op},\CATI)^{\lax}$ from
      \cref{cor:adjinlaxfuncat}. We must show that $R'$ is also in the
      image of $\iota$, which by \cref{funlaxemb} means that we need
      to show that $R'(a,\blank)$ is strong for every $a \in
      \tc{A}$. To see this, we observe that $R'(a,\blank)$ is right
      adjoint to $L'(a,\blank)$ in $\FUN(\tB^{\op},
      \CATI)^{\lax}$. Here $L'(a,\blank)$ is the image of $L(a)$ under
      the Yoneda embedding; by assumption, $L(a)$ has a right adjoint
      $R(a)$ in $\tB$, so by uniqueness of adjoints $R'(a,\blank)$ is
      the image of $R(a)$ under the Yoneda embedding, and so is
      necessarily strong. \qedhere
  \end{enumerate}
\end{proof}

\begin{remark}\label{rmk:adjinlaxtrother}
  The description of adjunctions from \cref{cor:adjinlaxtr} also
  appears as \cite{adjmnd}*{Theorem 4.6}, but the proof there is
  incomplete (and completing it would require working with coherence
  data in 4-categories).
\end{remark}

We can also immediately deduce a description of adjoints in ordinary
functor \itcats{}, since adjunctions there must necessarily also be
adjunctions in the larger \itcats{} of (op)lax transformations:
\begin{cor}
  Let $\tA$ and $\tB$ be \itcats{}, and consider functors $F,G \colon \tA \to \tB$.
  \begin{enumerate}[(1)]
  \item   A natural transformation $R \colon F \to G$ has a left
    adjoint in $\FUN(\tc{A},\tB)$ \IFF{} $R_{x} \colon F(x) \to
    G(x)$ has a left adjoint for every $x \in \tA$ and the mates of all the naturality squares commute.
  \item   A natural transformation $L \colon F \to G$ has a right
    adjoint in $\FUN(\tc{A},\tB)$ \IFF{} $L_{x} \colon F(x) \to
    G(x)$ has a right adjoint for every $x \in \tA$ and the mates of all the naturality squares
    commute. \qed
  \end{enumerate}
\end{cor}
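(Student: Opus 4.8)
The plan is to realize $\FUN(\tA,\tB)$ as a sub-\itcat{} of $\FUN(\tA,\tB)^{\lax}$ and then transport the characterization of adjunctions from \cref{cor:adjinlaxtr}. By \cref{cor:stronglax}, $\FUN(\tA,\tB)$ is the wide \emph{locally full} sub-\itcat{} of $\FUN(\tA,\tB)^{\lax}$ whose $1$-morphisms are the strong transformations. The inclusion $\iota \colon \FUN(\tA,\tB) \hookrightarrow \FUN(\tA,\tB)^{\lax}$ is a functor of \itcats{} and hence preserves adjunctions, so any adjunction in $\FUN(\tA,\tB)$ is in particular an adjunction in $\FUN(\tA,\tB)^{\lax}$ between strong transformations. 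The key general principle I would isolate first is the converse: because $\iota$ is locally full, an adjunction in $\FUN(\tA,\tB)^{\lax}$ restricts to one in $\FUN(\tA,\tB)$ as soon as \emph{both} adjoint $1$-morphisms are strong. Indeed, the unit and counit are then $2$-morphisms between composites of strong transformations, which are again strong since composition is internal to the sub-\itcat{}, so local fullness automatically places all these $2$-cells and the triangle coherences inside $\FUN(\tA,\tB)$. Thus in both cases the problem reduces to: the relevant adjoint exists in $\FUN(\tA,\tB)^{\lax}$ \emph{and} is strong.

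For part (1), a natural transformation $R$ has a left adjoint in $\FUN(\tA,\tB)$ \IFF{} it has a left adjoint $L$ in $\FUN(\tA,\tB)^{\lax}$ with $L$ strong. By \cref{cor:adjinlaxtr}(1), such an $L$ exists in the lax \itcat{} exactly when each $R_{x}$ admits a left adjoint and the mate of every naturality square commutes, which is the asserted condition; the reverse implication is immediate since $\iota$ preserves adjunctions. It then remains to observe that this $L$ is automatically strong: the relation $L \dashv R$ exhibits $R$ as a right adjoint of the lax transformation $L$, so \cref{cor:adjinlaxtr}(2) (applied to $L$) forces $L$ to be strong, whence $L$ lies in $\FUN(\tA,\tB)$ and the adjunction descends.

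For part (2), given a natural transformation $L$ whose components $L_{x}$ admit right adjoints, \cref{cor:adjinlaxtr}(2) already produces a right adjoint $R$ in $\FUN(\tA,\tB)^{\lax}$, the hypothesis there that $L$ be strong being satisfied since $L$ is an honest natural transformation. Here the lax naturality squares of $R$ are by construction the mates of the (commuting) naturality squares of $L$, so $R$ is strong \IFF{} these mates commute; this is precisely the remaining hypothesis, and it places $R$ in $\FUN(\tA,\tB)$, so again the adjunction descends, while the reverse implication follows as before. The main obstacle is the bookkeeping distinguishing the two cases: although each reduces to the slogan ``the adjoint exists in the lax world and is strong,'' the role of the mate condition is different — in (1) it governs the \emph{existence} of the adjoint in $\FUN(\tA,\tB)^{\lax}$, whereas in (2) existence is automatic and the mate condition instead governs the \emph{strongness} of the adjoint — so one must take care to invoke the correct part of \cref{cor:adjinlaxtr} in each direction.
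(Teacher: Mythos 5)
Your reduction principle (adjunctions restrict along the wide locally full inclusion $\FUN(\tA,\tB) \hookrightarrow \FUN(\tA,\tB)^{\plax}$ of \cref{cor:stronglax} once both adjoint $1$-morphisms are strong) and your part (1) are exactly the argument the paper intends: the corollary is stated there as an immediate consequence of \cref{cor:adjinlaxtr}, with the left adjoint produced by \cref{cor:adjinlaxtr}(1) and seen to be strong by \cref{cor:adjinlaxtr}(2), after which local fullness lets the adjunction descend.

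Part (2), however, has a genuine gap at the assertion that ``the lax naturality squares of $R$ are by construction the mates of the (commuting) naturality squares of $L$.'' \cref{cor:adjinlaxtr}(2) is an existence criterion, not a construction: it produces a right adjoint $R$ in $\FUN(\tA,\tB)^{\lax}$ but gives no handle on its naturality $2$-cells. Nor can you sidestep the identification from within the lax world: applying \cref{cor:adjinlaxtr}(1) to $R$ (which has the left adjoint $L$) only tells you that the mates of $R$'s squares commute, and since the mate correspondence does not preserve invertibility of $2$-cells this does not show $R$ is strong. The identification you want is a doctrinal-adjunction statement --- essentially the content of \cref{thm:mategeneral}, reducing for $\tA = [1]$ to \cite{HHLN1}*{Proposition 3.2.7} --- so it requires an argument, not the label ``by construction''; note that the ``only if'' direction of your (2) silently leans on the same unproved identification. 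The paper's phrasing (adjunctions in the larger \itcats{} of \emph{(op)lax} transformations, plural) indicates the intended fix: run part (2) in $\FUN(\tA,\tB)^{\oplax}$, which also contains $\FUN(\tA,\tB)$ as a wide locally full sub-\itcat{} by \cref{cor:stronglax}. There \cref{cor:adjinlaxtr}(3) says an oplax transformation $L$ has a right adjoint \IFF{} its components do and the mates of its oplax naturality squares commute --- precisely the hypothesis of (2), now governing \emph{existence} just as in (1) --- and \cref{cor:adjinlaxtr}(4), applied to the resulting right adjoint $R$, forces $R$ to be strong; one then descends by local fullness exactly as in your part (1), with no mate-identification needed.
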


\subsection{The mate correspondence}\label{sec:mates}
In this section we will show that taking mates gives an equivalence
between lax transformations whose components are right adjoints and
oplax transformations whose components are left adjoints, in the following sense:
\begin{defn}
  Let $\FUN(\tc{A},\tc{B})^{\lax,\radj}$ be the wide and locally full
  sub-\itcat{} of $\FUN(\tc{A},\tc{B})^{\lax}$ containing those lax
  transformations whose components are all right adjoints. We define
  $\FUN(\tc{A},\tc{B})^{\oplax,\ladj}$ similarly.
\end{defn}
Our more precise goal in this section is then to prove that there is an equivalence
\[ \FUN(\tc{A},\tc{B})^{\lax,\radj} \simeq
  (\FUN(\tc{A},\tc{B})^{\oplax,\ladj})^{\coop},\] given by taking the
mates of the naturality squares. Our strategy will be to first prove
this for functors to $\CATI$ using fibrations and then reduce the
general case to this using the Yoneda embedding. We start by giving a
fibrational description of the relevant \itcats{} informally:
\begin{enumerate}[(1)]
\item Given a functor $\tX \to \FUN(\tA, \CATI)^{\lax,\radj} \simeq
  \FIB^{\lax}_{0,1}(\tA)^{\radj}$,  we can unstraighten 
  over $\tX^{\op}$:
  \[
    \begin{tikzcd}
      \tE \arrow{rr} \arrow{dr} & & \tX^{\op} \times \tA \arrow{dl} \\
        & \tX^{\op},
    \end{tikzcd}
  \]
  where this is a morphism of $(1,0)$-fibrations over $\tX^{\op}$ that is fibrewise
  a $(0,1)$-fibration over $\tA$.
\item Then by \cref{propn:graybifib} we get that
    \[
    \begin{tikzcd}
      \tE \arrow{rr} \arrow{dr} & & \tX^{\op} \times \tA \arrow{dl} \\
        & \tA,
    \end{tikzcd}
  \]
  is a morphism of $(0,1)$-fibrations over $\tA$ that is fibrewise a
  $(1,0)$-fibration \emph{and} a $(0,0)$-fibration (encoding the right
  adjoints).
\item Now we can dualize over $\tA$ to get
    \[
    \begin{tikzcd}
      \tE^{\vee} \arrow{rr} \arrow{dr} & & \tX^{\op} \times \tA^{\coop} \arrow{dl} \\
        & \tA^{\coop},
    \end{tikzcd}
  \]
  a morphism of $(1,1)$-fibrations over $\tA^{\coop}$ that is
  fibrewise a $(0,0)$-fibration.
\item Using \cref{propn:graybifib} again, we obtain
      \[
    \begin{tikzcd}
      \tE^{\vee} \arrow{rr} \arrow{dr} & & \tX^{\op} \times \tA^{\coop} \arrow{dl} \\
        & \tX^{\op}
    \end{tikzcd}
  \]
  a morphism of $(0,0)$-fibrations that is fibrewise a
  $(1,1)$-fibration.
\item Now we can unstraighten over $\tX^{\op}$ to get a functor
  \[\tX^{\coop} \to \FIB^{\lax}_{1,1}(\tA^{\coop}) \simeq \FUN(\tA,
  \CATI)^{\oplax}\]
  landing in the sub-\itcat{} $\FUN(\tA, \CATI)^{\oplax,\ladj}$, or
  \[ \tX \to \FUN(\tA,\CATI)^{\oplax,\ladj,\coop}.\]
\end{enumerate}

Next, we make these maneuvres more precise:
\begin{propn}
  Under the equivalences given in \cref{prop:mapspacegbifib} we have that:
  \begin{enumerate}
  \item The subspace
    \[ \Map(\tX,\FUN(\tc{A},\CATI)^{\lax,\radj}) \subseteq
      \Map(\tX,\FUN(\tc{A},\CATI)^{\lax}) \]
    corresponds to the subspace
    \[ \GFIB_{(0,1);(1,0)+(0,0)}(\tA,\tX^{\op})^{\simeq}
      \subseteq 
      \GBIFIB_{(0,1)}(\tA,\tX^{\op})^{\simeq}\]
   consisting of functors that are both Gray $(0,1)$-bifibrations and Gray $(0,1)$-$(0,0)$-fibrations.
 \item The subspace 
   \[ \Map(\tX, \FUN(\tA, \CATI)^{\oplax,\ladj})
     \subseteq \Map(\tX, \FUN(\tA, \CATI)^{\oplax})
   \]
   corresponds to the subspace
   \[
     \GFIB_{(1,1);(1,0)+(0,0)}(\tX^{\co},\tA^{\coop})^{\simeq}
     \subseteq
     \GBIFIB_{(1,1)}(\tX^{\co},\tA^{\coop})^{\simeq}
     \]
     consisting of functors that are both Gray $(1,1)$-bifibrations and Gray $(1,1)$-$(1,0)$-fibrations.
  \end{enumerate}
\end{propn}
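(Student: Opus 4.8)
The plan is to carry the defining conditions of the sub-$(\infty,2)$-categories $\FUN(\tA,\CATI)^{\lax,\radj}$ and $\FUN(\tA,\CATI)^{\oplax,\ladj}$ across the equivalences of \cref{prop:mapspacegbifib}, and then to recognize them, via the adjointability criterion of \cref{cor:ladj+01is11}, as exactly the extra fibrewise fibration structure recorded by the superscript $(1,0)+(0,0)$. I treat case (1) in detail; case (2) will be strictly dual. The first step is to reduce factoring to a condition on $1$-morphisms. Since $\FUN(\tA,\CATI)^{\lax,\radj}$ is a wide and locally full sub-$(\infty,2)$-category of $\FUN(\tA,\CATI)^{\lax}$, hence in particular locally fully faithful, \cref{cor:lffon1cats} shows that a functor $\tX \to \FUN(\tA,\CATI)^{\lax}$ factors through it \IFF{} its restriction along $\tX^{\leq 1} \to \tX$ does, that is \IFF{} every $1$-morphism of $\tX$ is sent to a lax transformation all of whose components are right adjoints. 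Thus the subspace $\Map(\tX,\FUN(\tA,\CATI)^{\lax,\radj})$ is cut out of $\Map(\tX,\FUN(\tA,\CATI)^{\lax})$ by precisely this componentwise condition.

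Next I read off these components on the fibrational side. Under the equivalence $\Map(\tX,\FUN(\tA,\CATI)^{\lax}) \simeq \GBIFIB_{(0,1)}(\tA,\tX^{\op})^{\simeq}$ a point becomes a functor $\tE \to \tA \times \tX^{\op}$; by the swap of \cref{propn:graybifib} the same object may be presented with $\tE \to \tX^{\op}$ a $(1,0)$-fibration whose fibre over $x$ is the $(0,1)$-fibration $\tE_x \to \tA$ classifying the functor $\tA \to \CATI$ indexed by $x$, and whose cartesian transport along a morphism $x \to x'$ of $\tX$ is the cartesian-enriched functor $\tE_x \to \tE_{x'}$ representing the associated lax transformation. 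Restricting to the fibre over a fixed $a \in \tA$, the other projection $\tE_a \to \tX^{\op}$ is the fibrewise $(1,0)$-fibration of the Gray bifibration, and by the compatibility of the two fibration structures (as in \cref{obs:cocartmormapfib} and the proof of \cref{propn:graybifib}) its cartesian transport along $x \to x'$ is exactly the component of that lax transformation at $a$, namely $\tE_{(a,x)} \to \tE_{(a,x')}$. Hence the componentwise condition above says precisely that for every $a \in \tA$ all cartesian transport functors of $\tE_a \to \tX^{\op}$ are right adjoints.

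Finally I invoke the adjointability criterion. A $(1,0)$-fibration has right-adjoint cartesian transport functors \IFF{} it also admits cocartesian lifts whose transport supplies the left adjoints, that is \IFF{} it is in addition a $(0,0)$-fibration; this is the content of \cref{cor:ladj+01is11} together with \cref{lem:adjnotlocal}, which promotes the a priori local statement to an honest $(0,0)$-fibration. Imposing this fibrewise over all $a \in \tA$ is exactly the condition defining $\GFIB_{(0,1);(1,0)+(0,0)}(\tA,\tX^{\op})$ inside $\GBIFIB_{(0,1)}(\tA,\tX^{\op})$, which proves (1). For (2) the same three steps apply after replacing \cref{prop:mapspacegbifib}(1) by \cref{prop:mapspacegbifib}(2) and inserting the appropriate $\co$- and $\coop$-dualizations; there the oplax components being left adjoints translates, via the complementary half of \cref{cor:ladj+01is11}, into the identical fibrewise enhancement $(1,0)+(0,0)$.

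I expect the main obstacle to be the variance bookkeeping in the middle step: one must check that along the chain of straightening, swapping, and dualizing the (op)lax components really are identified with the cartesian transport functors of precisely the fibrewise fibrations to which \cref{cor:ladj+01is11} applies, and that ``right adjoint'' (respectively ``left adjoint'') corresponds in each case to adjoining \emph{cocartesian} rather than cartesian $1$-morphisms. Keeping every $\op$, $\co$, and $\coop$ consistent, so that the resulting extra structure is the stated $(1,0)+(0,0)$ and not one of its variance-dual siblings, is where the care is required.
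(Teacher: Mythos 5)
Your proposal is correct and follows essentially the same route as the paper's proof: reduce factoring through the wide locally full sub-\itcat{} to a componentwise condition on $1$-morphisms, identify the components of the lax transformations with the (co)cartesian transports of the fibrewise fibrations $\tE_a \to \tX^{\op}$ after the swap of \cref{propn:graybifib}, and conclude via \cref{cor:ladj+01is11} (which already packages \cref{lem:adjnotlocal}) that pointwise adjointability is equivalent to the extra $(0,0)$-fibration structure. The paper treats the reduction step as immediate from the definition of a locally full subcategory and works with the pullback $f^{*}\tE_{a} \to [1]^{\op}$ rather than citing \cref{cor:lffon1cats}, but this is only a difference in presentation, and your flagged variance concerns are resolved exactly as you anticipate.
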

\begin{proof}
  We prove the first case; the proof of the remaining case is
  essentially the same.  By definition, a functor
  $F \colon \tX \to \FUN(\tA, \CATI)^{\lax}$ factors through
  $\FUN(\tc{A},\CATI)^{\lax,\radj}$ \IFF{} for every morphism
  $f \colon x \to y$ in $\tX$ and every object $a \in \tA$, the
  functor $F(f,a) \colon F(x,a) \to F(y,a)$ is a right adjoint. If
  $p \colon \tE \to \tX^{\op} \times \tA$ is the corresponding Gray
  $(1,0)$-bifibration, then this amounts to the pullback
  $f^{*}\tE_{a} \to [1]^{\op}$ being the $(1,0)$-fibration for a
  functor that has a left adjoint. \cref{cor:ladj+01is11} then implies
  that this corresponds precisely to the condition that the
  $(1,0)$-fibration $\tE_{a} \to \tX^{\op}$ is also a
  $(0,0)$-fibration for every $a \in \tA$, as required.
\end{proof}

\begin{cor}\label{cor:catvaluedmates}
  For any \itcat{} $\tA$, there is a natural equivalence
  \[ \FUN(\tA, \CATI)^{\lax,\radj} \simeq \FUN(\tA, \CATI)^{\oplax,\ladj,\coop}\]
  given by taking mates of the naturality squares of lax transformations.
\end{cor}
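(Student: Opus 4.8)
The plan is to deduce this corollary from the preceding proposition by comparing the two fibrational descriptions over an arbitrary test \itcat{} $\tX$ and then applying the Yoneda lemma. Concretely, I would show that the subspaces of $\Map(\tX, \FUN(\tA,\CATI)^{\lax})$ and $\Map(\tX, \FUN(\tA,\CATI)^{\oplax,\coop})$ carved out by the adjointability conditions are identified, naturally in $\tX$, via an explicit dualization of the associated fibrations.

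**First I would** unwind both sides of the desired equivalence into fibrational data using \cref{prop:mapspacegbifib} and the preceding proposition. On the left, a point of $\Map(\tX, \FUN(\tA,\CATI)^{\lax,\radj})$ corresponds to an element of $\GFIB_{(0,1);(1,0)+(0,0)}(\tA,\tX^{\op})^{\simeq}$, i.e.\ a functor $\tE \to \tA \times \tX^{\op}$ that is simultaneously a Gray $(0,1)$-bifibration and a Gray $(0,1)$-$(0,0)$-fibration. On the right, using that $\Map(\tX, \FUN(\tA,\CATI)^{\oplax,\ladj,\coop}) \simeq \Map(\tX^{\coop}, \FUN(\tA,\CATI)^{\oplax,\ladj})$, a point corresponds to an element of $\GFIB_{(1,1);(1,0)+(0,0)}((\tX^{\coop})^{\co},\tA^{\coop})^{\simeq} = \GFIB_{(1,1);(1,0)+(0,0)}(\tX^{\op},\tA^{\coop})^{\simeq}$. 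I would then construct a natural equivalence between these two mapping spaces by carrying out the five-step maneuvre sketched immediately before the proposition: applying \cref{propn:graybifib} to reinterpret the $(0,1)$-bifibration over $\tA \times \tX^{\op}$ as a $(0,1)$-fibration over $\tA$ that is fibrewise a $(1,0)$- and $(0,0)$-fibration over $\tX^{\op}$; dualizing over $\tA$ (which sends $\tA \rightsquigarrow \tA^{\coop}$ and interchanges the relevant variances, using the equivalences \cref{eq:fibop} fibrewise); and invoking \cref{propn:graybifib} again to land in $\GFIB_{(1,1);(1,0)+(0,0)}(\tX^{\op},\tA^{\coop})^{\simeq}$.

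**The key technical point** is that the dualization functor over $\tA$ used in step (3) is precisely the operation that realizes the passage from a componentwise right adjoint to its mate: a $(0,0)$-fibration fibrewise over $\tA$ encodes the right adjoints $R_x$, and dualizing over $\tA$ produces the $(1,1)$-fibration classifying the left adjoints $L_x$, with the Beck--Chevalley/mate transformation of \cref{def:matesquare} appearing as the induced $2$-cells. I would verify that each of these steps is an equivalence on underlying \igpds{} and is natural in $\tX$, so that by the Yoneda lemma the composite assembles into the asserted equivalence of \itcats{}
\[ \FUN(\tA, \CATI)^{\lax,\radj} \simeq \FUN(\tA, \CATI)^{\oplax,\ladj,\coop}. \]
Finally, chasing an individual lax transformation through the construction—comparing its naturality square against the mate square produced by the fibrewise dualization—identifies the equivalence on morphisms with the mate operation, as claimed.

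**The main obstacle** I anticipate is bookkeeping the variances correctly through the two applications of \cref{propn:graybifib} and the dualization in the middle: each of these operations permutes the roles of $1$- and $2$-cartesian/cocartesian structures and toggles $\op$, $\co$, $\coop$ on the base \itcats{}, and it is easy to accumulate a sign error that lands one in the wrong flavour of fibration. I would manage this by tracking, at each stage, exactly which functor each fibration straightens to (covariant or contravariant in $\tA$, and in which $2$-cell direction), using the table of variances recorded after \cref{eq:strvariants}, and checking against the known endpoints: the starting data must straighten to a lax transformation valued in right adjoints and the ending data to an oplax transformation valued in left adjoints with reversed $2$-cells. Once the variances are pinned down, the identification of the transport with the mate construction is exactly the content already extracted from \cref{cor:ladj+01is11} and \cref{propn:fib01radj} in the proof of the preceding proposition, so no genuinely new coherence computation should be required.
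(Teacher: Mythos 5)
Your main chain of equivalences is exactly the paper's proof: unwind both sides via \cref{prop:mapspacegbifib} and the preceding proposition, pass from $\GFIB_{(0,1);(1,0)+(0,0)}(\tA,\tX^{\op})^{\simeq}$ to $\GFIB_{(1,1);(1,0)+(0,0)}(\tA^{\coop},\tX^{\op})^{\simeq}$ by the dualization $\FIB_{(0,1)}(\tA) \simeq \FUN(\tA,\CATIT) \simeq \FIB_{(1,1)}(\tA^{\coop})$ (together with the swap of \cref{propn:graybifib}), and conclude by naturality in $\tX$ and the Yoneda lemma. Up to this point your proposal is correct and coincides with the paper's argument, including your bookkeeping of variances.

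The gap is in your final step, the claim that the resulting equivalence ``is given by taking mates.'' You propose to verify this by chasing a single lax transformation through the construction, asserting that ``no genuinely new coherence computation should be required'' because of \cref{cor:ladj+01is11} and \cref{propn:fib01radj}. That is not right: those results are existence criteria --- they say when fibrewise adjoints assemble into a (relative) adjoint, with commutativity of mate squares appearing as a hypothesis --- and they do not compute which $2$-cell the dualization equivalence actually produces. Uniqueness of adjoints does pin down the components $L_x$ of the image transformation, but a lax square is \emph{data}, not a property: the filling $2$-cell is not determined by its boundary, so one must genuinely identify the transport across the dualization with the Beck--Chevalley $2$-cell of \cref{def:matesquare}. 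This is exactly the nontrivial coherence point, and the paper deliberately avoids redoing it: by naturality one may assume $\tA = [1]$, in which case the equivalence coincides with the \icatl{} one of \cite{HHLN1}, whose identification with the mate construction is \cite{HHLN1}*{Proposition 3.2.7}. To complete your proof you should either carry out an analogue of that (substantial) computation or perform the same reduction to $\tA = [1]$ and cite it; as written, your last paragraph asserts rather than proves the key claim.
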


\begin{proof}
  There are natural equivalences
  \[
    \begin{split}
      \Map(\tX, \FUN(\tA, \CATI)^{\lax,\radj}) & \simeq
                                                 \GFIB_{(0,1);(1,0)+(0,0)}(\tA,\tX^{\op})^{\simeq}
      \\
                                               & \simeq \GFIB_{(1,1);(1,0)+(0,0)}(\tA^{\coop},\tX^{\op})^{\simeq} \\
                                               & \simeq \Map(\tX^{\coop}, \FUN(\tA, \CATI)^{\oplax,\ladj}) \\
       & \simeq \Map(\tX, \FUN(\tA, \CATI)^{\oplax,\ladj,\coop}),
    \end{split}
  \]
  where the second is given by dualizing from $(0,1)$-fibrations over $\tA$  to 
  $(1,1)$-fibrations over $\tA^{\coop}$.
  By the Yoneda lemma, it follows that we have the required
  equivalence of \itcats{}.

  To show the final claim we can assume without loss of generality that $\tA=[1]$. In this case our equivalence coincides with that of \cite{HHLN1}, so the result follows from \cite{HHLN1}*{Proposition 3.2.7}.
\end{proof}

To extend this result to a general equivalence
\[ \FUN(\tA, \tB)^{\lax,\radj} \simeq \FUN(\tA,
  \tB)^{\oplax,\ladj,\coop},\]
we will first identify the images of these \itcats{} under the embedding of \cref{funlaxemb}. We will then show that the equivalence of \cref{cor:catvaluedmates} restricts appropiately to yield a commutative diagram
  \[
    \begin{tikzcd}
      \FUN(\tA, \tB)^{\lax,\radj} \arrow[r,"\simeq"] \arrow[d] & \FUN(\tA,\tB)^{\oplax,\ladj,\coop} \arrow[d] \\
      \FUN(\tA\times \tB^{\op}, \CATI)^{\lax,\radj} \arrow[r,"\simeq"] & \FUN(\tA \times \tB^{\op}, \CATI)^{\oplax,\ladj,\coop}.
    \end{tikzcd}
  \]

\begin{lemma}\label{lem:emblaxradj}
  Under the locally full embedding of \cref{funlaxemb},
  $\FUN(\tA,\tB)^{\lax,\radj}$ corresponds to a locally full sub-\itcat{}
  of $\FUN(\tA \times \tB^{\op},\CATI)^{\lax,\radj}$, whose morphisms are the lax transformations $\Phi
  \colon [1] \otimes (\tA \times \tB^{\op}) \to \CATI$ such that
  \begin{itemize}
  \item For every $a \in \tA$, $\Phi(\blank,a,\blank)$ is an ordinary
    natural transformation, \ie{} for every morphism $b \to b'$ in
    $\tB$ the square
    \[
      \begin{tikzcd}
        \Phi(0,a,b) \arrow{r}{\Phi(\blank,a,b)} \arrow{d} &
        \Phi(1,a,b) \arrow[Rightarrow,dl,shorten >=1.5ex,shorten <=1.5ex] \arrow{d} \\
        \Phi(0,a,b') \arrow[r,swap,"{\Phi(\blank,a,b')}"]  & \Phi(1,a,b')
      \end{tikzcd}
    \]
    commutes.
  \item If $L_{a,b}$ denotes the left adjoint of $\Phi(\blank,a,b)$,
    then for every morphism $b \to b'$ the mate square
    \[
      \begin{tikzcd}
        \Phi(1,a,b) \arrow{r}{L_{a,b}} \arrow{d} & \Phi(0,a,b) \arrow{d} \\
        \Phi(1,a,b') \arrow[r,swap,"L_{a,b'}"] \arrow[Rightarrow,ur,shorten >=1.5ex,shorten <=1.5ex]  & \Phi(0,a,b')
      \end{tikzcd}
    \]
    commutes.
  \end{itemize}
\end{lemma}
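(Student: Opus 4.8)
The plan is to leverage \cref{funlaxemb}, which already identifies $\FUN(\tA,\tB)^{\lax}$ as a locally full sub-\itcat{} of $\FUN(\tA \times \tB^{\op}, \CATI)^{\lax}$ with an explicit description of objects (functors $F$ with $F(a,\blank)$ representable) and morphisms (lax transformations $\Phi$ with $\Phi(\blank,a,\blank)$ strong for every $a$). Since $\FUN(\tA,\tB)^{\lax,\radj}$ is by definition the wide, locally full sub-\itcat{} of $\FUN(\tA,\tB)^{\lax}$ on lax transformations whose components are right adjoints, the embedding of \cref{funlaxemb} automatically restricts to a locally full embedding onto the corresponding sub-\itcat{} of $\FUN(\tA \times \tB^{\op},\CATI)^{\lax}$; the content of the lemma is just to make the characterizing conditions on morphisms explicit. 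So the first step is to record that objects are unchanged (the representability condition) and that we only need to analyze when a morphism $\Phi$ in the image is a lax transformation of \emph{componentwise right adjoints}.

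First I would fix a lax transformation $R \colon F \to G$ in $\FUN(\tA,\tB)^{\lax}$ and write $\Phi := \iota(R)$ for its image under the Yoneda embedding of \cref{funlaxemb}, so $\Phi \colon [1] \otimes (\tA \times \tB^{\op}) \to \CATI$ is the lax transformation with $\Phi(i,a,b) = \tB(b, (R\text{-data})(i,a))$, i.e. $\Phi(0,a,\blank) = \tB(\blank, F(a))$ and $\Phi(1,a,\blank) = \tB(\blank, G(a))$, with $\Phi(\blank,a,b)$ given by postcomposition with the component $R_a \colon F(a) \to G(a)$. The key translation is that for each fixed $(a,b)$, the functor $\Phi(\blank,a,b) = (R_a)_* \colon \tB(b,F(a)) \to \tB(b,G(a))$ is a right adjoint precisely when $R_a$ is a right adjoint in $\tB$, by representability and the Yoneda argument implicit in \cref{cor:adjinlaxtr}(1); its left adjoint $L_{a,b}$ is then postcomposition with the left adjoint of $R_a$. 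This identifies $\FUN(\tA,\tB)^{\lax,\radj}$ with the sub-\itcat{} of $\FUN(\tA \times \tB^{\op},\CATI)^{\lax,\radj}$ on those $\Phi$ in the image of $\iota$.

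The remaining task is to unwind what the image condition --- namely that $\Phi(\blank,a,\blank)$ be \emph{strong} for every $a$ (from \cref{funlaxemb}) --- becomes for a componentwise-right-adjoint $\Phi$. Here I would argue that strongness of the lax transformation $\Phi(\blank,a,\blank) \colon [1] \otimes \tB^{\op} \to \CATI$ is equivalent to the conjunction of the two displayed conditions in the statement. The naturality square in the first bullet (over a morphism $b \to b'$ in $\tB$, hence $b' \to b$ in $\tB^{\op}$) commuting is exactly the statement that $\Phi(\blank,a,\blank)$ is an ordinary natural transformation, which is one formulation of strongness via \cref{cor:stronglax}; I would then invoke \cref{cor:adjinlaxfuncat}(1) (the $\CATI$-valued recognition of left adjoints in lax functor categories) to see that, \emph{given} that each $\Phi(\blank,a,b)$ is a right adjoint, strongness of $\Phi(\blank,a,\blank)$ is equivalent to the commutativity of the mate squares in the second bullet. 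In other words, the mate of the (strong, hence commuting) naturality square of $\Phi(\blank,a,\blank)$ is the second displayed square, and strongness forces it to commute.

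The main obstacle I anticipate is bookkeeping the variances and ensuring the two bullet conditions are genuinely the correct translation of ``$\Phi(\blank,a,\blank)$ strong''. The subtlety is that strongness is a condition in the lax direction ($\tB^{\op}$-direction of naturality), and one must check that this single condition unpacks into both the commuting naturality square \emph{and} the commuting mate square, rather than just one of them; this is where \cref{propn:fib01radj} or \cref{cor:adjinlaxfuncat}(1) does the real work, since for a transformation that is componentwise a right adjoint, strongness in one presentation corresponds to commutativity of the mate in the adjoint presentation. Once the equivalence ``strong $\Leftrightarrow$ (naturality commutes) $+$ (mate commutes)'' is established pointwise in $a$, the conclusion that the image is the described locally full sub-\itcat{} is immediate, since locally fullness and the object description are inherited directly from \cref{funlaxemb}.
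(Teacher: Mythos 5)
Your outline assembles the right ingredients (\cref{funlaxemb}, \cref{cor:adjinlaxfuncat}, the Yoneda reduction), and the paper's own proof is essentially the two-line version of this strategy; but the logical core of your third paragraph is false, and with it the reverse implication of your argument. You claim that strongness of $\Phi(\blank,a,\blank)$ is \emph{equivalent} to the conjunction of the two bullets, and in particular that ``strongness forces [the mate square] to commute.'' The mate of an invertible 2-morphism is in general \emph{not} invertible --- this is exactly the Beck--Chevalley phenomenon --- and it is the whole reason the second bullet appears in the lemma as a separate condition, and why the mate hypothesis in \cref{cor:adjinlaxfuncat}(1) is not vacuous even for strong transformations. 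Strongness of $\Phi(\blank,a,\blank)$ is precisely the first bullet and nothing more: by \cref{funlaxemb} it characterizes membership of $\Phi$ in the image of the embedding $\iota$, and a strong transformation all of whose components are right adjoints need not admit a left adjoint. (The true statement in the vicinity of your ``adjoint presentation'' remark is that, \emph{given} an adjunction in the lax functor category, the naturality squares of the left adjoint are the mates of those of the right adjoint; this relates strongness of the \emph{left} adjoint to the second bullet, not strongness of $\Phi(\blank,a,\blank)$ itself.)

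The same conflation appears earlier in your ``key translation'': for fixed $(a,b)$, the component $\Phi(\blank,a,b) = (R_a)_* \colon \tB(b,F(a)) \to \tB(b,G(a))$ being a right adjoint is \emph{implied by}, but does not imply, $R_a$ being a right adjoint in $\tB$. The pointwise left adjoints $L_{a,b}$ assemble only into a datum whose structure 2-morphisms are the mates of the naturality squares, and they define an actual left adjoint of $\Phi(\blank,a,\blank)$ exactly when those mates commute. This is where the second bullet does the real work, and the correct division of labour (which is the paper's) runs: bullet 1 says $\Phi$ lies in the image of $\iota$, say $\Phi = \iota(R)$; then, since the components $\Phi(\blank,a,b)$ are right adjoints (this is membership in $\FUN(\tA \times \tB^{\op},\CATI)^{\lax,\radj}$), bullet 2 is by \cref{cor:adjinlaxfuncat}(1) equivalent to $\Phi(\blank,a,\blank)$ having a left adjoint in $\FUN(\tB^{\op},\CATI)^{\lax}$; that left adjoint is automatically strong (by \cref{cor:adjinlaxfuncat}(2)), hence an ordinary natural transformation between representable presheaves, and local full faithfulness of the Yoneda embedding transports the adjunction to a left adjoint of $R_a$ in $\tB$. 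The converse direction is your (correct) observation that Yoneda preserves adjunctions and mates. If you replace your biconditional ``strong \IFF{} bullet 1 $+$ bullet 2'' --- which, as stated, would render bullet 2 redundant, and it is not --- with this argument, the rest of your outline goes through and agrees with the paper's proof.
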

\begin{proof}
  The first condition is the one for $\Phi$ to live in
  $\FUN(\tA, \tB)^{\lax}$, and the second condition says by
  \cref{cor:adjinlaxfuncat} that for every $a \in \tA$ we get a left adjoint in
  $\FUN(\tB^{\op}, \CATI)$ and so in $\tB$, as required.
\end{proof}

\begin{lemma}\label{lem:emboplaxladj}
 Under the locally full embedding of \cref{funlaxemb},
 $\FUN(\tA,\tB)^{\oplax,\ladj}$ corresponds to a locally full sub-\itcat{}
  of $\FUN(\tA \times \tB^{\op},\CATI)^{\oplax,\ladj}$, whose morphisms are the lax transformations $\Phi
  \colon [1] \otimes (\tA \times \tB^{\op}) \to \CATI$ such that
  \begin{itemize}
  \item For every $a \in \tA$, $\Phi(\blank,a,\blank)$ is an ordinary
    natural transformation, \ie{} for every morphism $b \to b'$ in
    $\tB$ the square
    \[
      \begin{tikzcd}
        \Phi(0,a,b) \arrow{r}{\Phi(\blank,a,b)} \arrow{d} &
        \Phi(1,a,b) \arrow{d} \\
        \Phi(0,a,b') \arrow[r,swap,"{\Phi(\blank,a,b')}"]  \arrow[Rightarrow,ur,shorten >=1.5ex,shorten <=1.5ex]  & \Phi(1,a,b')
      \end{tikzcd}
    \]
    commutes.
  \item If $R_{a,b}$ denotes the right adjoint of $\Phi(\blank,a,b)$,
    then for every $b \to b'$ the mate square
    \[
      \begin{tikzcd}
        \Phi(1,a,b) \arrow{r}{R_{a,b}} \arrow{d} &
        \Phi(0,a,b) \arrow[Rightarrow,dl,shorten >=1.5ex,shorten <=1.5ex] \arrow{d} \\
        \Phi(1,a,b') \arrow[r,swap,"R_{a,b'}"]  & \Phi(0,a,b')
      \end{tikzcd}
    \]
    commutes.
  \end{itemize}
\end{lemma}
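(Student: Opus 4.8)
The plan is to mirror the proof of \cref{lem:emblaxradj} exactly, since the two statements are formally dual. I would begin by recalling the embedding from \cref{funlaxemb}: under the locally full inclusion $\iota \colon \FUN(\tA,\tB)^{\oplax} \hookrightarrow \FUN(\tA \times \tB^{\op},\CATI)^{\oplax}$, a morphism is an oplax transformation $\Phi$ such that $\Phi(\blank,a,\blank)$ is strong for every $a \in \tA$. The first displayed condition is precisely this requirement that $\Phi(\blank,a,\blank)$ be an ordinary natural transformation. Here one must be careful about variance: because the Yoneda embedding is contravariant in $\tB$, a transformation $L \colon F \to G$ in $\FUN(\tA,\tB)^{\oplax}$ whose components $L_x \colon F(x) \to G(x)$ are \emph{left} adjoints is sent to a transformation whose components on representables are the images $\tB(L_x(\blank),\blank)$, and under $\tB(b,\blank)$ these become \emph{right} adjoints in $\FUN(\tB^{\op},\CATI)$.

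Next I would invoke the pointwise adjunction criterion. By \cref{cor:adjinlaxfuncat}(3), an oplax transformation in $\FUN(\tB^{\op},\CATI)^{\oplax}$ has a right adjoint \IFF{} each component has a right adjoint and the mate of the oplax naturality square commutes for every morphism. Applying this fibrewise in $a \in \tA$: the condition that $L(a) \colon F(a) \to G(a)$ be a left adjoint in $\tB$ translates, under the Yoneda embedding and $\tB(b,\blank)$, into the condition that $\Phi(\blank,a,b)$ admit a \emph{right} adjoint $R_{a,b}$ for every $b$, together with the requirement that the associated mate square (the second displayed square) commutes. Thus the two bullet points are exactly the translation of ``$\Phi \in \iota(\FUN(\tA,\tB)^{\oplax})$'' and ``each component $L(a)$ is a left adjoint''.

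The only genuinely delicate point is bookkeeping of which square is labelled with which $2$-morphism direction and which functor ($L_{a,b}$ versus $R_{a,b}$) appears, so that the statement is literally dual to \cref{lem:emblaxradj} and not accidentally off by an op or a co. Concretely, I would observe that $\FUN(\tA,\tB)^{\oplax,\ladj}$ is by definition the wide locally full sub-\itcat{} of $\FUN(\tA,\tB)^{\oplax}$ on the componentwise left adjoints, so its image under $\iota$ is the intersection of $\iota(\FUN(\tA,\tB)^{\oplax})$ with $\FUN(\tA \times \tB^{\op},\CATI)^{\oplax,\ladj}$; since $\iota$ is locally fully faithful, this intersection is itself locally full, and its morphisms are characterized by imposing the second bullet on top of the first. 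I would write this up in one paragraph, essentially as:

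\begin{proof}
  The first condition is precisely the one for $\Phi$ to lie in the
  image of $\FUN(\tA, \tB)^{\oplax}$ under the embedding of
  \cref{funlaxemb}, since it says that $\Phi(\blank,a,\blank)$ is a
  strong transformation for every $a \in \tA$. Given this, the second
  condition says by \cref{cor:adjinlaxfuncat}(3) that for every
  $a \in \tA$ the transformation $\Phi(\blank,a,\blank)$ has a right
  adjoint in $\FUN(\tB^{\op}, \CATI)$, so that $\Phi(\blank,a,\blank)$
  is componentwise a left adjoint; by uniqueness of adjoints and the
  Yoneda lemma this is equivalent to $\Phi$ corresponding to a
  morphism of $\FUN(\tA, \tB)^{\oplax}$ whose components are left
  adjoints in $\tB$, as required.
\end{proof}
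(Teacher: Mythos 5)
Your proof is correct and takes essentially the same route as the paper: the paper disposes of this lemma with the single line that it is dual to \cref{lem:emblaxradj}, and your write-up is exactly that dual two-step argument (the first bullet is membership in the image of the embedding of \cref{funlaxemb}, the second bullet is the mate criterion of \cref{cor:adjinlaxfuncat}(3) applied to $\Phi(\blank,a,\blank)$ in $\FUN(\tB^{\op},\CATI)^{\oplax}$, transported to $\tB$ via the Yoneda lemma and uniqueness of adjoints).

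One slip in your preliminary discussion should be corrected, though it does not infect the final proof paragraph: the Yoneda embedding $\tB \hookrightarrow \FUN(\tB^{\op},\CATI)$ is covariant, and it sends a componentwise left adjoint $L_a$ to the postcomposition transformation $\tB(\blank, L_a)$, which \emph{remains} a left adjoint, with right adjoint $R_{a,b} \simeq \tB(b, R_a)$; your claim that the components ``become right adjoints in $\FUN(\tB^{\op},\CATI)$'' is wrong, and the expression $\tB(L_x(\blank),\blank)$ does not even define a transformation $\tB(\blank,F(x)) \to \tB(\blank,G(x))$. Indeed, if the variance flipped handedness as you suggest, the lemma would place the image inside the $\radj$ rather than the $\ladj$ variant, contradicting the statement. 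Your proof paragraph gets this right; only the phrase ``so that $\Phi(\blank,a,\blank)$ is componentwise a left adjoint'' is slightly off, since the componentwise condition (existence of the $R_{a,b}$) is the hypothesis already supplied by the ambient $\FUN(\tA \times \tB^{\op},\CATI)^{\oplax,\ladj}$, while what \cref{cor:adjinlaxfuncat}(3) adds, given the commuting mates, is that the whole transformation $\Phi(\blank,a,\blank)$ admits a right adjoint in $\FUN(\tB^{\op},\CATI)^{\oplax}$, which then descends to an adjunction $L_a \dashv R_a$ in $\tB$ by local full faithfulness of the Yoneda embedding.
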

\begin{proof}
  This is dual to \cref{lem:emblaxradj}.
\end{proof}

\begin{thm}\label{thm:mategeneral}
  Let $\tA,\tB$ be $(\infty,2)$-categories. Then there exists a natural equivalence of $(\infty,2)$-categories
  \[
    \FUN(\tA,\tB)^{\lax,\radj} \simeq (\FUN(\tA,\tB)^{\oplax,\ladj})^{\coop}
  \]
  given by taking mates of the naturality squares of lax transformations.
\end{thm}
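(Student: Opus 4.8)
The plan is to reduce the general statement to the already-established $\CATI$-valued case of \cref{cor:catvaluedmates} by means of the locally full embedding of \cref{funlaxemb}, together with the explicit descriptions of the relevant sub-\itcats{} provided in \cref{lem:emblaxradj} and \cref{lem:emboplaxladj}. First I would invoke the equivalence
\[ \FUN(\tA \times \tB^{\op}, \CATI)^{\lax,\radj} \simeq \bigl(\FUN(\tA \times \tB^{\op}, \CATI)^{\oplax,\ladj}\bigr)^{\coop} \]
from \cref{cor:catvaluedmates} (applied with the indexing \itcat{} $\tA \times \tB^{\op}$), which is implemented by taking mates. The embedding of \cref{funlaxemb} realizes $\FUN(\tA,\tB)^{\lax}$ as a locally full sub-\itcat{} of $\FUN(\tA \times \tB^{\op}, \CATI)^{\lax}$, and by \cref{lem:emblaxradj} and \cref{lem:emboplaxladj} it restricts to locally full embeddings of $\FUN(\tA,\tB)^{\lax,\radj}$ and $\FUN(\tA,\tB)^{\oplax,\ladj}$ into the corresponding $\radj$ and $\ladj$ sub-\itcats{} on the $\CATI$-valued side.

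The core of the argument is then to verify that the mate equivalence on the $\CATI$-valued side carries the image of $\FUN(\tA,\tB)^{\lax,\radj}$ exactly onto the image of $(\FUN(\tA,\tB)^{\oplax,\ladj})^{\coop}$, thereby yielding the commutative square
\[
  \begin{tikzcd}
    \FUN(\tA, \tB)^{\lax,\radj} \arrow[r,"\simeq"] \arrow[d] & (\FUN(\tA,\tB)^{\oplax,\ladj})^{\coop} \arrow[d] \\
    \FUN(\tA\times \tB^{\op}, \CATI)^{\lax,\radj} \arrow[r,"\simeq"] & (\FUN(\tA \times \tB^{\op}, \CATI)^{\oplax,\ladj})^{\coop}
  \end{tikzcd}
\]
with vertical locally full embeddings. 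Concretely, the image of $\FUN(\tA,\tB)^{\lax,\radj}$ is cut out by the two conditions in \cref{lem:emblaxradj}: for each fixed $a \in \tA$, naturality of $\Phi(\blank,a,\blank)$ in the $\tB^{\op}$-direction (the \emph{strongness} of the lax transformation in that variable), plus commutativity of the mate squares in that variable. Taking the mate of $\Phi$ in the $[1] \otimes \bigl(\tA \times \tB^{\op}\bigr)$-direction interchanges, for each $a$, the roles of the two conditions: the condition that $\Phi(\blank,a,\blank)$ is strong becomes the condition that the $\tB^{\op}$-direction mate square of the transformed transformation is commutative, and vice versa. I would make this precise by checking, using the pointwise computation of mates in \cref{def:matesquare} and \cref{cor:adjinlaxfuncat}, that the two conditions of \cref{lem:emblaxradj} are exchanged with the two conditions of \cref{lem:emboplaxladj} under the mate operation; this identifies the two images and shows the horizontal equivalence restricts.

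Once the restricted equivalence of images is established, the induced equivalence between the sub-\itcats{} follows because both vertical functors are locally full embeddings with the identified images, so the equivalence on the ambient $\CATI$-valued \itcats{} descends to an equivalence on the sub-\itcats{}; naturality in $\tA, \tB$ is inherited from naturality of all the constructions involved. The main obstacle I anticipate is precisely the bookkeeping in the previous paragraph: one must carefully disentangle the \emph{two} distinct variables in which laxness occurs --- the $[1]$-direction recording the transformation itself, and the $\tB^{\op}$-direction coming from the Yoneda embedding --- and confirm that the single mate operation of \cref{cor:catvaluedmates} acts on them in the way that swaps exactly the pair of conditions, rather than mixing them. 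The key conceptual point making this work is that, for each fixed $a$, the strongness condition of \cref{funlaxemb} says $\Phi(\blank,a,\blank)$ is an honest natural transformation, and under the mate correspondence for such $\CATI$-valued transformations (as in \cref{cor:adjinlaxfuncat}) ``strong with componentwise right adjoints'' and ``mates commute'' are exactly the two sides being interchanged.
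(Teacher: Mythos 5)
Your proposal is correct and follows essentially the same route as the paper: the paper likewise reduces to \cref{cor:catvaluedmates} via the locally full embedding of \cref{funlaxemb}, and verifies that taking mates exchanges the two defining conditions of \cref{lem:emblaxradj} and \cref{lem:emboplaxladj}, exactly as you describe. The only step you gloss that the paper spells out is the final check that the restricted equivalence is literally \emph{given} by taking mates, which the paper handles by reducing without loss of generality to $\tA = [1]$ and invoking the Yoneda embedding together with \cref{cor:catvaluedmates}.
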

\begin{proof}
  Observe that by \cref{lem:emblaxradj} and \cref{lem:emboplaxladj} we only need to check two things:
  \begin{itemize}
    \item The equivalence \[\FUN(\tA \times \tB^{\op},\CATI)^{\lax,\radj} \simeq (\FUN(\tA \times \tB^{\op},\CATI)^{\oplax,\ladj})^{\coop}\] given in \cref{cor:catvaluedmates} restricts to an equivalence \[\FUN(\tA,\tB)^{\lax,\radj} \simeq (\FUN(\tA,\tB)^{\oplax,\ladj})^{\coop}.\]
    \item The equivalence above is given by taking mates.
  \end{itemize}
  We start by showing the first assertion. Let
  $\Phi \colon [1] \otimes (\tA \times \tB^{\op}) \to \CATI$ be a morphism in the image of $\FUN(\tA,\tB)^{\lax,\radj}$,
  and denote by $\widetilde{\Phi}$ its image in the right-hand side of the equivalence. To show that $\widetilde{\Phi}(\blank,a,\blank)$ is a
  strong natural transformation for every $a \in \tA$, we observe that for
  every $b \to b'$ the associated square is the mate of the square
  associated to $\Phi(\blank,a,\blank)$, which is commutative by
  assumption. The remaining condition follows after observing that the
  mate of the square associated to $b \to b'$ and
  $\widetilde{\Phi}(\blank,a,\blank)$ is again commutative since
  $\Phi(\blank,a,\blank)$ is by assumption a strong natural transformation.

  For the final part, we can assume without loss of generality that
  $\tA=[1]$. Let $\eta \colon[1] \to \FUN([1],\tB)^{\lax,\radj}$ and
  denote by $\widetilde{\eta}$ the image of $\eta$ under the
  equivalence constructed above. We further define $\psi$ as the mate
  of the square determined by $\eta$. To finish the proof we must show
  that $\widetilde{\eta} \simeq \psi$. Now, if we denote by $\Phi$,
  $\widetilde{\Phi}$ and $\Psi$ the images of
  $\eta,\widetilde{\eta}$ and $\psi$ in
  $\FUN(\tA \times \tB^{\op},\CATI)^{\lax,\radj}$ and
  $\FUN(\tA \times \tB^{\op},\CATI)^{\oplax,\ladj})^{\coop}$, as appropriate, it
  follows from \cref{cor:catvaluedmates} that
  $\widetilde{\Phi} \simeq \Psi$ which then implies that
  $\widetilde{\eta}$ is indeed the same as $\psi$.
\end{proof}

As a variant of \cref{thm:mategeneral}, we can also obtain a mate equivalence between functors that take values in left and right adjoints:
\begin{defn}\label{defn:leftadjfuncat}
  Let $\tA$, $\tB$ be $(\infty,2)$-categories. We define the full
  subcategory $\LFUN(\tA,\tB)^{\lax}$ of $\FUN(\tA,\tB)^{\lax}$ to
  consist of the functors that send each morphism of $\tA$ to a
  left adjoint in $\tB$. We similarly denote by
  $\RFUN(\tA,\tB)^{\oplax}$ the analogous construction using right
  adjoints.
\end{defn}

\begin{thm}\label{thm:adjointtoafunctor}
   Let $\tA,\tB$ be $(\infty,2)$-categories. Then there exists a natural equivalence of $(\infty,2)$-categories
   \[
     \LFUN(\tA,\tB)^{\lax} \simeq \RFUN(\tA^{\coop},\tB)^{\oplax}
   \]
   which sends a functor $F \colon \tA \to \tB$ to a functor $F^{\vee} \colon \tA^{\coop} \to \tB$ that takes each 1-morphism to the functor right adjoint to its image under $F$.
\end{thm}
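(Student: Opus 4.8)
The plan is to prove the statement first for the targets $\CATIT$ and $\CATI$ by straightening for (op)lax transformations, and then to bootstrap to a general \itcat{} $\tB$ via the Yoneda embedding of \cref{funlaxemb}, following the pattern of the proof of \cref{thm:mategeneral}.

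For $\tB = \CATIT$ the key point is that the two straightening equivalences of \cref{thm:fib01laxstr} present their sources as \emph{full} sub-\itcats{} of $\Cartenr{\tA}$: we have $\FUN(\tA,\CATIT)^{\lax} \simeq \FIB_{(0,1)}^{\lax}(\tA)$ and $\FUN(\tA^{\coop},\CATIT)^{\oplax} \simeq \FIB_{(1,1)}^{\lax}(\tA)$, and by definition both $\FIB_{(0,1)}^{\lax}(\tA)$ and $\FIB_{(1,1)}^{\lax}(\tA)$ are full sub-\itcats{} of $\Cartenr{\tA}$. Under the first equivalence, $\LFUN(\tA,\CATIT)^{\lax}$ corresponds by \cref{cor:ladj+01is11}(i) to the full sub-\itcat{} of those $(0,1)$-fibrations that are \emph{also} $(1,1)$-fibrations. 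Under the second, $\RFUN(\tA^{\coop},\CATIT)^{\oplax}$ corresponds to the full sub-\itcat{} of those $(1,1)$-fibrations that are also $(0,1)$-fibrations; here I would invoke the evident dual of \cref{cor:ladj+01is11}(i), which follows from \cref{cor:ladjfib} together with the op-duality \cref{eq:fibop} and the fact that reversing $2$-morphisms interchanges left and right adjoints in $\CATIT$. Since a fibration is simultaneously a $(0,1)$- and a $(1,1)$-fibration regardless of which structure we take as primary, these two full sub-\itcats{} of $\Cartenr{\tA}$ have the same objects, and being full they coincide. Composing the two straightening equivalences therefore yields $\LFUN(\tA,\CATIT)^{\lax} \simeq \RFUN(\tA^{\coop},\CATIT)^{\oplax}$; on objects it sends $F$ to the $(1,1)$-straightening $F^{\vee}$ of the fibration straightening to $F$, and since the cartesian transport functors are right adjoint to the cocartesian ones, $F^{\vee}$ takes each $1$-morphism to the right adjoint of its image under $F$, as required. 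Running the same argument with $\CATI$ and the $1$-fibred categories $\FIB_{(0,1)}^{(1),\lax}(\tA)$ and $\FIB_{(1,1)}^{(1),\lax}(\tA)$ in place of $\CATIT$ gives the case $\tB = \CATI$.

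To pass to a general \itcat{} $\tB$ I would use the locally full embedding $\FUN(\tA,\tB)^{\lax} \hookrightarrow \FUN(\tA \times \tB^{\op},\CATI)^{\lax}$ of \cref{funlaxemb} and its oplax counterpart for $\FUN(\tA^{\coop},\tB)^{\oplax}$. The first observation is that the covariant Yoneda functor $\tB \to \FUN(\tB^{\op},\CATI)$, being a functor of \itcats{}, preserves adjunctions, and by full faithfulness together with the representability criterion \cref{cor:ladjrep} it also \emph{reflects} the property of a $1$-morphism being a left or right adjoint; hence $\LFUN(\tA,\tB)^{\lax}$ and $\RFUN(\tA^{\coop},\tB)^{\oplax}$ embed as locally full sub-\itcats{}, whose images I would describe explicitly by analogues of \cref{lem:emblaxradj} and \cref{lem:emboplaxladj}. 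Crucially, the adjointness condition is imposed only on the $1$-morphisms coming from $\tA$ (resp.\ $\tA^{\coop}$), while in the $\tB^{\op}$-direction the Yoneda image is already strong by the morphism condition of \cref{funlaxemb}; this is exactly the situation governed by the marked Gray tensor product, so the relevant input is the \emph{marked} straightening of \cref{propn:markedlaxstr} and \cref{var:markedgray}, applied with $E$ the class of morphisms induced from $\tA$. Restricting the $\CATI$-level equivalence to these two images and appealing to uniqueness of adjoints, as in the proof of \cref{thm:mategeneral}, would then give the general equivalence and identify it with passage to fibrewise right adjoints.

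I expect the main obstacle to be the bookkeeping of variances and markings in this last step. The $\CATI$-valued equivalence changes the indexing \itcat{} from $\tC$ to $\tC^{\coop}$, so with $\tC = \tA \times \tB^{\op}$ the oplax side is naturally indexed by $\tA^{\coop} \times \tB^{\co}$ rather than $\tA^{\coop} \times \tB^{\op}$, and one must check that the two $\CATI$-valued embeddings land in functor \itcats{} that are genuinely matched by the $\CATI$-level equivalence and that the marking $E$ (encoding ``laxness only along $\tA$'') is preserved throughout. Reconciling these op/co twists, and verifying that the Yoneda embedding interacts correctly with the partially lax, marked structures, is the delicate part; by contrast the purely fibrational heart of the argument for the targets $\CATIT$ and $\CATI$ is essentially formal once \cref{cor:ladj+01is11} and its dual are available.
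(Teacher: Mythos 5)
Your base case (targets $\CATIT$ and $\CATI$) is correct, and it is a genuinely different argument from anything in the paper: since $\FIB_{(0,1)}^{\lax}(\tA)$ and $\FIB_{(1,1)}^{\lax}(\tA)$ are by definition \emph{full} sub-\itcats{} of the same ambient \itcat{} $\Cartenr{\tA}$, both sides straighten, via \cref{cor:ladj+01is11}(i) and its op-dual, to the identical full sub-\itcat{} spanned by those functors $\tE \to \tA$ that are simultaneously $(0,1)$- and $(1,1)$-fibrations; fullness means no compatibility of morphisms needs checking, and the identification of $F^{\vee}$ with the fibrewise right adjoints comes from the proof of \cref{cor:ladjfib}. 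This part works.

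The bootstrap to general $\tB$, however, has a genuine gap. For a restriction argument in the style of \cref{thm:mategeneral} to get started, the image of $\LFUN(\tA,\tB)^{\lax}$ under the embedding of \cref{funlaxemb} would have to land in $\LFUN(\tA\times\tB^{\op},\CATI)^{\lax}$. It does not: the functor $\tB(\blank,F(\blank))$ sends a morphism $g$ of $\tB^{\op}$ to the precomposition functor $g^{*}$, which is a left adjoint in $\CATI$ only when $g$ happens to be a right adjoint in $\tB$. (Contrast \cref{lem:emblaxradj}, where the condition being transported lives on the \emph{components of transformations}: $\tB(b,R_{a})$ is a right adjoint whenever $R_{a}$ is, which is exactly why the componentwise-adjoint embedding works there.) So there is nothing to restrict your $\CATI$-level equivalence to. What one would need is a version of the base case where adjoint values are required only over a marked class $E$ (the $\tA$-direction) and where only the $\tA$-factor of the base is $\coop$-dualized; your proposed fix via \cref{propn:markedlaxstr} and \cref{var:markedgray} does not supply this, since marked straightening controls which naturality squares are strong, not where the adjoint-value condition is imposed. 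Your own observation that the dualized side is indexed by $\tA^{\coop}\times\tB^{\co}$ rather than $\tA^{\coop}\times\tB^{\op}$ is not a bookkeeping nuisance but the symptom: the base-case equivalence dualizes the \emph{whole} base, and producing the needed partial dualization along $\tA$ only is precisely the Gray-bifibration swap of \cref{propn:graybifib} and \cref{prop:mapspacegbifib} --- i.e.\ you would end up re-deriving the proof of \cref{thm:mategeneral} rather than quoting its conclusion.

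The paper avoids all of this with a short formal argument that you could adopt to replace your second step entirely: since \cref{thm:mategeneral} is already proved for an \emph{arbitrary} target $\tB$, the object-level statement follows by currying. For a test \itcat{} $\tX$, the Gray adjunctions give $\Map(\tX,\LFUN(\tA,\tB)^{\lax}) \simeq \Map(\tA,\FUN(\tX,\tB)^{\oplax,\ladj})$, because the condition that each $F_{x}$ sends $\tA$-morphisms to left adjoints becomes, after currying, the condition that each morphism of $\tA$ goes to an oplax transformation with left adjoint components; then \cref{thm:mategeneral} identifies this with $\Map(\tA,(\FUN(\tX,\tB)^{\lax,\radj})^{\coop}) \simeq \Map(\tX,\RFUN(\tA^{\coop},\tB)^{\oplax})$, and the Yoneda lemma concludes. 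This route renders your fibrational base case logically superfluous, though it remains a nice independent proof for $\tB = \CATIT$.
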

\begin{proof}
  Let $\tX$ be an $(\infty,2)$-category and observe that we have natural equivalences
  \[
    \Map(\tX, \LFUN(\tA,\tB)^{\lax}) \simeq \Map(\tA,\FUN(\tX,\tB)^{\oplax,\ladj}) \simeq \Map(\tA,(\FUN(\tX,\tB)^{\lax,\radj})^{\coop})
  \]
  where the first equivalence is given by the universal property of the Gray tensor product and the last one is given by \cref{thm:mategeneral}. Dually, we have
  \[
    \Map(\tX,\RFUN(\tA^\coop,\tB)^{\oplax})
    \simeq 
    \Map(\tA,(\FUN(\tX,\tB)^{\lax,\radj})^{\coop}),
  \]
  and so the Yoneda lemma implies that $ \LFUN(\tA,\tB)^{\lax} \simeq \RFUN(\tA^{\coop},\tB)^{\oplax}$, as desired.
\end{proof}

\begin{bibdiv}
  \begin{biblist}
\bib{Ab23}{article}{
  author={Abell\'{a}n, Fernando},
  title={On local fibrations of $(\infty ,2)$-categories},
  date={2023},
  eprint={arXiv:2305.00947},
}

\bib{AbLax}{article}{
  author={Abell\'{a}n, Fernando},
  title={Comparing lax functors of $(\infty ,2)$-categories},
  date={2023},
  eprint={arXiv:2311.12746},
}

\bib{AGS2}{article}{
  author={Abell\'{a}n, Fernando},
  author={Stern, Walker H.},
  title={2-Cartesian fibrations II: A Grothendieck construction for $\infty $-bicategories},
  date={2022},
  eprint={arXiv:2201.09589},
}

\bib{AbS3}{article}{
  author={Abell\'{a}n, Fernando},
  author={Stern, Walker H.},
  title={On cofinal functors of $(\infty ,2)$-categories},
  date={2023},
  eprint={arXiv:2304.07028},
}

\bib{AbMarked}{article}{
  author={Abell\'{a}n Garc\'{\i}a, Fernando},
  title={Marked colimits and higher cofinality},
  journal={J. Homotopy Relat. Struct.},
  volume={17},
  date={2022},
  number={1},
  pages={1--22},
  doi={10.1007/s40062-021-00296-2},
}

\bib{AGS1}{article}{
  author={Abell\'{a}n Garc\'{\i}a, Fernando},
  author={Stern, Walker H.},
  title={2-Cartesian fibrations I: A model for $\infty $-bicategories fibred in $\infty $-bicategories},
  journal={Appl. Categ. Structures},
  volume={30},
  date={2022},
  number={6},
  pages={1341--1392},
  eprint={arXiv:2106.03606},
}

\bib{AMGR}{article}{
  author={Ayala, David},
  author={Mazel-Gee, Aaaron},
  author={Rozenblyum, Nick},
  title={Stratified noncommutative geometry},
  date={2019},
  eprint={arXiv:1910.14602},
}

\bib{BarwickThesis}{book}{
  author={Barwick, Clark},
  title={$(\infty ,n)$-{C}at as a closed model category},
  note={Thesis (Ph.D.)--University of Pennsylvania},
  date={2005},
}

\bib{BSPUnicity}{article}{
  author={Barwick, Clark},
  author={Schommer-Pries, Christopher},
  title={On the unicity of the theory of higher categories},
  journal={J. Amer. Math. Soc.},
  volume={34},
  date={2021},
  number={4},
  pages={1011--1058},
}

\bib{BergnerRezk}{article}{
  author={Bergner, Julia E.},
  author={Rezk, Charles},
  title={Comparison of models for $(\infty ,n)$-categories, I},
  journal={Geom. Topol.},
  volume={17},
  date={2013},
  number={4},
  pages={2163--2202},
}

\bib{BermanLax}{article}{
  author={Berman, John D.},
  title={On lax limits in infinity categories},
  eprint={arXiv:2006.10851},
  date={2020},
}

\bib{CampionGray}{article}{
  author={Campion, Tim},
  title={The Gray tensor product of $(\infty ,n)$-categories},
  date={2023},
  eprint={arXiv:2311.00205},
}

\bib{CampionMaehara}{article}{
  author={Campion, Tim},
  author={Maehara, Yuki},
  title={A model-independent Gray tensor product for $(\infty ,2)$-categories},
  date={2023},
  eprint={arXiv:2304.05965},
}

\bib{SigmaLimits}{article}{
  author={Descotte, M. E.},
  author={Dubuc, E. J.},
  author={Szyld, M.},
  title={Sigma limits in 2-categories and flat pseudofunctors},
  journal={Adv. Math.},
  volume={333},
  date={2018},
  pages={266--313},
}

\bib{GagnaHarpazLanariLaxLim}{article}{
  author={Gagna, Andrea},
  author={Harpaz, Yonatan},
  author={Lanari, Edoardo},
  title={Fibrations and lax limits of $(\infty ,2)$-categories},
  date={2020},
  eprint={arXiv:2012.04537},
}

\bib{GagnaHarpazLanariGray}{article}{
  author={Gagna, Andrea},
  author={Harpaz, Yonatan},
  author={Lanari, Edoardo},
  title={Gray tensor products and Lax functors of $(\infty ,2)$-categories},
  journal={Adv. Math.},
  volume={391},
  date={2021},
  pages={Paper No. 107986, 32},
  eprint={arXiv:2006.14495},
}

\bib{GagnaHarpazLanariFib}{article}{
  title={Cartesian Fibrations of $(\infty ,2)$-categories},
  author={Gagna, Andrea},
  author={Harpaz, Yonatan},
  author={Lanari, Edoardo},
  date={2021},
  eprint={arXiv:2107.12356},
}

\bib{GagnaHarpazLanariScale}{article}{
  author={Gagna, Andrea},
  author={Harpaz, Yonatan},
  author={Lanari, Edoardo},
  title={On the equivalence of all models for $(\infty ,2)$-categories},
  journal={J. Lond. Math. Soc. (2)},
  volume={106},
  date={2022},
  number={3},
  pages={1920--1982},
  eprint={arXiv:1911.01905},
}

\bib{GR}{book}{
  author={Gaitsgory, Dennis},
  author={Rozenblyum, Nick},
  title={A study in derived algebraic geometry. Vol. I. Correspondences and duality},
  series={Mathematical Surveys and Monographs},
  volume={221},
  publisher={American Mathematical Society, Providence, RI},
  date={2017},
}

\bib{thetan}{article}{
  author={Haugseng, Rune},
  title={On the equivalence between $\Theta _n$-spaces and iterated Segal spaces},
  journal={Proc. Amer. Math. Soc.},
  volume={146},
  date={2018},
  number={4},
  pages={1401--1415},
}

\bib{adjmnd}{article}{
  author={Haugseng, Rune},
  title={On lax transformations, adjunctions, and monads in $(\infty ,2)$-categories},
  journal={High. Struct.},
  volume={5},
  date={2021},
  number={1},
  pages={244--281},
  eprint={arXiv:2002.01037},
}

\bib{coend}{article}{
  author={Haugseng, Rune},
  title={On (co)ends in $\infty $-categories},
  journal={J. Pure Appl. Algebra},
  volume={226},
  date={2022},
  number={2},
  pages={Paper No. 106819, 16},
}

\bib{enrtens}{article}{
  author={Haugseng, Rune},
  title={On the tensor product of enriched $\infty $-categories},
  date={2023},
  eprint={arXiv:2002.01037},
}

\bib{HHLN1}{article}{
  author={Haugseng, Rune},
  author={Hebestreit, Fabian},
  author={Linskens, Sil},
  author={Nuiten, Joost},
  title={Lax monoidal adjunctions, two-variable fibrations and the calculus of mates},
  journal={Proc. Lond. Math. Soc. (3)},
  volume={127},
  date={2023},
  number={4},
  pages={889--957},
  eprint={arXiv:2011.08808},
}

\bib{cois}{article}{
  author={Haugseng, Rune},
  author={Melani, Valerio},
  author={Safronov, Pavel},
  title={Shifted coisotropic correspondences},
  eprint={arXiv:1904.11312},
  journal={J. Inst. Math. Jussieu},
  volume={21},
  date={2022},
  number={3},
  pages={785--849},
}

\bib{HinichYoneda}{article}{
  eprint={arXiv:1805.07635},
  author={Hinich, Vladimir},
  title={Yoneda lemma for enriched $\infty $-categories},
  journal={Adv. Math.},
  volume={367},
  date={2020},
  pages={107129},
}

\bib{JohnsonFreydScheimbauer}{article}{
  author={Johnson-Freyd, Theo},
  author={Scheimbauer, Claudia},
  title={(Op)lax natural transformations, twisted quantum field theories, and ``even higher'' Morita categories},
  journal={Adv. Math.},
  volume={307},
  date={2017},
  pages={147--223},
}

\bib{Lambert}{article}{
  author={Lambert, Michael},
  title={Computing weighted colimits},
  date={2017},
  eprint={arXiv:1711.05903},
}

\bib{Soergel}{article}{
  title={A braided monoidal $(\infty ,2)$-category of Soergel bimodules},
  author={Liu, Yu Leon Liu},
  author={Mazel-Gee, Aaron},
  author={Reutter, David},
  author={Stroppel, Catharina},
  author={Wedrich, Paul},
  date={2024},
  eprint={arXiv:2401.02956},
}

\bib{Loubaton}{article}{
  author={Loubaton, F\'elix},
  title={Theory and models of $(\infty ,\omega )$-categories},
  date={2023},
  eprint={arXiv:2307.11931},
}

\bib{HTT}{book}{
  author={Lurie, Jacob},
  title={Higher Topos Theory},
  series={Annals of Mathematics Studies},
  publisher={Princeton University Press},
  address={Princeton, NJ},
  date={2009},
  volume={170},
  note={Available from \url {http://math.ias.edu/~lurie/}},
}

\bib{LurieGoodwillie}{article}{
  author={Lurie, Jacob},
  title={($\infty $,2)-Categories and the {G}oodwillie Calculus {I}},
  date={2009},
  note={Available at \url {http://math.ias.edu/~lurie/papers/GoodwillieI.pdf}},
}

\bib{HA}{book}{
  author={Lurie, Jacob},
  title={Higher Algebra},
  date={2017},
  note={Available at \url {http://math.ias.edu/~lurie/}.},
}

\bib{MaeharaGray}{article}{
  author={Maehara, Yuki},
  title={The Gray tensor product for 2-quasi-categories},
  journal={Adv. Math.},
  volume={377},
  date={2021},
  pages={Paper No. 107461, 78},
}

\bib{MartiniYoneda}{article}{
  author={Martini, Louis},
  title={Yoneda's lemma for internal higher categories},
  date={2022},
  eprint={arXiv:2103.17141},
}

\bib{Nuiten}{article}{
  author={Nuiten, Joost},
  title={On straightening for Segal spaces},
  journal={Compos. Math.},
  volume={160},
  date={2024},
  number={3},
  pages={586--656},
  eprint={arXiv:2108.11431},
}

\bib{RezkThetaN}{article}{
  author={Rezk, Charles},
  title={A Cartesian presentation of weak $n$-categories},
  journal={Geom. Topol.},
  volume={14},
  date={2010},
  number={1},
  pages={521--571},
}

\bib{RovelliWeight}{article}{
  author={Rovelli, Martina},
  title={Weighted limits in an $(\infty , 1)$-category},
  journal={Appl. Categ. Structures},
  volume={29},
  date={2021},
  number={6},
  pages={1019--1062},
  eprint={arXiv:1902.00805},
}

\bib{StreetOrient}{article}{
  author={Street, Ross},
  title={The algebra of oriented simplexes},
  journal={J. Pure Appl. Algebra},
  volume={49},
  date={1987},
  number={3},
  pages={283--335},
}

\bib{VerityComplI}{article}{
  author={Verity, D. R. B.},
  title={Weak complicial sets. I. Basic homotopy theory},
  journal={Adv. Math.},
  volume={219},
  date={2008},
  number={4},
  pages={1081--1149},
}
\end{biblist}
\end{bibdiv}

\end{document}